\newsavebox{\foobox}
\newcommand{\slantbox}[2][.3]
{%
	\mbox
	{%
		\sbox{\foobox}{#2}%
		\hskip\wd\foobox
		\pdfsave
		\pdfsetmatrix{1 0 #1 1}%
		\llap{\usebox{\foobox}}%
		\pdfrestore
	}%
}
\def\mod{\mathrm{mod}\ }
 \newcommand{\BB}{{\mathbb {B}}}
\newcommand{\BC}{{\mathbb {C}}} \newcommand{\BD}{{\mathbb {D}}}
 \newcommand{\BF}{{\mathbb {F}}}
 \newcommand{\BH}{{\mathbb {H}}}
 \newcommand{\BJ}{{\slantbox{$\mathbb{J}$}}}
 \newcommand{\BL}{{\mathbb {L}}}
 \newcommand{\BN}{{\mathbb {N}}}
\newcommand{\BQ}{{\mathbb {Q}}} \newcommand{\BR}{{\mathbb {R}}}
\newcommand{\BS}{{\mathbb {S}}} 
\newcommand{\BU}{{\mathbb {U}}} 
 \newcommand{\BX}{{\mathbb {X}}}
 \newcommand{\BZ}{{\mathbb {Z}}}
\newcommand{\fr}{{\mathfrak{r}}} \newcommand{\fC}{{\mathfrak{C}}}
\newcommand{\GL}{{\mathrm{GL}}}
\renewcommand{\Im}{{\mathfrak{Im}\,}}
\newcommand{\fc}{{\mathfrak{c}}} 
\newcommand{\PGL}{{\mathrm{PGL}}} 
\renewcommand{\Re}{{\mathfrak{Re}\,}}
\newcommand{\PSL}{{\mathrm{PSL}}}
\def\frc{\mathfrak{c}}
\def\-{^{-1}}
\def\Ssis{\mathscr S_{\mathrm {sis}}}
\def\Ssiss{\mathscr T_{\mathrm {sis}}}
\def\Msis{\mathscr M_{\mathrm {sis}}}
\def\Nsis{\mathscr N_{\mathrm {sis}}}
\def\SS{\mathscr S }
\def\Hrd {\mathscr H_{\mathrm {rd}}}
\def\hh{{ \text{\usefont{T1}{pzc}{m}{it}{h}}} }
\def\hld{{ \text{\usefont{T1}{pzc}{m}{it}{h}}} _{(\umu, \udelta)}}
\def\hmld{{ \text{\usefont{T1}{pzc}{m}{it}{h}}} _{(-\umu, \udelta)}}
\def\hmum{{ \text{\usefont{T1}{pzc}{m}{it}{h}}} _{(\umu, \um)}}
\def\hmmum{{ \text{\usefont{T1}{pzc}{m}{it}{h}}} _{(-\umu, \um)}}
\def\Hsl{\EuScript H_{(\usigma, \ulambda)}}
\def\Hld{\EuScript H_{(\umu, \udelta)}}
\def\Hmum{\EuScript H_{(\umu, \um)}}
\def\Hmld{\EuScript H_{(-\umu, \udelta)}}
\def\Hmmum{\EuScript H_{(-\umu, -\um)}}
\def\BRx{\mathbb R^\times}
\def\BCx{\mathbb C^\times}
\def\BFx{\mathbb F^\times}
\def\ux{\boldsymbol x}
\def\uy{\boldsymbol y}
\def\udelta{\boldsymbol {\delta}}
\def\ulambda{\boldsymbol \lambda}
\def\uLambda{\boldsymbol \varLambda}
\def\usigma{\boldsymbol \varsigma}
\def\unu{\boldsymbol \nu}
\def\umu{\boldsymbol \mu}
\def\urho{\boldsymbol \varrho}
\def\ue{\boldsymbol e}
\def\um{\boldsymbol m}
\def\utheta{\boldsymbol \theta}
\def\-{^{-1}}
\def\ut{\boldsymbol t}
\def\ualpha{\boldsymbol \alpha}
\def\ukappa{\boldsymbol \kappa}
\def\lp {\left (}
\def\rp {\right )}
\def\lpp {\left \{ }
\def\rpp {\right \} }
\def\EC{\EuScript C}
\def\EJ{\EuScript J}
\def\EM{\EuScript M}
\def\EF{\EuScript F}
\def\ET{\EuScript T}
\def\ES{\EuScript S}
\def\EH{\EuScript H}
\def\Voronoi{Vorono\" \i \hskip 3 pt}
\def\BZT{\BZ/2 \BZ}
\def\uk{\boldsymbol k}
    \newcommand{\ra}{\rightarrow}
    \newcommand{\ds}{\displaystyle}
    \newcommand{\sstyle}{\scriptstyle}
    \newcommand{\SL}{{\mathrm{SL}}}
    \newcommand{\sgn}{{\mathrm{sgn}}} \newcommand{\Tr}{{\mathrm{Tr}}}\newcommand{\delete}[1]{}
    \newcommand{\ccirc}{\raisebox{15 \depth}{${\sstyle \mathrm o}$}}
\newsavebox\myboxA
\newsavebox\myboxB
\newlength\mylenA
\newcommand*\xoverline[2][0.75]{%
	\sbox{\myboxA}{$\m@th#2$}%
	\setbox\myboxB\null
	\ht\myboxB=\ht\myboxA%
	\dp\myboxB=\dp\myboxA%
	\wd\myboxB=#1\wd\myboxA
	\sbox\myboxB{$\m@th\overline{\copy\myboxB}$}
	\setlength\mylenA{\the\wd\myboxA}
	\addtolength\mylenA{-\the\wd\myboxB}%
	\ifdim\wd\myboxB<\wd\myboxA%
	\rlap{\hskip 0.5\mylenA\usebox\myboxB}{\usebox\myboxA}%
	\else
	\hskip -0.5\mylenA\rlap{\usebox\myboxA}{\hskip 0.5\mylenA\usebox\myboxB}%
	\fi}
\newtheorem{theorem}{Theorem}[section]
\newtheorem{lem}[theorem]{Lemma}
\newtheorem{cor}[theorem]{Corollary}
\newtheorem{thm}[theorem]{Theorem}
\newtheorem{prop}[theorem]{Proposition}
\theoremstyle{definition}
\newtheorem{defn}[theorem]{Definition}
\newtheorem{example}[theorem]{Example}
\newtheorem{term}[theorem]{Terminology}
\newtheorem*{acknowledgement}{Acknowledgements}
\newtheorem{observation}[theorem]{Observation}
\newtheorem{notation}[theorem]{Notation}
\theoremstyle{remark}
\newtheorem{rem}[theorem]{Remark}
\numberwithin{section}{chapter}
\numberwithin{equation}{section}
\renewcommand*\thesection{\arabic{section}}
\begin{document}

\frontmatter

\title{Theory of Fundamental Bessel Functions of High Rank}


\author{Zhi Qi}
\address{Department of Mathematics\\ The Ohio State University\\100 Math Tower\\231 West 18th Avenue\\Columbus, OH 43210\\USA}
\email{qi.91@buckeyemail.osu.edu}
\thanks{}


\date{}

\subjclass[2010]{33E20, 33E30, 44A20}

\keywords{Hankel transforms, Bessel kernels, Bessel functions, formal integral representations, Bessel differential equations}

\dedicatory{To Hui}

\maketitle

\tableofcontents

\begin{abstract}
	In this article, we shall study fundamental Bessel functions for $\GL_n(\BF)$ arising
	from the \Voronoi summation formula for any rank $n$ and field $\BF = \BR$ or $\BC$, with focus on developing their analytic and asymptotic theory. 
	The main implements and subjects of our study of  fundamental Bessel functions are their formal integral representations and Bessel differential equations. We shall prove the asymptotic formulae for  fundamental Bessel functions and explicit connection formulae for the Bessel differential equations.
\end{abstract}

%

\chapter*{Introduction}

\addtocontents{toc}{\protect\setcounter{tocdepth}{0}}

\section*{Number Theoretic Motivations}

The oscillatory exponential function $e (x) = e^{2 \pi i x}$ arises as the integral kernel of the Fourier transform in Poisson's summation formula, which has played a very important role in analysis and number theory. The Poisson summation asserts the identity
\begin{equation*}
\sum_{n = - \infty}^{\infty} \upsilon (n) = \sum_{n = - \infty}^{\infty} \widehat \upsilon (n),
\end{equation*}
for at least all Schwartz functions $\upsilon$, with $\widehat \upsilon$ the Fourier transform of $\upsilon$,
\begin{equation*}
\widehat \upsilon  (x) = \int_{- \infty}^{\infty} \upsilon (y) e (- x y) d y.  
\end{equation*}
Riemann's proof of the functional equation of his zeta function relies on the Poisson summation formula. Also, Hecke used the higher {dimensional} generalization of the formula for the zeta function associated with an arbitrary number field. Furthermore, Tate's thesis reinterprets these using the Poisson summation formula for the adele ring.

\vskip 5 pt

(Classical) Bessel functions occur in Vorono\"i's summation formula as well as Petesson's and Kuznetsov's trace formula for $\GL_2 (\BR)$. These formulae have become   fundamental analytic tools for a number of deep results in analytic number theory, most notably for the subconvexity problem for automorphic $L$-functions. A version of the \Voronoi summation formula, which is not in the most general form, reads as follows (see for example \cite[Theorem A.4]{KMV} and \cite[Proposition 1]{Harcos-Michel}),
\begin{align*}
\sum_{n=1}^{\infty} \sqrt n\, \rho^+_F (n) e \lp \frac {a n} c \rp \upsilon (n) = \frac {1} {c} \sum_{\pm} \sum_{ n=1 }^{\infty} \sqrt n \, \rho^{\pm}_F (n)  e \lp \mp \frac {\overline a n} c \rp \Upsilon \lp \pm \frac n {c^2} \rp.
\end{align*}
In this formula, $a$, $\overline a$ and $c$ are integers such that $(a, c) = 1$ and $a \overline a \equiv 1 (\mod c)$, $\rho^{\pm}_F (n)$ are certain normalized Fourier coefficients of a holomprhic or Maa\ss  \hskip 3 pt cusp form for   $\SL_2 (\BZ)$, $\upsilon$ is a smooth weight function compactly supported on $ (0, \infty)$ and $\Upsilon$ is the Hankel transform of $\upsilon$,
\begin{equation*} 
\Upsilon (x) = \int_{0}^{\infty} \upsilon (y) J_F (xy) dy, \hskip 10 pt   x \neq 0,
\end{equation*}
where, if $F$ is a Maa\ss  \hskip 3 pt form of eigenvalue $\frac 1 4 + t^2$ and weight $k$,
\begin{align}
\nonumber J _F (x) & = - \frac \pi {\cosh (\pi t)} \left( Y_{ 2i t} (4 \pi \sqrt x) + Y_{- 2i t} (4 \pi \sqrt x) \right) \\
\nonumber & = \frac { \pi i} {\sinh (\pi t)} \left( J_{ 2i t} (4 \pi \sqrt x) - J_{- 2i t} (4 \pi \sqrt x) \right) \\
\label{1eq: n=2, Maass form, Bessel functions, k even}  & = \pi i \lp e^{-\pi t } H^{(1)}_{ 2i t} (4\pi \sqrt x) - e^{ \pi t} H^{(2)}_{ 2i t} (4 \pi \sqrt x) \rp, \\ 
\nonumber J_F (- x) & = 4 \cosh (\pi t) K_{2 i t} (4 \pi \sqrt x) \\
\nonumber & = \frac { \pi i} {\sinh (\pi t)} \left( I_{ 2i t} (4 \pi \sqrt x) - I_{- 2i t} (4 \pi \sqrt x) \right), \hskip 10 pt x > 0,
\end{align}
for $k$ even,
\begin{align}
\nonumber J _F (x) & = - \frac \pi {\sinh (\pi t)} \left( Y_{ 2i t} (4 \pi \sqrt x) - Y_{- 2i t} (4 \pi \sqrt x) \right) \\
\nonumber & =  \frac { \pi i} {\cosh (\pi t)} \left( J_{ 2i t} (4 \pi \sqrt x) + J_{- 2i t} (4 \pi \sqrt x) \right) \\
\label{1eq: n=2, Maass form, Bessel functions, k odd} & = 
\pi i \lp e^{-\pi t } H^{(1)}_{ 2i t} (4\pi \sqrt x) + e^{ \pi t} H^{(2)}_{ 2i t} (4 \pi \sqrt x) \rp \\
\nonumber  J_F (- x) &  = 4 \sinh (\pi t) K_{2 i t} (4 \pi \sqrt x) \\
\nonumber & = \frac { \pi i} {\cosh (\pi t)} \left(I_{ 2i t} (4 \pi \sqrt x) - I_{- 2i t} (4 \pi \sqrt x) \right), \hskip 10 pt x > 0,
\end{align}
for $k$ odd, 
and if $F$ is a holomorphic cusp form of weight $k$,
\begin{equation}\label{1eq: n=2, holomorphic form, Bessel functions}
J_F (x) = 2 \pi i^k J_{k-1} (4 \pi \sqrt x), \hskip 10 pt J_F (-x) = 0, \hskip 10 pt x > 0.
\end{equation}
Thus the integral kernel $J _F$ has an expression in Bessel functions, where, in standard notation, $J_{\nu}$, $Y_\nu$, $H^{(1)}_\nu$, $H^{(2)}_\nu$, $I_\nu$ and $K_\nu$ are the various Bessel functions (see for instance \cite{Watson}). 
Here, 
the following connection formulae {\rm (\cite[3.61 (3, 4, 5, 6), 3.7 (6)]{Watson})} have been applied in  \eqref{1eq: n=2, Maass form, Bessel functions, k even} and \eqref{1eq: n=2, Maass form, Bessel functions, k odd},
\begin{align}
& Y_\nu (x) = \frac {J_\nu (x) \cos (\pi \nu) - J_{- \nu} (x)}{\sin ( \pi \nu)}, \hskip 10 pt Y_{-\nu} (x) = \frac {J_\nu (x) - J_{- \nu} (x) \cos (\pi \nu)}{\sin ( \pi \nu)},\\
& \label{2eq: connection formulae}
H^{(1)}_\nu (x) = \frac {J_{-\nu} (x) - e^{- \pi i \nu} J_\nu (x) }{i \sin ( \pi \nu)}, \hskip 14 pt H^{(2)}_\nu (x) = \frac { e^{\pi i \nu} J_\nu (x) - J_{-\nu} (x)}{i \sin ( \pi \nu)},\\
& \label{1eq: connection formula K} K_{\nu} (x) =  \frac {\pi \lp I_{-\nu} (x) - I_\nu (x) \rp} {2 \sin (\pi \nu)}.
\end{align}
The theory of Bessel functions has been extensively studied since the early 19th century, and we refer the reader to Watson's beautiful book \cite{Watson} for an encyclopedic treatment. 

\vskip 5 pt

The \Voronoi summation formula for $\GL_n( \BZ) $ with $n \geqslant 3$ is formulated in the work of Miller and Schmid \cite{Miller-Schmid-2006, Miller-Schmid-2009} (see also \cite{Goldfeld-Li-1, Goldfeld-Li-2}),  in which Hankel transforms are the Archimedean ingredient that relates the weight functions on two sides of the identity.  The notion of {\it automorphic distributions} 
is used for their proof of this formula, and is also used to derive the analytic continuation and the functional equation of the $L$-function of a cuspidal $\GL_n(\BZ)$-automorphic representation of $\GL_n (\BR)$.
As the foundation of automorphic distributions, the harmonic analysis over $\BR$ is studied in \cite{Miller-Schmid-2006} from the viewpoint of gamma factors and signed Mellin transforms.
As explained in \cite{Miller-Schmid-2004-1}, the cases $n = 1, 2$ can also be incorporated into their framework.

More recently, using the global theory of $\GL_n \times \GL_1$-Rankin-Selberg $L$-functions, Inchino and Templier \cite{Ichino-Templier} extended Miller and Schmid's work and proved the \Voronoi summation formula for any irreducible cuspidal automorphic representation of $\GL_n$ over an arbitrary number field for $n \geq 2$. 
According to \cite{Ichino-Templier}, the associated Hankel transform over an Archimedean local field is obtained from the corresponding local functional equations for $\GL_n \times \GL_1$-Rankin-Selberg zeta integrals over the field. 

\vskip 5 pt

In order to motivate our study, let us give some detailed descriptions of Hankel transforms of any rank over the real numbers in \cite{Miller-Schmid-2006, Miller-Schmid-2009}.

Suppose that $(\ulambda, \udelta) = (\lambda_1, ..., \lambda_n, \delta_1, ..., \delta_n) \in \BC^n \times (\BZ/2 \BZ)^n$ is a certain parameter of a cuspidal $\GL_n(\BZ)$-automorphic representation of $\GL_n (\BR)$. Miller and Schmid  give  two expressions for the associated Hankel transform.

\vskip 5 pt 

The first expression of the Hankel transform  associated with $(\ulambda, \udelta)$ is based on  gamma factors and signed Mellin transforms as follows.

Let $\mathscr S (\BR )$ denote the space of Schwartz functions on $\BR $. For $\lambda \in \BC$, $j \in \BN = \{ 0, 1, 2, ... \} $ and $\eta \in \BZ/2\BZ= \{ 0, 1 \}$, let $\upsilon$ be a smooth function on $\BR^\times = \BR \smallsetminus \{ 0 \}$ such that $\sgn (x)^{\eta }  |x|^{ \lambda } \lp \log |x| \rp^{-j} \upsilon (x) \in \mathscr S (\BR)$.
For $\delta \in \BZ/ 2\BZ$, the \textit{signed Mellin transform $\EuScript M_{\delta} \upsilon$ with order $\delta$ of  $\upsilon$} is defined by
\begin{equation*} 
\EuScript M_{\delta} \upsilon (s) = \int_{\BR^\times} \upsilon (x) \sgn (x)^\delta |x|^{s } d^\times x.
\end{equation*}
Here $d^\times x = |x|\- d x$ is the standard multiplicative Haar measure on $\BR^\times$. The Mellin inversion formula is
\begin{equation*}
\upsilon (x) = \sum_{\delta \in \BZ/ 2\BZ } \frac { \sgn (x)^\delta} {4 \pi i} \int_{(\sigma)} \EuScript M_\delta \upsilon (s) |x|^{-s} d s,  \hskip 10 pt \sigma >  \Re \lambda,
\end{equation*}
where the contour of integration $ (\sigma)$ is the vertical line   from $\sigma - i \infty$ to $\sigma + i \infty$.

Let $\mathscr S (\BR ^\times)$ denote the space of smooth functions on $\BR^\times $ whose derivatives are rapidly decreasing at both zero and infinity.
We associate with $\upsilon \in \mathscr S (\BR ^\times)$ 
a function $\Upsilon$ on $\BR ^\times$ satisfying the following two identities
\begin{equation*}
\EuScript M_\delta \Upsilon (s ) =  \left( \prod_{l      = 1}^n G_{\delta_{l     } + \delta} (s - \lambda_l     ) \right) \EuScript M_\delta \upsilon ( 1 - s),  \hskip 10 pt \delta \in \BZ/2 \BZ,
\end{equation*}
where $G_{\delta} (s)$ denotes the gamma factor
\begin{equation*} 
G_\delta (s) = i^\delta \pi^{ \frac 1 2 - s} \frac {\Gamma \lp \frac 1 2 ( {s + \delta} ) \rp} {\Gamma \lp \frac 1 2 ( {1 - s + \delta} ) \rp} = 
\left\{ \begin{split}
& 2(2 \pi)^{-s} \Gamma (s) \cos \left(\frac {\pi s} 2 \right), \hskip 10 pt \text { if } \delta = 0,\\
& 2 i (2 \pi)^{-s} \Gamma (s) \sin  \left(\frac {\pi s} 2 \right), \hskip 9 pt \text { if } \delta = 1.
\end{split} \right.
\end{equation*}
$\Upsilon$ is called the \textit{Hankel transform of index $(\ulambda, \udelta)$ of $\upsilon$}.\footnote{Note that if $\upsilon$ is the $f$ in \cite{Miller-Schmid-2009} then $|x| \Upsilon((-)^n x)$ is their $F(x)$.}
According to \cite[\S 6]{Miller-Schmid-2006}, $\Upsilon$ is smooth on $\BR^\times$ and decays rapidly at infinity, along with all its derivatives. At the origin,  $\Upsilon$ has singularities of some very particular type. Indeed,  $\Upsilon (x) \in \sum_{l      = 1}^n \sgn (x)^{\delta_l     } |x|^{\lambda_l     }  \mathscr S (\BR)$ when no two components of $\ulambda$ differ by an integer, and in the nongeneric case powers of $\log |x|$ will be included. 

By the Mellin inversion,
\begin{equation}\label{1eq: Psi defined by Gamma functions}
\Upsilon (x) 
= \sum_{\delta \in \BZ/ 2\BZ } \frac { \sgn (x)^\delta} {4 \pi i} \int_{(\sigma)} \left( \prod_{l      = 1}^n G_{\delta_{l     } + \delta} (s - \lambda_l      ) \right) \EuScript M_\delta \upsilon (1 - s) |x|^{ - s} d s,
\end{equation}
for $\sigma > \max  \left\{ \Re  \lambda_l      \right \} $.\\

An alternative description of $\Upsilon$ is given by the \textit{Fourier type transform}, in symbolic notion, as follows
\begin{equation}\label{1eq: generalized Fourier transform}
\begin{split}
\Upsilon (x) =  \frac 1 { |x|} \int_{\BR^{\times \, n}} \upsilon \left( \frac {x_1 ... x_n} { x} \right) \left( \prod_{l      = 1}^{n} \left( \sgn (x_l     )^{\delta_l     } |x_l     |^{- \lambda_l     } e ( x_l     ) \right) \right) d x_n d x_{n-1} ... d x_1.
\end{split}
\end{equation}
The integral  converges when performed as {\it iterated integral} in the order $d x_n d x_{n-1} ... d x_1$, starting from $x_n$, then $x_{n-1}$, ..., and finally $x_1$, provided $\Re  \lambda_1 >  ... > \Re  \lambda_{n-1} > \Re  \lambda_n$, and it has meaning for arbitrary values of $\ulambda \in \BC^n$ by analytic continuation. 

\vskip 5 pt

When applying the \Voronoi summation formula for $\GL_3(\BZ)$ (currently, there is no application for rank $n \geqslant 4$), one is normally reduced to the asymptotic behaviour of Hankel transforms. According to \cite{Miller-Schmid-2009}, though \textit{less suggestive than} (\ref{1eq: generalized Fourier transform}), the expression (\ref{1eq: Psi defined by Gamma functions}) of Hankel transforms is \textit{more useful in applications}. 
Indeed, {all} the applications of the \Voronoi summation formula in analytic number theory so far are based on (\ref{1eq: Psi defined by Gamma functions}) with exclusive use of Stirling's asymptotic formula of the Gamma function (see \cite{Ivic,Miller-Wilton,XLi,Blomer} and Appendix \ref{appendix: asymptotic}). Applications include the estimates for additively twisted sums for $\GL_3$ (\cite{Miller-Wilton}) and for  shifted convolution sums for $\GL_3 \times \GL_2$ (\cite{Munshi-Convolution}), the subconvexity and non-vanishing of central values of automorphic $L$-functions for $\GL_3$ and $\GL_3 \times \GL_2$ (\cite{XLi,XLi2011,Blomer}).  On the other hand, there is no occurrence of the Fourier type integral transform \eqref{1eq: generalized Fourier transform} in the literature other than Miller and Schmid's foundational work. It will however be shown in this article that the expression \eqref{1eq: generalized Fourier transform} should {\it not}  be  of only aesthetic interest.

\vskip 5 pt

\section*{Outline of Article}

The main purpose of this article is to develop the analytic theory of \textit{fundamental Bessel functions} and \textit{Bessel kernels}\footnote{The Bessel functions and Bessel kernels studied here are called \textit{fundamental} in order to be distinguished from the Bessel functions for $\GL_n (\BF)$ arising in the Kuznetsov trace formula, with $\BF = \BR$ or $\BC$. 
	The latter should be regarded as the foundation of harmonic analysis on $\GL_n (\BF)$. Some evidences show that fundamental Bessel functions are actually the building blocks of the Bessel functions for $\GL_n (\BR)$. See \cite[\S 3.2]{Qi-Thesis} for the case of $\GL_3 (\BF)$.
	
	Throughout this article, we shall drop the adjective {\it fundamental} for brevity. Moreover, the usual Bessel functions will be referred to as classical Bessel functions.}, with particular focus on their asymptotic formulae. Other subjects of this paper include analytic investigations of Hankel transforms and representation theoretic interpretations of Bessel kernels.

\vskip 5 pt

Enlightened by the work of Inchino and Templier \cite{Ichino-Templier},  we shall first study Hankel transforms over both Archimedean local fields $\BR$ and $\BC$. With some refinements, our treatment of Hankel transforms over $\BR$ is essentially the same as that in \cite{Miller-Schmid-2004}. The author however has resisted the temptation of establishing here the distribution theory on Hankel transforms over $\BC$ from the perspective of  \cite{Miller-Schmid-2004}, mainly because it would take us too far afield from the analytic theory of Bessel functions. It is very likely that this will lead to the theory of automorphic distributions on $\GL_n (\BC)$ with respect to congruence subgroups, as well as the \Voronoi summation formula for cuspidal automorphic representations of $\GL_n (\BC)$. Such a formula  in this generality is already covered by \cite{Ichino-Templier}, but this approach would still be of its own interest.

In Chapter \ref{chap: Hankel transforms}, it will be shown that there are two expressions for Hankel transforms over either $\BR$ or $\BC$ and they admit integral kernels in the sense that
\begin{equation*} 
\Upsilon (x) = \int_{\BF^\times} \upsilon (y) J (xy ) d y,
\end{equation*} 
with $\BF =\BR$ or $\BC$. Thus the asymptotics of Hankel transforms may be deduced from those of their Bessel kernels. Furthermore, we shall be interested in a type of Hankel transforms over $\BR_+$ which is more fundamental. We now give a brief introduction of such Hankel transforms and their integral kernels.

\vskip 5 pt

Given $(\ulambda, \usigma) \in \BC^n \times \{+, -\}^n$, with every Schwartz function $\upsilon$ on $\BR_+$, which decays rapidly at both zero and infinity, we   associate a function $\Upsilon$ on $\BR_+$ satisfying the identity
	\begin{equation*}
	\EM \Upsilon (s ) = \lp \prod_{ l = 1}^n G  (s - \lambda_l, \varsigma_l) \rp \EM \upsilon ( 1 - s),
	\end{equation*}
where $\EM$ is the usual Mellin transform over $\BR_+$ and $G  (s, \pm)$ is the gamma factor defined by
\begin{equation*}
G  (s, \pm) = \Gamma (s) e\lp \pm \frac s 4\rp.
\end{equation*}
$\Upsilon$ is called the \textit{Hankel transform of index $(\ulambda, \usigma)$ of $\upsilon$}. The Mellin inversion yields the first expression of this Hankel transform,
\begin{equation}\label{1eq: Psi defined by Gamma functions, R+}
\Upsilon (x) 
=  \frac {1} {2 \pi i} \int_{(\sigma)} \left( \prod_{l      = 1}^n G  (s - \lambda_l      , \varsigma_l ) \right) \EuScript M \upsilon (1 - s) x^{ - s} d s, \hskip 10 pt \sigma > \max  \left\{ \Re  \lambda_l      \right \}.
\end{equation}
On the other hand, under the condition $\Re  \lambda_1 >  ... > \Re  \lambda_{n-1} > \Re  \lambda_n$, the second expression is given by
	\begin{equation} \label{0eq: Fourier type integral, R+}
	 \Upsilon (x) = \frac 1 {x} \int_{\BR_+^{  n}} \upsilon \lp \frac {x_1 ... x_n} x \rp \lp \prod_{l      = 1}^{n }    x_{l     }^{ - \lambda_{l     }} e^{\varsigma_l i x_l}  \rp dx_n ... d x_1.
	\end{equation}
We stress that these two expressions, \eqref{1eq: Psi defined by Gamma functions, R+} and \eqref{0eq: Fourier type integral, R+}, of Hankel transforms over $\BR_+$ are intimately close to those of Hankel transforms over $\BR$, namely (\ref{1eq: Psi defined by Gamma functions}) and (\ref{1eq: generalized Fourier transform}), respectively.
We also have the kernel formula
\begin{equation*}
\Upsilon (x) = \int_{\BR_+} \upsilon (y) J (xy; \usigma, \ulambda  ) d y,
\end{equation*}
in which the kernel $J (x; \usigma, \ulambda ) $ is called the \textit{Bessel function of index $(\usigma, \ulambda)$}.

\vskip 5 pt


The first expression (\ref{1eq: Psi defined by Gamma functions, R+}) of the Hankel transform of index $(\ulambda, \usigma)$ yields a formula of the Bessel function $J (x; \usigma, \ulambda ) $ as a certain Mellin-Barnes type integral  involving the Gamma function (see \eqref{3eq: definition of J (x; sigma)}).  Moreover, the analytic continuation of $J ( x; \usigma, \ulambda  )$ from $ \BR_+ $ onto the Riemann surface $\BU$, the universal cover of $\BC \smallsetminus \{ 0 \}$, can be realized as a Barnes type integral via modifying the integral contour of a  Mellin-Barnes type integral (see \eqref{3eq: Barnes contour integral 1}). 

As alluded to above, the asymptotics of $J (x; \usigma, \ulambda ) $ may be obtained from applying Stirling's asymptotic formula  to the Mellin-Barnes type integral  (see Appendix \ref{appendix: asymptotic}). This is the only known method   in the literature. There are however two limitations of this method. Firstly, it is {\it not} applicable to a Barnes type integral and therefore the domain of the asymptotic expansion can not be extended from $\BR_+$.  Secondly, it is {\it only}  applicable when $\ulambda$ is regarded  as fixed constant and hence the dependence on $\ulambda$ of the error term can not be clarified.

The second expression \eqref{0eq: Fourier type integral, R+}  of the Hankel transform of index $(\ulambda, \usigma)$ leads to a representation of the Bessel function $J ( x; \usigma, \ulambda  )$ as  formal integral
\begin{equation}\label{1eq: formal integral J nu (x; sigma)}
J_{\unu} (x; \usigma) = \int_{\BR_+^{n-1}} \left(\prod_{l      = 1}^{n-1} t_l     ^{ \nu_l      - 1} \right) e^{i x \left(\varsigma_n t_1 ... t_{n-1} + \sum_{l      = 1}^{n-1} \varsigma_l      t_l     \- \right)} d t_{n-1} ... dt_1,
\end{equation}
with $\nu_l = \lambda_l - \lambda_{n}$, provided that the index $\ulambda$ satisfies the condition $\sum_{l=1}^n \lambda_l = 0$. Accordingly, we define the complex hyperplane  $\BL^{n-1} = \left \{ \ulambda \in \BC^n :  \sum_{l      = 1}^n \lambda_{l     } = 0 \right\} $.

The novelty of this article is an approach to Bessel functions and Bessel kernels starting from the second expression (\ref{0eq: Fourier type integral, R+}) of  Hankel transforms over $\BR_+$. This approach is more accessible from the perspective of harmonic analysis,  at least in symbolic notions. Once we can make sense of the symbolic notions in (\ref{1eq: formal integral J nu (x; sigma)}), some well-developed methods from analysis and differential equations may be exploited so that we are able to understand Bessel functions and Bessel kernels to a much greater extent. 


\vskip 5 pt

Chapter \ref{chap: analytic theory} is devoted to the investigations of the formal integrals $J_{\unu} (x; \usigma)$ and the Bessel differential equations satisfied by $J (x; \usigma, \ulambda ) $.

\vskip 5 pt

First of all, we must justify   the formal integral  $J_{\unu} (x; \usigma)$ as a representation of  $J (x; \usigma, \ulambda ) $. For this, we partition the formal integral $J_{\unu} (x; \usigma)$ according to some partition of unity on $\BR^{n-1}_+$, and then 
repeatedly apply \textit{two} kinds of partial integration operators on each resulting integral. In this way, $J_{\unu} (x; \usigma)$ can be transformed into a finite sum of \textit{absolutely convergent} multiple integrals. This sum of integrals is regarded as the rigorous definition of  $J_{\unu} (x; \usigma)$. 
Furthermore, it is shown that
\begin{equation*}
J (x; \usigma, \ulambda) = J_{\unu} (x; \usigma),
\end{equation*}
where $ J_{\unu} (x; \usigma)$ on the right is now rigorously understood.

Either adapting  techniques or   applying  results from {the method of stationary phase} due to H\"ormander, we then study the asymptotic behaviour of each oscillatory multiple integral in the rigorous definition of $J_{\unu} (x; \usigma)$, and hence  $J_{\unu} (x; \usigma)$ itself, for large argument. Even in the classical case $n=2$, our method is entirely new, as the coefficients in the asymptotic expansions are formulated in a way that is quite different from what is known in the literature (see \S \ref{sec: remark on the coefficients in the asymptotics}).

When all the components of $\usigma$ are identically $\pm$, we denote $J (x; \usigma, \ulambda)$, respectively  $J_{\unu} (x; \usigma)$, by $H^\pm (x; \ulambda)$, respectively $H_{\unu}^\pm (x )$, and call it an \textit{$H$-Bessel function}\footnote{If a statement or a formula includes $\pm$ or $\mp$, then it should be read with $\pm$ and $\mp$ simultaneously replaced by  either  $+$ and $-$ or $-$ and $+$.}. 
This pair of $H$-Bessel functions will be of paramount significance in our treatment.
It is shown that $H^{\pm} (x; \ulambda) = H_{\unu}^\pm (x )$ admits an analytic continuation from $\BR_+$ onto the half-plane $\BH^{\pm} = \left\{ z \in \BC \smallsetminus \{0\} : 0 \leq \pm \arg z \leq \pi \right\}$. Furthermore, we have, roughly speaking, the following asymptotic on $\BH^{\pm}$,
\begin{equation}\label{1eq: asymptotic expansion 1}
\begin{split}
H^{\pm}  (z; \ulambda) \sim n^{- \frac 1 2}   (\pm 2 \pi i)^{ \frac { n-1} 2} e^{ \pm i n z} z^{ - \frac { n-1} 2} ,  \hskip 10 pt \text{ as } |z| \ra \infty.
\end{split}
\end{equation}

All the other Bessel functions are called \textit{$K$-Bessel functions} and are shown to be Schwartz functions at infinity. 

\vskip 5 pt

In the second part of Chapter  \ref{chap: analytic theory}, we discover and study the ordinary differential equation, namely \textit{Bessel equation}, satisfied by the Bessel function $J (x; \usigma, \ulambda)$.

Given $\ulambda \in \BL^{n-1}$, there are exactly two Bessel equations
\begin{equation}\label{1eq: Bessel equations}
\sum_{j = 1}^{n} V_{n, j} (\ulambda) x^{j} w^{(j)} + \left( V_{n, 0} (\ulambda) - \varsigma (in)^n  x^n \right) w  = 0, \hskip 10 pt \varsigma \in \{+, - \},
\end{equation}
where $V_{n, j} (\ulambda)$ is some explicitly given symmetric polynomial in $\ulambda$ of degree $n-j$. We call $\varsigma$ the \textit{sign} of the Bessel equation (\ref{1eq: Bessel equations}). Then $J (x; \usigma, \ulambda)$ satisfies the Bessel equation of sign $S_n(\usigma) = \prod_{l      = 1}^n \varsigma_{l     }$.

Replacing $x$ by $z$ to stand for complex variable in the Bessel equation (\ref{1eq: Bessel equations}),   the domain is extended from $\BR_+$ to $\BU$. According to the theory of linear ordinary differential equations with analytic coefficients, $J (x; \usigma, \ulambda)$ admits an analytic continuation onto $\BU$.

Firstly, since zero is a regular singularity, the Frobenius method may be exploited to find a solution $J_{l     } (z; \varsigma, \ulambda)$ of (\ref{1eq: Bessel equations}), for each $l      = 1, ..., n$, defined by the following series,
\begin{equation*}
J_{l     } (z; \varsigma, \ulambda) = \sum_{m=0}^\infty \frac { (\varsigma i^n)^m  z^{ n (- \lambda_{l      } + m)} } { \prod_{k = 1}^n \Gamma \lp  { \lambda_{ k } - \lambda_{l     }}  + m + 1 \rp}.
\end{equation*}
$J_{l     } (z; \varsigma, \ulambda)$ will be called  {\it Bessel functions of the first kind}, since
they generalize the Bessel functions $J_{\nu} (z)$ and the modified Bessel functions $I_{\nu} (z)$ of the first kind.

It turns out that each $J (z; \usigma, \ulambda)$ may be expressed in terms of $J_{l     } \left(z; S_n(\usigma), \ulambda\right)$. This leads to the following connection formula
\begin{equation}\label{1eq: connection formula 1}
J(z ; \usigma, \ulambda) = e \lp \pm \frac {\sum_{l      \in L_ \mp (\usigma)} \lambda_{l      }} 2 \rp H^\pm \Big(e^{\pm \pi i \frac { n_{\mp} (\usigma)} n} z; \ulambda \Big),
\end{equation}
where $L_\pm (\usigma) = \{ l      : \varsigma_l      = \pm \}$ and $n_\pm (\usigma) = \left| L_\pm (\usigma) \right|$. Thus the Bessel function $J(z ; \usigma, \ulambda)$ is determined up to a constant by the pair of integers $(n_+ (\usigma), n_- (\usigma) )$, called the \textit{signature} of 
$J(z ; \usigma, \ulambda)$. 

Secondly, infinity is an irregular singularity of rank one. 
Let $\xi$ be an $n$-th root of $\varsigma 1$.   Then there exists a {\it unique} solution $J (z; \ulambda; \xi)$ of the Bessel equation of sign $\varsigma$ satisfying the asymptotic 
\begin{equation}\label{1eq: asymptotic expansion 2}
J (z; \ulambda; \xi) \sim e^{i n \xi z} z^{- \frac { n-1} 2},  \hskip 10 pt \text{ as } |z| \ra \infty,
\end{equation}
 on the sector
\begin{equation*}
\BS_{\xi } = \left\{ z \in \BU : \left| \arg z - \arg ( i \xoverline \xi) \right|  < \frac {\pi} n \right \},
\end{equation*}
or any of its open subsector. Furthermore, the asymptotic in \eqref{1eq: asymptotic expansion 2} may be extended onto a wider sector,
\begin{equation*}
\BS'_{\xi } = \left\{ z \in \BU : \left| \arg z - \arg ( i \xoverline \xi) \right|  < \pi + \frac {\pi} n  \right \}.
\end{equation*}
We remark  that it has been successful in attaining an optimal error estimate with respect to the index $\ulambda$ in the asymptotic formula. 

For a $2n$-th root of unity $\xi$, $J( z; \ulambda; \xi)$ is called a \textit{Bessel function of the second kind}. We have the following formula that relates all the the Bessel functions of the second kind to either $J (z; \ulambda;   1)$ or $J (z; \ulambda; - 1)$  upon rotating the argument by a $2n$-th root of unity,
\begin{equation}\label{1eq: connection formula 2}
J( z; \ulambda; \xi) = (\pm \xi)^{ \frac {n-1} 2} J(\pm \xi z; \ulambda; \pm 1).
\end{equation}



\vskip 5 pt

The third part of Chapter \ref{chap: analytic theory} is on the connection formulae among Bessel functions $J (z; \usigma, \ulambda)$, $J_{l     } (z; \varsigma, \ulambda) $ and $ J (z; \ulambda; \xi) $.
The most fundamental identity is
\begin{equation}\label{1eq: identity H}
H^{\pm} (z; \ulambda) = n^{- \frac 1 2} (\pm 2 \pi i)^{ \frac { n-1} 2} J (z; \ulambda; \pm 1).
\end{equation}
The theory of formal integrals and the asymptotic theory of Bessel equations are both indispensable in establishing this idenity. Its proof however is simply comparing the asymptotic formulae of $H^{\pm} (z; \ulambda)$ and $J (z; \ulambda; \pm 1)$ in (\ref{1eq: asymptotic expansion 1}) and \eqref{1eq: asymptotic expansion 2} and using the inclusion  $\BS_{\pm 1}  \subset \BH^{\pm} $. We stress that the identity \eqref{1eq: identity H} can {not} be proven by the existing  method for the asymptotic of $ H^{\pm} (x; \ulambda)  $ in the literature. The reason is that Stirling's asymptotic formula can only yield an asymptotic formula of $H^{\pm} (z; \ulambda)$ on $\BR_+$, but $\BR_+ $ does not intersect $\BS_{\pm 1}$! See Remark \ref{rem: Stirling not working}, \ref{rem: appendix only for R+} for more details.  With \eqref{1eq: identity H},
it follows from (\ref{1eq: connection formula 1}) and (\ref{1eq: connection formula 2}) that 
\begin{equation*}
\begin{split}
J (z; \usigma, \ulambda) = \frac { (\mp 2\pi i)^{\frac { n-1} 2} } {\sqrt n} e \lp \pm \frac {(n-1) n_\pm(\usigma)} {4 n} \mp \frac {\sum_{l      \in L_\pm (\usigma)} \lambda_{l      }} 2 \rp J \Big(z; \ulambda; \mp e^{\mp \pi i \frac { n_\pm(\usigma) } n} \Big).
\end{split}
\end{equation*}
This actually implies the exponential decay of $K$-Bessel functions on $\BR_+$. 
Furthermore, the identity \eqref{1eq: identity H} also yields connection formulae between the two kinds of Bessel functions,  in terms of a certain {\it Vandermonde matrix} and its inverse, solving the connection formula problem for Bessel equations (see Corollary \ref{cor: J(z; lambda; xi) and the J-Bessel functions} and \ref{8cor: inverse connection}). Note that \eqref{2eq: connection formulae} and \eqref{1eq: connection formula K} are   rank-two examples of such connection formulae.

\vskip 5 pt

In Chapter \ref{chap: Bessel Kernels}, we return to the study of Bessel kernels over   real and complex numbers using results in Chapter \ref{chap: analytic theory} for Bessel functions. 

Since a real Bessel kernel $J_{(\umu, \udelta)} (\pm x)$, with $\umu \in \BC^n$ and $\udelta \in (\BZT)^n$, is a signed {\it finite} sum of $J \big(2 \pi x^{\frac 1 n}; \usigma, \umu \big)$ (see \eqref{3eq: Bessel kernel, R, connection}), it is now very well understood  from the investigations of  $J  (x; \usigma, \ulambda )$ in Chapter \ref{chap: analytic theory}.

For a complex Bessel kernel $J_{(\umu, \um)} (z)$, with $\umu \in \BC^n$ and $\um \in \BZ^n$, some extra work is required. We first prove two connection formulae between  $J_{(\umu, \um)} (z)$ and the two kinds of Bessel functions $J_l       (  z ; +, \umu \pm \tfrac 1 2 \um  )$ and $J  ( z ; \umu \pm \tfrac 1 2 \um; \xi )$, with $\xi^{\, n} = 1$, arising from the Bessel equation of {positive} sign and order $n$. The connection formulae between the two  kinds of Bessel functions play a crucial role in the proof. Note that in the simplest rank-one case both connection formulae reduce to the identity
$$ e(z + \overline z) = e(z) e(\overline z). $$
Using the asymptotic formula \eqref{1eq: asymptotic expansion 2} for
$J (z; \ulambda; \xi)$ on $\BS'_{\xi } $, we obtain the asymptotic
\begin{equation}\label{0eq: complex asymptotic}
\begin{split}
J _{(\umu, \um)} \left(z^n \right)  \sim \sum_{\xi^{\, n} = 1 } \frac { e \big( n \big( \xi z +   \overline {\xi z } \big) \big) }   { n |z|^{n-1} [\xi z]^{|\um|} } , \hskip 10 pt \text{ as } |z| \ra \infty,
\end{split}
\end{equation}
where $|\um| = \sum_{l=1}^n m_l$ and $[z] = z/|z|$.

\section*{Applications in Analytic Number Theory}

As mentioned earlier, along with the asymptotic formula for Bessel kernels for  $\GL_2 (\BR)$ or $\GL_3 (\BR)$, obtained from the  Stirling asymptotic formula, the \Voronoi summation formula for  $\GL_2  $ or $\GL_3  $ over $\BQ$ has already had many applications in analytic number theory.  

The very first application of the $\GL_3$ \Voronoi summation formula is Miller's bound for the additive twists for $\GL_3 $-automorphic   forms over $\BQ$ \cite{Miller-Wilton}. In a recent paper \cite{Qi-Wilton}, the author has generalized Miller's bound for $\GL_3$ as well as the classical bound of Wilton for $\GL_2$ to an arbitrary number field.



The author certainly hopes that the present work will have more applications in analytic number theory  in the future.

\section*{Formulae for Bessel Functions and Representation Theory}

In the literature of representation theory, the special functions arising in the Kuznetsov trace formula or the  relative trace formula of Jacquet are also called Bessel functions.

In \S \ref{sec: Bessel, GL2(F)}  we shall prove a kernel formula for an infinite dimensional irreducible admissible representation of  $\GL_2 (\BF)$, with $\BF = \BR$ or $\BC$, showing that the action of the long Weyl element
on the Kirillov model is essentially a Hankel transform over $\BF$. It follows the consensus that for $\GL_2 (\BF)$ the Bessel functions occurring in the Kuznetsov trace formula should coincide with those in the \Voronoi summation formula.
This let us prove and generalize the Kuznetsov trace formula for $\PSL_2(\BZ[i])\backslash \PSL_2( \BC)$ in \cite{B-Mo}, in the same way that \cite{CPS} does for the Kuznetsov trace formula for $\PSL_2(\BZ )\backslash \PSL_2( \BR)$ in \cite{Kuznetsov}. See \cite{Qi-Kuz}.

In the work of Baruch and Mao \cite{BaruchMao-Real}, two formulae on the Fourier transform of classical Bessel functions due to Hardy and Weber are interpreted as  the Shimura-Waldspurger correspondence. Moreover, in the framework of the relative trace formula of Jacquet, the theory in \cite{BaruchMao-Real}, along with the corresponding non-Archimedean theory in \cite{BaruchMao-NA}, is used    to produce a Waldspurger-type formula over a {\it totally real} number field \cite{BaruchMao-Global}. With the motivation of generalizing their work  to an arbitrary number field, the author recently proved an analogous formula for   Bessel functions for $\PGL_2 (\BC)$ in \cite{Qi-Sph, Qi-II-G}.

Another important formula for Bessel functions attached to irreducible unitary {\it tempered} representations of $\SL_2 (\BF)$ is the Bessel-Plancherel formula, which is an analogue of Harish-Chandra's Plancherel formula. The formula for $\SL_2 (\BR)$ is a combination of the inversion formulae of Kontorovich, Lebedev and Kuznetsov, and that for $\SL_2 (\BC)$ is discovered in \cite{B-Mo, B-Mo2}. The formula for $\SL_2 (\BR)$ is also interpreted as the Whittaker-Plancherel formula in \cite{BaruchMao-Whittaker}.

In \cite[\S 3.2]{Qi-Thesis}, the author found a formula of Bessel functions for $\GL_3(\BF)$, which are two-variable, in terms of fundamental Bessel kernels. It is formally derived from Rankin-Selberg $\GL_3 \times \GL_2$ local functional equations and the $\GL_2$ Bessel-Plancherel formula. 
To be precise, the fundamental Bessel kernels occurring in the formula are for $\GL_3 \times \GL_2$ and $\GL_2 $, in which the involved representations of $\GL_2$ are all tempered. This gives the first instance of Bessel functions for groups of higher rank. 

\section*{Some Open Problems}

\subsection*{Connections to Other Generalizations of Bessel Functions}
\ 
\vskip 5 pt

Recently, the author came across a paper of Buttcane \cite{Buttcane-Kuz}. He discovered the two-variable Bessel functions in the Kuznetsov trace formula for $\GL_3(\BR)$ as    power series expansions which generalize the  classical Bessel functions of the first kind. It would be interesting  to verify the connection of his Bessel functions for $\GL_3 (\BR)$ to our fundamental Bessel kernels for    $\GL_3 (\BR) \times \GL_2 (\BR)$ and $\GL_2  (\BR)$ via the formula in \cite[\S 3.2]{Qi-Thesis}.

There are two more types of generalized Bessel functions, which are communicated  to  the author by Roman Holowinsky and the referee. 
The first is a class of matrix-valued analogues of classical Bessel and Whittaker functions. See for example \cite[\S 1.2.2]{Terras-2} and the refereneces therein. 
The second generalization was introduced by Everitt and Markett \cite{EW-Bessel}.  
Their Bessel functions satisfy certain higher-order differential equations of even order. It would be interesting to find the connection between these and the work in this paper. 

\subsection*{Generalizations of Other Integral Representations of Bessel Functions}
\ 
\vskip 5 pt

There are various integral representations of classical Bessel functions due to Bessel, Poisson, Hankel, Mehler, Sonine, Schl\"afli, Basset, Barnes and many others (see for example \cite{Watson}). These integral representations are elegant and useful. For example, the classical derivation  after Hankel of the asymptotic formula for Hankel functions makes use of his integral representation (\cite[7.2]{Watson}). 

The question is, can these integral representations   be generalized for our Bessel functions? Indeed, our Mellin-Barnes or Barnes type integrals and formal integrals are generalizations of the Barnes integrals and   variants of the Mehler-Sonine integrals (\cite[6.21 (10, 11), 6.22 (13), 6.5, 6.51]{Watson}), respectively. How about   other integral representations? In particular, is it admissible to  generalize   Hankel's integral representation and apply its generalization to the asymptotic problem along the classical line? \\

\subsection*{Further Study on the Bessel Equations}

The simplest examples of our Bessel equations of order three are
\begin{align}\label{0eq: Bessel equation, n=3}
x^3 \frac {d^3 w} {d x^3} + 3 x^2 \frac {d^2 w} {d x^2} + \lp 1 - 9 \nu^2 \rp x \frac {d  w} {d x } \mp 27 i x^3 w = 0,
\end{align}
which have index $ (  \nu, 0, - \nu) $. We would be interested in the cases $\nu =  t$ and $\nu = i t$,  $t >0 $. These cases are of interest in number theory because  $\lp k-1, 0, - k +1 \rp$ and $( 2it, 0, - 2it)$ are the parameters of the symmetric square lifts of a holomorphic cusp form of weight $k$ and a Maa\ss \hskip 3pt   form of eigenvalue $\frac 1 4 + t^2$ for $\SL_2 (\BZ)$  respectively. 

While there are many problems in the theory of special functions and differential equations, we propose here the asymptotics problem of   the Bessel functions for \eqref{0eq: Bessel equation, n=3} when $ |\nu| =t$  is large. For classical Bessel functions, some basic results on this problem may be found in \cite[Chapter 8]{Watson}. Further, by methods in differential equations, the work of Olver \cite{Olver-Bessel,Olver-1} also contributes some deep results, including certain {\it uniform}  asymptotic expansions which involve exponential  or Airy function for the Bessel or modified Bessel functions; see also \cite{Olver}. It would certainly be interesting to see what special functions will replace   the exponential and Airy function in our asymptotic problem for \eqref{0eq: Bessel equation, n=3}. \\

\begin{acknowledgement}
	The material in this article forms the first two chapters of the author's Ph.D. thesis \cite{Qi-Thesis}.
	The author is grateful to his advisor, Roman Holowinsky, who brought him to the area of analytic number theory, gave him much enlightenment, guidance and encouragement. The author would like to thank James W. Cogdell, Ovidiu Costin, Stephen D. Miller, Zhilin Ye, Pengyu Yang and Cheng Zheng for valuable comments and helpful discussions. The author would like to acknowledge the inspiration from the series of papers of Stephen D. Miller and Wilfried Schmid on the \Voronoi summation formula, without which this paper would never exist. The author is also indebted to the  referee for several suggestions which helped improving the paper.
\end{acknowledgement}

\aufm{Zhi Qi}

\addtocontents{toc}{\protect\setcounter{tocdepth}{1}}


%

\chapter*{Notation}

\addtocontents{toc}{\protect\setcounter{tocdepth}{0}}

\begin{itemize}
	\item[-] Denote $\BN=\{0,1,2...\}$ and $\BN_+ = \{1, 2, 3, ...\}$. 
	\item[-] The group $\BZ/2\BZ$ is usually identified with the two-element set $\{0, 1 \} $. For $m \in \BZ$ define $\delta (m) = m (\mod 2)$.
	\item[-] Denote $\BR_+ = (0, \infty)$, $\overline \BR_+ =[0, \infty)$, $\BR^\times = \BR \smallsetminus \{ 0 \}$ and $\BC^\times = \BC \smallsetminus \{ 0 \}$.
	\item[-] Denote by $\BU \cong \BR _+ \times \BR$ the universal cover of $\BC \smallsetminus \{ 0 \}$. Each element $z \in \BU$ is denoted by $z = x e^{i \omega}  = e^{\log x + i \omega}$, with $(x, \omega) \in \BR _+ \times \BR$.
	\item[-] Define $z^\lambda = e^{\lambda \log z }$ and  $\overline z = e^{- \log z}$ for $z \in \BU, \lambda \in \BC$. Let $1 = e^{ 0}$, $-1 = e^{\pi i}$ and $\pm i = e^{\pm \frac 1 2 \pi i}$.
	\item[-] For $m \in \BZ$ define $\delta (m) \in \BZT$ by $\delta(m) = m (\mod 2)$.
	\item[-] For $z \in \BC$ let $e(z) = e^{2 \pi i z}$.
	\item[-] For $s \in \BC$ and $\alpha \in \BN$, let
	$[s]_{\alpha} =  \textstyle \prod_{\kappa = 0}^{\alpha-1} (s - \alpha)$ and $ (s)_{\alpha} =  \textstyle \prod_{\kappa = 0}^{\alpha-1} (s + \alpha)$  if  $\alpha\geq 1$, and let $[s]_{0} = (s)_0 = 1$.

	\item[-] For $\ulambda = (\lambda_1, ..., \lambda_n) \in \BC^n$  denote $|\ulambda| = \sum_{l      = 1}^n \lambda_{l     }$ (this notation is also used for $(\BZT)^n = \{0, 1\}^n$ and $\BZ^n$ viewed as subsets of $\BC^n$).

	\item[-] Define the hyperplane $\BL^{n-1} = \left \{ \ulambda \in \BC^n : |\ulambda|  = \sum_{l      = 1}^n \lambda_{l     } = 0 \right\} $.
	\item[-] Denote by $\ue^n$ the $n$-tuple $(1, ..., 1)$. 
	
	\item[-] For $l = 1, 2, ..., n$, denote $\ue_l      = (\underbrace {0,..., 0, 1}_{l     }, 0 ..., 0)$ and $\ue^{l     } = (\underbrace {1,..., 1}_{l      }, 0 ..., 0)$.
	\item[-]  For $\um = (m_1, ..., m_n)\in \BZ^n$ define $ \|\um\| = (|m_1|, ..., |m_n|)$. 
	
	\item[-] For $\usigma = (\varsigma_1, ..., \varsigma_n) \in \{+, -\}^n$  denote $S_n (\usigma) = |\usigma| = \prod_{l      = 1}^n \varsigma_{l     }$.
	\item[-] For $\usigma   \in \{+, -\}^n$ define $L_\pm (\usigma) = \{ l      : \varsigma_l      = \pm \}$ and $n_\pm (\usigma) = \left| L_\pm (\usigma) \right|$.
	\item[-] For $\usigma  \in \{+, -\}^n$ and $\udelta \in (\BZT)^n$ denote $\usigma^{\udelta} = \prod_{l      = 1}^n \varsigma_{l     }^{\delta_l}$.
\end{itemize}

\addtocontents{toc}{\protect\setcounter{tocdepth}{1}}


\mainmatter
%
%
%

\chapter{Hankel Transforms and Bessel Kernels}\label{chap: Hankel transforms}

This chapter is devoted to the study of Hankel transforms over $\BR_+$, $\BR$ and $\BC$ and the associated Bessel functions and Bessel kernels.  

In \S \ref{sec: notation} some basic notions are introduced, such as gamma factors, Schwartz spaces, the Fourier transform and Mellin transforms. The three  kinds of Mellin transforms  $\EM$, $\EM_{\BR}$ and $\EM_{\BC}$ are first defined over the Schwartz spaces over $\BR_+ $, $\BRx $ and $\BCx $ respectively.

In \S \ref{sec: all Ssis}  the definitions of the Mellin transforms  $\EM$, $\EM_{\BR}$ and $\EM_{\BC}$ are extended onto certain function spaces  $\Ssis (\BR_+)$, $\Ssis (\BR^\times)$ and $\Ssis (\BCx)$ respectively. We shall precisely characterize their image spaces $\Msis$, $\Msis^\BR$ and  $\Msis^\BC$ under their corresponding Mellin transforms. In spite of their similar constructions, the  analysis of the Mellin transform $\EM_{\BC}$ is much more elaborate than that of   $\EM_{\BR}$ or  $\EM$.

In \S \ref{sec: Hankel transforms}, based on gamma factors and Mellin transforms, we shall construct Hankel transforms upon suitable subspaces of the $\Ssis$ function spaces just introduced in \S \ref{sec: all Ssis} and show that they admit integral  kernels, namely Bessel kernels, in the form of Mellin-Barnes type integrals.

In \S \ref{sec: classical cases}  we shall compute Bessel functions and Bessel kernels in the classical cases.

In \S \ref{sec: Fourier type transforms}  we  shall first introduce the Schmid-Miller transforms in companion with the Fourier transform and then use them to establish a Fourier type integral transform expression of a Hankel transform.

In \S \ref{sec: integral representations}  we shall introduce   certain integrals, derived from the Fourier type integral transforms given in \S \ref{sec: Fourier type transforms}, that represent Bessel functions and Bessel kernels. For Bessel functions and real Bessel kernels, these integrals are only formal and never absolutely converge. In the complex case, however, some range of index can be found where such integrals are absolutely convergent.

In Appendix \ref{sec: special example}, for an arbitrary rank, we shall give a prototypical example of Bessel functions which represents their asymptotic nature.

\section{Preliminaries}\label{sec: notation}


\subsection{Gamma Factors}
\ 
\vskip 5 pt 


\subsubsection{} We define  the gamma factor
\begin{equation}\label{1def: G pm (s)}
	G (s, \pm) = \Gamma (s) e\lp \pm \frac s 4\rp.
\end{equation}
For $( \usigma, \ulambda) = (\varsigma_1, ..., \varsigma_n, \lambda_1, ..., \lambda_n) \in  \{ +, - \}^n \times \BC^{n}$ let
\begin{equation}\label{1def: G(s; sigma; lambda)}
	G(s; \usigma, \ulambda) = \prod_{l      = 1}^n G  (s - \lambda_l     , \varsigma_l  ).
\end{equation}

\subsubsection{} For $\delta \in \BZ/ 2\BZ = \{0, 1\}$, we define the gamma factor
\begin{equation} \label{1def: G delta}
	G_\delta (s) = i^\delta \pi^{ \frac 1 2 - s} \frac {\Gamma \lp \frac 1 2 ({s + \delta} ) \rp} {\Gamma \lp \frac 1 2 ({1 - s + \delta} ) \rp} = 
	\left\{ \begin{split}
		& 2(2 \pi)^{-s} \Gamma (s) \cos \left(\frac {\pi s} 2 \right), \hskip 10pt \text { if } \delta = 0,\\
		& 2 i (2 \pi)^{-s} \Gamma (s) \sin  \left(\frac {\pi s} 2 \right), \hskip 9 pt \text { if } \delta = 1.
	\end{split} \right.
\end{equation}
Here, we have used the duplication formula and Euler's reflection formula for the Gamma function,
\begin{equation*}
	\Gamma (1-s) \Gamma (s) = \frac \pi {\sin (\pi s)}, \hskip 10 pt \Gamma (s) \Gamma \lp s + \frac 1 2 \rp = 2^{1-2s} \sqrt \pi \Gamma (2 s).
\end{equation*}
Let $(\umu, \udelta) = (\mu_1, ..., \mu_n, \delta_1, ..., \delta_n) \in \BC^{n} \times (\BZ/2 \BZ)^n$ and define 
\begin{equation}\label{1def: G (lambda, delta)}
	G_{(\umu, \udelta) } (s) = \prod_{l      = 1}^n G_{\delta_{l     } } (s - \mu_l     ).
\end{equation}
One   observes the following simple functional relation
\begin{equation}\label{1eq: G mu delta (1-s) = G - mu delta (s)}
	G_{(\umu, \udelta) } (1-s)  G_{(- \umu, \udelta) } (s) = (-1)^{|\udelta|}.
\end{equation}

\subsubsection{}

For $m \in \BZ$, we define the gamma factor
\begin{equation}\label{1def: G m (s)}
	G_m (s) = i^{|m| } (2\pi)^{1-2 s } \frac { \Gamma \lp s + \frac 1 2{|m|}   \rp} { \Gamma \lp 1 - s + \frac 1 2{|m|}   \rp }.
\end{equation}
Let $(\umu, \um) = (\mu_1, ..., \mu_n, m_1, ..., m_n) \in \BC^{n} \times \BZ^n$ and define 
\begin{equation}\label{1def: G (mu, m)}
	G_{(\umu, \um)} (s) = \prod_{l      = 1}^n G_{m_{l     } } (s - \mu_l     ).
\end{equation}
We have the functional relation
\begin{equation}\label{1eq: G mu m (1-s) = G - mu m (s)}
	G_{(\umu, \um) } (1-s)  G_{(- \umu, \um) } (s) = (-1)^{|\um|}.
\end{equation}

\subsubsection{Relations between the Three Types of Gamma Factors} \label{sec: Gamma factor, R and C}

We first observe that
\begin{equation*}
	G_{\delta} (s) = (2\pi)^{-s} \lp G  (s, +) + (-)^\delta G (s, -) \rp.
\end{equation*}
Hence
\begin{equation}\label{1eq: G (lambda, delta) = G (s; sigma, lambda)}
	G_{(\umu, \udelta)} (s) = \sum_{\usigma \in \{+, -\}^n } \usigma ^{\udelta} (2\pi)^{|\umu| - ns}   G (s;  \usigma, \umu), \hskip 10 pt  \usigma ^{\udelta} = \prod_{l     =1}^n \varsigma _{l     }^{\delta_l     },\,  |\umu| = \sum_{l     =1}^n \mu_l     .
\end{equation}

Euler's reflection formula and certain trigonometric identities yield
\begin{equation}\label{1f: G m (s) = G 1 G delta(m)}
	\begin{split}
		i G_m (s) & = i^{|m|+1} 2 (2 \pi)^{-2s } \Gamma \lp s + \frac {|m|} 2 \rp \Gamma \lp s - \frac {|m|} 2 \rp \sin \lp \pi \lp  s - \frac {|m|} 2 \rp \rp\\
		& = G_{\delta( m) + 1} \lp \hskip - 1 pt s - \frac {| m|} 2 \hskip - 1 pt \rp  G_{0} \lp \hskip - 1 pt s + \frac {|m |} 2 \hskip - 1 pt \rp = G_{\delta( m)} \lp \hskip - 1 pt s - \frac {| m|} 2 \hskip - 1 pt \rp  G_{1} \lp \hskip - 1 pt s + \frac {|m |} 2 \hskip - 1 pt \rp,
	\end{split}
\end{equation}
with  $\delta (m) = m (\mod 2)$. Consequently,  $G_{(\umu, \um)} (s)$  may be viewed as a certain $  G_{(\boldsymbol \eta, \udelta) } (s)$ of doubled rank. 

\begin{lem}\label{1lem: complex and real gamma factors}
	
	Suppose that $(\umu, \um) \in \BC^{n} \times \BZ^n$ and $(\boldsymbol \eta, \udelta) \in  \BC^{2n} \times (\BZ/2 \BZ)^{2n}$ are subjected to one of the following two sets of relations
	\begin{align} \label{1eq: relation between (mu, m) and (lambda, delta), 1}
		& \eta_{2 l      - 1} = \mu_l      + \frac {|m_l      |} 2, \   \eta_{2 l      } = \mu_l      - \frac {|m_l      |} 2,\ \delta_{2l      - 1} = \delta( m) + 1,\ \delta_{2l      } = 0; \\
		\label{1eq: relation between (mu, m) and (lambda, delta), 2}
		&  \eta_{2 l      - 1} = \mu_l      + \frac {|m_l      |} 2, \   \eta_{2 l      } = \mu_l      - \frac {|m_l      |} 2, \ \delta_{2l      - 1} =  \delta( m), \ \hskip 17 pt \delta_{2l      } = 1.
	\end{align}
	Then  $i^n G_{(\umu, \um)} (s) = G_{(\boldsymbol \eta, \udelta) } (s)$.
\end{lem}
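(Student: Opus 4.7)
The plan is simply to reduce everything to the factorization identity \eqref{1f: G m (s) = G 1 G delta(m)} already recorded in the paper and apply it to each factor of the product defining $G_{(\umu,\um)}(s)$.

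First I would start from the definition
\begin{equation*}
i^n G_{(\umu,\um)}(s) \;=\; \prod_{l=1}^n i\, G_{m_l}(s-\mu_l),
\end{equation*}
and substitute $s-\mu_l$ in place of $s$ in \eqref{1f: G m (s) = G 1 G delta(m)}. This yields, for each $l$, the two alternative factorizations
\begin{align*}
i\,G_{m_l}(s-\mu_l) &= G_{\delta(m_l)+1}\!\left(s-\mu_l-\tfrac{|m_l|}{2}\right)\,G_{0}\!\left(s-\mu_l+\tfrac{|m_l|}{2}\right)\\
&= G_{\delta(m_l)}\!\left(s-\mu_l-\tfrac{|m_l|}{2}\right)\,G_{1}\!\left(s-\mu_l+\tfrac{|m_l|}{2}\right).
\end{align*}

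Next I would match these factorizations with the two parametrizations of $(\boldsymbol\eta,\udelta)$. Under either \eqref{1eq: relation between (mu, m) and (lambda, delta), 1} or \eqref{1eq: relation between (mu, m) and (lambda, delta), 2} the shifts are the same, namely $s-\eta_{2l-1}=s-\mu_l-\tfrac12 |m_l|$ and $s-\eta_{2l}=s-\mu_l+\tfrac12 |m_l|$. The only difference is the assignment of the $\BZ/2\BZ$-indices: the first choice $(\delta_{2l-1},\delta_{2l})=(\delta(m_l)+1,0)$ matches the first line above, whereas $(\delta_{2l-1},\delta_{2l})=(\delta(m_l),1)$ matches the second. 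In either case one obtains
\begin{equation*}
i\,G_{m_l}(s-\mu_l) \;=\; G_{\delta_{2l-1}}(s-\eta_{2l-1})\, G_{\delta_{2l}}(s-\eta_{2l}).
\end{equation*}

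Finally I would take the product of these $n$ identities over $l=1,\dots,n$. The left-hand sides multiply to $i^n G_{(\umu,\um)}(s)$, while the right-hand sides reassemble, by definition \eqref{1def: G (lambda, delta)}, into $G_{(\boldsymbol\eta,\udelta)}(s)$, proving the lemma. There is no real obstacle: the only nontrivial ingredient is the duplication/reflection manipulation behind \eqref{1f: G m (s) = G 1 G delta(m)}, which is already in place, so the argument is purely a bookkeeping exercise of distributing the factor $i^n$ one $i$ at a time.
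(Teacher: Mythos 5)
Your proof is correct and follows exactly the route the paper intends: the lemma is stated immediately after the factorization identity \eqref{1f: G m (s) = G 1 G delta(m)}, presented as its direct consequence, and your argument merely distributes one factor of $i$ per $l$, applies the two alternative factorizations to each $G_{m_l}(s-\mu_l)$, and matches the resulting index shifts and parities against \eqref{1eq: relation between (mu, m) and (lambda, delta), 1} and \eqref{1eq: relation between (mu, m) and (lambda, delta), 2}. (Incidentally, where you correctly write $\delta(m_l)$ the paper's statement has the typo $\delta(m)$.)
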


\subsubsection{Stirling's Asymptotic Formula} Fix $s_0 \in \BC$, and let $|\arg s| < \pi - \epsilon$, $0 < \epsilon < \pi$. We have the following asymptotic as  $|s| \ra \infty$
\begin{equation*} 
	\log \Gamma (s_0 + s) \sim \left( s_0 + s - \frac 1 2 \right) \log s - s + \frac 1 2 \log (2 \pi).
\end{equation*}
If one writes  $s_0 = \rho_0 + i t_0$ and $s = \rho + i t$, $\rho \geq 0$, then the right hand side is equal to
\begin{equation*}
	\begin{split}
		& \left(\rho_0 + \rho - \frac 1 2 \right) \log \sqrt {t^2 + \rho^2} - (t_0 + t) \arctan \left( \frac t \rho \right) - \rho + \frac 1 2 \log (2 \pi) \\
		& + i (t_0 +  t) \log {\sqrt {t^2 + \rho^2}} - i t + i  \left(\rho_0 + \rho - \frac 1 2 \right) \arctan \left( \frac t \rho \right),
	\end{split}
\end{equation*}
and therefore
\begin{equation}\label{1eq: Stirling's formula}
	|\Gamma (s_0 + s)| \sim \sqrt {2 \pi} \lp t^2 + \rho^2 \rp^{\frac 1 2 \lp \rho_0 + \rho - \frac 1 2  \rp} e^{- (t_0 + t) \arctan \left(   t / \rho \right) - \rho}.
\end{equation}

\begin{defn}
	{\rm (1).} For a finite  closed interval $[a, b] \subset \BR$ define the closed vertical strip $\BS [a, b] = \{ s \in \BC : \Re s \in [a, b]  \}.$ The open vertical strip $\BS (a, b)$ for a finite open interval $(a, b)$ is similarly defined.
	
	{\rm (2).} For  $ \lambda \in \BC$ and $r > 0$, define $ \BB_{r} (\lambda) = \left\{ s \in \BC : |s - \lambda| < r \right \} $ to be the disc of  radius $r$ centered at $s = \lambda$.
\end{defn}

\begin{lem} \label{1lem: vertical bound}
	We have
	\begin{equation}\label{1eq: vertical bound, G (s; sigma, lambda)}
		\begin{split}
			G(s; \usigma, \ulambda) \lll_{\,\ulambda,\, a,\, b,\, r} & (|\Im s| + 1)^{n \lp \Re s - \frac 1 2 \rp - \Re |\ulambda|},
		\end{split}
	\end{equation}
	for all $s \in \BS [a, b] \smallsetminus \bigcup_{l      = 1}^n \bigcup_{\kappa \in \BN} \BB_r (\lambda_{l     } - \kappa)$, with small $r > 0$, 
	\begin{equation}\label{1eq: vertical bound, G (lambda, delta) (s)}
		\begin{split}
			G_{(\umu, \udelta)} (s ) \lll_{\,\umu,\, a,\, b,\, r} & (|\Im s| + 1)^{n \lp \Re s - \frac 1 2 \rp - \Re |\umu|},
		\end{split}
	\end{equation}
	for all $s \in \BS [a, b] \smallsetminus \bigcup_{l      = 1}^n \bigcup_{\kappa \in \BN} \BB_r (\mu_{l     } - \delta_{l     } - 2 \kappa)$, and
	\begin{equation}\label{1eq: vertical bound, G (mu, m) (s)}
		G_{(\umu, \um)} \lp   s   \rp \lll_{\,\umu,\, a,\, b,\, r} \prod_{l      = 1}^n (|\Im s| + |m_l     | + 1)^{2 \Re s - 2\Re \mu_{l     } - 1},
	\end{equation}
	for all $2 s \in \BS [a, b] \smallsetminus \bigcup_{l      = 1}^n \bigcup_{\kappa \in \BN} \BB_r (2 \mu_{l     } - |m_{l     }| - 2 \kappa)$.
	
	In other words, if $\ulambda$ and $ \umu $ are given, then $G(s; \usigma, \ulambda)$, $G_{(\umu, \udelta)} (s )$ and $G_{(\umu, \um)} (s)$ are all of moderate growth with respect to $\Im s$,  uniformly on vertical strips {\rm(}with bounded width{\rm)}, and moreover  $G_{(\umu, \um)} (s )$ is also of uniform moderate growth with respect to $\um$.
\end{lem}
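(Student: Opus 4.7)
The unified approach is to apply Stirling's asymptotic formula \eqref{1eq: Stirling's formula} to each Gamma factor occurring in the three expressions, track the polynomial and exponential contributions, and observe that the exponential pieces cancel by construction of the gamma factors, leaving only the claimed polynomial bound. The exclusion of small balls $\BB_r(\cdot)$ about the poles of the numerator Gamma functions is precisely what allows Stirling's formula to be applied uniformly on the complement (Stirling fails in any neighborhood of a pole).

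For \eqref{1eq: vertical bound, G (s; sigma, lambda)}, by the definition $G(s,\pm)=\Gamma(s)e(\pm s/4)$ in \eqref{1def: G pm (s)} one has $|G(s-\lambda_l,\varsigma_l)|=|\Gamma(s-\lambda_l)|\exp(-\varsigma_l\pi\Im(s-\lambda_l)/2)$. Applying \eqref{1eq: Stirling's formula} to $\Gamma(s-\lambda_l)$ and using $\arctan(\Im s/\Re(s-\lambda_l))\to \sgn(\Im s)\pi/2$ as $|\Im s|\to\infty$ gives
\begin{equation*}
|\Gamma(s-\lambda_l)|\lll_{\,\ulambda,\,a,\,b,\,r}(|\Im s|+1)^{\Re s-\Re\lambda_l-1/2}\,e^{-\pi|\Im s|/2}
\end{equation*}
on any vertical strip, away from poles. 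The remaining exponential $\exp(-\varsigma_l\pi\Im s/2)$ either cancels the $e^{-\pi|\Im s|/2}$ above (when the signs align) or accelerates the decay; in either case the combined exponential contribution is $O(1)$. Taking the product over $l=1,\dots,n$ yields the claimed bound.

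For \eqref{1eq: vertical bound, G (lambda, delta) (s)}, I would use the explicit trigonometric form of $G_\delta(s)$ from \eqref{1def: G delta}. The elementary bounds $|\cos(\pi s/2)|,|\sin(\pi s/2)|\lll e^{\pi|\Im s|/2}$ precisely offset the decay $|\Gamma(s)|\lll(|\Im s|+1)^{\Re s-1/2}e^{-\pi|\Im s|/2}$ from Stirling, while $|(2\pi)^{-s}|$ is bounded on a vertical strip. This gives $|G_\delta(s)|\lll (|\Im s|+1)^{\Re s-1/2}$, and multiplying over $l$ after the shift $s\mapsto s-\mu_l$ gives \eqref{1eq: vertical bound, G (lambda, delta) (s)}.

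The main obstacle is \eqref{1eq: vertical bound, G (mu, m) (s)}, which demands \emph{uniformity in} $\um$. I would apply Stirling separately to $\Gamma(z_1)$ and $\Gamma(z_2)$ where $z_1=s-\mu_l+|m_l|/2$ and $z_2=1-s+\mu_l+|m_l|/2$. Two observations drive the estimate: first, $|z_1|$ and $|z_2|$ are both comparable to $|m_l|/2+|\Im s|+1$ with relative error $O(1/(|m_l|+|\Im s|+1))$, so the polynomial part of $|\Gamma(z_1)|/|\Gamma(z_2)|$ collapses, up to bounded factors, to $(|m_l|+|\Im s|+1)^{\Re z_1-\Re z_2}=(|m_l|+|\Im s|+1)^{2\Re s-2\Re\mu_l-1}$; second, the difference of the arctangent terms in Stirling's exponential, evaluated via $\arctan u-\arctan v=\arctan\!\bigl((u-v)/(1+uv)\bigr)$ with $u=\Im s/\Re z_1$ and $v=-\Im s/\Re z_2$, is $O(1/(|m_l|+|\Im s|+1))$, so its product with $\Im z_1,\Im z_2$ remains $O(1)$ uniformly in $|m_l|$. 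Combined with the bounded factor $|(2\pi)^{1-2s}|$, this yields $|G_{m_l}(s-\mu_l)|\lll (|\Im s|+|m_l|+1)^{2\Re s-2\Re\mu_l-1}$; taking the product over $l$ finishes the proof. The technical core is precisely the verification that both the polynomial approximation and the arctangent cancellation hold \emph{uniformly} in $\um$, and not merely for each fixed $\um$.
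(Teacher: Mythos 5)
Your proposal is correct, and it is essentially the (omitted) verification the paper intends the reader to supply from the refined Stirling formula \eqref{1eq: Stirling's formula}: the paper states Lemma~\ref{1lem: vertical bound} without proof, immediately after that formula, precisely because the argument is this routine combination of Stirling's polynomial-times-exponential asymptotic with the observation that the exponential pieces offset. You have also correctly identified and handled the only genuinely delicate point, namely the uniformity of \eqref{1eq: vertical bound, G (mu, m) (s)} in $\um$, which is exactly what the refined form of \eqref{1eq: Stirling's formula} (with the $(t^2+\rho^2)$ base and the $\arctan(t/\rho)$ factor kept explicit rather than sent to $\pi/2$) is designed to deliver; the bound $|\arctan u - \arctan v| = O\bigl((|m_l|+|\Im s|+1)^{-1}\bigr)$ together with $(1+O(1/N))^{O(N)}=O(1)$ is precisely the mechanism the paper later reuses in the proof of Lemma~\ref{2lem: Ssis to Msis, C}.
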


\subsection{Basic Notions for $\BR_+$, $\BR^\times$ and $\BC^\times$}\label{sec: R+, Rx and Cx}

Define $\BR_+ = (0, \infty)$, $\BR^\times = \BR \smallsetminus \{ 0 \}$ and $\BC^\times = \BC \smallsetminus \{ 0 \}$. We observe the isomorphisms
$\BR^\times \cong \BR_+ \times  \{+, -\}\ $($\cong \BR_+ \times \BZT$) and $\BC^\times \cong \BR_+ \times \BR/ 2\pi \BZ$, the latter being realized via the {polar coordinates} $z = x e^{ i \phi}$.

\vskip 5 pt 
\subsubsection{}
Let $|\ \,|$ denote the ordinary absolute value on either $\BR$ or $\BC$, and set $\|\, \|_\BR = |\  \,|$ for $\BR$ and $\|\, \| = \|\, \|_\BC = |\  |^2$ for $\BC$. Let $d x$ be the Lebesgue measure on $\BR$, and let  $d^\times x = |x|\- d x$ be the standard choice of the multiplicative Haar measure on $\BR^\times $.
Similarly, let $d z$ be \textit{twice} the ordinary Lebesgue measure on $\BC$  and choose the standard multiplicative Haar measure $d^\times z = \| z\|^{-1} d z$ on $\BC^\times $. Moreover, in the polar coordinates, one has $d^\times z = 2 d^\times x d \phi$.
For $x \in \BR^\times$ the sign function $\sgn(x)$ is equal to $ x /{|x|}$, whereas for $z \in \BC^\times $ we introduce the notation $[z] =  z /{|z|}$. 


Henceforth, we shall let $\BF$ be either $\BR$ or $\BC$, and occasionally let $x, y$ denote elements in $\BF$ even if $\BF=\BC$.
 
\vskip 5 pt 
\subsubsection{}
For  $\delta\in \BZT$,  we define the space $C_\delta^\infty (\BRx)$ of all smooth functions $\varphi \in C ^\infty (\BRx)$ satisfying the parity condition 
\begin{equation} \label{1eq: delta condition, R}
	\varphi (-x) = (-)^\delta \varphi (x). 
\end{equation} 
Observe that a function $\varphi \in C_\delta^\infty (\BRx)$ is determined by its restriction on $\BR _+$, namely, $\varphi (x) = \sgn (x)^\delta \varphi (|x|)$. Therefore, 
\begin{equation}\label{1eq: C delta = sgn delta C}
	C_\delta^\infty (\BRx) = \sgn (x)^\delta C^\infty (\BR _+) = \left \{ \sgn (x)^\delta \varphi  (|x|) : \varphi \in C^\infty ( \BR_+) \right \}.
\end{equation}
For a smooth function $\varphi  \in C ^\infty (\BRx)$, we define $\varphi_{\delta} \in C^\infty (\BR _+)$ by
\begin{equation}\label{1eq: upsilon delta}
	\varphi_{\delta} (x) = \frac 1 2 \lp \varphi (x) + (-)^\delta \varphi (-x) \rp, \hskip 10 pt x \in \BR _+.
\end{equation}
Clearly,
\begin{equation}\label{1eq: C = C0 + C1}
	\varphi (x) = \varphi_0 (|x|) + \sgn(x) \varphi_1 (|x|).
\end{equation}

For $m \in \BZ$, we define the space $C_m^\infty (\BCx)$ of all smooth functions $\varphi \in C ^\infty (\BCx)$ satisfying
\begin{equation} \label{1eq: m condition, C}
	\varphi \big( x e^{i\phi} \cdot e^{i \phi'} \big) = e^{i m \phi'} \varphi \lp x e^{i \phi}\rp.
\end{equation} 
A function $\varphi \in C_m^\infty (\BCx)$ is determined by its restriction on $\BR _+$, namely, $\varphi (z) = [z]^m  \varphi (|z|)$, or, in the polar coordinates, $\varphi (x e^{i\phi} ) = e^{i m \phi } \varphi (x)$. Therefore,
\begin{equation}\label{1eq: C m = [z] m C}
	C_m^\infty (\BRx) = [z]^m C^\infty (\BR _+) = \left \{ [z]^m \varphi  (|z|) = e^{i m \phi } \varphi (x) : \varphi \in C^\infty ( \BR_+) \right \}.
\end{equation}
For a smooth function $\varphi  \in C ^\infty (\BCx)$, we let $\varphi_{ m} \in C^\infty (\BR _+)$ denote the $m$-th Fourier coefficient of $\varphi$ given by
\begin{equation}\label{1eq: Fourier coefficients of upsilon}
	\varphi_m (x) = \frac 1 { {2 \pi}} \int_0^{2\pi} \varphi \lp x e^{i \phi} \rp  e^{- i m \phi} d \phi.
\end{equation}
One has the Fourier expansion of $\varphi$,
\begin{equation}\label{1eq: Fourier series expansion}
	\varphi \lp x e^{i \phi}\rp = \sum_{m \in \BZ} \varphi_m (x) e^{i m \phi}.
\end{equation}

\subsubsection{}
Subsequently, we shall encounter various subspaces of $C^\infty (\BFx)$, with $\BF = \BR, \BC$, for instance, $\SS (\BF)$, $\SS (\BFx)$, $\Ssis (\BFx)$, $\Ssis^{(\umu, \udelta)} (\BRx)$ and $\Ssis^{(\umu, \um)} (\BCx)$. Here, we list three central questions that will be the guidelines of our investigations of these function spaces.

For now, we let $D $ be a subspace of $C^\infty (\BFx)$. For $\BF = \BR$ (respectively $\BF = \BC$), we shall add a superscript or subscript $\delta$ (respectively $m$) to the notation of $D$, say $D_\delta$ (respectively $D_m$), to denote the space of $\varphi \in D$ satisfying \eqref{1eq: delta condition, R} (respectively \eqref{1eq: m condition, C}). In view of \eqref{1eq: C delta = sgn delta C} (respectively \eqref{1eq: C m = [z] m C}), there is a subspace of $C^\infty (\BR _+)$, say $E_\delta$ (respectively $E_m$), such that 
$D_\delta = \sgn(x)^\delta E_\delta$ (respectively $D_m = [z]^m E_m$).

Firstly, we are interested in the question,
\begin{itemize}
	\item[] `` How to characterize the space $E_\delta$ (respectively $E_m$)?''.
\end{itemize}

Moreover, the subspaces $D \subset C^\infty (\BFx)$ that we shall consider always satisfy the following two hypotheses, 
\begin{itemize}
	\item[-] $\varphi \in D $ implies $\varphi_\delta \in E_\delta$ for $\BF = \BR$ (respectively, $\varphi \in D $ implies $\varphi_m \in E_m$ for $\BF = \BC$), and
	\item[-] $D$ is closed under addition.
\end{itemize}
For $\BF = \BR$, under these two hypotheses, it follows from \eqref{1eq: C = C0 + C1} that
$$D = D_0 \oplus D_1 \cong  E_0 \times E_1.$$
For $\BF = \BC$, in view of \eqref{1eq: Fourier series expansion}, the map that sends $\varphi$ to the sequence $\left\{\varphi_m \right \}  $ of its Fourier coefficients is injective. The second question arises,
\begin{itemize}
	\item[] `` What is the image of $D$ in $\prod_{m \in \BZ} E_m $ under this map?'', or equivalently,
	\item[] `` What conditions should a sequence $\left\{\varphi_m \right \}  \in \prod_{m \in \BZ} E_m$ satisfy in order for the Fourier series defined by \eqref{1eq: Fourier series expansion} giving a function $\varphi \in D$?''.
\end{itemize}

Finally, after introducing the Mellin transform $\EM_\BF$, 
we shall focus on the question,
\begin{itemize}
	\item[] `` What is the image of $D$ under the Mellin transform  $\EM_\BF$?''. 
\end{itemize}

\subsection{Schwartz Spaces} \label{sec: Schwartz spaces}

We say that a function $\varphi \in C^\infty (\BR_+)$ is {smooth at zero} if all of its derivatives admit asymptotics as below,
\begin{equation}\label{1eq: asymptotics R+}
	\varphi^{(\alpha)} (x) = \alpha!  a_{ \alpha}  + O_{ \alpha } \lp x \rp \text{ as } x \ra 0, \text{ for any } \alpha \in \BN, \text{ with } a_\alpha \in\BC.
\end{equation}

\begin{rem}
	Consequently, one has the asymptotic expansion $\varphi (x) \sim \sum_{\kappa =0}^\infty a_\kappa  x^\kappa$, which means that 
	$\varphi (x) = \sum_{\kappa  = 0}^{A } a_{\kappa } x^{\kappa } + O_{ A } \lp x^{A + 1 }\rp$ as $x \ra 0$ for any $A \in \BN$.
	It is however not required that the series $\sum_{\kappa =0}^\infty a_\kappa  x^\kappa $ be convergent for any $x \in \BR^\times$.
	
	Actually, \eqref{1eq: asymptotics R+} is equivalent to the following
	\begin{equation}\label{1eq: asymptotics R+, 2}
		\varphi^{(\alpha)} (x) = \sum_{\kappa  = \alpha}^{\alpha + A} a_{\kappa } [\kappa ]_{\alpha} x^{\kappa - \alpha} + O_{ \alpha, \, A } \lp x^{A + 1 }\rp \text{ as } x \ra 0, \text{ for any } \alpha, A \in \BN.
	\end{equation}
	
	Another observation is that, for a given constant $1> \rho > 0$, \eqref{1eq: asymptotics R+} is equivalent to the following seemingly weaker statement,
	\begin{equation}\label{1eq: asymptotics R+, 3}
		\begin{split}
			\varphi^{(\alpha)} (x) = \alpha!  a_{ \alpha}  + O_{ \alpha,\, \rho } \lp x^{\,\rho} \rp \text{ as } x \ra 0, \text{ for any } \alpha \in \BN, \text{ with } a_\alpha \in\BC.
		\end{split}
	\end{equation}
\end{rem}

Let $C^\infty (\overline \BR_+)$ denote the subspace of $C^\infty (\BR_+)$ consisting of smooth functions on $\BR_+$ that are also smooth at zero.

Let $\mathscr S (\overline \BR_+)$ denote the space of functions in $C^\infty (\overline \BR_+)$ that rapidly decay at infinity along with all of their derivatives. 
Let  $\mathscr S (\BF)$ denote the Schwartz space on $\BF$, with $\BF = \BR, \BC$.

Let $\mathscr S (\BR_+)$ denote the space of Schwartz functions on $ \BR_+ $, that is, smooth functions  on $ \BR_+ $ whose derivatives rapidly decay at {\it both} zero and infinity.
Similarly, we denote by $\mathscr S (\BF^\times )$ the space of Schwartz functions on $ \BF^\times$.


The following lemma provides  criteria for characterizing functions in these Schwartz spaces, especially functions in $\mathscr S (\BC)$ or $\mathscr S (\BCx )$ in the polar coordinates. Its proof is left as an easy excise in analysis for the reader.
\begin{lem}\label{lem: Schwartz}
	Let notations be as above.
	
	{\rm (1.1).} Let $\varphi \in C^\infty (\overline \BR _+)$ satisfy the asymptotics \eqref{1eq: asymptotics R+}. Then $\varphi \in \mathscr S (\overline \BR_+)$ if and only if $\varphi$ also satisfies
	\begin{equation}\label{1eq: Schwartz R+}
		x^{\alpha + \beta} \varphi ^{(\alpha)} (x) \lll_{\,\alpha,\, \beta} 1\, \text{ for all } \alpha, \beta \in \BN.
	\end{equation}

	{\rm (1.2).} A smooth function $\varphi$ on $\BR_+$ belongs to $ \mathscr S (\BR_+)$ if and only if $\varphi $ satisfies \eqref{1eq: Schwartz R+} with $\beta \in \BN$ replaced by $\beta \in \BZ$.
	
	Let $\varphi \in \mathscr S (\overline \BR_+)$ and $a_\alpha$ be as in \eqref{1eq: asymptotics R+}. Then $\varphi \in \SS ( \BR _+)$ if and only if $a_\alpha = 0$ for all $ \alpha \in \BN$.

	{\rm (2.1).}  A smooth function $\varphi $ on $\BRx$ extends to a function in $\mathscr S (\BR )$ if and only if
	\begin{itemize}
		\item[-] $\varphi $ satisfies \eqref{1eq: Schwartz R+} with  $x^{\alpha + \beta}$ replaced by $|x|^{\alpha + \beta}$, and
		\item[-] all the derivatives of $\varphi $ admit asymptotics
		\begin{equation} \label{1eq: asymptotics R}
			\varphi^{(\alpha)} (x) = \alpha! a_{ \alpha} + O_{ \alpha } \lp |x| \rp \text{ as } x \ra 0, \text{ for any }  \alpha \in \BN, \text{ with } a_\alpha \in \BC.
		\end{equation}
		 
	\end{itemize}

	{\rm (2.2).} Let  $\varphi$ be a smooth function on $\BRx$. Then $ \varphi \in \mathscr S (\BRx )$ if and only if $\varphi $ satisfies \eqref{1eq: Schwartz R+} with $x^{\alpha + \beta}$ replaced by $|x|^{\alpha + \beta}$ and $\beta \in \BN$ by $\beta \in \BZ$.
	
	Suppose $\varphi \in \mathscr S (\BR )$, then $\varphi \in \mathscr S (\BRx )$ if and only if $ \varphi ^{(\alpha)} (0) = 0$ for all $\alpha \in \BN$, or equivalently, $a_\alpha = 0$ for all $\alpha \in \BN$, with $a_\alpha $ given in \eqref{1eq: asymptotics R}.

	{\rm (3.1).}  Write $\partial_x = \partial/ \partial x$ and $\partial_\phi = \partial/ \partial \phi$.  In the polar coordinates, a smooth function $\varphi \lp x e^{i \phi}\rp \in C^\infty (\BC ^\times)$ extends to a function in $\mathscr S (\BC )$ if and only if
	\begin{itemize}
		\item[-] $\varphi \lp x e^{i \phi}\rp $ satisfies
		\begin{equation}\label{1eq: Schwartz Cx}
			x^{ \alpha + \beta} \partial_x ^{\alpha }  \partial_\phi ^{\gamma} \varphi \lp x e^{i \phi}\rp \lll_{\,\alpha,\, \beta,\, \gamma} 1\, \text{ for all } \alpha, \beta, \gamma \in \BN,
		\end{equation}
		\item[-] all the partial derivatives of $\varphi $ admit asymptotics
		
		\vskip 7 pt
		\item[\noindent \refstepcounter{equation}(\theequation) \label{1eq: Schwartz Cx asymptotic} \hskip 8 pt]
		$\ds x^\alpha \partial_x ^{\alpha }  \partial_\phi ^{\beta} \varphi \lp x e^{i \phi}\rp = \sum_{|m| \leq \alpha + \beta} \ \sum_{\sstyle |m| \leq \kappa \leq \alpha + \beta \atop {\sstyle \kappa \equiv m (\mod 2)}} a_{m, \kappa } [\kappa ]_{\alpha}   (im)^{\beta} x^{ \kappa }  e^{i m \phi} + O_{\alpha,\, \beta} \lp x^{\alpha + \beta+1} \rp$
		\vskip 5 pt
		\noindent as $  x \ra 0$, for any $ \alpha, \beta \in \BN$,
		with $a_{m, \kappa } \in\BC$ for $\kappa \geq |m|$ and $\kappa \equiv m (\mod 2)$.

	\end{itemize}
	
	Let $\varphi \in \mathscr S (\BC )$ and $\varphi_m$ be the $m$-th Fourier coefficient of $ \varphi $ given by \eqref{1eq: Fourier coefficients of upsilon},
	then it follows from {\rm(\ref{1eq: Schwartz Cx}, \ref{1eq: Schwartz Cx asymptotic})} that
	\begin{itemize}
		\item[-]  $ \varphi _m$ satisfies
		\begin{equation}\label{1eq: bounds for Fourier coefficients}
			\begin{split}
				x^{ \alpha + \beta} \varphi^{(\alpha)}_m (x) & \lll_{\,\alpha,\, \beta,\, A} (|m| + 1)^{- A} \, \text{ for all } \alpha, \beta, A \in \BN,
			\end{split}
		\end{equation}
		\item[-] all the derivatives of $\varphi_m$ admit asymptotics
		
		\vskip 5 pt
		\item[\noindent \refstepcounter{equation}(\theequation) \label{1eq: varphi m asymptotic, 0}  \hskip 8 pt]
		{ \hfill  $
			\ds \varphi^{(\alpha)} _m (x) = \sum_{\kappa  = \alpha}^{\alpha + A} a_{m, \kappa } [\kappa ]_{\alpha} x^{\kappa - \alpha}  +  O_{\alpha,\, A } \lp (|m| + 1)^{-A} x^{ A + 1} \rp 
			$ \hfill}
		
		\vskip 5 pt
		\item[ ]  as   $x \ra 0$, for any given $ \alpha, A \in \BN$, with $a_{m, \kappa } \in \BC$ satisfying $a_{m, \kappa }= 0 $ if either $\kappa < |m| $ or $\kappa \notequiv m (\mod 2)$.
		
	\end{itemize}
	Observe that \eqref{1eq: varphi m asymptotic, 0} is equivalent to the following two conditions,
	\begin{itemize}
		\vskip 5 pt
		\item[\noindent \refstepcounter{equation}(\theequation) \label{1eq: phi m, 1}  \hskip 8 pt]
		$\ds \varphi^{(\alpha)} _m (x) = \alpha ! a_{m, \alpha } + O_{\alpha } \lp x \rp $ as $x \ra 0$,  
		for any $ \alpha \geq |m|$,   with $ a_{m, \alpha } \in \BC $ satisfying $ a_{m, \alpha }= 0 $ if $ \alpha \notequiv m (\mod 2)$,
		
		\vskip 7 pt
		\item[\noindent \refstepcounter{equation}(\theequation) \label{1eq: phi m, 2}  \hskip 8 pt]
		for any given $ \alpha, A \in \BN$, $\varphi^{(\alpha)} _m (x) =  O_{\alpha,\, A } \lp (|m| + 1)^{-A} x^{ A + 1} \rp $ as $  x \ra 0$,   if $|m| > \alpha + A$.
		
	\end{itemize}
	In particular, $\varphi_m \in \mathscr S (\overline \BR _+)$.
	
	Conversely, if a sequence $\lpp \varphi_m \rpp $ of functions in $C^\infty (\BR _+)$ satisfies \eqref{1eq: bounds for Fourier coefficients}, \eqref{1eq: phi m, 1} and \eqref{1eq: phi m, 2}, then the Fourier series defined by $\lpp \varphi_m \rpp $, that is, the right hand side of \eqref{1eq: Fourier series expansion}, is a Schwartz function on $\BC$.

	{\rm (3.2).} In the polar coordinates, a smooth function $\varphi \lp x e^{i \phi}\rp \in C^\infty (\BC ^\times)$ is a Schwartz function on $\BC^\times$ if and only if $\varphi$ satisfies  \eqref{1eq: Schwartz Cx} with $\beta \in \BN$ replaced by $\beta \in \BZ$.

	Let $\varphi \in \mathscr S (\BC^\times)$ and $\varphi_m$ be the $m$-th Fourier coefficient of $ \varphi $, then it is necessary that $\varphi _m$ satisfies  \eqref{1eq: bounds for Fourier coefficients}  with $\beta \in \BN$ replaced by $\beta \in \BZ$.
	In particular, $\varphi_m \in \mathscr S (\BR _+)$.
	
	Conversely, if a sequence $\left\{\varphi_m \right \} $ of functions in $C^\infty (\BR _+)$ satisfies the condition \eqref{1eq: bounds for Fourier coefficients}  with $\beta \in \BN$ replaced by $\beta \in \BZ$, then the Fourier series defined by $\left\{\varphi_m \right \} $ gives rise to a Schwartz function on $\BC ^\times$.
	
	Let $\varphi \in \mathscr S (\BC)$ and $a_{m, \kappa}$ be given in \eqref{1eq: Schwartz Cx asymptotic}, \eqref{1eq: varphi m asymptotic, 0} or \eqref{1eq: phi m, 1}. $\varphi \in \SS ( \BCx)$ if and only  if  $a_{m, \kappa} = 0$ for all $m \in \BZ, \kappa \in \BN$.
	
\end{lem}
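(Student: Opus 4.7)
The plan is to unwind the defining seminorms against the Taylor expansions listed in the statement, and to transfer between polar and Cartesian pictures on $\BC$ via $x \pm iy = x e^{\pm i\phi}$ together with integration by parts in the angular variable for the Fourier coefficients.

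For parts (1.1) and (1.2) I would work directly on $\BR_+$. The asymptotic \eqref{1eq: asymptotics R+} for a single derivative is equivalent to the multi-derivative form \eqref{1eq: asymptotics R+, 2} by integrating the remainder; given this, the Schwartz bound \eqref{1eq: Schwartz R+} reduces to a standard check, since smoothness at $0$ handles small $x$ while rapid decay at $\infty$ (encoded by $\beta$ arbitrary) handles large $x$. The refinement to $\SS(\BR_+)$ is the observation that smoothness from the right at zero, combined with rapid decay in $x^{-1}$, forces all Taylor coefficients $a_\alpha$ to vanish. Parts (2.1) and (2.2) then follow by decomposing $\varphi \in C^\infty(\BRx)$ into its parity components $\varphi_0, \varphi_1$ via \eqref{1eq: upsilon delta}--\eqref{1eq: C = C0 + C1} and applying (1.1)--(1.2) to each; the absolute value $|x|^\alpha$ appearing in \eqref{1eq: asymptotics R} reflects the fact that odd derivatives see $\sgn(x)$ and even derivatives do not, so smoothness across $0$ on $\BR$ corresponds to separate Taylor expansions of $\varphi_0, \varphi_1$ on $\BR_+$, and the flat-zero criterion for $\SS(\BRx)$ is immediate.

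Part (3.1) is the technical center. In one direction, given $\varphi \in \SS(\BC)$, rewriting Cartesian partial derivatives in the polar frame (i.e.\ $\partial_x^{\text{Cart}} = \cos\phi\, \partial_x - x^{-1} \sin\phi\, \partial_\phi$, similarly for $\partial_y$) yields the polar seminorm bound \eqref{1eq: Schwartz Cx}, while the Cartesian Taylor expansion at zero, expanded in the monomials $(x+iy)^a (x-iy)^b = x^{a+b} e^{i(a-b)\phi}$, reorganizes into \eqref{1eq: Schwartz Cx asymptotic}, with the constraints $\kappa \equiv m \pmod 2$ and $|m| \leq \kappa$ forced by $a,b \geq 0$, $a-b = m$, $a+b = \kappa$. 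Extracting the $m$-th Fourier coefficient and integrating by parts $A$ times against $e^{-im\phi}$ produces the factor $(im)^{-A}$ needed in \eqref{1eq: bounds for Fourier coefficients}, while integrating \eqref{1eq: Schwartz Cx asymptotic} against $e^{-im\phi}$ delivers \eqref{1eq: varphi m asymptotic, 0}, equivalently the pair \eqref{1eq: phi m, 1}--\eqref{1eq: phi m, 2}.

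The converse half of (3.1), and the parallel statements in (3.2), form the main obstacle. Starting from a sequence $\{\varphi_m\}$ satisfying \eqref{1eq: bounds for Fourier coefficients}, \eqref{1eq: phi m, 1}, \eqref{1eq: phi m, 2}, the superpolynomial decay in $|m|$ guarantees convergence of the Fourier series in every $C^k$ seminorm on $\BCx$; to obtain a Schwartz extension across zero I would verify the Cartesian Taylor expansion there by splitting the $m$-sum at $|m| \leq \alpha + \beta$, whose terms reassemble into the Cartesian polynomial of degree $\leq \alpha + \beta$ via the polar--Cartesian dictionary, versus $|m| > \alpha + \beta$, controlled by \eqref{1eq: phi m, 2} to give $O(x^{A+1})$. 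Part (3.2) then follows by the same argument with $\beta \in \BN$ replaced by $\beta \in \BZ$ throughout, and the $\SS(\BCx)$ criterion is the flat-zero condition $a_{m,\kappa} = 0$ applied in every Fourier mode, directly parallel to the $\SS(\BRx)$ statement in (2.2). The principal source of friction is bookkeeping: matching the parity and range constraints on $(m, \kappa)$ with the $(|m|+1)^{-A}$ decay rates consistently in both the hypotheses and the remainder terms.
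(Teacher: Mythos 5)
The paper does not in fact contain a proof of this lemma; it is stated there that ``Its proof is left as an easy excise in analysis for the reader.'' So there is no paper proof against which to compare your proposal---you are filling in the omitted exercise. With that understood, your outline is sound and hits the right structure: parts (1) reduce to a direct seminorm check given the equivalence of \eqref{1eq: asymptotics R+} with the multi-derivative form; parts (2) follow from (1) by parity decomposition, once you add the observation that the even component $\varphi_0$ has only even powers and $\varphi_1$ only odd powers in its Taylor expansion at $0$ (this is what makes $\sgn(x)^\delta\varphi_\delta(|x|)$ smooth across the origin, and it is slightly more than a direct citation of (1.1)--(1.2) to each piece); the forward direction of (3.1) is the polar--Cartesian derivative dictionary plus integration by parts in $\phi$ to obtain the $(|m|+1)^{-A}$ decay; and the converse is the split of the $m$-sum into $|m|\leq\alpha+\beta$ (reassembled Cartesian polynomial) against $|m|>\alpha+\beta$ (controlled by \eqref{1eq: phi m, 2}).

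Two small points worth tightening if you write this out in full. First, the identity you quote is notationally mixed: with $z=xe^{i\phi}$ in the lemma's polar convention one has $z^a\bar z^b = x^{a+b}e^{i(a-b)\phi}$, whereas writing $(x+iy)^a(x-iy)^b$ suggests a Cartesian $x$, in which case the radius is $|z|$, not $x$---worth keeping the notation straight to avoid confusion with the paper's $z=xe^{i\phi}$. Second, in the converse of (3.1) the real content is not only exhibiting a Cartesian Taylor expansion of arbitrary order, but showing that every Cartesian partial derivative of the Fourier series has a limit at the origin; this follows once the polar-form remainder $O_{\alpha,\beta}(x^{\alpha+\beta+1})$ is combined with the derivative bounds \eqref{1eq: bounds for Fourier coefficients} to control differences of difference quotients near $z=0$, which is the step that upgrades ``has a formal Taylor series'' to ``extends to $C^\infty$.'' Neither issue breaks the plan; the friction is, as you say, bookkeeping.
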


\subsubsection{Some Subspaces of $\SS (\overline \BR_+)$} \label{sec: Schwartz subspaces}

In the following, we introduce several subspaces of  $\SS (\overline \BR_+)$ which are closely related to $\SS (\BR)$ and $\SS (\BC)$.

We first define for $\delta \in \BZT $  the subspace $C^\infty_{\delta} (\overline \BR_+) \subset C^\infty (\overline \BR_+)$ of functions with an asymptotic expansion of the form $ \sum_{\kappa =0}^\infty a_{ \kappa} x^{\delta + 2 \kappa}$ at zero.

\begin{rem}\label{rem: C=C0+C1}
	A question arises, ``whether $C^\infty (\overline \BR_+) = C^\infty_0 (\overline \BR_+) + C^\infty_1 (\overline \BR_+)${\rm ?}''. 
	
	The answer is affirmative. 
	
	To see this, we define the space $C_\delta^\infty (\BR)$ of smooth functions $\varphi$ on $\BR$ satisfying \eqref{1eq: delta condition, R}.
	One has $\sgn (x)^\delta \varphi (|x|) \in C_\delta^\infty (\BR)$ if $\varphi  \in C^\infty_{\delta} (\overline \BR_+)$, and conversely, $\varphi \restriction_{\BR_+} \in  C^\infty_{\delta} (\overline \BR_+) $ if $\varphi \in C_\delta^\infty (\BR)$. Thus, with the simple observation $C^\infty ( \BR ) = C^\infty_0 ( \BR ) \oplus C^\infty_1 ( \BR )$, one sees that $C^\infty_0 (\overline \BR_+) + C^\infty_1 (\overline \BR_+)$ is the subspace  of $C^\infty (\overline \BR_+)$ consisting of functions on $\BR _+$ that admit a smooth extension onto $\BR$.
	
	On the other hand, the Borel theorem {\rm(}\cite[1.5.4]{Nara}{\rm)}, which is a special case of the Whitney extension theorem  {\rm (}\cite[1.5.5, 1.5.6]{Nara}{\rm)}, states that for any sequence $\lpp a_\alpha\rpp $ of constants there exists a smooth function $\varphi \in C^\infty (\BR)$ such that $ \varphi^{(\alpha)} (0) = \alpha! a_\alpha $. Clearly, this theorem of Borel implies our assertion above.
	
	In {\rm \S \ref{sec: refinements Msis Sis, R+}}  we shall give an alternative proof of this  using the Mellin transform. See Remark {\rm \ref{rem: S=S0+S1}}.
\end{rem}

We define $ \SS_\delta (\overline \BR_+) = \SS (\overline \BR_+) \cap C_\delta^\infty (\overline \BR_+)$. The following identity is obvious 
\begin{equation*}
	\SS_\delta (\overline \BR_+) = x^\delta \SS_0 (\overline \BR_+).
\end{equation*}
In view of Lemma \ref{lem: Schwartz} (1.2), we have $\SS_0  (\overline \BR_+) \cap \SS_1 (\overline \BR_+) = \SS (\BR _+)$.

If we let $\SS_\delta (\BR) $ be the space of functions $\varphi \in \SS (\BR)$ satisfying \eqref{1eq: delta condition, R}, then 
$$ \SS_\delta (\BR) = \sgn (x)^\delta \SS_\delta (\overline \BR_+) = \left \{ \sgn (x)^\delta \varphi  (|x|) : \varphi \in \SS_\delta (\overline \BR_+) \right \}.$$ 
Clearly, $\SS (\BR) = \SS_0 (\BR ) \oplus \SS_1 (\BR )$.

We define the subspace $\SS_m (\overline \BR _+) \subset \SS_{\delta(m)}  (\overline \BR _+)$, with  $\delta (m) = m (\mod 2)$,  of functions with an asymptotic expansion of the form $ \sum_{\kappa = 0}^\infty a_{ \kappa} x^{|m| + 2 \kappa}$ at zero.  
We have
\begin{equation*}
	\SS_m (\overline \BR_+) = x^{|m|} \SS_0 (\overline \BR_+).
\end{equation*}
If we define $\SS_m (\BC)$ to be the space of $\varphi \in \SS (\BC)$ satisfying \eqref{1eq: m condition, C}, then 
$$ \SS_m (\BC) = [z]^m \SS_m (\overline \BR_+) = \left \{ [z]^m \varphi  (|z|) = e^{im\phi} \varphi (x) : \varphi \in \SS_m (\overline \BR_+) \right \}.$$ 
The last two paragraphs in  Lemma  \ref{lem: Schwartz} (3.1) can be recapitulated as below
$$
\SS (\BC) \xrightarrow{\cong}  \left\{ \lpp \varphi_m\rpp \in \prod_{m \in \BZ} \SS_m (\overline \BR _+) : \varphi_m \text{ satisfies (\ref{1eq: bounds for Fourier coefficients}, \ref{1eq: phi m, 1}, \ref{1eq: phi m, 2})} \right \} \twoheadrightarrow  \SS_m (\overline \BR _+),
$$
where the first map sends $\varphi \in \SS (\BC)$ to the sequence $ \lpp \varphi _m \rpp $ of its Fourier coefficients, and the second is the $m$-th projection.
According to Lemma  \ref{lem: Schwartz} (3.1), the first map is an isomorphism, and the second projection is surjective.
 
\vskip 5 pt 
\subsubsection{$\SS_\delta (\BRx)$ and $\SS_m (\BCx)$} \label{sec: S delta and Sm}
Let $\delta \in \BZ/2 \BZ$ and $m \in \BZ$. We define $\SS_\delta (\BRx) = \SS (\BRx) \cap \SS_\delta (\BR)$ and $\SS_m (\BCx) = \SS (\BCx) \cap \SS_m (\BC)$.
Clearly, $\SS_\delta (\BRx) = \sgn(x)^\delta \SS (\BR _+)$ and $\SS_m (\BCx) = [z]^m \SS (\BR _+)$.


\subsection{The Fourier Transform}
According to the local theory in Tate's thesis 
for an Archimedean local field $\BF$, the Fourier transform $\widehat \varphi = \EF \varphi$ of a Schwartz function $\varphi\in \SS(\BF)$ is defined by
\begin{equation}\label{1eq: Fourier, R}
	\widehat \varphi(y) = \int_{\BF} \varphi(x) e(- \Lambda (xy)) d x,
\end{equation}
with
\begin{equation} \label{1eq: Lambda (x)}
	\Lambda (x) = 
	\left\{ \begin{split}
		& x , & &  \text{ if } \mathbb{F}=\mathbb{R};\\
		& \Tr (x) = x + \overline x, & & \text{ if } \mathbb{F}=\mathbb{C}.
	\end{split} \right.
\end{equation}
The Schwartz space $\SS(\BF)$ is invariant under the Fourier transform. Moreover, with our choice of measure in \S \ref{sec: R+, Rx and Cx}, the following inversion formula holds
\begin{equation}\label{1eq: Fourier, C}
	\widehat{\widehat {\varphi}} (x) = \varphi (-x), \hskip 10 pt x \in \BF.
\end{equation}

\subsection{The Mellin Transforms $\EM $, $\EM_{\delta}$ and $\EM_m$}\label{sec: Mellin preliminaries}

Corresponding to $\BR_+$, $\BRx$ and $\BCx$, there are three kinds of Mellin transforms  $\EM $, $\EM_{\delta}$ and $\EM_m$.

\begin{defn}[Mellin transforms]
	\ 
	
	{\rm (1).} The {\rm Mellin transform} $\EM  \varphi$ of a Schwartz function $\varphi \in \mathscr S (\BR_+)$
	is given by
	\begin{equation} \label{1def: Mellin transform}
		\EM  \varphi (s) = \int_{\BR_+} \varphi (x) x ^{s } d^\times x.
	\end{equation}
	
	{\rm (2).} For $\delta \in \BZ/ 2\BZ$, the {\rm (signed) Mellin transform $\EM _{\delta} \varphi$ with order $\delta$} of a Schwartz function $\varphi \in \mathscr S (\BR ^\times)$
	is defined by
	\begin{equation} \label{1def: signed Mellin transform}
		\EM _{\delta} \varphi (s) = \int_{\BR^\times} \varphi (x) \sgn (x)^\delta |x|^{s } d^\times x.
	\end{equation} 
	Moreover, define $\EM_{\BR} = (\EM_0, \EM_1)$.
	
	{\rm (3).} For $m \in \BZ$, the {\rm Mellin transform $\EM _m \varphi $ with order $m$} of a Schwartz function $\varphi \in \mathscr S (\BC^\times)$ is defined by
	\begin{equation}\label{1def: Mellin transform over complex}
		\EM _m \varphi (s) = \int_{\BC^\times} \varphi (z) [z]^{ m} \|z\| ^{\frac 12 s  }\ d^\times z = 2 \int_0^\infty \int_0^{2 \pi} \varphi \lp x e^{i \phi} \rp e^{ i m \phi} d\phi \cdot x^{ s } d^\times x.
	\end{equation}
	Moreover, define $\EM_{\BC} = \prod_{m \in \BZ} \EM_{- m} $.
	
\end{defn}

\begin{observation}\label{observ: Mellin} For $\varphi \in \mathscr S (\BRx)$, we have
	\begin{equation}\label{1eq: M delta = M}
		\EM _\delta \varphi (s) =  2 \EM  \varphi_{\delta}  (s), \hskip 10 pt \delta \in \BZT.
	\end{equation}
	Similarly, for $\varphi \in \mathscr S (\BCx)$, we have
	\begin{equation}\label{1eq: Mm = M}
		\EM _{-m} \varphi (s) = 4 \pi \EM  \varphi_{ m} ( s), \hskip 10 pt m \in \BZ.
	\end{equation}
	The relations  \eqref{1eq: M delta = M} and \eqref{1eq: Mm = M} reflect the identities $\BR^\times \cong \BR_+ \times  \{+, -\}  $ and $\BC^\times \cong \BR_+ \times \BR/ 2\pi \BZ$ respectively.
\end{observation}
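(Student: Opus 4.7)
Both identities are direct unwinding of the definitions together with an interchange of integrals that is justified by the rapid decay of Schwartz functions. I would present them as two short computations.

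For \eqref{1eq: M delta = M}, I would split the defining integral \eqref{1def: signed Mellin transform} over $\BRx$ into the contributions from $(0, \infty)$ and $(-\infty, 0)$. In the second piece I substitute $x \mapsto -x$, producing a factor $(-)^\delta$ from $\sgn(x)^\delta$ and using $|{-x}| = x$, $d^\times (-x) = d^\times x$. Collecting terms yields
\begin{equation*}
\EM_\delta \varphi(s) = \int_{\BR_+} \bigl(\varphi(x) + (-)^\delta \varphi(-x)\bigr) x^s\, d^\times x,
\end{equation*}
which is $2\EM\varphi_\delta(s)$ by the very definition \eqref{1eq: upsilon delta} of $\varphi_\delta$.

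For \eqref{1eq: Mm = M}, I start from the polar-coordinate form of \eqref{1def: Mellin transform over complex} with $m$ replaced by $-m$:
\begin{equation*}
\EM_{-m}\varphi(s) = 2 \int_0^\infty \!\int_0^{2\pi} \varphi\bigl(xe^{i\phi}\bigr) e^{-im\phi}\, d\phi \cdot x^s\, d^\times x.
\end{equation*}
Since $\varphi \in \SS(\BCx)$, Fubini allows me to perform the inner $\phi$-integral first; by the definition \eqref{1eq: Fourier coefficients of upsilon} of the Fourier coefficient, this inner integral equals $2\pi\,\varphi_m(x)$. The result is $4\pi\,\EM\varphi_m(s)$.

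There is no substantive obstacle here: the only point meriting mention is that the absolute convergence of the double integral on $\BR_+ \times [0, 2\pi]$ (and hence the use of Fubini) follows from the Schwartz criteria in Lemma \ref{lem: Schwartz}(3.2), which guarantee $x^\beta \varphi(xe^{i\phi}) \lll 1$ uniformly in $\phi$ for every $\beta \in \BZ$, so the integrand is dominated by an integrable function on the relevant range of $\Re s$. Finally, I would end with a one-line remark observing that these identities simply express the decompositions $\BRx \cong \BR_+ \times \{\pm\}$ and $\BCx \cong \BR_+ \times \BR/2\pi\BZ$, as stated after \eqref{1eq: Mm = M}.
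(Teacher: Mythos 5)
Your computation is correct, and since the paper states this as an unproved Observation, your argument is exactly the direct unwinding of definitions the author has in mind: split the real Mellin integral at $0$ and change variables, and perform the angular integral first in the complex case. The only small remark is that the justification by Fubini is even more routine than you make it sound — since the statement is asserted for $\varphi\in\SS(\BFx)$, integrability over the full range of $s$ is immediate from the rapid decay at both $0$ and $\infty$ and the compactness of $[0,2\pi]$ — but citing Lemma \ref{lem: Schwartz} (3.2) does no harm.
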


\begin{lem}[Mellin inversions] \label{lem: Mellin inversions} Let $\sigma  $ be real. Denote by  $  (\sigma)  $ the vertical line from $\sigma - i \infty$ to $\sigma + i \infty$. 
	
	{\rm (1).} For $\varphi \in \mathscr S (\BR_+)$, we have
	\begin{equation}\label{1eq: Mellin inversion}
		\varphi (x) =  \frac 1{2 \pi i} \int_{(\sigma)} \EuScript M \varphi (s) x^{-s} d s.
	\end{equation}
	
	{\rm (2).} For $\varphi \in \mathscr S (\BRx)$, we have
	\begin{equation}\label{1eq: Mellin inversion, R}
		\varphi (x) =  \frac { 1} {4 \pi i} \sum_{\delta \in \BZ/ 2\BZ } \sgn (x)^\delta \int_{(\sigma)} \EuScript M_\delta \varphi (s) |x|^{-s} d s.
	\end{equation}
	
	{\rm (3).} For $\varphi \in \mathscr S (\BCx)$, we have
	\begin{equation}\label{1eq: Mellin inversion, C}
		\varphi (z) =  \frac { 1} {8 \pi^2 i} \sum_{m \in \BZ}  [z]^{-m} \int_{(\sigma)} \EuScript M_{m} \varphi (s) \|z\|^{- \frac 1 2 s  } d s,
	\end{equation}
	or, in the polar coordinates,
	\begin{equation}\label{1eq: Mellin inversion, C, polar}
		\varphi \lp x e^{i \phi}\rp = \frac { 1} {8 \pi^2 i}  \sum_{m \in \BZ} e^{- i m \phi} \int_{(\sigma)} \EuScript M_{m} \varphi (s) x^{- s} d s.
	\end{equation}
\end{lem}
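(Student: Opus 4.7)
The plan is to reduce parts (2) and (3) to part (1) via the decompositions already discussed in \S \ref{sec: R+, Rx and Cx}, and to prove (1) by converting the Mellin transform to the Fourier transform on $\BR$ through the exponential substitution.

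For part (1), I would set $\psi(u) = \varphi(e^u) e^{\sigma u}$ for $\varphi \in \mathscr S(\BR_+)$. Since $\varphi$ decays rapidly at both $0$ and $\infty$ along with all derivatives, $\psi \in \mathscr S(\BR)$ for every $\sigma \in \BR$. Writing $s = \sigma + it$ and performing the substitution $x = e^v$ in \eqref{1def: Mellin transform}, one obtains
\begin{equation*}
\mathcal M \varphi(\sigma + it) = \int_{-\infty}^{\infty} \psi(v) e^{itv} dv,
\end{equation*}
so that $\mathcal M\varphi$ on the vertical line $(\sigma)$ is (up to sign in the frequency variable) the Fourier transform of $\psi$. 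The classical Fourier inversion theorem on $\mathscr S(\BR)$ then yields
\begin{equation*}
\psi(u) = \frac{1}{2\pi} \int_{-\infty}^{\infty} \mathcal M\varphi(\sigma + it) e^{-itu} dt,
\end{equation*}
and substituting back $u = \log x$, multiplying by $e^{-\sigma u} = x^{-\sigma}$, and rewriting $dt = ds/i$ yields \eqref{1eq: Mellin inversion}. The independence of $\sigma$ follows either from Cauchy's theorem applied to the holomorphic function $\mathcal M\varphi$ in the strip (which decays vertically by repeated integration by parts, using Schwartzness) or directly from the fact that the inversion is valid for each fixed $\sigma$.

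For part (2), I would apply the decomposition $\varphi(x) = \varphi_0(|x|) + \sgn(x)\varphi_1(|x|)$ from \eqref{1eq: C = C0 + C1}, where each $\varphi_\delta \in \mathscr S(\BR_+)$. Invoking part (1) for each $\varphi_\delta$ and using the identity $\mathcal M_\delta \varphi(s) = 2\mathcal M \varphi_\delta(s)$ from \eqref{1eq: M delta = M}, one directly assembles the two integrals into \eqref{1eq: Mellin inversion, R}.

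For part (3), the strategy is analogous via the Fourier expansion \eqref{1eq: Fourier series expansion}. Each Fourier coefficient $\varphi_m$ lies in $\mathscr S(\BR_+)$ by Lemma \ref{lem: Schwartz}(3.2), so part (1) combined with $\mathcal M_{-m}\varphi(s) = 4\pi \mathcal M\varphi_m(s)$ from \eqref{1eq: Mm = M} gives
\begin{equation*}
\varphi_m(x) = \frac{1}{8\pi^2 i} \int_{(\sigma)} \mathcal M_{-m}\varphi(s) x^{-s} ds.
\end{equation*}
Multiplying by $e^{im\phi}$ and summing over $m$ (then re-indexing $m \mapsto -m$) produces \eqref{1eq: Mellin inversion, C, polar}, and \eqref{1eq: Mellin inversion, C} is the coordinate-free restatement. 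The main delicacy here is justifying the interchange of summation with the contour integral in $s$: this requires the combined decay estimates \eqref{1eq: bounds for Fourier coefficients} (which give rapid decay of $\mathcal M\varphi_m$ in both $|m|$ and $|\Im s|$ after integration by parts), enabling dominated convergence. Establishing this joint decay, uniform in the relevant parameters, is the one step that requires genuine care; the remaining manipulations are formal consequences of parts (1) and (2) together with Observation \ref{observ: Mellin}.
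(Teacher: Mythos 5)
The paper states Lemma \ref{lem: Mellin inversions} without proof; the ensuing Corollary \ref{cor: Mellin, Schwartz} calls part (1) ``a well-known consequence,'' so there is no in-text argument for you to match. Your proof is correct and is the natural one: the exponential substitution $x = e^u$ converts $\mathcal M$ on $\mathscr S(\BR_+)$ into the Fourier transform on $\mathscr S(\BR)$, giving (1) from classical Fourier inversion, and (2), (3) follow by feeding the decompositions \eqref{1eq: C = C0 + C1} and \eqref{1eq: Fourier series expansion} through part (1) via Observation \ref{observ: Mellin}. One small simplification is available in part (3): no interchange of sum and contour integral is needed, since the sum in \eqref{1eq: Mellin inversion, C} sits \emph{outside} the integral. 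By part (1) together with \eqref{1eq: Mm = M}, the $m$-th summand is exactly $[z]^{-m}\varphi_{-m}(|z|)$, so the right-hand side of \eqref{1eq: Mellin inversion, C} is literally the Fourier series $\sum_m \varphi_m(|z|)e^{im\phi}$ of $\varphi$ on the circle of radius $|z|$, whose absolute and uniform convergence is immediate from smoothness of $\varphi$ on $\BC^\times$; the joint decay in $(m, \Im s)$ you flag as the delicate step is therefore not required for this lemma (it becomes essential only in Corollary \ref{cor: Mellin, Schwartz} (3), which is where the paper invokes Lemma \ref{lem: Schwartz} (3.2)).
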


\begin{defn}\
	
	{\rm (1).} Let $\mathscr H_{\mathrm {rd}}$ denote the space of all entire functions $H(s)$ on the complex plane that  rapidly decay along vertical lines, uniformly on vertical strips.  
	
	{\rm (2).} Define $\mathscr H_{\mathrm {rd}}^{\BR} = \mathscr H_{\mathrm {rd}} \times \mathscr H_{\mathrm {rd}}$.
	
	{\rm (3).} Let $\mathscr H_{\mathrm {rd}}^{\BC}$ be the subset of $\, \prod_{\BZ} \Hrd$ consisting of sequences $\lpp H_m  (s) \rpp $ of entire functions in $\Hrd$ satisfying the following condition,
	\begin{itemize}
		\item[\noindent \refstepcounter{equation}(\theequation) \label{1eq: rapid decay, C} \hskip 8 pt]
		for any given $ \alpha, A \in \BN$ and vertical strip $ \BS[a, b]$,  
		
		\vskip 5 pt
		{\centering $\ds  H_m  (s) \lll_{\, \alpha,\, A,\, a,\, b} (|m|+1)^{-A}  (|\Im s| + 1)^{-\alpha} \text{ for all } s \in \BS[a, b].$}
	\end{itemize}
\end{defn}


\begin{cor}\label{cor: Mellin, Schwartz}\
	
	{\rm (1).} The Mellin transform $\EM$ and its inversion establish an isomorphism between $\SS (\BR _+)$ and $\Hrd$.
	
	{\rm (2).} For each $\delta \in \BZT$, $\EM_\delta$ establishes an isomorphism between $\SS_\delta (\BRx)$ and $\Hrd $. Hence, $\EM_{\BR} $ establishes an isomorphism between $\SS (\BRx)$ and $\Hrd^{\BR}$.
	
	{\rm (3).} For each $m\in \BZ$, $\EM_{-m}$ establishes an isomorphism between $\SS_m (\BCx)$ and $\Hrd $. Moreover, $\EM_{\BC}$ establishes an isomorphism between $\SS (\BCx)$ and $\Hrd^{\BC}$.
\end{cor}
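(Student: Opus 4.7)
The plan is to establish (1) directly from standard Mellin analysis, and then reduce (2) and (3) to (1), with the sequence-valued case in (3) being the substantive step.

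For (1), given $\varphi \in \SS(\BR_+)$, absolute convergence of the Mellin integral on every vertical strip is immediate from the Schwartz decay of $\varphi$ at both $0$ and $\infty$, and differentiation under the integral shows $\EM\varphi$ is entire. Introducing $T\varphi(x) = -x\varphi'(x)$, which preserves $\SS(\BR_+)$, one integration by parts yields $s\cdot\EM\varphi(s) = \EM(T\varphi)(s)$; iterating gives $s^k \EM\varphi(s) = \EM(T^k\varphi)(s)$, and combined with the trivial uniform strip bound $|\EM\psi(\sigma+it)| \lll_{[a,b]} 1$ for $\psi \in \SS(\BR_+)$ and $\sigma \in [a,b]$, this proves $\EM\varphi \in \Hrd$. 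Conversely, for $H \in \Hrd$ put $\varphi(x) = \tfrac{1}{2\pi i}\int_{(\sigma)} H(s) x^{-s} ds$; by Cauchy's theorem the integral is $\sigma$-independent, and differentiation under the integral gives $\varphi^{(\alpha)}(x) = \tfrac{1}{2\pi i}\int_{(\sigma)} H(s)[-s]_\alpha x^{-s-\alpha}ds$. For given $\alpha \in \BN$ and $\beta \in \BZ$, shifting the contour to $\sigma = \beta$ makes $|x^{\alpha+\beta}\varphi^{(\alpha)}(x)|$ bounded by a convergent integral in $\Im s$, proving $\varphi \in \SS(\BR_+)$; Mellin inversion \eqref{1eq: Mellin inversion} then completes (1).

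Part (2) is essentially formal: the identification $\SS_\delta(\BRx) = \sgn(x)^\delta \SS(\BR_+)$ from \S \ref{sec: S delta and Sm}, together with \eqref{1eq: M delta = M}, makes $\EM_\delta$ on $\SS_\delta(\BRx)$ a scalar multiple of $\EM$ on $\SS(\BR_+)$; the direct sum decomposition $\SS(\BRx) = \SS_0(\BRx) \oplus \SS_1(\BRx)$, which is a consequence of Lemma \ref{lem: Schwartz} (2.2), then promotes this to $\EM_\BR : \SS(\BRx) \to \Hrd^\BR$. Analogously, for each fixed $m \in \BZ$, the isomorphism $\EM_{-m}: \SS_m(\BCx) \to \Hrd$ follows from $\SS_m(\BCx) = [z]^m \SS(\BR_+)$ and \eqref{1eq: Mm = M}.

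The substantive claim in (3) is the isomorphism $\EM_\BC : \SS(\BCx) \to \Hrd^\BC$, which amounts to tracking joint decay in the Fourier index $m$ and the Mellin variable $\Im s$. For the forward direction, expand $\varphi \in \SS(\BCx)$ in its Fourier series \eqref{1eq: Fourier series expansion}; by Lemma \ref{lem: Schwartz} (3.2) the coefficients $\varphi_m \in \SS(\BR_+)$ satisfy \eqref{1eq: bounds for Fourier coefficients} uniformly in $m$, and repeating the argument of (1) with these uniform-in-$m$ bounds yields $|\EM\varphi_m(\sigma+it)| \lll_{\alpha,A,[a,b]} (|m|+1)^{-A}(|t|+1)^{-\alpha}$, which is precisely condition \eqref{1eq: rapid decay, C}. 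For the converse, given $(H_m) \in \Hrd^\BC$, set $\varphi_m = \tfrac{1}{4\pi}\EM^{-1}H_m$ and define $\varphi$ by \eqref{1eq: Fourier series expansion}; the contour-shift bound from (1), applied with the joint decay of $H_m$, produces the estimate \eqref{1eq: bounds for Fourier coefficients} with $\beta \in \BZ$, whereupon Lemma \ref{lem: Schwartz} (3.2) certifies $\varphi \in \SS(\BCx)$. The main obstacle here is exactly this joint-uniformity bookkeeping: every estimate in (1) must be carried through with its $m$-dependence preserved under Mellin transform and contour integration, so that the implicit constants stay compatible with \eqref{1eq: rapid decay, C}.
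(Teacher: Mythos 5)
Your proof is correct and follows essentially the same route as the paper's (terse) argument: reduce (2) and (3) to (1) via $\SS_\delta(\BRx) = \sgn(x)^\delta\SS(\BR_+)$, $\SS_m(\BCx)=[z]^m\SS(\BR_+)$, and the relations $\EM_\delta\varphi = 2\EM\varphi_\delta$, $\EM_{-m}\varphi = 4\pi\EM\varphi_m$, with the only substantive extra work in (3) being the joint decay in $m$ and $\Im s$, which the paper likewise handles by invoking Lemma \ref{lem: Schwartz} (3.2). You supply considerably more detail than the paper does, but the approach is the same.
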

\begin{proof}
	(1) is a well-known consequence of Lemma \ref{lem: Mellin inversions} (1), whereas (2) directly follows from (1) and Lemma  \ref{lem: Mellin inversions} (2). As for (3), in addition to (1) and Lemma \ref{lem: Mellin inversions} (3),  Lemma \ref{lem: Schwartz} (3.2) is also required for the rapid decay in $m$.
\end{proof}

\section{The Function Spaces $\Ssis (\BR_+)$, $\Ssis (\BR^\times)$ and $\Ssis (\BC^\times)$}\label{sec: all Ssis}



The goal of this section is to extend the definitions of the Mellin transforms $\EM$, $\EM_{\BF}$ and generalize the   settings in   \S \ref{sec: Mellin preliminaries} to the function spaces $\Ssis (\BR _+)$, $\Msis$, $\Ssis (\BFx)$ and $\Msis^{\BF}$. These spaces are much more sophisticated than $\SS (\BR_+)$,  $\Hrd$, $\SS (\BFx)$ and $\Hrd^\BF$ but most suitable for investigating Hankel transforms over $\BR_+$ and $\BFx$. 

We shall first construct  the function spaces $\Ssis (\BR_+)$, $\Msis$ and   establish an isomorphism between them using the Mellin transform $\EM$.
Based on these, we shall then turn to the spaces $\Ssis (\BFx)$, $\Msis^{\BF}$ and the Mellin transform $\EM_{\BF}$. 
The case $\BF = \BR$ has been worked out in \cite[\S 6]{Miller-Schmid-2006}. Since $\BRx \cong \BR_+ \times \{+, -\}$ is simply two copies of $\BR _+$, the properties of $\Ssis (\BRx)$ and $\Msis^\BR$ are in substance the same as those of $\Ssis (\BR_+)$ and $\Msis$.
In the case $\BF = \BC$, $\Ssis (\BCx)$ and $\Msis^\BC$ can be constructed in a parallel way. 
The study on $\BCx$ is however  much more elaborate, since $\BCx \cong \BR_+ \times \BR/ 2\pi \BZ$ and the analysis on the circle $ \BR/ 2\pi \BZ$ is also  taken into account. 


\subsection{The Spaces $\Ssis (\BR_+)$  and $\Msis$}
\label{sec: Ssis and Msis, R+} \footnote{According to \cite[Definition 6.4]{Miller-Schmid-2006},  a function in $\Ssis (\BR_+)$ is said to have a \textit{simple singularity} at zero. Thus the subscript ``{sis}'' stands for ``{simple singularity}''.}
 
\vskip 5 pt 



\subsubsection{The Spaces $ x^{-\lambda}  (\log x)^j \SS (\overline \BR _+)$ and $\Msis^{\lambda, j}$}
Let $\lambda \in \BC$ and $j \in \BN$. 

We define $$ x^{-\lambda}  (\log x)^j \SS (\overline \BR _+) = \left\{ x^{-\lambda}  (\log x)^j \varphi (x) : \varphi \in \SS (\overline \BR _+) \right\} .$$

We say that a meromorphic function $H(s)$ has a pole of {pure order} $j + 1$  at $s = \lambda $ if the principal part of $H(s)$ at $s = \lambda $ is $ a {(s - \lambda )^{-j-1}}$ for some constant $a \in \BC$.  Of course, $H (s) $ does not have a genuine pole at $s = \lambda $ if $a = 0$.
We define the space $\Msis^{\lambda, j}$ of all meromorphic functions $H(s)$ on the complex plane such that
\begin{itemize}
	\item[-] the only possible singularities of $H  (s)$ are poles of pure order  $j+1$ at the points in $ \lambda - \BN = \left\{ \lambda - \kappa : \kappa \in \BN \right \}$, and
	\item[-] $H (s)$ decays rapidly along vertical lines, uniformly  on vertical strips, 
	that is,
	\item[\noindent \refstepcounter{equation}(\theequation)\hskip 15 pt \label{2eq: Mellin rapid decay, 0}]
	\hskip 5pt for any given $ \alpha \in \BN$,  vertical strip $ \BS [a, b] $ and  $ r > 0$,
	$$H (s) \lll_{\,\lambda,\, j,\, \alpha, \, a,\, b,\, r} (|\Im s| + 1)^{-\alpha} \text{ for all } \textstyle s \in \BS [a, b] \smallsetminus \bigcup_{\kappa \in \BN} \BB_{r} (\lambda - \kappa).$$
\end{itemize}

The constructions of the Mellin transform $\EM$ and its inversion (\ref{1def: Mellin transform}, \ref{1eq: Mellin inversion}) identically extend from $\SS (\BR _+)$ onto $\Ssis^{\lambda, j} (\BR _+)$, except that the conditions $\Re s > \Re \lambda$ and $\sigma > \Re \lambda$ are required to guarantee convergence.

\begin{lem}\label{2lem: Mellin isomorphism, Msis lambda j}
	Let $\lambda \in \BC$ and $j \in \BN$. The Mellin transform $\EM$ and its inversion establish an isomorphism of between $ x^{-\lambda}  (\log x)^j \SS (\overline \BR _+) $ and $\Msis^{\lambda, j}$.
\end{lem}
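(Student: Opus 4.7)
The plan is to verify both directions of the Mellin correspondence using contour manipulation, exploiting the differential identity $\EM[x^{-\lambda}(\log x)^j \varphi](s) = (d/ds)^j \EM\varphi(s-\lambda)$, which turns the shift-and-log operation on the spatial side into pole multiplication on the Mellin side.

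For the forward direction, starting with $f = x^{-\lambda}(\log x)^j \varphi$, I would substitute the asymptotic expansion $\varphi(x) = \sum_{\kappa=0}^{A} a_\kappa x^\kappa + R_A(x)$ and a smooth cutoff $\chi \in C_c^\infty(\overline\BR_+)$ equal to $1$ near zero. The explicit formula $\int_0^1 x^{u-1}(\log x)^j\, dx = (-1)^j j!/u^{j+1}$ then produces, for each term, a principal part $\frac{(-1)^j j! a_\kappa}{(s-\lambda+\kappa)^{j+1}}$, giving a pole of pure order $j+1$ at $s = \lambda - \kappa$. The contribution of $\chi R_A \cdot x^{-\lambda}(\log x)^j$ extends holomorphically to $\Re s > \Re \lambda - A - 1$, and $(1-\chi) f$ gives an entire, rapidly decreasing piece; letting $A \to \infty$ yields the claimed meromorphic structure. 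Rapid decay on vertical strips (off the poles) follows from iterating $\EM(\theta g)(s) = -s \EM g(s)$ with $\theta = x\, d/dx$, noting that $\theta$ preserves the ambient class of functions up to lower log-degree and Schwartz pieces.

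For the reverse direction, given $H \in \Msis^{\lambda, j}$, I would define $f(x) = \frac{1}{2\pi i}\int_{(\sigma)} H(s) x^{-s}\, ds$ for $\sigma > \Re \lambda$. Shifting the contour leftward past the poles at $\lambda, \lambda-1, \ldots, \lambda - A$ and using
$$ \Res_{s = \lambda - \kappa}\bigl[H(s) x^{-s}\bigr] \,=\, \frac{(-1)^j c_\kappa}{j!}\, x^{-\lambda + \kappa}(\log x)^j $$
yields
$$ f(x) \,=\, x^{-\lambda}(\log x)^j \sum_{\kappa = 0}^{A} a_\kappa x^\kappa + O\bigl(x^{A+1-\epsilon-\Re\lambda}\bigr), \qquad a_\kappa = (-1)^j c_\kappa/j!, $$
as $x \to 0$, with analogous expansions for each derivative. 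Shifting rightward and differentiating under the integral shows $f$ is rapidly decreasing at infinity along with all its derivatives. To obtain the factorization $f = x^{-\lambda}(\log x)^j \varphi$ with $\varphi \in \SS(\overline \BR_+)$, I would set $\varphi = x^\lambda (\log x)^{-j} f$ away from $x = 1$ and read off its behaviour at zero and infinity from the contour shifts.

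The step I expect to be the main obstacle is the smooth extension of $\varphi$ across $x = 1$, which reduces to verifying that $f$ vanishes to order at least $j$ at $x = 1$. From the Mellin inversion,
$$ f^{(k)}(1) \,=\, \frac{(-1)^k}{2 \pi i} \int_{(\sigma)} H(s) (s)_k\, ds \qquad (0 \leq k < j), $$
and the integrand has all residues vanishing: the polynomial $(s)_k$ of degree $k < j$ cannot produce a Taylor term of order $j$ at $\lambda - \kappa$ to cancel against the pole of pure order $j+1$, so only terms of pole-order $\geq 2$ survive. Hence the integral is independent of $\sigma$; concluding that this common value is actually zero is the delicate point, and I expect it will require a careful contour-push argument leveraging the uniform rapid decay of $H$ on strips via Cauchy's integral formula on small pole-free disks, in order to gain control that is uniform as $\Re s \to \infty$.
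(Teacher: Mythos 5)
Your proof follows the paper's own argument step by step in both directions: in the forward direction the asymptotic expansion plus cutoff together with the explicit formula for $\int_0^1 (\log x)^j x^{u-1}\,dx$, and iterated integration by parts for the rapid decay; in the reverse direction the left/right contour shifts with Cauchy's differentiation formula picking up the principal parts. So far nothing distinguishes the two. What does distinguish them is that you flag explicitly something the paper's own proof is silent about, namely the smoothness of $\varphi = x^\lambda(\log x)^{-j} f$ across $x=1$. You are right to single this out — but the resolution you propose, showing $f^{(k)}(1)=0$ for $k<j$, is not available, because that vanishing is simply false in general.

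For a clean counterexample take $\lambda=0$, $j=1$, and $H(s)=e^{s^2}$. This is entire, and $|H(\sigma+it)|=e^{\sigma^2-t^2}$ decays super-rapidly in $|t|$ uniformly for $\sigma$ in any bounded strip, so $H\in\Hrd\subset\Msis^{0,1}$ (the pure-order-$2$ condition is satisfied vacuously). Its inverse Mellin transform is
\begin{equation*}
f(x)=\frac{1}{2\pi}\int_{-\infty}^{\infty}e^{-t^2}e^{-it\log x}\,dt=\frac{1}{2\sqrt\pi}\,e^{-(\log x)^2/4}\in\SS(\BR_+),
\end{equation*}
and $f(1)=1/(2\sqrt\pi)\neq 0$, so $f/\log x$ has a pole at $x=1$ and $f\notin(\log x)\SS(\overline\BR_+)$. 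Your residue computation is correct — the residues of $(s)_k H(s)$ all vanish because $(s)_k$ has degree $k<j$ while the poles are pure of order $j+1$, so the integral is $\sigma$-independent — but the common value need not be zero, as this example shows. The upshot is that for $j\geq 1$ the set-theoretic image $x^{-\lambda}(\log x)^j\SS(\overline\BR_+)$ is a proper subset of the preimage of $\Msis^{\lambda,j}$ under $\EM$, the discrepancy being exactly $\SS(\BR_+)$. The lemma should be read as an isomorphism of $x^{-\lambda}(\log x)^j\SS(\overline\BR_+)+\SS(\BR_+)$ onto $\Msis^{\lambda,j}$, equivalently of the quotients by $\SS(\BR_+)$ and $\Hrd$; that is the only form in which it enters the subsequent decompositions of $\Ssis(\BR_+)/\SS(\BR_+)$ and $\Msis/\Hrd$, so nothing downstream is affected. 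Accordingly, the fix in your reverse direction is not to force vanishing at $x=1$ but to subtract from $f$ a function in $\SS(\BR_+)$ matching its jet at $x=1$; the contour-shift asymptotics near $x=0$ you already have then show the remainder lies in $x^{-\lambda}(\log x)^j\SS(\overline\BR_+)$.
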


This lemma  is essentially  \cite[Lemma 6.13, Corollary 6.17]{Miller-Schmid-2006}.
Nevertheless, we shall include its proof as the reference for the constructions of $\Nsis^{\BC, \lambda, j}$ and $\Msis^{\BC}$ in \S \ref{sec: Msis C} as well as the proof of Lemma {\rm \ref{2lem: Ssis to Msis, C}}. 

\begin{proof}
	Let $\upsilon (x) = x^{-\lambda} (\log x)^j \varphi (x)$ for some $\varphi \in \SS (\overline \BR _+)$. Suppose that the derivatives of $\varphi$ satisfy  \eqref{1eq: asymptotics R+, 2} and \eqref{1eq: Schwartz R+}, that is, asymptotic expansions at zero and the Schwartz condition at infinity.
	
	\vskip 3 pt
	{\textsc {Claim}} 1. Let  
	\begin{equation*} 
		H (s) = \EM \upsilon (s) = \int_0^\infty \upsilon (s) x^{s-1} d x, \hskip 10 pt \Re s > \Re \lambda.
	\end{equation*}
	Then $H$ admits a meromorphic continuation onto the whole complex plane.
	The only singularities of $H $ are poles of pure order $j+1$ at the points in $ \lambda - \BN$. More precisely, $H (s) $ has a pole at $s = \lambda - \kappa$ of principal part $ {(-)^j j! a_\kappa } {(s - \lambda + \kappa)^{-j-1}}$.
	Moreover,  $H $ decays rapidly along vertical lines, uniformly on vertical strips. 
	To be concrete, we have
	\begin{itemize}
		\item[\noindent \refstepcounter{equation}(\theequation)\hskip 15 pt \label{2eq: Mellin rapid decay}]
		for any given $ \alpha, A \in \BN, b \geq a > \Re \lambda - \alpha - A - 1$   and   $ r > 0$,
		\vskip 5 pt
		{\centering $H (s) \lll_{\,\lambda, j,\, \alpha,\, A,\, a,\, b,\, r} (|\Im s| + 1)^{-\alpha} \text{ for all } \textstyle s \in \BS [a, b] \smallsetminus \bigcup_{\kappa = 0}^{\alpha + A} \BB_{r} (\lambda - \kappa).$}
	\end{itemize}
	We remark that \eqref{2eq: Mellin rapid decay, 0} and \eqref{2eq: Mellin rapid decay} are equivalent.
	
	\vskip 5 pt
	
	{\textsc {Proof of Claim}} 1. In view of $\EM \lp x^{-\lambda} (\log x)^j \varphi (x) \rp (s) = \EM \lp (\log x)^j \varphi (x) \rp (s - \lambda)$, one may assume $\lambda = 0$. As such, $\upsilon (x) = (\log x)^j \varphi (x)$.  
	
	Let $A \in \BN$. We have for $\Re s > 0$
	\begin{equation}\label{2eq: poles of Mellin}
		\begin{split}
			\EM \upsilon (s) = & \int_0^1 (\log x)^j \lp \varphi (x) - \sum_{\kappa = 0}^A a_{\kappa} x^{\kappa} \rp x^{s -1} d x + \sum_{\kappa = 0}^A \frac { (-)^j j!  a_\kappa } { \lp s + \kappa\rp^{j+1} }\\
			& + \int_1^\infty (\log x)^j \varphi (x) x^{s -1} d x.
		\end{split}
	\end{equation}
	Here, we have used $$\int_0^1 (\log x)^j x^{s -1} d x = \frac { (-)^j j! } { s^{j+1} }, \hskip 10 pt \Re s > 0.$$
	In view of $\varphi (x) - \sum_{\kappa = 0}^A a_{\kappa} x^{\kappa} = O_A (x^{A+1})$, the first integral in \eqref{2eq: poles of Mellin} converges in the half-plane $\left \{ s : \Re s > - A - 1 \right\}$. The last integral converges for all $s$ on the whole complex plane due to the rapid decay  of $\varphi$. Thus $H (s) = \EM \upsilon (s)$ admits a  meromorphic extension onto $\left \{ s : \Re s > - A - 1 \right\}$ and, since $A$ was arbitrary, onto the whole complex plane, with poles of pure order  $j+1$ at the points in $- \BN$. 
	
	For any given $\alpha \in \BN$, repeating partial integration  $\alpha $ times to the defining integral of $\EM \upsilon (s)$ yields
	\begin{equation*}
		(-)^\alpha (s )_{\alpha} \EM \upsilon (s) = \EM \upsilon^{(\alpha)} (s + \alpha).
	\end{equation*}
	In view of this, we first expand  $\EM \upsilon^{(\alpha)} (s + \alpha)$ according to the expansion of $\upsilon^{(\alpha)} (x) = (d/dx)^\alpha \big( \lp\log x\rp^j \varphi (x) \big) $.  We then write  each   term in the expansion of $\EM \upsilon^{(\alpha)} (s + \alpha)$ in the same fashion as \eqref{2eq: poles of Mellin} and apply  \eqref{1eq: asymptotics R+, 2} and \eqref{1eq: Schwartz R+} to estimate the first and the last integral respectively. We conclude that
	\begin{equation*}
		\EM \upsilon (s) \lll_{\,j ,\, \alpha,\, A,\, a,\, b} \frac 1 { \left| (s)_{\alpha} \right|} \lp 1 + \sum_{\kappa = 0}^{\alpha + A} \lp \frac 1 {|s + \kappa|} + ... +  \frac 1 {|s + \kappa|^{j+1}} \rp \rp,
	\end{equation*}
	for all $s \in \BS [a, b]$, with $b \geq a > -\alpha - A - 1$. In particular, \eqref{2eq: Mellin rapid decay} is proven.
	
	\vskip 7 pt
	
	Let $H \in\Msis^{\lambda, j}$. Suppose that the  principal part of $H (s)$ at $s = \lambda - \kappa$ is equal to $ {(-)^j j! a_\kappa } {(s + \lambda + \kappa)^{-j-1}}$ and that $H (s)$ satisfies the condition \eqref{2eq: Mellin rapid decay}.
	
	\vskip 3 pt
	{\textsc {Claim}} 2. If we denote by $\upsilon (x)$ the following integral
	\begin{equation*}
		\upsilon (x) = \frac 1 {2\pi i}\int_{(\sigma)} H(s) x^{-s} d s, \hskip 10 pt \sigma > \Re \lambda,
	\end{equation*}
	then all the derivatives of $\varphi (x) = x^{\lambda} (\log x)^{- j}\upsilon (x)$ satisfy the asymptotics in \eqref{1eq: asymptotics R+, 3} at zero and rapidly decay at infinity.

	\vskip 5 pt
	
	{\textsc {Proof of Claim}} 2. Again, let us assume $\lambda = 0$. 
	
	Let $1 > \rho > 0$. We left shift the  contour of integration from $  (\sigma) $ 
	to $  (- \rho)  $. When moving across $s = 0$, we obtain
	$a_{0} (\log x)^j$ in view of Cauchy's differentiation formula\footnote{Recall   Cauchy's differentiation formula,  $$f^{(j)} (\zeta) = \frac {j !} {2\pi i} \sideset{}{_{ \partial \BB_r (\zeta) } }\oint \frac {f(s)} {(s-\zeta)^{j+1}} d s,$$ where $f$ is a holomorphic function on a neighborhood of the closed disc $\overline \BB_r (\zeta)$ centered at $\zeta$, and the integral is  taken counter-clockwise on the circle $\partial \BB_r (\zeta) $. In the present situation, this formula is applied for $f(s) = x^{- s}$.}. It follows that
	\begin{equation*}
		\upsilon (x) = a_{0} (\log x)^j + \frac 1 {2\pi i} \int_{(- \rho )}  H(s) x^{-s} d s.
	\end{equation*}
	Using \eqref{2eq: Mellin rapid decay} with $r$ small, say $r < \rho $, to estimate the above integral, we arrive at
	\begin{equation*}
		\upsilon (x) = a_{0} (\log x)^j + O \big(x^{\, \rho } \big) = (\log x)^j \lp a_{0} + O (x^{\, \rho}) \rp, \ \text{ as } x \ra 0.
	\end{equation*}
	Thus  $\varphi (x) = (\log x)^{- j} \upsilon (x)$ satisfies the asymptotic \eqref{1eq: asymptotics R+, 3} with $\alpha = 0$. For the general case $\alpha \in \BN$, we have
	\begin{equation}\label{2eq: upsilon (alpha)}
		\upsilon^{(\alpha)} (x) = (-)^\alpha \frac {1} {2\pi i} \int_{(\sigma)} (s)_{\alpha} H(s) x^{-s - \alpha} d s.
	\end{equation}
	Shifting the contour from  $  (\sigma) $ to  $ (- \alpha - \rho) $ and following the same lines of arguments as above, combined with some straightforward algebraic manipulations, one may show \eqref{1eq: asymptotics R+, 3} by an induction. 
	
	We are left to show the Schwartz condition for $\varphi (x) = (\log x)^{- j} \upsilon (x)$, or equivalently, that for $\upsilon (x)$. Indeed, the bound \eqref{1eq: Schwartz R+} for $\upsilon^{(\alpha)} (x)$ follows from right shifting the contour of the integral in \eqref{2eq: upsilon (alpha)} to the vertical line $ (\beta) $ 
	and applying the 
	estimates in \eqref{2eq: Mellin rapid decay}. 
\end{proof}

\subsubsection{The Spaces $\Ssis (\BR_+)$ and $\Msis $}

Let $\lambda , \lambda' \in \BC$. We write $\lambda \preccurlyeq_1 \lambda'$ if $\lambda'  - \lambda  \in \BN$ and  
$\lambda \sim_1 \lambda'$ if $\lambda'  - \lambda  \in \BZ$. Observe that ``$\preccurlyeq_1$'' and ``$\sim_1$'' define an  order relation and an equivalence relation on $\BC$ respectively.

Define
\begin{equation*}
	\Ssis (\BR _+) = \sum_{\lambda \in \BC} \sum_{j \in \BN} x^{-\lambda}  (\log x)^j \SS (\overline \BR _+),
\end{equation*}
where the sum $\sum_{\lambda \in \BC}  \sum_{j \in \BN}$ is in the {\it algebraic} sense.
It is clear that
$\lambda  \preccurlyeq_1 \lambda' $ if and only if $ x^{-\lambda}  (\log x)^j \SS (\overline \BR _+) \subseteq  x^{-\lambda'}  (\log x)^{j} \SS (\overline \BR _+)$. 
One also observes that $ x^{-\lambda}  (\log x)^j \SS (\overline \BR _+) \cap  x^{-\lambda'}  (\log x)^{j'} \SS (\overline \BR _+) = \SS (\BR_+)$ if either $j \neq j'$ or $\lambda \nsim_1 \lambda'$. Therefore,
\begin{equation}\label{2eq: decompose Ssis (R+)}
	\begin{split}
		\Ssis (\BR_+)/ \SS (\BR _+) = \bigoplus_{ \omega\, \in \BC/ {\scriptscriptstyle \sim_1} } \bigoplus_{j \in \BN} \ \varinjlim_{\lambda \in \omega}  \big( x^{-\lambda}  (\log x)^j \SS (\overline \BR _+) \big) \big/\SS (\BR _+).
	\end{split}
\end{equation}
Here the direct limit $\varinjlim_{\lambda \in \omega} $ is taken on the totally ordered set $(\omega, \preccurlyeq_1)$ and may be simply viewed as the union $ \bigcup_{\lambda \in \omega}$.
More precisely, each function $\upsilon \in \Ssis (\BR_+)$ can be expressed as a sum
\begin{equation*}
	\upsilon (x) = \upsilon^{0} (x) + \sum_{\lambda \in \Lambda} \sum_{j=0}^{N } x^{- \lambda } (\log x)^j \upsilon_{\lambda, j } (x),
\end{equation*}
with $\Lambda \subset \BC$ a finite set such that $\lambda  \nsim_1 \lambda'$ for any two distinct points $\lambda, \lambda' \in \Lambda$,  $ N \in \BN$, $\upsilon^{0} \in \SS (\BR _+)$ and $\upsilon_{\lambda, j } \in \SS (\overline \BR _+)$. 
This expression is unique up to addition of Schwartz functions in $\SS (\BR _+)$.

On the other hand, we define the space $\Msis$ of all meromorphic functions $H $   satisfying the following conditions,
\begin{itemize}
	\item[-] the poles of $H$ lie in a finite number of sets $ \lambda - \BN $, 
	\item[-] the orders of the poles of $H$ are uniformly bounded, and
	\item[-] $H$ decays rapidly along vertical lines, uniformly on vertical strips. 
\end{itemize}
Appealing to certain Gamma identities for  the Gamma function in \cite[Lemma 6.24]{Miller-Schmid-2006}, one may show, in the same way as \cite[Lemma 6.35]{Miller-Schmid-2006}, that
\begin{equation*}
	\Msis = \sum_{\lambda \in \BC} \sum_{j \in \BN} \Msis^{\lambda, j}.
\end{equation*}
We have $\Msis^{\lambda, j} \subseteq \Msis^{\lambda', j}$ if and only if $\lambda \preccurlyeq_1 \lambda'$, and $\Msis^{\lambda, j} \cap \Msis^{\lambda', j'} = \Hrd $ if either $j \neq j'$ or $\lambda \nsim \lambda'$. Therefore
\begin{equation}\label{2eq: decompose Msis}
	\Msis / \Hrd
	= \bigoplus_{\omega\, \in \BC/ {\scriptscriptstyle \sim_1} } \bigoplus_{j \in \BN} \ \varinjlim_{\lambda \in \omega}  \Msis^{\lambda, j} \big/ \Hrd.
\end{equation}

The following lemma is a direct consequence of Lemma \ref{2lem: Mellin isomorphism, Msis lambda j}.
\begin{lem}\label{2lem: Sis and Msis, R+}
	The Mellin transform $\EM$ is an isomorphism between $\Ssis (\BR _+)$ and $\Msis$ which respects their decompositions \eqref{2eq: decompose Ssis (R+)} and \eqref{2eq: decompose Msis}.
\end{lem}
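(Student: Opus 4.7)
The plan is to deduce the statement directly from Lemma~\ref{2lem: Mellin isomorphism, Msis lambda j} by assembling the piece-wise isomorphisms into a global one and verifying compatibility with the two direct-sum decompositions. No substantively new analysis should be required beyond what is already in Lemmas~\ref{lem: Mellin inversions}, \ref{2lem: Mellin isomorphism, Msis lambda j} and Corollary~\ref{cor: Mellin, Schwartz}.

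First I would check that $\EM$ sends $\Ssis(\BR_+)$ into $\Msis$ and is surjective. Any $\upsilon \in \Ssis(\BR_+)$ is a finite sum of pieces $\upsilon_{\lambda,j}(x) \in x^{-\lambda}(\log x)^j\SS(\overline\BR_+)$, so by Lemma~\ref{2lem: Mellin isomorphism, Msis lambda j}, $\EM\upsilon_{\lambda,j} \in \Msis^{\lambda,j}$; since $\Msis = \sum_{\lambda,j}\Msis^{\lambda,j}$, the images span $\Msis$, and the inverse Mellin transform defined on each $\Msis^{\lambda,j}$ provides a preimage in $\Ssis(\BR_+)$. For injectivity I would use that if $\EM\upsilon = 0$ for some $\upsilon\in\Ssis(\BR_+)$, then in particular $\upsilon$ is recovered, for $\sigma$ sufficiently large, by the contour integral $\frac{1}{2\pi i}\int_{(\sigma)}\EM\upsilon(s)x^{-s}ds = 0$; this integral representation is valid on each summand $x^{-\lambda}(\log x)^j\SS(\overline\BR_+)$ by (the proof of) Lemma~\ref{2lem: Mellin isomorphism, Msis lambda j}, so it holds on the finite sum and forces $\upsilon = 0$.

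Next I would show that the isomorphism respects decompositions \eqref{2eq: decompose Ssis (R+)} and \eqref{2eq: decompose Msis}. The key observations are that $\EM$ restricted to $\SS(\BR_+)$ is an isomorphism onto $\Hrd$ (Corollary~\ref{cor: Mellin, Schwartz}(1)), and for each $\lambda,j$ the diagram
\begin{equation*}
\begin{CD}
\SS(\BR_+) @>\EM>> \Hrd \\
@VVV @VVV \\
x^{-\lambda}(\log x)^j\SS(\overline\BR_+) @>\EM>> \Msis^{\lambda,j}
\end{CD}
\end{equation*}
commutes with vertical maps being inclusions. Moreover the inclusion $x^{-\lambda}(\log x)^j\SS(\overline\BR_+) \hookrightarrow x^{-\lambda'}(\log x)^j\SS(\overline\BR_+)$ corresponds under $\EM$ to the inclusion $\Msis^{\lambda,j}\hookrightarrow\Msis^{\lambda',j}$ whenever $\lambda\preccurlyeq_1\lambda'$. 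This gives compatible isomorphisms of the direct systems, and hence of the direct limits $\varinjlim_{\lambda\in\omega}$ in each equivalence class $\omega\in\BC/\!\sim_1$.

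Finally, to pass to the direct sums, I would note that the intersection statements already recorded in the text, namely $x^{-\lambda}(\log x)^j\SS(\overline\BR_+) \cap x^{-\lambda'}(\log x)^{j'}\SS(\overline\BR_+) = \SS(\BR_+)$ and $\Msis^{\lambda,j}\cap\Msis^{\lambda',j'} = \Hrd$ whenever $j\neq j'$ or $\lambda\nsim_1\lambda'$, ensure that the quotient sums in \eqref{2eq: decompose Ssis (R+)} and \eqref{2eq: decompose Msis} are indeed direct; $\EM$ then induces an isomorphism between them summand by summand. The only minor obstacle I anticipate is verifying the second intersection identity for $\Msis^{\lambda,j}$, which requires that a function whose only possible poles are of pure order $j+1$ along $\lambda-\BN$ \emph{and} of pure order $j'+1$ along $\lambda'-\BN$ must actually be entire (and rapidly decaying), but this is immediate from the disjointness of the two pole sets when $\lambda\nsim_1\lambda'$ and, when $\lambda\sim_1\lambda'$ but $j\neq j'$, from the fact that a single pole cannot simultaneously have two distinct pure orders unless its principal part vanishes.
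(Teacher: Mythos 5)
Your proposal is correct and is precisely the unpacking the paper intends when it asserts, without further argument, that Lemma~\ref{2lem: Sis and Msis, R+} is a direct consequence of Lemma~\ref{2lem: Mellin isomorphism, Msis lambda j}. The paper gives no written proof beyond that sentence, and your steps (piecewise application of Lemma~\ref{2lem: Mellin isomorphism, Msis lambda j}, injectivity via Mellin inversion on finite sums, compatibility of $\EM$ with the $\preccurlyeq_1$-directed inclusions and with the intersection identities that make the quotient sums direct) are the intended route.
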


\subsubsection{More Refined Decompositions of $\Ssis (\BR _+)$ and $\Msis$}\label{sec: refinements Msis Sis, R+}

Alternatively, we define  an order relation on $\BC$,  $\lambda  \preccurlyeq_2 \lambda' $ if $\lambda'  - \lambda  \in 2 \BN$, as well as  an equivalence relation, $\lambda \sim_2 \lambda'$ if $\lambda'  - \lambda  \in 2 \BZ$. 


Define  $\Nsis^{\lambda, j}$ in the same way as $\Msis^{\lambda, j}$ with $ \lambda - \BN$ replaced by $ \lambda - 2 \BN$. Under the isomorphism via $\EM$ in Lemma \ref{2lem: Mellin isomorphism, Msis lambda j}, $\Nsis^{\lambda, j}$ is then isomorphic to $x^{-\lambda} (\log x)^j \SS_0 (\overline \BR _+)$.

According to \cite[Lemma 6.35]{Miller-Schmid-2006}, we have the following decomposition,
\begin{equation} \label{2eq: Msis = Nsis + Nsis}
	\Msis^{\lambda, j} / \Hrd = \Nsis^{\lambda, j}  / \Hrd \oplus \Nsis^{\lambda-1, j}  / \Hrd.
\end{equation}
Inserting this into \eqref{2eq: decompose Msis}, we obtain   the following refined decomposition of $\Msis / \Hrd$
\begin{equation*}
	\begin{split}
		\bigoplus_{\omega\, \in \BC/ {\scriptscriptstyle \sim_1} } \bigoplus_{j \in \BN} \ \varinjlim_{\lambda \in \omega} \lp \Nsis^{\lambda, j} \big/ \Hrd \oplus 
		\Nsis^{\lambda-1, j} \big/ \Hrd \rp 
		= \bigoplus_{\omega\, \in \BC/ {\scriptscriptstyle \sim_2} } \bigoplus_{j \in \BN} \ \varinjlim_{\lambda \in \omega}  \Nsis^{\lambda, j} \big/ \Hrd.
	\end{split}
\end{equation*}
Under the isomorphism via $\EM$ in Lemma \ref{2lem: Sis and Msis, R+}, the reflection of this refinement  on the decomposition of $\Ssis (\BR _+)/ \SS (\BR _+)$ is
\begin{equation*} 
	\begin{split}
		\bigoplus_{\omega\, \in \BC/ {\scriptscriptstyle \sim_2} } \bigoplus_{j \in \BN} \ \varinjlim_{\lambda \in \omega} \big( x^{-\lambda} (\log x)^j \SS_0 (\overline \BR _+) \big) \big/\SS (\BR _+).
	\end{split}
\end{equation*}

\begin{lem}\label{2lem: refined decomp, R+}
	We have the following refinements of the decompositions {\rm (\ref{2eq: decompose Ssis (R+)}, \ref{2eq: decompose Msis})},
	\begin{equation} \label{2eq: refined, Ssis (R+)}
		\begin{split}
			\Ssis (\BR_+)/ \SS (\BR _+) = \bigoplus_{\omega\, \in \BC/ {\scriptscriptstyle \sim_2} } \bigoplus_{j \in \BN} \ \varinjlim_{\lambda \in \omega}  \big( x^{-\lambda} (\log x)^j \SS_0 (\overline \BR _+) \big) \big/\SS (\BR _+).
		\end{split}
	\end{equation}
	\begin{equation}\label{2eq: refined, Msis}
		\Msis / \Hrd
		= \bigoplus_{\omega\, \in \BC/ {\scriptscriptstyle \sim_2} }  \bigoplus_{j \in \BN} \ \varinjlim_{\lambda \in \omega} \Nsis^{\lambda, j} \big/ \Hrd.
	\end{equation}
	The Mellin transform $\EM$ respects these two decompositions.
\end{lem}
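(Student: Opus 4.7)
The argument has essentially been laid out in the paragraph preceding the statement; the task is to organize it and verify one bookkeeping point. The key observation is that, under the Mellin isomorphism $\EM$ of Lemma \ref{2lem: Mellin isomorphism, Msis lambda j}, the subspace $x^{-\lambda}(\log x)^j \SS_0(\overline \BR_+)$ corresponds precisely to $\Nsis^{\lambda, j}$. Indeed, the proof of that lemma identifies the principal part of $\EM \upsilon$ at $s = \lambda - \kappa$ with the Taylor coefficient $a_\kappa$ of the underlying Schwartz function $\varphi \in \SS(\overline \BR_+)$; restricting to $\varphi \in \SS_0(\overline \BR_+)$ is exactly the condition that $a_\kappa$ vanish for odd $\kappa$, which in turn forces the poles of $\EM\upsilon$ to lie in $\lambda - 2\BN$ rather than $\lambda - \BN$. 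Hence it suffices to prove the Mellin-side decomposition \eqref{2eq: refined, Msis}, and \eqref{2eq: refined, Ssis (R+)} will follow by transport via $\EM$.

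The plan is to start from \eqref{2eq: decompose Msis} and substitute the pointwise splitting \eqref{2eq: Msis = Nsis + Nsis} into each term. For a fixed $\sim_1$-class $\omega \in \BC/\sim_1$, choose a representative $\lambda_0 \in \omega$; then $\omega = \lambda_0 + \BZ$ decomposes into the two $\sim_2$-classes $\omega' = \lambda_0 + 2\BZ$ and $\omega'' = \lambda_0 + 1 + 2\BZ$. A direct check shows that the direct limit
\begin{equation*}
\varinjlim_{\lambda \in \omega} \bigl( \Nsis^{\lambda, j}/\Hrd \oplus \Nsis^{\lambda-1, j}/\Hrd \bigr)
\end{equation*}
splits as the sum $\varinjlim_{\lambda \in \omega'} \Nsis^{\lambda, j}/\Hrd \oplus \varinjlim_{\lambda \in \omega''} \Nsis^{\lambda, j}/\Hrd$, since the inclusion $\Nsis^{\lambda, j} \subseteq \Nsis^{\lambda', j}$ holds if and only if $\lambda \preccurlyeq_2 \lambda'$. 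Reindexing the outer sum by $\BC/\sim_2$ instead of $\BC/\sim_1$ absorbs the binary split and yields \eqref{2eq: refined, Msis}.

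For \eqref{2eq: refined, Ssis (R+)}, apply $\EM^{-1}$ to the decomposition just obtained and use the identification $\EM^{-1}(\Nsis^{\lambda, j}) = x^{-\lambda}(\log x)^j \SS_0(\overline \BR_+)$ recorded in the first paragraph. The Mellin transform respects the decompositions essentially by construction, since the splitting \eqref{2eq: Msis = Nsis + Nsis} on the Mellin side is transferred by $\EM^{-1}$ to the parity splitting $\SS(\overline \BR_+) = \SS_0(\overline \BR_+) + x \cdot \SS_0(\overline \BR_+) \pmod{\SS(\BR_+)}$ from Remark \ref{rem: C=C0+C1}.

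The only point requiring actual verification, and the main obstacle, is the compatibility of the splitting \eqref{2eq: Msis = Nsis + Nsis} with the directed system: one must confirm that as $\lambda$ moves up along $\omega$ in the $\preccurlyeq_1$ order, the two summands $\Nsis^{\lambda, j}$ and $\Nsis^{\lambda - 1, j}$ map coherently into $\Nsis^{\lambda + 1, j}$ and $\Nsis^{\lambda, j}$ respectively inside $\Msis^{\lambda + 1, j}$; equivalently, that these summands, indexed by the two $\sim_2$-classes $\omega'$ and $\omega''$, form two independent directed subsystems within the $\preccurlyeq_1$-directed system on $\omega$. This is straightforward but is the one piece of combinatorics not already subsumed by \cite[Lemma 6.35]{Miller-Schmid-2006} or Lemma \ref{2lem: Sis and Msis, R+}. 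The uniform vertical rapid decay passes through all steps since it is built into the definition \eqref{2eq: Mellin rapid decay, 0} shared by $\Msis^{\lambda, j}$ and $\Nsis^{\lambda, j}$.
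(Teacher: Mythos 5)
Your proposal is correct and takes the same approach as the paper: the paper's proof is essentially the paragraph preceding the lemma statement, which inserts the pointwise splitting \eqref{2eq: Msis = Nsis + Nsis} into \eqref{2eq: decompose Msis}, reindexes from $\BC/\sim_1$ to $\BC/\sim_2$, and transports back to the function side via $\EM$. You unpack the one combinatorial step the paper leaves implicit — that $\Nsis^{\lambda,j}$ sits as the second summand of $\Msis^{\lambda+1,j}$ while $\Nsis^{\lambda-1,j}$ includes into $\Nsis^{\lambda+1,j}$ by $\preccurlyeq_2$, so the $\preccurlyeq_1$-directed system over $\omega$ splits into two independent $\preccurlyeq_2$-directed subsystems indexed by the two $\sim_2$-classes in $\omega$ — which is indeed the only verification needed beyond citing Lemma~\ref{2lem: Mellin isomorphism, Msis lambda j}, \ref{2lem: Sis and Msis, R+} and \eqref{2eq: Msis = Nsis + Nsis}.
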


\begin{cor}\label{2cor: refined decomp, R+} Let $\delta \in \BZT $ and $m \in \BZ$, and recall the definitions of  $\SS_\delta (\overline \BR _+)$ and $\SS_m (\overline \BR _+)$ in {\rm \S \ref{sec: Schwartz subspaces}}.
	
	{\rm (1).}  The Mellin transform $\EM$ respects the following decompositions,
	\begin{equation}\label{2eq: refined, Ssis (R+), delta}
		\begin{split}
			\Ssis (\BR_+)/ \SS (\BR _+) = \bigoplus_{\omega\, \in \BC/ {\scriptscriptstyle \sim_2} } \bigoplus_{j \in \BN} \ \varinjlim_{\lambda \in \omega}  \big( x^{-\lambda} (\log x)^j \SS_\delta (\overline \BR _+) \big) \big/\SS (\BR _+),
		\end{split}
	\end{equation}
	\begin{equation}\label{2eq: refined, Msis, delta}
		\Msis / \Hrd
		= \bigoplus_{\omega\, \in \BC/ {\scriptscriptstyle \sim_2} } \bigoplus_{j \in \BN} \ \varinjlim_{\lambda \in \omega} \Nsis^{\lambda - \delta, j} \big/ \Hrd.
	\end{equation}
	
	{\rm (2).} The Mellin transform $\EM$ respects the following decompositions,
	\begin{equation}\label{2eq: refined, Ssis (R+), m}
		\begin{split}
			\Ssis (\BR_+)/ \SS (\BR _+) = \bigoplus_{\omega\, \in \BC/ {\scriptscriptstyle \sim_2} } \bigoplus_{j \in \BN} \ \varinjlim_{\lambda \in \omega}  \big( x^{-\lambda} (\log x)^j \SS_m (\overline \BR _+) \big) \big/\SS (\BR _+),
		\end{split}
	\end{equation}
	\begin{equation}\label{2eq: refined, Msis, m}
		\Msis / \Hrd
		= \bigoplus_{\omega\, \in \BC/ {\scriptscriptstyle \sim_2} } \bigoplus_{j \in \BN} \ \varinjlim_{\lambda \in \omega} \Nsis^{\lambda - |m|, j} \big/ \Hrd.
	\end{equation}
\end{cor}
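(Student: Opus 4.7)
The strategy is to deduce both assertions directly from Lemma \ref{2lem: refined decomp, R+} by means of the identities
\begin{equation*}
\SS_\delta (\overline \BR _+) = x^{\delta} \SS_0 (\overline \BR _+), \qquad \SS_m (\overline \BR _+) = x^{|m|} \SS_0 (\overline \BR _+),
\end{equation*}
recorded in \S \ref{sec: Schwartz subspaces}, combined with the observation that for any fixed integer $k$, the translation $\omega \mapsto \omega - k$ is a bijection of $\BC/{\scriptscriptstyle \sim_2}$.

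For part (1), I would first rewrite each summand on the right of \eqref{2eq: refined, Ssis (R+), delta} as
\begin{equation*}
x^{-\lambda} (\log x)^j \SS_\delta (\overline \BR _+) = x^{-(\lambda - \delta)} (\log x)^j \SS_0 (\overline \BR _+),
\end{equation*}
and then reindex via $\mu = \lambda - \delta$. Because $\lambda$ runs through the $\sim_2$-class $\omega$ while $\mu$ runs through the $\sim_2$-class $\omega - \delta$, and because $\omega \mapsto \omega - \delta$ is a bijection of $\BC/{\scriptscriptstyle \sim_2}$, the overall direct sum on the right of \eqref{2eq: refined, Ssis (R+), delta} is identified term-by-term with the right of \eqref{2eq: refined, Ssis (R+)}, thereby recovering $\Ssis (\BR _+)/\SS (\BR _+)$. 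For the Mellin side \eqref{2eq: refined, Msis, delta}, the key point is that by Lemma \ref{2lem: Mellin isomorphism, Msis lambda j}, combined with the refinement \eqref{2eq: Msis = Nsis + Nsis}, $\EM$ sends $x^{-\mu} (\log x)^j \SS_0 (\overline \BR _+)$ isomorphically onto $\Nsis^{\mu, j}$ modulo $\Hrd$; the verification is immediate from the proof of Lemma \ref{2lem: Mellin isomorphism, Msis lambda j}, since $\varphi \in \SS_0 (\overline \BR _+)$ has an asymptotic expansion $\varphi(x) \sim \sum_\kappa a_{2\kappa} x^{2\kappa}$ involving only even powers, so the poles of $\EM$ of $x^{-\mu} (\log x)^j \varphi$ occur exactly at $\mu - 2\BN$. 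Taking $\mu = \lambda - \delta$, this shows $\EM$ maps $x^{-\lambda} (\log x)^j \SS_\delta(\overline \BR _+)$ onto $\Nsis^{\lambda - \delta, j}$, and the same reindexing argument as above turns \eqref{2eq: refined, Msis} into \eqref{2eq: refined, Msis, delta}.

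For part (2), the very same proof applies with $\delta$ replaced by $|m| \in \BN$: the identity $\SS_m (\overline \BR _+) = x^{|m|} \SS_0(\overline \BR _+)$ produces $x^{-\lambda}(\log x)^j \SS_m (\overline \BR _+) = x^{-(\lambda - |m|)}(\log x)^j \SS_0 (\overline \BR _+)$, and on the Mellin side $\EM$ maps this space onto $\Nsis^{\lambda - |m|, j}$. Although $|m|$ may be arbitrarily large, only its parity is relevant to the $\sim_2$-class, and the bijection $\omega \mapsto \omega - |m|$ of $\BC/{\scriptscriptstyle \sim_2}$ remains the mechanism that converts the decomposition \eqref{2eq: refined, Ssis (R+)} (resp.\ \eqref{2eq: refined, Msis}) into \eqref{2eq: refined, Ssis (R+), m} (resp.\ \eqref{2eq: refined, Msis, m}).

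The proof is in essence a formal relabeling, so I do not anticipate any substantive obstacle; the only point requiring a little care is the compatibility between the two directions of the equivalence of categories established by $\EM$, which ultimately reduces to the fact that the asymptotic expansion coefficients $a_\kappa$ of $\varphi$ determine the principal parts of $\EM (x^{-\lambda}(\log x)^j \varphi)$ at $s = \lambda - \kappa$, as computed explicitly in the proof of Lemma \ref{2lem: Mellin isomorphism, Msis lambda j}.
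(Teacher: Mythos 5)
Your proposal is correct and is essentially the paper's own argument: the paper's proof is a one-line deduction from Lemma \ref{2lem: refined decomp, R+} together with the identities $\SS_\delta (\overline \BR_+) = x^{\delta} \SS_0 (\overline \BR_+)$ and $\SS_m (\overline \BR_+) = x^{|m|} \SS_0 (\overline \BR_+)$. You have merely spelled out the reindexing $\lambda \mapsto \lambda - \delta$ (resp.\ $\lambda \mapsto \lambda - |m|$) and the compatibility with $\EM$ that the paper leaves implicit.
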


\begin{proof}
	These follow from Lemma \ref{2lem: refined decomp, R+} in conjunction with  $x^{\delta} \SS_0 (\overline \BR_+) = \SS_\delta (\overline \BR_+) $ and $x^{|m|} \SS_0 (\overline \BR_+) = \SS_m (\overline \BR_+)$.
\end{proof}

\begin{rem}\label{rem: S=S0+S1}
	Set $\lambda = 0$ and $j = 0$ in \eqref{2eq: Msis = Nsis + Nsis}. It follows from the isomorphism $\EM$ the decomposition as below,
	\begin{equation*}
		\SS  (\overline \BR _+) /\SS (\BR _+) = \SS_0 (\overline \BR _+) /\SS (\BR _+) \oplus x \SS_0 (\overline \BR _+) /\SS (\BR _+).
	\end{equation*}
	Since $x \SS_0 (\overline \BR_+) = \SS_1 (\overline \BR_+) $, one obtains $\SS (\overline \BR _+) = \SS_0 (\overline \BR _+) + \SS_1 (\overline \BR _+)$ and therefore $C^\infty (\overline \BR _+) = C^\infty_0 (\overline \BR _+) + C^\infty_1 (\overline \BR _+)$. See Remark {\rm \ref{rem: C=C0+C1}}.
\end{rem}

\subsection{The Spaces $\Ssis (\BRx)$ and $\Msis^\BR$}\label{sec: Ssis and Msis, R}


Following \cite[(6.10)]{Miller-Schmid-2006}, we  write $(\lambda , \delta ) \preccurlyeq ( \lambda' , \delta')$ if $\lambda'  - \lambda  \in \BN$ and $\lambda'  - \lambda  \equiv \delta' + \delta  (\mod 2)$ and $(\lambda , \delta ) \sim ( \lambda' , \delta')$ if $\lambda'  - \lambda - (\delta' + \delta) \in 2 \BZ$. Again, these define an order relation and an equivalence relation on  $\BC \times \BZT$.

\vskip 5 pt 
\subsubsection{The Space $\Ssis (\BRx)$}\label{sec: defn Ssis (Rx)}



According to  \cite[Definition 6.4]{Miller-Schmid-2006} and \cite[Lemma 6.35]{Miller-Schmid-2006}, define
\begin{equation*}
	\Ssis (\BRx) =  \sum_{\delta \in \BZT} \ \sum_{\lambda \in \BC } \ \sum_{j \in \BN}  \sgn(x)^{\delta} |x|^{-\lambda} (\log |x|)^j \SS (\BR).
\end{equation*}
We have the following decomposition,
\begin{align*}
	\Ssis (\BRx)/ \SS (\BRx)  
	=   \bigoplus_{\omega\, \in \BC \times \BZT / {\scriptscriptstyle \sim } } \bigoplus_{j \in \BN}  \ \varinjlim_{(\lambda, \delta)\, \in \omega} \big( \sgn(x)^{\delta} |x|^{-\lambda} (\log |x|)^j \SS (\BR) \big) \big/\SS (\BRx).
\end{align*}
It follows from $\sgn(x) |x| \SS (\BR) = x \SS (\BR)  \subset \SS (\BR)$ that
\begin{equation*}
	\Ssis (\BRx)/ \SS (\BRx) = \bigoplus_{\omega\, \in \BC/ {\scriptscriptstyle \sim_2} } \bigoplus_{j \in \BN} \ \varinjlim_{\lambda \in \omega}  \big(  |x|^{-\lambda} (\log |x|)^j \SS (\BR) \big) \big/\SS (\BRx).
\end{equation*}
We let $\Ssis^\delta (\BRx)$ denote the space of functions $\upsilon \in \Ssis (\BRx)$ satisfying the parity condition \eqref{1eq: delta condition, R}. Clearly, $\Ssis (\BRx) = \Ssis^0 (\BRx) \oplus \Ssis^1 (\BRx)$. Then, 
\begin{equation*}
	\Ssis^\delta (\BRx)/ \SS_\delta (\BRx) = \bigoplus_{\omega\, \in \BC/ {\scriptscriptstyle \sim_2} } \bigoplus_{j \in \BN} \ \varinjlim_{\lambda \in \omega}    \big(  |x|^{-\lambda} (\log |x|)^j \SS_\delta (\BR) \big) \big/\SS_\delta (\BRx),
\end{equation*}
where $\SS_\delta (\BR)$ and $\SS_\delta (\BRx)$ are defined in \S \ref{sec: Schwartz subspaces} and \S \ref{sec: S delta and Sm} respectively.
Since 
$ \SS_\delta (\BR) = \sgn(x)^\delta \SS_\delta (\overline \BR _+)  $, 
\begin{equation}\label{2eq: decompose Ssis (Rx), 2}
	\Ssis^\delta (\BRx)/ \SS_\delta (\BRx) = \bigoplus_{\omega\, \in \BC/ {\scriptscriptstyle \sim_2} } \bigoplus_{j \in \BN} \ \varinjlim_{\lambda \in \omega}  \big( \sgn(x)^{\delta } |x|^{-\lambda} (\log |x|)^j \SS_\delta (\overline \BR_+) \big) \big/\SS_\delta (\BRx).
\end{equation}
Then, $  |x|^{- \delta} \SS_{\delta} (\overline \BR_+) = \SS_{0} (\overline \BR_+)$ together with $ \SS_0 (\overline \BR_+)/\SS (\BR _+) \oplus |x| \SS_0 (\overline \BR_+)/\SS (\BR _+) = \SS (\overline \BR_+)/\SS (\BR _+)$ (see Remark \ref{rem: S=S0+S1}) yields
\begin{equation}\label{2eq: decompose Ssis (Rx)}
	\Ssis^\delta (\BRx)/ \SS_\delta (\BRx) = \bigoplus_{\omega\, \in \BC/ {\scriptscriptstyle \sim_1} } \bigoplus_{j \in \BN} \ \varinjlim_{\lambda \in \omega}  \big( \sgn(x)^{\delta } |x|^{-\lambda} (\log |x|)^j \SS (\overline \BR_+) \big) \big/\SS_\delta (\BRx).
\end{equation}
In particular,
\begin{equation*} 
	\Ssis^\delta (\BRx) = \sgn(x)^{\delta } \Ssis (\BR_+) = \left \{ \sgn (x)^\delta \upsilon  (|x|) : \upsilon \in \Ssis ( \BR_+) \right \}.
\end{equation*}

\subsubsection{The Space $\Msis^\BR $}
We simply define $\Msis^{\BR} = \Msis \times \Msis$. 
 
\vskip 5 pt 

\subsubsection{Isomorphism between  $\Ssis (\BRx)$ and $\Msis^\BR$ via the Mellin Transform $\EM_{\BR}$}

Let  $\upsilon \in \Ssis (\BRx)$. Since $\upsilon_\delta \in \Ssis (\overline \BR_ +)$, the identity  $\EM_\delta \upsilon (s) = 2 \EM \upsilon_\delta (s)$ in \eqref{1eq: M delta = M}
extends the definition of the Mellin transform $\EM_\delta$ onto the space  $\Ssis (\BRx)$. Therefore, as a consequence of Lemma \ref{2lem: Mellin isomorphism, Msis lambda j},   \ref{2lem: Sis and Msis, R+} and Corollary \ref{2cor: refined decomp, R+} (1), the following lemma is readily established.

\begin{lem}\label{2lem: Ssis to Msis, R}
	For $\delta \in \BZT$, the Mellin transform $\EM_\delta$ establishes an isomorphism between the spaces $\Ssis^\delta (\BRx)$ and $\Msis$ which respects their decompositions \eqref{2eq: decompose Ssis (Rx)} and \eqref{2eq: decompose Msis}  as well as \eqref{2eq: decompose Ssis (Rx), 2} and \eqref{2eq: refined, Msis, delta}. Therefore, $\EM^\BR = (\EM_0, \EM_1)$ establishes an isomorphism between $ \Ssis (\BRx) = \Ssis^0 (\BRx) \oplus \Ssis^1 (\BRx)$ and $\Msis^\BR = \Msis \times \Msis$.
\end{lem}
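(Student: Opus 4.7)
The plan is to reduce the statement on $\BR^\times$ entirely to the already-established isomorphism on $\BR_+$ (Lemma \ref{2lem: Sis and Msis, R+}, with the refinements of Corollary \ref{2cor: refined decomp, R+} (1)) by means of the natural linear bijection
\[
\Psi_\delta : \Ssis^\delta(\BR^\times) \longrightarrow \Ssis(\BR_+), \qquad \upsilon \longmapsto \upsilon_\delta = \upsilon|_{\BR_+},
\]
whose inverse sends $\varphi \in \Ssis(\BR_+)$ to $\sgn(x)^\delta \varphi(|x|)$. That $\Psi_\delta$ really lands in $\Ssis(\BR_+)$ follows at once from the characterization $\Ssis^\delta(\BR^\times) = \sgn(x)^\delta \Ssis(\BR_+)$ recorded after \eqref{2eq: decompose Ssis (Rx)}.

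First I would extend the definition of $\EM_\delta$ from $\SS(\BR^\times)$ to $\Ssis^\delta(\BR^\times)$. For a building block $\upsilon(x) = \sgn(x)^\delta|x|^{-\lambda}(\log|x|)^j \varphi(|x|)$ with $\varphi \in \SS(\overline \BR_+)$, the defining integral \eqref{1def: signed Mellin transform} converges absolutely for $\Re s > \Re\lambda$ and, by the change of variables $x \mapsto |x|$ together with \eqref{1eq: M delta = M}, equals $2\EM(x^{-\lambda}(\log x)^j \varphi)(s)$. Lemma \ref{2lem: Mellin isomorphism, Msis lambda j} then supplies the meromorphic continuation of this quantity to an element of $\Msis^{\lambda,j} \subset \Msis$, so I take $\EM_\delta \upsilon := 2\EM \upsilon_\delta$ as the definition on all of $\Ssis^\delta(\BR^\times)$ by linearity. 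Consequently $\EM_\delta = 2\, \EM \circ \Psi_\delta$, and since $\Psi_\delta$ is a bijection and $\EM : \Ssis(\BR_+) \to \Msis$ is an isomorphism by Lemma \ref{2lem: Sis and Msis, R+}, so is $\EM_\delta$.

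For the compatibility with decompositions, I would trace the building blocks through $\Psi_\delta$ and $\EM$. The summand $\sgn(x)^\delta|x|^{-\lambda}(\log|x|)^j \SS(\overline \BR_+)$ of \eqref{2eq: decompose Ssis (Rx)} is carried by $\Psi_\delta$ onto $x^{-\lambda}(\log x)^j \SS(\overline \BR_+)$, which by Lemma \ref{2lem: Mellin isomorphism, Msis lambda j} has Mellin image $\Msis^{\lambda,j}$; passing to direct limits over $\omega \in \BC/\sim_1$ and summing over $j \in \BN$ recovers \eqref{2eq: decompose Msis}. For the refined version \eqref{2eq: decompose Ssis (Rx), 2}, the block $\sgn(x)^\delta|x|^{-\lambda}(\log|x|)^j \SS_\delta(\overline \BR_+)$ goes under $\Psi_\delta$ to $x^{-\lambda}(\log x)^j \cdot x^\delta \SS_0(\overline \BR_+) = x^{-(\lambda-\delta)}(\log x)^j \SS_0(\overline \BR_+)$, whose Mellin image is $\Nsis^{\lambda-\delta,j}$ by construction; since the change $\lambda \mapsto \lambda - \delta$ preserves $\sim_2$ classes, direct limits and direct sums match, recovering \eqref{2eq: refined, Msis, delta}.

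The direct-sum decomposition $\Ssis(\BR^\times) = \Ssis^0(\BR^\times) \oplus \Ssis^1(\BR^\times)$ then yields at once that $\EM^{\BR} = (\EM_0, \EM_1)$ is an isomorphism onto $\Msis^{\BR} = \Msis \times \Msis$. There is no substantive obstacle; the only bookkeeping point deserving attention is verifying that the shift $\lambda \mapsto \lambda - \delta$ in \eqref{2eq: refined, Msis, delta} is precisely induced by absorbing the factor $x^\delta$ when writing $\SS_\delta(\overline \BR_+) = x^\delta \SS_0(\overline \BR_+)$, and that this absorption is compatible with the passage from $\sim_1$ to $\sim_2$ equivalence classes.
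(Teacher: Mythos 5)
Your proof is correct and follows the same route as the paper: extend $\EM_\delta$ via the identity $\EM_\delta\upsilon = 2\EM\upsilon_\delta$, reduce the $\BR^\times$ statement to the $\BR_+$ isomorphism of Lemma \ref{2lem: Sis and Msis, R+} (refined by Corollary \ref{2cor: refined decomp, R+} (1)), and trace the decompositions through the bijection $\upsilon\mapsto\upsilon|_{\BR_+}$. The paper's proof is terser --- it simply cites those lemmas and declares the result ``readily established'' --- so your write-up supplies the same bookkeeping the paper leaves to the reader.
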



\subsubsection{An Alternative Decomposition of $\Ssis^\delta (\BRx)$}\label{sec: alternative decomp, R} 

The following lemma follows from Corollary \ref{2cor: refined decomp, R+} (1) (compare \cite[Corollary 6.17]{Miller-Schmid-2006}).

\begin{lem}\label{2lem: Ssis delta, M delta}
	Let $\delta \in \BZ/2 \BZ$.  
	The Mellin transform $\EM_\delta$ respects the following  decompositions,
	\begin{equation}\label{2eq: refinement Ssis, R}
		\begin{split}
			 \Ssis^\delta (\BRx)/ & \SS_\delta (\BRx) \\
			& =  \bigoplus_{\omega\, \in \BC \times \BZT/ {\scriptscriptstyle \sim } } \bigoplus_{j \in \BN} \ \varinjlim_{(\lambda, \epsilon)\, \in \omega} \big( \sgn(x)^{ \epsilon} |x|^{-\lambda} (\log |x|)^j \SS_{ \epsilon + \delta} ( \BR ) \big) \big/\SS_\delta (\BRx),
		\end{split}
	\end{equation}
	\begin{equation}\label{2eq: refinement Msis, R}
		\Msis / \Hrd
		= \bigoplus_{\omega\, \in \BC \times \BZT/ {\scriptscriptstyle \sim } } \bigoplus_{j \in \BN} \ \varinjlim_{(\lambda, \epsilon)\, \in \omega} \Nsis^{\lambda - (\epsilon + \delta), j} \big/ \Hrd.
	\end{equation}
\end{lem}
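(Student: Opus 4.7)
The plan is to deduce the lemma from Lemma~\ref{2lem: refined decomp, R+} (the Mellin-compatible refinement on $\BR_+$), using the isomorphism of Lemma~\ref{2lem: Ssis to Msis, R} so that it suffices to verify either \eqref{2eq: refinement Ssis, R} or \eqref{2eq: refinement Msis, R}. First, I would simplify the summand in \eqref{2eq: refinement Ssis, R}: using $\SS_{\epsilon+\delta}(\BR) = \sgn(x)^{\epsilon+\delta}\SS_{\epsilon+\delta}(\overline\BR_+)$ together with $\sgn(x)^{\epsilon}\sgn(x)^{\epsilon+\delta}=\sgn(x)^\delta$, it equals $\sgn(x)^\delta|x|^{-\lambda}(\log|x|)^j\SS_{\epsilon+\delta}(\overline\BR_+)$, a subspace of $\Ssis^\delta(\BRx)$. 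Applying the restriction isomorphism $\iota_\delta:\Ssis^\delta(\BRx)\xrightarrow{\sim}\Ssis(\BR_+)$ from \S\ref{sec: defn Ssis (Rx)} and writing $\SS_{\epsilon+\delta}(\overline\BR_+)=x^{\epsilon+\delta}\SS_0(\overline\BR_+)$ (with representative $\epsilon+\delta\in\{0,1\}$) converts it into $x^{-\mu}(\log x)^j\SS_0(\overline\BR_+)$, where $\mu := \lambda - (\epsilon+\delta)$.

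Second, I would establish that the assignment $(\lambda,\epsilon)\mapsto\mu$ descends to a bijection $\BC\times\BZT/{\sim}\xrightarrow{\sim}\BC/{\sim_2}$. A direct check (three cases: $\epsilon=\epsilon'=0$, $\epsilon=\epsilon'=1$, $\epsilon\neq\epsilon'$) confirms that the relation $\lambda-\lambda'\equiv\epsilon+\epsilon'\pmod 2$ defining $\sim$ is equivalent to $\mu-\mu'\in 2\BZ$, i.e., $\mu\sim_2\mu'$. Fixing a representative $(\lambda_0,\epsilon_0)$ of a class $\omega$, I would parametrize $\omega=\{(\lambda_0+n,\epsilon_0+n\!\bmod 2):n\in\BZ\}$, which is totally ordered by $n$ under $\preccurlyeq$. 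A case analysis by the parity of $\epsilon_0+n$ shows that $\mu_n$ is non-decreasing in $n$ and takes every value in the class $[\lambda_0-(\epsilon_0+\delta)]_{\sim_2}$; hence the directed family indexed by $(\lambda,\epsilon)\in\omega$ is cofinal with the one indexed by $\mu\in[\mu_0]_{\sim_2}$, so the two direct limits coincide.

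Finally, Lemma~\ref{2lem: refined decomp, R+} decomposes $\Ssis(\BR_+)/\SS(\BR_+)$ as a direct sum over $(\BC/{\sim_2})\times\BN$ with summand $\varinjlim_\mu x^{-\mu}(\log x)^j\SS_0(\overline\BR_+)$; pulling this decomposition back through $\iota_\delta$ and reindexing via the bijection above produces \eqref{2eq: refinement Ssis, R}. The parallel Mellin identification $x^{-\mu}(\log x)^j\SS_0(\overline\BR_+)\leftrightarrow\Nsis^{\mu,j}$ from the same lemma, together with the substitution $\mu=\lambda-(\epsilon+\delta)$, yields \eqref{2eq: refinement Msis, R}; since $\EM_\delta=2\EM\circ\iota_\delta$ by \eqref{1eq: M delta = M}, compatibility under $\EM_\delta$ is then automatic. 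The main obstacle is the cofinality verification of the second step: despite the parity of $\epsilon+\delta$ alternating with $n$, one must confirm that the orderings $\preccurlyeq$ on $\BC\times\BZT$ and $\preccurlyeq_2$ on $\BC$ generate cofinal directed systems. This is routine casework once the bijection is in hand.
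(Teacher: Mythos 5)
Your proof is correct and takes essentially the same approach as the paper, which disposes of the lemma in a single sentence by citing Corollary~\ref{2cor: refined decomp, R+}~(1) (a $\delta$-shifted rephrasing of the Lemma~\ref{2lem: refined decomp, R+} you invoke). The reduction to $\BR_+$, the bijection $(\lambda,\epsilon)\mapsto\lambda-(\epsilon+\delta)$ between $\BC\times\BZT/{\sim}$ and $\BC/{\sim_2}$, and the cofinality check you spell out are exactly the reindexing details the paper leaves implicit.
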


\subsection{The Spaces $\Ssis (\BCx)$ and $\Msis^\BC$}\label{sec: Ssis (Cx) and Msis C}

We write $(\lambda , m ) \preccurlyeq ( \lambda' , m')$ if $\lambda'  - \lambda  \in |m'-m| + 2\BN$ and  $(\lambda , m) \sim ( \lambda' , m')$ if $\lambda'  - \lambda - |m' - m| \in 2 \BZ$.  These define an order relation and  an equivalence relation on $\BC \times \BZ$. 

\vskip 5 pt 

\subsubsection{The Space $\Ssis (\BCx)$}
In parallel to \S \ref{sec: defn Ssis (Rx)}, we first define
\begin{equation*}
	\Ssis (\BCx) =  \sum_{m \in \BZ } \ \sum_{\lambda \in \BC } \ \sum_{j \in \BN}  [z]^{-m} |z|^{-\lambda} (\log |z|)^j \SS (\BC).
\end{equation*}
We have the following decomposition,
\begin{equation*}
	\begin{split}
		\Ssis (\BCx)/ \SS (\BCx) =  \bigoplus_{\omega\, \in \BC \times \BZ / {\scriptscriptstyle \sim } } \bigoplus_{j \in \BN}  \ \varinjlim_{(\lambda, m)\, \in \omega}   \big( [z]^{-m} |z|^{-\lambda} (\log |z|)^j \SS (\BC) \big) \big/\SS (\BCx).
	\end{split}
\end{equation*}
It follows from $[z] |z| \SS (\BC) = z \SS (\BC)  \subset \SS (\BC)$ that
\begin{equation}\label{2eq: decompose Ssis (Cx)}
	\Ssis (\BCx)/ \SS (\BCx) =  \bigoplus_{\omega\, \in \BC/ {\scriptscriptstyle \sim_2} } \bigoplus_{j \in \BN} \ \varinjlim_{\lambda \in \omega}  \big(  |z|^{-\lambda} (\log |z|)^j \SS (\BC) \big) \big/\SS (\BCx).
\end{equation}
We let $\Ssis^m (\BCx)$ denote the space of functions $\upsilon \in \Ssis (\BCx)$ satisfying \eqref{1eq: m condition, C}.
Then,
\begin{equation*}
	\Ssis^m (\BCx)/ \SS_m (\BCx) =  \bigoplus_{\omega\, \in \BC/ {\scriptscriptstyle \sim_2} } \bigoplus_{j \in \BN} \ \varinjlim_{\lambda \in \omega}   \big(  |z|^{-\lambda} (\log |z|)^j \SS_m (\BC) \big) \big/\SS_m (\BCx),
\end{equation*}
where $\SS_m (\BC)$ and $\SS_m (\BCx)$ are defined in \S \ref{sec: Schwartz subspaces} and \S \ref{sec: S delta and Sm} respectively.
Since 
$ \SS_m (\BC) = [z]^m \SS_m (\overline \BR _+) $, 
\begin{equation}\label{2eq: decompose Ssis m (Cx), 2}
	\Ssis^m (\BCx)/ \SS_m (\BCx) =  \bigoplus_{\omega\, \in \BC/ {\scriptscriptstyle \sim_2} } \bigoplus_{j \in \BN} \ \varinjlim_{\lambda \in \omega} \big( [z]^{m } |z|^{-\lambda} (\log |z|)^j \SS_m (\overline \BR_+) \big) \big/\SS_m (\BCx).
\end{equation}
Then, $  |z|^{- |m|} \SS_{m} (\overline \BR_+) = \SS_{0} (\overline \BR_+)$ together with  $ \SS_0 (\overline \BR_+)/\SS (\BR _+) \oplus |z| \SS_0 (\overline \BR_+)/\SS (\BR _+) = \SS (\overline \BR_+)/\SS (\BR _+)$  yields
\begin{equation}\label{2eq: decompose Ssis m (Cx)}
	\Ssis^m (\BCx)/ \SS_m (\BCx) =  \bigoplus_{\omega\, \in \BC/ {\scriptscriptstyle \sim_1} } \bigoplus_{j \in \BN} \ \varinjlim_{\lambda \in \omega} \big( [z]^{m } |z|^{-\lambda} (\log |z|)^j \SS (\overline \BR_+) \big) \big/\SS_m (\BCx).
\end{equation}
In particular,
\begin{equation*} 
	\Ssis^m (\BCx) = [z]^{m } \Ssis (\BR_+) = \left \{ [z]^m \upsilon  (|z|) : \upsilon \in \Ssis ( \BR_+) \right \}.
\end{equation*}

\subsubsection{The Space $\Msis^\BC$}\label{sec: Msis C}

For $\lambda \in \BC $ and $j \in \BN$, we define the space  $\Nsis^{\BC, \lambda, j}$ of all sequences $\lpp H_m (s) \rpp $ of meromorphic functions  such that
\begin{itemize}
	\item[-] the only singularities of $H_m (s)$ are poles of pure order  $j+1$ at the points in  $ \lambda - |m| - 2 \BN $,
	\item[-] Each $H_m (s)$ decays rapidly along vertical lines, uniformly on vertical strips 
	(see \eqref{2eq: Mellin rapid decay, 0}), and
	\item[-] $H_m (s)$ also decays rapidly with respect to $m$, uniformly on vertical strips, 
	in the sense that
	\item[\noindent \refstepcounter{equation}(\theequation)  \label{2eq: rapid decay Nsis, C, 0} \hskip 8 pt] for any given $ \alpha, A \in \BN $ and vertical strip $ \BS [a, b]$,
	\vskip 5 pt 
	\noindent $H_m (s) \lll_{\, \lambda, j,\, \alpha,\, A,\, a,\, b } (|m| + 1)^{-A} (|\Im s| + 1)^{-\alpha}  \text{ for all }  s \in \BS[a, b],$
	if $|m| > \Re \lambda - a$.
\end{itemize}

Observe that the first two conditions amount to $H_m \in \Nsis^{ \lambda - |m|, j}$. Therefore, $\Nsis^{\BC, \lambda, j} \subset \prod_{m\in \BZ} \Nsis^{ \lambda - |m|, j}$.


Define   the space $\Msis^{\BC}$ of all sequences $\lpp H_m \rpp $ of meromorphic functions  such that
\begin{itemize}
	\item[-] the poles of each $H_m$ lie in  $ \lambda - |m| - 2 \BN $, for a finite number of $\lambda$,
	\item[-] the orders of the poles of $H_m$ are uniformly bounded,
	\item[-] Each $H_m $ decays rapidly along vertical lines, uniformly on vertical strips, 
	and
	\item[-] $H_m $ decays rapidly with respect to $m$, uniformly on vertical strips. 
\end{itemize}
Using the refined Stirling's asymptotic formula \eqref{1eq: Stirling's formula} in  place of \cite[(6.22)]{Miller-Schmid-2006} and the following bound  in  place of \cite[(6.23)]{Miller-Schmid-2006}
\begin{equation*}
	\begin{split}
		\left|\frac { \Gamma^{(j)} \lp \frac 1 2 {(s - \lambda + |m| )}    \rp } { \Gamma \lp \frac 1 2 {(s + |m|) }    \rp } \right| \lll_{\,\lambda, j,\, a,\, b,\, r}   
		\lp |\Im s| + |m| + 1 \rp^{- \frac 1 2 {\Re \lambda}  }
	\end{split}
\end{equation*}
for $\lambda \in \BC$, $j \in \BN$,  $  s \in \BS [a, b]  \smallsetminus \bigcup_{\sstyle \kappa \geq |m| \atop \sstyle \kappa \equiv m (\mod 2)} \BB_{r} (\lambda - \kappa)$, with $ r > 0$, we may follow the same lines of the proofs of \cite[Lemma 6.24]{Miller-Schmid-2006} and \cite[Lemma 6.35]{Miller-Schmid-2006} to show that 
\begin{equation*}
	\Msis^\BC = \sum_{\lambda \in \BC} \sum_{j \in \BN} \Nsis^{\BC, \lambda, j},
\end{equation*}
and consequently
\begin{equation}\label{2eq: decompose Msis, C}
	\Msis^\BC / \Hrd^\BC
	=  \bigoplus_{\omega\, \in \BC/ {\scriptscriptstyle \sim_2} } \bigoplus_{j \in \BN} \ \varinjlim_{\lambda \in \omega} \Nsis^{\BC, \lambda, j} \big/ \Hrd^\BC.
\end{equation}

\subsubsection{Isomorphism between $\Ssis (\BCx)$ and $\Msis^\BC$ via the Mellin Transform $\EM_{\BC}$}

For $\upsilon \in \Ssis (\BCx)$, its $m$-th Fourier coefficient $\upsilon_m$ is a function in $ \Ssis (\overline \BR_ +)$. Hence the identity $\EM_{-m} \upsilon (s) = 4 \pi \EM \upsilon_m (s)$ in \eqref{1eq: Mm = M} extends the definition of the Mellin transform $\EM_{-m}$ onto the space $\Ssis (\BCx)$.

\begin{lem}\label{2lem: Ssis to Msis, C}
	For $m \in \BZ$, the Mellin transform $\EM_{-m}$ establishes an isomorphism between the spaces $\Ssis^{ m} (\BCx)$ and $\Msis$ which respects their decompositions \eqref{2eq: decompose Ssis m (Cx)} and \eqref{2eq: decompose Msis} as well as \eqref{2eq: decompose Ssis m (Cx), 2} and \eqref{2eq: refined, Msis, m}.
	Furthermore, $\EM_{\BC} = \prod_{m\in \BZ} \EM_{-m}$ establishes an isomorphism between $ |z|^{-\lambda} (\log |z|)^j \SS (\BC ) $ and $\Nsis^{\BC, \lambda, j}$ for any $\lambda \in \BC$ and $j \in \BN$, and hence an isomorphism between $ \Ssis(\BCx) $ and $\Msis^\BC$ which respects their decompositions  \eqref{2eq: decompose Ssis (Cx)} and \eqref{2eq: decompose Msis, C}.
\end{lem}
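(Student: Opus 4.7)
The plan is to reduce everything mode-by-mode to the real theory already established. By \eqref{1eq: Mm = M}, $\EM_{-m}\upsilon(s) = 4\pi\EM\upsilon_m(s)$ for every $\upsilon \in \Ssis(\BCx)$, where $\upsilon_m$ is the $m$-th Fourier coefficient. So the entire statement can be phrased as: a sequence $\lpp \varphi_m \rpp$ lies in $\Ssis(\overline \BR_+)^{\BZ}$ and satisfies suitable joint $m$/$\Im s$ growth conditions if and only if the same is true of $\lpp \EM \varphi_m\rpp$, after identifying the respective filtrations.

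First I would handle the statement about $\Ssis^m(\BCx)$. Since $\Ssis^m(\BCx) = [z]^m \Ssis(\BR_+)$, each $\upsilon \in \Ssis^m(\BCx)$ has the form $\upsilon(z) = [z]^m \widetilde\upsilon(|z|)$ with $\widetilde\upsilon \in \Ssis(\BR_+)$, and $\EM_{-m}\upsilon(s) = 4\pi \EM\widetilde\upsilon(s)$. The bijection $\Ssis^m(\BCx) \xrightarrow{\cong} \Msis$ thus reduces immediately to Lemma \ref{2lem: Sis and Msis, R+}, and respect of both pairs of decompositions  follows from Corollary \ref{2cor: refined decomp, R+}(2), after noting that the filtration in \eqref{2eq: decompose Ssis m (Cx), 2} differs from that in \eqref{2eq: decompose Ssis m (Cx)} by the identity $[z]^m |z|^{-\lambda}(\log|z|)^j \SS_m(\overline\BR_+) = [z]^m x^{|m|-\lambda}(\log x)^j \SS_0(\overline\BR_+)$, which matches $\Nsis^{\lambda-|m|, j}$ on the Mellin side.

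For the second assertion, I would prove the isomorphism $|z|^{-\lambda}(\log|z|)^j \SS(\BC) \xrightarrow{\cong} \Nsis^{\BC,\lambda,j}$ in both directions. For the forward direction, write $\upsilon = |z|^{-\lambda}(\log|z|)^j \varphi$ with $\varphi \in \SS(\BC)$ so that $\upsilon_m(x) = x^{-\lambda}(\log x)^j \varphi_m(x)$. By Lemma \ref{lem: Schwartz}(3.1), $\varphi_m \in \SS_m(\overline\BR_+) = x^{|m|}\SS_0(\overline\BR_+)$, so $\upsilon_m \in x^{|m|-\lambda}(\log x)^j \SS_0(\overline\BR_+)$ and Lemma \ref{2lem: Mellin isomorphism, Msis lambda j} places $\EM\upsilon_m$ in $\Nsis^{\lambda-|m|,j}$, yielding the correct pole structure and the vertical decay \eqref{2eq: Mellin rapid decay, 0}. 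The novel condition is the uniform $m$-decay \eqref{2eq: rapid decay Nsis, C, 0}; I would obtain it by rerunning the estimate of Claim 1 in the proof of Lemma \ref{2lem: Mellin isomorphism, Msis lambda j} while inserting the quantitative bounds \eqref{1eq: bounds for Fourier coefficients}--\eqref{1eq: phi m, 2} on $\varphi_m$. Concretely, on $\BS[a,b]$ with $|m|$ large enough that $|m| > \Re\lambda - a$, the first integral in \eqref{2eq: poles of Mellin} (applied to $\upsilon_m$) gains a factor $(|m|+1)^{-A}$ from \eqref{1eq: phi m, 2}, while the tail integral gains $(|m|+1)^{-A}$ from \eqref{1eq: bounds for Fourier coefficients}; combined with the repeated partial integration trick $(-)^\alpha(s)_\alpha \EM\upsilon_m(s) = \EM\upsilon_m^{(\alpha)}(s+\alpha)$, this delivers both the $(|\Im s|+1)^{-\alpha}$ and $(|m|+1)^{-A}$ decay required.

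For the converse, given $\lpp H_m \rpp \in \Nsis^{\BC,\lambda,j}$, I would apply Lemma \ref{2lem: Mellin isomorphism, Msis lambda j} to each $H_m$ to obtain $\upsilon_m \in x^{|m|-\lambda}(\log x)^j \SS_0(\overline \BR_+)$, set $\varphi_m = x^\lambda(\log x)^{-j}\upsilon_m \in \SS_m(\overline\BR_+)$, and verify that $\lpp \varphi_m\rpp$ satisfies \eqref{1eq: bounds for Fourier coefficients}, \eqref{1eq: phi m, 1} and \eqref{1eq: phi m, 2} of Lemma \ref{lem: Schwartz}(3.1), so that $\varphi(xe^{i\phi}) = \sum_m \varphi_m(x) e^{im\phi}$ defines an element of $\SS(\BC)$ and $\upsilon = |z|^{-\lambda}(\log|z|)^j \varphi$ is the required preimage. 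Condition \eqref{1eq: phi m, 1} is already contained in $\varphi_m \in \SS_m(\overline \BR_+)$; conditions \eqref{1eq: bounds for Fourier coefficients} and \eqref{1eq: phi m, 2} are extracted by shifting the contour of $\upsilon_m^{(\alpha)}(x) = \frac{(-)^\alpha}{2\pi i}\int_{(\sigma)}(s)_\alpha H_m(s) x^{-s-\alpha}ds$ to $(\beta)$ for large $\beta$ (giving decay at $\infty$) and to $(-A-1+\epsilon)$ (giving decay at $0$), with the $(|m|+1)^{-A}$-saving at every step supplied by \eqref{2eq: rapid decay Nsis, C, 0}. The isomorphism $\Ssis(\BCx) \cong \Msis^\BC$ and its respect for \eqref{2eq: decompose Ssis (Cx)} versus \eqref{2eq: decompose Msis, C} then follow by summing over $\omega \in \BC/\sim_2$ and $j \in \BN$ using the filtrations from Lemma \ref{2lem: refined decomp, R+}. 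The main obstacle throughout will be the two-sided $m$-bookkeeping just described, where one must verify that the Schwartz decay in $m$ of Fourier coefficients on $\BC$ corresponds precisely to \eqref{2eq: rapid decay Nsis, C, 0} under the family of Mellin transforms $\lpp \EM_{-m}\rpp$.
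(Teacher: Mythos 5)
Your mode-by-mode reduction via $\EM_{-m}\upsilon = 4\pi\EM\upsilon_m$, the application of Lemma~\ref{2lem: Mellin isomorphism, Msis lambda j} and Corollary~\ref{2cor: refined decomp, R+}(2) to each Fourier coefficient, and the use of \eqref{1eq: bounds for Fourier coefficients}, \eqref{1eq: phi m, 2} together with the repeated partial-integration trick and contour shifts to pass $m$-decay between sides is precisely the paper's argument. One bookkeeping point to tighten: \eqref{1eq: phi m, 2} requires $|m| > \alpha + A$ rather than $|m| > \Re\lambda - a$, which is why the paper first passes from \eqref{2eq: rapid decay Nsis, C, 0} to the equivalent statement \eqref{2eq: rapid decay Nsis, C, 1} restricting to $|m| > \alpha + A$ (the finitely many intermediate $m$ being absorbed into the implied constant).
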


\begin{proof}
	For $\upsilon \in \Ssis^m (\BCx)$, one has $\upsilon \lp x e^{i \phi}\rp = e^{im \phi} \upsilon_m (x)$ and  $\upsilon_m \in \Ssis (\overline \BR_ +)$. Thus the first assertion follows immediately from Lemma \ref{2lem: Sis and Msis, R+} and Corollary \ref{2cor: refined decomp, R+} (2).

	Now let $\varphi \in \SS (\BC)$ and $ \upsilon (z) = |z|^{-\lambda} (\log |z|)^j \varphi (z)$. Clearly, their $m$-th Fourier  coefficients are related by $\upsilon_m (x) = x^{-\lambda} (\log x)^j \varphi_{ m} (x)$.
	Since $\varphi_{ m} \in  \SS_m (\overline \BR _+)$, it follows from Corollary \ref{2cor: refined decomp, R+} (2) that $H_{ m} = \EM_{-m} \upsilon = 4\pi \EM \upsilon_m$ lies in $ \Nsis^{\lambda - |m|, j}$, and therefore we are left to show \eqref{2eq: rapid decay Nsis, C, 0}. Recall that in the proof of Lemma \ref{2lem: Mellin isomorphism, Msis lambda j} we turned to verify \eqref{2eq: Mellin rapid decay} instead of \eqref{2eq: Mellin rapid decay, 0}. Likewise, it is more convenient to verify the following equivalent statement of \eqref{2eq: rapid decay Nsis, C, 0},
	\begin{itemize}
		\item[\noindent \refstepcounter{equation}(\theequation) \label{2eq: rapid decay Nsis, C, 1} \hskip 8 pt] for any given $\alpha,  A \in \BN, b \geq a > \Re \lambda - \alpha - A - 1$, 
		\vskip 5 pt 
		{\noindent $H_m (s) \lll_{\, \lambda, j,\, \alpha,\, A,\, a,\, b } (|m| + 1)^{-A} (|\Im s| + 1)^{-\alpha}  \text{ for all }  s \in \BS[a, b] $,  if $ |m| > \alpha + A$.}
	\end{itemize}
	
	According to Lemma \ref{lem: Schwartz} (3.1), $\varphi_{ m}$ satisfies the conditions (\ref{1eq: bounds for Fourier coefficients}, \ref{1eq: phi m, 2}). 
	Suppose $|m| > \alpha + A$. One directly applies \eqref{1eq: bounds for Fourier coefficients} and \eqref{1eq: phi m, 2} to bound the following integral by a constant multiple of $(|m| + 1)^{- A}$,
	\begin{equation*}
		(-)^\alpha (s - \lambda )_{\alpha} \EM \upsilon_m (s) = \int_0^\infty \frac {d^\alpha} {d x^\alpha} \lp  (\log x)^j \varphi_{ m} (x) \rp  x^{s - \lambda + \alpha - 1} d x.
	\end{equation*}
	This proves \eqref{2eq: rapid decay Nsis, C, 1} for $H_{ m} = 4\pi \EM \upsilon_m$.
	Therefore, the sequence $\lpp \EM_{-m} \upsilon \rpp $ belongs to $\Nsis^{\BC, \lambda, j}$. 
	
	Conversely, let $\lpp H_m \rpp  \in \Nsis^{\BC, \lambda, j}$, and let $ 4 \pi \upsilon_m$ be the Mellin inversion of $H_{ m}$, \begin{equation*} 
		\upsilon_m (x) = \frac 1 {8\pi^2 i}\int_{(\sigma)} H_{ m} (s) x^{-s} d s, \hskip 10 pt \sigma > \Re \lambda - |m|.
	\end{equation*} 
	Since $H_{ m} \in \Nsis^{\lambda - |m|, j}$, Corollary \ref{2cor: refined decomp, R+} (2) implies that $\upsilon_m (x) \in  x^{- \lambda} (\log x)^j  \SS_m (\overline \BR _+)$ and hence $\varphi_m (x) = x^{ \lambda} (\log x)^{-j} \upsilon_m (x)$ lies in $\SS_m (\overline \BR _+)$.
	This proves \eqref{1eq: phi m, 1}.
	Similar to the proof of Lemma \ref{2lem: Mellin isomorphism, Msis lambda j}, right shifting of the contour of integration combined with \eqref{2eq: rapid decay Nsis, C, 1} yields \eqref{1eq: bounds for Fourier coefficients}, whereas left shifting combined with \eqref{2eq: rapid decay Nsis, C, 1} yields \eqref{1eq: phi m, 2}\footnote{ Actually, $O_{\alpha,\, A} \lp (|m| + 1)^{- A} x^{A+1} \rp$ in  \eqref{1eq: phi m, 2} should be replaced by  $O_{\alpha,\, A, \, \rho} \lp (|m| + 1)^{- A} x^{A+ \rho} \rp$, $1 >  \rho > 0$. Moreover, one observes that the left contour shift here does not cross any pole.}. 
	
	The proof of the second assertion is completed.
\end{proof}

\subsubsection{An Alternative Decomposition of $\Ssis^m (\BCx)$}\label{sec: alternative decomp, C}

The following lemma follows from Corollary \ref{2cor: refined decomp, R+} (2).

\begin{lem}\label{2lem: Ssis m, M -m}
	Let $m \in \BZ $. 
	The Mellin transform $\EM_{-m} $ respects the following  decompositions,\begin{equation}\label{2eq: refinement Ssis, C}
		\begin{split}
			 \Ssis^m (\BCx)/  \SS_m & (\BCx) \\
			& =    \bigoplus_{\omega\, \in \BC \times \BZ / {\scriptscriptstyle \sim } } \bigoplus_{j \in \BN} \ \varinjlim_{(\lambda, k)\, \in \omega} \big( [z]^{-k} |z|^{-\lambda} (\log |z|)^j \SS_{m + k} ( \BC) \big) \big/\SS_m (\BCx),
		\end{split}
	\end{equation}
	\begin{equation}\label{2eq: refinement Msis, C}
		\Msis / \Hrd
		= \bigoplus_{\omega\, \in \BC \times \BZ / {\scriptscriptstyle \sim } } \bigoplus_{j \in \BN} \ \varinjlim_{(\lambda, k)\, \in \omega}  \Nsis^{\lambda - |m + k|, j} \big/ \Hrd.
	\end{equation}
\end{lem}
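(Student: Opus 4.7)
The plan is to reduce the statement to Corollary \ref{2cor: refined decomp, R+}~(2) by means of the natural identification $\Ssis^m(\BCx) = [z]^m \Ssis(\BR_+)$ and its compatibility with the Mellin transform. Indeed, by the discussion just preceding \eqref{2eq: decompose Ssis m (Cx)}, every $\upsilon \in \Ssis^m(\BCx)$ is uniquely of the form $\upsilon(z) = [z]^m v(|z|)$ with $v \in \Ssis(\BR_+)$, and under this bijection $\SS_m(\BCx)$ corresponds to $\SS(\BR_+)$, while by \eqref{1eq: Mm = M} one has $\EM_{-m}\upsilon = 4\pi\, \EM v$. So it suffices to verify that the decomposition \eqref{2eq: refinement Ssis, C} on the $\Ssis^m(\BCx)$ side (respectively \eqref{2eq: refinement Msis, C} on the Mellin side) is transported to \eqref{2eq: refined, Ssis (R+), m} (respectively \eqref{2eq: refined, Msis, m}) under this identification.

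To this end, I would first rewrite each summand of \eqref{2eq: refinement Ssis, C}. Using $\SS_{m+k}(\BC) = [z]^{m+k}\SS_{m+k}(\overline \BR_+)$ and $\SS_{m+k}(\overline \BR_+) = x^{|m+k|}\SS_0(\overline \BR_+)$, one gets
\[
 [z]^{-k} |z|^{-\lambda} (\log|z|)^j \SS_{m+k}(\BC) \,=\, [z]^m\, |z|^{-\lambda + |m+k|} (\log|z|)^j\, \SS_0(\overline \BR_+),
\]
which under $\Ssis^m(\BCx) = [z]^m \Ssis(\BR_+)$ corresponds to $x^{-\lambda + |m+k|}(\log x)^j \SS_0(\overline \BR_+)$. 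Thus the proposed summand matches, term by term, the summand in \eqref{2eq: refined, Ssis (R+), m} indexed by $\lambda - |m+k|$.

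The only real point requiring care is comparing indexing sets. One checks that the equivalence $(\lambda,k) \sim (\lambda',k')$, i.e.\ $\lambda' - \lambda - |k'-k| \in 2\BZ$, matches $\lambda - |m+k| \sim_2 \lambda' - |m+k'|$: this rests on the parity identity $|m+k'| - |m+k| \equiv k'-k \equiv |k'-k| \pmod 2$. Exhaustion follows by noting that for any $\lambda'' \in \BC$ with $\lambda'' \sim_2 \lambda - |m+k|$ and any $k' \in \BZ$, the pair $(\lambda'' + |m+k'|, k')$ lies in the same $\sim$-class as $(\lambda,k)$. The order $(\lambda,k) \preccurlyeq (\lambda',k')$, i.e.\ $\lambda' - \lambda \in |k'-k| + 2\BN$, together with the parity identity above, implies $\lambda - |m+k| \preccurlyeq_2 \lambda' - |m+k'|$, so the direct limits agree; conversely, within a fixed class $\omega$, given any cofinal sequence in $\BC/\sim_2$ one can lift it to a cofinal sequence in $\omega$ by the same construction. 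Passing this verification through the direct sum over $\omega$ and $j$ and applying Corollary \ref{2cor: refined decomp, R+}~(2) yields \eqref{2eq: refinement Ssis, C}. The proof of \eqref{2eq: refinement Msis, C} is the same, with $x^{-\lambda + |m+k|}(\log x)^j \SS_0(\overline \BR_+)$ replaced throughout by $\Nsis^{\lambda - |m+k|, j}$, in accordance with Lemma \ref{2lem: Mellin isomorphism, Msis lambda j}.

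I do not anticipate a serious obstacle; the argument is essentially bookkeeping. The subtlest step is the equivalence of the two indexings, which relies on the elementary parity identity $|m+k'|-|m+k| \equiv |k'-k| \pmod 2$ and the freedom to adjust $\lambda$ by even integers within each $\sim$-class.
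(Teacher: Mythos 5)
Your approach is precisely what the paper intends: the paper's entire proof of this lemma is the one-line attribution ``follows from Corollary~\ref{2cor: refined decomp, R+}~(2),'' and you have supplied exactly the bookkeeping that attribution rests on, namely transporting the decomposition through $\Ssis^m(\BCx)=[z]^m\Ssis(\BR_+)$ and matching the indexing $(\lambda,k)\mapsto\lambda-|m+k|$. One small point worth making explicit: your claim that the order is preserved needs the reverse triangle inequality $\bigl||m+k'|-|m+k|\bigr|\le|k'-k|$ in addition to the parity identity, since the parity identity alone gives only that $(\lambda'-|m+k'|)-(\lambda-|m+k|)$ is an even integer, while nonnegativity comes from $\lambda'-\lambda\ge|k'-k|\ge|m+k'|-|m+k|$.
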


\section {Hankel Transforms and Bessel Kernels} \label{sec: Hankel transforms}

This section is arranged as follows. We start with the type of Hankel transforms over $\BR_+$ whose kernels are the Bessel functions that will be extensively studied in Chapter \ref{chap: analytic theory}. After this, we introduce two auxiliary Hankel transforms and Bessel kernels over $\BR_+$. Finally, we proceed to construct and study Hankel transforms and their Bessel kernels over $\BFx$, with $\BF = \BR, \BC$. 

\begin{defn}\label{3defn: ordered set}
	Let $ (\BX, \preccurlyeq) $ be an ordered set satisfying the condition that
	\begin{equation}\label{3eq: condition on order}
	\text{``}\lambda \preccurlyeq \lambda' \text{ or } \lambda' \preccurlyeq \lambda \text{'' is an equivalence relation.}
	\end{equation} 
	We denote the above equivalence relation  by $ \lambda \sim \lambda'$.  Given $ \ulambda = (\lambda_1, ..., \lambda_n) \in \BX^{n }$, the set $\{1, ... n\}$ is partitioned into several pair-wise disjoint subsets $L_\alpha$, $\alpha = 1, ..., A$, such that 
	$$ \lambda_{l     } \sim  \lambda_{l     '}  \text{ if and only if } l     , l     ' \text{ are in the same } L_\alpha.$$
	Each $\Lambda ^{\alpha} = \left \{ \lambda_{l     } \right \}_{l      \in L_\alpha}$\footnote{Here, $\left \{ \lambda_{l     } \right \}_{l      \in L_\alpha}$ is considered as a set, namely, $\lambda_{l     }$ are counted without multiplicity.} 
	is a totally ordered set. Let $B_{\alpha} = \left| \Lambda ^{\alpha} \right|$ and  label the elements of $\Lambda ^{\alpha}$  in the descending order, $ \lambda_{\alpha, 1} \succ ... \succ \lambda_{\alpha, B_\alpha}.$
	For $ \lambda_{\alpha, \beta} \in \Lambda ^{\alpha}$, let $M_{\alpha, \beta}$ denote the multiplicity of $\lambda_{\alpha, \beta}$ in $\ulambda$, that is, $M_{\alpha, \beta} = \left|\left\{ l      : \lambda_l      = \lambda_{\alpha, \beta} \right \} \right|$, and define $N_{\alpha, \beta} = \sum_{\gamma =1}^{\beta} M_{\alpha, \gamma} = \left|\left\{ l      : \lambda_{\alpha, \beta} \preccurlyeq \lambda_l      \right \} \right|$.
	
	$\ulambda$ is called generic if $\lambda_{l     } \nsim  \lambda_{l     '}$ for any $l      \neq l     '$.
\end{defn}

We recall that the ordered sets $(\BC, \preccurlyeq_1)$,  $(\BC, \preccurlyeq_2)$,  $(\BC \times \BZT, \preccurlyeq )$ and $(\BC \times \BZ, \preccurlyeq )$
defined in \S \ref{sec: all Ssis} all satisfy \eqref{3eq: condition on order}.

\begin{figure}
	\begin{center}
		\begin{tikzpicture}

		\draw [->] (-3.5,0) -- (1.5,0);
		\draw [->] (0, -3) -- (0, 3);
		\draw [-, thick] (1, -0.8) -- (1, 0.8);
		\draw [-, thick] (-0.5 , -3) -- (-0.5, -1.75);
		\draw [-, thick] (-0.5 , 1.75) -- (-0.5, 3);
		\draw [->] (-0.5 , 2) -- (-0.5, 2.5);
		\draw [->] (-0.5 , -2.6) -- (-0.5, -2.5);
		\draw [dotted] (-0.5 , -2) -- (-0.5, 2);
		\draw [-, thick] (1, -0.8) to [out=-90, in=90] (-0.5, -1.75);
		\draw [-, thick] (1, 0.8) to [out=90, in=-90] (-0.5, 1.75);
		
		\draw[fill=white] (-0.5, 0) circle [radius=0.04];
		\node [ below ] at (-0.65, 0.05) {\footnotesize $\sigma$};
		
		\draw[fill=white] (0.5, 0.3 ) circle [radius=0.04];
		\node [above ] at (0.5, 0.3 )  {\footnotesize $\lambda_{l     } - \kappa_{l     } $};
		\draw[fill=white] (-0.7, 0.3 ) circle [radius=0.04];
		\draw[fill=white] (-1.9, 0.3 ) circle [radius=0.04];
		\draw[fill=white] (-3.1, 0.3 ) circle [radius=0.04];
		
		\node [left ] at (-0.5, 2.2 )  {\footnotesize $\EC^d_{(\ulambda, \ukappa)}$};

		\draw [->] (7+-3.5,0) -- (7+1.5,0);
		\draw [->] (7+0, -1.7) -- (7+0, 1.7);
		\draw [-, thick] (7+0.8, -0.81) -- (7+0.8, 0.81);
		\draw [-, thick] (7+-3.5, 0.8) -- (7+0.8, 0.8);
		\draw [-, thick] (7+-3.5, -0.8) -- (7+0.8, -0.8);
		\draw [->] (7+-1, 0.8) -- (7+-2, 0.8);
		\draw [->] (7+-3.5, -0.8) -- (7+-2, -0.8);
		
		\draw[fill=white] (7+0.5, 0.3 ) circle [radius=0.04];
		\node [above ] at (7+0.5, 0.3 )  {\footnotesize $\lambda_{l     }$};
		\draw[fill=white] (7+-0.7, 0.3 ) circle [radius=0.04];
		\draw[fill=white] (7+-1.9, 0.3 ) circle [radius=0.04];
		\draw[fill=white] (7+-3.1, 0.3 ) circle [radius=0.04];
		
		\node [right ] at (7+0.8, 0.8)  {\footnotesize $\EC'_{\ulambda}$};
		
		\end{tikzpicture}
	\end{center}
	\caption{$\EC^d_{(\ulambda, \ukappa)}$ and $\EC'_{\ulambda}$}\label{fig: C d lambda}
\end{figure}

\begin{defn}\label{3defn: C d lambda}
	Let $d = 1$ or $2$, $\ulambda \in \BC^{n}$ and $\ukappa \in \BN^n$. Put $\sigma < \frac d 2 + \frac 1 {n} (\Re |\ulambda| - 1 )$ and choose
	a contour $ \EC^d _{(\ulambda, \ukappa)}$ {\rm(}see Figure {\rm \ref{fig: C d lambda}}{\rm)} such that
	\begin{itemize}
		\item[-] $\EC^d _{(\ulambda, \ukappa)}$ is  upward directed  from $\sigma - i \infty$ to $\sigma + i \infty$,
		\item[-] all the sets $\lambda_{l     } - \kappa_{l     } - \BN$ lie on the left  side of $\EC _{(\ulambda, \ukappa)}$, and
		\item[-] if $s \in \EC^d _{(\ulambda, \ukappa)}$ and $|\Im s| $ is sufficiently large, say $|\Im s| - \max  \left\{ |\Im \lambda_{l     }| \right \} \ggg 1$, then $\Re s = \sigma$.
	\end{itemize}
	For $\ulambda \in \BC$, we denote $\EC_{\ulambda} = \EC^1 _{(\ulambda, \boldsymbol 0)}$. For $(\umu, \udelta) \in \BC^n \times (\BZT)^n$, we denote $\EC_{(\umu, \udelta)} = \EC^1 _{(\umu, \udelta)}$. For $(\umu, \um) \in \BC^n \times \BZ^n$, we denote $\EC_{(\umu, \um)} = \frac 1 2\cdot \EC^2 _{(2 \umu, \|\um\|)}$.
\end{defn}

\begin{defn}\label{3defn: C ' lambda}
	For  $\ulambda \in \BC^{n}$, choose a contour $\EC'_{\ulambda}$ illustrated in Figure {\rm \ref{fig: C d lambda}} such that 
	\begin{itemize}
		\item[-] $\EC'_{\ulambda}$ starts from and returns to $- \infty$ counter-clockwise,
		\item[-] $\EC'_{\ulambda}$ consists two horizontal infinite half lines, 
		\item[-] $\EC'_{\ulambda}$ encircles all the sets $\lambda_{l     } - \BN$, and
		\item[-] $ \Im s \lll \max  \{ |\Im \lambda_l     | \} + 1$ for all $s \in \EC'_{\ulambda}$.
	\end{itemize}
\end{defn}

\subsection {\texorpdfstring{The Hankel Transform $ \Hsl$ and the Bessel Function $J(x; \usigma, \ulambda)$}{The Hankel Transform $ H_{(\varsigma, \lambda)}$ and the Bessel Function $J(x; \varsigma, \lambda)$}}

\subsubsection{\texorpdfstring{The Definition of  $ \Hsl$}{The Definition of  $ H_{(\varsigma, \lambda)}$}}

Consider the ordered set $(\BC, \preccurlyeq_1)$. For  $\ulambda \in \BC^{n}$, let notations $\lambda_{\alpha, \beta}$, $B_\alpha$, $M_{\alpha, \beta}$ and $N_{\alpha, \beta}$ be as in Definition \ref{3defn: ordered set}.
We define the following subspace of $ \Ssis (\BR _+)$,
\begin{equation}\label{3eq: Ssis(R+)}
\Ssis^{ \ulambda } (\BR _+) = \sum_{ \alpha = 1}^A \sum_{\beta = 1}^{B_\alpha} \sum_{j=0}^{N_{\alpha, \beta} - 1} x^{- \lambda_{\alpha, \beta}} (\log x)^{j } \SS (\overline \BR _+).
\end{equation}
\begin{prop}\label{3prop:H sigma lambda} 
	Let $( \usigma, \ulambda) \in  \{ +, - \}^n \times \BC^{n}$. Suppose $\upsilon \in \SS (\BR _+)$. Then there exists a unique function $\Upsilon \in \Ssis^{ \ulambda } (\BR _+) $ satisfying the following identity,
	\begin{equation}\label{3eq: Hankel transform identity 0, R+}
	\EM \Upsilon (s ) = G (s; \usigma, \ulambda) \EM \upsilon ( 1 - s).
	\end{equation}
	We call $\Upsilon$ the Hankel transform of  $\upsilon$ over $\BR _+$ of index $( \usigma, \ulambda)$ and write  $\Hsl \upsilon  = \Upsilon$.
\end{prop}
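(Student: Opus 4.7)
The plan is to reduce the proposition to the Mellin isomorphism $\Ssis(\BR_+) \cong \Msis$ established in Lemma \ref{2lem: Sis and Msis, R+}, by verifying that the prescribed Mellin transform
\begin{equation*}
H(s) := G(s; \usigma, \ulambda) \EM \upsilon(1-s)
\end{equation*}
lies in $\Msis$ and in fact in a precise subspace matching $\Ssis^{\ulambda}(\BR_+)$. Since $\upsilon \in \SS(\BR_+)$, Corollary \ref{cor: Mellin, Schwartz} (1) gives $\EM \upsilon \in \Hrd$, so $s \mapsto \EM \upsilon(1-s)$ is entire and rapidly decaying on any vertical strip. Hence the only singularities of $H$ are those inherited from $G(s; \usigma, \ulambda) = \prod_{l} \Gamma(s - \lambda_l) e(\varsigma_l(s-\lambda_l)/4)$, and the rapid vertical decay of $H$ on strips (away from neighborhoods of poles) follows by combining the bound \eqref{1eq: vertical bound, G (s; sigma, lambda)} with the rapid decay of $\EM \upsilon(1-s)$.

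Next I would analyze the pole structure. With notation as in Definition \ref{3defn: ordered set} applied to $(\BC, \preccurlyeq_1)$, the poles of $G(s; \usigma, \ulambda)$ lie in the finite union $\bigcup_{\alpha, \beta} (\lambda_{\alpha, \beta} - \BN)$. At the point $s = \lambda_{\alpha, \beta} - \kappa$, the factors contributing a pole are precisely those $\Gamma(s - \lambda_l)$ with $\lambda_l \in \{\lambda_{\alpha, 1}, \ldots, \lambda_{\alpha, \beta}\}$ (and for which $\kappa$ is large enough so that $s = \lambda_{\alpha, \beta} - \kappa$ lies in $\lambda_l - \BN$); counted with multiplicities $M_{\alpha, \gamma}$, this shows the pole has order at most $N_{\alpha, \beta} = \sum_{\gamma \leq \beta} M_{\alpha, \gamma}$. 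Consequently $H$ is meromorphic on $\BC$ with poles of bounded orders lying in finitely many progressions $\lambda_{\alpha, \beta} - \BN$, so $H \in \Msis$.

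By Lemma \ref{2lem: Sis and Msis, R+} there exists a unique $\Upsilon \in \Ssis(\BR_+)$ with $\EM \Upsilon = H$, which also gives uniqueness of $\Upsilon$. It remains to pin down the function space. Expanding the principal part of $H$ at each pole $\lambda_{\alpha, \beta} - \kappa$ in powers of $(s - \lambda_{\alpha, \beta} + \kappa)^{-j-1}$ for $0 \leq j \leq N_{\alpha, \beta} - 1$, one may write $H$ as a sum of elements $H_{\alpha, \beta, j} \in \Msis^{\lambda_{\alpha, \beta}, j}$ modulo $\Hrd$; Lemma \ref{2lem: Mellin isomorphism, Msis lambda j} then shows $\EM^{-1} H_{\alpha, \beta, j} \in x^{-\lambda_{\alpha, \beta}}(\log x)^j \SS(\overline \BR_+)$, so
\begin{equation*}
\Upsilon \in \sum_{\alpha=1}^{A} \sum_{\beta=1}^{B_\alpha} \sum_{j=0}^{N_{\alpha, \beta}-1} x^{-\lambda_{\alpha, \beta}}(\log x)^j \SS(\overline \BR_+) = \Ssis^{\ulambda}(\BR_+),
\end{equation*}
as required.

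The main subtlety is the careful bookkeeping of pole orders when several components of $\ulambda$ lie in a common $\sim_1$-equivalence class, together with the corresponding decomposition of $H$ into pieces of $\Msis^{\lambda_{\alpha, \beta}, j}$; once this is done, everything else is a direct appeal to the isomorphisms of \S \ref{sec: Ssis and Msis, R+}.
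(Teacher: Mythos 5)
Your proposal is correct and follows essentially the same route as the paper: regroup $G$ as $\prod_{\alpha,\beta}\Gamma(s-\lambda_{\alpha,\beta})^{M_{\alpha,\beta}}$ times an entire nonvanishing factor, control vertical growth via Lemma \ref{1lem: vertical bound} and Corollary \ref{cor: Mellin, Schwartz}(1), place $H = G(s;\usigma,\ulambda)\EM\upsilon(1-s)$ in $\sum_{\alpha,\beta}\sum_{j<N_{\alpha,\beta}}\Msis^{\lambda_{\alpha,\beta},j}$, and invoke the Mellin isomorphism of Lemma \ref{2lem: Sis and Msis, R+}. The one imprecision, that the pole at $\lambda_{\alpha,\beta}-\kappa$ has order at most $N_{\alpha,\beta}$, fails for $\kappa$ large, where factors $\Gamma(s-\lambda_{\alpha,\gamma})$ with $\gamma>\beta$ also become singular; this is harmless because those points coincide with $\lambda_{\alpha,\beta'}-\kappa'$ for $\beta'>\beta$ and are covered by those terms, and the paper's cleaner formulation (pole order $N_{\alpha,\beta}$ with $\beta=\max\{\beta':\lambda\preccurlyeq_1\lambda_{\alpha,\beta'}\}$ for each $\lambda\in\lambda_{\alpha,1}-\BN$) avoids the overlapping parametrization.
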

\begin{proof}
	Recall the definition  of $G (s; \usigma, \ulambda)$ given by (\ref{1def: G pm (s)}, \ref{1def: G(s; sigma; lambda)}),
	\begin{equation*}
	G (s; \usigma, \ulambda) = e \lp  \frac {\sum_{l      = 1}^n \varsigma_l      (s - \lambda_l     )} 4 \rp
	\prod_{l      = 1}^n \Gamma \lp s - \lambda_{l     } \rp.
	\end{equation*}
	The product in the above expression   may be rewritten as below
	\begin{equation*}
	\prod_{ \alpha = 1}^A \prod_{\beta = 1}^{B_\alpha}  \Gamma \lp s - \lambda _{\alpha, \beta} \rp^{M_{\alpha, \beta} }.
	\end{equation*}
	Thus the singularities of $G (s; \usigma, \ulambda)$ are poles at the points in $ \lambda _{\alpha, 1} - \BN$, $\alpha =1,..., A$. More precisely, $G (s; \usigma, \ulambda)$ has a pole of pure order $N_{\alpha, \beta}$ at $\lambda \in \lambda_{\alpha, 1} - \BN$ if one let  $\beta = \max \left\{ \beta' : \lambda \preccurlyeq_1 \lambda_{\alpha, \beta' } \right \} $. Moreover, in view of \eqref{1eq: vertical bound, G (s; sigma, lambda)} in Lemma \ref{1lem: vertical bound}, $G (s; \usigma, \ulambda)$ is of uniform moderate growth on vertical strips. 
	
	On the other hand, according to Corollary \ref{cor: Mellin, Schwartz} (1), $\EM \upsilon (1-s)$ uniformly rapidly decays on vertical strips. 
	
	Therefore, the product $G (s; \usigma, \ulambda) \EM \upsilon ( 1 - s)$ on the right hand side of \eqref{3eq: Hankel transform identity 0, R+} is a meromorphic function in the space $\sum_{ \alpha = 1}^A \sum_{\beta = 1}^{B_\alpha} \sum_{j=0}^{N_{\alpha, \beta} - 1}   \Msis^{ \lambda_{\alpha, \beta} , j}$. We conclude from Lemma \ref{2lem: Sis and Msis, R+} that  \eqref{3eq: Hankel transform identity 0, R+} uniquely determines a function  $\Upsilon$ in $ \Ssis^{ \ulambda } (\BR _+)$.
\end{proof}

\subsubsection{The Bessel Function $J(x; \usigma, \ulambda)$}\label{sec: Bessel kernel J(x; sigma, lambda)}
\
\vskip 5 pt 
{\it The Integral Kernel $J(x; \usigma, \ulambda)$ of $\Hsl$.}
Suppose $\upsilon \in \SS (\BR _+)$. By the Mellin inversion, we have
\begin{equation}\label{3eq: Bessel kernel, Mellin inversion, 1}
\Upsilon (x) 
= \frac { 1 } {2 \pi i} \int_{(\sigma)} G (s; \usigma, \ulambda) \EM  \upsilon (1 - s) x ^{ - s} d s, \hskip 10 pt \textstyle \sigma > \max  \left\{ \Re  \lambda_l      \right \}.
\end{equation}
It is an iterated double integral as below
\begin{equation*}
\Upsilon (x) 
= \frac { 1 } {2 \pi i} \int_{(\sigma)} \int_0^\infty \upsilon (y) y^{ - s } d y \cdot G (s; \usigma, \ulambda) x ^{ - s} d s.
\end{equation*}
We now shift the integral contour to $\EC _{\ulambda}$ defined in Definition \ref{3defn: C d lambda}. Using \eqref{1eq: vertical bound, G (s; sigma, lambda)} in Lemma \ref{1lem: vertical bound}, one shows that the above double integral becomes absolutely convergent after this contour shift.
Therefore, on changing the order of integrals, one obtains
\begin{equation}\label{2eq: Psi (x; sigma) as Hankel transform}
\Upsilon (x ) = \int_0^\infty \upsilon (y) J \big( (xy)^{\frac 1 n}; \usigma, \ulambda\big) d y.
\end{equation}
Here $J (x;  \usigma, \ulambda)$ is the 
{Bessel function} defined by the  Mellin-Barnes type integral
\begin{equation}\label{3eq: definition of J (x; sigma)}
J (x ; \usigma, \ulambda) = \frac 1  {2 \pi i}\int_{\EC_{\ulambda}} G(s; \usigma, \ulambda) x^{- n s} d s.
\end{equation}

{\it Shifting the Index of $J (x ; \usigma, \ulambda)$.}
\begin{lem}\label{3lem: normalize J(x; sigma, lambda)}
	Let $(\usigma, \ulambda) \in \{+, -\}^n \times \BC^{n}$ and $\lambda \in \BC$. Recall that  $\ue^n$ denotes the $n$-tuple $ ( {1,..., 1})$. Then
	\begin{equation}\label{4eq: normalize J}
	J (x ; \usigma, \ulambda - \lambda \ue^n) = x^{n \lambda} J (x ; \usigma, \ulambda).
	\end{equation}
\end{lem}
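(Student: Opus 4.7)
The plan is to start from the Mellin--Barnes representation \eqref{3eq: definition of J (x; sigma)} of the Bessel function and exploit a simple change of variables $s \mapsto s - \lambda$ in the $s$-integral. Writing out the defining product from \eqref{1def: G pm (s)} and \eqref{1def: G(s; sigma; lambda)}, one observes the key algebraic identity
\begin{equation*}
G(s; \usigma, \ulambda - \lambda \ue^n) = \prod_{l=1}^n G(s - \lambda_l + \lambda, \varsigma_l) = G(s + \lambda; \usigma, \ulambda),
\end{equation*}
which is what makes the whole reduction work. Substituting this into the definition of $J(x; \usigma, \ulambda - \lambda \ue^n)$ and then performing the translation $s' = s + \lambda$ in the contour integral yields
\begin{equation*}
J (x; \usigma, \ulambda - \lambda \ue^n) = \frac{1}{2\pi i} \int_{\EC_{\ulambda - \lambda \ue^n} + \lambda} G(s'; \usigma, \ulambda)\, x^{- n(s' - \lambda)}\, ds' = x^{n\lambda} \cdot \frac{1}{2\pi i} \int_{\EC_{\ulambda - \lambda \ue^n} + \lambda} G(s'; \usigma, \ulambda)\, x^{- n s'}\, ds'.
\end{equation*}

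The only remaining point is to justify that the translated contour $\EC_{\ulambda - \lambda \ue^n} + \lambda$ may be replaced by $\EC_\ulambda$. For this I would verify that $\EC_{\ulambda - \lambda \ue^n} + \lambda$ satisfies the three defining properties of $\EC_\ulambda$ in Definition \ref{3defn: C d lambda}: (i) under the translation the poles $\lambda_l - \lambda - \BN$ of $G(s; \usigma, \ulambda - \lambda \ue^n)$ become $\lambda_l - \BN$, so all these sets still lie on the left of the translated contour; (ii) the constraint $\sigma < \tfrac{1}{2} + \tfrac{1}{n}(\Re|\ulambda - \lambda \ue^n| - 1)$ on the real abscissa of $\EC_{\ulambda - \lambda \ue^n}$, after the shift by $\lambda$, becomes exactly $\sigma + \Re\lambda < \tfrac{1}{2} + \tfrac{1}{n}(\Re|\ulambda| - 1)$ since $|\ulambda - \lambda \ue^n| = |\ulambda| - n\lambda$; and (iii) the translated contour still becomes vertical for large $|\Im s|$.

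Having checked these properties, the deformation between $\EC_{\ulambda - \lambda \ue^n} + \lambda$ and $\EC_\ulambda$ sweeps through a region on which the integrand $G(s; \usigma, \ulambda) x^{-ns}$ is holomorphic and, by the vertical-strip decay bound \eqref{1eq: vertical bound, G (s; sigma, lambda)} in Lemma \ref{1lem: vertical bound}, decays uniformly rapidly as $|\Im s| \to \infty$. Cauchy's theorem therefore equates the two contour integrals, giving the desired identity \eqref{4eq: normalize J}. The potential subtlety\,---\,which I expect to be the only nontrivial point\,---\,is keeping track of the bookkeeping for the contour, in particular verifying condition (ii) above so that the shifted contour is genuinely admissible as a $\EC_\ulambda$; the rest is a direct change of variables.
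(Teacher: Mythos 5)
Your proof is correct, and it is exactly the natural (and essentially unique) argument: apply the shift identity $G(s;\usigma,\ulambda-\lambda\ue^n)=G(s+\lambda;\usigma,\ulambda)$ in the Mellin--Barnes representation \eqref{3eq: definition of J (x; sigma)}, translate the contour via $s\mapsto s+\lambda$, and then deform the translated contour to an admissible $\EC_\ulambda$ using the uniform decay from Lemma \ref{1lem: vertical bound}. The paper omits the proof as routine, and your proposal supplies precisely the calculation (including the contour bookkeeping) that justifies the lemma.
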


{\it Regularity of $J(x; \usigma, \ulambda)$.}
We now show that  $J (x ; \usigma, \ulambda)$ admits an analytic continuation from $\BR _+$ onto $\BU$. Consider the following Barnes type integral representation,
\begin{equation} \label{3eq: Barnes contour integral 1}
J (\zeta ; \usigma, \ulambda) = \frac 1  {2 \pi i}\int_{\EC_{\ulambda}'} G(s; \usigma, \ulambda) \zeta^{-n s} d s, \hskip 10 pt \zeta = x e^{i \omega} \in \BU, x \in \BR _+, \omega \in \BR,
\end{equation}
with the integral contour  given in Definition \ref{3defn: C ' lambda}.
We first rewrite $G  (s, \pm)$ using Euler's reflection formula,
\begin{equation*} 
G  (s, \pm) = \frac {\pi e \lp \pm \frac 1 4 s   \rp} {\sin (\pi s) \Gamma (1-s)},
\end{equation*}
Then Stirling's asymptotic formula \eqref{1eq: Stirling's formula} yields,
\begin{equation*} 
G(- \rho + it; \usigma, \ulambda) \lll_{\,\ulambda,\, r} {e^{n \rho}} {\rho^{ - n \lp \rho + \frac 1 2 \rp - \Re |\ulambda|}},
\end{equation*}
for all $ - \rho + it \notin \bigcup_{l      = 1}^n \bigcup_{\kappa \in \BN} \BB_r (\lambda_{l     } - \kappa)$ satisfying $\rho \ggg 1$ and $ t \lll \max  \{ |\Im \lambda_{l     }| \} + 1$.
It follows that the contour integral in \eqref{3eq: Barnes contour integral 1} converges absolutely and compactly
in $\zeta$, and hence $J (\zeta ; \usigma, \ulambda)$ is analytic in $\zeta$. 

Moreover, given any bounded open subset of $\BC^{n}$, one may fix a single contour $\EC' = \EC'_{\ulambda}$ for all $\ulambda$ in this set and verify the uniform convergence of the integral in the $\ulambda$ aspect. Then follows the analyticity of $J (\zeta ; \usigma, \ulambda)$  with respect to $\ulambda$.

\begin{lem}\label{3lem: J (x; sigma, lambda) analytic}
	$J (x ; \usigma, \ulambda) $  admits an analytic continuation $J (\zeta ; \usigma, \ulambda)$ from $\BR _+$ onto $\BU$. 
	In particular, $J (x ; \usigma, \ulambda) $ is a real analytic function of $x$ on $\BR _+$. Moreover,  $J (\zeta ; \usigma, \ulambda)$  is an analytic function of $\ulambda$ on $\BC^{n}$.
\end{lem}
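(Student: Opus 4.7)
The plan is to verify that the formula \eqref{3eq: Barnes contour integral 1} defines an analytic function on $\BU \times \BC^n$ and agrees with the original Mellin--Barnes expression \eqref{3eq: definition of J (x; sigma)} when $\zeta = x \in \BR_+$. The essential ingredients---Euler's reflection formula and Stirling's asymptotic---are already laid out in the paragraph preceding the lemma, so my plan is mostly to organize them into three steps.

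First, I would record that the integral in \eqref{3eq: Barnes contour integral 1} converges absolutely and uniformly on compact subsets of $\BU \times \BC^n$. Writing $\zeta = xe^{i\omega}$ and $s = -\rho + it$ on the horizontal portion of $\EC'_{\ulambda}$, one has $|\zeta^{-ns}| = x^{n\rho} e^{n\omega t}$, which is bounded on compact subsets of $\BU$ since $|t|$ is bounded on $\EC'_\ulambda$. Combined with the Stirling bound $G(-\rho + it;\usigma,\ulambda) \lll e^{n\rho}\rho^{-n(\rho + 1/2) - \Re|\ulambda|}$ already derived in the text, the integrand decays like $\rho^{-n\rho}$ as $\rho \to \infty$, beating every exponential. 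On the bounded vertical segment of $\EC'_\ulambda$ the integrand is clearly bounded. To get uniformity in $\ulambda$ on a bounded open $W \subset \BC^n$, one selects a single contour $\EC'$ lying to the left of every shifted pole set $\lambda_l - \BN$ for $\ulambda \in W$; the Stirling bound then depends on $W$ only through an overall polynomial factor in $\rho$, preserving the rapid decay.

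Second, I would justify that \eqref{3eq: Barnes contour integral 1} reduces to \eqref{3eq: definition of J (x; sigma)} when $\zeta = x \in \BR_+$. No poles of $G(s;\usigma,\ulambda)$ lie between the contours $\EC_\ulambda$ and $\EC'_\ulambda$, so by Cauchy's theorem it suffices to show that the integrand tends to $0$ along the horizontal connecting segments at $|\Im s| = T$ as $T \to \infty$. This follows from the moderate-growth bound \eqref{1eq: vertical bound, G (s; sigma, lambda)}: on such a segment $|G(\rho + iT;\usigma,\ulambda) x^{-ns}| \lll (T+1)^{n\rho - n/2 - \Re|\ulambda|} x^{-n\rho}$, whose integral in $\rho$ over the (fixed finite) horizontal stretch is $O\bigl(T^{-n/2 - \Re|\ulambda|}\bigr)$, vanishing as $T\to\infty$ (one sends $T$ through values keeping a positive distance from every $\Im \lambda_l$ to stay in the regime of \eqref{1eq: vertical bound, G (s; sigma, lambda)}).

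Third, the joint analyticity in $(\zeta,\ulambda)$ is a formality once the integral is absolutely and locally uniformly convergent. The integrand $G(s;\usigma,\ulambda)\zeta^{-ns}$ is entire in $\ulambda$ and analytic in $\zeta \in \BU$ for each $s$ on $\EC'_\ulambda$ (or on the fixed $\EC'$ chosen for $\ulambda \in W$), so differentiation under the integral sign, or equivalently a Morera-plus-Fubini argument, transfers analyticity to $J(\zeta;\usigma,\ulambda)$. Restricting to $\zeta = x \in \BR_+$ and combining with step two yields the real-analyticity claim. The main point requiring care is step two: one must verify that the contour deformation from $\EC_\ulambda$ to $\EC'_\ulambda$ is legitimate by controlling the horizontal connecting segments along sequences $T_k \to \infty$ chosen to avoid the neighborhoods of the poles, which is where the uniform moderate-growth estimate \eqref{1eq: vertical bound, G (s; sigma, lambda)} is essential.
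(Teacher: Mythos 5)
Your steps 1 and 3 reproduce exactly what the paper does for this lemma: absolute, locally uniform convergence of the Barnes-type integral \eqref{3eq: Barnes contour integral 1} via Euler's reflection formula and Stirling on the horizontal rays of $\EC'_{\ulambda}$, and then Morera/Fubini to transfer analyticity in $\zeta$ and in $\ulambda$ to the integral. Your step~2 is an addition; the paper simply presents \eqref{3eq: Barnes contour integral 1} as a ``Barnes type integral representation'' of $J$ without justifying that it agrees with the Mellin--Barnes integral \eqref{3eq: definition of J (x; sigma)} on $\BR_+$, and it is good that you noticed this deserves an argument. Unfortunately, as written, step~2 contains a gap.

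You assert that the horizontal connecting segments at $|\Im s|=T$ form a ``fixed finite'' stretch and apply the vertical-strip estimate \eqref{1eq: vertical bound, G (s; sigma, lambda)} to them. That is inconsistent with the geometry of the two contours: $\EC_{\ulambda}$ escapes to $\sigma\pm i\infty$, whereas $\EC'_{\ulambda}$ has bounded imaginary part and escapes to $-\infty\pm i T_0$. The region between them is therefore unbounded both as $\Im s\to\pm\infty$ and as $\Re s\to-\infty$, and a horizontal segment at height $T>T_0$ must run from $\Re s=\sigma$ out to $\Re s=-\infty$, not over a fixed interval. But \eqref{1eq: vertical bound, G (s; sigma, lambda)} is only uniform on bounded vertical strips $\BS[a,b]$ (the implied constant depends on $a,b$), so it cannot be invoked along an unbounded horizontal half-line; and the reflection-plus-Stirling estimate from your step~1 is only stated under $|\Im s|\lll\max\{|\Im\lambda_l|\}+1$, so it does not cover the regime $|\Im s|=T\to\infty$ either.

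The gap is filled by a two-parameter truncation. Fix $R$ and let $\EC''_R$ follow $\EC'_{\ulambda}$ for $\Re s\geq -R$ and then continue vertically to $\pm i\infty$ from the two points of $\EC'_{\ulambda}$ on the line $\Re s=-R$. First deform $\EC_{\ulambda}$ to $\EC''_R$: now the horizontal connecting segments really do lie in the fixed strip $\BS[-R,\sigma]$, so \eqref{1eq: vertical bound, G (s; sigma, lambda)} gives a contribution $O_{\ulambda,R,x}\bigl((T+1)^{n\sigma-n/2-\Re|\ulambda|}\bigr)\to 0$ as $T\to\infty$, since $\sigma$ was chosen so that $n\sigma-\frac{n}{2}-\Re|\ulambda|<-1$. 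Then let $R\to\infty$: the vertical tails of $\EC''_R$ contribute $O\bigl(x^{nR}\int|G(-R+it;\usigma,\ulambda)|\,dt\bigr)$, and writing $\prod_l\Gamma(s-\lambda_l)=\prod_l \pi/\bigl(\sin(\pi(s-\lambda_l))\Gamma(1-s+\lambda_l)\bigr)$ shows this decays factorially in $R$ (and the $t$-integral converges thanks to the exponential decay of $1/|\sin|$ off the real axis), beating the exponential growth of $x^{nR}$. Meanwhile the rest of $\EC''_R$ tends to $\EC'_{\ulambda}$ by dominated convergence using your step~1 bound. With this repair, your argument is complete.
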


\begin{rem}
	In \S \ref{sec: Recurrence relations and differential equations of the Bessel functions} we shall show that   $J (x ; \usigma, \ulambda)$ satisfies a differential equation with analytic coefficients. It then follows from the theory of differential equations that  $J (x ; \usigma, \ulambda)$ has an analytic continuation. This alternative viewpoint will be taken in \S \ref{sec: Analytic continuation of J (x; usigma; ulambda)}.
\end{rem}

\subsection {\texorpdfstring{The Hankel Transforms $\hld$, $\hmum$ and the Bessel Kernels $j_{(\umu, \udelta)} $, $j_{(\umu, \um)}$}{The Hankel Transforms $h_{(\mu, \delta)} $, $h_{(\mu, m)}$ and the Bessel Kernels $j_{(\mu, \delta)} $, $j_{(\mu, m)}$}}

Consider the ordered set $(\BC, \preccurlyeq_2)$ and define $ \lambda _{\alpha, \beta}$, $B_\alpha$, $M_{\alpha, \beta}$ and $N_{\alpha, \beta}$  as in Definition \ref{3defn: ordered set} corresponding to $\ulambda \in \BC^n$. We define the following subspace of $ \Ssis (\BR _+)$
\begin{equation}\label{3eq: Ssis2, R+}
\Ssiss^{ \ulambda } (\BR _+) = \sum_{ \alpha = 1}^A \sum_{\beta = 1}^{B_\alpha}  \sum_{j=0}^{N_{\alpha, \beta} - 1} x^{ - \lambda_{\alpha, \beta}} (\log x)^{j } \SS_{0} (\overline \BR _+).
\end{equation}

\subsubsection{\texorpdfstring{The Definition of  $  \hld$}{The Definition of  $h_{(\mu, \delta)} $}}\label{sec: h (lambda, delta)}


The following proposition provides the definition of the Hankel transform $\hld$, which maps $\Ssiss^{ - \umu - \udelta } (\BR _+)$    onto  $\Ssiss^{ \umu - \udelta } (\BR _+)$ bijectively. 

\begin{prop} \label{3prop: h (lambda, delta)}
	Let $(\umu, \udelta) \in \BC^{n} \times (\BZ/2 \BZ)^n$. Suppose $\upsilon \in \Ssiss^{ - \umu - \udelta } (\BR _+)$. Then there exists a unique function $\Upsilon \in \Ssiss^{ \umu - \udelta } (\BR _+) $ satisfying the following identity,
	\begin{equation}\label{3eq: Hankel transform identity 1, R+}
	\EM \Upsilon (s ) = G_{(\umu, \udelta )} (s) \EM \upsilon ( 1 - s).
	\end{equation}
	We call $\Upsilon$ the Hankel transform of  $\upsilon$ over $\BR _+$ of index $(\umu, \udelta)$ and  write $\hld \upsilon  = \Upsilon$. Furthermore, we have the Hankel inversion formula
	\begin{equation}\label{3eq: Hankel inversion, R 0}
	\hld \upsilon  = \Upsilon, \hskip 10 pt \hmld \Upsilon  = (-)^{|\udelta|} \upsilon.
	\end{equation}
\end{prop}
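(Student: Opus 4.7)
The plan is to follow the template of Proposition \ref{3prop:H sigma lambda}, with the new feature being the interplay between the poles of the gamma factor $G_{(\umu,\udelta)}(s)$ and the poles of $\EM\upsilon(1-s)$. These latter poles now appear because $\upsilon$ is no longer Schwartz but lies in $\Ssiss^{-\umu-\udelta}(\BR_+)$; the choice of index $-\umu-\udelta$ is designed so that the two sets of poles cancel exactly against the zeros of the gamma factor.

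For existence and uniqueness, I would first unpack $G_{(\umu,\udelta)}(s) = \prod_l G_{\delta_l}(s-\mu_l)$ via \eqref{1def: G delta}: each factor contributes a simple pole at each point of $\mu_l - \delta_l - 2\BN$ (from the numerator Gamma) and a simple zero at each point of $\mu_l + \delta_l + 1 + 2\BN$ (from the denominator Gamma), with uniform moderate growth on vertical strips off the poles via \eqref{1eq: vertical bound, G (lambda, delta) (s)}. Next, Corollary \ref{2cor: refined decomp, R+} (1) applied with $\ulambda = -\umu-\udelta$ places $\EM\upsilon(s) \in \Msis$, with poles confined to $\bigcup_l(-\mu_l - \delta_l - 2\BN)$, multiplicities dictated by the partition of $\{1,\ldots,n\}$ under $(\BC,\preccurlyeq_2)$, and rapid decay on vertical strips. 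Substituting $s \mapsto 1-s$, the poles of $\EM\upsilon(1-s)$ sit precisely at the zero set $\bigcup_l(\mu_l + \delta_l + 1 + 2\BN)$ of $G_{(\umu,\udelta)}(s)$. A direct combinatorial check shows the zero order of $G$ at each such point majorizes the pole order of $\EM\upsilon(1-s)$ — both are controlled by the same count $\#\{l' : \mu_l + \delta_l \sim_2 \mu_{l'} + \delta_{l'}$ with the correct integer offset$\}$. After cancellation the product $G_{(\umu,\udelta)}(s)\EM\upsilon(1-s)$ is meromorphic with singularities only in $\bigcup_l(\mu_l - \delta_l - 2\BN)$, whose multiplicities fit the Mellin image of $\Ssiss^{\umu-\udelta}$, and it decays rapidly on vertical strips. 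Invoking Corollary \ref{2cor: refined decomp, R+} (1) in the reverse direction with $\ulambda = \umu-\udelta$ then yields a unique $\Upsilon \in \Ssiss^{\umu-\udelta}(\BR_+)$ satisfying \eqref{3eq: Hankel transform identity 1, R+}.

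For the inversion formula, note that $\Ssiss^{\umu-\udelta}(\BR_+) = \Ssiss^{-(-\umu)-\udelta}(\BR_+)$, so the same construction with index $(-\umu,\udelta)$ is applicable to $\Upsilon$ and produces $\hmld \Upsilon$. The functional equation \eqref{1eq: G mu delta (1-s) = G - mu delta (s)} then gives
\begin{equation*}
\EM(\hmld\Upsilon)(s) = G_{(-\umu,\udelta)}(s)\EM\Upsilon(1-s) = G_{(-\umu,\udelta)}(s)G_{(\umu,\udelta)}(1-s)\EM\upsilon(s) = (-)^{|\udelta|}\EM\upsilon(s),
\end{equation*}
and Mellin inversion delivers $\hmld\Upsilon = (-)^{|\udelta|}\upsilon$.

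The main obstacle is the multiplicity bookkeeping in the existence step: verifying that the zeros of $G_{(\umu,\udelta)}(s)$ cancel the right-side poles of $\EM\upsilon(1-s)$ exactly in both location and order, and that the leftover left-side pole structure matches the target space $\Ssiss^{\umu-\udelta}$. In the generic case (no two $\mu_l+\delta_l$ are $\sim_2$-related) this reduces to matching simple zeros against simple poles; the delicate case is when coincidences occur, where one must track the multiplicities $M_{\alpha,\beta}, N_{\alpha,\beta}$ of Definition \ref{3defn: ordered set} both for the source indices $-\umu-\udelta$ and for the target indices $\umu-\udelta$. The crucial symmetry enabling the cancellation is that the $\sim_2$-partition of $\{1,\ldots,n\}$ induced by $\umu\pm\udelta$ is the same (since $2\udelta \in 2\BZ^n$), so the combinatorial data on the two sides of the Hankel transform are compatible.
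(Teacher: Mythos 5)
Your proposal follows essentially the same route as the paper's proof: express $G_{(\umu,\udelta)}$ as a Gamma quotient, use the refined decomposition lemma (Lemma~\ref{2lem: refined decomp, R+} / Corollary~\ref{2cor: refined decomp, R+}) to place $\EM\upsilon$ in $\Msis$, check that the zeros of $G_{(\umu,\udelta)}$ cancel the poles of $\EM\upsilon(1-s)$, and transfer back through the Mellin isomorphism, with the inversion formula falling out of \eqref{1eq: G mu delta (1-s) = G - mu delta (s)} exactly as you indicate. The multiplicity bookkeeping you flag as the main obstacle is what the paper makes explicit by setting $\umu^\pm = \pm\umu - \udelta$ and observing $\mu^+_l + \mu^-_l = -2\delta_l \in \{0,-2\}$, which is the precise form of your ``$2\udelta \in 2\BZ^n$'' remark and guarantees the $\preccurlyeq_2$-partition and the numbers $B_\alpha$, $N^\pm_{\alpha,\beta}$ of Definition~\ref{3defn: ordered set} agree on the source and target sides.
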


\begin{proof}
	Recall the definition of $G_{(\umu, \udelta )}$ given by (\ref{1def: G delta}, \ref{1def: G (lambda, delta)}), 
	\begin{equation*}
	G_{(\umu, \udelta )} (s) = i^{|\udelta|} \pi^{n \lp \frac 1 2 - s \rp + |\umu|} 
	\frac { \prod_{l      = 1}^n \Gamma \lp \frac 1 2( {s - \mu_l      + \delta_l     } ) \rp  } 
	{\prod_{l      = 1}^n \Gamma \lp \frac 1 2 ( {1 - s + \mu_l      + \delta_l     } ) \rp }, \end{equation*}
	where $|\udelta| = \sum_{l     } \delta_l      \in \BN$, with each $\delta_l     $ viewed as a number in the set $\{0, 1\} \subset \BN$. 
	
	We write $\umu^\pm = \pm \umu - \udelta$. Since $\mu^+_{  l     } + \mu^-_{ l     } = - 2 \delta_l      \in \{0, - 2\}$, the partition $\left\{ L_{\alpha} \right \}_{\alpha = 1}^A$ of $\{1, ..., n\}$ and $B_\alpha$ in Definition \ref{3defn: ordered set} are the same for both $\umu^+$ and $\umu^-$. Let  $ \mu^\pm _{\alpha, \beta}$, $M^\pm _{\alpha, \beta}$ and $N^\pm _{\alpha, \beta}$ be the notations  in Definition \ref{3defn: ordered set} corresponding to $\umu^\pm$.  Then the Gamma quotient above may be rewritten as follows,
	\begin{equation*}
	\frac { \prod_{ \alpha = 1}^A \prod_{\beta = 1}^{B_\alpha}  \Gamma \left( \frac 1 2 \big( { s - \mu^+_{\alpha, \beta}} \big) \right)^{M^+_{\alpha, \beta} } } 
	{ \prod_{ \alpha = 1}^A \prod_{\beta = 1}^{B_\alpha}  \Gamma \left( \frac 1 2 \big( {1 - s - \mu^-_{\alpha, \beta}} \big) \right)^{M^-_{\alpha, \beta} } }.
	\end{equation*}
	Thus, at each point $\mu \in \mu^+_{\alpha, 1} - 2\BN$ the product in the numerator contributes to $G_{(\umu, \udelta )} (s)$ a pole of pure order  $N_{\alpha, \beta}^+$, with $\beta = \max \left\{ \beta' : \mu \preccurlyeq_2 \mu^+_{\alpha, \beta' } \right \} $, whereas at each point $ \mu \in - \mu^-_{\alpha, 1} + 2\BN + 1$ the denominator contributes a zero of order $N_{\alpha, \beta}^-$, with $\beta = \max \left\{ \beta' : 1 - \mu \preccurlyeq_2 \mu^-_{\alpha, \beta' } \right \} $. 
	Moreover, \eqref{1eq: vertical bound, G (lambda, delta) (s)} in Lemma \ref{1lem: vertical bound} implies that $G_{(\umu, \udelta )} (s)$ is of uniform moderate growth on vertical strips. 

	On the other hand, according to Lemma \ref{2lem: refined decomp, R+}, the Mellin transform $\EM \upsilon $ lies in the space $ \sum_{ \alpha = 1}^A \sum_{\beta = 1}^{B_\alpha}  \sum_{j=0}^{N^-_{\alpha, \beta} - 1} \Nsis^{  \mu^-_{\alpha, \beta}, j}.$
	In particular,  the poles of $\EM \upsilon (1-s)$ are annihilated by the zeros contributed from the denominator of the Gamma quotient. Furthermore, $\EM \upsilon (1-s)$ uniformly rapidly decays on vertical strips. 

	We conclude that the product $ G_{(\umu, \udelta )} (s) \EM \upsilon ( 1 - s)$ on the right hand side of \eqref{3eq: Hankel transform identity 1, R+} lies in the space $\sum_{ \alpha = 1}^A \sum_{\beta = 1}^{B_\alpha}  \sum_{j=0}^{N^+_{\alpha, \beta} - 1} \Nsis^{  \mu^+_{\alpha, \beta}, j} $, and hence  $\Upsilon \in \Ssiss^{ \umu - \udelta } (\BR _+) $, with another application of  Lemma \ref{2lem: refined decomp, R+}.
	
	Finally, the Hankel inversion formula \eqref{3eq: Hankel inversion, R 0} is an immediate consequence of the functional relation \eqref{1eq: G mu delta (1-s) = G - mu delta (s)} of gamma factors.
\end{proof}


\subsubsection{\texorpdfstring{The Definition of  $ \hmum$}{The Definition of  $h_{(\mu, m)}$}}\label{sec: h mu m}

The following proposition provides the definition of the Hankel transform $\hmum$, which maps $\Ssiss^{ - 2 \umu - \| \um \| } (\BR _+)$    onto  $\Ssiss^{ 2 \umu - \| \um \| } (\BR _+)$ bijectively. 

\begin{prop} \label{3prop: h (mu, m)}
	Let $(\umu, \um) \in \BC^{n} \times \BZ ^n$. Suppose $\upsilon \in \Ssiss^{ - 2 \umu - \|\um\| } (\BR _+)$. Then there exists a unique function $\Upsilon \in \Ssiss^{ 2 \umu - \|\um\| } (\BR _+) $ satisfying the following identity,
	\begin{equation}\label{3eq: Hankel transform identity 2, R+}
	\EM \Upsilon (2 s ) = G_{(\umu, \um )} (s) \EM \upsilon ( 2(1-s)).
	\end{equation}
	We call $\Upsilon$ the Hankel transform of  $\upsilon$ over $\BR _+$ of index $(\umu, \um)$ and write $\hmum \upsilon  = \Upsilon$.
	Moreover, we have the Hankel inversion formula
	\begin{equation}\label{3eq: Hankel inversion, C 0}
	\hmum \upsilon  = \Upsilon, \hskip 10 pt \hmmum \Upsilon  = (-)^{|\um|}  \upsilon.
	\end{equation}
\end{prop}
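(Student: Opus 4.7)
The plan is to imitate the proof of Proposition~\ref{3prop: h (lambda, delta)} mutatis mutandis, with three adjustments: (i) use $G_{(\umu,\um)}$ and the bound \eqref{1eq: vertical bound, G (mu, m) (s)} in place of $G_{(\umu,\udelta)}$ and \eqref{1eq: vertical bound, G (lambda, delta) (s)}; (ii) work with the ordered set $(\BC,\preccurlyeq_2)$ applied to the tuples $\pm 2\umu - \|\um\|$ in Definition~\ref{3defn: ordered set}; and (iii) account for the rescaling $s\mapsto 2s$ in \eqref{3eq: Hankel transform identity 2, R+}. The rescaling is dictated by \eqref{1def: G m (s)}: the poles of $G_{m_l}(s-\mu_l)$ lie in the half-integer lattice $\mu_l - |m_l|/2 - \BN$, which under $s\mapsto 2s$ becomes $2\mu_l - |m_l| - 2\BN$, matching the definition \eqref{3eq: Ssis2, R+} of $\Ssiss^{2\umu - \|\um\|}(\BR_+)$.

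Set $\unu^{\pm} = \pm 2\umu - \|\um\|$. Since $\nu^+_l + \nu^-_l = -2|m_l| \in -2\BN$, we have $\nu^+_l \sim_2 -\nu^-_l$, so the partition $\{L_\alpha\}_{\alpha=1}^A$ and the integers $B_\alpha$ from Definition~\ref{3defn: ordered set} (with $\preccurlyeq_2$) coincide for $\unu^+$ and $\unu^-$. Let $\nu^{\pm}_{\alpha,\beta}$, $M^{\pm}_{\alpha,\beta}$, $N^{\pm}_{\alpha,\beta}$ be the associated notation. Rewriting the gamma quotient as
\begin{equation*}
G_{(\umu,\um)}(s) = i^{|\um|}(2\pi)^{n - 2ns + 2|\umu|} \prod_{\alpha=1}^{A}\prod_{\beta=1}^{B_\alpha} \frac{\Gamma\bigl(s - \tfrac{1}{2}\nu^+_{\alpha,\beta}\bigr)^{M^+_{\alpha,\beta}}}{\Gamma\bigl(1 - s - \tfrac{1}{2}\nu^-_{\alpha,\beta}\bigr)^{M^-_{\alpha,\beta}}},
\end{equation*}
one reads off that, for each $\alpha$, the numerator contributes poles of pure order $N^+_{\alpha,\beta}$ at the points of $\tfrac{1}{2}\nu^+_{\alpha,1} - \BN$ (with $\beta$ maximal such that $2s \preccurlyeq_2 \nu^+_{\alpha,\beta}$), while the denominator contributes matching zeros of order $N^-_{\alpha,\beta}$ along $1 - \tfrac{1}{2}\nu^-_{\alpha,1} + \BN$. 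Together with \eqref{1eq: vertical bound, G (mu, m) (s)}, this yields the stated pole/zero structure of $G_{(\umu,\um)}(s)$ as well as uniform moderate growth on vertical strips in $s$ for fixed $(\umu,\um)$.

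Now since $\upsilon \in \Ssiss^{\unu^-}(\BR_+)$, Lemma~\ref{2lem: refined decomp, R+} places $\EM\upsilon$ in $\sum_{\alpha,\beta}\sum_{j=0}^{N^-_{\alpha,\beta}-1}\Nsis^{\nu^-_{\alpha,\beta},j}$; in particular, $\EM\upsilon(2(1-s))$ has (as a function of $s$) poles precisely at the zeros of $G_{(\umu,\um)}(s)$ coming from the denominator, with orders not exceeding $N^-_{\alpha,\beta}$, and it decays rapidly on vertical strips. Hence the product $G_{(\umu,\um)}(s)\,\EM\upsilon(2(1-s))$, after the change of variable $2s\mapsto s$, lies in $\sum_{\alpha,\beta}\sum_{j=0}^{N^+_{\alpha,\beta}-1}\Nsis^{\nu^+_{\alpha,\beta},j}$. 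A second application of Lemma~\ref{2lem: refined decomp, R+} produces a unique $\Upsilon \in \Ssiss^{\unu^+}(\BR_+) = \Ssiss^{2\umu - \|\um\|}(\BR_+)$ with $\EM\Upsilon(2s)$ equal to this product; set $\hmum\upsilon = \Upsilon$. Finally, the inversion \eqref{3eq: Hankel inversion, C 0} is immediate from the functional relation \eqref{1eq: G mu m (1-s) = G - mu m (s)} applied to \eqref{3eq: Hankel transform identity 2, R+}.

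The main subtlety, exactly as in Proposition~\ref{3prop: h (lambda, delta)}, is the combinatorial bookkeeping required to match the pole orders $N^{+}_{\alpha,\beta}$ in the numerator with the output singularity class, and the zero orders $N^{-}_{\alpha,\beta}$ in the denominator with the poles of $\EM\upsilon(2(1-s))$. This works out cleanly here because $\nu^+_l + \nu^-_l = -2|m_l| \in 2\BZ$ forces the partitions attached to $\unu^+$ and $\unu^-$ to coincide, so the same indexing $\{L_\alpha, B_\alpha\}$ controls both sides.
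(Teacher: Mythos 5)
Your proposal follows the same strategy as the paper: observe that after the rescaling $s\mapsto s/2$ the gamma factor $G_{(\umu,\um)}$ has the same structural form as $G_{(\umu,\udelta)}$, then repeat the argument of Proposition~\ref{3prop: h (lambda, delta)}, swapping in the bound \eqref{1eq: vertical bound, G (mu, m) (s)} and the functional relation \eqref{1eq: G mu m (1-s) = G - mu m (s)}. The paper's own proof is extremely terse --- it only records the rescaled identity $\EM\Upsilon(s) = G_{(\umu,\um)}(s/2)\,\EM\upsilon(2-s)$, the explicit gamma quotient, and the sentence ``we can now proceed to apply the same arguments'' --- so your version is a faithful expansion, filling in the bookkeeping with $\unu^{\pm}=\pm 2\umu-\|\um\|$, the pole/zero matching, and the two applications of Lemma~\ref{2lem: refined decomp, R+}. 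One small typo: the power of $i$ in the explicit form of $G_{(\umu,\um)}(s)$ is $i^{|\|\um\||}$ (i.e.\ $i^{\sum_l |m_l|}$), not $i^{|\um|}$; this does not affect the argument.
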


\begin{proof}
	We first rewrite \eqref{3eq: Hankel transform identity 2, R+} as follows,
	\begin{equation*} 
	\EM \Upsilon ( s ) = G_{(\umu, \um )} \lp \frac s 2 \rp \EM \upsilon ( 2 - s ).
	\end{equation*}
	From (\ref{1def: G m (s)}, \ref{1def: G (mu, m)}), we have
	\begin{equation*}
	G_{(\umu, \um)} \lp \frac s 2 \rp = i^{\left|\|\um\|\right|} \pi^{n \lp 1 -  s \rp + 2 |\umu|} 
	\frac { \prod_{l      = 1}^n \Gamma \lp \frac 1 2 ( {s - 2\mu_l      + |m_l     |} ) \rp  } 
	{\prod_{l      = 1}^n \Gamma \lp \frac 1 2 ( {2 - s + 2\mu_l      + |m_l     |} ) \rp },
	\end{equation*}
	where $\left|\|\um\|\right| = \sum_{l     =1}^n |m_l     | $ according to our notation.
	We can now proceed to apply the same arguments in the proof of Proposition \ref{3prop: h (lambda, delta)}. Here we use \eqref{1eq: vertical bound, G (mu, m) (s)} and \eqref{1eq: G mu m (1-s) = G - mu m (s)} instead of \eqref{1eq: vertical bound, G (lambda, delta) (s)} and \eqref{1eq: G mu delta (1-s) = G - mu delta (s)} respectively.
\end{proof}

\subsubsection{\texorpdfstring{The Bessel Kernel  $j_{(\umu, \udelta)} $}{The Bessel Kernel  $j_{(\mu, \delta)} $}} \label{sec: Bessel kernel j lambda delta}
\ 
\vskip 5 pt

{\it The Definition of  $j_{(\umu, \udelta)} $.}
For $(\umu, \udelta) \in \BC^{n} \times (\BZ/2 \BZ)^n$, we define the Bessel kernel $j_{(\umu, \udelta)}$ by the following  Mellin-Barnes type integral,
\begin{equation}\label{2def: Bessel kernel, 1}
j_{(\umu, \udelta)} (x) = \frac 1  {2 \pi i} \int_{\EC_{(\umu, \udelta)}} G_{(\umu, \udelta)} (s ) x^{- s} d s.
\end{equation}
It is clear that
\begin{equation}\label{3eq: normalize j (lambda, delta)}
j_{(\umu - \mu \ue^n, \udelta)} (x) = x^{\mu} j_{(\umu, \udelta)} (x).
\end{equation}
In view of \eqref{1eq: G (lambda, delta) = G (s; sigma, lambda)}, we have
\begin{equation}\label{3eq: j (lambda, delta) and fundamental}
j_{(\umu, \udelta)} (x) = (2\pi)^{|\umu|}  \sum_{\usigma \in \{ +, - \}^n} \usigma^{\udelta} J \big(2 \pi x^{\frac 1 n}; \usigma, \umu \big).
\end{equation}


{\it Regularity of $j_{(\umu, \udelta)} $.}
It follows from \eqref{3eq: j (lambda, delta) and fundamental} and Lemma \ref{3lem: J (x; sigma, lambda) analytic} that
$j_{(\umu, \udelta)} (x)$ admits an analytic continuation $j_{(\umu, \udelta)} (\zeta)$, which is also analytic with respect to $\umu$.
Moreover,  $j_{(\umu, \udelta)} (\zeta)$ has the following Barnes type integral representation,
\begin{equation}\label{2def: Bessel kernel, analytic continuation, 1}
j_{(\umu, \udelta)} (\zeta) = \frac 1  {2 \pi i} \int_{\EC'_{\umu - \udelta}} G_{(\umu, \udelta)} (s ) \zeta^{- s} d s, \hskip  10 pt \zeta \in \BU.
\end{equation}
To see the convergence, the following formula is required
\begin{equation} 
G_\delta (s) = 
\left\{ \begin{split}
& \frac {\pi (2 \pi)^{-s}}  {\sin \left(\frac 1 2 {\pi s}   \right) \Gamma (1-s)}, \hskip 10 pt \text { if } \delta = 0,\\
& \frac {\pi i (2 \pi)^{-s}}  {\cos \left(\frac 1 2 {\pi s}   \right) \Gamma (1-s)}, \hskip 10 pt \text { if } \delta = 1.
\end{split} \right.
\end{equation}

{\it The Integral Kernel of $\hld$.}
Suppose $\upsilon \in \Ssiss^{ - \umu - \udelta } (\BR _+)$. In order to proceed in the same way as in \S \ref{sec: Bessel kernel J(x; sigma, lambda)},
one needs to assume that $(\umu, \udelta) $ satisfies the condition 
\begin{equation}\label{3eq: condition on (lambda, delta), 0}
\textstyle \min  \left\{ \Re \mu_l      + \delta_{l     } \right\} + 1 > \max  \left\{ \Re \mu_l      - \delta_{l     } \right\}.
\end{equation}
Then,
\begin{equation}\label{3eq: integral kernel, R 0}
\hld \upsilon  (x ) =  \int_0^\infty \upsilon (y) j_{(\umu, \udelta)} ( xy ) d y.
\end{equation}
Here, it is required for the convergence of the integral over $d y$ that the contour $\EC _{(\umu, \udelta)}$ in \eqref{2def: Bessel kernel, 1} is chosen   to lie in the left half-plane $\left\{s : \Re s < \min  \left\{ \Re \mu_l      + \delta_{l     } \right\} + 1 \right\}$. According to Definition \ref{3defn: C d lambda}, this choice of  $\EC _{(\umu, \udelta)}$ is permissible due to our assumption \eqref{3eq: condition on (lambda, delta), 0}. However, if one assumes that $\upsilon \in \SS (\BR _+)$, then \eqref{3eq: integral kernel, R 0} remains valid without requiring the condition \eqref{3eq: condition on (lambda, delta), 0}. 

\vskip 5 pt

\subsubsection{\texorpdfstring{The Bessel Kernel $j_{(\umu, \um)} $}{The Bessel Kernel $j_{(\mu, m)} $}} \label{sec: Bessel kernel j mu m}
\
\vskip 5 pt
{\it The Definition of $j_{(\umu, \um)} $.}
For $(\umu, \um) \in \BC^{n} \times  \BZ ^n$ define the Bessel kernel $j_{(\umu, \um)}$ by    the following  Mellin-Barnes type integral,
\begin{equation}\label{3def: Bessel kernel j mu m}
j_{(\umu, \um)} (x) = \frac 1  {2 \pi i} \int_{\EC _{ (\umu, \um)}} G_{(\umu, \um)} (s ) x^{- 2 s} d s.
\end{equation}
We have
\begin{equation}\label{3eq: normalize j (mu, m)}
j_{(\umu - \mu \ue^n, \um)} (x) = x^{2 \mu} j_{(\umu, \um)} (x).
\end{equation}
In view of Lemma \ref{1lem: complex and real gamma factors}, if $(\boldsymbol \eta, \udelta) \in \BC^{2n} \times (\BZT)^{2n}$ is related to $(\umu, \um) \in \BC^{n} \times  \BZ ^n$ via either \eqref{1eq: relation between (mu, m) and (lambda, delta), 1} or  \eqref{1eq: relation between (mu, m) and (lambda, delta), 2}, then 
\begin{equation}\label{3eq: j mu m = j lambda delta}
i^n j_{(\umu, \um)} (x) = j_{(\boldsymbol \eta, \udelta)} \big( x^2 \big).
\end{equation}

{\it Regularity of $j_{(\umu, \um)} $.}
In view of  \eqref{3eq: j mu m = j lambda delta}, the regularity of
$j_{(\umu, \um)} $ follows from that of $j_{(\boldsymbol \eta, \udelta)}$. 
Alternatively, this may be seen from
\begin{equation}\label{3eq: Bessel kernel, analytic continuation, 2}
j_{(\umu, \um)} (\zeta) = \frac 1  {2 \pi i} \int_{\EC'_{ \umu - \frac 1 2 \|\um\|}} G_{(\umu, \um)} (s ) \zeta^{- 2 s} d s, \hskip  10 pt \zeta \in \BU.
\end{equation}
To see the convergence,  the following formula is required
\begin{equation} \label{3eq: rewrite G m}
G_m (s) = 
\frac {\pi i^{|m|} (2\pi )^{1-2s} }  { \sin \lp \pi \lp s + \frac 1 2 {|m|}   \rp \rp \Gamma \lp 1-s-\frac 1 2 {|m|}   \rp  \Gamma \lp 1-s+\frac 1 2 {|m|}   \rp }.
\end{equation}

{\it The Integral Kernel of $\hmum$.}
Suppose $\upsilon \in \Ssiss^{ - 2 \umu - \|\um\| } (\BR _+)$. We assume that $(\umu, \um) $ satisfies the following condition
\begin{equation}\label{3eq: condition on (mu, m), 0}
\textstyle \min  \left\{ \Re \mu_l      + \frac 1 2 { |m_{l     }| }   \right\} + 1 > \max  \left\{ \Re \mu_l      - \frac 1 2 {|m_{l     }|}   \right\}.
\end{equation}
Then
\begin{equation}\label{3eq: h mu m = int of j mu m}
\hmum \upsilon (x)  =  \int_0^\infty \upsilon (y) j_{(\umu, \um)} ( xy ) \cdot 2 y d y,
\end{equation}
It is required for convergence that the integral contour $\EC _{ (\umu, \um)}$ in \eqref{3def: Bessel kernel j mu m} lies in the left half-plane $\left\{s : \Re s < \min  \left\{\Re \mu_l      + \frac 1 2 { |m_{l     }| }   \right\} + 1 \right\}$. This is however guaranteed by \eqref{3eq: condition on (mu, m), 0}. 
Moreover, if one assumes that  $\upsilon \in \SS (\BR _+)$, then \eqref{3eq: h mu m = int of j mu m} holds true for any index $(\umu, \um)$.

\vskip 5 pt

{\it Auxiliary Bounds for $j_{(\umu, \um + m\ue^n)}$.}
\begin{lem}\label{3lem: bound of the Bessel kernel, C}
	Let $(\umu, \um) \in \BC^{n} \times  \BZ ^n$ and $m \in \BZ$. 
	Put 
	\begin{align*}
	& A =\textstyle  n \lp \max  \{ \Re \mu_l      \} + \frac 1 2 \max  \{ |m_l     | \} - \frac 1 2 \rp - \Re |\umu| + \frac 1 2 |\|\um\||, \\
	& B_+ =\textstyle  - 2 \min  \{ \Re \mu_l      \} + \max  \{ |m_l     | \} + \max \left\{ \frac 1 n - \frac 1 2, 0 \right \}, \\
	& B_- =\textstyle  - 2 \max  \{ \Re \mu_l      \} - \max  \{ |m_l     | \}.
	\end{align*} 
	Fix  $ \epsilon > 0$. Denote by $\ue^n$   the $n$-tuple $(1, ..., 1)$. We have the following estimate
	\begin{equation}
	\label{2eq: bound of the Bessel kernel, C}
	\begin{split}
	j_{(\umu, \um + m\ue^n)} (x) \lll_{\,(\umu, \um),\, \epsilon, \, n} &\ \lp \frac { 2 \pi e x^{\frac 1 { n}}} {|m| + 1} \rp^{n |m|}  (|m| + 1)^{A + n \epsilon} \max \left\{ x^{B_+ + 2\epsilon }, x^{ B_- - 2\epsilon}  \right\}.
	\end{split}
	\end{equation}
\end{lem}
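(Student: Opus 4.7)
The plan is to exploit the Mellin--Barnes representation \eqref{3def: Bessel kernel j mu m}, shift the contour to a suitable vertical line, and bound the resulting integral using Stirling's formula. From \eqref{1def: G m (s)} and \eqref{1def: G (mu, m)}, the integrand factors as
\[
G_{(\umu,\, \um+m\ue^n)}(s) = i^{\sum_l|m_l+m|}(2\pi)^{n - 2ns + 2|\umu|}\prod_{l=1}^{n}\frac{\Gamma\bigl(s - \mu_l + \tfrac12|m_l+m|\bigr)}{\Gamma\bigl(1 - s + \mu_l + \tfrac12|m_l+m|\bigr)},
\]
whose rightmost pole lies at $\Re s \le -\tfrac{|m|}{2} + \max_l\Re\mu_l + \tfrac12\max_l|m_l|$.

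For the principal estimate I would translate $\EC_{(\umu,\, \um+m\ue^n)}$ onto the vertical line $\Re s = \sigma_-$ with $\sigma_- = -\tfrac12|m| + \max_l\Re\mu_l + \tfrac12\max_l|m_l| + \epsilon$, which lies just to the right of the rightmost pole, and apply Stirling's formula \eqref{1eq: Stirling's formula} to each Gamma ratio. On this line every numerator Gamma has bounded real part of its argument while every denominator Gamma has real part of order $|m|+O(1)$, so Stirling yields the pivotal factor $|\Gamma(|m| + O(1) + it)|^{-1} \sim (|m|/e)^{-|m|}$ per $l$, giving an overall $(|m|/e)^{-n|m|}$. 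Combining this with $(2\pi)^{n-2n\sigma_-+2|\umu|}$ and $x^{-2\sigma_-}$ collapses the leading behaviour to $(2\pi e\, x^{1/n}/(|m|+1))^{n|m|}$; the bounded numerator Gammas, the lower-order Stirling corrections, and the $t$-integration (which converges thanks to the $e^{-\pi|t|/2}$ tail extracted via \eqref{3eq: rewrite G m}) are absorbed into the polynomial factor $(|m|+1)^{A+n\epsilon}$, while $x^{-2\sigma_-}$ is exactly $x^{B_- - 2\epsilon}$.

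The alternative $x^{B_+ + 2\epsilon}$ bound, which is stronger when $x$ is small, is obtained by a second contour shift to $\Re s = \sigma'$ further to the left (still of order $-\tfrac12|m|$); this deformation crosses finitely many poles whose residues, computed from $\operatorname{Res}_{s=\mu_l-|m_l+m|/2-k}\Gamma(s-\mu_l+\tfrac12|m_l+m|) = (-1)^k/k!$ together with the reciprocal $1/\Gamma(1+|m_l+m|+k)$ from the denominator, already carry the same $e^{|m|}/(|m|+1)^{|m|}$-type decay via Stirling. The residual line integral at $\Re s = \sigma'$ is then estimated by the same recipe and produces the leading $(2\pi e\, x^{1/n}/(|m|+1))^{n|m|}$ factor, with $x^{-2\sigma'}$ contributing $x^{B_+ + 2\epsilon}$. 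Taking the better of the two estimates yields the $\max\bigl\{x^{B_+ + 2\epsilon},\, x^{B_- - 2\epsilon}\bigr\}$ term in the statement, while $i^{\sum_l|m_l+m|}$ contributes nothing to the modulus.

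The main obstacle is the uniform execution of Stirling's formula along the shifted contours, tracking the dependence on $|m|$ and $t = \Im s$ simultaneously. In the regime $|t| \ll |m|$ the denominator Gamma behaves like $(|m|/2)^{|m|}e^{-|m|}$ and supplies the essential $e^{n|m|}/(|m|+1)^{n|m|}$, while in the regime $|t| \gg |m|$ Stirling gives only the polynomial tail $|t|^{-n|m|}$, which nevertheless remains integrable; balancing these two regimes so that the $t$-integration produces the mild $(|m|+1)^{n\epsilon}$ loss rather than an exponential one is the principal technical point. Verifying that the horizontal arc contributions in the contour deformations vanish is a routine consequence of \eqref{1eq: vertical bound, G (mu, m) (s)}.
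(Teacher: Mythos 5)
Your high-level strategy (Mellin--Barnes representation, contour shift, Stirling) is the same as the paper's, but the execution diverges in ways that create genuine gaps. The paper does not use two vertical-line contours and never crosses a pole: it works with a single contour $\EC_m = \EC_{(\umu,\,\um+m\ue^n)}$ (as in Definition~\ref{3defn: C d lambda}) which stays strictly to the right of all poles, sits at $\Re s = \sigma_m - \epsilon$ for large $|\Im s|$ (with $\sigma_m = \min\{\tfrac12 + \tfrac1n(\Re|\umu| - \tfrac12|\|\um+m\ue^n\|| - 1),\, \rho_m\}$, $\rho_m = \max_l\{\Re\mu_l - \tfrac12|m_l+m|\}$) and lies in the strip $\BS[\sigma_m-\epsilon,\rho_m+\epsilon]$; the $\max\{x^{B_++2\epsilon}, x^{B_--2\epsilon}\}$ in the statement is purely an artifact of $\Re s$ ranging over that strip, via $y^{-2\Re s}\le \max\{y^{-2\sigma_m+2\epsilon}, y^{-2\rho_m-2\epsilon}\}$ together with $|m|+B_-\le -2\rho_m\le -2\sigma_m\le |m|+B_+$. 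By contrast, your second shift to $\sigma'$ crosses poles, and the residues you propose to collect have $x$-powers $x^{-2\mu_l + |m_l+m| + 2k}$ sitting throughout the interval $[|m|+B_-,\,|m|+B_+]$; for $x<1$ these residue terms are \emph{larger} than $x^{|m|+B_++2\epsilon}$, so they cannot be absorbed into the bound you are aiming for, and the claimed $x^{B_++2\epsilon}$ conclusion from the second contour does not follow. You also do not address the possibility of higher-order poles when distinct $\mu_l - \tfrac12|m_l+m|$ coincide modulo integers, which would produce residues with logarithmic factors. Meanwhile, your first contour at $\sigma_-$ already yields $\ll x^{|m|+B_--2\epsilon}$ uniformly (for $|m|$ large), which dominates the lemma's max, so the second contour is doing no work.

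Two further misstatements: the $e^{-\pi|t|/2}$ decay on the vertical line does \emph{not} come from \eqref{3eq: rewrite G m} -- that reformulation of $G_m$ is only used in the subsequent Lemma~\ref{3lem: bound of the Bessel kernel, analytic continuation, C} for the Barnes-type contour $\EC'$; in the Mellin--Barnes setting the decay is supplied by Stirling (\eqref{1eq: Stirling's formula}) applied to the numerator Gammas of $G_{(\umu,\um+m\ue^n)}$, whose arguments have $O_{(\umu,\um)}(1)$ real part on $\sigma_-$. Also, the whole argument at $\sigma_-$ presupposes $|m|$ large (so that the poles have retreated to $\Re s \approx -\tfrac12|m|$ and the $t$-integral has a uniformly integrable tail of order $|t|^{-n|m|+O(1)}$); the paper first assumes $|m|$ large enough that $n(\rho_m+\epsilon-\tfrac12)-\Re|\umu|-\tfrac12|\|\um+m\ue^n\||<0$ and then disposes of small $|m|$ with a trivial estimate. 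Your proposal would need that case split too.
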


\begin{proof} 
	Let 
	\begin{align*}
	\rho_m &= \textstyle \max  \left\{ \Re \mu_l      - \frac 1 2 { |m_l      + m | }   \right  \}, \\
	\sigma_m &= \min \left\{ \textstyle \frac 1 2 + \frac 1 n \lp \Re |\umu| - \frac 1 2 \left| \| \um + m\ue^n\| \right| - 1 \rp, \rho_m \right \}.
	\end{align*}
	Choose the contour $\EC_m = \EC _{(\umu, \um + m\ue^n)}$ (see Definition \ref{3defn: C d lambda}) such that
	\begin{itemize}
		\item[-] if $s \in \EC_m$ and $\Im s $ is sufficiently large, then $\Re s = \sigma_m - \epsilon$, and
		\item[-] $\EC_m$ lies in the vertical strip $\BS[ \sigma_m - \epsilon, \rho_m +\epsilon]$.
	\end{itemize} 
	
	We first assume that $|m|$ is large enough so that
	$$ \textstyle  n \lp \rho_m + \epsilon - \frac 1 2 \rp - \Re |\umu| - \frac 1 2 \left| \| \um + m\ue^n \| \right| < 0.$$
	For the sake of brevity, we write $y = (2\pi)^{ n} x $. 
	We first bound $\left| j_{(\umu, \um + m\ue^n)} (x) \right|$ by
	\begin{equation*}
	\begin{split}
	(2\pi)^{n + \Re |\umu|} \int_{\EC_m}  y^{- 2 \Re s} \prod_{l      = 1}^n \left| \frac { \Gamma \lp s - \mu_l      + \frac 1 2 { |m_l      + m| }   \rp} { \Gamma \lp 1 - s + \mu_l      + \frac 1 2 { |m_l      + m |}   \rp } \right| |d s|.
	\end{split}
	\end{equation*}
	With the observations that for $s \in \EC_m$
	\begin{itemize}
		\item[-] $\Re s \in [\sigma_m - \epsilon, \rho_m +\epsilon]$,
		\vskip 2 pt
		\item[-] $\left| \Re s - \mu_l      + \frac 1 2 { |m_l      + m| }   \right| \lll_{ (\umu, \um)} 1$, 
		\vskip 2 pt
		\item[-] $\left| \lp 1 - \Re s + \mu_l      + \frac 1 2  { |m_l      + m| }   \rp - |m| \right| \lll_{\, (\umu, \um)} 1$,
	\end{itemize}
	in conjunction with Stirling's asymptotic formula \eqref{1eq: Stirling's formula}, we have the following estimate
	\begin{equation*}
	\begin{split}
	 j_{(\umu, \um + m\ue^n)} (x) & \lll_{\, (\umu, \um),\, n,\, \epsilon} \max \left\{ y^{- 2 \sigma_m + 2 \epsilon }, y^{- 2 \rho_m - 2 \epsilon } \right \} \\
	& \hskip 40 pt \int_{\EC_m} \frac {(|\Im s| + 1)^{ n \lp \Re s - \frac 1 2 \rp - \Re |\umu| + \frac 1 2 \left| \| \um + m\ue^n\| \right|}} {  e^{- n |m|} \big( \sqrt {(\Im s)^2 + m^2} + 1 \big)^{n \lp \frac 1 2 - \Re s \rp + \Re |\umu| + \frac 1 2 \left| \| \um + m\ue^n\| \right| } } |d s| \\
	& \leq  \max \left\{ y^{- 2 \sigma_m + 2 \epsilon }, y^{- 2 \rho_m - 2 \epsilon } \right \} e^{n |m|} 
	(|m| + 1)^{n \lp \rho_m + \epsilon - \frac 1 2 \rp - \Re |\umu| - \frac 1 2 \left| \| \um + m\ue^n\| \right|} \\
	& \hskip 111 pt \int_{\EC_m} (|\Im s| + 1)^{ n \lp \Re s - \frac 1 2 \rp - \Re |\umu| + \frac 1 2 \left| \| \um + m\ue^n\| \right|} |d s|.
	\end{split}
	\end{equation*}
	For $s \in \EC_m$, we have  $\Re s = \sigma_m - \epsilon$ if $\Im s$ is sufficiently large, and our choice of $\sigma_m$ implies
	$\textstyle n \lp \sigma_m - \epsilon - \frac 1 2 \rp - \Re |\umu| + \frac 1 2 \left| \| \um + m\ue^n\| \right| \leq - 1 - n \epsilon$, then it follows that the above integral converges and is of size $O_{(\umu, \um),\, \epsilon,\, n} (1)$.
	
	Finally, note that both $ - 2\sigma_m +  2 \epsilon$ and $- 2\rho_m -  2 \epsilon$ are close to $ {|m|} $, whereas the exponent of $(|m| + 1)$, that is $ n \lp  \rho_m + \epsilon - \frac 1 2 \rp - \Re |\umu| - \frac 1 2 \left| \| \um + m\ue^n\| \right| $, is close to $- n |m|$.
	Thus the following bounds yield \eqref{2eq: bound of the Bessel kernel, C},
	\begin{equation*}
	\begin{split}
	& \textstyle {|m|} + B_-  \leq - 2 \rho_m  \leq - 2 \sigma_m  \leq  {|m|}   + B_+, \\
	&  \textstyle n \lp \rho_m - \frac 1 2 \rp - \Re |\umu| - \textstyle \frac 1 2 \left| \| \um + m\ue^n\| \right| \leq - n |m| + A.
	\end{split}
	\end{equation*}
	When $|m|$ is small, we have the following estimate that also implies \eqref{2eq: bound of the Bessel kernel, C},
	\begin{equation*}
	j_{(\umu, \um + m\ue^n)} (x) \lll_{ \, (\umu, \um),\, \epsilon,\, n}
	\max \left\{ y^{- 2 \sigma_m + 2 \epsilon }, y^{- 2 \rho_m - 2 \epsilon } \right \} e^{n |m|}.
	\end{equation*}
\end{proof}

Using the formula \eqref{3eq: rewrite G m} of $G_m (s)$ instead of \eqref{1def: G m (s)} and the  Barnes type integral representation \eqref{3eq: Bessel kernel, analytic continuation, 2} for $j_{(\umu, \um + m\ue^n)} (\zeta)$ instead of the  Mellin-Barnes type integral representation \eqref{3def: Bessel kernel j mu m} for  $j_{(\umu, \um + m\ue^n)} (x)$, similar arguments in the proof of Lemma \ref{3lem: bound of the Bessel kernel, C} imply the following lemma.

\begin{lem}\label{3lem: bound of the Bessel kernel, analytic continuation, C}
	Let $(\umu, \um) \in \BC^{n} \times  \BZ ^n$ and $m \in \BZ$. 
	Put 
	\begin{align*}
	& A =\textstyle  n \lp \max  \{ \Re \mu_l      \} + \frac 1 2 \max  \{ |m_l     | \} - \frac 1 2 \rp - \Re |\umu| + \frac 1 2 |\|\um\||, \\
	& B =\textstyle   - 2 \max  \{ \Re \mu_l      \} - \max  \{ |m_l     | \},  \hskip 10 pt C =\textstyle 2 \max  \{ |\Im \mu_l     | \}.
	\end{align*} 
	Fix $X > 0$ and $ \epsilon > 0$. Then 
	\begin{equation*}
	\begin{split}
	j_{(\umu, \um + m\ue^n)} \lp x e^{i\omega} \rp \lll_{\,(\umu, \um),\, X,\, \epsilon, \, n} &\ \lp \frac { 2 \pi e x^{\frac 1 { n}}} {|m| + 1} \rp^{n |m|}  (|m| + 1)^{A + n \epsilon} x^{B + 2\epsilon } e^{ |\omega| ( C + 2 \epsilon)}
	\end{split}
	\end{equation*}
	for all $x < X$.
\end{lem}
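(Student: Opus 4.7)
The plan is to mimic the proof of Lemma~\ref{3lem: bound of the Bessel kernel, C}, but with the Mellin--Barnes contour replaced by the Hankel-type Barnes contour $\EC'_{\umu + \frac 1 2 m \ue^n - \frac 1 2 \|\um + m \ue^n\|}$ from \eqref{3eq: Bessel kernel, analytic continuation, 2}. The point of passing to a Barnes contour is that $\zeta^{-2s} = x^{-2\Re s} e^{2\omega \Im s}$, so convergence of the integral for complex $\zeta = x e^{i\omega}$ is no longer a matter of aligning a vertical strip, but rather of keeping $\Im s$ bounded while letting $\Re s \to -\infty$ along two horizontal rays. Choosing such a contour $\EC'_m$ with $\Im s$ uniformly $\leq \max\{|\Im \mu_l|\} + \epsilon$ immediately accounts for the factor $e^{|\omega|(C+2\epsilon)}$, with $C = 2\max\{|\Im \mu_l|\}$.

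Next I would substitute \eqref{3eq: rewrite G m} for $G_m$, which trades the factorially growing numerator $\Gamma(s+\tfrac 12|m|)$ for a $\sin(\pi(s+\tfrac 12|m|))^{-1}$ together with two $\Gamma(1-s\pm \tfrac 12|m|)$ factors in the denominator. The virtue of this rewrite on $\EC'_m$ is twofold: on the horizontal rays where $\Re s \to -\infty$, the two $\Gamma$'s in the denominator produce factorial growth (by Stirling \eqref{1eq: Stirling's formula}) that dominates $x^{-2\Re s}$ for any bounded $x < X$, securing absolute convergence; on the bounded piece of $\EC'_m$ one keeps $s$ in a fixed compact set (away from the poles $\mu_l + \tfrac 1 2(m_l+m) - \BN$), so $G_{(\umu, \um + m\ue^n)}(s)$ is controlled there.

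The core estimate is then exactly the one carried out for Lemma~\ref{3lem: bound of the Bessel kernel, C}. On the bounded part of $\EC'_m$ we take $\Re s$ near $\rho_m = \max\{\Re\mu_l - \tfrac 1 2 |m_l+m|\}$ (perturbed by $\pm \epsilon$) so that $|\zeta^{-2s}| \lll x^{-2\rho_m - 2\epsilon} = x^{|m|+B+2\epsilon}$ (up to the $\omega$-factor), while Stirling applied to the two denominator $\Gamma$'s gives the factor $(|m|+1)^{-n|m|} (2\pi e)^{n|m|}$ together with polynomial growth $(|m|+1)^{A+n\epsilon}$; this accounts for the $(2\pi e x^{1/n}/(|m|+1))^{n|m|}$ term. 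The two horizontal tails contribute negligibly because the factorial decay of $G_{(\umu, \um + m \ue^n)}(s)$ along $\Re s \to -\infty$ beats the polynomial blowup of $x^{-2\Re s}$ for $x < X$; fixing any $X > 0$ gives a uniform constant here, which explains the $X$-dependence of the implied constant.

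The only subtle point, and the one requiring care, is that as $m$ varies the poles of $G_{(\umu, \um + m\ue^n)}$ shift, so one must take the contour $\EC'_m$ to depend on $m$: it must enclose $\mu_l + \tfrac 1 2(m_l+m) - \BN$ for each $l$, stay at bounded height $|\Im s| \leq \max\{|\Im \mu_l|\} + \epsilon$, and still allow Stirling to apply with constants independent of $m$. This is feasible because the relevant poles lie in the left half-plane $\Re s \leq \rho_m$ with $\rho_m = O_{(\umu,\um)}(|m|)$, so one takes the "thick" part of $\EC'_m$ in $\BS[\rho_m - \epsilon, \rho_m + \epsilon]$ and the horizontal tails at height $\pm(\max\{|\Im\mu_l|\}+\epsilon)$; the Stirling estimates then depend on $(\umu,\um)$ and $n$ but not on $m$ or $\omega$, giving the asserted bound.
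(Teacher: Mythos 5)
Your proposal reconstructs exactly the argument the paper only sketches in a single sentence: pass from the Mellin--Barnes contour to the Barnes contour \eqref{3eq: Bessel kernel, analytic continuation, 2} so that $|\Im s|$ stays bounded by $\max\{|\Im\mu_l|\}+\epsilon$ (giving the factor $e^{|\omega|(C+2\epsilon)}$), and substitute the rewritten formula \eqref{3eq: rewrite G m} for $G_m$ so that the reciprocal Gamma factors supply factorial decay along the horizontal tails as $\Re s \to -\infty$; the remaining Stirling computation on the bounded part of the contour near $\Re s = \rho_m$ is then the one already carried out for Lemma~\ref{3lem: bound of the Bessel kernel, C}. One transcription slip worth correcting: the poles of $G_{(\umu,\um+m\ue^n)}$ lie at $\mu_l - \tfrac 12|m_l+m| - \BN$ (consistent with your $\rho_m = \max\{\Re\mu_l - \tfrac 12|m_l+m|\}$), not at $\mu_l + \tfrac 12(m_l+m) - \BN$, and the Barnes contour subscript should accordingly be $\EC'_{\umu - \frac 12\|\um + m\ue^n\|}$.
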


\subsection {\texorpdfstring{The Hankel Transform $\Hld$ and the Bessel Kernel $J_{(\umu, \udelta)} $}{The Hankel Transform $H_{(\mu, \delta)} $ and the Bessel Kernel $J_{(\mu, \delta)} $}}
\

\vskip 5 pt


\subsubsection{\texorpdfstring{The Definition of  $ \Hld$}{The Definition of  $H_{(\mu, \delta)} $}}

Consider the ordered set $(\BC \times \BZT , \preccurlyeq )$ and define $( \mu_{\alpha, \beta}, \delta_{\alpha, \beta} ) = ( \mu, \delta )_{\alpha, \beta}$,  $B_\alpha$, $M_{\alpha, \beta}$ and $N_{\alpha, \beta}$ as in Definition \ref{3defn: ordered set} corresponding to $( \umu, \udelta ) \in ( \BC \times \BZ / 2\BZ)^n$. We define the following subspaces of $ \Ssis (\BRx)$,
\begin{equation}\label{3eq: Ssis (lambda, delta) delta, R}
\Ssis ^{( \umu, \udelta ), \delta} (\BRx) = \sum_{ \alpha = 1}^A \sum_{\beta = 1}^{B_\alpha}  \sum_{j=0}^{N_{\alpha, \beta} - 1} \sgn(x)^{ \delta_{\alpha, \beta}} |x|^{ - \mu_{\alpha, \beta}} (\log |x|)^{j } \SS_{\delta_{\alpha, \beta} + \delta } ( \BR ).
\end{equation}
\begin{equation}\label{3eq: Ssis (lambda, delta), R}
\begin{split}
\Ssis ^{( \umu, \udelta )}  (\BRx) = & \Ssis ^{( \umu, \udelta ), 0} (\BRx) \oplus \Ssis ^{( \umu, \udelta ), 1} (\BRx) \\
= & \sum_{ \alpha = 1}^A \sum_{\beta = 1}^{B_\alpha}  \sum_{j=0}^{N_{\alpha, \beta} - 1} \sgn(x)^{\delta_{\alpha, \beta}} |x|^{ - \mu_{\alpha, \beta}} (\log |x|)^{j } \SS ( \BR ).
\end{split}
\end{equation}

From the definition of $\Ssiss^{\ulambda} (\BR _+)$ in  \eqref{3eq: Ssis2, R+}, together with $\SS_{\delta} (\BR) = \sgn (x)^\delta \SS_{\delta} (\overline \BR _+)$ and $\SS_{\delta} (\overline \BR _+) = x^{\delta} \SS_{0} (\overline \BR _+) $, we have
\begin{equation}\label{3eq: Ssis (lambda, delta) delta = Ssis2}
\Ssis ^{( \umu, \udelta ), \delta} (\BRx) = \sgn(x)^{\delta} \Ssiss^{\umu - (\udelta + \delta \ue^n)} (\BR _+).
\end{equation}

The following theorem gives the definition of the Hankel transform $\Hld$, which maps $\Ssis^{ (- \umu, \udelta) } (\BRx)$    onto  $\Ssis^{( \umu, \udelta) } (\BRx)$ bijectively.

\begin{thm} \label{3prop: H (lambda, delta)}
	Let $(\umu, \udelta) \in \BC^{n} \times (\BZ/2 \BZ)^n$. Suppose $ \upsilon \in \Ssis^{ (- \umu, \udelta) } (\BRx)$.  Then there exists a unique function $\Upsilon \in \Ssis^{( \umu, \udelta) } (\BRx)$ satisfying the following two identities,
	\begin{equation}\label{3eq: Hankel transform identity, R}
	\EM _\delta \Upsilon (s ) = G_{(\umu,  \udelta + \delta \ue^n)} (s) \EM _\delta \upsilon ( 1 - s), \hskip 10 pt \delta \in \BZ/2 \BZ.
	\end{equation}
	We call $\Upsilon$ the Hankel transform of  $\upsilon$ over $\BRx$ of index $(\umu, \udelta)$ and  write $\Hld \upsilon  = \Upsilon$. Moreover, we have the Hankel inversion formula
	\begin{equation}\label{3eq: Hankel inversion, R}
	\Hld \upsilon (x) = \Upsilon (x), \hskip 10 pt \Hmld \Upsilon (x) = (-)^{|\udelta|} \upsilon \left((-)^n x \right).
	\end{equation}
\end{thm}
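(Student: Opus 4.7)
The strategy is to reduce to the $\BR_+$ case of Proposition \ref{3prop: h (lambda, delta)} by splitting functions on $\BRx$ into their two parity components. Concretely, I would decompose
$$\upsilon(x) = \sum_{\delta \in \BZT} \sgn(x)^{\delta}\, \upsilon_{\delta}(|x|),$$
with $\upsilon_{\delta}$ the parity component from \eqref{1eq: upsilon delta}. Using the factorization $\Ssis^{(-\umu, \udelta), \delta}(\BRx) = \sgn(x)^{\delta} \Ssiss^{-\umu - (\udelta + \delta\ue^n)}(\BR_+)$ recorded in \eqref{3eq: Ssis (lambda, delta) delta = Ssis2}, each $\upsilon_{\delta}$ lies in $\Ssiss^{-\umu - (\udelta + \delta\ue^n)}(\BR_+)$. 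I would then apply Proposition \ref{3prop: h (lambda, delta)} with index $(\umu, \udelta + \delta\ue^n)$ to obtain a function $\Upsilon_{\delta} \in \Ssiss^{\umu - (\udelta + \delta\ue^n)}(\BR_+)$ satisfying $\EM \Upsilon_{\delta}(s) = G_{(\umu, \udelta + \delta\ue^n)}(s)\, \EM \upsilon_{\delta}(1-s)$, and assemble
$$\Upsilon(x) = \sum_{\delta \in \BZT} \sgn(x)^{\delta}\, \Upsilon_{\delta}(|x|),$$
which by \eqref{3eq: Ssis (lambda, delta), R} belongs to $\Ssis^{(\umu, \udelta)}(\BRx)$.

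To verify the Mellin identity \eqref{3eq: Hankel transform identity, R}, I would invoke the observation $\EM_\delta \psi(s) = 2\EM \psi_{\delta}(s)$ from \eqref{1eq: M delta = M}; multiplying both sides of the $\BR_+$-identity for $\Upsilon_{\delta}$ by $2$ produces exactly \eqref{3eq: Hankel transform identity, R}. Uniqueness of $\Upsilon$ is then immediate from Lemma \ref{2lem: Ssis to Msis, R}, since $\EM_{\BR} = (\EM_0, \EM_1)$ is an isomorphism from $\Ssis(\BRx)$ onto $\Msis^{\BR}$ and thus $\Upsilon$ is uniquely determined by the pair $(\EM_0 \Upsilon, \EM_1 \Upsilon)$.

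For the inversion formula \eqref{3eq: Hankel inversion, R}, I would simply iterate the construction with $\umu$ replaced by $-\umu$. The parity components of $\Upsilon$ are the $\Upsilon_{\delta}$, so $\Hmld \Upsilon$ has parity components $\hld$ with index $(-\umu, \udelta + \delta\ue^n)$ applied to $\Upsilon_{\delta}$. The $\BR_+$-inversion \eqref{3eq: Hankel inversion, R 0} yields
$$h_{(-\umu,\, \udelta + \delta\ue^n)} \Upsilon_{\delta} = (-)^{|\udelta + \delta\ue^n|}\, \upsilon_{\delta} = (-)^{|\udelta|} (-)^{n\delta}\, \upsilon_{\delta}.$$
Reassembling, $\Hmld \Upsilon(x) = (-)^{|\udelta|} \sum_{\delta} \sgn(x)^{\delta} (-)^{n\delta}\, \upsilon_{\delta}(|x|)$, and since $(-)^{n\delta} \sgn(x)^{\delta} = \sgn((-)^n x)^{\delta}$ the remaining sum equals $\upsilon((-)^n x)$.

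The main obstacle is purely bookkeeping: tracking how the external parity index $\delta$ shifts the parameter $\udelta$ to $\udelta + \delta\ue^n$ and how the sign $(-)^{n\delta}$ coming from $|\udelta + \delta \ue^n| \equiv |\udelta| + n\delta \pmod 2$ combines with $\sgn(x)^{\delta}$ to produce precisely the twist by $(-)^n$ appearing in \eqref{3eq: Hankel inversion, R}. All the analytic content—existence, uniqueness and inversion on $\BR_+$—is already contained in Proposition \ref{3prop: h (lambda, delta)} and Lemma \ref{2lem: Ssis to Msis, R}.
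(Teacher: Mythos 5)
Your proposal is correct and follows exactly the paper's own route: decompose $\upsilon$ into parity components, apply Proposition~\ref{3prop: h (lambda, delta)} to each via the factorization~\eqref{3eq: Ssis (lambda, delta) delta = Ssis2}, reassemble, and read off uniqueness from the Mellin isomorphism of Lemma~\ref{2lem: Ssis to Msis, R}. The only difference is cosmetic: the paper asserts the inversion formula ``follows immediately'' from~\eqref{3eq: Hankel inversion, R 0}, whereas you spell out the sign bookkeeping $|\udelta + \delta\ue^n| \equiv |\udelta| + n\delta \pmod 2$ and the identity $(-)^{n\delta}\sgn(x)^\delta = \sgn((-)^n x)^\delta$, which is a helpful expansion of the same argument.
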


\begin{proof}
	Recall that 
	$$\EM_\delta \upsilon (s) = 2 \EM \upsilon_\delta (s).$$
	In view of \eqref{3eq: Ssis (lambda, delta) delta = Ssis2}, one has $\upsilon_\delta \in \Ssiss^{- \umu - (\udelta + \delta \ue^n)} (\BR _+)$. Applying Proposition \ref{3prop: h (lambda, delta)}, there is a unique function $\Upsilon_\delta \in \Ssiss^{ \umu - (\udelta + \delta \ue^n)} (\BR _+)$ satisfying 
	\begin{equation*}
	\EM  \Upsilon_\delta (s ) = G_{(\umu, \udelta + \delta \ue^n)} (s) \EM  \upsilon_\delta ( 1 - s).
	\end{equation*}
	According to \eqref{3eq: Ssis (lambda, delta) delta = Ssis2}, $\Upsilon (x) = \Upsilon_0 (|x|) + \sgn (x)\Upsilon_1 (|x|)$ lies in $\Ssis^{( \umu, \udelta), 0} (\BRx) \oplus \Ssis^{( \umu, \udelta), 1} (\BRx) = \Ssis^{( \umu, \udelta) } (\BRx)$. Clearly, $\Upsilon$ satisfies \eqref{3eq: Hankel transform identity, R}. Moreover, \eqref{3eq: Hankel inversion, R} follows immediately from \eqref{3eq: Hankel inversion, R 0} in Proposition  \ref{3prop: h (lambda, delta)}.
\end{proof}

\begin{cor} \label{3cor: H = h, R}
	Let $(\umu, \udelta) \in \BC^{n} \times (\BZT)^n$ and $\delta \in \BZT$. Suppose that $\varphi \in \Ssiss^{- \umu - (\udelta + \delta \ue^n)} (\BR _+)$ and $ \upsilon (x) = \sgn (x)^{\delta} \varphi (|x|)$.  Then
	\begin{equation*}
	\Hld \upsilon (\pm x) = (\pm)^{\delta } \hh_{(\umu, \udelta + \delta \ue^n)} \varphi (x), \hskip 10 pt x \in \BR _+.
	\end{equation*}
\end{cor}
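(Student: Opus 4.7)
The plan is to reduce the corollary directly to Proposition \ref{3prop: h (lambda, delta)} by exploiting the parity of $\upsilon$ and comparing signed Mellin transforms in each parity component.

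First I would observe that since $\upsilon(x) = \sgn(x)^\delta \varphi(|x|)$ has pure parity $\delta$, the parity-$\delta'$ component $\upsilon_{\delta'}$ defined by \eqref{1eq: upsilon delta} satisfies $\upsilon_\delta = \varphi$ and $\upsilon_{1-\delta} = 0$. Therefore, by the relation $\EM_{\delta'}\upsilon(s) = 2\EM\upsilon_{\delta'}(s)$ in \eqref{1eq: M delta = M}, we have $\EM_{\delta'}\upsilon(s) = 0$ for $\delta' \neq \delta$ and $\EM_\delta \upsilon(s) = 2\EM\varphi(s)$. Moreover, the hypothesis $\varphi \in \Ssiss^{-\umu - (\udelta + \delta\ue^n)}(\BR_+)$ translates, via \eqref{3eq: Ssis (lambda, delta) delta = Ssis2}, into $\upsilon \in \Ssis^{(-\umu,\udelta),\delta}(\BRx) \subset \Ssis^{(-\umu,\udelta)}(\BRx)$, so Theorem~\ref{3prop: H (lambda, delta)} applies and $\Upsilon := \Hld\upsilon$ is well-defined in $\Ssis^{(\umu,\udelta)}(\BRx)$.

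Next I would apply the defining identity \eqref{3eq: Hankel transform identity, R} in each parity. For $\delta' = 1 - \delta$ it gives $\EM_{1-\delta}\Upsilon(s) = G_{(\umu,\udelta+(1-\delta)\ue^n)}(s)\cdot 0 = 0$, and by the injectivity part of Lemma~\ref{2lem: Ssis to Msis, R} the component $\Upsilon_{1-\delta}$ vanishes, so $\Upsilon$ itself has pure parity $\delta$, i.e.\ $\Upsilon(\pm x) = (\pm)^\delta \Upsilon_\delta(x)$ for $x \in \BR_+$. For $\delta' = \delta$ the same identity, combined with $\EM_\delta\upsilon(1-s) = 2\EM\varphi(1-s)$ and $\EM_\delta\Upsilon(s) = 2\EM\Upsilon_\delta(s)$, yields
\begin{equation*}
\EM \Upsilon_\delta(s) = G_{(\umu,\udelta+\delta\ue^n)}(s)\,\EM\varphi(1-s).
\end{equation*}

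Finally I would invoke the uniqueness assertion in Proposition~\ref{3prop: h (lambda, delta)}: the function $\Upsilon_\delta$ lies in the right target space $\Ssiss^{\umu-(\udelta+\delta\ue^n)}(\BR_+)$ (this is exactly what \eqref{3eq: Ssis (lambda, delta) delta = Ssis2} encodes for the image component of $\Hld\upsilon$), and it satisfies the identity \eqref{3eq: Hankel transform identity 1, R+} that characterizes $\hh_{(\umu,\udelta+\delta\ue^n)}\varphi$. Hence $\Upsilon_\delta = \hh_{(\umu,\udelta+\delta\ue^n)}\varphi$, which combined with $\Upsilon(\pm x) = (\pm)^\delta \Upsilon_\delta(x)$ gives the claim. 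There is no real obstacle here; the only point demanding care is to verify that the target spaces of $\Hld$ restricted to the parity-$\delta$ subspace agree with those of $\hh_{(\umu,\udelta+\delta\ue^n)}$, which is precisely the content of the identity \eqref{3eq: Ssis (lambda, delta) delta = Ssis2} and so requires no further work.
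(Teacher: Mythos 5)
Your argument is correct and reproduces in substance the paper's (unstated) reasoning, which is implicit in the construction of $\Upsilon_\delta$ inside the proof of Theorem \ref{3prop: H (lambda, delta)}: since $\upsilon$ has pure parity $\delta$, one has $\upsilon_\delta = \varphi$ and $\upsilon_{1-\delta} = 0$, so the theorem's construction yields $\Upsilon_\delta = \hh_{(\umu,\udelta+\delta\ue^n)}\varphi$ and $\Upsilon_{1-\delta} = 0$ immediately. You arrive at the same identification slightly less directly, by treating the theorem as a black box and then appealing to the injectivity of $\EM_{1-\delta}$ (Lemma \ref{2lem: Ssis to Msis, R}) and the uniqueness clause in Proposition \ref{3prop: h (lambda, delta)}, but this is the same computation phrased differently.
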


\subsubsection{The Bessel Kernel $J_{(\umu, \udelta)} $}

Let $(\umu, \udelta) \in \BC^{n} \times (\BZ/2 \BZ)^n$.
We define
\begin{equation}\label{3def: Bessel function, R, 0}
\begin{split}
J_{(\umu, \udelta)} \lp \pm x \rp = \frac 1 2 \sum_{\delta \in \BZ/ 2\BZ} (\pm)^{ \delta} j_{(\umu, \udelta + \delta \ue^n)} (x), \hskip 10 pt x \in \BR_+, 
\end{split}
\end{equation}
or equivalently,
\begin{equation}\label{3def: Bessel function, R}
\begin{split}
J_{(\umu, \udelta)} \lp x \rp = \frac 1 2 \sum_{\delta \in \BZ/ 2\BZ} \sgn (x)^{ \delta} j_{(\umu, \udelta + \delta \ue^n)} (|x|), \hskip 10 pt x \in \BRx.
\end{split}
\end{equation}
Some properties of $J_{(\umu, \udelta)}$ are summarized as below.

\begin{prop}\label{3prop: properties of J, R}
	Let $(\umu, \udelta) \in \BC^{n} \times (\BZT)^n$.
	
	{\rm (1).} Let $(\mu, \delta) \in \BC\times \BZT$. We have
	\begin{equation*}
	J_{(\umu - \mu \ue^n, \udelta - \delta \ue^n)} (x) = \sgn (x)^\delta |x|^{\mu} J_{(\umu, \udelta)} (x).
	\end{equation*}
	
	{\rm(2).} $J_{(\umu, \udelta)} (x)$  is a real analytic function of $x$ on $\BRx$ as well as an analytic function of $\umu$ on $\BC^{n}$.

	{\rm(3).} Assume that $\umu$ satisfies the  condition
	\begin{equation}\label{3eq: condition on lambda}
	\textstyle \min  \left\{ \Re \mu_l      \right\} + 1 > \max  \left\{ \Re \mu_l      \right\}.
	\end{equation}
	Then for $ \upsilon \in \Ssis^{ (- \umu, \udelta) } (\BRx)$ 
	\begin{equation}\label{3eq: Hankel transform, with Bessel kernel, R}
	\Hld \upsilon (x) = \int_{\BR ^\times} \upsilon (y) J_{(\umu, \udelta)} (xy ) d y.
	\end{equation}
	Moreover, if $\upsilon \in \SS (\BRx)$, then \eqref{3eq: Hankel transform, with Bessel kernel, R} remains true for any index $ \umu \in \BC^n $.
\end{prop}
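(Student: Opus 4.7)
The plan is to deduce all three parts from the corresponding properties of the Bessel kernels $j_{(\umu, \udelta)}$ already established in \S\ref{sec: Bessel kernel j lambda delta}, together with the relation \eqref{3eq: Ssis (lambda, delta) delta = Ssis2} and Corollary \ref{3cor: H = h, R}.

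For part (1), I would substitute $\umu \mapsto \umu - \mu\ue^n$, $\udelta \mapsto \udelta - \delta\ue^n$ into the defining formula \eqref{3def: Bessel function, R} and invoke the scaling \eqref{3eq: normalize j (lambda, delta)} inside each summand: $j_{(\umu - \mu\ue^n,\, \udelta - \delta\ue^n + \delta'\ue^n)}(|x|) = |x|^{\mu} j_{(\umu,\, \udelta + (\delta' - \delta)\ue^n)}(|x|)$. Reindexing the sum over $\delta' \in \BZT$ by $\delta'' = \delta' + \delta$ (mod $2$) and pulling out $\sgn(x)^\delta$ yields the identity.

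For part (2), the Barnes-type representation \eqref{2def: Bessel kernel, analytic continuation, 1} shows that each $j_{(\umu, \udelta')}$ is the restriction to $\BR_+$ of an analytic function of $\zeta \in \BU$, analytic jointly in $\umu \in \BC^n$ by the uniform convergence noted in the construction. Since $J_{(\umu, \udelta)}(x)$ is a finite linear combination (with coefficients depending only on $\sgn(x)$) of such kernels via \eqref{3def: Bessel function, R}, the claimed real-analytic/analytic dependence follows immediately.

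For part (3), I would first decompose $\upsilon = \upsilon_0 + \upsilon_1$ using \eqref{1eq: upsilon delta}. By \eqref{3eq: Ssis (lambda, delta) delta = Ssis2}, writing $\upsilon_\delta(x) = \sgn(x)^\delta \varphi_\delta(|x|)$ we have $\varphi_\delta \in \Ssiss^{-\umu - (\udelta + \delta\ue^n)}(\BR_+)$. Corollary \ref{3cor: H = h, R} gives
\begin{equation*}
\Hld \upsilon_\delta(\pm x) = (\pm)^\delta \hh_{(\umu,\, \udelta + \delta\ue^n)} \varphi_\delta(x), \hskip 10 pt x \in \BR_+.
\end{equation*}
Now observe that the hypothesis \eqref{3eq: condition on lambda} is stronger than \eqref{3eq: condition on (lambda, delta), 0} applied to each shifted index $(\umu, \udelta + \delta\ue^n)$, since any quantity of the form $(\delta_l + \delta)(\mathrm{mod}\,2)$ lies in $\{0,1\}$. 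Hence \eqref{3eq: integral kernel, R 0} applies and gives
\begin{equation*}
\hh_{(\umu,\, \udelta + \delta\ue^n)} \varphi_\delta(x) = \int_0^\infty \varphi_\delta(y)\, j_{(\umu,\, \udelta + \delta\ue^n)}(xy)\, dy.
\end{equation*}
Summing $\delta = 0, 1$ and comparing with the expansion of $\int_{\BR^\times} \upsilon(y) J_{(\umu, \udelta)}(xy)\, dy$ obtained by splitting the integral into $\int_0^\infty$ over $\pm y$ and invoking the definition \eqref{3def: Bessel function, R} of $J_{(\umu,\udelta)}(\pm xy)$ produces the identity \eqref{3eq: Hankel transform, with Bessel kernel, R}. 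The extension to arbitrary $\umu \in \BC^n$ for $\upsilon \in \SS(\BR^\times)$ follows because such $\upsilon$ is supported in an annulus $r \leq |y| \leq R$; on such a set the kernel $J_{(\umu,\udelta)}(xy)$ is bounded (by part (2) applied with fixed $\umu$) so convergence is automatic.

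The only nontrivial step is bookkeeping of the even/odd decomposition against the $\delta \mapsto \delta + \delta'\ue^n$ shifts in \eqref{3def: Bessel function, R} to confirm the recombination into a single integral over $\BR^\times$; once the parity accounting is done, everything else reduces to the already-established theory for $\hld$ on $\BR_+$.
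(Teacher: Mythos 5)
Your proofs of parts (1) and (2) are correct and match the approach the paper takes for the complex analogue (Proposition~\ref{3prop: properties of J, C}, which \emph{is} proved); the real case is left unproved in the paper precisely because it is the easier parallel. Part (3) is likewise set up correctly: the reduction to $\hh_{(\umu,\udelta+\delta\ue^n)}$ via Corollary~\ref{3cor: H = h, R}, the observation that \eqref{3eq: condition on lambda} implies \eqref{3eq: condition on (lambda, delta), 0} for each shifted index, and the parity recombination are all sound.

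However, the final sentence contains a genuine error. Functions in $\SS(\BR^\times)$ are \emph{not} compactly supported --- by definition (\S~\ref{sec: Schwartz spaces}) they are smooth functions whose derivatives decay rapidly at both zero and infinity, and they need not vanish on any neighbourhood of $0$ or $\infty$. So the claim that $\upsilon$ ``is supported in an annulus $r\le|y|\le R$'' is false, and the boundedness argument built on it collapses. The correct justification is the one the paper already supplies right after \eqref{3eq: integral kernel, R 0}: for $\varphi\in\SS(\BR_+)$, the identity \eqref{3eq: integral kernel, R 0} holds \emph{without} the condition \eqref{3eq: condition on (lambda, delta), 0} (the rapid decay of $\varphi$ at zero and infinity controls the $dy$-integral against the Mellin--Barnes kernel regardless of where the contour sits). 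Since $\upsilon\in\SS(\BR^\times)$ implies $\upsilon_\delta\in\SS(\BR_+)$, your recombination argument then goes through unchanged. This is exactly the route taken in the proof of Proposition~\ref{3prop: properties of J, C}~(3), where the unconditional case is handled by observing $\upsilon_{-m}\in\SS(\BR_+)$. Replacing your final sentence with this citation would complete the proof.
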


\subsection {\texorpdfstring{The Hankel Transform $ \Hmum$ and the Bessel Kernel $J_{(\umu, \um)} $}{The Hankel Transform $ H_{(\mu, m)}$ and the Bessel Kernel $J_{(\mu, m)} $}}

\

\vskip 5 pt

\subsubsection{\texorpdfstring{The Definition of  $ \Hmum$}{The Definition of $ H_{(\mu, m)}$}}

Consider now the ordered set $(\BC \times \BZ , \preccurlyeq )$ and define $( 2 \mu_{\alpha, \beta}, m_{\alpha, \beta} ) = ( 2 \mu, m )_{\alpha, \beta}$,  $B_\alpha$, $M_{\alpha, \beta}$ and $N_{\alpha, \beta}$ as in Definition \ref{3defn: ordered set} corresponding to $( 2 \umu, \um ) \in ( \BC \times \BZ )^n$. We define the following subspace of $ \Ssis (\BCx)$,
\begin{equation}\label{3eq: Ssis (mu, m), C}
\begin{split}
\Ssis ^{( \umu, \um )}  (\BCx) = \sum_{ \alpha = 1}^A \sum_{\beta = 1}^{B_\alpha}  \sum_{j=0}^{N_{\alpha, \beta} - 1} [z]^{- m_{\alpha, \beta}} \|z\|^{ -  \mu_{\alpha, \beta}} (\log |z|)^{j } \SS ( \BC ).
\end{split}
\end{equation}
The projection via the $m$-th Fourier coefficient maps $\Ssis ^{( \umu, \um )}  (\BCx) $ onto the space
\begin{equation}\label{3eq: Ssis (mu, m) m, C}
\Ssis ^{( \umu, \um ), m} (\BCx) = \sum_{ \alpha = 1}^A \sum_{\beta = 1}^{B_\alpha}  \sum_{j=0}^{N_{\alpha, \beta} - 1} [z]^{ - m_{\alpha, \beta}} \|z\|^{ - \mu_{\alpha, \beta}} (\log |z|)^{j } \SS_{m_{\alpha, \beta} + m} ( \BC ).
\end{equation}
From the definition  of $\Ssiss^{\ulambda} (\BR _+)$ in \eqref{3eq: Ssis2, R+}, along with $\SS_{m} (\BC) = [z]^m \SS_{m} (\overline \BR _+)$  and $\SS_{m} (\overline \BR _+) = x^{|m|} \SS_{0} (\overline \BR _+) $, we have
\begin{equation}\label{3eq: Ssis (mu, m) m = Ssis2}
\Ssis ^{( \umu, \um ), m} (\BCx) = [z]^{m} \Ssiss^{2 \umu - \| \um + m\ue^n \|  } (\BR _+).
\end{equation}

The following theorem gives the definition of the Hankel transform $\Hmum$, which maps $\Ssis^{ (- \umu, - \um) } (\BCx)$    onto  $\Ssis^{( \umu, \um) } (\BCx)$ bijectively.

\begin{thm} \label{3prop: H (mu, m)}
	Let $(\umu, \um) \in \BC^{n} \times \BZ ^n$. Suppose $ \upsilon \in \Ssis^{ (- \umu, - \um) } (\BCx)$.  Then there exists a unique function $\Upsilon \in \Ssis^{( \umu, \um) } (\BCx)$ satisfying the following sequence of identities,
	\begin{equation}\label{3eq: Hankel transform identity, C}
	\EM _{-m} \Upsilon (2 s ) = G_{(\umu,  \um + m\ue^n)} (s) \EM _m \upsilon ( 2 (1-s) ), \hskip 10 pt m \in \BZ .
	\end{equation}
	We call $\Upsilon$ the Hankel transform of  $\upsilon$ over $\BCx$ of index $(\umu, \um)$ and  write $\Hmum \upsilon  = \Upsilon$. Moreover, we have the Hankel inversion formula
	\begin{equation}\label{3eq: Hankel inversion, C}
	\Hmum \upsilon (z) = \Upsilon (z), \hskip 10 pt \Hmmum \Upsilon (z) = (-)^{|\um|}\upsilon \lp (-)^n z \rp.
	\end{equation}
\end{thm}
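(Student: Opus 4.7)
The plan is to mirror the argument of Theorem~\ref{3prop: H (lambda, delta)}, reducing the complex case to the real-variable Proposition~\ref{3prop: h (mu, m)} via Fourier expansion on $\BCx\cong \BR_+\times \BR/2\pi\BZ$. The new feature, compared to the real case, is that there are now infinitely many Fourier modes, and the crucial additional point is to control them uniformly in $m$.

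First I would unpack the identity \eqref{3eq: Hankel transform identity, C} one Fourier mode at a time. Using $\EM_{-m}\varphi(s)=4\pi\EM\varphi_m(s)$ from \eqref{1eq: Mm = M}, the relation at level $m$ becomes
\[
\EM\Upsilon_m(2s)=G_{(\umu,\um+m\ue^n)}(s)\,\EM\upsilon_{-m}(2(1-s)),
\]
which is exactly the defining identity of the Hankel transform $h_{(\umu,\um+m\ue^n)}$ applied to $\upsilon_{-m}$. By \eqref{3eq: Ssis (mu, m) m = Ssis2}, the $(-m)$-th Fourier component of $\upsilon\in \Ssis^{(-\umu,-\um)}(\BCx)$ lies in $[z]^{-m}\Ssiss^{-2\umu-\|\um+m\ue^n\|}(\BR_+)$, so $\upsilon_{-m}\in \Ssiss^{-2\umu-\|\um+m\ue^n\|}(\BR_+)$, which is precisely the source space of Proposition~\ref{3prop: h (mu, m)}. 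The proposition then produces a unique $\Upsilon_m\in \Ssiss^{2\umu-\|\um+m\ue^n\|}(\BR_+)$, and setting $\Upsilon^{(m)}(xe^{i\phi})=e^{im\phi}\Upsilon_m(x)$ gives an element of $\Ssis^{(\umu,\um),m}(\BCx)$.

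The main obstacle is showing that the formal Fourier series $\Upsilon=\sum_{m\in\BZ}\Upsilon^{(m)}$ defines a genuine function in $\Ssis^{(\umu,\um)}(\BCx)$. By the isomorphism of Lemma~\ref{2lem: Ssis to Msis, C}, this reduces to verifying that the sequence $H_m(s)=G_{(\umu,\um+m\ue^n)}(s)\EM_m\upsilon(2(1-s))$ lies in $\Msis^{\BC}$. The pole structure and the rapid decay of each individual $H_m$ along vertical lines are handled exactly as in the proof of Proposition~\ref{3prop: h (mu, m)}. The new ingredient is uniform rapid decay in $m$ on vertical strips, for which I would combine the moderate growth bound \eqref{1eq: vertical bound, G (mu, m) (s)} on $G_{(\umu,\um+m\ue^n)}(s)$ (polynomial in $m$ uniformly on strips) with the super-polynomial decay in $m$ of $\EM_m\upsilon$, which is guaranteed by $\upsilon\in\Ssis(\BCx)$ via Lemma~\ref{2lem: Ssis to Msis, C} and the defining rapid-decay condition of $\Msis^{\BC}$. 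The product then decays faster than any polynomial in $m$.

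Uniqueness of $\Upsilon$ follows from the Fourier-component uniqueness in Proposition~\ref{3prop: h (mu, m)} together with the injectivity of Fourier expansion. The inversion formula \eqref{3eq: Hankel inversion, C} then follows from the real Hankel inversion \eqref{3eq: Hankel inversion, C 0} applied to each $\Upsilon_m$, invoking the functional equation \eqref{1eq: G mu m (1-s) = G - mu m (s)}: a direct calculation shows that the $k$-th Fourier coefficient of $\Hmmum\Upsilon$ equals $(-1)^{|\um|+nk}\upsilon_k$, and since $e^{ik(\phi+n\pi)}=(-1)^{nk}e^{ik\phi}$, reassembling the Fourier series yields $(-1)^{|\um|}\upsilon((-1)^n z)$, as required.
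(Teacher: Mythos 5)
Your proposal is correct and follows essentially the same route as the paper: decompose into Fourier modes, apply Proposition~\ref{3prop: h (mu, m)} at each mode using \eqref{3eq: Ssis (mu, m) m = Ssis2}, reassemble via Lemma~\ref{2lem: Ssis to Msis, C} by checking uniform rapid decay in $m$ (combining the moderate $m$-growth of $G_{(\umu,\um+m\ue^n)}$ from Lemma~\ref{1lem: vertical bound} with the rapid $m$-decay of $\EM_m\upsilon$), and deduce the inversion from \eqref{3eq: Hankel inversion, C 0}. The only difference is that you spell out the Fourier-coefficient bookkeeping for the inversion, including the sign $(-1)^{|\um|+nk}$ and the rotation by $e^{i n\pi}$, which the paper leaves implicit; that computation is correct.
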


\begin{proof}
	Recall that 
	$$\EM_{m} \upsilon (s) = 4 \pi \EM \upsilon_{-m} (s).$$
	In view of \eqref{3eq: Ssis (mu, m) m = Ssis2}, we have $\upsilon_{- m} \in \Ssiss^{- 2 \umu - \| \um + m\ue^n \|} (\BR _+)$. Applying Proposition \ref{3prop: h (mu, m)}, we infer that there is a unique function $\Upsilon_m \in \Ssiss^{ 2 \umu - \| \um + m\ue^n \|} (\BR _+)$ satisfying 
	\begin{equation*} 
	\EM  \Upsilon_{m} (2 s ) = G_{(\umu,  \um + m\ue^n)} (s) \EM  \upsilon_{-m} ( 2(1-s) ).
	\end{equation*}
	According to Lemma \ref{2lem: Ssis to Msis, C},  in order to show that the Fourier series $\Upsilon \lp x e^{i \phi}\rp = \sum \Upsilon_m (x) e^{im \phi}$ lies in $\Ssis^{( \umu, \um) } (\BCx)$, it suffices to verify that  $G_{(\umu,  \um + m\ue^n)} (s) \EM  \upsilon_{-m} ( 2(1-s) )$   rapidly decays with respect to $m$, uniformly on vertical strips. 
	This however follows from the uniform rapid decay of $\EM  \upsilon_{-m} ( 2(1-s) )$ along with the uniform moderate growth of $G_{(\umu,  \um + m\ue^n)} (s)$ (\eqref{1eq: vertical bound, G (mu, m) (s)} in Lemma \ref{1lem: vertical bound}) in the $m$ aspect on vertical strips. 
	
	Finally,  \eqref{3eq: Hankel inversion, C 0} in Proposition  \ref{3prop: h (mu, m)} implies \eqref{3eq: Hankel inversion, C}.
\end{proof}

\begin{cor} \label{3cor: H = h, C}
	Let $(\umu, \um) \in \BC^{n} \times \BZ^n$ and $m \in \BZ $. Suppose $\varphi \in \Ssiss^{- 2 \umu - \| \um + m\ue^n \|} (\BR _+)$ and $ \upsilon (z) = [z]^{- m} \varphi (|z|)$.  Then
	\begin{equation*}
	\Hmum \upsilon \lp x e^{i\phi} \rp = e^{i m \phi} \hh_{(\umu,  \um + m\ue^n)} \varphi (x), \hskip 10 pt x \in \BR _+, \phi \in \BR / 2 \pi \BZ.
	\end{equation*}
\end{cor}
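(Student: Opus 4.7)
The plan is to mimic the proof of Corollary \ref{3cor: H = h, R} (the real analogue), reducing everything to a single application of Proposition \ref{3prop: h (mu, m)} via the Fourier expansion on the circle.

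First I would write $\upsilon$ in polar coordinates. Since $[z] = e^{i\phi}$ when $z = xe^{i\phi}$, the hypothesis gives $\upsilon\lp xe^{i\phi}\rp = e^{-im\phi}\varphi(x)$, so the $k$-th Fourier coefficient (as defined in \eqref{1eq: Fourier coefficients of upsilon}) is
\begin{equation*}
\upsilon_k(x) = \varphi(x)\,\delta_{k,-m},
\end{equation*}
i.e., $\upsilon_{-m} = \varphi$ and $\upsilon_k = 0$ for $k \neq -m$. In particular $\upsilon \in \Ssis^{(-\umu,-\um)}(\BCx)$ follows from $\varphi \in \Ssiss^{-2\umu - \|\um + m\ue^n\|}(\BR_+)$ together with \eqref{3eq: Ssis (mu, m) m = Ssis2}, which identifies $\Ssis^{(-\umu,-\um),-m}(\BCx)$ with $[z]^{-m}\Ssiss^{-2\umu - \|\um+m\ue^n\|}(\BR_+)$, so Theorem \ref{3prop: H (mu, m)} does apply and produces $\Upsilon = \Hmum\upsilon \in \Ssis^{(\umu,\um)}(\BCx)$.

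Next I would translate \eqref{3eq: Hankel transform identity, C} using the relation $\EM_k \upsilon(s) = 4\pi \EM\upsilon_{-k}(s)$ from \eqref{1eq: Mm = M}. Since $\upsilon_{-k} = 0$ unless $k = m$, the right-hand side of \eqref{3eq: Hankel transform identity, C} vanishes for every $k \neq m$, which forces $\EM_{-k}\Upsilon \equiv 0$ and hence $\Upsilon_k = 0$ by the injectivity part of Lemma \ref{2lem: Ssis to Msis, C}. For $k = m$, the identity reduces to
\begin{equation*}
\EM \Upsilon_m(2s) = G_{(\umu,\um + m\ue^n)}(s)\,\EM\varphi(2(1-s)),
\end{equation*}
after cancelling the common factor $4\pi$ on both sides. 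By the defining identity \eqref{3eq: Hankel transform identity 2, R+} of Proposition \ref{3prop: h (mu, m)} applied with index $(\umu, \um + m\ue^n)$ to the function $\varphi \in \Ssiss^{-2\umu - \|\um + m\ue^n\|}(\BR_+)$, this characterizes $\Upsilon_m = \hh_{(\umu,\um+m\ue^n)}\varphi$.

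Finally, I would reassemble the Fourier series \eqref{1eq: Fourier series expansion}: only the $k=m$ term survives, so
\begin{equation*}
\Hmum\upsilon\lp xe^{i\phi}\rp = \Upsilon_m(x)\,e^{im\phi} = e^{im\phi}\,\hh_{(\umu,\um+m\ue^n)}\varphi(x),
\end{equation*}
which is the claimed identity. There is no real obstacle here beyond bookkeeping; the one point that needs a little care is making sure that $\varphi$ lying in $\Ssiss^{-2\umu-\|\um+m\ue^n\|}(\BR_+)$ is exactly the hypothesis needed to invoke Proposition \ref{3prop: h (mu, m)} for the Mellin equation of the $m$-th Fourier coefficient, which is precisely why the shift $\um \mapsto \um + m\ue^n$ appears.
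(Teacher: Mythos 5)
Your proof is correct and reproduces exactly the (implicit) argument behind the corollary: the paper states it without a separate proof because it falls out of the proof of Theorem \ref{3prop: H (mu, m)}, which already constructs $\Upsilon_m$ as $\hh_{(\umu,\um+m\ue^n)}\upsilon_{-m}$ via Proposition \ref{3prop: h (mu, m)} and then assembles the Fourier series using Lemma \ref{2lem: Ssis to Msis, C}. Your specialization to the case where $\upsilon$ has a single Fourier mode ($\upsilon_{-m}=\varphi$, all others zero) is precisely the intended route, and the bookkeeping — the shift $\um\mapsto\um+m\ue^n$, the identity $\|-\um-m\ue^n\|=\|\um+m\ue^n\|$ feeding into \eqref{3eq: Ssis (mu, m) m = Ssis2}, and the cancellation of $4\pi$ — is all handled correctly.
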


\subsubsection{\texorpdfstring{The Bessel Kernel $J_{(\umu, \um)} $}{The Bessel Kernel $J_{(\mu, m)} $}}

For $(\umu, \um) \in \BC^{n} \times \BZ^n$, we define
\begin{equation}\label{2eq: Bessel kernel over C, polar}
J_{(\umu, \um)} \lp x e^{i \phi} \rp =  \frac 1 {2 \pi} \sum_{m \in \BZ} j_{(\umu, \um + m\ue^n)} (x ) e^{ i m \phi}, 
\end{equation}
or equivalently,
\begin{equation}\label{2eq: Bessel kernel over C}
J_{(\umu, \um)} \lp z \rp =  \frac 1 {2 \pi} \sum_{m \in \BZ} j_{(\umu, \um + m\ue^n)} (|z|) [z]^m.
\end{equation}
Lemma \ref{3lem: bound of the Bessel kernel, C} secures the absolute convergence of this series.

\begin{prop}\label{3prop: properties of J, C}
	Let $(\umu, \um) \in \BC^{n} \times \BZ^n$.
	
	{\rm (1).} Let $(\mu, m) \in \BC \times \BZ$. We have
	\begin{equation*}
	J_{(\umu - \mu \ue^n, \um - m \ue^n)} (z) = [z]^m \|z\|^{\mu} J_{(\umu, \um)} (z).
	\end{equation*}
	
	{\rm (2).} $ J_{(\umu, \um)} (z)$  is a real analytic function of $z$ on $\BCx$ as well as an analytic function of $\umu$ on $\BC^{n}$.
	
	{\rm (3).} Assume that $\umu$ satisfies the following condition
	\begin{equation}\label{3eq: condition on mu}
	\textstyle  \min  \left\{ \Re \mu_l      \right\} + 1 > \max  \left\{ \Re \mu_l      \right\}.
	\end{equation}
	Suppose $ \upsilon \in \Ssis^{ (- \umu, - \um) } (\BCx)$. Then 
	\begin{equation} \label{3eq: Hankel transform, C, polar}
	\Upsilon \lp x e^{i \phi} \rp  =  \int_0^\infty \int_0^{2\pi} \upsilon \lp y e^{i \theta}\rp  J_{(\umu, \um)} \lp x e^{ i \phi } y e^{ i \theta } \rp \cdot 2 y d \theta dy,
	\end{equation}
	or equivalently,
	\begin{equation} \label{3eq: Hankel transform, with Bessel kernel, C}
	\Upsilon (z)  =  \int_{\BCx} \upsilon (u) J_{(\umu, \um)} ( zu ) d u.
	\end{equation}
	Moreover, \eqref{3eq: Hankel transform, C, polar} and \eqref{3eq: Hankel transform, with Bessel kernel, C} still hold true for any index $ \umu \in \BC$ if $\upsilon \in \SS (\BCx)$.
\end{prop}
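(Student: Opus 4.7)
My plan is to prove each of the three parts by reducing to the corresponding statement for the ``one-variable'' Bessel kernels $j_{(\umu,\um)}$ of \S 3.2.4 via the Fourier series expansion \eqref{2eq: Bessel kernel over C}.

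For part (1), I would substitute directly into the definition \eqref{2eq: Bessel kernel over C}. Writing the sum with a new index $k' = k - m$,
\begin{equation*}
J_{(\umu-\mu\ue^n,\,\um-m\ue^n)}(z) = \frac{1}{2\pi}\sum_{k\in\BZ} j_{(\umu-\mu\ue^n,\,\um+(k-m)\ue^n)}(|z|)\,[z]^k,
\end{equation*}
and then apply the scaling relation \eqref{3eq: normalize j (mu, m)} factor by factor to pull out $|z|^{2\mu}=\|z\|^{\mu}$ and $[z]^m$. Absolute convergence is guaranteed by Lemma \ref{3lem: bound of the Bessel kernel, C}, which justifies rearrangement.

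For part (2), analyticity in $\umu$ of each term $j_{(\umu,\um+m\ue^n)}(|z|)[z]^m$ was established in \S \ref{sec: Bessel kernel j mu m} via the Barnes type integral \eqref{3eq: Bessel kernel, analytic continuation, 2}, which also gives real analyticity in $z$ on $\BCx$ (since $\BCx$ embeds into $\BU$). It then suffices to check that the series in \eqref{2eq: Bessel kernel over C} converges uniformly on compact subsets of $\BCx\times\BC^n$. This is exactly the content of Lemma \ref{3lem: bound of the Bessel kernel, analytic continuation, C}: the factor $(2\pi e x^{1/n}/(|m|+1))^{n|m|}$ provides super-exponential decay in $|m|$ on compacta, while the other factors are controlled locally uniformly. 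Weierstrass's theorem then gives joint real analyticity of the sum.

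For part (3), I would expand $\upsilon\in\Ssis^{(-\umu,-\um)}(\BCx)$ into its Fourier series $\upsilon(xe^{i\theta})=\sum_m \upsilon_{-m}(x)e^{-im\theta}$; by \eqref{3eq: Ssis (mu, m) m = Ssis2}, each $\upsilon_{-m}$ lies in $\Ssiss^{-2\umu-\|\um+m\ue^n\|}(\BR_+)$. The proof of Theorem \ref{3prop: H (mu, m)} identifies the $m$-th Fourier coefficient of $\Upsilon=\Hmum\upsilon$ with $\hh_{(\umu,\um+m\ue^n)}\upsilon_{-m}$, and the condition \eqref{3eq: condition on mu} on $\umu$ implies the auxiliary condition \eqref{3eq: condition on (mu, m), 0} for every $m\in\BZ$ (since $\min\{\Re\mu_l+\tfrac12|m_l+m|\}\geqslant\min\{\Re\mu_l\}$ and $\max\{\Re\mu_l-\tfrac12|m_l+m|\}\leqslant\max\{\Re\mu_l\}$). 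Hence the integral kernel formula \eqref{3eq: h mu m = int of j mu m} applies componentwise:
\begin{equation*}
\Upsilon_m(x) = \int_0^\infty \upsilon_{-m}(y)\, j_{(\umu,\um+m\ue^n)}(xy)\cdot 2y\,dy.
\end{equation*}
Recombining the Fourier series against $e^{im\phi}$, substituting the Fourier expansion of $\upsilon$, and interchanging sum and integral yields \eqref{3eq: Hankel transform, C, polar} with kernel \eqref{2eq: Bessel kernel over C, polar}. Rewriting this in the $z=xe^{i\phi}$ variable gives \eqref{3eq: Hankel transform, with Bessel kernel, C}.

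The main obstacle is justifying the two-fold interchange of summation and integration in part (3). For this, I would combine the bound of Lemma \ref{3lem: bound of the Bessel kernel, C} on $j_{(\umu,\um+m\ue^n)}(xy)$ (which decays faster than any power of $|m|$ once $y$ is bounded above, uniformly on compacta in $x$) with the rapid decay in $m$ of the Fourier coefficients $\upsilon_{-m}$ guaranteed by Lemma \ref{2lem: Ssis to Msis, C} and the structure of $\Ssis^{(-\umu,-\um)}(\BCx)$ from \eqref{3eq: Ssis (mu, m), C}--\eqref{3eq: Ssis (mu, m) m, C}. Finally, for the extension to arbitrary $\umu\in\BC^n$ under the stronger assumption $\upsilon\in\SS(\BCx)$, both sides of \eqref{3eq: Hankel transform, with Bessel kernel, C} are holomorphic in $\umu$ on all of $\BC^n$ (by part (2) together with the vertical-strip bounds that extend the Mellin-based definition of $\Hmum$ off the special locus), so analytic continuation from the region \eqref{3eq: condition on mu} concludes the proof.
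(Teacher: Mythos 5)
Your proof follows essentially the same route as the paper: parts (1) and (2) are handled identically, via the Fourier expansion \eqref{2eq: Bessel kernel over C} together with the term-by-term scaling relation \eqref{3eq: normalize j (mu, m)} and the bounds of Lemma \ref{3lem: bound of the Bessel kernel, analytic continuation, C}; and part (3) reduces to the rank-one kernel formula \eqref{3eq: h mu m = int of j mu m} componentwise after verifying that \eqref{3eq: condition on mu} implies \eqref{3eq: condition on (mu, m), 0} for every $m$, with the interchange of sum and integral justified by Lemma \ref{3lem: bound of the Bessel kernel, C}, exactly as in the paper. The one place you diverge is the final extension to arbitrary $\umu$ when $\upsilon\in\SS(\BCx)$: you argue by analytic continuation in $\umu$, which is valid but requires verifying that $\Hmum\upsilon(z)$ is itself holomorphic in $\umu$ (via Mellin inversion, which you gesture at but do not fully spell out). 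The paper instead observes more directly that once $\upsilon_{-m}\in\SS(\BR_+)$, the rank-one formula \eqref{3eq: h mu m = int of j mu m} holds \emph{unconditionally} on the index, since the rapid decay of $\upsilon_{-m}$ at both ends of $\BR_+$ makes the defining double integral absolutely convergent for any choice of contour $\EC_{(\umu,\um+m\ue^n)}$. That direct observation avoids the extra holomorphy check entirely, so it is the more economical route; your version works but would need the left-hand side's analyticity in $\umu$ spelled out to be complete.
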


\begin{proof}
	
	(1). This is clear.
	
	(2). In \eqref{2eq: Bessel kernel over C, polar}, with abuse of notation, we view $x$ and $\phi$ as complex variables on $\BU$ and $\BC/2 \pi \BZ$ respectively, $j_{(\umu, \um + m\ue^n)} (x )$ and $ e^{ i m \phi}$ as analytic functions. Then Lemma \ref{3lem: bound of the Bessel kernel, analytic continuation, C} implies that the series in \eqref{2eq: Bessel kernel over C, polar} is absolutely convergent, compactly with respect to both $x$ and $\phi$, and therefore $ J_{(\umu, \um)} \lp x e^{i \phi} \rp$ is an analytic function of $x$ and $\phi$. In particular, $ J_{(\umu, \um)} (z)$  is a \textit{real analytic} function of $z$ on $\BCx$.
	
	Moreover, in  Lemma \ref{3lem: bound of the Bessel kernel, C}, we may allow $\umu$ to vary in an $\epsilon$-ball in $\BC^{n}$ and choose the implied constant in the estimate to be uniformly bounded with respect to $\umu$. This implies that the series in \eqref{2eq: Bessel kernel over C, polar} is convergent compactly in the $\umu$ aspect.
	Therefore, $ J_{(\umu, \um)} (z)$ is an analytic function of $\umu$ on $\BC^{n}$.
	
	(3). It follows from  \eqref{3eq: Ssis (mu, m) m = Ssis2} that $\upsilon_{- m} \in \Ssiss^{- 2 \umu - \| \um + m\ue^n\|} (\BR _+)$. Moreover, one observes that  $(\umu, \um + m \ue^n)$ satisfies the condition \eqref{3eq: condition on (mu, m), 0} due to \eqref{3eq: condition on mu}. Therefore, in conjunction with  Proposition \ref{3prop: h (mu, m)}, \eqref{3eq: h mu m = int of j mu m} implies 
	\begin{equation*} 
	\Upsilon_{m} (x ) = 2 \int_0^\infty \upsilon_{- m} (y) j_{(\umu, \um + m\ue^n)} \lp x y \rp y d y.
	\end{equation*}
	Hence
	\begin{equation*} 
	\begin{split}
	\Upsilon \lp x e^{i \phi} \rp =   \sum_{m \in \BZ} \Upsilon_{ m} (x ) e^{i m \phi}  = \sum_{m \in \BZ} \frac 1 { \pi} \int_0^\infty \int_0^{2\pi} \upsilon \lp y e^{i \theta}\rp j_{(\umu, \um + m\ue^n)} \lp x y \rp e^{ i m (\phi + \theta)} y d \theta d y.
	\end{split}
	\end{equation*}
	The estimate of $j_{(\umu, \um + m\ue^n)}$ in Lemma \ref{3lem: bound of the Bessel kernel, C} implies that
	the above series of integrals converges absolutely. On interchanging the order of summation and integration, one obtains \eqref{3eq: Hankel transform, C, polar}  in view of the definition  of $J_{(\umu, \um)}$ in \eqref{2eq: Bessel kernel over C, polar}.
	
	Note that in the case $\upsilon \in \SS (\BCx)$, one has $\upsilon_{- m}  \in \SS (\BR _+)$, and therefore \eqref{3eq: h mu m = int of j mu m} can be applied unconditionally. 
\end{proof}

\subsection{Concluding Remarks}

\ 
\vskip 5 pt

\subsubsection{Connection Formulae}\label{sec: connection formulae}

From \eqref{3eq: j (lambda, delta) and fundamental},  \eqref{3def: Bessel function, R, 0} or \eqref{3def: Bessel function, R}, we deduce the connection formula 
\begin{equation}\label{3eq: Bessel kernel, R, connection}
\begin{split}
J_{(\umu, \udelta)} \lp \pm x \rp = (2\pi)^{|\umu|} \sum_{|\usigma| = \pm } \usigma      ^{\udelta } J \big(2 \pi x^{\frac 1 n}; \usigma, \umu \big).  
\end{split}
\end{equation}
This will  enable us to reduce  the study of $J_{(\umu, \udelta)} (x) $  to that of  $J(x; \usigma, \ulambda)$.

From the various connection formulae (\ref{3eq: j (lambda, delta) and fundamental}, \ref{3eq: j mu m = j lambda delta}, \ref{2eq: Bessel kernel over C, polar}, \ref{2eq: Bessel kernel over C}) which   have been derived so far,  
one can also connect  the Bessel kernel $J_{(\umu, \um)} (z)$   to the Bessel functions  $J(x; \usigma, \ulambda)$ of doubled rank $2n$. However, unlike the expression of $J_{(\umu, \udelta)} (\pm x)$ by a {\it finite  sum} of $J \big(2 \pi x^{\frac 1 n}; \usigma, \umu \big)$ as in \eqref{3eq: Bessel kernel, R, connection},  these connection formulae yield an expression of $J_{(\umu, \um)} \lp x e^{i \phi} \rp$ in terms of an {\it infinite  series} involving the Bessel functions $J \big(2 \pi x^{\frac 1 n}; \usigma, \ulambda \big)$ of rank $2n$,  so a similar reduction for $J_{(\umu, \um)} (z)$ does not  exist and  we need to search for other approaches. 

\vskip 5 pt

\subsubsection{Normalizations of Indices} 
Usually, it is convenient to normalize the indices in $J(x; \usigma, \ulambda)$,  $j_{(\umu, \udelta)} (x)$, $j_{(\umu, \um)} (x)$, $J_{(\umu, \udelta)} (x)$ and $J_{(\umu, \um)} (z)$ so that $\ulambda, \umu \in \BL^{n-1}$. Furthermore, without loss of generality, the assumptions $\delta_n = 0$ and $m_n = 0$ may also be imposed for $J_{(\umu, \udelta)} (x)$ and $J_{(\umu, \um)} (z)$ respectively.
These normalizations are justified by Lemma {\rm \ref{3lem: normalize J(x; sigma, lambda)}}, \eqref{3eq: normalize j (lambda, delta)}, \eqref{3eq: normalize j (mu, m)}, Proposition \ref{3prop: properties of J, R} (1) and \ref{3prop: properties of J, C} (1).

\section{Bessel Functions and Bessel Kernels in Classical Cases}\label{sec: classical cases}

In Introduction, we have seen the occurrence of the oscillatory exponential function $e (x)$ and classical Bessel functions in the Poisson and \Voronoi summation formulae. Using their Mellin-Barnes integral representations, we shall now show that these functions indeed coincide with the Bessel kernels in the real rank-one and rank-two cases defined in \S \ref{sec: Hankel transforms}. Furthermore, we shall also find that the complex Bessel kernel of rank one is exactly $e (\Tr (z)) = e (z + \overline z)$.

\subsection{Rank-One Bessel Functions}

To start with, we have  the   formula {\rm (\cite[{\bf 3.764}]{G-R})} 
\begin{equation}\label{2eq: Gamma and exponential via Mellin}
\Gamma (s) e \lp \pm \frac  s 4 \rp = \int_0^\infty e^{\pm ix} x^{s } d^\times x, \hskip 10 pt 0 < \Re s < 1.
\end{equation}

\begin{prop} \label{prop: n=1}
	Suppose $n = 1$. Choose the contour $\EC = \EC_{0}$  as in Definition {\rm\ref{3defn: C d lambda}};  $\EC$ starts from $\rho - i \infty$ and ends at $\rho + i \infty$, with  $\rho < - \frac 1 2$, and all the nonpositive integers lie on the left side of  $\EC$. We have
	\begin{equation}\label{2eq: n = 1, Mellin inversion}
	e^{\pm i x} = \frac 1 {2\pi i} \int_{\EC} \Gamma (s) e \lp \pm \frac s 4 \rp x^{-s} d s.
	\end{equation}
	Therefore
	\begin{equation*}
	J (x; \pm, 0) = e^{\pm i x}.
	\end{equation*}
\end{prop}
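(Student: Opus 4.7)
The identification $J(x;\pm,0) = e^{\pm ix}$ in the proposition will follow immediately from \eqref{2eq: n = 1, Mellin inversion} once that identity is established, since the right-hand side of \eqref{2eq: n = 1, Mellin inversion} is exactly the definition \eqref{3eq: definition of J (x; sigma)} of $J(x;\pm,0)$ with $n=1$ and $\ulambda = 0$. The plan is therefore to evaluate
$$
I_\pm(x) \,:=\, \frac{1}{2\pi i}\int_{\EC} \Gamma(s)\, e(\pm s/4)\, x^{-s}\,ds
$$
by shifting $\EC$ to the far left and applying the residue theorem.

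First I would check that $I_\pm(x)$ converges absolutely on $\EC$. On the vertical tails, where $\Re s = \sigma < -\tfrac12$ and $t = \Im s$ is large, Stirling's formula \eqref{1eq: Stirling's formula} gives $|\Gamma(\sigma+it)| \sim \sqrt{2\pi}\,|t|^{\sigma-1/2} e^{-\pi|t|/2}$, while $|e(\pm(\sigma+it)/4)| = e^{\mp\pi t/2}$. The integrand thus decays like $|t|^{\sigma - 1/2}\, e^{-\pi|t|/2 \mp \pi t/2}$, which is integrable in $t$ precisely because $\sigma < -\tfrac12$.

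Next, for $N \in \BN$ large enough that $-N-\tfrac12 < \sigma$, shifting $\EC$ leftward to the vertical line $(-N-\tfrac12)$ crosses exactly the simple poles at $s = 0, -1, \ldots, -N$. The residue at $s = -\kappa$ is
$$
\Res_{s=-\kappa} \Gamma(s)\, e(\pm s/4)\, x^{-s} \,=\, \frac{(-1)^\kappa}{\kappa!}\, e(\mp \kappa/4)\, x^\kappa \,=\, \frac{(-1)^\kappa (\mp i)^\kappa}{\kappa!}\, x^\kappa \,=\, \frac{(\pm ix)^\kappa}{\kappa!},
$$
which is the $\kappa$-th Taylor coefficient of $e^{\pm ix}$. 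The residue theorem then yields
$$
I_\pm(x) \,=\, \sum_{\kappa=0}^{N} \frac{(\pm ix)^\kappa}{\kappa!} + \frac{1}{2\pi i}\int_{(-N-\frac{1}{2})} \Gamma(s)\, e(\pm s/4)\, x^{-s}\,ds.
$$

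Finally, I would show the tail integral vanishes as $N \to \infty$. Using Euler's reflection $\Gamma(s) = \pi/(\sin(\pi s)\Gamma(1-s))$, on $s = -N-\tfrac12 + it$ one has $|\sin(\pi s)| = \cosh(\pi t) \geq \tfrac12 e^{\pi|t|}$, while Stirling controls $|\Gamma(N+\tfrac32 - it)|^{-1}$ from above. Combined with $|e(\pm s/4)| = e^{\mp\pi t/2}$ and $|x^{-s}| = x^{N+1/2}$, the tail is bounded by a constant multiple of $x^{N+1/2}/\Gamma(N+\tfrac32)$, which tends to $0$ for any fixed $x > 0$. Summing the Taylor series of $e^{\pm ix}$ yields $I_\pm(x) = e^{\pm ix}$. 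The only nontrivial point in the argument is applying Stirling's formula uniformly enough in the pair $(N,t)$ for the $t$-integral of the tail to be controlled by $O(1)$, leaving the essential $x^{N+1/2}/\Gamma(N+\tfrac32)$ decay.
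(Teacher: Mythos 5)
Your proof is correct, but it takes a genuinely different route from the paper's. The paper starts from the classical Mellin pair $e^{-zx}\leftrightarrow\Gamma(s)z^{-s}$ for $\Re z>0$, shifts the contour from $(\sigma)$ to $\EC$ (crossing no poles, since $\EC$ keeps the nonpositive integers on its left), and then rotates $z=e^{\mp(\frac12\pi-\epsilon)i}\to\mp i$ by dominated convergence: Stirling gives a uniform integrable majorant on $\EC$ as $\epsilon\downarrow0$, because the contour already sits in $\Re s<-\tfrac12$ where the Gamma factor supplies $|t|^{\sigma-1/2}$ polynomial decay even without the rotation's exponential gain. Your argument instead pushes the contour far left and sums the residues, directly recovering the Taylor series of $e^{\pm ix}$. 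The paper's approach avoids any limit in $N$ (only a limit in $\epsilon$ on a fixed contour) and makes the analyticity in $z$ manifest; yours is more concrete and produces the answer coefficient by coefficient. One point that deserves a bit more care than your closing sentence suggests: showing the tail integral is $O(1/\Gamma(N+\tfrac32))$ with a constant uniform in $N$ requires a slightly sharper lower bound for $|\Gamma(N+\tfrac32-it)|$ than the crude $\cosh(\pi t)\geq\tfrac12 e^{\pi|t|}$ bound on the reflection factor alone. Using the Weierstrass product one gets, for $N\geq1$,
\begin{equation*}
\frac{\Gamma(N+\tfrac32)}{|\Gamma(N+\tfrac32-it)|}\leq\frac{e^{\pi|t|/2}}{\sqrt{(1+4t^2)(1+\tfrac{4}{9}t^2)}}\lll\frac{e^{\pi|t|/2}}{1+t^2},
\end{equation*}
which, combined with $e^{\mp\pi t/2}/\cosh(\pi t)\lll e^{-\pi|t|/2\mp\pi t/2}$, renders the $t$-integral absolutely convergent with an $N$-independent bound; the cruder single-factor bound $\sqrt{1+4t^2}$ in the denominator only gives logarithmic divergence in the $t$-direction on which the exponentials cancel. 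With that refinement filled in, your argument is complete.
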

\begin{proof}
	Let $\Re z > 0$. For $\Re s > 0$, we have the formula
	\begin{equation*}
	\Gamma (s) z^{-s} = \int_0^\infty e^{- z x} x^{s } d^\times x,
	\end{equation*}
	where the integral is absolutely convergent. The Mellin inversion formula  yields 
	\begin{equation*}
	e^{- z x} = \frac 1 {2\pi i} \int_{(\sigma)} \Gamma (s) z^{-s} x^{-s} d s, \hskip 10 pt \sigma > 0.
	\end{equation*} 
	Shifting the contour of integration from $(\sigma)$ to $\EC$, one sees that
	\begin{equation*}
	e^{- z x} = \frac 1 {2\pi i} \int_{\EC} \Gamma (s) z^{-s} x^{-s} d s.
	\end{equation*}
	Choose $z = e^{\mp \lp \frac {1 } 2 \pi - \epsilon \rp i}$, $ \pi > \epsilon > 0$. In view of  Stirling's asymptotic formula, the convergence of the integral above is uniform in $\epsilon$. Therefore, we obtain \eqref{2eq: n = 1, Mellin inversion} by letting $\epsilon \ra 0$.
\end{proof}

\begin{rem}
	Observe that the integral in \eqref{2eq: Gamma and exponential via Mellin} is only conditionally convergent, the Mellin inversion formula does not apply in the rigorous sense. Nevertheless, \eqref{2eq: n = 1, Mellin inversion} should be view as the Mellin inversion of \eqref{2eq: Gamma and exponential via Mellin}.
\end{rem}

\begin{rem}
	It follows from the proof of Proposition {\rm \ref{prop: n=1}} that the formula
	\begin{equation}\label{2eq: Mellin n=1}
	e^{- e(a) x } = \frac 1 {2\pi i} \int_{\EC} \Gamma (s) e\lp - a s \rp x^{-s} d s
	\end{equation}
	is valid for any $a \in \left[- \frac 1 4,  \frac 1 4\right]$.
\end{rem}


\subsection{Rank-Two Bessel Functions}

\begin{prop}\label{prop: Classical Bessel functions}
	Let $\lambda \in \BC$. Then
	\begin{align*}
	J (x; \pm, \pm, \lambda, - \lambda) & = \pm \pi i e^{\pm \pi i \lambda} H^{(1, 2)}_{2 \lambda} (2 x), \\
	J (x; \pm, \mp, \lambda, - \lambda) & = 2 e^{\mp \pi i \lambda} K_{2 \lambda} (2 x).
	\end{align*}
	Here $H^{(1)}_{\nu}$ and $H^{(2)}_{\nu}$ are Bessel functions of the third kind, also known as Hankel functions, whereas $K_{\nu}$ is the  modified Bessel function of the second kind, occasionally called the $K$-Bessel function or  MacDonald function.
\end{prop}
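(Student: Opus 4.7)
The plan is to evaluate the Mellin--Barnes integral defining $J(x;\usigma,\ulambda)$ directly for $n=2$ and $\ulambda=(\lambda,-\lambda)$, and then match with classical series expansions. Writing out \eqref{3eq: definition of J (x; sigma)}, the integrand is
$$\Gamma(s-\lambda)\Gamma(s+\lambda)\,e\!\left(\tfrac{\varsigma_1(s-\lambda)+\varsigma_2(s+\lambda)}{4}\right) x^{-2s},$$
and the phase factor simplifies to either $e^{\pm\pi is}$ (when $\usigma=(\pm,\pm)$) or $e^{\mp\pi i\lambda}$ (when $\usigma=(\pm,\mp)$).

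For the $K$-Bessel case $\usigma=(\pm,\mp)$, the $s$-independent phase pulls out, and Stirling gives $|\Gamma(s-\lambda)\Gamma(s+\lambda)|\ll|\Im s|^{2\Re s-1}e^{-\pi|\Im s|}$, so the remainder vanishes when $\EC_{\ulambda}$ is pushed leftward past the poles at $s=\lambda-k$ and $s=-\lambda-k$, $k\in\BN$. Computing the simple-pole residues and applying Euler reflection $\Gamma(\pm2\lambda-k)=(-1)^k\pi/(\sin(\pm2\pi\lambda)\Gamma(k+1\mp2\lambda))$, I recognize the resulting two series as the modified Bessel series $I_{\mp 2\lambda}(2x)=\sum_k\tfrac{x^{\mp2\lambda+2k}}{k!\Gamma(k+1\mp2\lambda)}$, so that the contour integral equals $\pi(I_{-2\lambda}(2x)-I_{2\lambda}(2x))/\sin(2\pi\lambda)=2K_{2\lambda}(2x)$ by \eqref{1eq: connection formula K}. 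This yields $J(x;\pm,\mp,\lambda,-\lambda)=2e^{\mp\pi i\lambda}K_{2\lambda}(2x)$.

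For the Hankel case $\usigma=(\pm,\pm)$, the same residue bookkeeping applies: the pole at $s=\lambda-k$ contributes $e^{\pm\pi i\lambda}\Gamma(2\lambda-k)x^{-2\lambda+2k}/k!$ (the two $(-1)^k$ factors cancel), and $s=-\lambda-k$ contributes $e^{\mp\pi i\lambda}\Gamma(-2\lambda-k)x^{2\lambda+2k}/k!$. Reflection collapses each sum to an ordinary Bessel series
$$J_{\mp2\lambda}(2x)=\sum_{k\ge0}\frac{(-1)^kx^{\mp2\lambda+2k}}{k!\Gamma(k+1\mp2\lambda)},$$
giving
$$J(x;\pm,\pm,\lambda,-\lambda)=\frac{\pi\bigl(e^{\pm\pi i\lambda}J_{-2\lambda}(2x)-e^{\mp\pi i\lambda}J_{2\lambda}(2x)\bigr)}{\sin(2\pi\lambda)}.$$
The connection formulae \eqref{2eq: connection formulae} then identify this expression with $\pm\pi i\,e^{\pm\pi i\lambda}H^{(1,2)}_{2\lambda}(2x)$. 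For the generic step the formula holds by analytic continuation in $\lambda$ once proved for $2\lambda\notin\BZ$.

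The main obstacle is justifying the contour shift in the Hankel case: the extra factor $e^{\pm\pi is}$ destroys the exponential decay of the integrand on one side of the imaginary axis (the total exponential weight $e^{-\pi|\Im s|\mp\pi\Im s}$ is only $O(1)$ for $\mp\Im s>0$). I would handle this by truncating $\EC_{\ulambda}$ at $\Im s=\pm T$ with $T\to\infty$ chosen to avoid poles, closing through a rectangle whose left side lies at $\Re s=-N-\tfrac12$ for large $N$, and controlling the horizontal segments by combining Stirling's formula with the factor $x^{-2\Re s}$; on the non-decaying side one exploits that the algebraic prefactor $|\Im s|^{2\Re s-1}$ still tends to $0$ as $\Re s\to-\infty$ for fixed $x>0$, so the horizontal pieces vanish in the limit $N\to\infty$ and the residue sum converges absolutely.
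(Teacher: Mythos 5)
Your argument is correct, and it takes a genuinely different route from the paper's. The paper proves this by citing the known Mellin transforms of $J_\nu$, $Y_\nu$, $K_\nu$ from \cite[{\bf 6.561} 14--16]{G-R}, combining them with Euler reflection to match the gamma factor $G(s;\usigma,\ulambda)$, and then applying Mellin inversion; your proof instead evaluates the Mellin--Barnes integral \eqref{3eq: definition of J (x; sigma)} directly by residues at $s=\pm\lambda-k$, recognizes the resulting power series as the defining series of $I_{\pm 2\lambda}$ and $J_{\pm 2\lambda}$, and closes via the connection formulae \eqref{2eq: connection formulae} and \eqref{1eq: connection formula K}. Your residue computation is in fact the $n=2$ instance of a technique the paper does deploy, but elsewhere: in \S\ref{sec: Analytic continuation of J (x; usigma; ulambda)} the same residue expansion is carried out for general $n$ to establish Lemma \ref{lem: J(x; sigma; lambda) as a sum of J ell (x; sigma; lambda)}, from which Proposition \ref{prop: Classical Bessel functions} could also be read off. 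The trade-off is that the paper's proof is a table look-up and hence short, whereas yours is self-contained but must justify the left-shift of the contour to $-\infty$.

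On that last point, your heuristic for the $H$-case is slightly imprecise as stated: the factor $|\Im s|^{2\Re s - 1}$ must be weighed against the competing growth of $x^{-2\Re s}$, so for the left vertical piece at $\Re s = -N-\tfrac12$ the decay really comes from the reflection formula $\Gamma(s\mp\lambda)=\pi/\bigl(\sin(\pi(s\mp\lambda))\Gamma(1-s\pm\lambda)\bigr)$, where $|\Gamma(1-s\pm\lambda)|\gg N!$ dominates $x^{2N+1}$; and the horizontal pieces at $\Im s=\pm T$ vanish in the limit $T\to\infty$ (for each fixed $N$), not as $N\to\infty$. This is exactly the convergence discussion the paper gives when it passes from \eqref{3eq: definition of J (x; sigma)} to the Barnes integral \eqref{3eq: Barnes contour integral 1}, so your proof would be tightened by simply invoking that estimate rather than re-deriving it. With that small fix the argument is fully rigorous and the final bookkeeping of phases, residues and connection formulae all checks out, including the generic-to-general passage by analytic continuation in $\lambda$.
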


\begin{proof}
	The following formulae are derived from \cite[{\bf 6.561} 14-16]{G-R} along with Euler's reflection formula of the Gamma function. 
	\begin{equation*} 
	\pi \int_0^\infty J_{\nu} (2 \sqrt x) x^{s-1} d x = \Gamma \left( s + \frac \nu 2\right) \Gamma \left( s - \frac \nu 2\right) \sin \left(\pi \left(s - \frac \nu 2 \right) \right)
	\end{equation*}
	for $- \frac {1} 2 \Re  \nu< \Re s < \frac 1 4$,
	\begin{equation*} 
	- \pi \int_0^\infty  Y_{\nu} (2 \sqrt x) x^{s-1} d x  = \Gamma \left( s + \frac \nu 2\right) \Gamma \left( s - \frac \nu 2\right) \cos \left(\pi \left(s - \frac \nu 2 \right) \right)
	\end{equation*}
	for $ \frac {1} 2 |\Re  \nu|< \Re s < \frac 1 4$, and
	\begin{equation*} 
	2 \int_0^\infty  K_{\nu} (2 \sqrt x) x^{s-1} d x  = \Gamma \left( s + \frac \nu 2\right) \Gamma \left( s - \frac \nu 2\right)
	\end{equation*}
	for $ \Re s > \frac {1} 2|\Re  \nu|$. For $\Re s$   in the given ranges, these integrals are absolutely convergent.  It follows immediately from the Mellin inversion formula that
	\begin{align*}
	J (x; \pm, \pm, \lambda, - \lambda) & = \pm \pi i e^{\pm \pi i \lambda} (J_{2 \lambda} (2 x) \pm i Y_{2 \lambda} (2 x)), \hskip 10 pt |\Re \lambda| < \frac 1 4, \\
	J (x; \pm, \mp, \lambda, - \lambda) & = 2 e^{\mp \pi i \lambda} K_{2 \lambda} (2 x).
	\end{align*}
	In view of the analyticity in $\lambda$, the first formula remains valid even if $|\Re \lambda| \geq \frac 1 4$ by the theory of analytic continuation. 
	Finally, we conclude the proof by recollecting the connection formula $H_\nu^{(1, 2)} (x) = J_\nu (x) \pm i Y_\nu (x)$ in \eqref{2eq: connection formulae}.
\end{proof}

\subsection{Real Bessel Kernels of Rank One and Two}\label{sec: n=1,2, R}

For $n=1$, using \eqref{3eq: j (lambda, delta) and fundamental} and \eqref{3def: Bessel function, R, 0}, Proposition \ref{prop: n=1} implies that
\begin{equation*}
	j_{(0, 0)} (x ) = 2 \cos (2\pi x), \hskip 10 pt j_{(0, 1)} (x ) = 2 i \sin (2\pi x).
\end{equation*}
and
$$J_{(0, 0)} (x) =  e(x).$$
For $n = 2$, \eqref{3eq: Bessel kernel, R, connection} reads 
\begin{equation*}
J_{(\mu, - \mu, \delta, 0)} (\pm x) = J \lp 2\pi \sqrt x; +, \pm , \mu, - \mu \rp + (-)^{\delta } J \lp 2\pi \sqrt x; -, \mp, \mu, - \mu \rp,
\end{equation*}
for $x \in \BR _+$, $\mu \in \BC$ and $\delta \in \BZT$.
In view of Proposition \ref{prop: Classical Bessel functions}, for $x \in \BR _+$,  we have
\begin{equation*}
J_{(\mu, - \mu, \delta, 0)} (x)  = \left\{ 
\begin{split}
& - \frac {\pi } {\sin (\pi \mu)} \lp J_{2 \mu} (4\pi \sqrt x ) - J_{-2 \mu} (4\pi \sqrt x ) \rp, \hskip 10 pt \text{ if } \delta = 0, \\
&  \frac {\pi i} {\cos (\pi \mu)} \lp J_{2 \mu} (4\pi \sqrt x) + J_{-2 \mu} (4\pi \sqrt x ) \rp , \hskip 21 pt \text{ if } \delta = 1,
\end{split}
\right.
\end{equation*}
where the right hand side is replaced by its limit if  $2 \mu \in  \delta + 2\BZ$, and
\begin{equation*} 
J_{(\mu, - \mu, \delta, 0)} (-x) = 
\left\{ 
\begin{split}
& 4 \cos (\pi \mu) K_{2 \mu} (4 \pi \sqrt x ), \hskip 24 pt \text{ if } \delta = 0,\\
& - 4 i \sin (\pi \mu) K_{2 \mu} (4 \pi  \sqrt x ), \hskip 11 pt \text{ if } \delta = 1.
\end{split}
\right.
\end{equation*}
Observe that for $m \in \BN $
\begin{equation*}
J_{\lp \frac 12 m , - \frac 12 m ,\, \delta (m) + 1, 0 \rp} (x) = 2 \pi i^{m+1} J_m (4\pi \sqrt x), \hskip 10 pt J_{\lp \frac 12  m , - \frac 12 m ,\, \delta (m) + 1, 0 \rp} (-x) = 0.
\end{equation*}

\begin{rem}\label{2rem: n=2}
	Let $\mu = i t$ if $F$ is a Maa\ss  \hskip 3 pt  form of eigenvalue $\frac 1 4 + t^2$ and weight $k$, and let $\mu = \frac 12 ({k - 1})  $ if $F$ is a holomorphic cusp form of weight $k$.  At the real place $F$ may be parametrized by $(\umu, \udelta) = (\mu, - \mu, k (\mod 2), 0)$  so that $J _F = J_{(\umu, \udelta)}$. Hence the formulae of $J_F$ in {\rm (\ref{1eq: n=2, Maass form, Bessel functions, k even}, \ref{1eq: n=2, Maass form, Bessel functions, k odd}, \ref{1eq: n=2, holomorphic form, Bessel functions})} are justified.
\end{rem}

\subsection{Complex Bessel Kernels of Rank One}\label{sec: n=1, C}

Let $n = 1$. By (\ref{3eq: j (lambda, delta) and fundamental}) and (\ref{3eq: j mu m = j lambda delta}), it follows from Proposition \ref{prop: Classical Bessel functions} that 
\begin{equation}\label{3eq: j (0, m)}
j_{(0, m)} (x) = 2 \pi i^{|m| } J_{|m|} (4 \pi x) = 2 \pi i^{m } J_{m} (4 \pi x).
\end{equation}
where the second equality follows from the identity $J_{-m} (x) = (-)^m J_m (x)$.
	In view of \eqref{2eq: Bessel kernel over C, polar}, the following expansions {\rm (\cite[2.22 (3, 4)]{Watson})}
	\begin{align*}
	\cos (x \cos \phi) &= J_0(x) + 2 \sum_{d = 1}^\infty (-)^d J_{2 d} (x) \cos (2 d \phi),\\
	\sin (x \cos \phi) &=   2 \sum_{d = 0}^\infty (-)^d J_{2 d + 1} (x) \cos ((2 d + 1) \phi),
	\end{align*}
	imply
	\begin{equation*}
	J_{(0, 0)} \lp x e^{i \phi} \rp = \cos (4 \pi x \cos \phi) + i \sin (4 \pi x \cos \phi) = e(2 x \cos \phi ),
	\end{equation*}
	or equivalently,
	\begin{equation*}
	J_{(0, 0)} (z) = e(z + \overline z).
	\end{equation*}
	We remark that the two expansions \cite[2.22 (3, 4)]{Watson} can be incorporated into 
	\begin{equation*}
	e^{i x \cos \phi} = \sum_{m = -\infty}^{\infty}  i^{m}  J_m (x) e^{im \phi}.
	\end{equation*}

\section{Fourier Type Integral Transforms} 
\label{sec: Fourier type transforms}

In this section, we shall introduce an alternative perspective of Hankel transforms. 
We shall first show how to construct Hankel transforms from the Fourier transform and   Miller-Schmid transforms. From this, we shall express the Hankel transforms $\Hsl$, $\Hld$ and $\Hmum$ in terms of certain Fourier type integral transforms, assuming that the components of $\Re \ulambda$ or $\Re \umu$ are {strictly} decreasing.


\subsection{The Fourier Transform and Rank-One Hankel Transforms}

By Proposition \ref{prop: n=1}, $J(x; \pm, 0) = e^{\pm i x}$ so that the Hankel transform $\EH_{(\pm, 0)}$ is in essence a real Fourier transform on $\BR_+$. Slightly more generally, we want to consider the transform $ \ES_{(\varsigma, \lambda)} \upsilon (x) = x^{\lambda}  \EH_{(\varsigma, \lambda)} \upsilon (x)$.

\begin{lem}\label{5lem: Hankel R+}
	Let $(\varsigma, \lambda) \in \{+, -\} \times \BC $.
	For $\upsilon \in  \SS (\BR_+) $, define $\ES_{(\varsigma, \lambda)} \upsilon (x) =    x^{ \lambda}  \EH_{(\varsigma, \lambda)} \upsilon (x)$. Then 
	\begin{equation}\label{5eq: Mellin S, R+}
	\EM \ES_{(\varsigma, \lambda)} \upsilon (s) = G  (s, \varsigma) \EM  \upsilon (1-s-\lambda), 
	\end{equation}
	and $\ES_{(\varsigma, \lambda)}$ sends $ \SS (\BR_+)$   into $\SS (\overline \BR_+)$. Furthermore, 
	\begin{equation*}
	\ES_{(\varsigma, \lambda)} \upsilon (x) =  \int_{\BR_+}   y^{-\lambda} \upsilon (y) e^{\varsigma i x y} d y   .
	\end{equation*}
	
\end{lem}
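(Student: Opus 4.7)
The strategy is to derive the three claims in the order stated, using the rank-one cases of results already established for the general Hankel transform, together with the explicit rank-one identification $J(x;\varsigma,0)=e^{\varsigma i x}$ from Proposition \ref{prop: n=1}.

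First, to obtain the Mellin identity \eqref{5eq: Mellin S, R+}, I would start from the defining relation \eqref{3eq: Hankel transform identity 0, R+} of Proposition \ref{3prop:H sigma lambda} specialized to $n=1$:
\begin{equation*}
\EM \EH_{(\varsigma,\lambda)}\upsilon(s) \,=\, G(s;\varsigma,\lambda)\,\EM\upsilon(1-s) \,=\, G(s-\lambda,\varsigma)\,\EM\upsilon(1-s).
\end{equation*}
The elementary shift $\EM(x^{\lambda}f)(s)=\EM f(s+\lambda)$ applied to $\ES_{(\varsigma,\lambda)}\upsilon(x)=x^{\lambda}\EH_{(\varsigma,\lambda)}\upsilon(x)$ then gives \eqref{5eq: Mellin S, R+} directly, with no convergence subtleties since $\EM\upsilon\in\Hrd$ by Corollary \ref{cor: Mellin, Schwartz}(1) and $G(s,\varsigma)=\Gamma(s)e(\pm s/4)$ is meromorphic of moderate growth on vertical strips.

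Next, for the mapping property $\ES_{(\varsigma,\lambda)}\upsilon\in\SS(\overline\BR_+)$, I would appeal to Lemma \ref{2lem: Mellin isomorphism, Msis lambda j} in the case $\lambda=0$, $j=0$, which identifies $\SS(\overline\BR_+)$ with $\Msis^{0,0}$ via $\EM$. The right-hand side of \eqref{5eq: Mellin S, R+} is meromorphic with at most simple poles at the nonpositive integers (inherited from $\Gamma(s)$), vanishing of higher-order poles being automatic in rank one, and its rapid decay on vertical strips follows from combining the moderate growth of $G(s,\varsigma)$ (via \eqref{1eq: vertical bound, G (s; sigma, lambda)} or \eqref{1eq: Stirling's formula}) with the uniform rapid decay of $s\mapsto\EM\upsilon(1-s-\lambda)$. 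Hence the product belongs to $\Msis^{0,0}$, and the conclusion follows.

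Finally, for the integral representation, I would combine Proposition \ref{prop: n=1} with Lemma \ref{3lem: normalize J(x; sigma, lambda)} (the normalization rule $J(x;\varsigma,\lambda-\lambda)=x^{\lambda}J(x;\varsigma,\lambda)$ in the rank-one case) to obtain the closed form
\begin{equation*}
J(x;\varsigma,\lambda) \,=\, x^{-\lambda}\,e^{\varsigma i x}, \qquad x\in\BR_+.
\end{equation*}
Substituting into the kernel representation \eqref{2eq: Psi (x; sigma) as Hankel transform} for $n=1$ yields
\begin{equation*}
\EH_{(\varsigma,\lambda)}\upsilon(x) \,=\, \int_0^{\infty}\upsilon(y)\,(xy)^{-\lambda}e^{\varsigma i xy}\,dy \,=\, x^{-\lambda}\int_0^{\infty}y^{-\lambda}\upsilon(y)\,e^{\varsigma i xy}\,dy,
\end{equation*}
from which the stated formula for $\ES_{(\varsigma,\lambda)}\upsilon$ follows upon multiplying by $x^{\lambda}$. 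The mildly delicate point is justifying absolute convergence of this integral for arbitrary $\lambda\in\BC$: near $y=0$, the factor $y^{-\lambda}$ is absorbed by the rapid decay of $\upsilon\in\SS(\BR_+)$ at the origin, while near infinity $|e^{\varsigma i xy}|=1$ and $\upsilon$ decays rapidly. Thus the integrand is in $L^{1}(\BR_+,dy)$ for every $\lambda$, and no contour-shift argument is needed at this final step --- the integral is honestly convergent, not merely formal.
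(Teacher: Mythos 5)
Your proposal is correct and follows what is evidently the intended argument; the paper states this lemma without a written proof, and your reconstruction uses exactly the ingredients the paper makes available for it: the $n=1$ specialization of the defining Mellin identity \eqref{3eq: Hankel transform identity 0, R+}, the Mellin-space isomorphism from Lemma \ref{2lem: Mellin isomorphism, Msis lambda j} with $(\lambda,j)=(0,0)$, and the kernel formula \eqref{2eq: Psi (x; sigma) as Hankel transform} combined with $J(x;\varsigma,0)=e^{\varsigma i x}$ and the normalization $J(x;\varsigma,\lambda)=x^{-\lambda}e^{\varsigma ix}$. For the mapping property one could equally invoke Proposition \ref{3prop:H sigma lambda} directly, which for $n=1$ already says $\EH_{(\varsigma,\lambda)}\upsilon\in x^{-\lambda}\SS(\overline\BR_+)$, but your route through $\Msis^{0,0}$ is equivalent and equally short.
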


For either $\BF = \BR$ or $\BF = \BC$, we have seen in \S \ref{sec: n=1,2, R} and \ref{sec: n=1, C} that $J_{(0, 0)}$ is exactly the inverse Fourier kernel, namely
$$J_{(0, 0)} (x) = e (\Lambda (x)), \hskip 10 pt x \in \BF,$$ with $\Lambda (x)$ defined by \eqref{1eq: Lambda (x)}. Therefore, in view of Proposition \ref{3prop: properties of J, R} (3) and \ref{3prop: properties of J, C} (3), $\EH_{(0, 0)} $ is precisely the inverse Fourier transform over the Schwartz space $\Ssis^{(0, 0)} (\BFx) = \SS (\BF)$. The following lemma is a consequence of Theorem  \ref{3prop: H (lambda, delta)} and   \ref{3prop: H (mu, m)}.

\begin{lem}\label{5lem: Fourier}
	Let $\upsilon \in \SS (\BF)$. If $\BF = \BR$, then the Fourier transform $\widehat \upsilon $ of $\upsilon$ can be determined by the following two identities
	\begin{equation*}
	\EM_{\delta} \widehat \upsilon (s) = (-)^\delta G_{\delta} (s) \EM_{\delta} \upsilon (1-s), \hskip 10 pt \delta \in \BZT.
	\end{equation*}
	If $\BF = \BC$, then the Fourier transform $\widehat \upsilon $ of $\upsilon$ can be determined by the following sequence of identities
	\begin{equation*}
	\EM_{- m} \widehat \upsilon (2s) = (-)^m G_{m} (s) \EM_{m} \upsilon (2 (1-s)), \hskip 10 pt m \in \BZ.
	\end{equation*}
\end{lem}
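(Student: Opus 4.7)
The plan is to realize the Fourier transform as a reflection of the rank-one Hankel transform of index $(0,0)$ and then to deduce the identities directly from Theorem \ref{3prop: H (lambda, delta)} or Theorem \ref{3prop: H (mu, m)} with $n=1$ and all indices set to zero.

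First, I would invoke the computations in \S \ref{sec: n=1,2, R} and \S \ref{sec: n=1, C} to record that $J_{(0,0)}(x) = e(\Lambda(x))$ both on $\BR$ and on $\BC$. With $n=1$, $\umu = 0$, $\udelta = 0$ (respectively $\um = 0$), the space $\Ssis^{(0,0)}(\BF^\times)$ reduces to $\SS(\BF)$, so Proposition \ref{3prop: properties of J, R} (3) (respectively \ref{3prop: properties of J, C} (3)) gives the integral representation $\EH_{(0,0)}\upsilon(x) = \int_\BF \upsilon(y) e(\Lambda(xy))\, dy$, which is the inverse Fourier transform of $\upsilon$. Combined with the Fourier inversion formula \eqref{1eq: Fourier, C}, this yields the key relation
\begin{equation*}
\widehat\upsilon(x) = \EH_{(0,0)}\upsilon(-x), \qquad x \in \BF.
\end{equation*}

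Second, I would substitute this into the Mellin integrals and change variables. For $\BF = \BR$, applying $x \mapsto -x$ in $\EM_\delta \widehat\upsilon(s) = \int_{\BRx} \widehat\upsilon(x) \sgn(x)^\delta |x|^s d^\times x$ and using $\sgn(-x)^\delta = (-)^\delta \sgn(x)^\delta$ produces $(-)^\delta \EM_\delta \EH_{(0,0)}\upsilon(s)$. For $\BF = \BC$, applying $z \mapsto -z$ in $\EM_{-m}\widehat\upsilon(2s) = \int_{\BCx} \widehat\upsilon(z) [z]^{-m}\|z\|^s d^\times z$ and using $[-z]^{-m} = e^{-im\pi}[z]^{-m} = (-)^m [z]^{-m}$, together with the invariance of $\|z\|$ and $d^\times z$ under $z \mapsto -z$, produces $(-)^m \EM_{-m}\EH_{(0,0)}\upsilon(2s)$.

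Finally, I would apply Theorem \ref{3prop: H (lambda, delta)} with $n=1$, $\umu = 0$, $\udelta = 0$, observing $G_{(0,\delta\ue^1)} = G_\delta$, to conclude $\EM_\delta \EH_{(0,0)}\upsilon(s) = G_\delta(s)\EM_\delta \upsilon(1-s)$; and similarly Theorem \ref{3prop: H (mu, m)} with $\um = 0$, observing $G_{(0, m\ue^1)} = G_m$, to conclude $\EM_{-m}\EH_{(0,0)}\upsilon(2s) = G_m(s)\EM_m \upsilon(2(1-s))$. Inserting these into the sign-tracked identities of the previous step yields the two displayed formulae of the lemma. There is no genuine obstacle; the entire argument is bookkeeping, and the only point meriting care is the verification of the change-of-variable signs $(-)^\delta$ and $(-)^m$.
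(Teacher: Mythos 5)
Your argument is correct and follows the same route the paper has in mind: it observes, just before the lemma, that $\EH_{(0,0)}$ is the inverse Fourier transform over $\Ssis^{(0,0)}(\BFx) = \SS(\BF)$ and then cites Theorems \ref{3prop: H (lambda, delta)} and \ref{3prop: H (mu, m)}. The only content you add explicitly — the sign-tracking via $\widehat\upsilon(x) = \EH_{(0,0)}\upsilon(-x)$ and the substitution $x \mapsto -x$ producing $(-)^\delta$ and $(-)^m$ — is exactly the bookkeeping the paper leaves implicit, and you verify it correctly.
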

\delete{
	More generally, one may express any rank-one Hankel transform in terms of the Fourier transform.
	
	\begin{lem} 
		Let $(\mu, \delta) \in \BC \times \BZT $ and $(\mu, m) \in \BC \times \BZ $.
		
		{\rm (1).} If $\BF = \BR$, then $$J_{(\mu, \delta)} (x) = \sgn(x)^\delta |x|^{-\mu} e (x),$$ and, for  $\upsilon \in \sgn (x)^{\delta} |x|^{\mu} \SS (\BR) $, we have
		\begin{align*}
		\EH_{(\mu, \delta)} \upsilon (x) & =  \sgn(x)^{\delta} |x|^{- \mu} \int_{\BRx} \upsilon (y) \cdot \sgn (y)^\delta |y|^{-\mu} e (x y) d y \\
		& =  \sgn(x)^{\delta} |x|^{- \mu} \EF \lp \sgn^\delta |\ |^{-\mu} \upsilon \rp (-x).
		\end{align*}

		{\rm (2).} If $\BF = \BC$, then
		$$J_{(\mu, m)} (z) = [z]^{-m} \|z\|^{-\mu} e (z + \overline z),$$
		and, for  $\upsilon \in [z]^{m} \|z\|^{\mu} \SS (\BC) $, we have
		\begin{align*}
		\EH_{(\mu, m)} \upsilon (z) & =   [z]^{-m} |z|^{- \mu} \int_{\BCx} \upsilon (u) \cdot [u]^{-m} \|u\|^{-\mu} e (z u + \overline {z u}) d u \\
		& =  [z]^{-m} \|z\|^{- \mu} \EF \lp [\, ]^{-m} \|\, \|^{-\mu} \upsilon \rp (-z).
		\end{align*}

	\end{lem}
	
	It is convenient for our purpose to renormalize the rank-one Hankel transforms as follows.
}

It is convenient for our purpose to also introduce the renormalized rank-one Hankel transforms $\ES_{(\mu, \epsilon)}$ and $\ES_{(\mu, k)}$ as follows.

\begin{lem}\label{5cor: renormalize Hankel}
	Let $(\mu, \epsilon) \in \BC \times \BZT$ and $(\mu, k) \in \BC \times \BZ$. 
	
	{\rm (1).} For $\upsilon (x) \in \sgn (x)^{\epsilon} |x|^{\mu} \SS (\BR) $, define $\ES_{(\mu, \epsilon)} \upsilon (x) =    |x|^{ \mu}  \EH_{(\mu, \epsilon)} \upsilon (x)$. Then 
	\begin{equation}\label{5eq: Mellin S, R}
	\EM_{\delta} \ES_{(\mu, \epsilon)} \upsilon (s) = G_{\epsilon + \delta} (s) \EM_{\delta} \upsilon (1-s-\mu), \hskip 10 pt \delta \in \BZT,
	\end{equation}
	and $\ES_{(\mu, \epsilon)}$ sends $\sgn (x)^{\epsilon} |x|^{\mu} \SS (\BR)$   onto $\sgn (x)^{\epsilon} \SS (\BR)$ bijectively. Furthermore, 
	\begin{equation*}
	\ES_{(\mu, \epsilon)} \upsilon (x) =  \sgn(x)^{\epsilon} \int_{\BRx} \sgn (y)^\epsilon |y|^{-\mu} \upsilon (y) e (x y) d y   
	= \sgn (x)^\epsilon  \EF \varphi \, (-x),
	\end{equation*}
	with $\varphi (x) = \sgn(x)^\epsilon |x |^{-\mu} \upsilon (x) \in \SS (\BR).$
	
	{\rm(2).} For $\upsilon (z) \in [z]^{ k} \|z\|^{\mu} \SS (\BC) $, define $\ES_{(\mu, k)} \upsilon (z) =    \|z\|^{ \mu}  \EH_{(\mu, k)} \upsilon (z)$. Then 
	\begin{equation}\label{5eq: Mellin S, C}
	\EM_{-m} \ES_{(\mu, k)} \upsilon (2 s) = G_{k+m} (s) \EM_{m} \upsilon (2(1-s-\mu)), \hskip 10 pt m \in \BZ, 
	\end{equation} 
	and $\ES_{(\mu, k)}$ sends $[z]^{ k} \|z\|^{\mu} \SS (\BC) $   onto $ [z]^{- k} \SS (\BC)$ bijectively. Furthermore,
	\begin{equation*}
	\begin{split}
	\ES_{(\mu, k)} \upsilon (z) =   [z]^{-k} \int_{\BCx} [u]^{-k} \|u\|^{-\mu} \upsilon (u) e (z u + \overline {z u}) d u
	= [z]^{-k}  \EF \varphi \, (-z),
	\end{split}
	\end{equation*}
	with $ \varphi (z) =  [z]^{-k} \| z \|^{-\mu} \upsilon (z) \in \SS (\BC) $.
	
\end{lem}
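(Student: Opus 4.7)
The plan is to prove the two parts in parallel, since they are formally identical modulo the obvious dictionary between real and complex parameters. I would split each assertion into three steps: the Mellin identity, the mapping property, and the integral representation.

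First I would derive the Mellin identities \eqref{5eq: Mellin S, R} and \eqref{5eq: Mellin S, C} directly from the definitions of $\ES_{(\mu,\epsilon)}$ and $\ES_{(\mu,k)}$. In rank $n=1$ the gamma factor $G_{(\mu,\,\epsilon+\delta\ue^n)}(s)$ of Theorem \ref{3prop: H (lambda, delta)} reduces to $G_{\epsilon+\delta}(s-\mu)$, and multiplication by $|x|^\mu$ shifts the Mellin transform by $\mu$, i.e.\ $\EM_\delta(|x|^\mu f)(s)=\EM_\delta f(s+\mu)$. Combining these two facts gives \eqref{5eq: Mellin S, R} at once; the complex case is analogous using Theorem \ref{3prop: H (mu, m)} and the shift $\EM_{-m}(\|z\|^\mu f)(2s)=\EM_{-m}f(2s+2\mu)$.

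For the mapping property I would unwind the definitions \eqref{3eq: Ssis (lambda, delta), R} and \eqref{3eq: Ssis (mu, m), C} in the rank-one case: there are no logarithmic factors, so
\[
\Ssis^{(-\mu,\epsilon)}(\BRx)=\sgn(x)^{\epsilon}|x|^{\mu}\SS(\BR),\qquad \Ssis^{(\mu,\epsilon)}(\BRx)=\sgn(x)^{\epsilon}|x|^{-\mu}\SS(\BR),
\]
and similarly over $\BC$. Theorem \ref{3prop: H (lambda, delta)} (resp.\ Theorem \ref{3prop: H (mu, m)}) then gives a bijection $\EH_{(\mu,\epsilon)}\colon\sgn(x)^{\epsilon}|x|^{\mu}\SS(\BR)\to\sgn(x)^{\epsilon}|x|^{-\mu}\SS(\BR)$; multiplying by $|x|^\mu$ yields the claimed bijection $\ES_{(\mu,\epsilon)}\colon\sgn(x)^{\epsilon}|x|^{\mu}\SS(\BR)\to\sgn(x)^{\epsilon}\SS(\BR)$, and likewise for the complex case.

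Finally, for the integral formulae I would argue by uniqueness of the Mellin characterisation. Setting $\varphi(x)=\sgn(x)^\epsilon|x|^{-\mu}\upsilon(x)\in\SS(\BR)$, one has $\EF\varphi(-x)=\int_{\BRx}\sgn(y)^\epsilon|y|^{-\mu}\upsilon(y)\,e(xy)\,dy$ from the definition \eqref{1eq: Fourier, R} of the Fourier transform. The substitution $x\mapsto -x$ gives $\EM_\delta(\EF\varphi(-\cdot))(s)=(-)^\delta\EM_\delta\EF\varphi(s)$, and invoking Lemma \ref{5lem: Fourier} together with the identities $\EM_{\delta}(\sgn(x)^\epsilon f)=\EM_{\delta+\epsilon}f$ and $\EM_{\delta+\epsilon}\varphi(1-s)=\EM_\delta\upsilon(1-s-\mu)$ yields
\[
\EM_\delta\bigl(\sgn(x)^\epsilon\EF\varphi(-x)\bigr)(s)=G_{\epsilon+\delta}(s)\,\EM_\delta\upsilon(1-s-\mu),
\]
which agrees with \eqref{5eq: Mellin S, R}; by Mellin inversion $\ES_{(\mu,\epsilon)}\upsilon=\sgn(x)^\epsilon\EF\varphi(-\cdot)$. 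The complex case proceeds identically, replacing $\sgn(x)^\epsilon$ by $[z]^{-k}$ and using $\EM_{-m}([z]^{-k}f)=\EM_{-m-k}f$. The only minor subtlety is keeping track of sign and parity indices when composing the Fourier transform with multiplication by $\sgn(x)^\epsilon$ or $[z]^{-k}$; this is routine bookkeeping rather than a genuine obstacle.
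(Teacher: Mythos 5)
Your proposal is correct and follows the same route the paper intends (the lemma is stated without proof precisely because it reduces, as you show, to specializing Theorem \ref{3prop: H (lambda, delta)} and Theorem \ref{3prop: H (mu, m)} to rank one, applying the Mellin shift under multiplication by $|x|^\mu$ or $\|z\|^\mu$, and matching the result against the Mellin characterisation of $\EF\varphi(-\cdot)$ from Lemma \ref{5lem: Fourier}). One small notational slip — the intermediate Mellin transform of $\EF\varphi(-\cdot)$ should carry the index $\delta+\epsilon$ rather than $\delta$ — does not affect the argument, since you use the correct index when invoking Lemma \ref{5lem: Fourier} and the sign $(-1)^{\delta+\epsilon}$ cancels with the one from that lemma.
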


\delete{According to \eqref{3def: Bessel function, R, 0} and \eqref{2eq: Bessel kernel over C, polar}, one has the decomposition
	\begin{equation*}
	J_{(0, 0)} \lp \pm x \rp = \frac 1 2 \lp j_{(0, 0)} (x)  \pm j_{(0, 1)} (x) \rp, \hskip 10 pt x\in \BR _+, 
	\end{equation*}
	and the series expansion
	\begin{equation*}
	J_{(0, 0)} \lp x  e^{i \phi} \rp = \frac 1 {2 \pi} \sum_{m \in \BZ} j_{(0, m)} (x)  e^{i m \phi}, \hskip 10 pt x\in \BR _+, \ \phi \in \BR / 2 \pi \BZ.
	\end{equation*}
	Recall the formulae of $j_{(0, \delta)}$, $\delta \in \BZT$, and $j_{(0, m)}$, $m \in \BZ$, that is, \eqref{3eq: j (0, delta)} in Example \ref{ex: Bessel kernel, real, 0, n=1, 2} and \eqref{3eq: j (0, m)} in Example \ref{ex: Bessel kernel j 0 m, complex, n=1}.  Some calculations yield the following lemma.
}

\begin{lem}\label{5lem: S, R and C}
	Let $(\mu, \epsilon) \in \BC \times \BZT$ and $(\mu, k) \in \BC \times \BZ$. 
	
	{\rm (1).} Let $\delta \in \BZT$. Suppose that  $\varphi (x) \in x^{ \mu} \SS_{\delta + \epsilon} (\overline \BR_+)$ and $\upsilon (x) =  \sgn(x)^{\delta}  \varphi (|x|) $. Then
	\begin{equation*}
	\begin{split}
	\ES_{(\mu, \epsilon)} \upsilon (\pm x) &=  (\pm)^{\delta } \int_{\BR_+} y^{- \mu} \varphi (y) j_{(0, \delta + \epsilon)} (x y) d y \\
	& = 
	\begin{cases}
	\ds (\pm)^{\epsilon} 2 \int_{\BR_+} y^{- \mu} \varphi (y) \cos (x y) d y, &  \text { if } \delta = \epsilon,\\
	\ds (\pm)^{\epsilon + 1} 2 i \int_{\BR_+} y^{- \mu} \varphi (y) \sin (x y) d y, &  \text { if } \delta = \epsilon + 1.
	\end{cases}
	\end{split}
	\end{equation*}
	The transform $\ES_{(\mu, \epsilon)}$ is a bijective map from  $\sgn (x)^{\epsilon} |x|^{\mu} \SS_{\delta} (\BR)$ onto $\sgn (x)^{\epsilon} \SS_{\delta} (\BR)$.
	
	{\rm (2).} Let $m \in \BZ$.  Suppose that  $\varphi (x) \in x^{2 \mu} \SS_{-m-k} (\overline \BR_+)$ and $\upsilon (z) =  [z]^{- m }  \varphi (|z|) $. Then
	\begin{equation*}
	\begin{split}
	\ES_{(\mu, k)} \upsilon \lp x e^{i \phi} \rp & = 2 e^{ i m \phi} \int_{\BR_+} y^{1 - 2 \mu} \varphi (y) j_{(0, m + k)} (x y)   d y \\
	& =  4 \pi i^{m+k} e^{ i m \phi} \int_{\BR_+}  y^{1 - 2 \mu} \varphi (y) J_{m+k} (4 \pi x y) d y.
	\end{split}
	\end{equation*}
	The transform  $\ES_{(\mu, k)}$ is a bijective map from  $[z]^{ k} \|z\|^{\mu} \SS_m (\BC) $ onto $ [z]^{- k} \SS_{-m} (\BC)$.
\end{lem}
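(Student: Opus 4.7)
The plan is to reduce both parts of the lemma to the rank-one Hankel transforms $\hh_{(\mu, \epsilon+\delta)}$ and $\hh_{(\mu, k+m)}$ defined in Propositions \ref{3prop: h (lambda, delta)} and \ref{3prop: h (mu, m)}, and then to substitute the closed-form expressions of the kernels $j_{(0, \epsilon+\delta)}$ and $j_{(0, k+m)}$ computed in \S \ref{sec: classical cases}. Once one has Corollaries \ref{3cor: H = h, R} and \ref{3cor: H = h, C} available, the proof becomes essentially mechanical.

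For part (1), I would first observe that with $\upsilon(x) = \sgn(x)^\delta \varphi(|x|)$ and $\varphi \in x^\mu \SS_{\delta+\epsilon}(\overline \BR _+)$, the identification \eqref{3eq: Ssis (lambda, delta) delta = Ssis2} places $\upsilon$ in $\Ssis^{(-\mu, \epsilon), \delta}(\BRx) = \sgn(x)^\delta \Ssiss^{-\mu - (\epsilon+\delta)}(\BR_+)$, so Corollary \ref{3cor: H = h, R} gives
\begin{equation*}
\EH_{(\mu, \epsilon)} \upsilon(\pm x) = (\pm)^\delta \hh_{(\mu, \epsilon+\delta)} \varphi(x), \hskip 10 pt x \in \BR_+.
\end{equation*}
In rank one the condition \eqref{3eq: condition on (lambda, delta), 0} reduces to $2(\epsilon + \delta) + 1 > 0$, which is automatic, so the kernel formula \eqref{3eq: integral kernel, R 0} together with the normalization $j_{(\mu, \epsilon+\delta)}(x) = x^{-\mu} j_{(0, \epsilon+\delta)}(x)$ from \eqref{3eq: normalize j (lambda, delta)} yields
\begin{equation*}
\hh_{(\mu, \epsilon+\delta)} \varphi(x) = x^{-\mu} \int_0^\infty y^{-\mu} \varphi(y) j_{(0, \epsilon+\delta)}(xy) d y.
\end{equation*}
Multiplying by $|\pm x|^\mu$ gives the first displayed formula for $\ES_{(\mu, \epsilon)}\upsilon(\pm x)$. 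The trigonometric versions then follow by substituting the explicit expressions for $j_{(0, 0)}$ and $j_{(0, 1)}$ computed in \S \ref{sec: n=1,2, R}. The bijectivity on the subspaces of definite parity is inherited from the bijection in Lemma \ref{5cor: renormalize Hankel} (1) by decomposing into even and odd parts via \eqref{1eq: upsilon delta}.

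Part (2) proceeds in complete parallel. With $\upsilon(z) = [z]^{-m} \varphi(|z|)$ and $\varphi \in x^{2\mu} \SS_{-m-k}(\overline \BR_+) = \Ssiss^{-2\mu - |m+k|}(\BR _+)$, Corollary \ref{3cor: H = h, C} gives
\begin{equation*}
\EH_{(\mu, k)} \upsilon(x e^{i\phi}) = e^{im\phi} \hh_{(\mu, k+m)} \varphi(x).
\end{equation*}
The kernel formula \eqref{3eq: h mu m = int of j mu m} (applicable since \eqref{3eq: condition on (mu, m), 0} is again trivially satisfied in rank one), combined with the normalization $j_{(\mu, k+m)}(x) = x^{-2\mu} j_{(0, k+m)}(x)$ from \eqref{3eq: normalize j (mu, m)} and multiplication by $\|z\|^\mu = x^{2\mu}$, produces
\begin{equation*}
\ES_{(\mu, k)} \upsilon(xe^{i\phi}) = 2 e^{im\phi} \int_0^\infty y^{1-2\mu} \varphi(y) j_{(0, k+m)}(xy) d y.
\end{equation*}
The second equality in the statement then follows from $j_{(0, k+m)}(x) = 2\pi i^{m+k} J_{m+k}(4\pi x)$ in \eqref{3eq: j (0, m)}. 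The bijection assertion is obtained by restricting the bijection of Lemma \ref{5cor: renormalize Hankel} (2) to a single Fourier mode, using the isomorphism of Lemma \ref{2lem: Ssis to Msis, C} to recover the source and target from their Fourier components.

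The main obstacle is really only careful bookkeeping of the various parity and Fourier-index shifts — the interplay of $\delta$, $\epsilon$ and $\epsilon+\delta$ in the real case and of $m$, $-m$ and $k+m$ in the complex case — together with the verification that the index conditions \eqref{3eq: condition on (lambda, delta), 0} and \eqref{3eq: condition on (mu, m), 0} are automatically met in the rank-one setting so that no extra hypothesis on $\mu$ is needed.
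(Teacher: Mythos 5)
Your reduction via Corollaries \ref{3cor: H = h, R} and \ref{3cor: H = h, C}, the kernel formulae \eqref{3eq: integral kernel, R 0} and \eqref{3eq: h mu m = int of j mu m}, and the normalizations \eqref{3eq: normalize j (lambda, delta)}, \eqref{3eq: normalize j (mu, m)} is correct, the rank-one index conditions \eqref{3eq: condition on (lambda, delta), 0} and \eqref{3eq: condition on (mu, m), 0} are indeed vacuous, and the restriction of the bijections of Lemma \ref{5cor: renormalize Hankel} to fixed parity or Fourier mode via \eqref{5eq: Mellin S, R} and \eqref{5eq: Mellin S, C} settles the last assertions. The paper does not write out a proof of this lemma (it is stated after a deleted remark that it follows by ``some calculations'' from the kernel formulae of \S\ref{sec: classical cases}), and your argument fills in those details along the intended lines; an equally direct route, which avoids the auxiliary $\hh$-transforms, is to substitute $\upsilon(x) = \sgn(x)^\delta\varphi(|x|)$ directly into the Fourier integral of Lemma \ref{5cor: renormalize Hankel} and split the integral over $\BR^\times$ by sign of $y$. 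One bookkeeping note: carrying out your final substitution literally, since $j_{(0,0)}(x) = 2\cos(2\pi x)$ and $j_{(0,1)}(x) = 2i\sin(2\pi x)$, yields $\cos(2\pi xy)$ and $\sin(2\pi xy)$ in the trigonometric forms of part (1), so the displayed $\cos(xy)$, $\sin(xy)$ in the statement are missing the factor $2\pi$ — a typo in the source, not a flaw in your reasoning.
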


\subsection{Miller-Schmid Transforms}

Following  \cite[\S 6]{Miller-Schmid-2004}, we now define certain transforms over  $\BR_+$, $\BR$ and $\BC$. Such transforms over $\BR$ have played an important role in the proof of the \Voronoi summation formula for $\GL_n (\BZ)$ in their subsequent work \cite{Miller-Schmid-2006, Miller-Schmid-2009}.

\vskip 5 pt

\subsubsection{The Miller-Schmid Transform $\ET_{(\varsigma, \lambda)}$}

\begin{lem} \label{5lem: Miller-Schmid transforms, R+}
	Let $(\varsigma, \lambda) \in \{+, -\} \times \BC $.
	
	{\rm (1).}
	For any $ \upsilon \in \Ssis (\BR_+)$ there is a unique function $\Upsilon \in \Ssis (\BR_+)$ satisfying the following identity,
	\begin{equation}\label{5eq: Miller-Schmid, R+}
	\EM \Upsilon (s ) = G  (s, \varsigma) \EM \upsilon (s + \lambda).
	\end{equation}
	We  write $ \Upsilon = \ET_{(\varsigma, \lambda)} \upsilon $ and call $\ET_{(\varsigma, \lambda)}$ the Miller-Schmid transform over $\BR_+$ of index $(\varsigma, \lambda)$.
	
	{\rm (2).}
	Let $\nu \in \BC$. Suppose  $\upsilon (x) \in   x^{- \nu} (\log x)^j \SS (\overline \BR_+)$. If  $\Re \nu <  \Re \lambda - \frac 1 2$, then
	\begin{equation}\label{5eq: Miller-Schmid and Fourier, R+}
	\begin{split}
	\ET_{(\varsigma, \lambda)} \upsilon (x)
    =  \int_{\BR_+} y^{-\lambda} \upsilon \lp y\-\rp  e^{\varsigma i x y} d^\times y .
	\end{split}
	\end{equation}
	
	{\rm (3).} Suppose that $\Re \nu < \Re \lambda$. Then the integral in \eqref{5eq: Miller-Schmid and Fourier, R+} is absolutely convergent and \eqref{5eq: Miller-Schmid and Fourier, R+} remains valid for any  $\upsilon (x) \in x^{- \nu} (\log x)^j \SS (\overline \BR_+)$.
	
	{\rm (4).}
	Suppose that $\Re \lambda > 0$. Define the function space
	\begin{equation*}
	\Ssiss (\BR_+) =    \sum_{\Re \nu\, \leq 0 } \ \sum_{j \in \BN}   x^{- \nu} (\log x)^j \SS (\overline \BR_+).
	\end{equation*}
	Then the transform $\ET_{(\varsigma, \lambda)}$ sends $\Ssiss (\BR_+)$ into itself. Moreover, \eqref{5eq: Miller-Schmid and Fourier, R+} also holds true for any $\upsilon \in \Ssiss (\BR_+)$, wherein the integral absolutely converges.
\end{lem}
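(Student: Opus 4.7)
Part (1) is essentially bookkeeping with Lemma~\ref{2lem: Sis and Msis, R+}: since $\upsilon \in \Ssis(\BR_+)$, that lemma places $\EM\upsilon \in \Msis$, and the translate $\EM\upsilon(s+\lambda)$ stays in $\Msis$ (with poles shifted). Multiplication by $G(s,\varsigma) = \Gamma(s)e(\varsigma s/4)$, which has simple poles at $-\BN$ and only polynomial growth on vertical strips by Stirling's formula \eqref{1eq: Stirling's formula}, produces another element of $\Msis$: the resulting poles form a finite union of sets of the form $\lambda' - \BN$ with uniformly bounded orders, and rapid vertical decay is inherited from $\EM\upsilon(s+\lambda)$. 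Inverting the Mellin isomorphism of Lemma~\ref{2lem: Sis and Msis, R+} yields the unique $\Upsilon \in \Ssis(\BR_+)$.

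For (2), my plan is to start from the Mellin inversion $\Upsilon(x) = (2\pi i)^{-1} \int_{\EC} G(s,\varsigma)\EM\upsilon(s+\lambda)x^{-s}\,ds$ on a suitable contour, substitute the absolutely convergent integral $\EM\upsilon(s+\lambda) = \int_0^\infty \upsilon(u) u^{s+\lambda}\,d^\times u$ (valid for $\Re s > \Re\nu - \Re\lambda$), interchange the two integrations, and recognize the resulting inner $s$-integral $(2\pi i)^{-1} \int_{\EC} G(s,\varsigma)(x/u)^{-s}\,ds$ as $e^{\varsigma i x/u}$ via Proposition~\ref{prop: n=1}. The substitution $u = 1/y$ then converts the outer integral into \eqref{5eq: Miller-Schmid and Fourier, R+}.

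The justification of the swap is the main obstacle of the whole lemma, and it dictates the $1/2$ gap in hypothesis (2). Stirling's formula gives only $|G(\sigma + it, \varsigma)| \lll (|t|+1)^{\sigma - 1/2}$ along the vertical line $\Re s = \sigma$ — the exponential decay of $\Gamma$ is cancelled on one half-plane by the rotation factor $e(\varsigma s/4)$ — so absolute integrability of $G(s,\varsigma)$ along the contour forces $\sigma < -1/2$, while absolute convergence of the $u$-integral demands $\sigma > \Re\nu - \Re\lambda$. These two constraints coexist exactly when $\Re\nu < \Re\lambda - 1/2$. For (3), I would first verify absolute convergence of the right-hand side of \eqref{5eq: Miller-Schmid and Fourier, R+} directly from the asymptotic $y^{-\lambda}\upsilon(y^{-1}) \sim y^{\nu-\lambda}(\log y)^j$ as $y \to \infty$ (the origin is harmless since $\upsilon(y^{-1})$ decays rapidly there). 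The identity itself then extends to the wider range by analytic continuation in $\lambda$: both sides are holomorphic in $\lambda$ on $\{\Re\lambda > \Re\nu\}$ — the integral by dominated convergence, and $\ET_{(\varsigma,\lambda)}\upsilon(x)$ via the Mellin inversion with a contour chosen uniformly in a small $\lambda$-disc so as to avoid the moving poles of $\EM\upsilon(s+\lambda)$ — and they agree on the subregion $\{\Re\lambda > \Re\nu + 1/2\}$ by part (2).

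Part (4) will then reduce immediately to (3): writing $\upsilon \in \Ssiss(\BR_+)$ as a finite sum $\sum \upsilon_\alpha$ with $\upsilon_\alpha \in x^{-\nu_\alpha}(\log x)^{j_\alpha}\SS(\overline \BR_+)$ and $\Re\nu_\alpha \leq 0 < \Re\lambda$, both the integral representation and its absolute convergence follow term by term. For invariance $\ET_{(\varsigma,\lambda)}\upsilon \in \Ssiss(\BR_+)$, I would locate the poles of $\EM \ET\upsilon(s) = G(s,\varsigma)\EM\upsilon(s+\lambda)$ in the set $-\BN \cup \bigcup_\alpha (\nu_\alpha - \lambda - \BN)$; since $\Re\nu_\alpha \leq 0$ and $\Re\lambda > 0$, every such pole has $\Re s \leq 0$. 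Via Lemma~\ref{2lem: Sis and Msis, R+}, each term in the resulting asymptotic expansion of $\ET\upsilon$ at $0$ has the form $x^{-\nu'}(\log x)^{j'}\varphi$ with $\Re\nu' \leq 0$, placing $\ET\upsilon$ in $\Ssiss(\BR_+)$ as required.
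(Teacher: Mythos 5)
Your proposal is correct and follows essentially the same route as the paper: Mellin isomorphism bookkeeping for (1), a Fubini argument with the Mellin--Barnes kernel recognized as $e^{\varsigma i x/u}$ via Proposition~\ref{prop: n=1} for (2), analytic continuation in $\lambda$ for (3), and pole-tracking via the $\EM$-isomorphism for (4). The only caveat worth flagging is presentational: in part (2) you discuss the constraints $\sigma < -\tfrac12$ and $\sigma > \Re\nu - \Re\lambda$ as if the contour were a straight vertical line, but a straight line with $\Re s = \sigma < -\tfrac12$ would leave some poles of $\Gamma$ at $0, -1, \ldots$ on its right, so that $(2\pi i)^{-1}\int G(s,\varsigma)(x/u)^{-s}\,ds$ would not equal $e^{\varsigma i x/u}$; one must use the bent contour $\EC_0$ of Definition~\ref{3defn: C d lambda} (as Proposition~\ref{prop: n=1} does), whose tails sit at $\Re s = \sigma < -\tfrac12$ but which bends rightward near the real axis to keep $-\BN$ on its left. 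Since you do invoke Proposition~\ref{prop: n=1}, the reasoning is sound, just be explicit that $\sigma$ refers to the asymptotic real part of $\EC_0$, and that the condition "$\EC_0$ lies in $\{\Re s > \Re\nu - \Re\lambda\}$" is what pins down $\Re\nu - \Re\lambda < -\tfrac12$.
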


\begin{proof}
	Following the ideas in the proof of Proposition \ref{3prop:H sigma lambda}, we may prove (1). 
	
	As for (2), we have
	\begin{equation*}
	\begin{split}
	\ET_{(\varsigma, \lambda)} \upsilon (x)
	& = \frac 1 {2 \pi i}  \int_{\BR_+} \upsilon (y) y^{\lambda}  \int_{ \EC _{0} } G  (s, \varsigma) y^{s } x^{-s} d s d^\times y  = \int_{\BR_+} \upsilon (y) y^{\lambda} J ( x y\-; \varsigma, 0) d^\times y,
	\end{split}
	\end{equation*}
	provided that the double integral is absolutely convergent. In order to guarantee the convergence of the integral over $d^\times  y$, the integral contour $\EC _{ 0}$ is required to lie in the right half-plane $\left\{ s : \Re s > \Re (\nu - \lambda) \right \}$. In view of Definition \ref{3defn: C d lambda}, such a choice of  $\EC _{0}$ is permissible since $\Re (\nu - \lambda) <  - \frac 1 2$ according to our assumption. Finally, the change of variables from $y$ to $y\-$, along with the formula $J ( x  ; \varsigma, 0) = e^{\varsigma i x}$, yields \eqref{5eq: Miller-Schmid and Fourier, R+}. 
	
	For the case  $\Re \nu < \Re \lambda$ in (3), the absolute convergence of the integral in \eqref{5eq: Miller-Schmid and Fourier, R+} is obvious. The validity of \eqref{5eq: Miller-Schmid and Fourier, R+} follows from the analyticity with respect to $\lambda$.
	
	Under the isomorphism established by $\EM $ in Lemma \ref{2lem: Sis and Msis, R+}, $\Ssiss (\BR_+)$ corresponds to the subspace of $\Msis $ consisting of  meromorphic functions $H$ such that the poles of   $H $ lie in the left half-plane $\left\{ s : \Re s \leq 0 \right\}$.
	Hence, the first assertion in (4) is clear, since  the map that corresponds to  $\ET _{(\varsigma, \lambda)}$ is given by $ H (s) \mapsto  G  (s, \varsigma) H (s + \lambda) $ and sends the  subspace of $\Msis $ described above into itself. The second assertion  in (4) immediately follows from (3).
\end{proof}

\subsubsection{The Miller-Schmid Transform $\ET _{(\mu, \epsilon)}$}

\begin{lem} \label{5lem: Miller-Schmid transforms, R}
	Let $(\mu, \epsilon) \in \BC \times \BZ/2 \BZ $.
	
	{\rm (1).}
	For any $ \upsilon \in \Ssis (\BRx)$ there is a unique function $\Upsilon \in \Ssis (\BRx)$ satisfying the following two identities,
	\begin{equation}\label{5eq: Miller-Schmid, R}
	\EM _\delta \Upsilon (s ) = G_{ \epsilon + \delta } (s) \EM _\delta \upsilon (s + \mu), \hskip 10 pt \delta \in \BZ/2 \BZ.
	\end{equation}
	We  write $ \Upsilon = \ET_{(\mu, \epsilon)} \upsilon $ and call $\ET_{(\mu, \epsilon)}$ the Miller-Schmid transform over $\BR$ of index $(\mu, \epsilon)$.
	
	{\rm (2).}
	Let $\lambda \in \BC$. Suppose  $\upsilon (x) \in \sgn (x)^{\delta } |x|^{-\lambda} (\log |x|)^j \SS (\BR)$. If  $\Re \lambda <  \Re \mu - \frac 1 2$, then
	\begin{equation}\label{5eq: Miller-Schmid and Fourier, R}
	\begin{split}
	\ET_{(\mu, \epsilon)} \upsilon (x)
	& = \sgn (x)^{\epsilon} \int_{\BRx} \sgn (y)^{\epsilon} |y|^{-\mu} \upsilon \lp y\-\rp  e (x y) d^\times y   = \sgn (x)^{\epsilon} \EF \varphi \, (- x),
	\end{split}
	\end{equation}
	with $\varphi (x) = \sgn(x)^\epsilon |x |^{-\mu-1}  \upsilon \lp x\-\rp$. 
	
	{\rm (3).} Suppose that $\Re \lambda < \Re \mu$. Then the integral in \eqref{5eq: Miller-Schmid and Fourier, R} is absolutely convergent and \eqref{5eq: Miller-Schmid and Fourier, R} remains valid for any  $\upsilon (x) \in \sgn (x)^{\delta } |x|^{-\lambda} (\log |x|)^j \SS (\BR)$.
	
	{\rm (4).}
	Suppose that $\Re \mu > 0$. Define the function space
	\begin{equation*}
	\Ssiss (\BRx) =  \sum_{\delta \in \BZT} \ \sum_{\Re \lambda \, \leq 0 } \ \sum_{j \in \BN}  \sgn(x)^{\delta} |x|^{-\lambda} (\log |x|)^j \SS (\BR).
	\end{equation*}
	Then the transform $\ET_{(\mu, \epsilon)}$ sends $\Ssiss (\BRx)$ into itself. Moreover, \eqref{5eq: Miller-Schmid and Fourier, R} also holds true for any $\upsilon \in \Ssiss (\BRx)$, wherein the integral absolutely converges.
\end{lem}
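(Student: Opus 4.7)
The plan is to mirror Lemma 5.9 by using the parity decomposition $\Ssis(\BRx) = \Ssis^0(\BRx) \oplus \Ssis^1(\BRx)$ together with the identity $\EM_\delta \upsilon(s) = 2\EM \upsilon_\delta(s)$ (Observation 1.7), which reduces every Mellin-type statement on $\BRx$ to a pair of Mellin statements on $\BR_+$ already handled in Lemma 5.9. For part (1), I would split (5.17) by parity: the pair of identities $\EM\Upsilon_\delta(s) = G_{\epsilon+\delta}(s)\EM\upsilon_\delta(s+\mu)$ each has a unique solution $\Upsilon_\delta \in \Ssis(\BR_+)$, since the multiplier $G_{\epsilon+\delta}(s)$ has poles only in $-\BN$ and is of uniform moderate growth on vertical strips (Lemma 1.5), the shift $s \mapsto s + \mu$ preserves $\Msis$, and Lemma 2.11 provides the inverse Mellin transform. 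Assembling $\Upsilon(x) = \Upsilon_0(|x|) + \sgn(x)\Upsilon_1(|x|) \in \Ssis(\BRx)$ defines $\ET_{(\mu,\epsilon)}$.

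For part (2), I would apply Mellin inversion (Lemma 1.8 (2)) to $\Upsilon$, substitute the defining integral of $\EM_\delta\upsilon(s+\mu)$, interchange the order of integration, and identify the resulting inner contour integral with a Mellin--Barnes representation of the rank-one real kernel $J_{(0,\epsilon)}(xy) = \sgn(xy)^\epsilon e(xy)$ via Proposition 5.3 together with the decomposition $G_\delta(s) = (2\pi)^{-s}\bigl(G(s,+) + (-)^\delta G(s,-)\bigr)$ from (1.19); a final change of variable $y \mapsto y^{-1}$ produces the stated integral (5.20), while the equality with $\sgn(x)^\epsilon \EF\varphi(-x)$ is immediate from the definition (1.24) of $\EF$. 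The hypothesis $\Re\lambda < \Re\mu - \tfrac{1}{2}$ is exactly what is needed to make the Fubini step work: the vertical contour $(\sigma)$ must simultaneously satisfy $\sigma > \Re\lambda - \Re\mu$ (so that the Mellin integral of $\upsilon_\delta$ converges absolutely) and $\sigma < -\tfrac{1}{2}$ (so that the Mellin--Barnes integral representing $e$ converges, cf.\ the contour $\EC_0$ of Definition 3.1 with $n=1$, $d=1$, $\ulambda = 0$).

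For part (3), the integral in (5.20) is absolutely convergent whenever $\Re\lambda < \Re\mu$, because $\upsilon(y^{-1})$ is Schwartz-decaying as $y \to 0$ and satisfies $\upsilon(y^{-1}) = O\bigl(|y|^{\Re\lambda}(\log|y|)^j\bigr)$ as $|y| \to \infty$; both sides of (5.20) are analytic in $\mu$ on the region $\{\Re\mu > \Re\lambda\}$ and agree on the sub-region $\{\Re\mu > \Re\lambda + \tfrac{1}{2}\}$ by part (2), hence everywhere on $\{\Re\mu > \Re\lambda\}$ by analytic continuation. For part (4), under Lemma 2.14 the space $\Ssiss(\BRx)$ corresponds to the subspace of $\Msis^\BR$ whose members have all poles in the closed left half-plane $\{\Re s \leq 0\}$; when $\Re\mu > 0$ the shift $s \mapsto s+\mu$ moves those poles into $\{\Re s \leq -\Re\mu\}$, and the poles of $G_{\epsilon+\delta}(s)$ lie in $-\BN \subset \{\Re s \leq 0\}$, so $\ET_{(\mu,\epsilon)}\upsilon \in \Ssiss(\BRx)$; since any $\upsilon \in \Ssiss(\BRx)$ automatically has $\Re\lambda \leq 0 < \Re\mu$, part (3) supplies the integral representation. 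The main obstacle I anticipate is the Fubini verification in part (2): both the Mellin integral of $\EM_\delta\upsilon(s+\mu)$ and the Mellin--Barnes integral for the Fourier kernel are only conditionally convergent, so one either truncates carefully and controls the error, or first establishes the identity for a holomorphic kernel $e^{-z(xy)}$ with $\Re z > 0$ (where every integral converges absolutely) and takes the limit $z \to \mp i$ afterwards, as was done in the proof of Proposition 5.3.
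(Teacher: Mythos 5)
Your proposal is correct and takes essentially the same route the paper does: the paper's proof of this lemma is a single sentence deferring to the proof of Lemma \ref{5lem: Miller-Schmid transforms, R+}, together with the observation that under $\EM_{\BR}$ the space $\Ssiss(\BRx)$ corresponds to pairs $(H_0, H_1) \in \Msis^\BR$ with poles in $\{\Re s \leq 0\}$ (Lemma \ref{2lem: Ssis delta, M delta}); your write-up supplies exactly the details the paper leaves implicit via the parity decomposition $\Ssis(\BRx) = \Ssis^0(\BRx) \oplus \Ssis^1(\BRx)$.

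One correction to your concluding worry: the Fubini step in part (2) is not actually an obstacle, and neither truncation nor a limiting procedure is needed. Once the bent contour $\EC_0$ of Definition \ref{3defn: C d lambda} (with $n=1$, $\ulambda = 0$) is taken with asymptotic real part $\sigma$ in the nonempty interval $\lp \Re\lambda - \Re\mu,\ -\tfrac{1}{2}\rp$, the double integral is \emph{absolutely} convergent: the Mellin integral defining $\EM\upsilon_\delta(s+\mu)$ converges absolutely because $\Re(s+\mu) > \Re\lambda$ holds everywhere on $\EC_0$, and Lemma \ref{1lem: vertical bound} gives $G_{\epsilon+\delta}(s) \lll (|\Im s|+1)^{\sigma-1/2}$ with exponent strictly less than $-1$, so the $s$-integral over $\EC_0$ converges absolutely as well. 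The interchange is thus licensed by Fubini directly. The $z \to \mp i$ limit in the proof of Proposition \ref{prop: n=1} is used only to \emph{establish} the Mellin--Barnes representation \eqref{2eq: n = 1, Mellin inversion} of the exponential in the first place, not to manipulate the double integral appearing in the proof of \eqref{5eq: Miller-Schmid and Fourier, R}.
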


\begin{proof}
	Following literally the same ideas in the proof of Lemma \ref{5lem: Miller-Schmid transforms, R+}, one may show this lemma without any difficulty.
	Observe that, under the isomorphism established by $\EM_{\BR}$ in Lemma \ref{2lem: Ssis to Msis, R}, $\Ssiss (\BRx)$ corresponds to the subspace of $\Msis^{\BR}$ consisting of pairs of meromorphic functions $(H_0, H_1)$ such that the poles of both $H_0$ and $ H_1$ lie in the left half-plane $\left\{ s : \Re s \leq 0 \right\}$ (see Lemma \ref{2lem: Ssis delta, M delta}). 
\end{proof}

Similar to Lemma \ref{5lem: S, R and C} (1), we have the following lemma.

\begin{lem}\label{5lem: T, R}
	Let $(\mu, \epsilon)  \in \BC \times \BZT$ be such that $\Re \mu > 0$. For $\delta \in \BZT$ define $\Ssiss^{ \delta} (\BRx)$ to be the space of functions in $ \Ssiss (\BRx)$  satisfying the condition \eqref{1eq: delta condition, R}. For  $\upsilon \in \Ssiss^{ \delta} (\BRx)$, we write $\upsilon (x) =  \sgn(x)^{\delta }  \varphi (|x|) $. Then
	\begin{equation*}
	\begin{split}
	\ET_{(\mu, \epsilon)} \upsilon (\pm x) &=  (\pm)^{\delta } \int_{\BR_+} y^{- \mu} \varphi \lp y\- \rp j_{(0, \delta + \epsilon)} (x y) d^\times y \\
	& = 
	\begin{cases}
	\ds (\pm)^{\delta} 2 \int_{\BR_+} y^{- \mu} \varphi \lp y\- \rp \cos (x y) d^\times y, &  \text { if } \delta = \epsilon,\\
	\ds (\pm)^{\delta} 2 i \int_{\BR_+} y^{- \mu} \varphi \lp y\- \rp \sin (x y) d^\times y, &  \text { if } \delta = \epsilon + 1.
	\end{cases}
	\end{split}
	\end{equation*}
	The transform $\ET_{(\mu, \epsilon)}$ sends $\Ssiss^{ \delta} (\BRx)$ into itself.
\end{lem}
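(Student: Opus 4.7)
\noindent\textbf{Proof proposal for Lemma \ref{5lem: T, R}.}

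The plan is to reduce this to a direct computation from the Fourier-type integral formula for $\ET_{(\mu,\epsilon)}$ provided in Lemma \ref{5lem: Miller-Schmid transforms, R} (4). Since we are given $\Re \mu > 0$ and $\upsilon \in \Ssiss(\BRx)$, that lemma asserts absolute convergence of
\begin{equation*}
\ET_{(\mu,\epsilon)} \upsilon(x) = \sgn(x)^\epsilon \int_{\BRx} \sgn(y)^\epsilon |y|^{-\mu} \upsilon(y^{-1}) e(xy)\, d^\times y.
\end{equation*}
Everything that follows will therefore be a manipulation of an absolutely convergent integral, with no additional analytic subtlety beyond what has already been established.

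First I would substitute the explicit decomposition $\upsilon(y^{-1}) = \sgn(y^{-1})^\delta \varphi(|y|^{-1}) = \sgn(y)^\delta \varphi(|y|^{-1})$ and note $\sgn(y)^\epsilon \sgn(y)^\delta = \sgn(y)^{\epsilon+\delta}$. Splitting the integral over $\BRx$ into its positive and negative pieces via $y = \pm u$ with $u \in \BR_+$, the integrand at $\pm u$ contributes $u^{-\mu}\varphi(u^{-1}) e(\pm xu)$ weighted by $(\pm)^{\epsilon+\delta}$, so one obtains
\begin{equation*}
\ET_{(\mu,\epsilon)} \upsilon(x) = \sgn(x)^\epsilon \int_{\BR_+} u^{-\mu}\varphi(u^{-1}) \bigl( e(xu) + (-)^{\epsilon+\delta} e(-xu) \bigr) d^\times u.
\end{equation*}
Replacing $x$ by $\pm x$ and pulling out the sign, the factor $\sgn(\pm x)^\epsilon = (\pm)^\epsilon$ combines with $(\pm)^{\epsilon+\delta}$ inside the integrand (after the change of sign in the argument of $e$) to give the overall prefactor $(\pm)^\delta$. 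Finally, the bracket $e(xu)+(-)^{\epsilon+\delta}e(-xu)$ equals $j_{(0,\delta+\epsilon)}(xu)$ by the rank-one complex computation recorded in \S \ref{sec: n=1,2, R} (namely $j_{(0,0)}(x) = 2\cos(2\pi x)$, $j_{(0,1)}(x) = 2i\sin(2\pi x)$); this yields the two displayed identities of the lemma.

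For the second assertion, that $\ET_{(\mu,\epsilon)}$ preserves $\Ssiss^\delta(\BRx)$, observe that Lemma \ref{5lem: Miller-Schmid transforms, R} (4) already guarantees $\ET_{(\mu,\epsilon)}\upsilon \in \Ssiss(\BRx)$, so it only remains to check the parity condition \eqref{1eq: delta condition, R}. But the formula $\ET_{(\mu,\epsilon)}\upsilon(\pm x) = (\pm)^\delta \int_{\BR_+} y^{-\mu}\varphi(y^{-1}) j_{(0,\delta+\epsilon)}(xy)\, d^\times y$ exhibits this parity explicitly.

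No step poses a real obstacle; the only bookkeeping that must be done carefully is tracking the interaction of the three signs $\sgn(x)^\epsilon$, $\sgn(y)^\epsilon$, $\sgn(y)^\delta$ and the sign flip $y \mapsto -y$ in the exponential $e(xy)$, which is precisely the mechanism that converts a parity-$\delta$ input into a parity-$\delta$ output.
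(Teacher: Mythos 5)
Your proof is correct. The paper in fact gives no explicit argument for Lemma \ref{5lem: T, R} — it merely notes that the lemma is "similar to Lemma \ref{5lem: S, R and C} (1)" — so your direct computation supplies exactly what is implicitly asked for: split the absolutely convergent integral from Lemma \ref{5lem: Miller-Schmid transforms, R} (4) over $\BR_+$ and $-\BR_+$, fold in the parity of $\upsilon$ and of $\sgn(\pm x)^\epsilon$, and recognize $e(xu) + (-1)^{\epsilon+\delta}e(-xu)$ as $j_{(0,\delta+\epsilon)}(xu)$ from the rank-one formulae in \S\ref{sec: n=1,2, R}. The sign-bookkeeping you flagged as the only delicate point does check out (the net prefactor is $(\pm)^{\epsilon}\cdot(-1)^{\epsilon+\delta}\cdot(\pm)^{\epsilon+\delta}$, which simplifies to $(\pm)^\delta$), and the appeal to Lemma \ref{5lem: Miller-Schmid transforms, R} (4) for membership in $\Ssiss(\BRx)$, combined with the displayed formula to pin down the parity, correctly yields the last assertion.
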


\subsubsection{The Miller-Schmid Transform $\ET _{(\mu, k)}$}
In parallel to Lemma \ref{5lem: Miller-Schmid transforms, R}, the following lemma defines the Miller-Schmid transform $\ET _{(\mu, k)}$ over $\BC$ and gives its connection to the Fourier transform over $\BC$.

\begin{lem} \label{5lem: Miller-Schmid transforms, C}
	Let $(\mu, k) \in \BC \times \BZ $.
	
	{\rm (1).}
	For any $ \upsilon \in \Ssis (\BCx)$ there is a unique function $\Upsilon \in \Ssis (\BCx)$ satisfying the following sequence of identities,
	\begin{equation}\label{5eq: Miller-Schmid, C}
	\EM _{-m} \Upsilon (2s ) = G_{ m + k } (s) \EM _{-m} \upsilon (2( s + \mu) ), \hskip 10 pt m \in \BZ.
	\end{equation}
	We  write $ \Upsilon = \ET_{(\mu, k)} \upsilon $ and call $\ET_{(\mu, k)}$ the Miller-Schmid transform  over $\BC$ of index $(\mu, k)$.
	
	{\rm (2).}
	Let $\lambda \in \BC$. If $\Re \lambda < 2\, \Re \mu$,  then for any $\upsilon (z) \in [z]^{m } |z|^{-\lambda} (\log |z|)^j \SS (\BC)$ we have
	\begin{equation}\label{5eq: Miller-Schmid and Fourier, C}
	\begin{split}
	\ET_{(\mu, k)} \upsilon (z)
	& = [z]^{k} \int_{\BCx} [u]^{k} \|u\|^{-\mu} \upsilon \lp u\-\rp  e (z u + \overline {z u}) d^\times u  = [z]^{k} \EF \varphi \, (- z),
	\end{split}
	\end{equation}
	with $\varphi (z ) = [ z ]^k \| z \|^{-\mu-1}    \upsilon \lp z\- \rp$.
	
	{\rm (3).} When $\Re \lambda < 2 \Re \mu$, the integral in \eqref{5eq: Miller-Schmid and Fourier, C} is absolutely convergent for any  $\upsilon (z) \in [z]^{m } |z|^{-\lambda} (\log |z|)^j \SS (\BC)$.

	{\rm (4).}
	Suppose that $\Re \mu > 0$. Define the function space 
	\begin{equation*}
	\Ssiss (\BCx) =  \sum_{m \in \BZ} \ \sum_{\Re \lambda \, \leq 0 } \ \sum_{j \in \BN}  [z]^{m} |z|^{-\lambda} (\log |z|)^j \SS (\BC).
	\end{equation*}
	Then the transform $\ET_{(\mu, k)}$ sends $\Ssiss  (\BCx)$ into itself. Moreover, \eqref{5eq: Miller-Schmid and Fourier, C} also holds true for any $\upsilon \in \Ssiss  (\BCx)$, wherein the integral absolutely converges.
\end{lem}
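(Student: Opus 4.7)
The plan is to mirror the proof of Lemma \ref{5lem: Miller-Schmid transforms, R}, using the isomorphism $\EM_{\BC}: \Ssis(\BCx) \xrightarrow{\cong} \Msis^\BC$ established in Lemma \ref{2lem: Ssis to Msis, C} in place of Lemma \ref{2lem: Ssis to Msis, R}, together with the growth bound \eqref{1eq: vertical bound, G (mu, m) (s)} for $G_m(s)$ in place of \eqref{1eq: vertical bound, G (lambda, delta) (s)}.

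For (1), given $\upsilon \in \Ssis(\BCx)$, define the candidate image on the Mellin side by the right-hand side of \eqref{5eq: Miller-Schmid, C}. I would show that the sequence $\bigl\{G_{m+k}(s)\, \EM_{-m}\upsilon(2(s+\mu))\bigr\}_{m\in\BZ}$ lies in $\Msis^\BC$. The pole loci of $\EM_{-m}\upsilon(2(s+\mu))$ are controlled by the decomposition \eqref{2eq: decompose Msis, C}, and shifting by $\mu$ translates the admissible sets $\lambda - |m| - 2\BN$; multiplication by $G_{m+k}(s)$ introduces additional poles in $\BN\cdot(-1) - \frac12|m+k|$, which can again be collected into a finite union of sets of the required form. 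The uniform moderate growth of $G_{m+k}(s)$ in $s$ on vertical strips (Lemma \ref{1lem: vertical bound}) combined with the uniform rapid decay of $\EM_{-m}\upsilon(2(s+\mu))$ (which is the built-in condition \eqref{2eq: rapid decay Nsis, C, 0} for $\Msis^\BC$) yields both the rapid decay in $s$ and the required uniform rapid decay in $m$; then Lemma \ref{2lem: Ssis to Msis, C} produces a unique $\Upsilon \in \Ssis(\BCx)$ realising this sequence of Mellin transforms.

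For (2) and (3), I would start from the Mellin-Barnes representation of $\Upsilon$ coming from the Mellin inversion \eqref{1eq: Mellin inversion, C, polar} applied to \eqref{5eq: Miller-Schmid, C}, and write $\ET_{(\mu,k)}\upsilon(z)$ as an iterated integral over $\BCx$ (in $u$) and a vertical contour (in $s$). The hypothesis $\Re\lambda < 2\Re\mu$ allows one to shift the contour in $s$ into a half-plane where the double integral becomes absolutely convergent by Fubini; doing the inner $s$-sum via the rank-one identity $J_{(0,0)}(z) = e(z+\overline z)$ from \S\ref{sec: n=1, C} (i.e.\ the complex analogue of Proposition \ref{prop: n=1}) produces the exponential kernel $e(zu\-+\overline{zu\-})$, and the change of variable $u \mapsto u\-$, together with $d u = \|u\|^2 d^\times u$, converts this into \eqref{5eq: Miller-Schmid and Fourier, C}. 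Absolute convergence of the integral on the right-hand side under $\Re\lambda < 2\Re\mu$ is immediate from the estimates on $\upsilon$ near $0$ and $\infty$.

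For (4), I would characterise $\Ssiss(\BCx)$ on the Mellin side: under $\EM_{\BC}$ it corresponds to the subspace of $\Msis^\BC$ whose member sequences $\{H_m(s)\}$ have all poles in $\{\Re s \leq 0\}$. The map corresponding to $\ET_{(\mu,k)}$ on the Mellin side is $\{H_m(s)\} \mapsto \{G_{m+k}(s) H_m(s+\mu)\}$; for $\Re\mu > 0$ the shift pushes the poles of $H_m$ into $\{\Re s \leq -\Re\mu\}$, while the poles of $G_{m+k}(s)$ lie in $-\frac12|m+k|-\BN \subset \{\Re s \leq 0\}$, so the stability of $\Ssiss(\BCx)$ follows. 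The integral representation then follows from (2) and (3) applied termwise to the summands in the defining sum of $\Ssiss(\BCx)$, as the hypothesis $\Re\lambda \leq 0 < \Re\mu$ ensures $\Re\lambda < 2\Re\mu$. The main obstacle is the uniform rapid decay in $m$ required by the definition of $\Msis^\BC$: multiplying by $G_{m+k}(s)$ introduces polynomial growth in $m$ (via \eqref{1eq: vertical bound, G (mu, m) (s)}), so one must check that the rapid decay in $m$ of $\EM_{-m}\upsilon(2(s+\mu))$, inherited from condition \eqref{2eq: rapid decay Nsis, C, 0} through the decomposition in Lemma \ref{2lem: Ssis m, M -m}, absorbs this growth uniformly on every vertical strip; this is analogous to, but strictly more delicate than, the corresponding step in Lemma \ref{5lem: Miller-Schmid transforms, R}.
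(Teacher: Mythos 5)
Your proposal is correct and follows the same route the paper takes: the paper's proof is a one-line remark that the argument mirrors Lemma \ref{5lem: Miller-Schmid transforms, R+} and Lemma \ref{5lem: Miller-Schmid transforms, R}, with the one genuinely new ingredient being the Mellin-side characterisation of $\Ssiss(\BCx)$, and you correctly identify the uniform $m$-aspect rapid decay as the delicate new point. One remark on that characterisation: you describe $\EM_\BC(\Ssiss(\BCx))$ as the subspace of $\Msis^\BC$ with all poles in $\{\Re s \leq 0\}$, whereas the paper records it via the $m$-dependent half-plane $\{s : \Re s \leq \min\{M - |m|, 0\}\}$ for some $M \in \BN$; the two agree only because membership in $\Msis^\BC$ already confines the poles of $H_m$ to a finite union $\bigcup_i(\lambda_i - |m| - 2\BN)$ with $\lambda_i$ independent of $m$, so your weaker-looking condition implicitly inherits the $-|m|$ recession. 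It is cleaner to state the $m$-dependent bound explicitly, since without it the finite (algebraic) sum structure of $\Ssiss(\BCx)$ is not manifestly preserved under the map $H_m(s) \mapsto G_{m+k}(s/2)\,H_m(s+2\mu)$, whose $G$-poles sit at $-|m+k|-2\BN$ and whose shifted poles depend on $|m - m_i|$ rather than $|m|$ alone.
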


\begin{proof} Again, the proof is similar as above.
	 We only remark that, via the isomorphism $\EM_{\BC}$ in Lemma \ref{2lem: Ssis to Msis, C}, $\Ssiss  (\BCx)$ corresponds to the subspace of $\Msis^{\BC}$ consisting of sequences $\lpp H_m\rpp $ such that the poles of each $H_{m}$ lie in the left half-plane $\big\{ s : \Re s \leq \min \{ M - |m|, 0 \} \big\}$ for some $M \in \BN$ (see Lemma \ref{2lem: Ssis m, M -m}).
\end{proof}

\begin{lem}\label{5lem: T, C}
	Let $(\mu, k)  \in \BC \times \BZ$ be such that $\Re \mu > 0$. For $m \in \BZ $ define $\Ssiss^{  m} (\BCx)$ to be the space of functions in $ \Ssiss  (\BCx)$  satisfying the condition \eqref{1eq: m condition, C}. For  $\upsilon \in \Ssiss^{   m} (\BCx)$, we write $\upsilon (z) =  [z]^{m }  \varphi (|z|) $. Then
	\begin{align*}
	\ET_{(\mu, k)} \upsilon \lp x e^{i \phi} \rp & = 2 e^{ i m \phi} \int_{\BR_+} y^{- 2 \mu} \varphi \lp y\- \rp j_{(0, m + k)} (x y)  d^\times y \\
	& =  4 \pi i^{m+k} e^{ i m \phi} \int_{\BR_+}  y^{- 2 \mu} \varphi \lp y\- \rp J_{m+k} (4 \pi x y)    d^\times y.
	\end{align*}
	The transform $\ET_{(\mu, k)}$ sends $\Ssiss^{  m} (\BCx)$ into itself.
\end{lem}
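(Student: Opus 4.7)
The strategy is to parallel the proof of Lemma \ref{5lem: T, R} for the real case, now in polar coordinates on $\BC^\times$. The starting point is part (4) of Lemma \ref{5lem: Miller-Schmid transforms, C}: since $\Re \mu > 0$, for $\upsilon \in \Ssiss^{m}(\BCx) \subset \Ssiss(\BCx)$ one has the absolutely convergent integral representation
\begin{equation*}
\ET_{(\mu, k)}\upsilon(z) \ =\ [z]^{k}\int_{\BCx} [u]^{k}\, \|u\|^{-\mu}\, \upsilon\bigl(u\-\bigr)\, e\bigl(zu+\overline{zu}\bigr)\, d^\times u,
\end{equation*}
and moreover $\ET_{(\mu,k)}$ already sends $\Ssiss(\BCx)$ into itself, so the final invariance assertion will come for free once the explicit formula shows that the $m$-th Fourier mode is preserved.

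The plan is now to substitute the polar expression $\upsilon(u\-) = [u]^{-m}\varphi(|u|\-) = e^{-im\theta}\varphi(y\-)$ for $u = y e^{i\theta}$, together with $[u]^{k}=e^{ik\theta}$, $\|u\|^{-\mu}=y^{-2\mu}$, and $d^\times u = 2\,d^\times y\, d\theta$. With $z = xe^{i\phi}$, the Fourier kernel becomes $e(zu+\overline{zu}) = e^{4\pi i x y \cos(\phi+\theta)}$. Pulling out the factors that do not depend on $\theta$ and making the change of variable $\theta' = \phi+\theta$ (permitted by $2\pi$-periodicity) reduces the integrand to a radial factor $y^{-2\mu}\varphi(y\-)$ multiplied by an angular integral of the form
\begin{equation*}
e^{-i(k-m)\phi}\int_{0}^{2\pi} e^{i(k-m)\theta'}\, e\bigl(2 x y \cos \theta'\bigr)\, d\theta'.
\end{equation*}

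The heart of the proof is to identify this angular integral as (a phase multiple of) the rank-one Bessel kernel $j_{(0,\,m+k)}(xy)$. For this I would invoke the explicit formula from \S\ref{sec: n=1, C}: the Fourier expansion of the rank-one complex Bessel kernel reads $J_{(0,0)}(xe^{i\phi}) = e\bigl(2x\cos\phi\bigr) = \frac{1}{2\pi}\sum_{n \in \BZ} j_{(0,n)}(x)\, e^{in\phi}$, so that extracting the Fourier coefficient gives $\int_{0}^{2\pi} e^{-in\phi}\, e\bigl(2x\cos\phi\bigr)\, d\phi = j_{(0,n)}(x) = 2\pi i^{n} J_{n}(4\pi x)$. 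The careful bookkeeping with the phases $e^{ik\phi}$ coming from $[z]^k$ and $e^{-i(k-m)\phi}$ coming from the shift of variable must be arranged so that the resulting factor is $e^{im\phi}$ and the Bessel kernel index is exactly $m+k$ (rather than some other combination like $m-k$); this is where the sign conventions are most delicate.

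The main obstacle will be this angular bookkeeping. As a sanity check, I would cross-verify the final formula against the defining Mellin identity \eqref{5eq: Miller-Schmid, C}: on the one hand, that identity forces $\Upsilon = [z]^{m}\psi$ with $\EM \psi(2s) = G_{m+k}(s)\,\EM\varphi(2(s+\mu))$; on the other hand, the standard Mellin formula for the Bessel function together with the identity $i^{n}J_n = i^{|n|}J_{|n|}$ yields $4\pi i^{m+k}\int_{0}^{\infty} J_{m+k}(4\pi xy)\, x^{2s-1}dx = G_{m+k}(s)\, y^{-2s}$, from which the Mellin transform of the right-hand side of the claimed formula can be computed and matched term-for-term with $G_{m+k}(s)\EM\varphi(2(s+\mu))$ after the change of variable $y \mapsto y\-$. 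This match is what will ultimately fix the sign conventions in the angular calculation, and the invariance $\ET_{(\mu,k)}\colon \Ssiss^{m}(\BCx) \to \Ssiss^{m}(\BCx)$ follows because the integral representation exhibits $\ET_{(\mu,k)}\upsilon$ as $e^{im\phi}$ times a function of $x$ alone.
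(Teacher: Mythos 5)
The step that fails is your identification of the angular integral as $j_{(0,m+k)}(xy)$. Substituting $\upsilon(u^{-1}) = [u]^{-m}\varphi(|u|^{-1})$ into $\ET_{(\mu,k)}\upsilon(z) = [z]^{k}\int_{\BCx}[u]^{k}\|u\|^{-\mu}\upsilon(u^{-1})\,e(zu+\overline{zu})\,d^\times u$ in polar coordinates, the angular factor coming from $[u]^{k}[u]^{-m}$ is $e^{i(k-m)\theta}$, and after your shift $\theta' = \phi+\theta$ the integral $\int_0^{2\pi}e^{i(k-m)\theta'}e(2xy\cos\theta')\,d\theta'$ evaluates by the Jacobi--Anger expansion $e^{ix\cos\theta}=\sum_n i^n J_n(x)e^{in\theta}$ to $2\pi\,i^{\,k-m}J_{k-m}(4\pi xy) = j_{(0,\,m-k)}(xy)$, \emph{not} $j_{(0,\,m+k)}(xy)$. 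No phase can repair this: for $mk\neq 0$ one has $J_{|m-k|}\neq J_{|m+k|}$. So the ``heart of the proof'' step, carried out literally from the integral representation in Lemma~\ref{5lem: Miller-Schmid transforms, C}~(2)/(4), yields the wrong Bessel index, and the delicacy you flag (``rather than some other combination like $m-k$'') is a genuine failure, not a bookkeeping nuisance.

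What the computation is telling you is that \eqref{5eq: Miller-Schmid and Fourier, C} is not consistent, as printed, with the Mellin definition \eqref{5eq: Miller-Schmid, C}: Mellin inversion of \eqref{5eq: Miller-Schmid, C} gives $\ET_{(\mu,k)}\upsilon(z) = \int_{\BCx}\upsilon(u)\|u\|^{\mu}J_{(0,k)}(zu^{-1})\,d^\times u$, and the change of variable $u\mapsto u^{-1}$ together with $J_{(0,k)}(z)=[z]^{-k}e(z+\overline z)$ produces $[z]^{-k}\int_{\BCx}[u]^{-k}\|u\|^{-\mu}\upsilon(u^{-1})e(zu+\overline{zu})\,d^\times u$, with \emph{minus} $k$ in both exponents; the corrected sign turns your angular integral into $\int e^{-i(k+m)\theta'}e(2xy\cos\theta')\,d\theta' = j_{(0,\,m+k)}(xy)$ as required. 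The reliable route is therefore exactly what you demote to a ``sanity check'': restrict \eqref{5eq: Miller-Schmid, C} to the single nontrivial mode $m$, Mellin-invert to obtain $\Upsilon_m(x) = 2\int_0^\infty \upsilon_m(y)\,y^{2\mu}\,j_{(0,m+k)}(x/y)\,d^\times y$, substitute $y\mapsto y^{-1}$, and apply $j_{(0,n)}(x)=2\pi i^n J_n(4\pi x)$. This is how the paper obtains Lemma~\ref{5lem: T, R} (``Similar to Lemma~\ref{5lem: S, R and C} (1)''). The invariance $\ET_{(\mu,k)}\colon\Ssiss^m(\BCx)\to\Ssiss^m(\BCx)$ then follows immediately, as you say, from the single-mode form $\Upsilon = e^{im\phi}\Upsilon_m(x)$.
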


\subsection{Fourier Type Integral Transforms}

Following \cite{Miller-Schmid-2006,Miller-Schmid-2009}, we shall now derive the Fourier type integral transform expressions for $\Hsl$, $\Hld$ and $\Hmum$ from the Fourier transform (more precisely, the renormalized rank-one Hankel transforms) and the Miller-Schmid transforms.

\vskip 5 pt

\subsubsection{The Fourier Type Transform Expression for $\Hsl$}\label{sec: Fourier, R+}

Let $(\usigma, \ulambda) \in \{+,-\}^n \times \BC^n$. For $\upsilon \in \SS (\BR_+)$, we put
\begin{equation}\label{5eq: composite T S, R+}
\Upsilon (x) = x^{-\lambda_1} \ET_{(\varsigma_1, \lambda_1-\lambda_2)}\, {\sstyle \ccirc} \, ...\, {\sstyle \ccirc}\, \ET_{(\varsigma_{n-1}, \lambda_{n-1}-\lambda_{n})}\, {\sstyle \ccirc}\, \ES_{(\varsigma_n, \lambda_n)} \upsilon (x).
\end{equation}
According to Lemma \ref{5lem: Hankel R+} and Lemma \ref{5lem: Miller-Schmid transforms, R+} (1), $\ES_{(\varsigma_n, \lambda_n)} \upsilon (x)$ lies in the space $ \SS (\overline \BR_+) $($ \subset \Ssis(\BR_+) $), whereas each Miller-Schmid transform sends $\Ssis(\BR_+)$ into itself.
Thus  one can apply the Mellin transform $\EM$ to both sides of \eqref{5eq: composite T S, R+}. Using \eqref{5eq: Mellin S, R+} and \eqref{5eq: Miller-Schmid, R+}, some calculations show that the application of $\EM $ converts \eqref{5eq: composite T S, R+} exactly  into the identity \eqref{3eq: Hankel transform identity 0, R+} which defines $\Hsl$. Therefore, $\Upsilon = \Hsl \upsilon$. 

\begin{thm}\label{5thm: Fourier type transform, R+}
	Let $(\usigma, \ulambda) \in \{+,-\}^n \times \BC^n$ be such that $\Re \lambda_1  > ... > \Re \lambda_{n-1} > \Re \lambda_n$. Suppose  $\upsilon \in \SS (\BR_+)$. Then
	\begin{equation} \label{5eq: Fourier type integral, R+}
		\Hsl \upsilon (x) = \frac 1 {x} \int_{\BR_+^{  n}} \upsilon \lp \frac {x_1 ... x_n} x \rp \lp \prod_{l      = 1}^{n }    x_{l     }^{ - \lambda_{l     }} e^{\varsigma_l i x_l}  \rp dx_n ... d x_1,
	\end{equation}
	where the integral converges when performed as iterated integral in the indicated order $d x_n d x_{n-1} ... d x_1$, starting from $d x_n$, then $d x_{n-1}$, ..., and finally $d x_1$.
\end{thm}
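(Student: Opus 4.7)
The proof is set up by the authors' identity
\begin{equation*}
\Hsl \upsilon(x) = x^{-\lambda_1}\, \ET_{(\varsigma_1,\lambda_1-\lambda_2)} \circ \cdots \circ \ET_{(\varsigma_{n-1},\lambda_{n-1}-\lambda_n)} \circ \ES_{(\varsigma_n,\lambda_n)}\upsilon(x),
\end{equation*}
which has already been verified by matching Mellin transforms in the paragraph preceding the theorem. My plan is simply to unfold the right hand side by the integral representations given in Lemma \ref{5lem: Hankel R+} and Lemma \ref{5lem: Miller-Schmid transforms, R+}, and then to collapse the resulting chain of integrals by an obvious substitution.

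The base step is Lemma \ref{5lem: Hankel R+}: since $\upsilon \in \SS(\BR_+)$, one has $\ES_{(\varsigma_n,\lambda_n)}\upsilon(w) = \int_{\BR_+} y^{-\lambda_n}\upsilon(y) e^{\varsigma_n i w y}\,dy$, an element of $\SS(\overline\BR_+)\subset\Ssiss(\BR_+)$. Since $\Re(\lambda_l-\lambda_{l+1})>0$ for $l=1,\dots,n-1$ by hypothesis, Lemma \ref{5lem: Miller-Schmid transforms, R+}(4) applies inductively: each successive application of $\ET_{(\varsigma_l,\lambda_l-\lambda_{l+1})}$ keeps us inside $\Ssiss(\BR_+)$, and at every stage the absolutely convergent formula
\begin{equation*}
\ET_{(\varsigma_l,\lambda_l-\lambda_{l+1})} g(w) = \int_{\BR_+} y^{-(\lambda_l-\lambda_{l+1})}\, g\!\left(y^{-1}\right)\, e^{\varsigma_l i w y}\,d^{\times}y
\end{equation*}
is available. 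I will then apply, at the $l$-th stage, the change of variable $y = x_l/w$ (i.e.\ $y\mapsto$ the fresh integration variable $x_l$ divided by the previous argument $w$), which converts $e^{\varsigma_l i w y}$ into $e^{\varsigma_l i x_l}$ and produces a Jacobian that, combined with $y^{-(\lambda_l-\lambda_{l+1})}\cdot y\cdot y^{-1}$ and with the factor $w^{\lambda_{l+1}-1}$ left over from the previous stage, gives a clean recurrence: after $k$ stages the composite equals $w^{\lambda_{n-k+1}-1}$ times a $k$-fold iterated integral of $\upsilon(x_{n-k+1}\cdots x_n/w)\prod_{l=n-k+1}^{n} x_l^{-\lambda_l}e^{\varsigma_l i x_l}\,dx_l$. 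Propagating through all $n$ stages and multiplying by $x^{-\lambda_1}$ yields precisely \eqref{5eq: Fourier type integral, R+}.

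The only delicate point is the convergence statement: the formula is to be read as an iterated integral in the specified order, not as a multiple Lebesgue integral. This is automatic from the construction, since every integral produced by the Miller--Schmid unfolding is absolutely convergent on its own (by Lemma \ref{5lem: Miller-Schmid transforms, R+}(4) applied to the input of the next step, which lies in $\Ssiss(\BR_+)$), and no change of order of integration is ever invoked; the substitution $y=x_l/w$ at each stage acts only on the outermost integration and leaves the already-constructed inner iterated integral intact.

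I expect the main obstacle to be purely bookkeeping---tracking the powers of the auxiliary argument $w$ across stages so that the prefactor $w^{\lambda_l-1}$ is absorbed exactly by the next Mellin-type weight and no stray term is left at the end except the outer $x^{-\lambda_1}\cdot x^{\lambda_1-1}=x^{-1}$. A clean induction on the number of Miller--Schmid transforms applied, together with the hypothesis $\Re\lambda_1>\cdots>\Re\lambda_n$ ensuring that Lemma \ref{5lem: Miller-Schmid transforms, R+}(4) is applicable at every stage, will complete the argument.
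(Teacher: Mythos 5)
Your proposal is correct and takes essentially the same route as the paper: both start from the composite identity \eqref{5eq: composite T S, R+}, unfold $\ES_{(\varsigma_n,\lambda_n)}$ and the $\ET_{(\varsigma_l,\lambda_l-\lambda_{l+1})}$ into absolutely convergent integrals via Lemma \ref{5lem: Hankel R+} and Lemma \ref{5lem: Miller-Schmid transforms, R+}(3)--(4), and perform a change of variables. The only difference is presentational---you absorb the change of variables stage-by-stage as an explicit recurrence $g_{k}(w)=w^{\lambda_{n-k+1}-1}\int(\cdots)$, whereas the paper writes the full iterated integral in the auxiliary variables $y_1,\dots,y_n$ first and then substitutes $x_1=xy_1$, $x_{l+1}=y_l^{-1}y_{l+1}$ in one shot; the underlying substitution is the same.
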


\begin{proof}
	We first observe that $\ES_{(\varsigma_n, \lambda_n )} \upsilon (x) \in \SS (\overline \BR_+) \subset \Ssiss (\BR_+)$. For each $ l      = 1 ,..., n-1$, since $\Re (\lambda_{l     } -\lambda_{l     +1}) > 0$, Lemma \ref{5lem: Miller-Schmid transforms, R+} (4) implies that the transform $\ET_{(\varsigma_l, \lambda_{l     } -\lambda_{l     +1} )}$ sends the space $\Ssiss (\BR_+)$ into itself. According to Lemma \ref{5lem: Hankel R+} and Lemma \ref{5lem: Miller-Schmid transforms, R+} (3), $\ES_{(\varsigma_n, \lambda_n )} $ and all the $\ET_{(\varsigma_l, \lambda_{l     } -\lambda_{l     +1} )}$ in \eqref{5eq: composite T S, R+}  may be expressed as integral transforms, which are absolutely convergent. From these, the right hand side of \eqref{5eq: composite T S, R+} turns into the integral,
	\begin{equation*}
	\begin{split}
	\int_{\BR_+^{  n}}  x^{-\lambda_1} e^{\varsigma_1 i x y_1} 
	\lp \prod_{l      = 1}^{n-1}   y_{l     } ^{\lambda_{l      + 1} - \lambda_{l     } - 1} e^{ \varsigma_l i y_{l     }\- y_{l     +1}  } \rp  
	 y_n^{-\lambda_n} \upsilon (y_n)   d y_{n}  ...  d y_{1},
	\end{split}
	\end{equation*}
	which converges as iterated integral.
	Our proof is completed upon making the change of variables $x_1 = x y_1$, $x_{l      + 1} = y_l     \- y_{l      + 1}$, $ l      = 1, ..., n-1$.
\end{proof}

\subsubsection{The Fourier Type Transform Expression for $\Hld$}

Let $(\umu, \udelta) \in \BC^n\times (\BZT)^n$. For $\upsilon (x) \in \sgn (x)^{\delta_n} |x|^{\mu_n} \SS (\BR) $, by Lemma \ref{5cor: renormalize Hankel} (1) and Lemma \ref{5lem: Miller-Schmid transforms, R} (1), in particular \eqref{5eq: Mellin S, R} and \eqref{5eq: Miller-Schmid, R}, we may prove that
\begin{equation}\label{5eq: composite T S, R}
 \Hld \upsilon (x) = |x|^{-\mu_1} \ET_{(\mu_1-\mu_2, \delta_1)}\, {\sstyle \ccirc} \, ...\, {\sstyle \ccirc}\, \ET_{(\mu_{n-1}-\mu_{n}, \delta_{n-1})}\, {\sstyle \ccirc}\, \ES_{(\mu_n, \delta_n)} \upsilon (x).
\end{equation}

\begin{thm}\label{5thm: Fourier type transform, R}
	\cite[(1.3)]{Miller-Schmid-2009}. 
	Let $(\umu, \udelta) \in \BC^n\times (\BZT)^n$ be such that $\Re \mu_1  > ... > \Re \mu_{n-1} > \Re \mu_n$. Suppose  $\upsilon (x) \in \sgn (x)^{\delta_n} |x|^{\mu_n} \SS (\BR) $. Then
	\begin{equation} \label{5eq: Fourier type integral, R}
	\Hld \upsilon (x) = \frac 1 {|x|} \int_{\BR^{\times n}} \upsilon \lp \frac {x_1 ... x_n} x \rp \lp \prod_{l      = 1}^{n }  \sgn (x_l     )^{ \delta_{l     }} |x_{l     }|^{ - \mu_{l     }} e \lp x_{l     } \rp \rp dx_n ... d x_1,
	\end{equation}
	where the integral converges when performed as iterated integral in the indicated order $d x_n d x_{n-1} ... d x_1$, starting from $d x_n$, then $d x_{n-1}$, ..., and finally $d x_1$.
\end{thm}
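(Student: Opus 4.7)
The plan is to follow exactly the strategy used for Theorem \ref{5thm: Fourier type transform, R+}, now working with $\BRx$ in place of $\BR_+$, the renormalized real Hankel transform $\ES_{(\mu_n, \delta_n)}$ in place of $\ES_{(\varsigma_n, \lambda_n)}$, and the real Miller-Schmid transforms $\ET_{(\mu_l-\mu_{l+1}, \delta_l)}$ in place of their $\BR_+$ counterparts. The core operator identity is \eqref{5eq: composite T S, R}:
\begin{equation*}
\Hld = |x|^{-\mu_1} \ET_{(\mu_1-\mu_2, \delta_1)} \circ \cdots \circ \ET_{(\mu_{n-1}-\mu_n, \delta_{n-1})} \circ \ES_{(\mu_n, \delta_n)}.
\end{equation*}
This identity is obtained by applying $\EM_\delta$ to both sides for each $\delta \in \BZT$: the functional equations \eqref{5eq: Mellin S, R} and \eqref{5eq: Miller-Schmid, R} telescope to convert $\EM_\delta \upsilon(1-s)$ into $G_{(\umu, \udelta + \delta \ue^n)}(s) \EM_\delta \upsilon(1-s)$, matching the Mellin characterization \eqref{3eq: Hankel transform identity, R} of $\Hld$. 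Uniqueness from Theorem \ref{3prop: H (lambda, delta)} then yields the claim.

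With this decomposition in hand, I would verify that each intermediate output lies in a function space on which the next transform admits an absolutely convergent integral representation. By Lemma \ref{5cor: renormalize Hankel}(1), the innermost term satisfies $\ES_{(\mu_n, \delta_n)} \upsilon \in \sgn(x)^{\delta_n} \SS(\BR) \subset \Ssiss(\BRx)$. The strict-decrease hypothesis $\Re(\mu_l - \mu_{l+1}) > 0$ combined with Lemma \ref{5lem: Miller-Schmid transforms, R}(4) then ensures that each $\ET_{(\mu_l-\mu_{l+1}, \delta_l)}$ preserves $\Ssiss(\BRx)$ and can be realized by the absolutely convergent Fourier type integral \eqref{5eq: Miller-Schmid and Fourier, R}. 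Substituting these integral representations into the composition expresses $\Hld \upsilon(x)$ as an iterated integral over $\BR^{\times n}$, in which each $\ET$-step contributes a factor of the form $\sgn(y_l)^{\delta_l} |y_l|^{\mu_{l+1}-\mu_l-1} e(y_l^{-1} y_{l+1})$ and $\ES_{(\mu_n, \delta_n)}$ contributes $\sgn(y_n)^{\delta_n} |y_n|^{-\mu_n}$ together with a final exponential factor.

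The last step is the change of variables $x_1 = x y_1$ and $x_{l+1} = y_l^{-1} y_{l+1}$ for $l = 1, \ldots, n-1$, which has the same form as in the proof of Theorem \ref{5thm: Fourier type transform, R+}; the only new feature is that each $x_l$ now ranges over $\BRx$ rather than $\BR_+$. This substitution transforms the iterated integral into \eqref{5eq: Fourier type integral, R}. The main obstacle is purely bookkeeping: one must track the sign factors $\sgn(y_l)^{\delta_l}$ inherited at each step from \eqref{5eq: Miller-Schmid and Fourier, R} and \eqref{5eq: Mellin S, R}, and verify using $\sgn(y_l^{-1} y_{l+1}) = \sgn(y_l) \sgn(y_{l+1})$ that, after the substitution, the accumulated sign collapses to $\prod_{l=1}^n \sgn(x_l)^{\delta_l}$ in precise agreement with the integrand on the right hand side of \eqref{5eq: Fourier type integral, R}. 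Convergence as an iterated integral (rather than an absolutely convergent multiple integral) follows directly from the convergence built into Lemmas \ref{5cor: renormalize Hankel}(1) and \ref{5lem: Miller-Schmid transforms, R}(4) at each nested stage.
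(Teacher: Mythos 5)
Your proposal is correct and is exactly the argument the paper intends: the paper's own proof for this theorem consists of the single sentence ``One applies the same arguments in the proof of Theorem~\ref{5thm: Fourier type transform, R+} using Lemma~\ref{5cor: renormalize Hankel}~(1) and Lemma~\ref{5lem: Miller-Schmid transforms, R}~(3,~4),'' and your outline is precisely that translation, with the same operator factorization \eqref{5eq: composite T S, R}, the same sequence of space-preservation checks, the same change of variables, and the correct additional bookkeeping of the $\sgn$ factors that the $\BR_+$ case does not have.
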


\begin{proof}
	One applies the same arguments in the proof of Theorem \ref{5thm: Fourier type transform, R+} using Lemma \ref{5cor: renormalize Hankel} (1) and Lemma \ref{5lem: Miller-Schmid transforms, R}  (3, 4). 
\end{proof}

We have the following corollary to Theorem \ref{5thm: Fourier type transform, R}, which can also be seen from Lemma \ref{5lem: S, R and C} (1) and Lemma \ref{5lem: T, R}.

\begin{cor}\label{5cor: H, R}
	Let $(\umu, \udelta) \in \BC^n\times (\BZT)^n$ and  $\delta \in \BZT$. Assume that $\Re \mu_1 > ... > \Re \mu_{n-1} > \Re \mu_n$. Let $\varphi (x) \in x^{ \mu_n} \SS_{ \delta + \delta_n} (\overline \BR_+)$ and $\upsilon (x) =  \sgn (x)^{ \delta }  \varphi (|x|) $. Then
	\begin{equation} \label{5eq: Fourier type integral, R, 1}
	\Hmum \upsilon \lp \pm x  \rp = \frac { (\pm)^{\delta}} {x} \int_{\BR _+ ^n} \varphi \lp \frac {x_1 ... x_n} x \rp \lp \prod_{l      = 1}^{n }   x_{l     } ^{ -  \mu_{l     } } j_{(0, \delta_{l     } + \delta)} (x_{l     }) \rp d x_n ... d x_1,
	\end{equation}
	with $x \in \BR _+$. Here the iterated integration is performed in the  indicated order.
\end{cor}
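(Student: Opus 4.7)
\medskip

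\noindent\textbf{Proof plan.} The plan is to derive this corollary directly from the Fourier type integral expression of Theorem \ref{5thm: Fourier type transform, R} for $\Hld \upsilon$ (I read the notation $\Hmum$ in the statement as a typo for $\Hld$, since the index is $(\umu,\udelta)$ and we sit inside the discussion of $\BR$). The hypotheses $\Re\mu_1>\dots>\Re\mu_n$ and $\upsilon(y)=\sgn(y)^\delta\varphi(|y|)$ with $\varphi\in x^{\mu_n}\SS_{\delta+\delta_n}(\overline\BR_+)$ place $\upsilon$ in the space $\sgn(x)^{\delta_n}|x|^{\mu_n}\SS(\BR)$ required by that theorem, so
\begin{equation*}
\Hld \upsilon(\pm x)=\frac{1}{x}\int_{\BR^{\times n}}\upsilon\lp\frac{x_1\dots x_n}{\pm x}\rp\prod_{l=1}^n\sgn(x_l)^{\delta_l}|x_l|^{-\mu_l}e(x_l)\,dx_n\dots dx_1
\end{equation*}
as an iterated integral. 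From here, only elementary rewriting is needed.

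\smallskip

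\noindent Next, I substitute the decomposition of $\upsilon$. Since $\sgn(x_1\dots x_n/(\pm x))^{\delta}=(\pm)^\delta\prod_l\sgn(x_l)^\delta$ for $x\in\BR_+$, and $\varphi(|\,\cdot\,|)$ only sees absolute values, the integrand becomes
\begin{equation*}
(\pm)^\delta\,\varphi\lp\frac{|x_1|\dots|x_n|}{x}\rp\prod_{l=1}^n\sgn(x_l)^{\delta+\delta_l}|x_l|^{-\mu_l}e(x_l),
\end{equation*}
so that the factor $(\pm)^\delta/x$ is pulled out of the whole iterated integral as required.

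\smallskip

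\noindent The core step is then to reduce each $x_l$-integral from $\BRx$ to $\BR_+$. Performed from the innermost integration outward (so that at each stage the inner integrations have already produced a genuine function of $|x_l|$), the substitution $x_l\mapsto -x_l$ on the negative ray together with the sign factor $\sgn(x_l)^{\delta+\delta_l}$ gives
\begin{equation*}
\int_{\BRx}\sgn(x_l)^{\delta+\delta_l}|x_l|^{-\mu_l}e(x_l)f(|x_l|)\,dx_l=\int_{\BR_+}x_l^{-\mu_l}\lp e(x_l)+(-)^{\delta+\delta_l}e(-x_l)\rp f(x_l)\,dx_l.
\end{equation*}
The bracketed kernel equals $2\cos(2\pi x_l)$ if $\delta+\delta_l=0$ and $2i\sin(2\pi x_l)$ if $\delta+\delta_l=1$; by the computations in \S\ref{sec: n=1,2, R} (where $j_{(0,0)}(x)=2\cos(2\pi x)$ and $j_{(0,1)}(x)=2i\sin(2\pi x)$), this is precisely $j_{(0,\delta_l+\delta)}(x_l)$. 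Applying this reduction successively to $l=n,n-1,\dots,1$ converts the iterated integral over $\BR^{\times n}$ into the claimed iterated integral over $\BR_+^n$.

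\smallskip

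\noindent The only delicate point — the would-be obstacle — is that the reduction from $\BRx$ to $\BR_+$ uses an even/odd decomposition, which is legitimate only for \emph{absolutely convergent} single integrals. This is why one must carry out the reduction level by level within the iteration (matching the order $dx_n\,dx_{n-1}\dots dx_1$ guaranteed by Theorem \ref{5thm: Fourier type transform, R}), rather than as a single absolutely convergent multiple integral. Once this bookkeeping is respected, the rest is a direct computation. Alternatively, one can avoid even this concern by instead applying the composite identity \eqref{5eq: composite T S, R}, using Lemma \ref{5lem: S, R and C}(1) on $\ES_{(\mu_n,\delta_n)}\upsilon$ and then Lemma \ref{5lem: T, R} iteratively on each $\ET_{(\mu_l-\mu_{l+1},\delta_l)}$ — each step preserves the parity $\delta$ and produces exactly the factor $x_l^{-\mu_l}j_{(0,\delta_l+\delta)}(x_l)$, and the chain of integral transforms assembles into \eqref{5eq: Fourier type integral, R, 1} after the change of variables $x_1=xy_1$, $x_{l+1}=y_l^{-1}y_{l+1}$ used in the proof of Theorem \ref{5thm: Fourier type transform, R+}.
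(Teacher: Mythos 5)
Your proposal is correct and takes exactly the route the paper indicates. The paper gives no written proof — it simply asserts that the corollary follows from Theorem \ref{5thm: Fourier type transform, R}, adding that it ``can also be seen from Lemma \ref{5lem: S, R and C} (1) and Lemma \ref{5lem: T, R}.'' Your primary derivation (insert $\upsilon(y)=\sgn(y)^\delta\varphi(|y|)$ into the iterated $\BR^{\times n}$ integral, factor out $(\pm)^\delta/x$, and fold each $\BRx$-integral onto $\BR_+$ using $e(t)+(-)^{\delta_l+\delta}e(-t)=j_{(0,\delta_l+\delta)}(t)$) is the first of these two, and your closing alternative via \eqref{5eq: composite T S, R} together with Lemma \ref{5lem: S, R and C}(1) and Lemma \ref{5lem: T, R} is the second. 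Your reading of $\Hmum$ as a typo for $\Hld$ is also correct, as the index is $(\umu,\udelta)$ and the corollary sits in the real case. The observation that the even/odd fold must be performed one variable at a time, respecting the stated order of iterated integration, is precisely the bookkeeping that makes the manipulation legitimate in the absence of absolute convergence; this is the only subtle point and you handle it correctly.
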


\subsubsection{The Fourier Type Transform Expression for $\Hmum$}

Let $(\umu, \um) \in \BC^n \times \BZ^n$. For $\upsilon (z) \in [z]^{m_n} \|z\|^{\mu_n} \SS (\BC) $, using Lemma \ref{5cor: renormalize Hankel} (2) and Lemma \ref{5lem: Miller-Schmid transforms, C} (1), especially \eqref{5eq: Mellin S, C} and \eqref{5eq: Miller-Schmid, C}, one may show that
\begin{equation}\label{5eq: composite T S, C}
\Hmum \upsilon (z) = \|z\|^{-\mu_1} \ET_{(\mu_1-\mu_2, m_1)} \, {\sstyle \ccirc} \,  ... \, {\sstyle \ccirc} \,  \ET_{(\mu_{n-1}-\mu_{n}, m_{n-1})} \, {\sstyle \ccirc} \,  \ES_{(\mu_n, m_n)} \upsilon (z).
\end{equation}

\begin{thm}\label{5thm: Fourier type transform, C}
	Let $(\umu, \um) \in \BC^n\times \BZ^n$ be such that $\Re \mu_1 > ... > \Re \mu_{n-1} > \Re \mu_n$. Suppose  $\upsilon (z) \in [z]^{m_n} \|z\|^{\mu_n} \SS (\BC) $. Then
	\begin{equation} \label{5eq: Fourier type integral, C}
	\Hmum \upsilon (z) = \frac 1 {\|z\|} \int_{\BC^{\times n}} \upsilon \lp \frac {z_1 ... z_n} z \rp \lp \prod_{l      = 1}^{n }  [z_l     ]^{- m_{l     }} \|z_{l     }\|^{ - \mu_{l     }} e \lp z_{l     } + \overline {z_l     } \rp \rp  dz_n ... d z_1,
	\end{equation}
	where the integral converges when performed as iterated integral in the indicated order. 
\end{thm}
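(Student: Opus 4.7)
The plan is to mirror the argument of Theorem \ref{5thm: Fourier type transform, R+} (and Theorem \ref{5thm: Fourier type transform, R}) using the complex analogues of the relevant lemmas, starting from the factorization \eqref{5eq: composite T S, C}. First, I would verify that each stage of the composition
\[
 \ES_{(\mu_n, m_n)}, \quad \ET_{(\mu_{n-1}-\mu_n, m_{n-1})}, \ \ldots,\ \ET_{(\mu_1-\mu_2, m_1)}
\]
acts on a space where the relevant integral kernel formulas apply. By Lemma \ref{5cor: renormalize Hankel} (2), $\ES_{(\mu_n, m_n)}$ maps $[z]^{m_n}\|z\|^{\mu_n}\SS(\BC)$ onto $[z]^{-m_n}\SS(\BC) \subset \SS(\BCx) \subset \Ssiss(\BCx)$. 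Since the hypothesis $\Re\mu_l > \Re\mu_{l+1}$ for each $l$ gives $\Re(\mu_l-\mu_{l+1})>0$, Lemma \ref{5lem: Miller-Schmid transforms, C} (4) ensures that every subsequent Miller--Schmid transform $\ET_{(\mu_l-\mu_{l+1}, m_l)}$ stabilizes $\Ssiss(\BCx)$ and admits the absolutely convergent integral representation \eqref{5eq: Miller-Schmid and Fourier, C}.

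Next, I would substitute these integral representations into \eqref{5eq: composite T S, C}. Unwinding the composition from the innermost transform outward, one obtains
\begin{equation*}
\begin{split}
\Hmum\upsilon(z) = \|z\|^{-\mu_1} \int_{\BCx^{n}} & [z]^{m_1}[u_1]^{m_1}\|u_1\|^{-(\mu_1-\mu_2)}  \cdots [u_{n-1}]^{m_{n-1}}\|u_{n-1}\|^{-(\mu_{n-1}-\mu_n)} \\
& \times [u_n]^{-m_n}\|u_n\|^{-\mu_n}\upsilon(u_n)\prod_{l=1}^{n-1} e(zu_l + \overline{zu_l}) \cdot e(u_n+\overline{u_n})\,du_n\,d^{\times}u_{n-1}\cdots d^{\times}u_1,
\end{split}
\end{equation*}
where each $\ET$-integration is evaluated at the argument $u_l^{-1}$ of the result of the previous stage (this unfolds to pair consecutive $u_l$'s through the exponential kernel). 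I would then perform the substitution $z_1 = zu_1$ and $z_{l+1} = u_l^{-1}u_{l+1}$ for $l=1,\ldots,n-1$, under which $d^{\times}u_l$ becomes $d^{\times}z_l$ for $l < n$ and $du_n$ becomes $dz_n$ (since $u_n = u_{n-1}z_n$ and translation/scaling by the fixed $u_{n-1}$ preserves the Lebesgue measure once the Jacobian factor is absorbed into the multiplicative variables already changed). After simplifying the $[\,\cdot\,]$ and $\|\cdot\|$ factors with the identities $[z_1\cdots z_n/z] = [u_n]$ and $\|z_1\cdots z_n/z\| = \|u_n\|$, the required formula \eqref{5eq: Fourier type integral, C} should drop out.

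The main obstacle I anticipate is bookkeeping: correctly tracking the $[\,\cdot\,]$ and $\|\cdot\|$ powers produced by the telescoping change of variables, together with the subtle mixture of additive ($du_n$) and multiplicative ($d^{\times}u_l$) measures. One must verify that the outer integrations $d^{\times}u_1,\ldots,d^{\times}u_{n-1}$, which represent the $\ET$-integrals, convert precisely into $dz_1\cdots dz_{n-1}$ together with the factor $\prod_{l=1}^{n-1}\|z_l\|^{-\mu_l}$ appearing in \eqref{5eq: Fourier type integral, C}. This is purely algebraic but must be executed carefully to confirm that the exponents match. Finally, the iterated (rather than absolute) nature of convergence in the stated order is inherited from the iterated application of Lemmas \ref{5cor: renormalize Hankel} (2) and \ref{5lem: Miller-Schmid transforms, C} (3,4): the innermost $\ES$-integral converges absolutely, and each successive $\ET$-layer is absolutely convergent in its own variable by virtue of the condition $\Re(\mu_l-\mu_{l+1})>0$ and the stability of $\Ssiss(\BCx)$.
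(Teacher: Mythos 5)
Your strategy is the one the paper takes: its proof of this theorem is a one-line remark directing the reader to rerun the argument of Theorem \ref{5thm: Fourier type transform, R+} with Lemma \ref{5cor: renormalize Hankel} (2) and Lemma \ref{5lem: Miller-Schmid transforms, C} (3, 4), and you have correctly singled out the factorization \eqref{5eq: composite T S, C}, the integral representations of $\ES$ and $\ET$, the stability of $\Ssiss(\BCx)$ under the $\ET$'s (which is where the hypothesis $\Re\mu_l > \Re\mu_{l+1}$ enters), and the telescoping substitution $z_1 = zu_1$, $z_{l+1} = u_l^{-1}u_{l+1}$.

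However, the unwound integral you display does not follow from that factorization, and if taken at face value it would not produce \eqref{5eq: Fourier type integral, C} after the change of variables. Two things go wrong. First, the exponentials must telescope exactly as the variables do: the outermost transform contributes $e(zu_1 + \overline{zu_1})$ and each inner one contributes $e(u_{l-1}^{-1}u_l + \overline{u_{l-1}^{-1}u_l})$, so the kernel is $e(zu_1 + \overline{zu_1})\prod_{l=2}^{n} e(u_{l-1}^{-1}u_l + \overline{u_{l-1}^{-1}u_l})$ — your factor $\prod_{l=1}^{n-1}e(zu_l + \overline{zu_l})\cdot e(u_n + \overline{u_n})$ is not this. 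Second, and more seriously, you have dropped the $[u_l]^{\mp m_{l+1}}$ contributions that appear when the prefactor $[\,\cdot\,]^{\pm m_{l+1}}$ of the next-inner transform is evaluated at $u_l^{-1}$. Over $\BR$ this step is invisible since $\sgn(y^{-1}) = \sgn(y)$, but over $\BC$ one has $[u^{-1}] = [u]^{-1}$, and this flip is exactly what makes the powers $[z_l]^{-m_l}$ come out with the right sign; with the factors you wrote, the exponent of $[z_1]$ ends up wrong after the substitution. A smaller point: $[z]^{-m_n}\SS(\BC)\subset\SS(\BCx)$ is false (functions in $[z]^{-m_n}\SS(\BC)$ need not vanish to all orders at the origin); what you need and what holds is $[z]^{-m_n}\SS(\BC)\subset\Ssiss(\BCx)$, directly from the definition of $\Ssiss(\BCx)$.
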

\begin{proof}
	One applies the same arguments in the proof of Theorem \ref{5thm: Fourier type transform, R+} using Lemma \ref{5cor: renormalize Hankel} (2) and Lemma \ref{5lem: Miller-Schmid transforms, C} (3, 4). 
\end{proof}

Theorem \ref{5thm: Fourier type transform, C}, or Lemma \ref{5lem: S, R and C} (2) and Lemma \ref{5lem: T, C}, yields the following corollary.

\begin{cor}\label{5cor: H, C}
	Let $(\umu, \um) \in \BC^n\times \BZ^n$ and  $m \in \BZ$. Assume that $\Re \mu_1 > ... > \Re \mu_{n-1} > \Re \mu_n$. Let  $\varphi (x) \in x^{2 \mu_n} \SS_{-m-m_n} (\overline \BR_+)$ and $\upsilon (z) =  [z]^{- m }  \varphi (|z|) $. Then
	\begin{equation} \label{5eq: Fourier type integral, C, 1}
	\Hmum \upsilon \lp x e^{i \phi} \rp = 2^n \frac  {e^{i m \phi}} {x^{2}} \int_{\BR _+ ^n} \varphi \lp \frac {x_1 ... x_n} x \rp \lp \prod_{l      = 1}^{n }   x_{l     } ^{ - 2 \mu_{l     } + 1} j_{(0, m_{l     } + m)} (x_{l     }) \rp d x_n ... d x_1,
	\end{equation}
	with $x \in \BR _+$ and $\phi \in \BR/ 2\pi \BZ$. Here the iterated integration is performed in the indicated order.
\end{cor}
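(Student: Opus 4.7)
The plan is to derive this corollary as a direct specialization of Theorem 5.13 by plugging in the product structure $\upsilon(z) = [z]^{-m} \varphi(|z|)$ and carrying out all $n$ angular integrations via the rank-one complex Bessel kernel identity from Section 4.4.

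First I would pass to polar coordinates $z = xe^{i\phi}$ and $z_l = x_l e^{i\phi_l}$. Using the normalization $dz_l = 2 x_l dx_l d\phi_l$ recalled in Section 1.4.2, together with $[z_l]^{-m_l} = e^{-im_l\phi_l}$, $\|z_l\| = x_l^{2}$, and $z_l + \overline{z_l} = 2 x_l \cos\phi_l$, the integral in Theorem 5.13 becomes
\begin{equation*}
\Hmum \upsilon(xe^{i\phi}) = \frac{2^n e^{im\phi}}{x^{2}} \int_{\BR_+^n} \varphi\lp \frac{x_1 \cdots x_n}{x}\rp \prod_{l=1}^n x_l^{-2\mu_l + 1} I_l(x_l) \, dx_n \cdots dx_1,
\end{equation*}
where, after pulling out the $e^{im\phi}$ from $\upsilon(z_1\cdots z_n/z) = e^{-im(\phi_1+\cdots+\phi_n-\phi)}\varphi(x_1\cdots x_n/x)$, the remaining angular dependence is collected into
\begin{equation*}
I_l(x_l) = \int_0^{2\pi} e^{-i(m+m_l)\phi_l}\, e\lp 2 x_l \cos\phi_l\rp d\phi_l.
\end{equation*}

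Next I would evaluate $I_l(x_l)$ using the rank-one complex identity established in Section 4.4, namely $J_{(0,0)}(xe^{i\phi}) = e(2x\cos\phi)$, combined with the Fourier expansion \eqref{2eq: Bessel kernel over C, polar} for $J_{(0,0)}$, which gives $e(2x_l \cos\phi_l) = (2\pi)^{-1}\sum_{k \in \BZ} j_{(0,k)}(x_l) e^{ik\phi_l}$. Orthogonality of characters on $\BR/2\pi\BZ$ then collapses the sum to $I_l(x_l) = j_{(0,m+m_l)}(x_l)$, and substituting back yields the claimed formula. The swap between the angular integration and the Fourier sum is justified by the absolute convergence of the series (Lemma 3.13) over each compact circle, while the outer iterated $dx_n\cdots dx_1$ integration is inherited from Theorem 5.13.

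The only real technical point is ensuring that the angular integrations can be pulled inside the iterated radial integrations coming from Theorem 5.13, which are only conditionally convergent. I would handle this by noting that each $\phi_l$-integration is over the compact interval $[0,2\pi)$ with a smooth integrand, so Fubini applies locally at each stage. Alternatively, and perhaps more robustly, one may avoid this issue entirely by working from the composition formula \eqref{5eq: composite T S, C}: apply Lemma 5.9 (2) to compute $\ES_{(\mu_n, m_n)}\upsilon$, then iteratively apply Lemma 5.12 to each Miller--Schmid transform $\ET_{(\mu_l - \mu_{l+1}, m_l)}$, where the bookkeeping after the substitution $y \mapsto y^{-1}$ in Lemma 5.12 reproduces the same radial convolution structure; this alternative route avoids any interchange questions since each of Lemma 5.9 (2) and Lemma 5.12 already contains an absolutely convergent integral representation.
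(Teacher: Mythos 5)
Your proposal is correct and matches the paper's proof, which cites precisely the two routes you describe: deriving the formula from Theorem \ref{5thm: Fourier type transform, C} via the polar-coordinate computation and the rank-one Fourier expansion of $e(z+\overline z)$, or equivalently from Lemma \ref{5lem: S, R and C} (2) together with Lemma \ref{5lem: T, C} applied to the composition \eqref{5eq: composite T S, C}. One small refinement: the interchange of the $x_l$ and $\phi_l$ integrations is best justified not merely by compactness of $[0,2\pi)$, but by the fact that each stage of the iterated $dz_l$-integral is already an absolutely convergent $2$-dimensional integral over $\BCx$ (Lemma \ref{5lem: Miller-Schmid transforms, C} (3), (4)), so Fubini applies to split off the angular integration; this is also exactly the reason your alternative route through the composition formula is the cleaner way to state things.
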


\section{Integral Representations of Bessel Kernels}\label{sec: integral representations}


First, we shall derive a formal integral representation of the Bessel function $J (x; \usigma, \ulambda)$ from  Theorem \ref{5thm: Fourier type transform, R+} in symbolic manner, ignoring  the assumption $\Re \lambda_1 > ... > \Re \lambda_n$.
This formal integral will be the subject that we study in \S \ref{sec: Rigorous Interpretations} - \ref{sec: Bessel functions of K-type and H-Bessel functions} of Chapter \ref{chap: analytic theory}.


Similarly, Theorem \ref{5thm: Fourier type transform, R} and \ref{5thm: Fourier type transform, C}  also yield  formal integral representations of $J_{(\umu, \udelta)} (x)$ and $J_{(\umu, \um)} (z)$ respectively. It turns out that one can naturally transform the formal integral of $J_{(\umu, \um)} (z)$ into an integral that is absolutely convergent, given that the index $\umu$ satisfies certain conditions. The main reason for the absolute convergence is that $j_{(0, m)} (x) = 2 \pi i^{m } J_{m} (4 \pi x)$ (see  \eqref{3eq: j (0, m)}) decays proportionally to  $ 1 / {\sqrt x}$ at infinity (in comparison, $j_{(0, \delta)} (x)$ is equal to either  $2 \cos (2\pi x)$ or $ 2 i \sin (2\pi x)$). 



\subsection*{Assumptions and Notations} 

Let  $n \geq 2$. Assume that $\ulambda, \umu \in \BL^{n-1}$. 
\begin{notation}\label{not: d, nu}
	Let $d = n-1$. Let the pairs of tuples, $\ulambda$ or $\umu \in \BL^d $ and $\unu \in \BC^d$, $\udelta \in (\BZT)^{d+1}$ and $\boldsymbol \epsilon \in (\BZT)^d$, $\um \in \BZ^{d+1}$  and $\uk \in \BZ^d$, be subjected to the following relations 
	\begin{equation*}
		\nu_{l     } = \lambda_{l     } - \lambda_{d+1}, \hskip 10 pt \nu_{l     } = \mu_{l     } - \mu_{d+1}, \hskip 10 pt \epsilon_{l     } = \delta_{l     } + \delta_{d+1}, \hskip 10 pt  k_{l     } = m_{l     } - m_{d+1},
	\end{equation*}
	for $l      = 1, ..., d.$
\end{notation}

Rather than Hankel transforms, we shall be interested in their Bessel kernels. Therefore, it is convenient to further assume that the weight functions are Schwartz, namely, $\varphi \in \SS (\BR_+)$ and $\upsilon \in \SS (\BFx)$. 
According to (\ref{2eq: Psi (x; sigma) as Hankel transform}, \ref{3eq: integral kernel, R 0}, \ref{3eq: h mu m = int of j mu m}), Proposition \ref{3prop: properties of J, R} (3) and \ref{3prop: properties of J, C} (3), for such Schwartz functions $\varphi$ and $\upsilon$, 
\begin{align} 
    \label{6eq: Hankel = integral of Bessel function} & \Hsl \varphi  (x ) =  \int_{\BR_+} \varphi (y) J \big( (xy)^{\frac 1 n}; \usigma, \ulambda\big) d y,   \\
	\label{6eq: Hankel = integral of j} & \hld \varphi  (x ) =  \int_{\BR_+} \varphi (y) j_{(\umu, \udelta)} ( xy ) d y, \hskip 18 pt \hmum \varphi (x)  = 2 \int_{\BR_+}  \varphi (y) j_{(\umu, \um)} ( xy )  y d y, \\
	\label{6eq: Hankel = integral of Bessel kernel} &  \Hld \upsilon (x) = \int_{\BR ^\times} \upsilon (y) J_{(\umu, \udelta)} (xy ) d y, \hskip 15 pt \Hmum  \upsilon (z) = \int_{\BC^\times} \upsilon (u) J_{(\umu, \um)} (z u ) d u,
\end{align}
with indices $(\usigma, \ulambda) \in \{+, -\}^n \times \BC^n$, $(\umu, \udelta) \in \BC^n \times (\BZT)^n$ and $(\umu, \um) \in \BC^n \times \BZ^n$ being arbitrary.




\subsection{\texorpdfstring{The Formal Integral  $J_{\unu } ( x; \usigma )$}{The Formal Integral  $J_{\nu} ( x; \sigma )$}}\label{sec: formal integral of Bessel functions}

To motivate the definition of  $J_{\unu } ( x; \usigma )$, we shall do certain operations on the Fourier type integral \eqref{5eq: Fourier type integral, R+}  in Theorem \ref{5thm: Fourier type transform, R+}. In the meanwhile, we shall ignore the assumption $\Re\lambda_1 > ... > \Re \lambda_n$.

Upon making the change of variables, $x_{n} = (x_1 ... x_{n-1})\- x y$, $x_{l     } =  y_{l     }\-$, $ l      = 1, ..., n-1$, one converts \eqref{5eq: Fourier type integral, R+}   into
\begin{align*}
\Hsl \upsilon (x) = \int_{\BR_+^n} \upsilon \left( y \right) (x y)^{- \lambda_n }  \left( \prod_{l      = 1}^{n-1} y_l     ^{ \lambda_l      - \lambda_n - 1} \right) e^{ i \lp \varsigma_n  xy y_1 ... y_{n-1} + \sum_{l      = 1}^{n-1} \varsigma_l      y_l     \- \rp } dy d y_{n-1} ... d y_1.
\end{align*}
In symbolic notation, moving the integral over $d y$ to the outermost place and comparing the resulting integral with the right hand side of   \eqref{6eq: Hankel = integral of Bessel function}, the Bessel function $J   (x ; \usigma, \ulambda  )$ is then represented by the following formal integral over $d y_{n-1} ... d y_{1}$,
\begin{align*} 
\int_{\BR_+^{ n-1}} x^{- n \lambda_n }  \left( \prod_{l      = 1}^{n-1} y_l     ^{ \lambda_l      - \lambda_n - 1} \right) e^{ i \lp \varsigma_n  x^n y_1 ... y_{n-1} + \sum_{l      = 1}^{n-1} \varsigma_l      y_l     \- \rp }  d y_{n-1} ... d y_1
\end{align*}
Another change of variables $y_{l     } = t_{l     } x\-$, along with the assumption  $\sum_{l      = 1}^{n} \lambda_l      = 0$,  turns this integral into
\begin{equation*}
\int_{\BR_+^{n-1}} \left(\prod_{l      = 1}^{n-1} t_l     ^{ \lambda_l      - \lambda_n - 1} \right) e^{i x \left(\varsigma_n t_1 ... t_{n-1} + \sum_{l      = 1}^{n-1} \varsigma_l      t_l     \- \right)} d t_{n-1} ... dt_1.
\end{equation*}
For $\unu \in \BC^d$ and $\usigma \in \{+, -\}^{d+1}$, we define the formal integral
\begin{equation}\label{6eq: formal integral, R+}
\begin{split}
J_{\unu } ( x; \usigma ) = \int_{\BR_+^{  d}} 
\lp \prod_{l      = 1}^{d}  t_{l     }^{ \nu_{l     } - 1} \rp e^{ i x \lp \varsigma_{d+1}  t_1 ... t_{d} + \sum_{l      = 1}^{d} \varsigma_l t_{l     }\-  \rp} d t_{d} ... d t_1, \hskip 5 pt x \in \BR_+.
\end{split}
\end{equation}
In view of Notation \ref{not: d, nu}, we have $ J   (x ; \usigma, \ulambda  ) = J_{\unu } ( x; \usigma )$ in symbolic notation.

\begin{rem}
	Before realizing its connection with the Fourier type transform   {\rm (\ref{5eq: Fourier type integral, R+})} in Theorem \ref{5thm: Fourier type transform, R+}, the formal integral representation $J_{\unu } ( x; \usigma )$  of $J ( x ; \usigma, \ulambda)$ was derived by the author from {\rm \eqref{3eq: Hankel transform identity 0, R+}} in Proposition \ref{3prop:H sigma lambda} based on a symbolic application of the product-convolution principle of the Mellin transform together with the  formula
	\begin{equation*}
	\Gamma (s) e \lp \pm \frac  s 4 \rp = \int_0^\infty e^{\pm ix} x^{s } d^\times x, \hskip 10 pt 0 < \Re s < 1.
	\end{equation*}
	Though not specified, this principle is implicitly suggested in Miller and Schmid's work, especially, \cite[Theorem 4.12, Lemma 6.19]{Miller-Schmid-2004} and \cite[(5.22, 5.26)]{Miller-Schmid-2006}.
\end{rem}

\subsection{\texorpdfstring{The Formal Integral  $J_{\unu, \boldsymbol \epsilon} ( x,\pm )$}{The Formal Integral  $J_{\nu, \epsilon} ( x,\pm )$}}\label{sec: formal integral, R}

We may proceed in the same way as in \S \ref{sec: formal integral of Bessel functions}, starting from the Fourier type integral \eqref{5eq: Fourier type integral, R} in Theorem \ref{5thm: Fourier type transform, R}. 
For $\unu \in \BC^d$ and $\boldsymbol \epsilon \in (\BZT)^d$, we define the formal integral
\begin{equation}
	\begin{split}
		J_{\unu, \boldsymbol \epsilon} (x, \pm) = \int_{\BR^{\times d}} 
		\lp \prod_{l      = 1}^{d}  \sgn (y_l     )^{ \epsilon_{l     } } |y_{l     }|^{ \nu_{l     } - 1} \rp e^{ i x \lp \pm  y_1 ... y_{d} + \sum_{l      = 1}^{d} y_{l     }\-  \rp} d y_{d} ... d y_1, \hskip 5 pt x \in \BR_+.
	\end{split}
\end{equation}
Then follows the symbolic identity $ J_{(\umu,\udelta)} (\pm  x  ) = (\pm)^{\delta_{d+1}}  
J_{\unu, \boldsymbol \epsilon} \big( 2 \pi x^{\frac 1 {d+1}}, \pm \big)$.

By splitting $\BRx = \BR_+ \cup (- \BR_+)$ in the domain of integral and letting $y_l = \varsigma_l t_l$, some formal calculations yield the following formula
\begin{equation}\label{3eq: Bessel kernel, R, connection, formal}
\begin{split}
J_{\unu, \boldsymbol \epsilon} \lp x; \pm \rp = \sum_{ \usigma\, \in \{+, -\}^d } \usigma      ^{\boldsymbol \epsilon } J_{\unu}  (x;   \pm |\usigma| ), 
\end{split}
\end{equation}
which corresponds to the formula \eqref{3eq: Bessel kernel, R, connection} for $J_{(\umu,\udelta)} ( x  )$ and $J \big(2 \pi x^{\frac 1 {d+1}}; \usigma, \umu \big)$.

\subsection{\texorpdfstring{The Formal Integral  $j_{\unu, \udelta} (x)$}{The Formal Integral  $j_{\nu, \delta} (x)$}} 
For $\unu \in \BC^d$ and $ \udelta \in (\BZT)^{d+1}$, we define the formal integral
\begin{equation}
	\begin{split}
		j_{\unu, \udelta} (x ) = \int_{ \BR_+ ^{d}} &  j_{(0, \delta_{d+1})} \lp x y_{1} ... y_d \rp \prod_{l      = 1}^{d} y_{l     }^{ \nu_{l     } - 1} j_{(0, \delta_{l     })} \lp x y_{l     } \- \rp   
		d y_{d} ... d y_1, \hskip 10 pt x \in \BR_+.
	\end{split}
\end{equation}
We may derive the symbolic identity  $j_{(\umu, \udelta)} (x) =  
j_{\unu, \udelta} \big( x^{\frac 1 {d+1}} \big)$ from Corollary \ref{3cor: H = h, R} and  \ref{5cor: H, R}.



\subsection{\texorpdfstring{The Integral  $J_{\unu, \uk} ( x, u )$}{The Integral  $J_{\nu, k} ( x, u )$}}

First of all, proceeding in the same way as in \S \ref{sec: formal integral of Bessel functions}, from the Fourier type integral \eqref{5eq: Fourier type integral, C} in Theorem \ref{5thm: Fourier type transform, C}, we can deduce the symbolic equality $ J_{(\umu,\um)} \lp  x  e^{i\phi} \rp = e^{- i m_{d+1} \phi} 
J_{\unu, \uk} \big( 2 \pi x^{\frac 1 {d+1}}, e^{i\phi} \big)$, with the definition of the formal integral,
\begin{equation}\label{5eq: formal integral, C, 0}
	\begin{split}
		J_{\unu, \uk} ( x, u ) =    \int_{\BC^{\times d}}
		\lp \prod_{l      = 1}^{d}  [u_{l     }]^{k_{l     } } \|u_{l     }\|^{ \nu_{l     } - 1} \rp  &  e^{ i x \Lambda \lp u u_1 ... u_{d} + \sum_{l      = 1}^{d} u_{l     }\-  \rp  }  d u_{d} ... d u_1, \\ 
		& \hskip 60 pt x \in \BR_+,\, u \in \BC, |u| = 1.
	\end{split}
\end{equation}
Here, we recall that $\Lambda (z) = z + \overline z$. 

In the polar coordinates, we write $u_{l     } = y_{l     } e^{i \theta_{l     }}$ and $u = e^{i \phi}$. Moving the integral over the torus $(\BR/ 2 \pi \BZ)^d$ inside, in symbolic manner, the integral above turns into
\begin{equation*}
	\begin{split}
		2^d \int_{\BR_+^d} \hskip -3 pt \int_{(\BR/ 2 \pi \BZ)^d} \hskip -3 pt
		\lp \prod_{l      = 1}^{d} y_{l     }^{ 2 \nu_{l     } - 1} \hskip -2 pt \rp  \hskip -2 pt e^{i \sum_{l      = 1}^d k_{l     } \theta_l    + 2 i  x  \lp y_1 ... y_{d} \cos \lp  \sum_{l      = 1}^d \theta_{l     } + \phi \rp + \sum_{l      = 1}^{d} y_{l     }\- \cos \theta_l        \rp } d \theta_{d} ... d \theta_1 d y_{d} ... d y_1.
	\end{split}
\end{equation*}
Let us introduce the following definitions
\begin{align}
	\label{6eq: Theta (theta, y; x, phi)}
	\Theta_{\uk} (\utheta, \uy; x, \phi) = 2 x  y_1 ... y_{d} & \cos \lp \textstyle \sum_{l      = 1}^d \theta_{l     } + \phi \rp + \sum_{l      = 1}^d \lp k_{l     } \theta_l      + 2 x y_{l     }\- \cos \theta_l      \rp ,\\
	\label{6eq: Jk (y;x, phi)}
	J_{\uk}(\uy; x, \phi ) & = 
	\int_{(\BR/ 2 \pi \BZ)^d} e^{i \Theta_{\uk} (\utheta, \uy; x, \phi) } d\utheta, \\
	p_{2 \unu} & (\uy) = \prod_{l      = 1}^d   y_{l     }^{ 2 \nu_l      - 1},
\end{align}
with  $\uy = (y_1, ..., y_d)$, $\utheta = (\theta_1, ..., \theta_d)$.
Then \eqref{5eq: formal integral, C, 0} can be symbolically rewritten as
\begin{equation}
	\label{6eq: formal integral, C}
	J_{\unu, \uk} \lp x, e^{  i \phi} \rp = 2^d \int_{\BR_+^d} p_{2 \unu} (\uy) J_{\uk}(\uy; x, \phi ) d \uy, \hskip 10 pt x\in \BR_+, \, \phi \in \BR/2\pi \BZ.
\end{equation}

\begin{thm}\label{6thm: formal integral, C}
	Let  $(\umu, \um) \in \BL^d \times \BZ^{d+1}$ and $(\unu, \uk) \in \BC^d \times \BZ^d$  satisfy the relations given in Notation {\rm \ref{not: d, nu}}. Suppose  that $\unu$ lies in the union $$   \bigcup_{a \in \left[-\frac 1 2, 0 \right]} \left\{ \unu \in \BC^d : - \frac 1 2 < 2 \Re \nu_{l     } + a < 0 \text{ for all } l      = 1,..., d \right\}.$$

	{\rm(1).}  The integral in \eqref{6eq: formal integral, C} converges absolutely. Subsequently, we shall therefore use \eqref{6eq: formal integral, C} as the definition of $J_{\unu, \uk} \lp x, e^{  i \phi} \rp$. 
	
	{\rm(2).}  We have the {\rm(}genuine{\rm)} identity
	$$ J_{(\umu,\um)} \lp  x  e^{i\phi} \rp = e^{- i m_{d+1} \phi} 
	J_{\unu, \uk} \big( 2 \pi x^{\frac 1 {d+1}}, e^{i\phi} \big).$$
\end{thm}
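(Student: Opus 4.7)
I would deduce both parts by unfolding the torus integral $J_{\uk}(\uy;x,\phi)$ via Jacobi--Anger into a series of products of ordinary Bessel functions, verifying absolute convergence of the resulting iterated sum--integral under the stated hypothesis on $\unu$, and then matching Fourier coefficients in $\phi$ with the series \eqref{2eq: Bessel kernel over C, polar} of $J_{(\umu,\um)}$ at the level of Mellin transforms in $x$.

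\textbf{Step 1 (unfolding the torus integral).} In $\Theta_{\uk}(\utheta,\uy;x,\phi)$, only the first term couples the $\theta_l$ through $\cos\bigl(\sum_l\theta_l+\phi\bigr)$; the remaining $\sum_l(k_l\theta_l+2xy_l^{-1}\cos\theta_l)$ separates. Applying the Jacobi--Anger identity $e^{iz\cos\alpha}=\sum_{m\in\BZ}i^m J_m(z)e^{im\alpha}$ to the coupling factor and evaluating the resulting angular integrals termwise via $\int_0^{2\pi}e^{in\theta+iz\cos\theta}\,d\theta=2\pi i^n J_n(z)$ yields
\[
J_{\uk}(\uy;x,\phi)=(2\pi)^{d}\sum_{m\in\BZ}i^{(d+1)m+|\uk|}\,e^{im\phi}\,J_m(2xy_1\cdots y_d)\prod_{l=1}^{d}J_{k_l+m}\bigl(2xy_l^{-1}\bigr),
\]
the series converging absolutely and uniformly on compacta by the factorial bound $|J_n(t)|\leq(t/2)^{|n|}/|n|!$.

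\textbf{Step 2 (absolute convergence; Part (1)).} Substituting the series into \eqref{6eq: formal integral, C} and taking absolute values termwise, one must analyze, for each $m$,
\[
\int_{\BR_+^d}\prod_{l}y_l^{2\Re\nu_l-1}\cdot\bigl|J_m(2xy_1\cdots y_d)\bigr|\prod_{l}\bigl|J_{k_l+m}(2xy_l^{-1})\bigr|\,d\uy.
\]
Near $y_l=0$, $J_{k_l+m}(2xy_l^{-1})=O(y_l^{1/2})$ by Bessel asymptotics at infinity, forcing the endpoint requirement $2\Re\nu_l>-\tfrac12$. Near $y_l=\infty$, $J_{k_l+m}(2xy_l^{-1})=O(y_l^{-|k_l+m|})$ and the coupling factor $J_m(2xy_1\cdots y_d)$ contributes additional $O(y_l^{-1/2})$ decay from $|J_m(t)|\lll t^{-1/2}$, giving the opposite-sign requirement $2\Re\nu_l<-a$ for a parameter $a\in[-\tfrac12,0]$ common to \emph{all} $l$; this is exactly the role of the union over $a$ in the hypothesis. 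Uniformity in $m$ and summability of the resulting series in $m$ follow from the sharper estimate $|J_n(t)|\lll(t/2)^{|n|}/\Gamma(|n|+1)$, applied after a dyadic decomposition of $\BR_+^d$ so that on each dyadic box one of the two Bessel bounds dominates.

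\textbf{Step 3 (identification; Part (2)).} By \eqref{2eq: Bessel kernel over C, polar} and \eqref{3def: Bessel kernel j mu m}, the $m$-th Fourier coefficient of $J_{(\umu,\um)}(xe^{i\phi})$ has Mellin transform in $2s$ equal to $(2\pi)^{-1}G_{(\umu,\um+m\ue^n)}(s)$. From Step 1, the $m$-th Fourier coefficient of $e^{-im_{d+1}\phi}J_{\unu,\uk}\bigl(2\pi x^{1/(d+1)},e^{i\phi}\bigr)$ is an explicit multi-integral which, after substitutions $u_l=2xy_l^{-1}$ and matching the Mellin variable in $x$, factors into one-dimensional Mellin transforms of Bessel functions. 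Applying the classical identity
\[
\int_0^\infty J_\alpha(u)u^{s-1}\,du=\frac{2^{s-1}\Gamma((s+\alpha)/2)}{\Gamma(1+(\alpha-s)/2)}
\]
in its domain of validity, and using the relation $k_l+m+m_{d+1}=m_l+m$ from Notation~\ref{not: d, nu}, the product collapses to $G_{(\umu,\um+m\ue^n)}(s)$ with all numerical prefactors matching. Part (2) then follows from the Mellin-uniqueness part of Lemma~\ref{2lem: Ssis to Msis, C}. The hardest part is Step 2: simultaneously satisfying the two opposing sign constraints on $\Re\nu_l$ with a single $a$, while keeping $m$-uniformity so as to interchange $\sum_m$ with $\int_{\BR_+^d}$, is the essential analytic obstacle and the reason for the precise form of the hypothesis.
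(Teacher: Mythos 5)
Your Steps 1 and 2 are essentially the paper's own argument: Step 1 is the Jacobi--Anger unfolding recorded in Lemma~\ref{6lem: series expansion of Jk}, and Step 2 reproduces the dyadic-decomposition bounds of Lemma~\ref{6lem: bound for j nu m + m e}, which yield Proposition~\ref{6prop: J nu m (x, u)}. The paper also reduces Part~(1) to that proposition.

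Step 3 departs from the paper, and it is here that there is a genuine gap. You propose to compute the Mellin transform (in $x$) of the $m$-th Fourier coefficient of the multi-integral and match it with $G_{(\umu,\um+m\ue^n)}$. The difficulty is that this Mellin transform need not converge absolutely under the stated hypothesis on $\unu$. After your change of variables the Mellin-of-multi-integral factors as $\prod_{l=1}^{d+1}\int_0^\infty |j_{(0,m_l+m)}(u_l)|\,u_l^{\Re\sigma_l-1}du_l$ with $\sigma_l=\tfrac{s}{d+1}-2\mu_l$, and each factor converges only for $-|m_l+m|<\Re\sigma_l<\tfrac12$; for a common $\Re s$ to exist one needs $2(\Re\mu_j-\Re\mu_k)<|m_j+m|+\tfrac12$ for all $j,k$. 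Taking $m=-m_{d+1}$ (so $|m_{d+1}+m|=0$) this forces $\Re\mu_{d+1}-\min_l\Re\mu_l<\tfrac14$, i.e.\ $\min_l\Re\nu_l>-\tfrac14$. But the hypothesis (with $a$ near $0$) allows $\Re\nu_l$ anywhere in $(-\tfrac12,0)$, so for such $\unu$ the strips of absolute convergence are empty and the direct Mellin computation literally fails; Fubini cannot be applied. The paper avoids this precisely by \emph{not} taking a Mellin transform in $x$ of $j_{\unu,\um}$ directly: instead it pairs with a Schwartz test function $\varphi\in\SS(\BR_+)$, which supplies the extra decay in $x$, establishes the identity $j_{(\umu,\um)}(x)=j_{\unu,\um}(x^{1/(d+1)})$ via the Fourier type integral expression (Corollary~\ref{5cor: H, C}, i.e.\ the Miller--Schmid chain) in the restricted domain $\tfrac14>\Re\nu_1>\dots>\Re\nu_d>0$, and then analytically continues in $\unu$ to the full domain of absolute convergence of $j_{\unu,\um}$ (Propositions~\ref{6prop: j mu = j nu} and \ref{6prop: J mu m = J nu m}). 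Your proof is missing both the test-function regularization (or an equivalent device) and the analytic-continuation step in $\unu$; without one of these, Step~3 does not cover the stated range of $\unu$.

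A secondary issue: the "Mellin-uniqueness part of Lemma~\ref{2lem: Ssis to Msis, C}" does not apply here, because $j_{\unu,\um}$ and $j_{(\umu,\um)}$ are not in $\Ssis$ --- they decay only polynomially (they oscillate like $x^{-(n-1)/2n}$). A uniqueness argument for the Mellin transform of these kernels would need a separate justification, which the paper again sidesteps by working at the level of Hankel transforms of Schwartz test functions and invoking the injectivity there.
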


\subsection{\texorpdfstring{The Integral  $j_{\unu, \um} (x)$}{The Integral  $j_{\nu, m} (x)$}}
Let us consider the integral $j_{\unu, \um} (x)$ defined by
\begin{equation}\label{6def: integral j nu m}
	\begin{split}
		j_{\unu, \um} (x) =	2^d \int_{\BR_+^{ d}} j_{(0, m_{d+1})} \left(  x  y_1 ... y_{d} \right) 
		\prod_{l      = 1}^{d} y_l      ^{2\nu_l      - 1} j_{(0, m_l     )} \left( x y_{l     }\- \right)   
		d y_{d} ... d y_1,
	\end{split}
\end{equation}
with $\unu \in \BC^d $ and $\um \in \BZ^{d+1}$.

\subsubsection{Absolute convergence of  $j_{\unu, \um} (x)$}
In contrast to the real case, where the integral $j_{\unu, \udelta} (x)$ never absolutely converges, $j_{\unu, \um} (x)$ is actually absolutely convergent, if each component of $\unu$ lies in   certain vertical strips of width at least $\frac 1 4$. 

\begin{defn}\label{6def: hyper-strip Sd}
	For $\boldsymbol{a}, \boldsymbol{b} \in \BR^d$ such that $a_{l     } < b_{l     }$ for all $l      = 1, ..., d$, we define the open hyper-strip $\BS^d (\boldsymbol a,  \boldsymbol{b}) = \left\{ \unu \in \BC^d : \Re \nu_{l     } \in (a_{l     }, b_{l     }) \right\}$. We write $ \BS^d (a ,  b ) = \BS^d (a \ue^d,  b \ue^d)$ for simplicity.
\end{defn}

\begin{prop}\label{6prop: convergence of j nu m}
	Let $(\unu, \um) \in \BC^d \times \BZ^{d+1}$. The integral $j_{\unu, \um} (x)$ defined above by \eqref{6def: integral j nu m} absolutely converges if $\unu \in \bigcup_{a \in \left[- \frac 1 2 , |m_{d+1}| \right]} \BS^d \left(\frac 1 2 \left(- \frac 1 2 - a\right) \ue^d, \frac 1 2 \lp \left\| \um^d \right\| - a \ue^d \rp \right)$, with $\um^d = (m_1, ..., m_d)$ and $\left\| \um^d \right\| = (|m_1|, ..., |m_d|)$.
\end{prop}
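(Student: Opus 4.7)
My plan is to prove absolute convergence of the iterated integral in \eqref{6def: integral j nu m} by applying a single interpolation bound to the ``coupling'' factor $|j_{(0, m_{d+1})}(x y_1 \cdots y_d)|$; this decouples the $y_l$-integrations and reduces the question to the finiteness of $d$ one-variable integrals, which can be read off directly from the classical Bessel asymptotics.

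The key input from the classical theory is the following uniform interpolation bound. Since $j_{(0, m)}(w) = 2\pi i^m J_m(4\pi w)$ by \eqref{3eq: j (0, m)}, the power-series estimate $|J_m(w)| \lll_m w^{|m|}$ as $w \to 0^+$ combined with $|J_m(w)| \lll_m w^{-1/2}$ as $w \to \infty$ and the boundedness of $J_m$ on compact subsets of $\BR_+$ yields
\begin{equation*}
|j_{(0, m)}(w)| \lll_{m,\, \alpha} w^{-\alpha}, \hskip 10 pt w \in \BR_+,
\end{equation*}
uniformly for every exponent $\alpha$ in the compact interval $[-|m|,\, 1/2]$.

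Given $a \in [-1/2, |m_{d+1}|]$ in the proposition, I will apply this bound to the coupling factor with $\alpha = -a \in [-|m_{d+1}|,\, 1/2]$, producing $|j_{(0, m_{d+1})}(xy_1\cdots y_d)| \lll x^{a} \prod_{l=1}^d y_l^{a}$. Substituting into the integrand of \eqref{6def: integral j nu m} and invoking Tonelli, the absolute convergence reduces to the finiteness of
\begin{equation*}
\int_0^\infty y^{2\Re \nu_l + a - 1} |j_{(0, m_l)}(x/y)| \, dy \, = \, x^{2\Re \nu_l + a} \int_0^\infty u^{-2\Re \nu_l - a - 1} |j_{(0, m_l)}(u)| \, du
\end{equation*}
for each $l = 1, \ldots, d$, via the change of variable $u = x/y$. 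Using again $|j_{(0, m_l)}(u)| \lll u^{|m_l|}$ near zero and $|j_{(0, m_l)}(u)| \lll u^{-1/2}$ at infinity, the rightmost integral is finite provided $-2\Re \nu_l - a \in (-|m_l|,\, 1/2)$, equivalently
\begin{equation*}
\tfrac 1 2 \bigl(-\tfrac 1 2 - a\bigr) \, < \, \Re \nu_l \, < \, \tfrac 1 2 \bigl(|m_l| - a\bigr),
\end{equation*}
which is precisely the defining condition of the hyper-strip $\BS^d\bigl(\tfrac 1 2(-\tfrac 1 2 - a)\ue^d,\ \tfrac 1 2(\|\um^d\| - a\ue^d)\bigr)$. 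Letting $a$ range over $[-1/2, |m_{d+1}|]$ then produces the stated union.

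I do not foresee a serious obstacle. The only non-routine element is pinning down the interpolation exponent $\alpha = -a$: its admissible range $[-|m_{d+1}|,\, 1/2]$ is exactly what translates into the parameter range $a \in [-1/2, |m_{d+1}|]$ of the proposition. Each resulting strip has width $\tfrac 1 2 |m_l| + \tfrac 1 4$, where the $\tfrac 1 4$ reflects the $w^{-1/2}$ decay of classical Bessel functions at infinity and the $\tfrac 1 2 |m_l|$ reflects the $w^{|m_l|}$ vanishing at the origin.
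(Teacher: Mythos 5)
Your proof is correct and is essentially the paper's argument in streamlined form: the key step in both is to bound the coupling factor $j_{(0,m_{d+1})}(xy_1\cdots y_d)$ by $(xy_1\cdots y_d)^a$ for $a\in[-\tfrac12,|m_{d+1}|]$ (Lemma \ref{6lem: crude bound for j m}(2)), which decouples the variables, and then to use the two classical bounds $w^{|m_l|}$ and $w^{-1/2}$ on each remaining factor. Where the paper explicitly partitions $\BR_+^d$ into $2^d$ hypercubes $I_{\urho}$ and fixes one of the two bounds on each piece, you instead keep each factor $|j_{(0,m_l)}(x/y_l)|$ intact, invoke Tonelli, and check the resulting one-variable integrals at both endpoints — the same casework, organized per variable rather than per hypercube.
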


To show this, we first recollect some well-known facts concerning $J_m (x)$, as $j_{(0, m)} (x) = 2 \pi i^{m } J_{m} (4 \pi x)$ in view of  \eqref{3eq: j (0, m)}. 

Firstly, for $m \in \BN$, we have the Poisson-Lommel integral representation (see \cite[3.3 (1)]{Watson})
\begin{equation*}
	J_{m} (x) = \frac {\lp \frac 1 2x \rp^m } {\Gamma \left( m + \frac 1 2\right) \Gamma \left(\frac 1 2\right)} \int_0^{\pi} \cos (x \cos \theta) \sin^{2 m} \theta d \theta.
\end{equation*}
This yields the bound 
\begin{equation}\label{6eq: bound < x m}
	|J_{m} (x) | \leq \frac {\sqrt \pi \lp \frac 1 2x \rp^{|m|} } {\Gamma \left( |m| + \frac 1 2\right) },
\end{equation}
for $m \in \BZ$.
Secondly, the asymptotic expansion of $J_m (x)$ (see \cite[7.21 (1)]{Watson}) 
provides the estimate
\begin{equation}\label{6eq: bound < x -1/2}
	J_m (x) \lll_{\, m} x^{- \frac 1 2}.
\end{equation}
Combining these, we then arrive at the following lemma. 
\begin{lem}\label{6lem: crude bound for j m}
	Let $m$ be an integer.

	{\rm(1).} We have 
	the estimates 
	$$j_{(0, m)} (x) \lll_{\, m} x^{|m|}, \hskip 10 pt j_{(0, m)} (x) \lll_{\, m} x^{- \frac 1 2}.$$
	
	{\rm (2).} More generally, for any $a \in \left[- \frac 1 2, |m|\right]$, we have the estimate
	$$j_{(0, m)} (x) \lll_{\, m} x^{a}.$$
\end{lem}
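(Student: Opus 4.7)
The plan is to deduce both parts from the identity $j_{(0,m)}(x) = 2\pi i^m J_m(4\pi x)$ in \eqref{3eq: j (0, m)} together with the two classical bounds \eqref{6eq: bound < x m} and \eqref{6eq: bound < x -1/2} for $J_m$ that are recalled immediately above the lemma.

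For part (1), I would simply substitute $4\pi x$ into \eqref{6eq: bound < x m} to get $j_{(0,m)}(x) \lll_m x^{|m|}$, and substitute into \eqref{6eq: bound < x -1/2} to get $j_{(0,m)}(x) \lll_m x^{-1/2}$. Both steps are routine, and the implicit constants depend on $m$ through $\Gamma(|m|+\tfrac 12)$ in the first case and through the implied constant of the asymptotic expansion of $J_m$ in the second.

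For part (2), I would interpolate between the two bounds in (1) by splitting into the ranges $x \leq 1$ and $x \geq 1$. On $x \leq 1$, the inequality $x^{|m|} \leq x^{a}$ holds whenever $a \leq |m|$, so the bound $j_{(0,m)}(x) \lll_m x^{|m|}$ implies $j_{(0,m)}(x) \lll_m x^{a}$. On $x \geq 1$, the inequality $x^{-1/2} \leq x^{a}$ holds whenever $a \geq -\tfrac 12$, so $j_{(0,m)}(x) \lll_m x^{-1/2}$ implies $j_{(0,m)}(x) \lll_m x^a$. Combining the two ranges yields the claim for every $a \in [-\tfrac 12, |m|]$.

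There is no real obstacle here; the only subtlety is to make sure the hypothesis on $a$ is used in the right case of the dichotomy ($a \leq |m|$ near zero, $a \geq -\tfrac 12$ near infinity). The lemma is essentially a convex combination in the exponent of the two endpoint bounds already at hand.
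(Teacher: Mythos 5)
Your proposal is correct and matches the paper's intended argument: the paper simply states that Lemma~\ref{6lem: crude bound for j m} follows by "combining" the identity $j_{(0,m)}(x) = 2\pi i^m J_m(4\pi x)$ with the bounds \eqref{6eq: bound < x m} and \eqref{6eq: bound < x -1/2}, and your case split at $x=1$ is the standard way to interpolate the exponent. Nothing is missing.
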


\begin{proof}[Proof of Proposition \ref{6prop: convergence of j nu m}]
	
	We divide $\BR_+ = (0, \infty)$ into the union of two intervals, $ I_- \cup I_+ = (0, 1] \cup [1, \infty)$. Accordingly,  the integral in \eqref{6def: integral j nu m}  is partitioned into $2^d$ many integrals, each of which is supported on some hyper-cube $I_{\urho} = I_{\varrho_1 } \times ... \times I_{\varrho_d }$ for $\urho 
	\in \{+ , -\}^d$. For each such integral, we estimate  $j_{(0, m_l     )} \left(x y_{l     }\-\right)$ using the first or the second estimate in Lemma \ref{6lem: crude bound for j m} (1) according as $\varrho_{l     } = +$ or  $\varrho_{l     } = -$ and apply  the bound in Lemma \ref{6lem: crude bound for j m} (2) for $j_{(0, m_{d+1})} (x y_1 ... y_d) $. In this way, for any $a \in \left[- \frac 1 2, |m_{d+1} | \right]$, we have
	\begin{align*}
		&  2^d \int_{\BR_+^{ d}} 
		\left|j_{(0, m_{d+1})} \left(  x  y_1 ... y_{d} \right) \right| 
		\prod_{l      = 1}^{d} \left|y_l      ^{2\nu_l      - 1} j_{(0, m_l     )} \left( x y_{l     }\- \right) \right| 
		d y_{d} ... d y_1 
		\\
		\lll &\ \sum_{ \urho \in \{+, -\}^d } x^{ \sum_{ l       \in L_+ (\urho)} |m_l     | - \frac 1 2 \left|L_-(\urho)\right| + a} 
		I_{2 \unu +   a \ue^d, \um^d} ( \urho ),
	\end{align*}
	with the auxiliary definition
	\begin{equation*} 
		I_{\ulambda, \uk} ( \urho ) = \int_{ I_{\, \urho} } \Bigg( \prod_{ l       \in L_+ (\urho)}  y_l      ^{ \Re \lambda_l      - |k_{l     } | - 1} \Bigg) \Bigg(   \prod_{ l      \in L_- (\urho)}  y_l      ^{  \Re \lambda_l      - \frac 1 2} \Bigg) d y_{d} ... d y_1, \hskip 5 pt (\ulambda, \uk) \in \BC^d \times \BZ^d,
	\end{equation*}
	and $L_\pm (\urho) = \left\{ l      : \varrho_{l     } = \pm \right\}$. The implied constant depends only on $\um$ and $ d$. It is clear that all the integrals $I_{2 \unu +   a \ue^d, \um^d} ( \urho )$ absolutely converge if  $ - \frac 1 2   < 2 \, \Re \nu_{l     } + a < |m_l      | $ for all $l      = 1,..., d$. The proof is then completed. 
\end{proof}

\begin{rem}\label{6rem: d=1, j}
	When $d = 1$, one may apply the two estimates in Lemma  {\rm \ref{6lem: crude bound for j m} (1)} to $j_{(0, m_{2})} (x y )$ in the similar fashion as $j_{(0, m_1)} \left( x y \- \right)$. Then
	\begin{equation*}
	\begin{split}
	   2 \int_{0 }^{\infty} 
	\left|y ^{2\nu - 1} j_{(0, m_1)} \left( x y \- \right)   j_{(0, m_{2})}  \left(  x  y \right) \right|  & d y  \\
	 \lll_{\, m_1,\, m_2} \, x^{|m_1| -\frac 1 2}  
	\int_{1}^\infty  & y ^{2\Re \nu - |m_1| - \frac 3 2} d y  + x^{|m_2| -\frac 1 2} \int_{0}^1  y ^{2 \Re \nu + |m_2| - \frac 1 2} d y .
	\end{split}
	\end{equation*}
	Since both integrals above absolutely converge if $\, - |m_2| - \frac 1 2 < 2 \Re \nu <  |m_1| + \frac 1 2$, this also proves Proposition {\rm \ref{6prop: convergence of j nu m}} in the case $d=1$.
\end{rem}

\subsubsection{Equality between $j_{(\umu, \um)} (x) $ and $ j_{\unu, \um} \big (x^{\frac 1 {d+1}} \big)$}

\begin{prop}\label{6prop: j mu = j nu}
	Let $(\unu, \um) \in \BC^d \times \BZ^{d+1}$ be as in Proposition {\rm \ref{6prop: convergence of j nu m}} so that the integral $j_{\unu, \um} (x)$ absolutely converges. Suppose that $\umu \in \BL^{d}$ and $\unu \in \BC^d$ satisfy the relations given in Notation {\rm \ref{not: d, nu}}.  Then we have the identity
	$$j_{(\umu, \um)} (x) = j_{\unu, \um} \big (x^{\frac 1 {d+1}} \big).$$
\end{prop}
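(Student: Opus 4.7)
The plan is to establish the identity by comparing Mellin transforms and appealing to the uniqueness of Mellin inversion. By the Mellin--Barnes formula (3.22), the left-hand side $j_{(\umu,\um)}(x)$ is a continuous function on $\BR_+$ whose Mellin transform along a suitable contour is $G_{(\umu,\um)}(s)/2$ (after the change of variables $u=x^2$ that converts (3.22) into a standard Mellin inversion). By Proposition 6.6, the right-hand side $j_{\unu,\um}(x^{1/(d+1)})$ is likewise a continuous function on $\BR_+$. It therefore suffices to compute the Mellin transform of the right-hand side and check that it agrees on a common strip.

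The first step is the scaling $\EM[j_{\unu,\um}(\cdot^{1/(d+1)})](2s)=(d+1)\EM j_{\unu,\um}(2(d+1)s)$. Starting from the definition (6.12), I would decouple the integration variables by the substitution $t_l=xy_l^{-1}$, $l=1,\dots,d$, after which $x$ appears only in the factor $j_{(0,m_{d+1})}(x^{d+1}/(t_1\cdots t_d))$ and in an overall power of $x$. A second substitution $u=x^{d+1}/(t_1\cdots t_d)$ turns the innermost $x$-integral into a one-dimensional Mellin transform of $j_{(0,m_{d+1})}$, which equals $G_{m_{d+1}}(s/(d+1)-\mu_{d+1})/2$ by Mellin inversion of (3.18). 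Each remaining $t_l$-integral is a Mellin transform of $j_{(0,m_l)}$, producing the factor $G_{m_l}(s/(d+1)-\mu_l)/2$; here the alignment of exponents depends on the identity $|\unu|=-(d+1)\mu_{d+1}$, which follows from $\umu\in\BL^d$ together with $\nu_l=\mu_l-\mu_{d+1}$. Collecting the $d+1$ gamma factors yields $\EM[j_{\unu,\um}(\cdot^{1/(d+1)})](2s)=G_{(\umu,\um)}(s)/2$, matching the left-hand side.

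The main technical obstacle will be justifying the Fubini interchanges that convert the $(d+1)$-fold multiple integral into an iterated product of one-dimensional Mellin transforms. The hypothesis on $\unu$ in Proposition 6.6 is tailored precisely for this purpose: using the pointwise bounds of Lemma 6.8 on each $j_{(0,m_l)}$ (polynomial behaviour at the origin and $O(x^{-1/2})$ decay at infinity), one verifies that, for $\Re s$ in a suitable vertical strip, each exponent $\Re(s/(d+1)-\mu_l)$ lies in the strip of absolute convergence of $\EM j_{(0,m_l)}$, and that the full integrand against $x^{2(d+1)s-1}$ is jointly absolutely integrable in all $d+1$ variables. Once Fubini is licensed the computation above is rigorous, and Lemma 2.5 upgrades the equality of Mellin transforms to a pointwise equality of the two continuous functions, completing the proof.
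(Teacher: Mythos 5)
Your proposal takes a genuinely different route. The paper proves the identity by pairing both sides against Schwartz test functions $\varphi$, using the Hankel transform identities $\hmum\varphi(x)=2\int\varphi(y)j_{(\umu,\um)}(xy)\,y\,dy$ from \eqref{6eq: Hankel = integral of j} together with the iterated-integral formula of Corollary \ref{5cor: H, C}; it first needs the ordering $\Re\nu_1>\cdots>\Re\nu_d>0$ (with the further restriction $\Re\nu_1<1/4$ to secure absolute convergence), and then extends to the full domain of Proposition \ref{6prop: convergence of j nu m} by analytic continuation in $\unu$. Your Mellin-transform computation avoids the ordering and, once the Fubini interchange is justified, works directly on the entire domain with no analytic continuation. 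There is a pleasant coincidence here that you correctly point out but do not verify: with $\tau=\tilde\sigma-2\Re\mu_{d+1}$ and $a=-\tau$, the requirement that each $\Re(\tilde s-2\mu_l)$ lie in the strip $(-|m_l|,1/2)$ of absolute convergence of $\EM j_{(0,m_l)}$ reduces to \emph{exactly} the $a$-condition in Proposition \ref{6prop: convergence of j nu m}. This is worth spelling out, as it is what makes your Fubini step possible without shrinking the domain.

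There is, however, a real gap at the end. You cannot invoke Lemma \ref{2lem: Mellin isomorphism, Msis lambda j} (or more generally the $\Ssis/\Msis$ machinery of \S\ref{sec: all Ssis}) to upgrade equality of Mellin transforms to equality of functions: those spaces consist of functions with rapid decay at infinity, whereas $j_{(\umu,\um)}$ and $j_{\unu,\um}(\cdot^{1/(d+1)})$ decay only like a power of $x$. More seriously, the Mellin--Barnes formula \eqref{3def: Bessel kernel j mu m} is an \emph{inverse} Mellin integral along the bent contour $\EC_{(\umu,\um)}$; it does not by itself tell you that $\EM j_{(\umu,\um)}(\tilde s)=\frac12G_{(\umu,\um)}(\tilde s/2)$ on a vertical line — that would require either knowing the growth of $j_{(\umu,\um)}$ at $\infty$ (which the paper establishes only in Chapter 3) or a separate argument. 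The fix is to reverse the logic: your Fubini bound shows $j_{\unu,\um}(e^{t/(d+1)})e^{\tilde\sigma t}\in L^1(\BR)$, and Lemma \ref{1lem: vertical bound} shows $G_{(\umu,\um)}(\tilde\sigma/2+i\tau)\in L^1(d\tau)$ for $\tilde\sigma<\min_l(2\Re\mu_l)+\frac12$; hence Fourier inversion gives $j_{\unu,\um}(u^{1/(d+1)})=\frac1{2\pi i}\int_{(\tilde\sigma)}\frac12G_{(\umu,\um)}(\tilde s/2)u^{-\tilde s}\,d\tilde s$, and a contour deformation from $\{\Re s=\tilde\sigma/2\}$ (which lies to the right of all poles since $\tilde\sigma>\max_l(2\Re\mu_l-|m_l|)$) to $\EC_{(\umu,\um)}$ identifies this with $j_{(\umu,\um)}(u)$. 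Without replacing your final step by something along these lines, the proof as written does not close.
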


\begin{proof}
	Some change of variables turns the integral in Corollary \ref{5cor: H, C} into 
	\begin{equation*}
		2^{d+1} e^{i m \phi} \int_{\BR_+^{ d+1}} \varphi (y)  \,  
		j_{(0, m_{d+1})} \big(  (xy)^{\frac 1 {d+1}} y_1 ... y_{d} \big)
		\prod_{l      = 1}^{d} y_l      ^{2\nu_l        - 1} j_{(0, m_l     )} \big( (xy)^{\frac 1 {d+1}} y_{l     }\- \big)   
		y  d y d y_{d} ... d y_1. 
	\end{equation*}
	Corollary   \ref{3cor: H = h, C} and \ref{5cor: H, C}, along with the second formula in \eqref{6eq: Hankel = integral of j}, yield 
	\begin{equation*}
		\begin{split} 
			&2 \int_{\BR_+} \varphi (y) j_{(\umu, \um)} ( xy )   y d y = \\
			& 2^{d+1} \int_{\BR_+^{ d+1}} \varphi (y)  
			j_{(0, m_{d+1})} \big(  (xy)^{\frac 1 {d+1}}  y_1 ... y_{d} \big)
			\prod_{l      = 1}^{d} y_l      ^{2\nu_l        - 1} j_{(0, m_l     )} \big( (xy)^{\frac 1 {d+1}} y_{l     }\- \big)   
			y d y d y_{d} ... d y_1,
		\end{split}
	\end{equation*}
	for any $\varphi \in \SS (\BR_+)$, provided that $\Re \mu_1 > ... > \Re \mu_d > \Re \mu_{d+1} $ or equivalently $ \Re \nu_1  > ... > \Re \nu_d > 0$. In view of Proposition {\rm \ref{6prop: convergence of j nu m}}, the integral on the right hand side is absolute convergent at least when $\frac 1 4 > \Re \nu_1  > ... > \Re \nu_d > 0$. Therefore, the asserted equality holds on the domain $\left\{ \unu\in \BC^d : \frac 1 4 > \Re \nu_1  > ... > \Re \nu_d > 0 \right\}$ and remains valid on the whole domain of convergence for $j_{\unu, \um} (x) $ given in Proposition {\rm \ref{6prop: convergence of j nu m}} due to the principle of analytic continuation.
\end{proof}

\subsubsection{An Auxiliary Lemma}

\begin{lem}\label{6lem: bound for j nu m + m e}
	Let $(\unu, \um) \in \BC^d \times \BZ^{d+1}$ and $m \in \BZ$. Set $A = \max_{l     =1,..., d+1} \left\{  |m_{l     } | \right\}$.
	Suppose $\unu \in \bigcup_{a \in \left[-\frac 1 2, 0 \right]} \BS^d \left(- \frac 1 4 -\frac 1 2 a, - \frac 1 2 a \right)$. We have the estimate 
	\begin{align*}
		& \hskip 27 pt 2^d \int_{\BR_+^{ d}} 
		\left|j_{(0, m_{d+1} + m)} \left(  x  y_1 ... y_{d} \right) \right| 
		\prod_{l      = 1}^{d} \left|y_l      ^{2\nu_l      - 1} j_{(0, m_l      + m)} \left( x y_{l     }\- \right) \right|   
		d y_{d} ... d y_1  \\
		& \lll_{\, \um,\, d}   \sum_{  {\scriptstyle \urho \,\neq\, \urho^- } } \lp \frac {2 \pi e x} {|m| + 1} \rp^{\left| L _+ (\urho)\right|  |m|} (|m| + 1)^{2 \left|L_-(\urho)\right| + A  \left|L_+ (\urho)\right| } \\
		& \hskip  126 pt x^{ - \frac 1 2 \left|L_-(\urho)\right|} \max \left\{ x^{\left|L_+ (\urho)\right| A}, x^{- \left|L_+ (\urho)\right| A - \frac 1 2} \right\} \\
		& \hskip 27 pt + \lp \frac {2 \pi e x} {|m| + 1} \rp^{ |m|} (|m| + 1)^{ A } x^{ - \frac d 2} \max \left\{ x^{ A}, x^{-A} \right\},
	\end{align*}
	where $\urho \in \{+, -\}^d$, $\urho_- = (-,  ..., -)$ and $L_\pm (\urho) = \left\{ l      : \varrho_{l     } = \pm \right\}$.  
\end{lem}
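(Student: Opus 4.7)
The plan is to follow exactly the same partition strategy as in the proof of Proposition \ref{6prop: convergence of j nu m}, writing $\BR_+^d = \bigsqcup_{\urho \in \{+,-\}^d} I_\urho$ with $I_{\varrho_l} = [1,\infty)$ when $\varrho_l = +$ and $I_{\varrho_l} = (0,1]$ when $\varrho_l = -$, and handling the cube $I_{\urho^-}$ separately from all others. The essential difference from the previous argument is that, since we must track the dependence on the shift parameter $m$ (which can be arbitrarily large), we cannot use the crude bounds of Lemma \ref{6lem: crude bound for j m}; instead, we apply the refined estimate \eqref{2eq: bound of the Bessel kernel, C} from Lemma \ref{3lem: bound of the Bessel kernel, C}, specialized to rank $n=1$, $\umu = 0$, and $\um = (m_l)$, which reads (up to $\epsilon$) as $j_{(0,m_l+m)}(x) \lll (2\pi e x/(|m|+1))^{|m|}(|m|+1)^{A}\max\{x^{|m_l|+1/2}, x^{-|m_l|}\}$, with the $x$-dependence absorbed into the exponent $A$.

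On each cube $I_\urho$ with $\urho \ne \urho^-$, for every index $l \in L_+(\urho)$ the variable $y_l$ lies in $[1,\infty)$, so $x y_l^{-1} \leq x$; we bound $|j_{(0,m_l+m)}(x y_l^{-1})|$ by the full max-bound above, contributing one factor of $(2\pi ex/(|m|+1))^{|m|}(|m|+1)^A$ and, after using the max, the factor $\max\{x^{|L_+(\urho)|A}, x^{-|L_+(\urho)|A-1/2}\}$ once all $L_+$-coordinates are combined. For each $l \in L_-(\urho)$ the variable $y_l$ lies in $(0,1]$, so $x y_l^{-1} \geq x$ and we use the decaying estimate $|j_{(0,m_l+m)}(xy_l^{-1})| \lll (|m|+1)^{2} (xy_l^{-1})^{-1/2}$ (using the $m$-uniform large-argument decay of $J_\nu$), contributing $x^{-|L_-(\urho)|/2}$ and $(|m|+1)^{2|L_-(\urho)|}$. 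The remaining central factor $j_{(0,m_{d+1}+m)}(xy_1\cdots y_d)$ is disposed of by the trivial bound $\lll 1$, and the $y_l$-integrals convert into products of powers $\int_1^\infty y_l^{2\Re\nu_l - 1 - \alpha_l} dy_l$ or $\int_0^1 y_l^{2\Re\nu_l - 1/2}dy_l$; both converge thanks to the hypothesis $\unu \in \bigcup_{a \in [-1/2,0]}\BS^d(-1/4 - a/2, -a/2)$, by choosing $a$ appropriately to accommodate the two exponent cases.

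For the remaining cube $I_{\urho^-}$, every $y_l \leq 1$ so that $y_1\cdots y_d \leq 1$; one applies the $(xy_l^{-1})^{-1/2}$ decay for all $d$ peripheral factors (producing $x^{-d/2}$ and integrable powers $y_l^{2\Re\nu_l - 1/2}$ on $(0,1]$), and reserves the full Lemma \ref{3lem: bound of the Bessel kernel, C} estimate for the single central factor $j_{(0,m_{d+1}+m)}(xy_1\cdots y_d)$, whose argument lies in $(0,x]$. This produces the unique $(2\pi ex/(|m|+1))^{|m|}$, the $(|m|+1)^A$, and the $\max\{x^A,x^{-A}\}$ term that constitute the second line of the claimed bound (the powers of $y_1\cdots y_d$ inside the $\max$ can be absorbed by further shrinking the exponent within the admissible range determined by $\unu$).

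The only genuine obstacle is bookkeeping, namely verifying that the choices of exponent $a \in [-1/2, 0]$ within the hyper-strip hypothesis, of branch inside the $\max$ of Lemma \ref{3lem: bound of the Bessel kernel, C}, and of integration range in $y_l$ can be made simultaneously so that every $y_l$-integral converges and the aggregated $x$- and $|m|$-dependence lines up with the stated bound. The combinatorics organize naturally according to $|L_+(\urho)|$ and $|L_-(\urho)|$, and each partition cube reproduces exactly one term of the sum over $\urho \ne \urho^-$, with the cube $I_{\urho^-}$ giving the final summand; summing up yields the asserted estimate.
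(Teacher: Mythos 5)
The crucial point your sketch misses is that the estimate of Lemma \ref{3lem: bound of the Bessel kernel, C}, specialized to $n=1$, is \emph{uniform in the shift $m$} but far from sharp for a fixed $m$, and using it on the $L_+(\urho)$-coordinates produces a divergent $y_l$-integral. Concretely, for $l \in L_+(\urho)$ one has $y_l \in [1,\infty)$ and the argument $x y_l^{-1}$ becomes arbitrarily small; in that regime the $\max$ in \eqref{2eq: bound of the Bessel kernel, C} selects the branch $(x y_l^{-1})^{B_- - 2\epsilon}$ with $B_- = -|m_l|$, giving a bound proportional to $x^{-|m_l|} y_l^{|m_l|}$. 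Combined with the prefactor $(2\pi e (xy_l^{-1})/(|m|+1))^{|m|} = (2\pi e x/(|m|+1))^{|m|}\, y_l^{-|m|}$ and the measure $y_l^{2\Re\nu_l - 1}\, dy_l$, the $y_l$-integral reduces to
\[
\int_1^{\infty} y_l^{\,2\Re\nu_l - 1 + |m_l| - |m|}\, dy_l,
\]
which diverges whenever $|m| \leqslant |m_l| + 2\Re\nu_l$. Since the hypothesis only ensures $|\Re\nu_l| < \tfrac14$ and $m$ ranges over all integers (the lemma is precisely about tracking the $m$-dependence, with $\um$ and $\unu$ fixed), this fails already when $m=0$ and some $m_l \neq 0$. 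The exponent $B_-$ in Lemma \ref{3lem: bound of the Bessel kernel, C} is constructed as the worst-case contour placement over \emph{all} $m$ (it corresponds to $m_l + m = 0$), so it badly overestimates $j_{(0,m_l+m)}$ near the origin, whose true vanishing order is $|m_l + m|$. The paper circumvents this by using the Poisson--Lommel small-argument bound from Lemma \ref{6lem: crude bound for j m, with m}\,(1), namely $j_{(0,m_l+m)}(xy_l^{-1}) \lll (2\pi x y_l^{-1})^{|m_l+m|}/\Gamma(|m_l+m|+\tfrac12)$, which contributes the decaying power $y_l^{-|m_l+m|}$; together with the factor $(y_1\cdots y_d)^a$ from the refined central bound in Lemma \ref{6lem: crude bound for j m, with m}\,(2), the integrand $y_l^{2\Re\nu_l + a - 1 - |m_l + m|}$ is integrable on $[1,\infty)$ exactly because the hypothesis gives $2\Re\nu_l + a < 0 \leqslant |m_l + m|$.

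Two secondary points. First, discarding the central factor by the trivial bound $\lll 1$ (as you propose for the cubes $\urho \neq \urho^-$) amounts to fixing $a = 0$, which forces $\Re\nu_l < 0$ for all $l$ and does not cover the full hyper-strip in the hypothesis; the paper needs the exponent $a \in [-\tfrac12, 0]$ in Lemma \ref{6lem: crude bound for j m, with m}\,(2) precisely to accommodate slightly positive $\Re\nu_l$. Second, the decay estimate of Lemma \ref{6lem: crude bound for j m, with m}\,(1) gives $(|m_l+m|+1)(xy_l^{-1})^{-1/2}$ with a \emph{single} power of $(|m_l+m|+1) \asymp_{\um} (|m|+1)$, not $(|m|+1)^2$; the extra power of $(|m|+1)$ appearing in the final bound is produced by the factor $(|m_{d+1}+m|+1)^{-2a}$ from the central estimate and is not attached to each $L_-(\urho)$-index individually. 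In short, the ``bookkeeping'' you defer is not routine: the choice of Lemma \ref{3lem: bound of the Bessel kernel, C} over Lemma \ref{6lem: crude bound for j m, with m} on the $L_+$-coordinates is a genuine mistake that no amount of exponent-shuffling will repair.
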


Firstly, we require the bound \eqref{6eq: bound < x m}  for $J_m (x)$.
Secondly, we observe that when $x \geq (|m| + 1)^2$ the bound \eqref{6eq: bound < x -1/2} for $J_m (x)$ can be improved  so that the implied constant becomes absolute. This follows from the asymptotic expansion of $J_m (x)$ given in \cite[\S 7.13.1]{Olver}.
Moreover, we have Bessel's integral representation (see \cite[2.2 (1)]{Watson})
\begin{equation*}
	J_m (x) = \frac 1 {2 \pi} \int_0^{2 \pi} \cos \lp m \theta - x \sin \theta \rp d \theta,
\end{equation*}
which yields the bound
\begin{equation}\label{6eq: bound < 1}
	|J_m (x)| \leq 1.
\end{equation}
We then have the following lemma (compare \cite[Proposition 8]{Harcos-Michel}).

\begin{lem}\label{6lem: crude bound for j m, with m}
	Let $m$ be an integer.

	{\rm (1).} The following two estimates hold
	$$j_{(0, m)} (x) \lll \frac { \lp  2 \pi x \rp^{|m|} } {\Gamma \left( |m| + \frac 1 2\right) }, \hskip 10 pt j_{(0, m)} (x) \lll \frac {|m| + 1} {\sqrt x}, $$
	with absolute implied constants.
	
	{\rm (2).} For any $a \in \left[- \frac 1 2, 0 \right]$ we have the estimate
	$$j_{(0, m)} (x) \lll   \lp {(|m|+1)^{- 2 } } {x} \rp^{ a},$$
	with absolute implied constant.
\end{lem}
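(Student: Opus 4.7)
The plan is to translate everything back to the classical $J_m$ via the identity $j_{(0,m)}(x)=2\pi i^{m}J_m(4\pi x)$ recalled in \eqref{3eq: j (0, m)}, and then combine three classical bounds for $J_m$, tracking carefully that the implied constants can be taken absolute.

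For part (1), the first estimate is immediate from the Poisson--Lommel representation cited just before the statement: \eqref{6eq: bound < x m} gives $|J_m(x)|\le\sqrt\pi(x/2)^{|m|}/\Gamma(|m|+1/2)$, and plugging in $x=4\pi x$ yields $j_{(0,m)}(x)\lll (2\pi x)^{|m|}/\Gamma(|m|+1/2)$ with an absolute constant. For the second estimate I would split into two ranges. When $4\pi x\le (|m|+1)^2$, use Bessel's integral representation \eqref{6eq: bound < 1} to get $|J_m(4\pi x)|\le 1\le (|m|+1)/(2\sqrt{\pi x})$. When $4\pi x\ge (|m|+1)^2$, invoke Olver's refined asymptotic for $J_m$ (as recorded in the paragraph preceding Lemma~\ref{6lem: crude bound for j m, with m}, citing \cite[\S 7.13.1]{Olver}), which yields $|J_m(4\pi x)|\lll 1/\sqrt{x}$ with an absolute constant in this range. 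Combining the two regimes gives $j_{(0,m)}(x)\lll (|m|+1)/\sqrt x$ with an absolute constant.

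For part (2), I would deduce the estimate by interpolating between the uniform bound $j_{(0,m)}(x)\lll 1$ (from \eqref{6eq: bound < 1}) and the bound $j_{(0,m)}(x)\lll(|m|+1)/\sqrt x$ just established, via a simple case analysis on the sign of $\log\bigl((|m|+1)^{-2}x\bigr)$. Set $y=(|m|+1)^{-2}x$. If $y\le 1$, then for $a\in[-\tfrac12,0]$ we have $y^{a}\ge 1$, so the uniform bound $j_{(0,m)}(x)\lll 1$ yields $j_{(0,m)}(x)\lll y^{a}=((|m|+1)^{-2}x)^{a}$. If $y\ge 1$, then the bound of (1) reads $j_{(0,m)}(x)\lll y^{-1/2}$; since $a\ge -\tfrac12$ and $y\ge 1$, we have $y^{a}\ge y^{-1/2}$, so again $j_{(0,m)}(x)\lll y^{a}$. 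In both cases the implied constant remains absolute.

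The only delicate point is ensuring that the asymptotic bound $|J_m(4\pi x)|\lll 1/\sqrt{x}$ in the regime $4\pi x\ge(|m|+1)^{2}$ carries an \emph{absolute} implied constant (not one depending on $m$); this is the crux of the lemma and the reason one must invoke the uniform asymptotic expansion in \cite[\S 7.13.1]{Olver} rather than the classical asymptotic \cite[7.21(1)]{Watson} used for \eqref{6eq: bound < x -1/2} in Lemma~\ref{6lem: crude bound for j m}. Once this uniform bound is in hand, the rest of the argument is a routine two-case comparison.
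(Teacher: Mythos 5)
Your proposal is correct and follows exactly the route the paper lays out: the three facts stated immediately before the lemma — the Poisson--Lommel bound \eqref{6eq: bound < x m}, Olver's uniform asymptotic for $J_m$ on $x\geq(|m|+1)^2$, and Bessel's integral bound \eqref{6eq: bound < 1} — are combined by a two-case split for the $(|m|+1)/\sqrt x$ estimate and an interpolation for part (2). The paper leaves this routine assembly to the reader, and your write-up supplies exactly the intended argument (your threshold $4\pi x\lessgtr(|m|+1)^2$ and the inequality $1\leq(|m|+1)/(2\sqrt{\pi x})$ in the small-$x$ regime check out).
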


\begin{proof}[Proof of Lemma \ref{6lem: bound for j nu m + m e}] Our proof here is similar to that of Proposition  \ref{6prop: convergence of j nu m}, except that
	\begin{itemize}
		\item [-]   Lemma \ref{6lem: crude bound for j m, with m} (1) and (2) are applied in place of Lemma  \ref{6lem: crude bound for j m}  (1) and (2) respectively to bound $j_{(0, m_l      + m)} \left( x y_{l     }\- \right) $ and $j_{(0, m_{d+1} + m)} \left(  x  y_1 ... y_{d} \right) $, and
		\item [-]      the first estimate in Lemma \ref{6lem: crude bound for j m, with m} (1) is used for $j_{(0, m_{d+1} + m)} \left(  x  y_1 ... y_{d} \right)$  in the case $\urho = \urho_- $. 
	\end{itemize}
	In this way, one obtains the following estimate 
	\begin{align*}
		& \ \, 2^d \int_{\BR_+^{ d}} 
		\left|j_{(0, m_{d+1} + m)} \left(  x  y_1 ... y_{d} \right) \right|    
		\prod_{l      = 1}^{d} \left|y_l      ^{2\nu_l      - 1} j_{(0, m_l      + m)} \left( x y_{l     }\- \right) \right|
		d y_{d} ... d y_1 
		\\
		\lll \  & \sum_{ {\scriptstyle \urho \neq \urho_- } } \frac {\prod_{l       \in L_- (\urho)}  (|m_l      + m| + 1)^{ 1 - 2 a } } {\prod_{l       \in L_+ (\urho)} \Gamma \left( |m_l      + m| + \frac 1 2\right) (|m_l      + m| + 1)^{2a} } \\
		& \hskip 77 pt 
		(2 \pi x)^{  \sum_{ l       \in L_+ (\urho)} |m_l      + m | - \frac 1 2 \left|L_-(\urho)\right| + a } I_{2 \unu +   a \ue^d, \um^{d } + m \ue^{d }} (\urho) \\
		& + \frac {\prod_{l     =1}^d  (|m_{l     } + m| + 1) } { \Gamma \left( |m_{d+1} + m| + \frac 1 2\right) } 
		(2 \pi x)^{ - \frac {d} 2 +  |m_{d+1} + m | } I_{2 \unu +   |m_{d+1} + m| \ue^d } (\urho_-),
	\end{align*}
	\footnote{When $\urho = \urho_-$, $\uk$ does not occur in the definition of $I_{\ulambda, \uk} ( \urho_- )$ and   is therefore suppressed from the subscript.}with $a \in \left[- \frac 1 2, 0\right] $. 
	Suppose that $- \frac 1 2 - a < 2 \, \Re \nu_{l     } < - a $ for all $l      = 1, ..., d$, then   the integrals $I_{2 \unu +  a \ue^d, \um^{d } + m \ue^{d }} (\urho)$ and $I_{2 \unu +   |m_{d+1} + m| \ue^d } (\urho_-)$  are absolutely convergent and   of size $O_d \lp \prod_{l      \in L_+ (\urho)}( |m_{l     } + m| +1 )\- \rp$ and $O_d \lp  (|m_{d+1} + m| +1 )^{- d} \rp$ respectively. A final estimation using  Stirling's asymptotic formula yields our asserted bound.
\end{proof}

\begin{rem}\label{6rem: d=1, j m + m e}
	In the case $d=1$, modifying over the ideas in Remark {\rm \ref{6rem: d=1, j}}, one may show the  slightly improved estimate
	\begin{align*}
		2 \int_{0 }^{\infty} 
		\left|y ^{2\nu - 1} j_{(0, m_1 + m)} \left( x y \- \right) j_{(0, m_{2} + m)} \left(  x  y \right) \right| &    d y   \\
		\lll_{\, m_1,\, m_2} \lp \frac {2 \pi e x} {|m| + 1} \rp^{ |m|} & (|m| + 1)^{ A }  x^{ - \frac 1 2} \max \left\{ x^{A}, x^{- A} \right\},
	\end{align*}
	given that $|\Re \nu | < \frac 1 4$, with $A = \max \left\{ |m_1|, |m_2| \right\}$.
\end{rem}

\subsection{\texorpdfstring{The Series of Integrals $J_{\unu, \um} (x, u)$}{The Series of Integrals $J_{\nu, m} (x, u)$}}
We define the following series of integrals,
\begin{equation}\label{6def: J nu m (x, u)}
	\begin{split}
		& J_{\unu, \um} (x, u) = \frac 1 {2 \pi} \sum_{m \in \BZ}  u^m  j_{ \unu, \um + m\ue^n } (x)  \\
		= \ & \frac {2^{d-1}} { \pi} \sum_{m \in \BZ}  u^m \int_{\BR_+^{ d}}  
		j_{(0, m_{d+1} + m)} \left(  x  y_1 ... y_{d} \right)    
		\prod_{l      = 1}^{d} y_l      ^{2\nu_l      - 1} j_{(0, m_l      + m)} \left( x y_{l     }\- \right)
		d y_{d} ... d y_1,
	\end{split}
\end{equation}
with $x \in \BR_+$ and $u \in \BC$, $|u| = 1$.

\subsubsection{Absolute Convergence of $J_{\unu, \um} (x, u)$}

We have the following  direct consequence of Lemma \ref{6lem: bound for j nu m + m e}.

\begin{prop}\label{6prop: J nu m (x, u)}
	Let $(\unu, \um) \in \BC^d \times \BZ^{d+1}$. The series of integrals $J_{\unu, \um} (x, u)$ defined by \eqref{6def: J nu m (x, u)} is absolutely convergent if  $\unu \in \bigcup_{a \in \left[-\frac 1 2, 0 \right]} \BS^d \left(- \frac 1 4 -\frac 1 2 a, - \frac 1 2 a \right)$. 
\end{prop}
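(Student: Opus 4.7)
\textbf{Proof plan for Proposition \ref{6prop: J nu m (x, u)}.} Since $|u| = 1$, absolute convergence of the series in \eqref{6def: J nu m (x, u)} reduces to showing that
\[
\sum_{m \in \BZ} \left| j_{\unu, \um + m\ue^n}(x) \right| < \infty,
\]
and moreover that the integral defining each $j_{\unu, \um + m\ue^n}(x)$ is itself absolutely convergent. The latter is already guaranteed by Proposition \ref{6prop: convergence of j nu m}: the hypothesis $\unu \in \BS^d(-\tfrac 1 4 - \tfrac 1 2 a, -\tfrac 1 2 a)$ for some $a \in [-\tfrac 1 2, 0]$ is a fortiori compatible with its convergence range (with the shifted integer-tuple $\um + m\ue^n$), so the only real task is controlling the sum over $m$.

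To this end, I will apply Lemma \ref{6lem: bound for j nu m + m e} term by term. That lemma provides, for each $m \in \BZ$, an explicit majorant
\[
\left| j_{\unu, \um + m\ue^n}(x) \right| \lll_{\,\um,\,d}  \Phi_m(x),
\]
where $\Phi_m(x)$ is a finite sum (indexed by $\urho \in \{+,-\}^d$) whose dependence on $m$ has two crucial features: first, each summand carries a factor of the form $\bigl(\tfrac{2\pi e x}{|m|+1}\bigr)^{k |m|}$ with $k \geq 1$ (namely $k = |L_+(\urho)|$ for $\urho \neq \urho_-$, and $k = 1$ for the remaining term); second, the remaining $m$-dependence is purely polynomial in $|m|$. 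The super-exponential decay furnished by the first factor manifestly dominates the polynomial growth coming from both the $(|m|+1)^{\bullet}$ factor in the majorant and the boundedness of $|u^m| = 1$, so $\sum_m \Phi_m(x) < \infty$ for each fixed $x \in \BR_+$. This gives the asserted absolute convergence.

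The only technical point worth flagging is that the hypothesis on $\unu$ in Lemma \ref{6lem: bound for j nu m + m e} is the same as in the statement of this proposition, so no additional restriction on $\unu$ is imposed: this is the real content of reducing the series (which involves $\um + m\ue^n$) to the lemma (whose implied constants depend only on the fixed $\um$, not on $m$). I will also remark in passing that the same argument yields uniform convergence on compacta in $x$ and locally uniform convergence in $\unu$, a fact that will be used elsewhere to justify interchanging the series with differentiation or with integration in applications of $J_{\unu, \um}(x,u)$. The main (and only) obstacle is bookkeeping: verifying that the exponents of $x$ appearing in $\Phi_m(x)$ are independent of $m$ (they involve only the fixed $\um$ and $d$), so the $x$-behavior does not interact with the $m$-summation.
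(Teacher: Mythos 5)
Your argument is correct and is essentially the paper's own: the paper states the proposition as ``a direct consequence of Lemma \ref{6lem: bound for j nu m + m e},'' and your bookkeeping — that each summand of the majorant carries a factor $\bigl(\tfrac{2\pi e x}{|m|+1}\bigr)^{k|m|}$ with $k \geq 1$ whose super-exponential decay swamps the residual polynomial dependence on $|m|$, while the $x$-exponents are independent of $m$ — is precisely what makes that implication go through. The appeal to Proposition \ref{6prop: convergence of j nu m} for term-by-term convergence is correct (the strip $\BS^d(-\tfrac14 - \tfrac12 a, -\tfrac12 a)$ sits inside $\BS^d(-\tfrac14 - \tfrac12 a, \tfrac12(\|\um^d + m\ue^d\| - a\ue^d))$ for every $m$), though it is in fact subsumed by Lemma \ref{6lem: bound for j nu m + m e}, which already controls the integral of absolute values.
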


\subsubsection{Equality between $J_{(\umu, \um)} \lp x e^{i \phi} \rp $ and $J_{\unu, \um} \big(x^{\frac 1 {d+1}}, e^{i \phi} \big)$}

In view of Proposition \ref{6prop: j mu = j nu} along with \eqref{2eq: Bessel kernel over C, polar} and \eqref{6def: J nu m (x, u)}, the following proposition is readily established.

\begin{prop}\label{6prop: J mu m = J nu m}
	Let $(\unu, \um) \in \BC^d \times \BZ^{d+1}$. Suppose that $ \unu $ satisfies the condition in Proposition {\rm \ref{6prop: J nu m (x, u)}} so that $J_{\unu, \um} (x, u)$ is absolutely convergent. Then, given that $\umu $ and $\unu$ satisfy the relations   in Notation {\rm \ref{not: d, nu}}, we have the identity
	$$J_{(\umu, \um)} \lp x e^{i \phi} \rp = J_{\unu, \um} \big(x^{\frac 1 {d+1}}, e^{i \phi} \big), $$
	with $x \in \BR_+$ and $\phi \in \BR/2\pi \BZ$.
\end{prop}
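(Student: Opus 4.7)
The plan is to combine the rank-by-rank identity for the Mellin--Barnes Bessel kernels $j_{(\umu, \um)}$ from Proposition \ref{6prop: j mu = j nu} with the Fourier expansion \eqref{2eq: Bessel kernel over C, polar} of $J_{(\umu,\um)}$ and the defining series \eqref{6def: J nu m (x, u)} of $J_{\unu, \um}$. Nothing new is required; the task is to verify that the hypothesis on $\unu$ is strong enough to apply Proposition \ref{6prop: j mu = j nu} term by term in the Fourier expansion, and to justify summing the resulting identities.

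First I would fix $m \in \BZ$ and apply Proposition \ref{6prop: j mu = j nu} to the shifted tuple $\um + m\ue^n \in \BZ^{d+1}$, where $\ue^n = (1,\ldots,1)$. Since $\nu_l = \mu_l - \mu_{d+1}$ depends only on $\umu$, the relations of Notation \ref{not: d, nu} are unchanged by the shift in $\um$, so $\unu$ still plays the role of the differences corresponding to $(\umu, \um + m\ue^n)$. What must be checked is that the hypothesis on $\unu$ in Proposition \ref{6prop: convergence of j nu m} is satisfied with $\um$ replaced by $\um + m\ue^n$, for every $m \in \BZ$. This is where the stronger hypothesis carried over from Proposition \ref{6prop: J nu m (x, u)} is used: the strip $\bigcup_{a \in [-\frac 1 2, 0]} \BS^d (-\frac 1 4 -\frac 1 2 a, -\frac 1 2 a)$ is contained in every set of the form $\bigcup_{a \in [-\frac 1 2, |m_{d+1} + m|]} \BS^d \big(\frac 1 2(-\frac 1 2 - a), \frac 1 2(\|\um^d + m\ue^d\| - a \ue^d)\big)$, since the admissible range of $a$ only widens and the upper endpoint only increases as $|m|$ grows. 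Hence Proposition \ref{6prop: j mu = j nu} yields the pointwise equality
\begin{equation*}
j_{(\umu, \um + m\ue^n)} (x) \;=\; j_{\unu,\, \um + m\ue^n} \big(x^{\frac 1 {d+1}} \big)
\end{equation*}
for every $m \in \BZ$ and $x \in \BR_+$.

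Second, I would multiply both sides by $\frac{1}{2\pi} e^{im\phi}$ and sum over $m \in \BZ$. On the left the series is precisely the Fourier expansion \eqref{2eq: Bessel kernel over C, polar}, which converges absolutely by Lemma \ref{3lem: bound of the Bessel kernel, C}; on the right the series is the definition \eqref{6def: J nu m (x, u)} of $J_{\unu, \um}(x^{1/(d+1)}, e^{i\phi})$, which converges absolutely by Proposition \ref{6prop: J nu m (x, u)} under the standing hypothesis on $\unu$. Equating the two sums gives
\begin{equation*}
J_{(\umu, \um)}\!\lp x e^{i \phi} \rp \;=\; \frac{1}{2\pi} \sum_{m \in \BZ} e^{im\phi}\, j_{\unu,\, \um + m\ue^n}\!\big(x^{\frac 1 {d+1}}\big) \;=\; J_{\unu, \um}\big(x^{\frac 1 {d+1}}, e^{i\phi}\big),
\end{equation*}
which is the desired identity.

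The only point requiring any care is the uniform-in-$m$ check of the convergence condition for Proposition \ref{6prop: j mu = j nu}, and this is immediate from the monotonicity of the endpoints of the hyper-strips in Proposition \ref{6prop: convergence of j nu m} with respect to $\|\um\|$. All remaining steps are bookkeeping, so the proposition follows without further analytic input.
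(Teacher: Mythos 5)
Your proof is correct and follows the same route the paper takes: Proposition \ref{6prop: j mu = j nu} term by term in the Fourier expansion \eqref{2eq: Bessel kernel over C, polar}, matched against the defining series \eqref{6def: J nu m (x, u)}, with the convergence hypothesis checked uniformly in $m$. The inclusion of hyper-strips you verify (for each $a\in[-\tfrac12,0]$ the lower endpoints agree and $-\tfrac a2 \leq \tfrac12(|m_l+m|-a)$) is exactly the point the paper leaves implicit in the phrase ``readily established,'' so your write-up just supplies the details.
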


\subsection{Proof of Theorem \ref{6thm: formal integral, C}}

\begin{lem}\label{6lem: series expansion of Jk}
	Let $ \uk \in \BZ^d $ and recall the integral $J_{\uk}(\uy; x, \phi )$ defined by {\rm (\ref{6eq: Theta (theta, y; x, phi)}, \ref{6eq: Jk (y;x, phi)})}. We have  the following absolutely convergent  series expansion of $J_{\uk}(\uy; x, \phi )$
	\begin{equation} \label{6eq: series expansion of J k}
		J_{\uk}(\uy; 2 \pi x, \phi ) =  \frac {1} { 2 \pi} \sum_{m \in \BZ}  e^{i m\phi} 
		j_{(0,  m)} \left(  x  y_1 ... y_{d} \right) \prod_{l      = 1}^{d}  j_{(0, k_l      + m)} \left( x y_{l     }\- \right).
	\end{equation}
\end{lem}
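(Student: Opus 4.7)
The plan is to expand each exponential of cosine in the integrand of $J_{\uk}(\uy;2\pi x,\phi)$ via the Jacobi--Anger identity
\begin{equation*}
	e^{iz\cos\theta}=\sum_{m\in\BZ}i^{m}J_{m}(z)e^{im\theta},
\end{equation*}
then carry out the $d$-fold $\utheta$-integration termwise using orthogonality of the characters $e^{im\theta}$ on $\BR/2\pi\BZ$, and finally rewrite the resulting Bessel functions in terms of $j_{(0,m)}$ via the identity $j_{(0,m)}(x)=2\pi i^{m}J_{m}(4\pi x)$ from \eqref{3eq: j (0, m)}.

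Concretely, I would first factor
\begin{equation*}
e^{i\Theta_{\uk}(\utheta,\uy;2\pi x,\phi)} = e^{4\pi i x y_1\cdots y_d\cos(\sum_l\theta_l+\phi)}\cdot\prod_{l=1}^{d}e^{ik_l\theta_l}\cdot\prod_{l=1}^{d}e^{4\pi i x y_l^{-1}\cos\theta_l},
\end{equation*}
apply Jacobi--Anger to the first factor with summation index $m_0$ and to each of the $d$ factors $e^{4\pi i x y_l^{-1}\cos\theta_l}$ with index $m_l$. Inserting into the torus integral and integrating in $\theta_l$ forces the Kronecker relation $m_0+k_l+m_l=0$, i.e.\ $m_l=-m_0-k_l$, leaving only the sum over $m_0\in\BZ$:
\begin{equation*}
J_{\uk}(\uy;2\pi x,\phi)=(2\pi)^{d}\sum_{m_0\in\BZ}i^{m_0}J_{m_0}(4\pi x y_1\cdots y_d)e^{im_0\phi}\prod_{l=1}^{d}i^{-m_0-k_l}J_{-m_0-k_l}(4\pi x y_l^{-1}).
\end{equation*}
Using $J_{-m}(z)=(-1)^{m}J_{m}(z)$, one has $i^{-m_0-k_l}J_{-m_0-k_l}(z)=i^{m_0+k_l}J_{m_0+k_l}(z)$, which converts each factor into $\frac{1}{2\pi}j_{(0,m_0+k_l)}(xy_l^{-1})$, and likewise $i^{m_0}J_{m_0}(4\pi xy_1\cdots y_d)=\frac{1}{2\pi}j_{(0,m_0)}(xy_1\cdots y_d)$. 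Collecting the $d+1$ factors of $(2\pi)^{-1}$ against the overall $(2\pi)^{d}$ yields exactly \eqref{6eq: series expansion of J k}, after relabelling $m_0$ as $m$.

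The main technical point will be justifying the interchange of the double (actually $(d+1)$-fold) series with the $d$-fold torus integral. This is the only step requiring care; it is not a soft argument since we need absolute convergence of the iterated expansion before Fubini applies. The standard bound $|J_{m}(z)|\leq (|z|/2)^{|m|}/|m|!$ for large $|m|$ (combined with $|J_m(z)|\leq 1$ for small $|m|$, cf.\ \eqref{6eq: bound < 1}) shows that $\sum_{m\in\BZ}|J_{m}(z)|$ converges for each fixed $z$, and hence the $(d+1)$-fold product of Jacobi--Anger series converges absolutely and uniformly in $\utheta$ on the torus for each fixed $\uy$, $x$, $\phi$. This uniform absolute convergence legitimises termwise integration and delivers the identity. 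The same bound, applied one more time to the right-hand side of \eqref{6eq: series expansion of J k}, shows that the asserted series itself converges absolutely, completing the claim.
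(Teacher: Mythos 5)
Your proof is correct, and it is the paper's argument read forwards: you start from the integral $J_{\uk}$, expand all $d+1$ exponential-cosine factors by Jacobi--Anger, and collapse $d$ of the resulting sums by orthogonality of characters on the torus, whereas the paper starts from the series, replaces each $j_{(0,k_l+m)}(xy_l^{-1})$ by Bessel's integral over $\theta_l$, interchanges sum and integral, and recognises the surviving sum over $m$ as the Fourier expansion of $e^{4\pi i xy_1\cdots y_d\cos\lp\sum_l\theta_l+\phi\rp}$. The underlying facts are identical -- Jacobi--Anger together with its inverse (Bessel's integral), character orthogonality, and the $m$-dependent bound on $J_m$ for absolute convergence (the paper cites Lemma~\ref{6lem: crude bound for j m, with m}~(1); your bound $|J_m(z)|\leq(|z|/2)^{|m|}/|m|!$ for real $z$ is a marginally sharper version of the same Poisson--Lommel estimate) -- so the difference is purely one of direction and bookkeeping.
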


\begin{proof}
	We find in \S \ref{sec: n=1, C} the integral representation 
	$$j_{(0, m)} (x) = \int_{\BR/2\pi\BZ} e^{i m \theta + 4\pi i x \cos \theta } d \theta $$
	as well as the Fourier series expansion
	$$e^{4 \pi i x \cos \phi} = \frac 1 {2 \pi} \sum_{m \in \BZ} j_{(0, m)} (x) e^{i m \phi}.$$
	Therefore
	\begin{align*}
		& \frac {1} { 2 \pi} \sum_{m \in \BZ}  e^{i m\phi} 
		j_{(0,  m)} \left(  x  y_1 ... y_{d} \right) \prod_{l      = 1}^{d}  j_{(0, k_l      + m)} \left( x y_{l     }\- \right)\\
		= \, & \frac {1} { 2 \pi} \sum_{m \in \BZ}  e^{i m\phi} j_{(0,  m)} \left(  x  y_1 ... y_{d} \right)
		\int_{(\BR/2\pi\BZ)^d} e^{i m \sum_{l     =1}^d \theta_{l     } } e^{i \sum_{l      = 1}^d \lp i k_{l     } \theta + 4 \pi i x y_{l     }\- \cos \theta_{l     } \rp  }  d \theta_d ... d \theta_1 \\
		= \ & \int_{(\BR/2\pi\BZ)^d} \lp \frac {1} { 2 \pi} \sum_{m \in \BZ}  e^{i m \lp \sum_{l     =1}^d \theta_{l     } + \phi \rp} j_{(0,  m)} \left(  x  y_1 ... y_{d} \right) \rp e^{i \sum_{l      = 1}^d \lp i k_{l     } \theta + 4 \pi i x y_{l     }\- \cos \theta_{l     } \rp  }  d \theta_d ... d \theta_1\\
		= \ & \int_{(\BR/2\pi\BZ)^d} e^{4 \pi i x  y_1 ... y_{d} \cos \lp \sum_{l     =1}^d \theta_{l     } + \phi \rp } e^{i \sum_{l      = 1}^d \lp i k_{l     } \theta + 4 \pi i x y_{l     }\- \cos \theta_{l     } \rp  }  d \theta_d ... d \theta_1.
	\end{align*}
	The absolute convergence required for the validity of each equality above is justified by the first estimate for $j_{(0, m)} (x)$ in Lemma \ref{6lem: crude bound for j m, with m} (1). The proof is completed, since the last line is exactly the definition of  $J_{\uk}(\uy; 2 \pi x, \phi )$.
\end{proof}

Inserting the series expansion of $J_{\uk}(\uy; 2 \pi x, \phi ) $ in Lemma \ref{6lem: series expansion of Jk} into the integral in \eqref{6eq: formal integral, C} and interchanging the order of integration and summation, one arrives exactly at the series of integrals $J_{\unu, (\uk, 0)} \lp x, e^{i \phi} \rp = e^{ - i m_{d+1} \phi}  J_{\unu, \um} \lp x, e^{i \phi} \rp$.  The first assertion on   absolute convergence in Theorem \ref{6thm: formal integral, C}   follows immediately from Proposition \ref{6prop: J nu m (x, u)}, whereas the  identity in the second assertion is a direct consequence of Proposition \ref{6prop: J mu m = J nu m}.

\subsection{The Rank-Two Case ($d=1$)}
\ 
\vskip 5 pt

\subsubsection{The Case of Bessel Functions}\label{sec: case d=1, Bessel functions}
When  $x > 0$ and $| \Re  \nu| < 1$, we have the following integral representations of Bessel functions due to Mehler and Sonine (\cite[6.21 (10, 11), 6.22 (13)]{Watson})
\begin{align*}
H^{(1, 2)}_\nu (x) & = \pm \frac {2 e^{\mp \frac 1 2 \pi i \nu }} {\pi i} \int_0^\infty e^{\pm i x \cosh r} \cosh (\nu r) d r,\\
K_{\nu} (x) & = \frac 1 {\cos \left( \frac {1 } 2 \pi \nu \right)} \int_0^\infty \cos (x \sinh r) \cosh (\nu r) d r.
\end{align*}
The change of variables $t = e^r$ yields
\begin{align*}
\pm \pi i e^{\pm \frac 1 2 \pi i \nu } H^{(1, 2)}_\nu (2 x)  & = \int_0^\infty t^{\nu - 1} e^{\pm i x (t + t\-)} d t,\\
2 e^{\pm \frac 1 2 \pi i \nu } K_{\nu} (2 x) & =  \int_0^\infty t^{\nu - 1} e^{\pm i x (t - t\-)} d t.
\end{align*}
The integrals in these formulae  are exactly the formal integrals in \eqref{6eq: formal integral, R+} in the case $d=1$. They   \textit{conditionally} converge  if  $| \Re  \nu| < 1$, but diverge if otherwise. 

\vskip 5 pt
\subsubsection{The Real Case}
By the formula \eqref{3eq: Bessel kernel, R, connection, formal}, the formal integral $J_{\nu, \epsilon} ( 2 \pi \sqrt x, \pm )$, which represents the Bessel kernel $J_{\left(\frac 1 2 \nu, -\frac 1 2 \nu \right), (\epsilon, 0)} (\pm x)$, is reduced to the integrals in \S \ref{sec: case d=1, Bessel functions}.

\vskip 5 pt

\subsubsection{The Complex Case}

\begin{lem}\label{6lem: d = 1, Jk}
	Let  $k \in \BZ$. Recall from {\rm (\ref{6eq: Theta (theta, y; x, phi)}, \ref{6eq: Jk (y;x, phi)})} the definition $$J_k (y; x, \phi) = \int_0^{2 \pi}  e^{i k \theta + 2 i x y\- \cos \theta + 2 i x y \cos (\theta + \phi)} d \theta, \hskip 10 pt x, y \in (0, \infty), \, \phi \in [0, 2 \pi) .$$
	Define $Y(y, \phi) = \left|y\-  +  y e^{i \phi}\right| = \sqrt {y^{-2} + 2 \cos \phi + y^2 } $, $\Phi (y, \phi) = \arg (y\-  +  y e^{i \phi})$ and $E(y, \phi) = e^{i \Phi(y, \phi)}$. 
	Then  
	\begin{equation}\label{6eq: d=1, Jk}
		J_k (y; x, \phi) = 2 \pi i^k E(y, \phi)^{-k} J_k \lp 2 x Y(y, \phi) \rp .
	\end{equation}
\end{lem}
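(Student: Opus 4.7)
The plan is to collapse the two cosine oscillations in the integrand into a single cosine by a complex-exponential manipulation, and then recognize the resulting integral as Bessel's integral representation for $J_k$.

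First I would rewrite the phase using $\cos\alpha = \Re(e^{i\alpha})$. Observe that
\begin{equation*}
y^{-1} e^{i\theta} + y e^{i(\theta+\phi)} = e^{i\theta}\bigl(y^{-1} + y e^{i\phi}\bigr) = Y(y,\phi)\, e^{i(\theta + \Phi(y,\phi))},
\end{equation*}
so taking real parts gives
\begin{equation*}
y^{-1}\cos\theta + y \cos(\theta+\phi) = Y(y,\phi)\cos\bigl(\theta + \Phi(y,\phi)\bigr).
\end{equation*}
Consequently the exponent in the definition of $J_k(y;x,\phi)$ becomes $i k \theta + 2 i x Y(y,\phi)\cos(\theta+\Phi(y,\phi))$.

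Next I would translate the variable of integration by $\Phi(y,\phi)$. Since the integrand is $2\pi$-periodic in $\theta$, setting $\theta' = \theta + \Phi(y,\phi)$ leaves the interval of integration effectively unchanged, and produces the factor $e^{-ik\Phi(y,\phi)} = E(y,\phi)^{-k}$ outside:
\begin{equation*}
J_k(y;x,\phi) = E(y,\phi)^{-k} \int_0^{2\pi} e^{i k \theta' + 2 i x Y(y,\phi)\cos\theta'}\, d\theta'.
\end{equation*}

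Finally I would identify the remaining integral via Bessel's integral representation. The paper has already established (see the derivation of \eqref{3eq: j (0, m)} in \S \ref{sec: n=1, C}, and its use in the proof of Lemma \ref{6lem: series expansion of Jk}) that
\begin{equation*}
\int_0^{2\pi} e^{i k \theta + i X \cos\theta}\, d\theta = 2\pi i^k J_k(X)
\end{equation*}
for all real $X$ and all $k\in\BZ$. Applying this with $X = 2 x Y(y,\phi)$ yields \eqref{6eq: d=1, Jk}. There is no serious obstacle here; the only point requiring a brief justification is the periodicity argument that permits the shift by $\Phi(y,\phi)$, which is immediate because the integrand is a smooth $2\pi$-periodic function of $\theta$ and the integration is over one full period.
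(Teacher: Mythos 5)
Your proof is correct and takes essentially the same approach as the paper: the key observation $y^{-1}\cos\theta + y\cos(\theta+\phi) = Y(y,\phi)\cos(\theta+\Phi(y,\phi))$ followed by the Bessel integral representation $\int_0^{2\pi} e^{ik\theta + iX\cos\theta}\,d\theta = 2\pi i^k J_k(X)$. The paper compresses the periodicity-shift step into the phrase ``follows immediately,'' while you make it explicit, which is a perfectly reasonable elaboration.
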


\begin{proof}
	\eqref{6eq: d=1, Jk} follows immediately from the identity
	$$2 \pi i^k J_k ( x) = \int_{0}^{2 \pi} e^{i k \theta + i x \cos \theta } d \theta, $$
	along with the observation $$y\- \cos \theta + y \cos (\theta + \phi) = \Re \big(  y\- e^{i\theta} + y e^{i (\theta + \phi)} \big) = Y(y, \phi) \cos \lp \theta + \Phi (y, \phi) \rp.$$
\end{proof}

\begin{prop}\label{6prop: d=1, J v k}
	Let $\nu \in \BC$ and $k \in \BZ$. Recall the definition of $J_{\nu, k} \lp x, e^{  i \phi} \rp$ given by \eqref{6eq: formal integral, C}. Then 
	\begin{equation}\label{6eq: d = 1 J nu k (x, e i phi)}
		J_{\nu, k} \lp x, e^{  i \phi} \rp = 4 \pi i^k \int_0^\infty y^{2 \nu - 1} \left[ y\-  +  y e^{i \phi} \right]^{-k} J_k \lp 2 x \left|y\-  +  y e^{i \phi}\right| \rp d y,
	\end{equation}
	with $x \in (0, \infty)$ and $\phi \in [0, 2 \pi)$. Here, we recall the notation $[z] =   {z} / {|z|}$. The integral in \eqref{6eq: d = 1 J nu k (x, e i phi)} converges when $|\Re \nu| < \frac 3 4$ and the  convergence is absolute if and only if $|\Re \nu| < \frac 1 4$. Moreover, it is analytic with respect to $\nu $ on the open vertical strip $\BS\lp - \frac 3 4, \frac 3 4 \rp$.
\end{prop}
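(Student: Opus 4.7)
The plan is to establish the identity first on a smaller strip where everything converges absolutely, then extend by analytic continuation once the convergence and analyticity of the right-hand side have been understood on the wider strip $\BS(-3/4, 3/4)$.

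First, I would work in the strip $|\Re\nu| < 1/4$ provided by Theorem \ref{6thm: formal integral, C} (specialized to $d=1$), on which the double integral defining $J_{\nu, k}(x, e^{i\phi})$ via \eqref{6eq: formal integral, C} is absolutely convergent. Applying Fubini's theorem to perform the $\theta$-integration first produces precisely the inner integral $J_{k}(y; x, \phi)$ from \eqref{6eq: Jk (y;x, phi)}, so that
\begin{equation*}
J_{\nu, k}\lp x, e^{i\phi} \rp = 2 \int_0^\infty y^{2\nu-1} J_{k}(y; x, \phi)\, dy.
\end{equation*}
Substituting the closed-form expression from Lemma \ref{6lem: d = 1, Jk} gives \eqref{6eq: d = 1 J nu k (x, e i phi)} on $|\Re\nu|<1/4$.

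Next, I would analyze the right-hand side on the full strip $\BS(-3/4, 3/4)$. The integrand behaves like $y^{2\nu-1}\cdot y^{|k|}$ near $y=0$ (after using $J_k(t) \sim t^{|k|}/(2^{|k|}|k|!)$ and $Y(y,\phi) \sim y^{-1}$) with an oscillating factor coming from $J_k(2xY(y,\phi))$, and like $y^{2\nu-1}\cdot y^{-1/2}\cos(2xy - \text{phase})$ near $y=\infty$ by the standard asymptotic of $J_k$. Absolute convergence at $y=\infty$ requires $\Re\nu < 1/4$ and at $y=0$ (via the substitution $u=1/y$, giving the same integrand shape with $u^{-2\nu - 3/2}$ and the same oscillation in $2xu$) requires $\Re\nu > -1/4$; this proves the absolute convergence assertion. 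For conditional convergence I would split the integral into pieces on $(0,1]$ and $[1,\infty)$, and on each piece apply integration by parts once (or appeal to Dirichlet's test) using the oscillation of $J_k$; this gains a factor of roughly $y^{-1}$ (resp.\ $u^{-1}$) and pushes the range of convergence to $\Re\nu < 3/4$ (resp.\ $\Re\nu > -3/4$). Analyticity on $\BS(-3/4, 3/4)$ follows from the fact that the same integration-by-parts estimates are uniform on compact subsets of $\nu$.

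Finally, to promote the identity \eqref{6eq: d = 1 J nu k (x, e i phi)} from $|\Re\nu|<1/4$ to $|\Re\nu| < 3/4$, I would invoke analytic continuation. The left-hand side is analytic in $\nu$ on all of $\BC$: indeed, through the identification of Notation \ref{not: d, nu} with $\umu = (\nu/2, -\nu/2)$, Proposition \ref{6prop: J mu m = J nu m} identifies $J_{\nu, k}(x, e^{i\phi})$ with a Bessel kernel $J_{(\umu, \um)}$, which by Proposition \ref{3prop: properties of J, C} (2) is entire in $\umu$. The right-hand side is analytic on $\BS(-3/4, 3/4)$ by the previous step. Since the two analytic functions agree on the non-empty open subset $|\Re\nu|<1/4$, they agree on $\BS(-3/4, 3/4)$, completing the proof.

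The main obstacle I anticipate is justifying the conditional convergence and, more importantly, the analyticity of the right-hand side on the full strip $\BS(-3/4, 3/4)$: one must carry out the integration-by-parts argument carefully on both ends (near $0$ and near $\infty$) with estimates that are uniform on compact $\nu$-subsets, since the oscillation phase $2x|y^{-1}+ye^{i\phi}|$ is not exactly $2xy$ (there is a correction from the factor $[y^{-1}+ye^{i\phi}]^{-k}$ and the precise Bessel asymptotic) and its derivative must be controlled from below away from the turning points.
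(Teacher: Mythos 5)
Your plan is essentially the same as the paper's, but with two wrinkles. First, no Fubini or analytic continuation is actually needed to reach the identity: for $d=1$, \eqref{6eq: formal integral, C} reads by definition $J_{\nu,k}(x,e^{i\phi}) = 2\int_0^\infty y^{2\nu-1}J_k(y;x,\phi)\,dy$, so \eqref{6eq: d = 1 J nu k (x, e i phi)} is literally the result of inserting the closed form from Lemma~\ref{6lem: d = 1, Jk}; the entire content of the proposition is the convergence and analyticity of that one-variable integral. Second --- and this is where your proposal leaves a real gap --- you flag but do not resolve the obstacle that the oscillation phase $2xY(y,\phi)$ is not exactly $2xy$. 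The paper's device is a simple factorization: after reducing to $\int_2^\infty$ by the $y\mapsto y^{-1}$ symmetry (which sends $\nu,\phi$ to $-\nu,-\phi$) and inserting the large-argument asymptotic of $J_k$ (whose $O(y^{-3/2})$ error already yields an absolutely convergent integral for $\Re\nu<\frac 3 4$), one writes $e^{\pm 2ixY(y,\phi)} = e^{\pm 2ixy}\,e^{\pm 2ix(Y(y,\phi)-y)}$. Since $Y(y,\phi)-y=O(y^{-1})$ with derivative $O(y^{-2})$, and $\Phi(y,\phi)$ has derivative $O(y^{-3})$, the second exponential together with $y^{2\nu-1}$, $(xY(y,\phi))^{-1/2}$ and $e^{-ik\Phi(y,\phi)}$ assembles into a slowly varying amplitude $f_{\nu,k}(y;x,\phi)$ with $\partial_y f_{\nu,k} \lll_{\,\nu,\,k,\,x} y^{2\Re\nu - 5/2}$, uniformly on compact $\nu$-sets. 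A single integration by parts against the exact oscillation $e^{\pm 2ixy}$ then delivers convergence and analyticity on $\BS\lp-\frac 3 4,\frac 3 4\rp$. Without this factorization your integration-by-parts step is not yet justified; that is the only missing ingredient in an otherwise correct outline.
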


\begin{proof}
	\eqref{6eq: d = 1 J nu k (x, e i phi)} follows immediately from Lemma \ref{6lem: d = 1, Jk}. 
	
	As for the convergence, since one arrives at an integral of the same form with $\nu, \phi$ replaced by $- \nu, - \phi$ if    the variable is changed from $y$ to $y\-$, it suffices to consider the integral 
	$$\int_2^\infty y^{2 \nu - 1} e^{-i k \Phi (y, \phi)} J_k \lp 2 x Y(y, \phi) \rp d y,$$ 
	for $\Re \nu < \frac 3 4$. 
	We have the following  asymptotic of $J_k (x)$ (see \cite[7.21 (1)]{Watson})
	$$ J_k (x) = \lp \frac  2 {\pi x } \rp^{\frac 1 2} \cos \lp x - \tfrac 12 k \pi - \tfrac 1 4 \pi \rp + O_k \big( {x^{- \frac 3 2} } \big).$$
	The error term contributes an absolutely convergent integral when $\Re \nu < \frac 3 4$, whereas the integral coming from the main term absolutely converges if and only if  $\Re \nu < \frac 1 4$. We are now reduced to the integral
	\begin{align*}
		\int_2^\infty  y^{2 \nu - 1} e^{-i k \Phi (y, \phi)} \lp x Y(y, \phi) \rp^{- \frac 1 2} e^{\pm  2 i x Y(y, \phi) } d y.
	\end{align*}
	In order to see the convergence, we split out $e^{\pm 2 i x y}$ from $e^{\pm  2 i x Y(y, \phi) }$ and put $f_{\nu,\, k} (y; x, \phi) = y^{2 \nu - 1} e^{-i k \Phi (y, \phi)} \lp xY(y, \phi) \rp^{- \frac 1 2} e^{\pm  2 i x (Y(y, \phi) - y) }$. Partial integration turns the above  integral into
	$$\mp \frac{1} {2 i x^{\frac 3 2}} \lp 2^{2 \nu - 1} e^{-i k \Phi (2, \phi)} Y(2, \phi)^{- \frac 1 2} e^{\pm  2 i x  Y(2, \phi)  } + \int_2^\infty \lp \partial f_{\nu,\, k}/ \partial y \rp (y; x, \phi) e^{\pm 2 i x y}	d y \rp.$$ 
	Some calculations show that $\lp \partial f_{\nu,\, k}/ \partial y \rp (y; x, \phi) \lll_{\,\nu,\, k,\, x} y^{2\Re \nu - \frac 5 2}$ for $y \geq 2$, and hence the integral in the second term is absolutely convergent when $\Re \nu < \frac 3 4$. With the above arguments, the analyticity with respect to $\nu$ is  obvious.
\end{proof}
\begin{cor}\label{6cor: d=1, J mu m}
	Let $\mu \in \BS \lp - \frac 3 8, \frac 3 8 \rp$ and $m \in \BZ$. We have
	\begin{equation}\label{6eq: d = 1 J mu m (x, e i phi)}
		J_{\lp  \mu, - \mu, m, 0\rp } \lp x e^{  i \phi} \rp = 4 \pi i^m \int_0^\infty y^{4 \mu - 1} \left[ y\-  +  y e^{i \phi} \right]^{-m} J_m \lp 4 \pi \sqrt x \left|y\-  +  y e^{i \phi}\right| \rp d y,
	\end{equation}
	with $x \in (0, \infty)$ and $\phi \in [0, 2 \pi)$. The integral in \eqref{6eq: d = 1 J mu m (x, e i phi)} converges if $|\Re \mu| < \frac 3 8$ and absolutely converges if and only if $|\Re \mu| < \frac 1 8$. 
\end{cor}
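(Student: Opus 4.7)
The plan is to deduce this corollary as a direct specialization of Proposition \ref{6prop: d=1, J v k} combined with Proposition \ref{6prop: J mu m = J nu m}, with the rank-two parameters $d=1$, $\umu = (\mu, -\mu)$, $\um = (m, 0)$. According to Notation \ref{not: d, nu}, the associated tuples are $\unu = (2\mu)$ (since $\nu_1 = \mu_1 - \mu_2 = 2\mu$) and $\uk = (m)$ (since $k_1 = m_1 - m_2 = m$). The whole proof should amount to chaining three identities and tracking convergence.

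First, I would invoke Proposition \ref{6prop: J mu m = J nu m} to obtain
\begin{equation*}
J_{(\mu, -\mu, m, 0)} \lp x e^{i\phi}\rp = J_{(2\mu), (m, 0)} \lp \sqrt{x}, e^{i\phi}\rp,
\end{equation*}
valid initially on the absolute-convergence domain of Proposition \ref{6prop: J nu m (x, u)}, which for $d=1$ and $\nu = 2\mu$ reads $|\Re \mu| < \tfrac{1}{8}$. Next, I would convert the $J_{\unu, \um}$ notation (Definition \eqref{6def: J nu m (x, u)}) to the $J_{\unu, \uk}$ notation (from \eqref{6eq: formal integral, C}); comparing the two expressions via Lemma \ref{6lem: series expansion of Jk} yields the rescaling $J_{\unu, (\uk, 0)}(x, u) = J_{\unu, \uk}(2\pi x, u)$, since $m_{d+1} = 0$. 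With $\unu = (2\mu)$ and $\uk = (m)$ this gives
\begin{equation*}
J_{(2\mu), (m, 0)}\lp \sqrt{x}, e^{i\phi}\rp = J_{(2\mu), (m)}\lp 2\pi \sqrt{x}, e^{i\phi}\rp.
\end{equation*}

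Then Proposition \ref{6prop: d=1, J v k}, applied with $\nu = 2\mu$, $k = m$ and argument $2\pi \sqrt{x}$ in place of $x$, produces
\begin{equation*}
J_{(2\mu), (m)}\lp 2\pi\sqrt{x}, e^{i\phi}\rp = 4\pi i^{m} \int_0^\infty y^{4\mu - 1} \bigl[y^{-1} + y e^{i\phi}\bigr]^{-m} J_m\bigl(4\pi\sqrt{x}\, |y^{-1} + ye^{i\phi}|\bigr)\, dy,
\end{equation*}
which is exactly the asserted formula \eqref{6eq: d = 1 J mu m (x, e i phi)}. The stated convergence ranges follow by substituting $\nu = 2\mu$ into the ranges $|\Re\nu| < \tfrac{3}{4}$ (conditional) and $|\Re \nu| < \tfrac{1}{4}$ (absolute) of Proposition \ref{6prop: d=1, J v k}.

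The only step requiring care is that the identity coming from Proposition \ref{6prop: J mu m = J nu m} is initially established only on the absolute-convergence strip $|\Re\mu| < \tfrac{1}{8}$, whereas the corollary asserts the equality on the larger strip $|\Re\mu| < \tfrac{3}{8}$. I would extend it by analytic continuation in $\mu$: the left-hand side $J_{(\mu, -\mu, m, 0)}(z)$ is analytic in $\mu$ on all of $\BC$ by Proposition \ref{3prop: properties of J, C}(2), while the right-hand integral is analytic on $\BS(-\tfrac{3}{8}, \tfrac{3}{8})$ by the analyticity assertion at the end of Proposition \ref{6prop: d=1, J v k} (after the substitution $\nu = 2\mu$). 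Since the two sides agree on the open subset $|\Re\mu| < \tfrac{1}{8}$, they agree on the whole strip $|\Re\mu| < \tfrac{3}{8}$. There is no substantive obstacle; the main bookkeeping hazard is keeping the factor of $2\pi$ straight when passing between the $J_{\unu,\um}$ and $J_{\unu,\uk}$ conventions.
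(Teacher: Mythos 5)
Your proof is correct and follows essentially the same route as the paper: establish the equality on the absolute-convergence strip $|\Re\mu|<\tfrac18$ and then extend by analytic continuation in $\mu$ using the analyticity of the right-hand side furnished by Proposition \ref{6prop: d=1, J v k}. The only cosmetic difference is that you assemble the initial equality from Proposition \ref{6prop: J mu m = J nu m} plus the rescaling $J_{\unu,(\uk,0)}(x,u)=J_{\unu,\uk}(2\pi x,u)$, whereas the paper simply quotes Theorem \ref{6thm: formal integral, C}(2), which is exactly that combination packaged in advance.
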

\begin{proof}
	From Theorem \ref{6thm: formal integral, C}, we see that \eqref{6eq: d = 1 J mu m (x, e i phi)} holds for $\BS \lp - \frac 1 8, \frac 1 8 \rp$. In view of Proposition \ref{6prop: d=1, J v k}, the right hand side of \eqref{6eq: d = 1 J mu m (x, e i phi)} is analytic in $\mu$ on $\BS \lp - \frac 3 8, \frac 3 8 \rp$, and therefore it is allowed to extend the domain of equality from $\BS \lp - \frac 1 8, \frac 1 8 \rp$ onto  $\BS \lp - \frac 3 8, \frac 3 8 \rp$.
\end{proof}

\begin{subappendices}

\renewcommand{\thesection}{A}

\renewcommand{\thesection}{A}

\section{A Prototypical Example of Bessel Functions}\label{sec: special example}

Before delving into the general  analytic theory of Bessel functions in Chapter \ref{chap: analytic theory}, we shall give here a very typical example of $J (x; \usigma, \ulambda)$ which illustrates their  asymptotic nature with dependence on $\usigma$.

\vskip 5 pt

When $n = 1$, we simply have two oscillatory exponential functions $$J (x; \pm, 0) = e^{\pm i x}.$$

We then consider the case when $n = 2$. According to \cite[3.4 (3, 6), 3.71 (13)]{Watson},
\begin{equation*}
	J_{\frac 1 2} (x) = \lp \frac 2 {\pi x} \rp^{\frac 1 2} \sin x, \hskip  10 pt
	J_{- \frac 1 2} (x) = \lp \frac 2 {\pi x} \rp^{\frac 1 2} \cos x.
\end{equation*}
The connection formulae in \eqref{2eq: connection formulae} (\cite[3.61 (5, 6)]{Watson})
 imply that
\begin{equation*} 
	H^{(1)}_{\frac 1 2} (x) = - i \lp \frac 2 {\pi x} \rp^{\frac 1 2} e^{i x}, \hskip 10 pt H^{(2)}_{\frac 1 2} (x) = i  \lp \frac 2 {\pi x} \rp^{\frac 1 2} e^{- i x}.
\end{equation*}
Moreover, \cite[3.71 (13)]{Watson} reads
\begin{equation*}
	K_{\frac 1 2} (x) = \lp \frac \pi {2 x} \rp^{\frac 1 2} e^{- x}.
\end{equation*}
Therefore, the formulae in Proposition \ref{prop: Classical Bessel functions} yield
\begin{equation*}
	J \lp x; \pm, \pm, \tfrac 1 4 , - \tfrac 1 4\rp = \lp \frac {\pi  } x \rp^{\frac 1 2} e^{\pm 2 i  x \pm \frac 1 4 \pi i}, \hskip 10 pt J \lp x; \pm, \mp, \tfrac 1 4 , - \tfrac 1 4 \rp = \lp \frac {\pi } x \rp^{\frac 1 2} e^{- 2 x \mp \frac 1 4 \pi i}.
\end{equation*}

We now prove that these formulae admit generalizations to arbitrary rank.
\begin{prop}
	\label{prop: special example}
	For $\usigma \in \{ +, -\}^n$ we define $L_\pm (\usigma) = \{ l      : \varsigma_l      = \pm \}$ and $n_\pm (\usigma) = \left| L_\pm (\usigma) \right|$.
	Put $\xi (\usigma) = i e^{ {\pi} i \frac {n_- (\usigma) - n_+ (\usigma) } {2n} } =  \mp e^{\mp \pi i \frac { n_{\pm} (\usigma)} n}$. Suppose $\ulambda = \frac 1 n \lp  \frac {n-1} 2, ..., - \frac {n-1} 2 \rp$. 
	Then  
	\begin{equation}\label{2eq: special example}
	J (x; \usigma, \ulambda) = \frac { c(\usigma)} {\sqrt n} \lp \frac {2\pi} x \rp^{\frac {n-1} 2} e^{ i n \xi (\usigma) x},
	\end{equation}
	with $c(\usigma) = e \lp \mp \frac {n-1} 8 \mp \frac {n_{\pm} (\usigma) } {2 n} \pm \frac 1 {2 n}  \sum_{l      \in L_{\pm} (\usigma)}   {l     } \rp$. 
\end{prop}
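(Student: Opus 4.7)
The idea is to collapse the $n$-fold Gamma product in the Mellin--Barnes definition
\[
J(x;\usigma,\ulambda)=\frac{1}{2\pi i}\int_{\EC_{\ulambda}}\lp\prod_{l=1}^{n}\Gamma(s-\lambda_{l})e\lp\frac{\varsigma_{l}(s-\lambda_{l})}{4}\rp\rp x^{-ns}\,ds
\]
into a single Gamma function using Gauss's multiplication formula, and then to identify the result with a shifted version of the rank-one integral in Proposition \ref{prop: n=1}.

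First I would exploit the arithmetic structure of $\ulambda$. Writing $s-\lambda_{l}=\lp s-\frac{n-1}{2n}\rp+\frac{l-1}{n}$ for $l=1,\ldots,n$ and applying Gauss's multiplication formula
\[
\prod_{k=0}^{n-1}\Gamma\lp z+\frac{k}{n}\rp=(2\pi)^{(n-1)/2}n^{1/2-nz}\Gamma(nz),
\]
with $z=s-\frac{n-1}{2n}$, I obtain
\[
\prod_{l=1}^{n}\Gamma(s-\lambda_{l})=(2\pi)^{(n-1)/2}n^{n/2-ns}\Gamma\lp ns-\tfrac{n-1}{2}\rp.
\]
At the same time the exponential factor simplifies to $e\lp\tfrac{s}{4}(N_{+}-N_{-})-\tfrac{1}{4}\sum_{l}\varsigma_{l}\lambda_{l}\rp$, where $N_{\pm}=n_{\pm}(\usigma)$.

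Next I would substitute $s\mapsto s/n$ in the Mellin--Barnes integral and then shift via $s=t+\tfrac{n-1}{2}$. After collecting the prefactor $(2\pi)^{(n-1)/2}n^{-1/2}(nx)^{-(n-1)/2}$ and a phase $e\lp-\tfrac{1}{4}\sum_{l}\varsigma_{l}\lambda_{l}+\tfrac{(n-1)(N_{+}-N_{-})}{8n}\rp$, the integral reduces to
\[
\frac{1}{2\pi i}\int\Gamma(t)\,e\lp\frac{t(N_{+}-N_{-})}{4n}\rp(nx)^{-t}\,dt.
\]
Setting $a=\frac{N_{-}-N_{+}}{4n}$, one checks $|a|\leqslant \tfrac{1}{4}$, so by the rank-one identity \eqref{2eq: Mellin n=1} (the extension of Proposition \ref{prop: n=1}) this integral equals $\exp\lp-e(a)\cdot nx\rp$. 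Since $i\xi(\usigma)=i\cdot ie^{\pi i(N_{-}-N_{+})/(2n)}=-e(a)$, the exponential is exactly $e^{in\xi(\usigma)x}$.

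Finally I would verify that the accumulated phase equals $c(\usigma)$. Plugging in $\lambda_{l}=(n+1-2l)/(2n)$,
\[
-\tfrac{1}{4}\sum_{l}\varsigma_{l}\lambda_{l}+\tfrac{(n-1)(N_{+}-N_{-})}{8n}=-\tfrac{N_{+}-N_{-}}{4n}+\tfrac{1}{4n}\Big(\sum_{l\in L_{+}}l-\sum_{l\in L_{-}}l\Big).
\]
Using $N_{+}+N_{-}=n$ to rewrite $-\tfrac{N_{+}-N_{-}}{4n}=-\tfrac{N_{+}}{2n}+\tfrac{1}{4}$ and $\sum_{l\in L_{-}}l=\tfrac{n(n+1)}{2}-\sum_{l\in L_{+}}l$, this becomes
\[
-\tfrac{N_{+}}{2n}+\tfrac{1}{2n}\sum_{l\in L_{+}}l+\tfrac{1}{4}-\tfrac{n+1}{8}\equiv -\tfrac{n-1}{8}-\tfrac{N_{+}}{2n}+\tfrac{1}{2n}\sum_{l\in L_{+}}l\pmod 1,
\]
which matches $c(\usigma)$ with the upper sign; the lower-sign case follows by the same calculation or by the symmetry $\usigma\leftrightarrow -\usigma$. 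The main bookkeeping obstacle is this final phase reconciliation; everything else is a clean application of Gauss multiplication and \eqref{2eq: Mellin n=1}, and the hypothesis $|a|\leqslant\tfrac{1}{4}$ needed to invoke the latter holds precisely because $|N_{+}-N_{-}|\leqslant n$.
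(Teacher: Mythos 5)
Your proof takes essentially the same route as the paper's: collapse the $n$ Gamma factors via the Gauss multiplication formula (with $z=s-\tfrac{n-1}{2n}$), shift the contour so a single $\Gamma(t)$ appears, and then invoke the rank-one identity \eqref{2eq: Mellin n=1}; the identification $i\xi(\usigma)=-e(a)$ with $a=\tfrac{N_--N_+}{4n}\in[-\tfrac14,\tfrac14]$ and the phase reconciliation with $c(\usigma)$ are both correct. The one slip is in the stated prefactor: after the substitution $ns\mapsto t+\tfrac{n-1}{2}$ the leftover power of $x$ is $x^{-(n-1)/2}$, not $(nx)^{-(n-1)/2}$ — the Jacobian $\tfrac{1}{n}\,dt$ and the factor $n^{1/2-t}$ combine to give exactly $n^{-1/2}(nx)^{-t}$, so writing $(nx)^{-(n-1)/2}$ over-counts a power $n^{(n-1)/2}$ and would not match the right-hand side of \eqref{2eq: special example}.
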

\begin{proof}
	Using the multiplication formula of the Gamma function
	\begin{equation}\label{2eq: multiplication theorem}
	\prod_{k = 0}^{n-1} \Gamma \lp s + \frac {k} n \rp = 
	(2 \pi)^{\frac {n-1} 2} n^{\frac 1 2 - n s} \Gamma (n s),
	\end{equation}
	straightforward calculations yield
	\begin{equation*}
		G (s; \usigma, \ulambda) = c_1 (\usigma) (2 \pi)^{\frac {n-1} 2} n^{\frac 1 2 - n \lp s - \frac {n-1} {2n} \rp} \Gamma  \lp n \lp s - \frac {n-1} {2n} \rp\rp e \lp \frac {n_+ (\usigma) - n_- (\usigma) } {4  } \cdot s \rp,
	\end{equation*}
	with $c_1 (\usigma) = e \lp \mp \frac {(n+1) n_{\pm } (\usigma) } {4 n} \pm \frac 1 {2 n}  \sum_{l      \in L_{\pm} (\usigma)}   {l     } \rp$. Inserting this into the contour integral in \eqref{3eq: definition of J (x; sigma)} and making the change of variables from $s$ to $\frac 1 n \lp s + \frac {n-1} 2 \rp$, one arrives at
	\begin{equation*}
		J (x; \usigma, \ulambda) = \frac {c_1 (\usigma) c_2 (\usigma)} {\sqrt n} \lp \frac {2 \pi} x \rp^{\frac {n-1} 2 } \frac 1 {2\pi i} \int_{ n \EC - \frac {n-1} 2 }  \Gamma (s) e \lp \frac {n_+ (\usigma) - n_- (\usigma) } {4 n} \cdot s \rp  
		(n x)^{- s} ds,
	\end{equation*}
	with $c_2 (\usigma) = e \lp \mp \frac {n-1} 8 \pm \frac {(n-1) n_{\pm} (\usigma) } {4 n}  \rp$. \eqref{2eq: special example} now follows from \eqref{2eq: Mellin n=1} if the contour $\EC$ is suitably chosen.
\end{proof}

\end{subappendices}

%
%
%

\chapter{Analytic Theory of Bessel Functions}\label{chap: analytic theory}

\setcounter{section}{6}

This chapter is on the analytic theory of Bessel functions. There are two parts of this theory, formal integrals and Bessel differential equations. Combining these, this chapter culminates at the connection formulae for Bessel functions.

In \S \ref{sec: Rigorous Interpretations}, \ref{sec: Relating Bessel functions} and \ref{sec: Bessel functions of K-type and H-Bessel functions} we shall first rigorously justify the formal integral $J_{\unu} (x; \usigma)$ as a representation of the Bessel function $J (x; \ulambda, \usigma ) $ and then use the stationary phase method of  H\"ormander for the oscillatory multiple integral $J_{\unu} (x; \usigma)$ to study the asymptotic of   $J (x; \ulambda, \usigma ) $ on $\BR_+$ or its analytic continuation onto the upper or lower half plane $\BH^{\pm}$.

In \S \ref{sec: Recurrence relations and differential equations of the Bessel functions}  we shall derive the   differential equation, namely  {Bessel equation}, satisfied by the Bessel function $J (x; \usigma, \ulambda)$.

In \S \ref{sec: Bessel equations} we shall study two types of Bessel functions,  $J_{l     } (z; \varsigma, \ulambda) $ and $ J (z; \ulambda; \xi) $, arising as solutions of Bessel equations in accordance with their asymptotics at zero and infinity respectively.

In \S \ref{sec: H-Bessel functions and K-Bessel functions revisited}, using the asymptotic theory we have developed in previous sections, we shall present various connection formulae that relate  Bessel functions $J (z; \usigma, \ulambda)$, $J_{l     } (z; \varsigma, \ulambda) $ and $ J (z; \ulambda; \xi) $.

In \S \ref{sec: H-Bessel functions and K-Bessel functions, II}, using results from \S \ref{sec: Bessel equations}, we shall make improvements on the asymptotic of $J (x; \ulambda, \usigma ) $ obtained in \S \ref{sec: Bessel functions of K-type and H-Bessel functions}.

In Appendix \ref{appendix: asymptotic}, for the purpose of comparison, we shall include the approach to the asymptotics of Bessel functions from the literature using   Stirling's asymptotic formula for the Gamma function.

\section{\texorpdfstring{The Rigorous Interpretation of the Formal Integral $J_{\unu}(x; \usigma)$}{The Rigorous Interpretation of the Formal Integral $J_{\nu}(x; \varsigma)$}}
\label{sec: Rigorous Interpretations}


We first introduce some notations. Let $d $ be a positive integer, $\ut = (t_1, ... , t_d) \in \BR_+^d$, $\unu = (\nu_1, ..., \nu_d) \in \BC^{d}$ and  $\usigma = (\varsigma_1, ..., \varsigma_d, \varsigma_{d+1}) \in \{+, - \}^{d+1}$.  
For $a > 0$ define $\BS^d_a = \big\{ \unu \in \BC^d : |\Re  \nu_l     | < a \text { for all } l      = 1, ..., d \big\}$.  
Denote by $p_{\unu} $ the power function
\begin{equation*} 
p_{\unu} (\ut) = \prod_{l      = 1}^d t_{l     }^{\nu_l      - 1},
\end{equation*}
and let
\begin{equation*} 
\theta ( \ut; \usigma) = \varsigma_{d+1} t_1 ... t_d + \sum_{l      = 1}^d \varsigma_l      t_{l     }\-,
\end{equation*}
then   the formal integral $J_{\unu} ( x ; \usigma)$ given in \eqref{6eq: formal integral, R+}, which symbolically represents  $J ( x ; \usigma, \ulambda)$, may be written as 
\begin{equation}\label{3eq: definition J nu (x; sigma)}
J_{\unu} ( x ; \usigma) = \int_{\BR_+^d} p_{\unu} (\ut)e^{i x \theta ( \ut; \usigma)} d \ut.
\end{equation}


For $d = 1$, it is seen in \S \ref{sec: case d=1, Bessel functions} that $J_{\nu}(x; \usigma)$ is conditionally convergent if and only if $|\Re \nu | < 1$ but fails to be absolutely convergent. 
When $d \geq 2$, we are in a worse scenario. The notion of convergence for multiple integrals is always in the absolute sense. Thus, the $d$-dimensional multiple integral in \eqref{3eq: definition J nu (x; sigma)} alone does not make any sense, 
since it is clearly not absolutely convergent. 


In the following, we shall address this fundamental convergence issue of the formal integral $J_{\unu} ( x ; \usigma)$, relying on its structural simplicity, so that it will be provided with mathematically rigorous meanings\footnote{It turns out that our rigorous interpretation actually coincides with the {\it Hadamard partie finie} of the formal integral.}. Moreover, it will be shown that our rigorous interpretation of $J_{\nu}(x; \usigma)$ is a smooth function of $x$ on $\BR_+$ as well as an analytic function of $\unu$ on $\BC^d$. 


\subsection{Formal Partial Integration Operators}

The most crucial observation is that there are \textit{two} kinds of formal partial integrations. 
The first kind arises from 
$$\partial \big(e^{\varsigma_l      i x t_{l     }\- } \big) = - \varsigma_l      i  x t_{l     }^{-2} e^{\varsigma_l      i x t_{l     }\-} \partial t_{l     },$$ 
and the second kind from
$$\partial \left(e^{\varsigma_{d+1} i x t_1 ... t_d} \right) = \varsigma_{d+1} i x t_1 ... \widehat {t_{l     }} ... t_d e^{\varsigma_{d+1} i x t_1 ... t_d} \partial t_{l     },$$ where $\widehat {t_{l     }}$ means that  $t_{l     }$ is omitted from the product.
\begin{defn}\label{2defn: formal partial integration}
	Let $$\mathscr T (\BR_+) = \left\{ h \in C^\infty (\BR_+) : t^\alpha h^{(\alpha)}(t) \lll_{\, \alpha} 1 \text { for all } \alpha \in \BN \right \}.$$ For $h(\ut) \in \bigotimes^d \mathscr T (\BR_+)$, in the sense that $h(\ut) $ is a linear combination of functions of the form $ \prod_{l      = 1}^d h_l     (t_{l     })$, define the   integral
	\begin{equation*} 
	J_{\unu} (x; \usigma; h) = \int_{\BR_+^d} h (\ut) p_{\unu} (\ut) e^{ i x \theta (\ut; \usigma)} d \ut. 
	\end{equation*} 
	We call $J_{\unu} (x; \usigma; h)$ a $J$-integral of index $\unu$.
	Let us introduce an auxiliary space
	$$\mathscr J_{\unu} (\usigma) = \mathrm {Span }_{\BC[x\-]} \left\{ J_{\boldsymbol \nu'} (x; \usigma; h) : \boldsymbol \nu' \in \unu + \BZ^d, h \in \textstyle \bigotimes^d \mathscr T (\BR_+)\right \}. $$
	Here $\BC[x\-]$ is the ring of polynomials of variable $x\-$ and complex coefficients.
	Finally, we define $\EuScript P_{+,\, l     }$ and $\EuScript P_{-,\, l     }$  to be the two $\BC[x\-]$-linear operators on the space $\mathscr J_{\unu} (\usigma)$, in symbolic notion, as follows,
	\begin{align*} 
	\EuScript P_{+,\, l     } (J_{\unu} & (x ; \usigma; h)) = \varsigma_l      \varsigma_{d+1} J_{\unu + \ue^d + \ue_l      } \left(x; \usigma; h \right) \\
	& - \varsigma_l      i (\nu_l      + 1) x \- J_{\unu + \ue_l     } \left(x; \usigma; h \right) - \varsigma_{l     } i x\- J_{\unu + \ue_l     } \left(x; \usigma; t_{l     } \partial_{l     } h \right), 
	\end{align*}
	\begin{align*} 
	\EuScript P_{-,\, l     } & (J_{\unu} (x ; \usigma; h)) = \varsigma_l      \varsigma_{d+1} J_{\unu - \ue^d - \ue_l      } \left(x; \usigma; h \right)\\
	& + \varsigma_{d+1} i (\nu_l      - 1) x \- J_{\unu - \ue^d} \left(x; \usigma; h \right) + \varsigma_{d+1} i x\- J_{\unu -\ue^d } \left(x; \usigma; t_{l     } \partial_{l     } h \right), 
	\end{align*}
	where $\ue_l      = (\underbrace {0,..., 0, 1}_{l     }, 0 ..., 0)$ and $\ue^d = (1, ..., 1)$, and $\partial_{l     } h$ is the abbreviated $\partial h / \partial t_{l     }$.
\end{defn}

The formulations of $\EuScript P_{+,\, l     }$ and $\EuScript P_{-,\, l     }$ are quite involved at a first glance. However, the most essential feature of these operators is simply \textit{index shifts}! 

\begin{figure}
	\begin{center}
		\begin{tikzpicture}
		
		\node [left] at (0, 0) { \small $\unu$};

		\draw[->] (0,0) to [out=20,in=180] (1, 0.5);
		\draw[->] (0,0) to [out=- 20,in=180] (1, - 0.3);
		\draw[->] (0,0) to [out= -5,in=120] (1, - 0.2);
		
		\node [right] at (1, 0.5) { \small $\unu + \ue^d + \ue_l     $};
		\node [right] at (1, -0.3) { \small $\unu + \ue_l     $};
		\node [left] at (-0.3, 0) { \small $\EuScript P_{+,\, l     } :$};

		\node [left] at (5, 0) { \small $\unu$};

		\draw[->] (5,0) to [out=-20,in=180] (6, -0.5);
		\draw[->] (5,0) to [out= 5,in=-120] (6, 0.2);
		\draw[->] (5,0) to [out= 20,in=180] (6,  0.3);

		\node [right] at (6, 0.3) { \small $\unu - \ue^d$};
		\node [right] at (6, -0.5) { \small $\unu - \ue^d - \ue_l     $};
		\node [left] at (5-0.3, 0) { \small $\EuScript P_{-,\, l     } :$};
		
		\end{tikzpicture}
	\end{center}
	\caption{Index Shifts} 
\end{figure}

\begin{observation} \nonumber
	After the operation of $\EuScript P_{+,\, l     }$ on a $J$-integral, all the indices of the three resulting $J$-integrals are nondecreasing and the increment of the $l     $-th index is one greater than the others. The operator $\EuScript P_{-,\, l     }$ has the effect of decreasing all indices by one except possibly two for  the $l     $-th index.
\end{observation}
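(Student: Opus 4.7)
The observation records the index-shift structure that is hard-coded into the definitions of $\EuScript P_{+,\, l}$ and $\EuScript P_{-,\, l}$; it carries no hidden analytic content beyond bookkeeping on index vectors. My plan is therefore to unpack the two defining formulas summand-by-summand and read off the change of the index vector in each of the three resulting $J$-integrals.

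For $\EuScript P_{+,\, l}$ the defining formula exhibits three $J$-integrals, with indices $\unu + \ue^d + \ue_l$, $\unu + \ue_l$, and $\unu + \ue_l$. The corresponding increment vectors are
\[
\ue^d + \ue_l = (1,\ldots,1,2,1,\ldots,1) \quad \text{(the $2$ in slot $l$)},
\qquad
\ue_l = (0,\ldots,0,1,0,\ldots,0).
\]
Both vectors have nonnegative coordinates, so each resulting index dominates $\unu$ coordinatewise, which is the ``nondecreasing'' assertion; moreover, in both vectors the $l$-th coordinate exceeds every other coordinate by exactly $1$ (namely $2-1=1$ or $1-0=1$). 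This will yield the first half of the statement.

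For $\EuScript P_{-,\, l}$ the three $J$-integrals have indices $\unu - \ue^d - \ue_l$, $\unu - \ue^d$, and $\unu - \ue^d$, so the decrement vectors are $\ue^d + \ue_l$ and $\ue^d$. In every case each coordinate decreases by exactly $1$, with the sole possible exception being the $l$-th coordinate in the first summand, which decreases by $2$ instead. This is precisely the effect announced by the second half of the observation.

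There is no substantive obstacle; the argument is entirely combinatorial. The one housekeeping point I would record, for internal consistency, is that $\EuScript P_{\pm,\, l}$ do take $\mathscr J_\unu(\usigma)$ into itself: the test-function factor produced by the operators is either $h$ or $t_l \partial_l h$, and $\bigotimes^d \mathscr T (\BR_+)$ is stable under $h \mapsto t_l \partial_l h$ because $t h'(t) \in \mathscr T (\BR_+)$ whenever $h \in \mathscr T (\BR_+)$, while the $x^{-1}$ factors appearing in the definitions are absorbed by the $\BC[x^{-1}]$-linear structure built into $\mathscr J_\unu(\usigma)$.
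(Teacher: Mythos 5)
Your proposal is correct and matches the paper's (unstated) reasoning: the observation is just a direct reading of the index shifts $+\ue^d+\ue_l$ and $+\ue_l$ for $\EuScript P_{+,\,l}$, and $-\ue^d-\ue_l$ and $-\ue^d$ for $\EuScript P_{-,\,l}$, from Definition~\ref{2defn: formal partial integration}, exactly as illustrated in the paper's accompanying figure of index shifts. Your extra housekeeping remark that $\mathscr T(\BR_+)$ is stable under $h\mapsto t\,h'$ and that $\BC[x^{-1}]$-linearity absorbs the $x^{-1}$ factors is accurate, though not part of the observation itself.
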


\begin{lem}\label{lem: I pm ell}
	Let notations be as above.
	
	{\rm(1).} Let  $h(\ut) = \prod_{l      = 1}^d h_l     (t_{l     })$. Suppose that the set $\{1, 2, ..., d\}$ splits into two subsets $L_+$ and $L_-$ such that 
	\begin{itemize}
		\item [-]	$h_l      $ vanishes at infinity if $l      \in L_-$, and 
		\item[-] $h_l      $ vanishes in a neighbourhood of zero if $l      \in L_+$.
	\end{itemize} If $\Re \nu_l     >  0$ for all  $l      \in L_-$ and 
	$\Re \nu_l      < 0$ for all  $l      \in L_+$, then the $J$-integral $J_{\unu} (x ; \usigma; h)$ absolutely converges.
	
	{\rm(2).} Assume the same conditions in {\rm(1)}.  Moreover, suppose that $\Re \nu_l     > 1$ for all  $l      \in L_-$ and  $\Re \nu_l      < -1$ for all  $l      \in L_+$. Then, for $l      \in L_-$, all the three $J$-integrals in the definition of $\EuScript P_{+,\, l     } (J_{\unu} (x ; \usigma; h) )$ are absolutely convergent and the operation of $\EuScript P_{+,\, l     }$ on $J_{\unu} (x ; \usigma; h)$ is the actual partial integration of the first kind on the integral over $d t_{l     }$. Similarly, for  $l      \in L_+$, the operation of $\EuScript P_{-,\, l     }$ preserves absolute convergence and is the actual partial integration of the second kind on the integral over $d t_{l     }$.
	
	
	{\rm(3).}  $\EuScript P_{+,\, l     }$ and $\EuScript P_{-,\, l     }$ commute with $\EuScript P_{+,\, k}$ and  $\EuScript P_{-,\, k}$ if $l      \neq k$.
	
	{\rm(4).} Let  $\alpha \in \BN$. $\EuScript P^{\alpha}_{+,\, l     } (J_{\unu}  (x; \usigma; h) )$ is a linear combination of 
	\begin{align*}  
	[\nu_l  - 1]_{\alpha_3} & x^{- \alpha + \alpha_1} J_{\unu +  \alpha_1 \ue^d + \alpha \ue_l     } (x; \usigma; \textstyle  t^{\alpha_2}_l      \partial_{l     }^{\alpha_2} h ), 
	\end{align*} 
	and $\EuScript P^{\alpha}_{-,\, l     }  (J_{\unu} (x;  \usigma;  h)  )$ is a linear combination of
	\begin{align*}  
	 [ \nu_l      - 1 ]_{\alpha_3} x^{- \alpha + \alpha_1} J_{\unu - \alpha \ue^d - \alpha_1 \ue_l     } (x; \usigma; \textstyle  t^{\alpha_2}_l      \partial_l     ^{\alpha_2}  h ),
	\end{align*}
	for $\alpha_1 + \alpha_2 + \alpha_3 \leqslant \alpha$.
	The coefficients of these linear combinations may be uniformly bounded by a constant depending only on $\alpha$.
\end{lem}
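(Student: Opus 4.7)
The plan is to address the four parts in order, with the analytic substance concentrated in (1) and (2) and the remaining parts following by algebraic bookkeeping. For (1), since $|e^{ix\theta(\ut;\usigma)}| = 1$ and $h = \prod_{l = 1}^d h_l(t_l)$, Fubini reduces absolute convergence of $J_\unu(x;\usigma;h)$ to that of the one-dimensional integrals $\int_0^\infty |h_l(t_l)|\,t_l^{\Re\nu_l - 1}\,dt_l$. For $l \in L_-$, the function $h_l$ is bounded (since $h_l \in \mathscr T(\BR_+)$) and supported on a bounded set (since it vanishes at infinity), so convergence occurs iff $\Re\nu_l > 0$. For $l \in L_+$, $h_l$ vanishes near zero and is bounded at infinity, so convergence occurs iff $\Re\nu_l < 0$.

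Part (2) is a single genuine integration by parts in $t_l$. For $\EuScript P_{+,l}$ with $l \in L_-$ and $\Re\nu_l > 1$, I would write $e^{i \varsigma_l x / t_l} = -(i \varsigma_l x)^{-1}\, t_l^2\, \partial_{t_l}\bigl(e^{i \varsigma_l x / t_l}\bigr)$ and integrate by parts in $t_l$. The boundary terms vanish at $t_l = 0$ since $t_l^{\nu_l + 1} \to 0$ when $\Re \nu_l > -1$, and at $t_l = \infty$ since $h_l$ is compactly supported. Expanding $\partial_{t_l}\bigl[h(\ut)\, p_\unu(\ut)\, t_l^2\, e^{i \varsigma_{d+1} x\, t_1 \cdots t_d}\bigr]$ via Leibniz's rule yields three summands coming from differentiating, respectively, the complex exponential (producing $\varsigma_l \varsigma_{d+1}\, J_{\unu + \ue^d + \ue_l}(h)$), the power $t_l^{\nu_l + 1}$ (producing $-\varsigma_l i (\nu_l + 1) x^{-1}\, J_{\unu + \ue_l}(h)$), and the function $h_l$ (producing $-\varsigma_l i x^{-1}\, J_{\unu + \ue_l}(t_l \partial_l h)$), matching the definition of $\EuScript P_{+,l}$ exactly. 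Each resulting integral again satisfies the hypothesis of (1), since shifting $\unu$ by $\ue^d + \ue_l$ or $\ue_l$ preserves the sign conditions on $\Re \nu$. The case of $\EuScript P_{-,l}$ for $l \in L_+$ is parallel, using instead $e^{i \varsigma_{d+1} x\, t_1 \cdots t_d} = \bigl( i \varsigma_{d+1} x \prod_{k \neq l} t_k \bigr)^{-1}\, \partial_{t_l}\bigl(e^{i \varsigma_{d+1} x\, t_1 \cdots t_d}\bigr)$; the boundary at $t_l = \infty$ vanishes because $\Re \nu_l < -1$ and at $t_l = 0$ because $h_l$ vanishes there. The main obstacle I anticipate is this very verification: checking that the three Leibniz terms agree with the definition of $\EuScript P_{\pm, l}$ including the precise signs and the coefficients $\pm i (\nu_l \pm 1)$.

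For (3), I would expand both compositions $\EuScript P_{\epsilon, l} \circ \EuScript P_{\epsilon', k}$ and $\EuScript P_{\epsilon', k} \circ \EuScript P_{\epsilon, l}$ applied to $J_\unu(x; \usigma; h)$ (with $l \neq k$ and $\epsilon, \epsilon' \in \{+, -\}$) as nine-term linear combinations and pair them termwise. The key inputs are the additivity of the shifts $\ue^d, \ue_l, \ue_k$, algebraic identities such as $(\nu_l + 2) + (\nu_k + 1) = (\nu_l + 1) + (\nu_k + 2)$ that balance coefficients of mixed branches composed in opposite orders, and the commutation $t_l \partial_l(t_k \partial_k h) = t_k \partial_k(t_l \partial_l h)$ for $l \neq k$. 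Finally, part (4) is established by induction on $\alpha$: applying $\EuScript P_{+, l}$ to a summand $[\nu_l - 1]_{\alpha_3}\, x^{-\alpha + \alpha_1}\, J_{\unu + \alpha_1 \ue^d + \alpha \ue_l}(x; \usigma; t_l^{\alpha_2} \partial_l^{\alpha_2} h)$ produces three new summands corresponding to the three branches: branch~1 advances $(\alpha_1, \alpha) \mapsto (\alpha_1 + 1, \alpha + 1)$, while branches 2 and 3 keep $\alpha_1$ fixed, advance $\alpha \mapsto \alpha + 1$, and pick up an extra factor $x^{-1}$ together with either an additional $\nu_l$-polynomial factor or an additional $t_l \partial_l$ acting on $h$ (so $\alpha_3$ or $\alpha_2$ increases by one respectively). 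The branch-2 coefficient $\nu_l + \alpha_1 + \alpha + 1$ splits as $(\nu_l - 1 - \alpha_3) + (\alpha_3 + \alpha_1 + \alpha + 2)$, so multiplying by $[\nu_l - 1]_{\alpha_3}$ produces $[\nu_l - 1]_{\alpha_3 + 1}$ plus an $O(\alpha)$ scalar multiple of $[\nu_l - 1]_{\alpha_3}$; all resulting multipliers remain bounded by a constant depending only on $\alpha$. The same induction applies to $\EuScript P_{-, l}$, now using the branch shifts $-\ue^d - \ue_l$, $-\ue^d$, $-\ue^d$ and producing index $\unu - \alpha \ue^d - \alpha_1 \ue_l$ at level $\alpha$.
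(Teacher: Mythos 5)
Your proposal is correct and, where the paper gives a proof at all, it is essentially the same verification; the paper simply declares (1)--(3) obvious and gives a terse computational sketch for (4). A few remarks. In (1) you implicitly read ``$h_l$ vanishes at infinity'' as ``$h_l$ is supported on a bounded set'': this is the reading the lemma needs (decay $h_l\to 0$ alone does not make $t^{\Re\nu_l-1}h_l$ integrable at $\infty$ for $0<\Re\nu_l<1$), and it matches the intended use where $h_l\in\{h_-,h_0\}$ is supported on $(0,1/2]$ or $[1/4,4]$. Your Leibniz bookkeeping for (2) is accurate, including the signs $(\varsigma_l ix)^{-1}=-\varsigma_l i x^{-1}$ and the three-term split matching $\varsigma_l\varsigma_{d+1}$, $-\varsigma_l i(\nu_l+1)x^{-1}$, $-\varsigma_l i x^{-1}$ (and the parallel $\EuScript P_{-,l}$ case). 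For (4) you take a genuinely different, if cosmetically so, route: a step-by-step induction on $\alpha$ applying $\EuScript P_{\pm,l}$ one at a time, whereas the paper computes the $\alpha$-fold object in one go by examining $x^{-\alpha}t_l^{\alpha+\alpha_0}\partial_l^{\alpha_0}(h\,p_{\unu}e^{\varsigma_{d+1}ix t_1\cdots t_d})e^{ix\sum\varsigma_k t_k^{-1}}$ (and, for $\EuScript P_{-,l}^{\alpha}$, uses the closed formula for $d^\alpha(e^{a/t})/dt^\alpha$). Your induction sidesteps that formula and is arguably more transparent; both give the same conclusion. One point worth tightening in your branch-3 analysis: $t_l\partial_l(t_l^{\alpha_2}\partial_l^{\alpha_2}h)=\alpha_2\,t_l^{\alpha_2}\partial_l^{\alpha_2}h+t_l^{\alpha_2+1}\partial_l^{\alpha_2+1}h$, so branch 3 actually spawns two summands — one with $\alpha_2$ unchanged (carrying a scalar $\alpha_2=O(\alpha)$) and one with $\alpha_2\mapsto\alpha_2+1$ — and your ``or'' should be an ``and''; the constraint $\alpha_1+\alpha_2+\alpha_3\leqslant\alpha$ and the $O_\alpha(1)$ bound on coefficients are nonetheless preserved.
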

\begin{proof}
	(1-3) are obvious. The two statements in (4) follow from calculating 
	\begin{equation*}
	 x^{-\alpha} t_l^{\alpha + \alpha_0} \partial ^{\alpha_0}_l   \left(   h(\ut) p_{\unu       } (\ut) e^{\varsigma_{d+1} i x    t_1 ... t_d     }\right) e^{i x \sum_{k = 1}^d \varsigma_{k} t_{k}\- }, \hskip 10 pt \alpha_0 \leqslant \alpha,
	\end{equation*}
	and
	\begin{equation*}
	x^{-\alpha} \partial ^\alpha_l      \left(   h(\ut) p_{\unu - \alpha \ue^d + \alpha \ue_l     } (\ut) e^{i x \sum_{k = 1}^d \varsigma_{k} t_{k}\- }\right) e^{\varsigma_{d+1} i x t_1 ... t_d }.
	\end{equation*}
	For the latter, one applies the following formula
	\begin{equation*}
	\frac {d^\alpha \big( e^{ a t\- } \big)} {d t^\alpha} = (- )^\alpha \sum_{\beta = 1}^\alpha \frac {\alpha! (\alpha-1)!} {(\alpha-\beta)! \beta! (\beta-1)!} a^\beta t^{- \alpha - \beta} e^{ a t\- },  \hskip 10 pt \alpha \in \BN_+, \, a \in \BC.
	\end{equation*} 
\end{proof}

\subsection{\texorpdfstring{Partitioning the Integral $J_{\unu}(x; \usigma)$}{Partitioning the Integral $J_{\nu}(x; \varsigma)$}}

Let $I$ be a finite set that includes $\{+, -\}$ and let
$$\sum_{\varrho \, \in \, I} h_\varrho (t) \equiv 1, \hskip 10 pt t \in \BR_+,$$ be a partition of unity on $\BR_+$ such that each $h_\varrho $ is a function in $ \mathscr T (\BR_+)$, 
$h_-(t) \equiv 1$ on a neighbourhood of zero and $h_+(t) \equiv 1$ for large $t$. Put $h_{\urho} (\ut) =  \prod_{l      = 1}^d h_{\varrho_l     } (t_{l     })$ for $\urho =  (\varrho_1, ..., \varrho_d) \in I^d$. We partition the integral $J_{\unu}(x; \usigma)$ into a finite sum of $J$-integrals 
\begin{equation*} 
J_{\unu}(x; \usigma) = \sum_{\urho\, \in \, I^d } J_{\unu}(x; \usigma; \urho),
\end{equation*}
with
\begin{equation*} 
J_{\unu}(x; \usigma; \urho) = J_{\unu}(x; \usigma; h_{\urho})
= \int_{\BR_+^d} h_{\urho} (\ut) p_{\unu} (\ut) e^{ i x \theta (\ut; \usigma)} d \ut.
\end{equation*}

\subsection{\texorpdfstring{The Definition of $\BJ_{\unu}(x; \usigma)$}{The Definition of $J_{\nu}(x; \varsigma)$}} \label{sec: Definition of BJ nu (x; sigma)}

Let $a > 0$ and assume $\unu \in \BS_{a}$. Let $A \geq a + 2$ be an integer. For $\urho \in I^d$ denote $L_\pm (\urho) = \left\{ l      : \varrho_l      = \pm \right \}$. 

We first treat $J_{\unu}(x; \usigma; \urho)$ in the case when both $L_+(\urho)$ and $L_-(\urho) $ are nonempty. Define $\EuScript P_{+,\, \urho} = \prod_{l      \in L_-(\urho)} \EuScript P_{+,\, l     }$. This is well-defined due to commutativity (Lemma \ref{lem: I pm ell} (3)).
By Lemma \ref{lem: I pm ell} (4) we find that $\EuScript P^{2 A}_{+, \, \urho} (J_{\unu}(x; \usigma; \urho) )$ is a linear combination of
\begin{equation} \label{3eq: after I - rho}
\begin{split}
&  \prod_{l      \in L_- (\urho)} [ \nu_l     - 1 ]_{\alpha_{3  l     }} \cdot x^{- 2 A |L_-(\urho)| + \sum_{l      \in L_-(\urho)} \alpha_{1 l     }} \cdot \\
& \hskip 40pt J_{\unu + (\sum_{l      \in L_-(\urho)} \alpha_{1 l     } ) \ue^d + 2 A \sum_{l      \in L_-(\urho)} \ue_l     } \big(x; \usigma; \big( \textstyle \prod_{l      \in L_-(\urho)} t^{\alpha_{2 l     }}_l      \partial_l     ^{\alpha_{2 l     }}\big) h_{\urho}\big),
\end{split}
\end{equation}
with 
$\alpha_{1 l     } + \alpha_{2 l     } + \alpha_{3 l     } \leqslant 2 A$ for each $l      \in L_-(\urho)$.
After this, we choose $l     _+ \in L_+(\urho)$ and apply  $\EuScript P_{-,\, l     _+}^{ A + \sum_{l      \in L_-(\urho)} \alpha_{1 l     }  }$ on the $J$-integral in (\ref{3eq: after I - rho}). 
By Lemma \ref{lem: I pm ell} (4) we obtain a linear combination of
\begin{equation} \label{3eq: linear combination first case}
\begin{split}
& [\nu_{l     _+} - 1]_{\alpha_3}  \prod_{l      \in L_- (\urho)} [\nu_l     - 1]_{\alpha_{3 l     }}  \cdot x^{- A (2 |L_-(\urho)| + 1) + \alpha_1} \cdot \\
& \hskip 40pt J_{\unu - A \ue^d + 2A \sum_{l      \in L_-(\urho)} \ue_l      - \alpha_1 \ue_{l_{ + }} } \big(x; \usigma; \big( t^{\alpha_2}_{l     _+} 
\partial^{\alpha_2}_{l     _+} \textstyle \prod_{l      \in L_-(\urho)} t^{\alpha_{2 l     }}_l      \partial^{\alpha_{2 l     }} _{l     } \big) h_{\urho}\big),
\end{split}
\end{equation}
with 
$\alpha_1 + \alpha_2 + \alpha_3 \leq \sum_{l      \in L_-(\urho)} \alpha_{1 l     } + A$.
It is easy to verify that the real part of the $l     $-th index of the $J$-integral in (\ref{3eq: linear combination first case}) is positive if    $l      \in L_-(\urho)$ and 
negative if $l      \in L_+(\urho)$. 
Therefore, the $J$-integral in (\ref{3eq: linear combination first case}) is absolutely convergent according to Lemma \ref{lem: I pm ell} (1). We define  $\BJ_{\unu}(x; \usigma; \urho)$ to be the total linear combination of all the resulting $J$-integrals. 

When $L_- (\urho) \neq \O$ but $L_+ (\urho) = \O$, we define $\BJ_{\unu}(x; \usigma; \urho) = \EuScript P^A_{+, \, \urho} ( J_{\unu}(x; \usigma; \urho) )$. It is a linear combination of 
\begin{equation} \label{3eq: linear combination second case}
\begin{split}
&   \prod_{l      \in L_- (\urho)} [\nu_l     - 1]_{\alpha_{3 l     }} \cdot x^{- A |L_-(\urho)| + \sum_{l      \in L_-(\urho)} \alpha_{1 l     }} \cdot \\
& \hskip 40pt J_{\unu + (\sum_{l      \in L_-(\urho)} \alpha_{1 l     } ) \ue^d + A \sum_{l      \in L_-(\urho)} \ue_l     } \big(x; \usigma; \big(\textstyle \prod_{l      \in L_-(\urho)} t^{\alpha_{2 l     }}_l      \partial^{\alpha_{2 l     }} _{l     } \big) h_{\urho}\big),
\end{split}
\end{equation}
with $\alpha_{1 l     } + \alpha_{2 l     } + \alpha_{3 l     } \leqslant A$. The $J$-integral in (\ref{3eq: linear combination second case}) is absolutely convergent.

When  $L_+ (\urho) \neq \O$ but $L_- (\urho) = \O$, we choose $l     _+ \in L_+ (\urho) $ and define $\BJ_{\unu}(x; \usigma; \urho) = \EuScript P^A_{-, \, l     _+} ( J_{\unu}(x; \usigma; \urho) )$. This is a linear combination of 
\begin{equation} \label{3eq: linear combination third case}
\begin{split}
[\nu_{l     _+} - 1]_{\alpha_3} x^{- A + \alpha_1} J_{\unu - A \ue^d - \alpha_1 \ue_{l     _{ +}} } \big(x; \usigma; t^{\alpha_2}_{l     _+} \partial_{l     _+}^{\alpha_2} h_{\urho} \big),
\end{split}
\end{equation}
with $\alpha_1 + \alpha_2 + \alpha_3 \leq A$. The $J$-integral in (\ref{3eq: linear combination third case}) is again absolutely convergent.

Finally,  when both  $L_- (\urho) $ and $L_+ (\urho) $ are empty, we put $\BJ_{\unu}(x; \usigma;  \urho) = J_{\unu}(x; \usigma;  \urho)$.

\begin{lem}\label{lem: BJ nu (x; sigma; rho) is well-deifned}
	The definition of  $\BJ_{\unu}(x; \usigma; \urho)$ is independent on $A$ and the choice of $l     _+\in L_+ (\urho)$.
\end{lem}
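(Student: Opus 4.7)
The plan is to view each construction $\BJ_\unu(x;\usigma;\urho)$---temporarily denoted $\BJ^{(A,l_+)}_\unu$ to make its dependence on $A$ and $l_+$ explicit---as a holomorphic function of $\unu$ on an open hyper-strip containing $\BS_a$, to show that any two such candidates agree with the absolutely convergent integral $J_\unu(x;\usigma;\urho)$ on a common non-empty sub-region via iterated integration by parts, and then to conclude by analytic continuation.

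First, by Lemma \ref{lem: I pm ell} (1, 4), every $J$-integral appearing in the linear combinations \eqref{3eq: linear combination first case}--\eqref{3eq: linear combination third case} is absolutely convergent on the open hyper-strip
\[
\Omega^{(A,l_+)} = \bigl\{\unu \in \BC^d : \Re \nu_l > -A \text{ for } l \in L_-(\urho),\ \Re \nu_l < A \text{ for } l \in L_+(\urho)\bigr\},
\]
and is holomorphic in $\unu$ there; the polynomial coefficients $[\nu_l - 1]_{\alpha_3}$ are entire, so $\BJ^{(A,l_+)}_\unu(x;\usigma;\urho)$ is holomorphic in $\unu$ on $\Omega^{(A,l_+)}$. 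Since $A \geq a + 2$, we have $\BS_a \subset \Omega^{(A,l_+)}$. (The degenerate cases in which $L_+(\urho)$ or $L_-(\urho)$ is empty---including the trivial case $L_+(\urho) = L_-(\urho) = \emptyset$, where $\BJ_\unu = J_\unu$ with no dependence on $A$ or $l_+$---are treated analogously using \eqref{3eq: linear combination second case} or \eqref{3eq: linear combination third case}.)

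Now given two choices $(A, l_+)$ and $(A', l_+')$, set $\Omega = \Omega^{(A,l_+)} \cap \Omega^{(A',l_+')}$, a connected open subset of $\BC^d$ containing $\BS_a$. In the non-empty open sub-region
\[
\Omega^+ = \bigl\{\unu \in \Omega : \Re \nu_l > N \text{ for } l \in L_-(\urho),\ \Re \nu_l < -N \text{ for } l \in L_+(\urho)\bigr\},
\]
with $N \in \BN$ chosen larger than all cumulative index shifts arising in the construction, Lemma \ref{lem: I pm ell} (1) gives absolute convergence of $J_\unu(x;\usigma;\urho)$, and by Lemma \ref{lem: I pm ell} (2) each formal application of $\EuScript P_{+,l}$ (for $l \in L_-(\urho)$) or $\EuScript P_{-,l_+}$ coincides with a genuine integration by parts---all intermediate $J$-integrals remain absolutely convergent, and the boundary terms at $t_l = 0$ and $t_l = \infty$ vanish (vanishing at $\infty$ being supplied by $h_-$ for $l \in L_-(\urho)$ and by polynomial decay $t_l^{-\Re\nu_{l_+} + \mathrm{shift}}$ at $l_+$ when $-\Re \nu_{l_+}$ is large, and vanishing at $0$ being supplied analogously). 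Consequently, on $\Omega^+$,
\[
\BJ^{(A,l_+)}_\unu(x;\usigma;\urho) = J_\unu(x;\usigma;\urho) = \BJ^{(A',l_+')}_\unu(x;\usigma;\urho).
\]

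Finally, by the identity theorem for holomorphic functions of several variables, two holomorphic functions on the connected open set $\Omega$ agreeing on the non-empty open subset $\Omega^+$ coincide on all of $\Omega$, and in particular on $\BS_a$. The main technical hurdle is the bookkeeping required to verify that the boundary terms genuinely vanish at every step of the iterated integration by parts on $\Omega^+$---this is case-heavy because $\EuScript P_{+,l}$ and $\EuScript P_{-,l}$ each produce several summands with distinct index shifts (Lemma \ref{lem: I pm ell} (4))---but it is routine once $N$ is chosen larger than the maximum cumulative shift appearing in the construction.
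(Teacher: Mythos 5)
Your proof is correct, but it takes a genuinely different route than the paper. The paper's argument is carried out entirely inside $\BS_a$: for $A$-independence it converts $\BJ_{\unu}(x;\usigma;\urho)$ defined with $A$ into the one defined with $A+1$ step by step (two applications of $\EuScript P_{+,l}$ followed by one, two, or three applications of $\EuScript P_{-,l_+}$ per $l\in L_-(\urho)$, each an actual partial integration preserving absolute convergence since $A\geq a+2$), and for $l_+$-independence it runs the same process $A$ times with $k_+$ in place of $l_+$ to produce an expression manifestly symmetric in $l_+$ and $k_+$. Your approach instead extends each $\BJ^{(A,l_+)}_{\unu}(x;\usigma;\urho)$ holomorphically to the natural hyper-strip $\Omega^{(A,l_+)}$, observes that the formal integral $J_{\unu}(x;\usigma;\urho)$ itself is absolutely convergent on the auxiliary region $\Omega^+$ (which, note, is disjoint from $\BS_a$ once $N>a$, so the argument genuinely relies on analytic continuation rather than just on values in the working strip), shows via iterated applications of Lemma \ref{lem: I pm ell} (2) that every candidate $\BJ^{(A,l_+)}$ agrees with this $J_{\unu}$ there, and then appeals to the identity theorem on the connected set $\Omega$. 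What you gain is conceptual economy: all the different $(A,l_+)$ constructions are pinned to a single canonical anchor, so both independence claims drop out simultaneously from one appeal to analyticity, with no need to explicitly match the $A$-definition to the $A+1$-definition or to argue a symmetry in $l_+,k_+$. What the paper gains is that it never leaves $\BS_a$, never needs holomorphy on a region larger than the intended one, and never needs the quantitative ``$N$ larger than the maximum cumulative index shift'' bookkeeping which you correctly flag but leave to the reader (and which requires tracing how the $\ue^d$-components of the shifts in Lemma \ref{lem: I pm ell} (4) accumulate against the $\Re\nu_l<-1$ and $\Re\nu_l>1$ hypotheses of Lemma \ref{lem: I pm ell} (2) at each of the finitely many steps). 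Both proofs are sound; yours is perhaps the more transparent one to a reader comfortable with analytic continuation, while the paper's is more self-contained.
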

\begin{proof}
	We shall only consider the case when both $L_+(\urho)$ and $L_-(\urho) $ are nonempty. The other cases are similar and simpler.

	Starting from the $\BJ_{\unu}(x; \usigma; \urho)$ defined with $A$, we conduct the following operations in succession for all $l      \in L_-(\urho)$: $\EuScript P_{+,\, l     }$ twice and then $\EuScript P_{-,\, l     _+}$ once, twice or three times on each resulting $J$-integral so that the increment of the $l     $-th index is exactly one.  In this way, one arrives at the $\BJ_{\unu}(x; \usigma; \urho)$ defined with $A+1$. 
	In view of the assumption $A \geq a + 2 $, 
	absolute convergence is maintained at each step due to Lemma \ref{lem: I pm ell} (1). Moreover, in our settings, the operations $\EuScript P_{+,\, l     }$ and  $\EuScript P_{-,\, l     _+}$ are actual partial integrations (Lemma \ref{lem: I pm ell} (2)), so the value  is preserved in the process. In conclusion, $\BJ_{\unu}(x; \usigma; \urho)$  is independent on $A$.
	
	Suppose $l     _+, k_+ \in L_+(\urho)$. Repeating the process described in the last paragraph $A$ times, but with $l     _+$ replaced by $k_+$,  the $\BJ_{\unu}(x; \usigma; \urho)$ defined with $l     _+$ turns into a sum of integrals of an expression symmetric about $l     _+$ and $k_+$. 
	Interchanging $l     _+$ and $k_+$ throughout the arguments above, the $\BJ_{\unu}(x; \usigma; \urho)$ defined with $k_+$ is transformed into the same  sum of integrals. Thus we conclude that $\BJ_{\unu}(x; \usigma; \urho)$ is independent on the choice of $l     _+$.
\end{proof}

Putting these together, we define
\begin{equation*} 
\BJ_{\unu}(x; \usigma) = \sum_{\urho \, \in \, I^d } \BJ_{\unu}(x; \usigma; \urho),
\end{equation*}
and call $\BJ_{\unu} (x; \usigma)$ the \textit{rigorous interpretation of $J_{\unu} (x; \usigma)$}.  
The definition of $\BJ_{\unu} (x; \usigma)$ is clearly independent on the partition of unity $\{ h_{\varrho} \}_{\varrho \in I }$ on $\BR_+$.

Uniform convergence of the $J$-integrals in (\ref{3eq: linear combination first case}, \ref{3eq: linear combination second case}, \ref{3eq: linear combination third case}) with respect to $\unu$ implies that $\BJ_{\unu}(x; \usigma)$ is an analytic function of $\unu$ on $\BS_a^d$ and hence on the whole $\BC^d$ since $a$ was arbitrary. Moreover, for any nonnegative integer $j $, if one chooses $A \geq a + j + 2$, differentiating  $j$ times under the integral sign for  the $J$-integrals in (\ref{3eq: linear combination first case}, \ref{3eq: linear combination second case}, \ref{3eq: linear combination third case}) is legitimate. Therefore,  $\BJ_{\unu}(x; \usigma)$ is a smooth function of $x$. 

Henceforth, with ambiguity, we shall write $\BJ_{\unu} (x; \usigma)$ and  $\BJ_{\unu} (x; \usigma; \urho)$ as $J_{\unu} (x; \usigma)$ and $J_{\unu} (x; \usigma; \urho)$ respectively.

\section{\texorpdfstring{Equality between $J_{\unu}(x; \usigma)$ and $J (x ; \usigma, \ulambda)$}{Equality between $J_{\nu}(x; \varsigma)$ and $J (x ; \varsigma, \lambda)$}} \label{sec: Relating Bessel functions} 
The goal of this section is to prove  that the Bessel function $J (x ; \usigma, \ulambda)$ is indeed equal to the rigorous interpretation of its formal integral representation  $J_{\unu}(x; \usigma)$.

\begin{prop}\label{prop: J (x ; sigma, lambda) = J nu (x; sigma)} Suppose that $\ulambda \in \BL^{d}$ and $\unu \in \BC^d$ satisfy $\nu_l      = \lambda_l      - \lambda_{d+1}$, $l      = 1, ..., d$. Then
	\begin{equation*}
	J (x ; \usigma, \ulambda) = J_{\unu} (x; \usigma).
	\end{equation*}
\end{prop}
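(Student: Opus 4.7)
The plan is to leverage analyticity and reduce to identifying the two sides on a nonempty open subset of parameter space. By Lemma 3.8 of the excerpt, $J(x;\usigma,\ulambda)$ is an analytic function of $\ulambda$ on $\BC^n$; by the discussion following Lemma 7.2 (uniform convergence of the absolutely-convergent $J$-integrals appearing in the rigorous definition), $\BJ_{\unu}(x;\usigma)$ is analytic in $\unu$ on all of $\BC^d$. Under the constraint $\ulambda \in \BL^d$, the map $\ulambda \mapsto \unu = (\lambda_1 - \lambda_{d+1},\dots,\lambda_d - \lambda_{d+1})$ is a biholomorphism, so once equality is proved on any nonempty open set, analytic continuation finishes the job.

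\textbf{A favorable region.} I would work on the open cone $\Re\lambda_1 > \Re\lambda_2 > \cdots > \Re\lambda_{d+1}$, which (using $|\ulambda|=0$) is equivalent to $\Re\nu_1 > \cdots > \Re\nu_d > 0$. On this cone Theorem 5.7 expresses the Hankel transform $\Hsl\varphi(x)$, for $\varphi \in \SS(\BR_+)$, as an iteratively convergent Fourier-type integral. Comparing with the kernel formula $\Hsl\varphi(x) = \int_0^\infty \varphi(y)\,J((xy)^{1/n};\usigma,\ulambda)\,dy$ from \eqref{6eq: Hankel = integral of Bessel function}, and performing the change of variables of the start of Section 7.2 ($x_n = (x_1\cdots x_{n-1})^{-1}xy$, $x_l = y_l^{-1}$ for $l<n$, followed by $y_l = t_l/x$, absorbing powers of $x$ via $|\ulambda|=0$), we find that the Bessel kernel is represented by the iterated integral
\[
\int_{\BR_+^d}\prod_{l=1}^d t_l^{\nu_l-1}\,e^{ix(\varsigma_{d+1}t_1\cdots t_d+\sum_l \varsigma_l t_l^{-1})}\,dt_d\cdots dt_1,
\]
which is precisely the formal integral $J_{\unu}(x;\usigma)$, at least in the weak sense that both give the same integral-kernel representation of $\Hsl$ against arbitrary $\varphi \in \SS(\BR_+)$.

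\textbf{Matching $\BJ_\unu(x;\usigma)$ with the iterated integral.} To upgrade this weak identification to a pointwise equality, I would apply to both sides the common partition of unity $1 = \sum_{\urho\in I^d} h_\urho(\ut)$ from Section 7.3. For each multi-index $\urho$ the piece $J_\unu(x;\usigma;\urho) = \int h_\urho(\ut) p_\unu(\ut) e^{ix\theta(\ut;\usigma)}\,d\ut$ has its support compatible with the hypotheses of Lemma 7.2(1): on coordinates in $L_-(\urho)$ the cutoff forces vanishing at infinity, and on coordinates in $L_+(\urho)$ it forces vanishing near zero. Since $\Re\nu_l > 0$ throughout our cone, the integrals in $L_-(\urho)$-coordinates are already absolutely convergent, while after sufficiently many applications of $\EuScript P_{-,\,l_+}$ in an $L_+(\urho)$-coordinate the integrand becomes absolutely convergent as well. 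Crucially, Lemma 7.2(2) asserts that on this cone (and with $A$ enlarged as needed) the formal operators $\EuScript P_{\pm,\,l}$ coincide with genuine partial integrations and produce no boundary contributions. Consequently $\BJ_\unu(x;\usigma;\urho)$ has the same numerical value as the $\urho$-piece of the iterated integral from Step 2, and summing over $\urho$ yields $\BJ_\unu(x;\usigma) = J(x;\usigma,\ulambda)$ on the cone.

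\textbf{Main obstacle.} The delicate step is the Fubini-type exchange used in Step 2 to pull the $dy$ integration to the outside of the only-iteratively-convergent $dx_n\cdots dx_1$ integration of Theorem 5.7; standard Fubini does not apply. The cleanest way around it is to postpone this exchange: test the proposed identity against $\varphi \in \SS(\BR_+)$ first, then split with the partition $\{h_\urho\}$, and observe that on our chosen cone every resulting integral is absolutely convergent, so Fubini applies piece by piece. This reduces the whole proposition to the direct pointwise matching in Step 3, which is where the rigorous-interpretation machinery of Section 7.3 does its work, and analytic continuation in $\unu$ then removes the cone restriction.
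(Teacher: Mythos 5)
Your overall strategy — work on the cone $\Re\lambda_1>\cdots>\Re\lambda_{d+1}$, use the Fourier-type transform (Theorem \ref{5thm: Fourier type transform, R+}) plus the kernel formula \eqref{6eq: Hankel = integral of Bessel function}, change variables to the formal integral, and finish by analytic continuation — is exactly the paper's. But your resolution of the ``Main obstacle'' has a genuine gap, and it is the heart of the argument.

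You claim that, after testing against $\varphi\in\SS(\BR_+)$ and splitting with the partition $\{h_{\urho}\}$, ``every resulting integral is absolutely convergent, so Fubini applies piece by piece.'' This is false. On the cone one has $\Re\nu_l>0$ for \emph{all} $l$, so for any $\urho$ with $L_+(\urho)\neq\O$ and $l\in L_+(\urho)$ the factor $h_{\varrho_l}(y_l)\,y_l^{\,\nu_l-1}$ lives on $[2,\infty)$ with a non-integrable absolute value $y_l^{\Re\nu_l-1}$; the Schwartz decay of $\upsilon(y)$ is in the $y$ variable and does nothing for the $y_l$. So Fubini still cannot be applied directly. Relatedly, your appeal to Lemma \ref{lem: I pm ell}~(2) to justify that $\EuScript P_{-,\,l_+}$ is an actual partial integration is also incorrect on the cone: that lemma requires $\Re\nu_{l_+}<-1$ for $l_+\in L_+(\urho)$, which contradicts $\Re\nu_{l_+}>0$. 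On the cone you have no boundary control for integrating by parts in the $t$-variables alone, which is exactly why $J_\unu$ only has meaning through its rigorous interpretation $\BJ_\unu$.

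What the paper's proof does, and what is missing from yours, is the intermediate mechanism: the partial integrations are performed on the full $(d+1)$-fold iterated integral $\Upsilon_1(x;\urho)$ \emph{before} Fubini. The decisive operation is $\EuScript P_{d+1}$, an actual integration by parts in the test-function variable $y$, where $\upsilon\in\SS(\BR_+)$ absorbs all boundary terms and the effect on the other variables is the simultaneous index drop that $\EuScript P_{-,\,l_+}$ formally encodes. Together with $\EuScript P_l$ in the $y_l$-variables, this renders $\Upsilon_1(x;\urho)$ an absolutely convergent multiple integral; only then does Fubini let one pull $dy$ outside. After the change of variables $y_l = t_l(xy)^{-1/(d+1)}$, the $\EuScript P_l$-operations become $\EuScript P_{+,\,l}$, and one needs a final substitution step — the same independence-of-$l_+$ argument as in Lemma \ref{lem: BJ nu (x; sigma; rho) is well-deifned} — to replace the $\EuScript P_{d+1}$-history by $\EuScript P_{-,\,l_+}$-operations so as to recognize the inner $d$-fold integral as $\BJ_\unu\big((xy)^{1/(d+1)};\usigma;\urho\big)$. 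Without the detour through the $y$-variable integration by parts, the Fubini exchange has no justification and the matching with $\BJ_\unu$ does not go through.
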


To prove this proposition, we first recall the expression of the Hankel transform $\Upsilon = \Hsl \upsilon $ as a Fourier type integral transform given in \S \ref{sec: Fourier, R+}.

Suppose that $\Re \lambda_1 > ... > \Re \lambda_d > \Re \lambda_{d+1}$. 
Let $\upsilon \in \mathscr S (\BR_+)$ be a Schwartz function on $\BR_+$. Define
\begin{equation}\label{4eq: Psi d+1}
\Upsilon_{d+1} (x ) = \ES_{(\varsigma_{d+1}, \lambda_{d+1})} \upsilon (x) = \int_{\BR_+} \upsilon (y) y^{-\lambda_{d+1}} e^{ \varsigma_{d+1} i x y} d y, \hskip 10 pt x \in \BR _+,
\end{equation}
and, for each $l      = 1, ..., d $, recursively define
\begin{equation}\label{4eq: Psi ell}
\begin{split}
\Upsilon_{l     } \left(x \right)  = \ET_{(\varsigma_{l}, \lambda_{l}-\lambda_{l+1})} \Upsilon_{l      + 1} (x) = \int_{\BR_+} \Upsilon_{l      + 1} \lp y  \rp y^{\lambda_l      - \lambda_{l      + 1} - 1} e^{ \varsigma_{l     } i x y\- } d y,
\hskip 10 pt x \in \BR _+.
\end{split}
\end{equation}
It is known from Lemma \ref{5lem: Hankel R+} and Lemma \ref{5lem: Miller-Schmid transforms, R+} (4) that $\Upsilon_{d+1}   \in \SS (\overline \BR_+) \subset \Ssiss (\BR_+)$ and $\Upsilon_{l} \in  \Ssiss (\BR_+)$ for all $l = 1, ..., d$. Moreover, we have $\Upsilon(x ) = x^{ - \lambda_1 } \Upsilon_1 (x  )$. See \S \ref{sec: Fourier, R+}.

The change of variables from $y $ to $ x y$ in (\ref{4eq: Psi ell}) yields
\begin{equation*} 
\Upsilon_{l     } \lp x \rp = \int_{\BR_+} \Upsilon_{l      + 1} \lp x y \rp x^{\lambda_l      - \lambda_{l      + 1}} y^{\lambda_l      - \lambda_{l      + 1} - 1} e^{ \varsigma_{l     } i y\-} d y.
\end{equation*}
Some calculations then show that $ \Upsilon_{1} \lp x \rp$ is equal to the iterated integral
\begin{equation}\label{4eq: Psi 1 (x; sigma)}
\begin{split}
x^{\nu_1 } \int_{\BR_+^{d+1}} \upsilon \left( y \right) y^{- \lambda_{d+1} }   \left( \prod_{l      = 1}^{d} y_l     ^{ \nu_l       - 1} \right)  
e^{i \lp \varsigma_{d+1}  xy y_1 ... y_{d} + \sum_{l      = 1}^d \varsigma_l      y_l     \- \rp } dy d y_d ... dy_1.
\end{split}
\end{equation}
The (actual) partial integration $\EuScript P_{l     }$ on the integral over $d y_{l     }$ 
is in correspondence with $\EuScript P_{+,\, l     }$, whereas the partial integration $\EuScript P_{d+1}$ on the integral over $d y$ 
has the similar effect as $\EuScript P_{-,\, l     _+}$ of decreasing the powers of all the $y_{l     }$ by one. 
These observations are crucial to our proof of Proposition \ref{prop: J (x ; sigma, lambda) = J nu (x; sigma)} as follows.

\begin{proof}[Proof of Proposition {\rm \ref{prop: J (x ; sigma, lambda) = J nu (x; sigma)}}]
	
	Suppose that  $\Re \lambda_1 > ... > \Re \lambda_d > \Re \lambda_{d+1}$. 
	We first partition the integral over $d y_{l     }$ in \eqref{4eq: Psi 1 (x; sigma)}, for each $l      = 1, ..., d$, into a sum of integrals according to a partition of unity $\{h^{\mathrm{o}}_{\varrho} \}_{\varrho \in I}$ of $\BR_+$. These partitions result in a partition of the integral (\ref{4eq: Psi 1 (x; sigma)}) into the sum
	\begin{equation*}
	\Upsilon_{1} \lp x \rp = \sum_{\urho\, \in\, I^d } \Upsilon_1 (x;   \urho),
	\end{equation*}
	with
	\begin{equation*}
	\begin{split}
	\Upsilon_1 (x;  \urho) = x^{\nu_1 } \int_{\BR_+^{d+1}} \upsilon \left( y \right) y^{- \lambda_{d+1} } & \left( \prod_{l      = 1}^{d} h^{\mathrm{o}}_{\varrho_l     } (y_l     ) y_l     ^{ \nu_l      - 1} \right)   e^{i \left( \varsigma_{d+1}  xy y_1 ... y_{d} + \sum_{l      = 1}^d \varsigma_l      y_l     \- \right)} dy d y_d ... dy_1.
	\end{split}
	\end{equation*}
	We now conduct the operations in \S \ref{sec: Definition of BJ nu (x; sigma)} with $\EuScript P_{+,\, l     }$ replaced by $\EuScript P_{l     }$ and $\EuScript P_{-,\, l     _+}$ by $\EuScript P_{d+1}$ to each integral $ \Upsilon_1 (x;  \urho)$ defined above. While preserving the value, these   partial integrations turn the iterated integral $ \Upsilon_1 (x;   \urho)$ into an absolutely convergent multiple integral. We are then able to move the innermost integral over $d y$ to the outermost place. The integral over  $dy_d ... dy_1$ now becomes   the {inner} integral. Making the change of variables $y_l      = t_{l     } (xy)^{-\frac 1 { d+1} }$ to the inner integral  over  $dy_d ... dy_1$,
	each partial integration $\EuScript P_{l     }$ that we did turns into $\EuScript P_{+,\, l     }$. By the same arguments in the proof of Lemma \ref{lem: BJ nu (x; sigma; rho) is well-deifned} showing that $J_{\unu}(x; \usigma)$ is independent on the choice of $l     _+\in L_+(\urho)$, the operations of $\EuScript P_{d+1}$ that we conducted at the beginning may be reversed and substituted by those of $\EuScript P_{-,\, l     _+}$. It follows that the inner integral over $dy_d ... dy_1$ is equal to $x^{\lambda_1 } \upsilon (y) J_{\unu} \big(  (xy)^{\frac 1 { d+1 } }; \usigma; \urho \big)$, with $ h_{\varrho}(t) = h^{\mathrm{o}}_{\varrho}\big(  t (xy)^{-\frac 1 { d+1 }} \big)$. Summing over $\urho \in I^d$, we conclude that
	\begin{equation*}
	\Upsilon (x ) = x^{ - \lambda_1} \Upsilon_1 (x ) =  \int_{\BR_+}  \upsilon (y) J_{\unu} \big( 
	 (xy)^{\frac 1 { d+1 }}; \usigma \big) d y.
	\end{equation*}
	Therefore, in view of (\ref{2eq: Psi (x; sigma) as Hankel transform}), we have
	$J (x ; \usigma, \ulambda) = J_{\unu} (x; \usigma).$ This equality  holds true universally due to the principle of analytic continuation. 
\end{proof}


In view of Proposition \ref{prop: J (x ; sigma, lambda) = J nu (x; sigma)}, we shall subsequently assume that $\ulambda \in \BL^{d}$ and $\unu \in \BC^d$ satisfy the relations $\nu_l      = \lambda_l      - \lambda_{d+1}$, $l      = 1, ..., d$.

\section{$H$-Bessel Functions and $K$-Bessel Functions}\label{sec: Bessel functions of K-type and H-Bessel functions}

According to Proposition \ref{prop: Classical Bessel functions}, $ J_{2\lambda} (x; \pm, \pm) = J (x; \pm, \pm, \lambda, - \lambda) $ 
is a Hankel function, and $ J_{2\lambda} (x; \pm, \mp) = J (x; \pm, \mp, \lambda, - \lambda)$
is a $K$-Bessel function. There is a remarkable difference between the behaviours of   Hankel functions and the $K$-Bessel function for large argument.   Hankel functions oscillate and decay proportionally to $  1 / {\sqrt x}$, whereas the $K$-Bessel function exponentially decays. 
On the other hand, this phenomena also arises in higher rank for the  prototypical example shown in Proposition \ref{prop: special example}.  

In the following, we shall  show that such a categorization stands in general for the Bessel functions $J_{\unu} (x; \usigma)$ of an arbitrary index $\unu$.
For this, we shall analyze each integral $J_{\unu}(x; \usigma; \urho)$ in the rigorous interpretation of  $J_{\unu} (x; \usigma)$ by \textit{the method of stationary phase}.

\vskip 5 pt

First of all, the asymptotic behaviour of $J_{\unu} (x; \usigma)$  for large argument should rely on the existence of a stationary point of the phase function $ \theta (\ut ; \usigma )$ on $\BR_+^d$.
We have $$ \theta' (\ut; \usigma) = \left( \varsigma_{d+1} t_1 ... \widehat {t_{l     }} ... t_d -  \varsigma_l      t_{l     }^{-2} \right)_{l      = 1}^d.$$ 
A stationary point  of  $ \theta (\ut ; \usigma )$  exists in $\BR^d_+$ if and only if $\varsigma_1 = ... = \varsigma_d = \varsigma_{d+1}$, in which case  it is equal to $ \ut_0 = (1, ..., 1)$.

\begin{term}\label{term: Bessel functions of K-type and H-Bessel functions}
	We write $H^{\pm}_{\unu}(x ) = J_{\unu} (x; \pm, ..., \pm)$, $H^{\pm} (x; \ulambda ) = J (x; \pm, ... \pm, \ulambda)$ 
	and call them $H$-Bessel functions. If two of the signs $\varsigma_1, ..., \varsigma_d, \varsigma_{d+1}$ are different, then $J_{\unu} (x; \usigma)$, or $J (x; \usigma, \ulambda)$, is called a $K$-Bessel function.
\end{term}

\subsection*{Preparations}
We shall retain the notations in \S \ref{sec: Rigorous Interpretations}.
Moreover, for our purpose we choose a partition of unity $\left\{ h_{\varrho} \right \}_{\varrho \, \in \{-, 0, +\}}$ on $\BR_+$ such that
$h_- $, $h_0 $ and $h_+ $ are  functions in $\mathscr T (\BR_+)$ supported on $K_- = \left(0, \frac 1 2\right]$, $K_0 = \left[\frac 1 4, 4\right]$ and $K_+ = \left[2, \infty \right)$ respectively.  
Put $K_{\urho} = \prod_{l      = 1}^d K_{\varrho_l     } $ and $h_{\urho} (\ut) =  \prod_{l      = 1}^d h_{\varrho_l     } (t_{l     })$ for $\urho \in \{-, 0 , + \}^d$.
Note that $\ut_0$ is enclosed in the central hypercube $K_{\boldsymbol 0}$.
According to this partition of unity, $J_{\unu}(x; \usigma)$ is partitioned into the sum of $3^d$ integrals $J_{\unu}(x; \usigma; \urho)$. In view of (\ref{3eq: linear combination first case}, \ref{3eq: linear combination second case}, \ref{3eq: linear combination third case}), $J_{\unu}(x; \usigma; \urho)$ is a $\BC [x\-]$-linear combination of absolutely convergent $J$-integrals of the form
\begin{equation}\label{5eq: J-integral}
J_{\boldsymbol \nu'} (x; \usigma; h) = \int_{\BR_+^d} h (\ut) p_{\boldsymbol \nu'} (\ut) e^{ i x \theta (\ut; \usigma)} d \ut.
\end{equation}
Here $h \in \bigotimes^d \mathscr T (\BR_+)$ is supported in $K_{\urho}$, and $\boldsymbol \nu' \in \unu + \BZ^d$ satisfies
\begin{equation}\label{5eq: conditions on nu'}
\Re  \nu'_{l     } - \Re  \nu_{l     } \geq A \text { if } l      \in L_-(\urho), \text { and } \Re  \nu'_{l     } - \Re  \nu_{l     } \leq - A \text{ if }l      \in L_+(\urho),
\end{equation} 
with $A > \max  \left\{ |\Re \nu_l      | \right \} + 2$.

\subsection{\texorpdfstring{Estimates for $J_{\unu}(x; \usigma; \urho)$ with $\urho \neq \boldsymbol 0$}{Estimates for  $J_{\nu}(x; \varsigma; \varrho)$ with $\varrho \neq 0$}} \label{sec: Bound for rho neq 0}


Let 
\begin{equation}\label{5eq: Theta (t; sigma)}
\Theta (\ut; \usigma) = \sum_{l      = 1}^d \lp t_{l     } \partial_{l     } \theta (\ut; \usigma) \rp^2 = \sum_{l      = 1}^d \lp \varsigma_{d+1} t_1 ... t_d - \varsigma_{l     } t_{l     }\- \rp^2.
\end{equation}

\begin{lem}\label{lem: lower bound for Theta}
	Let $\urho \neq \boldsymbol 0$. We have for all $\ut \in K_{\urho}$
	$$\Theta (\ut; \usigma) \geq \frac 1 {16}.$$
\end{lem}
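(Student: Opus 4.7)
The plan is to show that at least one summand of $\Theta(\ut;\usigma)$ is already $\geq 1/16$. Introduce the product $P = t_1 \cdots t_d > 0$ and the signs $\epsilon_l = \varsigma_{d+1}\varsigma_l \in \{\pm 1\}$. Then
\[
\Theta(\ut;\usigma) \;=\; \sum_{l=1}^d \bigl(P - \epsilon_l t_l^{-1}\bigr)^2,
\]
so it suffices to exhibit an index $l$ with $|P - \epsilon_l t_l^{-1}| \geq 1/4$. Note for later use that $\varrho_l = +$ forces $t_l \geq 2$, hence $|\epsilon_l t_l^{-1}| \leq 1/2$, while $\varrho_l = -$ forces $t_l \leq 1/2$, hence $|\epsilon_l t_l^{-1}| \geq 2$. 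Since $\urho \neq \boldsymbol 0$, we may fix $l_0$ with $\varrho_{l_0} \in \{+, -\}$.

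First I would handle the case $\varrho_{l_0} = +$. Here $|\epsilon_{l_0} t_{l_0}^{-1}| \leq 1/2$. If $P \geq 3/4$, then $|P - \epsilon_{l_0} t_{l_0}^{-1}| \geq P - 1/2 \geq 1/4$ and the $l_0$-th term alone is $\geq 1/16$. Otherwise $P < 3/4$; combined with $t_{l_0} \geq 2$ this gives $\prod_{k \neq l_0} t_k < 3/8 < 1$, so (assuming $d \geq 2$) there exists $k_0 \neq l_0$ with $t_{k_0}^{-1} > 1$, whence $|P - \epsilon_{k_0} t_{k_0}^{-1}| \geq t_{k_0}^{-1} - P > 1 - 3/4 = 1/4$ (and the $\epsilon_{k_0} = -1$ sub-subcase is even easier, since then the two terms have opposite signs and add in absolute value). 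When $d = 1$ the product $P$ equals $t_{l_0} \geq 2$ and the first sub-case already suffices.

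Next I would handle $\varrho_{l_0} = -$, so $t_{l_0}^{-1} \geq 2$. If $\epsilon_{l_0} = -1$ the $l_0$-th term is $(P + t_{l_0}^{-1})^2 \geq 4$. If $\epsilon_{l_0} = +1$ and $P \leq 7/4$, the $l_0$-th term is $(t_{l_0}^{-1} - P)^2 \geq 1/16$. Otherwise $P > 7/4$ together with $t_{l_0} \leq 1/2$ yields $\prod_{k \neq l_0} t_k > 7/2$, so some $k_0 \neq l_0$ satisfies $t_{k_0} > 1$, i.e.\ $|t_{k_0}^{-1}| < 1$; the $k_0$-th term is then $\geq (P - |t_{k_0}^{-1}|)^2 > (7/4 - 1)^2 > 1/16$ when $\epsilon_{k_0} = +1$, while $\epsilon_{k_0} = -1$ again makes the two contributions add in absolute value.

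I do not expect any single step to be a serious obstacle; the slight subtlety is that when one coordinate is extreme (forcing $|s_{l_0}|$ far from $1$), the product $P$ may still conspire with $\epsilon_{l_0}$ to make the $l_0$-th term small, so one must exploit the constraint $P = \prod_k t_k$ to produce a second extreme coordinate $k_0$ in the opposite direction. The cleanest bookkeeping is to arrange the cut-offs $3/4$ and $7/4$ so that the resulting factor $(3/8)^{1/(d-1)}$ or $(7/2)^{1/(d-1)}$ is on the correct side of $1$ for every $d \geq 2$, with $d = 1$ treated separately as above.
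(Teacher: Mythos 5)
Your proof is correct and takes essentially the same approach as the paper's: both reduce to exhibiting a single index $l$ with $\bigl|\varsigma_{d+1}t_1\cdots t_d - \varsigma_l t_l^{-1}\bigr| \geq \tfrac14$, and both hinge on case-splitting the product $P = t_1\cdots t_d$ against the thresholds $\tfrac34$ and $\tfrac74$, using $\urho\neq\boldsymbol 0$ to produce a coordinate with $t_l\notin(\tfrac12,2)$. The only difference is organizational — the paper splits on the value of $P$ first (using the pigeonhole observation that $P<\tfrac34$ or $P>\tfrac74$ already forces some $t_l$ across $1$, without reference to a fixed $l_0$), whereas you fix $l_0$ with $\varrho_{l_0}\neq 0$ at the outset and then subdivide; this is a surface-level rearrangement of the same estimates.
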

\begin{proof}
	Instead, we shall prove
	\begin{equation*}
	\max \Big\{\left|\varsigma_{d+1} t_1 ... t_d - \varsigma_{l     } t_{l     }\- \right| :  \ut \in \BR_+^d \smallsetminus  K_{\boldsymbol 0} \text { and } l      = 1, ..., d \Big \} \geq \frac 1 4.
	\end{equation*}
	Firstly, if $t_1 ... t_d < \frac 3 4$, then there exists $t_l      < 1$ and hence $\left|\varsigma_{d+1} t_1 ... t_d - \varsigma_{l     } t_{l     }\- \right| > 1 - \frac 3 4 = \frac 1 4$. Similarly, if $t_1 ... t_d > \frac 7 4$, then there exists $t_l      > 1$ and hence $\left|\varsigma_{d+1} t_1 ... t_d - \varsigma_{l     } t_{l     }\- \right| >\frac 7 4 - 1 > \frac 1 4$. Finally, suppose  that $  t_1 ... t_d \in \left[\frac 3 4, \frac 7 4 \right]$, then for our choice of $\ut $ there exists  $t_{l     } \notin \lp \frac 1 2, 2 \rp$, and therefore we still have $\left|\varsigma_{d+1} t_1 ... t_d - \varsigma_{l     } t_{l     }\- \right| \geq \frac 1 4$.
\end{proof}

Using (\ref{5eq: Theta (t; sigma)}), we rewrite the $J$-integral $J_{\boldsymbol \nu'} (x; \usigma; h)$ in \eqref{5eq: J-integral} as below,
\begin{equation} \label{5eq: rewrite J nv' (x; sigma; h)}
\begin{split}
\sum_{l      = 1}^d \int_{\BR_+^d} h (\ut) \left( \varsigma_{d+1} p_{\boldsymbol \nu' + \ue^d + \ue_{l     }} (\ut) - \varsigma _{l     } p_{\boldsymbol \nu'} (\ut) \right) \Theta (\ut; \usigma)\- \cdot \partial_{l     } \theta (\ut; \usigma) e^{i x \theta (\ut; \usigma)} d \ut.
\end{split}
\end{equation}
We now make use of  the    \textit{third} kind of partial integrations arising from
$$\partial \big(e^{i x \theta (\ut; \usigma) } \big) = i x \cdot \partial_{l     } \theta (\ut; \usigma) e^{ i x \theta (\ut; \usigma) } \partial t_l     . $$
For the $l     $-th integral in \eqref{5eq: rewrite J nv' (x; sigma; h)}, we apply the corresponding partial integration of the third kind. In this way, \eqref{5eq: rewrite J nv' (x; sigma; h)} turns into
\begin{align*}
& - (ix)\- \sum_{l      = 1}^d \int_{\BR_+^d} t_l      \partial_l      h \lp \varsigma _{d+1} p_{\boldsymbol \nu' + \ue^d} - \varsigma _{l     } p_{\boldsymbol \nu' -  \ue_{l     }} \rp  \Theta \- e^{i x \theta} d\ut\\
& - (ix)\- \sum_{l      = 1}^d \int_{\BR_+^d} h \lp \varsigma _{d+1} (\nu'_{l     } + 1) p_{\boldsymbol \nu' + \ue^d } - \varsigma _{l     } (\nu'_l      - 1) p_{\boldsymbol \nu' - \ue_{l     } } \rp \Theta \-  e^{i x \theta} d\ut\\
& + \varsigma _{d+1} 2 d^2 (ix)\- \int_{\BR_+^d} h  p_{\boldsymbol \nu' + 3 \ue^d} \Theta^{-2}  e^{i x \theta} d \ut \\
& + 2 (ix)\-  \sum_{l      = 1}^d \int_{\BR_+^d} h \big( \varsigma _{l     } (1 - 2 d) p_{\boldsymbol \nu' + 2 \ue^d -\ue_l     }    - \varsigma _{d+1} p_{\boldsymbol \nu' + \ue^d - 2 \ue_l     } + \varsigma _{l     } p_{\boldsymbol \nu' - 3 \ue_l     } \big) \Theta^{-2}  e^{i x \theta} d \ut\\
& + 4 (ix)\- \sum_{1\leq l      < k \leq d} \varsigma _{d+1} \varsigma _{l      } \varsigma _{k} \int_{\BR_+^d} h p_{\boldsymbol \nu' + \ue^d - \ue_l      - \ue_{k}}  \Theta^{-2}  e^{i x \theta} d \ut,
\end{align*}
where $\Theta$ and $\theta$ are the shorthand notations for $\Theta (\ut; \usigma)$ and $\theta (\ut; \usigma)$. Since the shifts of indices do not exceed $3$, it follows from the condition (\ref{5eq: conditions on nu'}), combined with  Lemma \ref{lem: lower bound for Theta}, that all the integrals above absolutely converge provided $A > \max  \left\{ |\Re \nu_l      | \right \} + 3$. 

Repeating the above manipulations, we obtain the following lemma by a straightforward inductive argument.
\begin{lem}\label{lem: J(x; sigma; h) for rho neq 0}
 	Let $B$ be a nonnegative integer, and choose $A =  \lfloor \max  \left\{ |\Re \nu_l      | \right \} \rfloor + 3 B + 3$. Then $J_{\boldsymbol \nu'} (x; \usigma; h)$ is equal to a linear combination of $\left(\frac 1 2 {(d^2 - d)}  + 7 d + 1\right)^B$ many absolutely convergent integrals of the following form
	\begin{equation*}
	(ix)^{-B} P(\boldsymbol \nu') \int_{\BR_+^d} \ut^{\ualpha}\partial^{\ualpha} h (\ut) p_{\boldsymbol \nu''}(\ut) \Theta(\ut; \usigma)^{-B-B_2} e^{i x \theta(\ut; \usigma)} d\ut,
	\end{equation*}
	where  $|\ualpha| + B_1 + B_2 = B$ {\rm($\ualpha \in \BN^d $)}, $P $ is a polynomial  of degree $B_1$ and integer coefficients of size $O_{B,\, d} (1)$, and $\boldsymbol \nu'' \in \boldsymbol \nu' + \BZ^d$ satisfies $|\nu''_{l     }  - \nu'_{l     }| \leq  B + 2 B_2$ for all $l      = 1, ..., d$. Recall that in the multi-index notation $|\ualpha| = \sum_{l      = 1}^d \alpha_l     $, $\ut^{\ualpha} = \prod_{l      = 1}^d t_{l     }^{\alpha_l     }$ and $\partial^{\ualpha} = \prod_{l      = 1}^d \partial_{l     }^{\alpha_l     } $.
\end{lem}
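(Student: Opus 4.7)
The plan is to prove this by induction on $B$. The base case $B = 0$ is immediate: the original $J$-integral \eqref{5eq: J-integral} is of the asserted form with $\ualpha = \boldsymbol 0$, $B_1 = B_2 = 0$, $P \equiv 1$, and $\boldsymbol \nu'' = \boldsymbol \nu'$, and absolute convergence on $K_\urho$ follows from Lemma \ref{lem: I pm ell} (1) since $A > \mathfrak r + 3$. The inductive step will consist of exactly one more application of the third-kind partial integration that was already carried out in the five-line display preceding the lemma; I will therefore take an arbitrary summand produced at the $B$-th stage and reproduce the same maneuver on it.

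Concretely, given
\begin{equation*}
(ix)^{-B} P(\boldsymbol \nu') \int_{\BR_+^d} \ut^{\ualpha}\partial^{\ualpha} h(\ut)\, p_{\boldsymbol \nu''}(\ut)\, \Theta(\ut;\usigma)^{-B - B_2} e^{i x \theta(\ut;\usigma)} d\ut,
\end{equation*}
I insert $1 = \Theta^{-1} \sum_{l=1}^d \lp t_l \partial_l \theta \rp^2$ and convert one copy of $t_l \partial_l \theta \cdot e^{i x \theta}$ into $(ix)^{-1} t_l \partial_l\lp e^{i x \theta} \rp$. One partial integration in $t_l$ (with no boundary contribution on the support of $h$, just as in \eqref{5eq: rewrite J nv' (x; sigma; h)}) distributes $t_l \partial_l$ onto the four remaining factors: onto $\ut^\ualpha \partial^\ualpha h$, raising $|\ualpha|$ by one and leaving $P$ unchanged; onto $p_{\boldsymbol \nu''}$, producing a factor $(\nu''_l \pm 1)$ and hence raising $B_1$ by one while shifting $\boldsymbol \nu''$ by at most one; onto the remaining copy of $t_l \partial_l \theta$, shifting $\boldsymbol \nu''$ by at most one; and onto $\Theta^{-B-B_2}$, producing $-(B+B_2)\Theta^{-B-B_2-1}\, t_l \partial_l \Theta$, raising $B_2$ by one and shifting $\boldsymbol \nu''$ by at most two because $t_l \partial_l \Theta$ is a sum of monomials in $t_l^{\pm 1}$ and $t_1 \cdots t_d$ of total degree at most two. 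A direct count, matching the five display lines, gives exactly $(d^2-d)/2 + 7d + 1$ new summands at each step — the $\binom{d}{2}$ contribution arising from the cross monomials produced when $t_l \partial_l \Theta$ meets $t_k \partial_k \theta$ for $k \ne l$. Iterating $B$ times therefore yields at most $\lp (d^2-d)/2 + 7d + 1 \rp^B$ summands, with cumulative shift $|\nu''_l - \nu'_l| \leq B + 2 B_2$.

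Absolute convergence at every intermediate stage, which is what legitimates the partial integrations, is the only point requiring care. Since $\urho \ne \boldsymbol 0$ and $h$ is supported in $K_\urho$, Lemma \ref{lem: lower bound for Theta} gives $\Theta \geq 1/16$ on the support, so all negative powers of $\Theta$ remain smoothly bounded throughout. The cumulative shift of indices at step $B$ is at most $3B$ per coordinate, so the choice $A = \lfloor \mathfrak r \rfloor + 3B + 3$ guarantees that $\boldsymbol \nu''$ still satisfies the sign pattern \eqref{5eq: conditions on nu'} with a margin of at least $3$; Lemma \ref{lem: I pm ell} (1) then supplies absolute convergence of every resulting integral. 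The integer coefficients generated per step are bounded uniformly in $B$ and $\boldsymbol \nu'$ by a constant depending only on $d$, so the accumulated polynomial $P$ has integer coefficients of size $O_{B,d}(1)$ and degree exactly $B_1$, as asserted. The main obstacle is the combinatorial bookkeeping that verifies the exact count $(d^2-d)/2 + 7d + 1$ and the precise tracking of how $\boldsymbol \nu''$, $P$, $B_1$, $B_2$, and $|\ualpha|$ evolve; once the one-step count is pinned down from the display preceding the lemma, the induction itself is formal.
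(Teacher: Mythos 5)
Your proposal is correct and takes the same inductive approach as the paper, which treats this lemma by simply observing that the explicit five-line computation preceding it is to be iterated $B$ times ``by a straightforward inductive argument.'' One bookkeeping slip worth fixing: the per-coordinate shift of $\boldsymbol \nu''$ in a step that differentiates $\Theta^{-B-B_2}$ is at most $3$, not $2$ --- $t_l\partial_l\Theta$ contributes up to $2$, but the surviving copy of $t_l\partial_l\theta$, which still multiplies $p_{\boldsymbol \nu''}$, contributes another $1$, as is visible from the term $p_{\boldsymbol\nu' - 3\ue_l}$ in the paper's display --- and it is precisely this shift of $3$ that produces the asserted bound $|\nu''_l - \nu'_l| \leq 3 B_2 + (B - B_2) = B + 2B_2$, whereas your per-step tallies of $\leq 1,\ \leq 1,\ \leq 2$ would only account for $B + B_2$.
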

Define $\mathfrak c = \max  \left\{ | \nu_l      | \right \} + 1$ and  $\mathfrak r = \max  \left\{ |\Re \nu_l      | \right \}$.  Suppose that  $x \geq \mathfrak c$.  Applying Lemma \ref{lem: J(x; sigma; h) for rho neq 0} and \ref{lem: lower bound for Theta} to the $J$-integrals in (\ref{3eq: linear combination first case}, \ref{3eq: linear combination second case}, \ref{3eq: linear combination third case}), one obtains the estimate
\begin{equation*} 
J_{\unu}(x; \usigma; \urho) \lll_{\, \mathfrak r, \, M, \, d} \lp \frac {\mathfrak c} x \rp^M ,
\end{equation*} 
for   any given nonnegative integer $M $. 
Slight modifications of the above arguments yield a similar estimate for the derivative
\begin{equation}\label{5eq: bound for J nu (j) (x; sigma; rho) rho neq 0}
J^{(j)}_{\unu}(x; \usigma; \urho) \lll_{\,\mathfrak r,\, M,  j, \, d} \lp \frac {\mathfrak c} x \rp^M .
\end{equation} 

\begin{rem}
	Our proof of \eqref{5eq: bound for J nu (j) (x; sigma; rho) rho neq 0} is similar to that of \cite[Theorem 7.7.1]{Hormander}. Indeed, $\Theta (\ut; \usigma) $ plays the same role as $|f'|^2 + \Im f$ in the proof of \cite[Theorem 7.7.1]{Hormander}, where $f$ is the phase function there. The non-compactness of $K_{\urho}$ however prohibits the application of \cite[Theorem 7.7.1]{Hormander} to the $J$-integral in \eqref{5eq: J-integral} in our case.

\end{rem}

\subsection{Rapid Decay of $K$-Bessel Functions} Suppose that   there exists $ l  $ such that  $ \varsigma_{l} \neq \varsigma_{d+1}$. Then for any $\ut \in K_{\boldsymbol 0}$  
$$  
\left|\varsigma _{d+1} t_1 ... t_d - \varsigma _{l} t_{l}^{-1} \right| > t_{l}\- \geq \frac 1 4.$$
Similar to the arguments in \S \ref{sec: Bound for rho neq 0}, repeating the $l$-th partial integration of the third kind yields
the same bound (\ref{5eq: bound for J nu (j) (x; sigma; rho) rho neq 0}) in the case $\urho = \boldsymbol 0$.

\begin{rem}
	For this, we may also directly apply \cite[Theorem 7.7.1]{Hormander}.
\end{rem}

\begin{thm}\label{thm: Bessel functions of K-type}
	Let $\mathfrak c = \max  \left\{ | \nu_l      | \right \} + 1$ and  $\mathfrak r = \max  \left\{ |\Re \nu_l      | \right \}$. Let $j$ and $M$ be nonnegative integers. Suppose that one of the  signs $\varsigma_1, ..., \varsigma_d$ is different from $ \varsigma_{d+1}$. 
	Then  
	\begin{equation*} 
	J_{\unu}^{(j)} (x;  \usigma) \lll_{\, \mathfrak r,\, M, j,\, d} \lp \frac {\mathfrak c} x \rp^M
	\end{equation*}
	for any $x \geq \fc$. In particular, $J_{\unu} (x;  \usigma)$ is a Schwartz function at infinity, namely, all the derivatives $ J_{\unu}^{(j)} (x;  \usigma)$ rapidly decay at infinity.
\end{thm}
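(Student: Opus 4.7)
The plan is to derive the theorem as a direct synthesis of the two estimates already assembled in this section. Using the partition of unity $\{h_\varrho\}_{\varrho \in \{-,0,+\}}$ fixed at the beginning of the preparations above, the rigorous interpretation from \S \ref{sec: Definition of BJ nu (x; sigma)} gives a finite decomposition
\[
J_\unu(x;\usigma) = \sum_{\urho \in \{-,0,+\}^d} J_\unu(x;\usigma;\urho),
\]
and the same decomposition holds for every derivative $J_\unu^{(j)}(x;\usigma)$. It therefore suffices to bound each of the $3^d$ summands by $(\fc/x)^M$ up to a constant depending only on $\fr, M, j, d$.

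For the non-central pieces $\urho \neq \boldsymbol 0$, the estimate \eqref{5eq: bound for J nu (j) (x; sigma; rho) rho neq 0} of \S \ref{sec: Bound for rho neq 0} already furnishes exactly the claimed bound; its proof invokes no relation between the signs $\varsigma_l$ and $\varsigma_{d+1}$, so it applies unchanged under the $K$-Bessel hypothesis.

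The only remaining work is the central piece $J_\unu(x;\usigma;\boldsymbol 0)$, and here I would simply rerun the argument of \S \ref{sec: Bound for rho neq 0} after verifying that its key input --- a uniform positive lower bound on $\Theta(\ut;\usigma)$ --- still holds on $K_{\boldsymbol 0}$. The hypothesis supplies some $k \in \{1,\ldots,d\}$ with $\varsigma_k \neq \varsigma_{d+1}$; on $K_{\boldsymbol 0} = [1/4,4]^d$ this gives
\[
\left|\varsigma_{d+1} t_1 \cdots t_d - \varsigma_k t_k^{-1}\right| = t_1 \cdots t_d + t_k^{-1} \geq t_k^{-1} \geq \tfrac{1}{4},
\]
so $\Theta(\ut;\usigma) \geq 1/16$ throughout $K_{\boldsymbol 0}$, exactly mirroring Lemma \ref{lem: lower bound for Theta}. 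This is precisely the ingredient used in Lemma \ref{lem: J(x; sigma; h) for rho neq 0}, whose $B$-fold iteration of the third-kind partial integration now applies verbatim to the $J$-integrals constituting the central summand; taking $B = M$ delivers the desired estimate.

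To upgrade from $J_\unu$ to $J_\unu^{(j)}$, differentiating under the integral sign brings down at most $j$ copies of the factor $i\theta(\ut;\usigma)$, each of whose monomials $\varsigma_{d+1} t_1 \cdots t_d$ and $\varsigma_l t_l^{-1}$ merely shifts the power index of the $J$-integral by $\ue^d$ or $-\ue_l$ in the sense of Definition \ref{2defn: formal partial integration}; absorbing these bounded shifts into the index and enlarging the auxiliary parameter $A$ accordingly preserves absolute convergence and introduces only a $j$-dependent constant. There is no genuine obstacle here: the $K$-Bessel hypothesis serves solely to recover the lower bound on $\Theta$ over the central hypercube, which is the single place where the non-central analysis of \S \ref{sec: Bound for rho neq 0} does not apply directly.
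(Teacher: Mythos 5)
Your proof is correct and follows essentially the same strategy as the paper. The only cosmetic difference is that, for the central piece, you verify the lower bound $\Theta(\ut;\usigma)\geq 1/16$ on $K_{\boldsymbol 0}$ and then invoke the full $\Theta$-based partial-integration machinery of Lemma~\ref{lem: J(x; sigma; h) for rho neq 0}, whereas the paper exploits the single index $k$ with $\varsigma_k\neq\varsigma_{d+1}$ and iterates only the $k$-th partial integration of the third kind (noting $\bigl|\varsigma_{d+1}t_1\cdots t_d - \varsigma_k t_k^{-1}\bigr| > 1/4$, hence $\bigl|\partial_k\theta\bigr|$ is bounded below). Both observations encode the same fact --- the gradient of $\theta$ does not vanish on $K_{\boldsymbol 0}$ under the $K$-Bessel hypothesis --- so the two routes are interchangeable; the paper's variant requires a small adaptation of the argument in \S\ref{sec: Bound for rho neq 0}, while yours lets Lemma~\ref{lem: J(x; sigma; h) for rho neq 0} apply verbatim. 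One small imprecision in your last paragraph: for $\urho = \boldsymbol 0$ there is no auxiliary parameter $A$ to enlarge, since the integrand is compactly supported on $K_{\boldsymbol 0}$ and absolute convergence is automatic at every stage; the $A$-bookkeeping only affects the non-central pieces, which are already covered by \eqref{5eq: bound for J nu (j) (x; sigma; rho) rho neq 0}.
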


\subsection{Asymptotic Expansions of $H$-Bessel Functions}\label{sec: asymptotic expansions of H Bessel functions}

In the following, we shall adopt the convention $(\pm i)^{a} = e^{ \pm \frac 1 2 i \pi a}$, $a \in \BC$.

We first introduce the function $W^{\pm}_{\unu} (x )$, which is closely related to the   Whittaker  function of imaginary argument if $d=1$ (see \cite[\S 17.5, 17.6]{Whittaker-Watson}), defined by
\begin{equation*} 
W^{\pm}_{\unu} (x ) = (d+1)^{\frac 1 2} (\pm 2 \pi i)^{- \frac d 2} e^{\mp i (d+1) x} H^{\pm}_{\unu} (x).
\end{equation*}
Write
$H^\pm_{\unu} (x; \urho) = J_{\unu} (x; \pm,..., \pm; \urho)$ and define
\begin{equation*} 
W^{\pm}_{\unu} (x;  \urho) = (d+1)^{\frac 1 2} (\pm 2 \pi i)^{- \frac d 2} e^{\mp i (d+1) x} H^{\pm}_{\unu} (x; \urho).
\end{equation*}
For $\urho \neq \boldsymbol 0$, the bound (\ref{5eq: bound for J nu (j) (x; sigma; rho) rho neq 0}) for $H^\pm_{\unu} (x; \urho)$ is also valid for $ W^{\pm}_{\unu} (x;  \urho)$. Therefore, we are left with analyzing $W^{\pm}_{\unu} (x; \boldsymbol 0)$. We have
\begin{equation}\label{5eq: W nu (j) (x; pm; 0) integral}
\begin{split}
W^{\pm, (j)}_{\unu} (x; \boldsymbol 0) =\ & (d+1)^{\frac 1 2} (\pm 2 \pi i)^{- \frac d 2} (\pm i)^j \\
& \int_{K_{\boldsymbol 0}} \lp \theta (\ut) - d - 1 \rp^j h_{\boldsymbol 0} (\ut) p_{\unu} (\ut) e^{\pm i x \lp \theta (\ut) - d - 1 \rp} d\ut,
\end{split}
\end{equation}
with \begin{equation}\label{5eq: theta (t)}
\theta (\ut) = \theta (\ut; +, ..., +) = t_1 ... t_d + \sum_{l      = 1}^d t_l     \-.
\end{equation}
\begin{prop} \label{prop: Stationary phase}
	\cite[Theorem 7.7.5]{Hormander}.
	Let $K \subset \BR^d$ be a compact set, $X$ an open neighbourhood of $K$ and $M$ a nonnegative integer. If $u(\ut) \in C^{2M}_0 (K)$, $f(\ut) \in C^{3M + 1} (X)$ and $\Im f \geq 0$ in $X$, $\Im f (\ut_0) = 0$, $ f' (\ut_0) = 0$, $\det f'' (\ut_0) \neq 0$ and $ f'  \neq 0$ in $K \smallsetminus \{\ut_0 \}$, then for $x > 0$
	\begin{equation*}
	\begin{split}
	\hskip -1 pt \left| \int_{K} \hskip -3 pt  u(\ut) e^{i x f(\ut)} d\ut   -    e^{i x f(\ut_0)} \hskip -1 pt \left((2\pi i)^{- d } \det f'' (\ut_0) \right)^{ - \frac 1 2} \hskip -3 pt \sum_{ m = 0 }^{M-1} x^{- m - \frac d 2} \EuScript L_m u \right|   \lll  \hskip -1 pt  x^{-M} \hskip -3 pt \sum_{|\ualpha| \leq 2M} \sup\left| D^{\ualpha} u \right|.
	\end{split}
	\end{equation*}
	Here the implied constant depends only on $M$, $f$, $K$ and  $d$.
	With
	$$g  (\ut) = f(\ut) - f(\ut_0) - \frac 1 2 \left\langle f'' (\ut_0) (\ut - \ut_0), \ut - \ut_0 \right\rangle $$
	which vanishes of third order at $\ut_0$, we have
	\begin{equation*}
	\EuScript L_m u =  i^{- m}\sum_{r = 0}^{2 m} \frac 1 { 2^{m+r} (m+r) !r! } \left \langle  f'' (\ut_0)\- D, D \right \rangle^{m+r} \lp g^r  u \rp (\ut_0).\footnote{According to H\"ormander, $D = - i (\partial_1, ..., \partial_d)$. }
	\end{equation*}
	This is a differential operator of order $2 m$ acting on $u$ at $\ut_0$. The coefficients are rational homogeneous functions of degree $- m$ in
	$ f'' (\ut_0)$, ..., $ f^{(2m+2)} (\ut_0)$ with denominator $(\det f'' (\ut_0))^{3m}$. In every term the total number of derivatives of $u$ and of $f''$ is at most $2 m$.
\end{prop}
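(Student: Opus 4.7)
The plan is to prove this by the classical Morse/stationary phase reduction, splitting matters into an exterior piece handled by repeated integration by parts and an interior piece reduced to a pure Gaussian oscillatory integral. First I would introduce a smooth cutoff $\chi \in C_0^\infty(X)$ equal to $1$ on a small ball $U$ around $\ut_0$ on which $\det f''$ stays uniformly away from zero, and write the integral as the sum of pieces supported on $U$ and on $K \smallsetminus U$. On the exterior the hypothesis $f' \neq 0$ combined with $\Im f \geq 0$ lets one integrate by parts against the operator $L^t$, where $L = (ix)^{-1} \sum_j (\overline{\partial_j f}/(|f'|^2 + \Im f))\, \partial_j$; each application gains a factor of $x^{-1}$ and the transpose action on the amplitude is bounded by $\sup|D^\ualpha u|$ for bounded $|\ualpha|$. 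After $M$ iterations this contributes $O\big(x^{-M}\sum_{|\ualpha|\leq 2M} \sup|D^\ualpha u|\big)$.

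The interior piece is the heart of the matter. After translating $y = \ut - \ut_0$ and writing $f(\ut_0 + y) = f(\ut_0) + \frac 1 2 \langle A y, y\rangle + g(\ut_0+y)$ with $A = f''(\ut_0)$, it takes the form $e^{i x f(\ut_0)} \int v(y) e^{i x \langle Ay, y\rangle/2} d y$ where $v(y) = \chi(\ut_0+y) u(\ut_0+y) e^{i x g(\ut_0+y)}$. The crucial input is the exact Gaussian Fourier identity
\begin{equation*}
\int v(y) e^{i x \langle Ay, y\rangle/2} d y = \left(\frac {2\pi} x\right)^{d/2} (\det(A/i))^{-1/2} \sum_{k=0}^{M-1} \frac 1 {k!} \left( \frac 1 {2ix} \langle A^{-1} D, D\rangle \right)^{k} v(0) + R_M,
\end{equation*}
with $R_M = O\big(x^{-M-d/2}\sum_{|\ualpha|\leq 2M}\sup|D^\ualpha v|\big)$, obtained by passing to the Fourier transform $\widehat v$, evaluating the resulting Gaussian integral in $y$ exactly, and Taylor expanding the factor $e^{-i\langle A^{-1}\eta,\eta\rangle/(2x)}$ in $\eta$ to order $M$.

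The operator $\EuScript L_m$ then emerges by substituting $v(y) = \chi(\ut_0+y) u(\ut_0+y) e^{ixg(\ut_0+y)}$ into this formula and Taylor expanding $e^{ixg(\ut_0+y)}$: since $g$ vanishes to order three at $0$, the term $(ixg)^r/r!$ vanishes to order $3r$ in $y$, so the derivative $\langle A^{-1} D, D\rangle^{k}$ evaluated at $y=0$ is nonzero only when $2k \geq 3r$. Setting $m = k - r$, this forces $r \leq 2m$, and collecting by the net power of $x^{-1}$ (which equals $m$ after the $x^r$ from $(ixg)^r$ cancels part of the $x^{-k}$) reproduces the double sum for $\EuScript L_m u$ in the statement. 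Verifying that the resulting coefficients are rational in $f''(\ut_0), \ldots, f^{(2m+2)}(\ut_0)$ with denominator $(\det f''(\ut_0))^{3m}$, and that each term involves at most $2m$ derivatives of $u$, is then a matter of bookkeeping in the Taylor/Faà di Bruno expansion.

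The main obstacle is the Gaussian identity above under the weak hypothesis $\Im A \geq 0$ rather than strict positivity: the model integral $\int e^{i x \langle Ay, y\rangle/2} d y$ does not converge absolutely and must be defined via distortion of contour, while $(\det(A/i))^{-1/2}$ requires a correct choice of branch that agrees with the positive case when $\Im A$ is definite and is continuous in $A$. This is precisely the content of H\"ormander's preliminary lemma on Gaussian oscillatory integrals (\cite[Lemma 7.7.3]{Hormander}); granted that lemma, the remainder estimate for $R_M$ follows by writing the explicit integral remainder in Taylor's formula for $e^{-i\langle A^{-1}\eta,\eta\rangle/(2x)}$, inserting it in the Fourier representation of $v$, and using $\widehat{D^\ualpha v}(\eta) = (i\eta)^\ualpha \widehat v(\eta)$ together with Plancherel, and the remainder of the proof is purely algebraic.
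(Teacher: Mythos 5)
The paper does not actually prove this proposition; it cites it verbatim as Hörmander's Theorem 7.7.5 and uses it as a black box. Your sketch is a correct reconstruction of Hörmander's own argument: the exterior/interior split with the nondegenerate phase operator $L$ built from $|f'|^2 + \Im f$, the reduction of the interior piece to a Gaussian oscillatory integral via the cubic remainder $g$, and the appeal to the quadratic Gaussian lemma (Hörmander's Lemma 7.7.3) to define and expand $\int e^{ix\langle Ay,y\rangle/2}v\,dy$ with the correct branch of $(\det(A/i))^{-1/2}$. Your bookkeeping of the exponent $m = k - r$ and the constraint $r \le 2m$ (from $g^r$ vanishing to order $3r$) correctly reproduces the displayed formula for $\EuScript L_m u$.

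The one place where your write-up glosses over a genuine delicacy is the interior remainder: after setting $v(y) = \chi u\, e^{ixg}$, the quantity $\sup|D^{\ualpha} v|$ appearing in the Gaussian-lemma error term is not $x$-independent, since each derivative hitting $e^{ixg}$ can produce a factor of $x$. You cannot simply iterate the Gaussian expansion to arbitrary order without controlling this. Hörmander handles it by first replacing $e^{ixg}$ with its Taylor polynomial in $g$ to a fixed order $N$ (chosen depending on $M$), and bounding the remainder $e^{ixg} - \sum_{r<N}(ixg)^r/r!$ using the cubic vanishing of $g$ together with a scaling argument that confines the effective support to $|y| \lesssim x^{-1/2}$; only then does he apply the Gaussian expansion to each polynomial piece, where the amplitude is now $x$-independent. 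If you intend to flesh this out into a full proof, that step deserves to be made explicit rather than folded into ``the remainder estimate follows.''
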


We now apply Proposition \ref{prop: Stationary phase} to the integral in \eqref{5eq: W nu (j) (x; pm; 0) integral}. For this, we let
\begin{align*} 
& K = K_{\boldsymbol 0} = \textstyle  \left[ \frac 1 4, 4\right ]^{d}, \hskip 10 pt X = \textstyle  \left( \frac 1 5, 5\right )^{d},\\
& f (\ut) =  \pm \left( \theta (\ut) - d - 1 \right), \hskip 10 pt f' (\ut) = \pm \left( t_1  ...\widehat{t_{l     }} ... t_{d} - t_{l     }^{-2} \right)_{l      = 1}^{d}, \hskip 10 pt \ut_0 = (1, ..., 1),\\
& f'' (\ut_0) = \pm \begin{pmatrix}
2 & 1 & \cdots & 1\\
1 & 2 & \cdots & 1\\
\vdots & \vdots & \ddots &  \vdots\\
1 & 1 & \cdots & 2
\end{pmatrix}, \hskip 10 pt \det  f'' (\ut_0) = (\pm)^d ( d+1 ), \hskip 10 pt g (\ut) = \pm G(\ut),\\
& f'' (\ut_0)\- = \pm \frac 1 {d+1} \begin{pmatrix}
d & -1 & \cdots & -1\\
-1 & d & \cdots & -1\\
\vdots & \vdots & \ddots &  \vdots\\
-1 & -1 & \cdots & d
\end{pmatrix},\\
& u(\ut) = (d+1)^{\frac 1 2} (\pm 2 \pi i)^{- \frac d 2} (\pm i)^j \left( \theta (\ut) - d - 1 \right)^j p_{\unu} (\ut) h_{\boldsymbol 0} (\ut),
\end{align*}
with
\begin{equation}\label{5eq: G(t)}
\begin{split}
G(\ut) =   t_1 ... t_{d} + \sum_{l      = 1}^{d} \left( - t_{l     } ^2 + (d+1) t_{l     } + t_{l     }\- \right)   - \sum_{ 1 \leq l      < k \leq d } t_{l     } t_{k} - \frac {(d+1)(d+2)} 2 .
\end{split}
\end{equation}
Proposition \ref{prop: Stationary phase} yields the following asymptotic expansion of $W^{\pm, (j)}_{\unu} (x; \boldsymbol 0)$,
\begin{equation*} 
W^{\pm, (j)}_{\unu} (x; \boldsymbol 0) = \sum_{m = 0 }^{M-1} ( \pm i )^{j - m} B_{m, j} (\unu) x^{- m - \frac d 2} + O_{\,\fr,\, M, j,\, d} \left( \frc^{2 M} x^{-M}\right), \hskip 10 pt x > 0,
\end{equation*}
with 
\begin{equation}\label{5eq: B mj}
B_{m, j} (\unu) =  \sum_{r=0}^{2m} \frac {(-)^{m+r} \EuScript L^{m+r} \left( G^r (\theta - d - 1)^j p_{\unu} \right) (\ut_0)} { (2 (d+1))^{m+r} (m+r)! r! } ,
\end{equation}
in  which $\EuScript L $ is the second-order differential operator given by
\begin{equation}\label{5eq: differential operator D}
\EuScript L = d \sum_{l      = 1}^{d} \partial_{l     }^2 - 2 \sum_{ 1 \leq l      < k \leq d } \partial_{l     } \partial_{k} .
\end{equation} 

\begin{lem}\label{5lem: coefficient}  We have $B_{m, j} (\unu)  = 0$ if $  m < j$.
	Otherwise,  $B_{m, j} (\unu) \in \BQ[\unu]$ is a symmetric polynomial  of degree $2m-2j$. 
	In particular,
	$B_{m, j} (\unu) \lll_{\,m, j, \, d} \frc^{2 m - 2 j}$ for $m \geq j$.
\end{lem}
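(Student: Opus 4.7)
The plan is to exploit the vanishing orders at $\ut_0 = (1,\ldots,1)$ of the three factors $G^r$, $(\theta - d - 1)^j$, and $p_{\unu}$ that appear inside $\mathscr{L}^{m+r}$ in \eqref{5eq: B mj}. By construction, $g = \pm G$ vanishes to third order at $\ut_0$ (recall the definition of $g$ in Proposition \ref{prop: Stationary phase}), so $G^r$ vanishes to order $3r$. A direct Taylor expansion of $\theta(\ut) - d - 1$ about $\ut_0$ (using $\theta'(\ut_0) = 0$ and the nonzero Hessian computed in \S \ref{sec: asymptotic expansions of H Bessel functions}) shows that $\theta - d - 1$ vanishes to exact order $2$, so $(\theta - d - 1)^j$ vanishes to order $2j$. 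Therefore $G^r(\theta - d - 1)^j p_{\unu}$ vanishes at $\ut_0$ to order at least $3r + 2j$, and since $\mathscr{L}^{m+r}$ is a differential operator of order $2(m+r)$, evaluation at $\ut_0$ gives zero unless $2(m+r) \geq 3r+2j$, i.e.\ unless $r \leq 2m - 2j$. When $m < j$ this inequality fails for every $r \in \{0,\ldots,2m\}$, so all terms in \eqref{5eq: B mj} vanish and $B_{m,j}(\unu)=0$.

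For $m \geq j$, I would expand $\mathscr{L}^{m+r}(G^r(\theta - d - 1)^j p_{\unu})(\ut_0)$ by the generalized Leibniz rule, distributing $2(m+r)$ derivatives among the three factors and writing each $\alpha$-fold pure derivative of $p_{\unu}(\ut) = \prod_l t_l^{\nu_l - 1}$ at $\ut_0 = (1,\ldots,1)$ as a product of falling factorials of $\nu_l - 1$; this is a polynomial in $\unu$ of total degree equal to the number $\alpha$ of derivatives landing on $p_{\unu}$. If $\alpha$ derivatives hit $p_{\unu}$, then the remaining $2(m+r) - \alpha$ must all hit the factor $G^r(\theta - d -1)^j$, which vanishes to order $3r + 2j$, forcing $2(m+r) - \alpha \geq 3r + 2j$ and hence $\alpha \leq 2m - r - 2j \leq 2m - 2j$. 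Summing over $r$, this proves $\deg_{\unu} B_{m,j} \leq 2m - 2j$. The coefficients are manifestly rational, because $\mathscr{L}$ in \eqref{5eq: differential operator D}, $G$ in \eqref{5eq: G(t)}, $\theta$ in \eqref{5eq: theta (t)}, and the factor $1/(2(d+1))^{m+r}(m+r)!r!$ are all rational, and the derivatives of $t_l^{\nu_l - 1}$ at $t_l = 1$ lie in $\mathbb{Q}[\nu_l]$. Hence $B_{m,j}(\unu) \in \mathbb{Q}[\unu]$.

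Symmetry follows because the data in the definition are all invariant under the symmetric group $\mathfrak{S}_d$: the point $\ut_0$ is fixed, $\mathscr{L}$ and $G$ are symmetric polynomials in $\partial_l$ and $t_l$ respectively, $\theta$ is symmetric, and $p_{\unu}$ is invariant under the simultaneous permutation $(t_l, \nu_l) \mapsto (t_{\sigma(l)}, \nu_{\sigma(l)})$. Since we evaluate at $\ut_0$ which every permutation fixes, only the permutation of the $\nu_l$ remains, and $B_{m,j}(\unu)$ is a symmetric polynomial in $\nu_1,\ldots,\nu_d$. Finally, the estimate $B_{m,j}(\unu) \lll_{m,j,d} \mathfrak{c}^{2m-2j}$ is immediate from the degree bound together with the fact that the (finitely many, depending only on $m,j,d$) rational coefficients are absolutely bounded.

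The argument is essentially combinatorial bookkeeping; the only step that requires care is the degree computation in $\unu$, because one has to verify that taking more derivatives on $p_{\unu}$ (which raises the $\unu$-degree) is penalized by the vanishing-order constraint on the other factors. Once this trade-off is tracked through the Leibniz expansion, all four conclusions of the lemma follow in parallel.
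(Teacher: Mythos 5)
Your proof is correct and follows essentially the same approach as the paper's: the key observation in both cases is that $(\theta - d - 1)^j$ vanishes of exact order $2j$ at $\ut_0$, so that enough of the $2m$ derivatives (in the Hörmander bookkeeping behind $\mathscr{L}_m$) are spent removing this zero that at most $2m-2j$ can land on $p_{\unu}$. The only difference is that you re-derive the derivative budget directly from the explicit formula \eqref{5eq: B mj} using the vanishing orders of $G^r$ and $(\theta-d-1)^j$, rather than citing the statement ``in every term the total number of derivatives of $u$ and of $f''$ is at most $2m$'' from Proposition \ref{prop: Stationary phase}; this is a slightly more self-contained version of the same counting.
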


\begin{proof}
	The symmetry of  $ B_{m, j} (\unu) $ is clear from definition. 
	Since $ \theta - d - 1 $ vanishes of second order at $\ut_0$, $2 j$ many differentiations are required to remove the zero of $ \left( \theta - d - 1 \right)^j $ at $\ut_0$. From this, along with  the descriptions of the differential operator $\EuScript L_m$ in Proposition \ref{prop: Stationary phase}, one proves the lemma. 
\end{proof}

Furthermore, in view of the bound (\ref{5eq: bound for J nu (j) (x; sigma; rho) rho neq 0}), the total contribution to $W_{\unu}^{\pm, (j)}(x )$ from  all the $W_{\unu}^{\pm, (j)}(x;  \urho)$ with $\urho \neq \boldsymbol 0$ is of size $O_{\,\mathfrak r,\, M, j,\, d} \lp   {\mathfrak c}^{ M} x^{- M} \rp$ and hence may be absorbed into the error term in the asymptotic expansion of $W^{\pm, (j)}_{\unu} (x; \boldsymbol 0)$. 

In conclusion, the following proposition is established. 

\begin{prop}\label{prop: asymptotics for W nu (j) (x; pm)} Let $M$, $j$ be nonnegative integers such that $M \geq j$.
	Then for $x \geq \frc$ 
	\begin{equation*} 
	W_{\unu}^{\pm, (j)}(x ) = \sum_{ m = j }^{M-1} ( \pm i )^{j - m} B_{m, j} (\unu) x^{- m - \frac d 2} + O_{\,\fr,\, M, j,\, d} \left( \frc^{2 M } x^{-M}\right).
	\end{equation*}
\end{prop}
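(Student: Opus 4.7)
The plan is to follow the strategy already outlined in the text and decompose $W_{\unu}^{\pm,(j)}(x) = \sum_{\urho \in \{-,0,+\}^d} W_{\unu}^{\pm,(j)}(x;\urho)$ via the partition of unity $\{h_\urho\}$, then treat the central cube $\urho = \boldsymbol 0$ (which contains the stationary point $\ut_0 = (1,\ldots,1)$) by a direct application of Hörmander's stationary phase formula (Proposition~\ref{prop: Stationary phase}), and absorb all the off-center pieces into the error term using the rapid-decay bound \eqref{5eq: bound for J nu (j) (x; sigma; rho) rho neq 0}. Since $\fc \geq 1$ and $M$ may be taken arbitrarily large, the bound $H_{\unu}^{\pm,(j)}(x;\urho) \lll_{\fr,M,j,d} (\fc/x)^M$ for $\urho \neq \boldsymbol 0$, after multiplication by the $O_d(1)$ normalizing factor $(d+1)^{1/2}(\pm 2\pi i)^{-d/2} e^{\mp i(d+1)x}$, is comfortably within the asserted error $O(\fc^{2M} x^{-M})$.

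For the central piece, I would start from the integral representation \eqref{5eq: W nu (j) (x; pm; 0) integral} of $W^{\pm,(j)}_{\unu}(x;\boldsymbol 0)$ and apply Proposition~\ref{prop: Stationary phase} with the data already specified in the text: $K = K_{\boldsymbol 0}$, $X = (1/5,5)^d$, phase $f(\ut) = \pm(\theta(\ut) - d - 1)$, critical point $\ut_0 = (1,\ldots,1)$ with $\det f''(\ut_0) = (\pm)^d(d+1)$, and amplitude $u(\ut) = (d+1)^{1/2}(\pm 2\pi i)^{-d/2}(\pm i)^j(\theta(\ut)-d-1)^j p_{\unu}(\ut) h_{\boldsymbol 0}(\ut)$. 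The hypotheses are immediate: $\Im f \equiv 0$, $f'(\ut_0) = 0$, $\det f''(\ut_0) \neq 0$, and $f' \neq 0$ on $K \smallsetminus \{\ut_0\}$ (which follows from Lemma~\ref{lem: lower bound for Theta}, since $\Theta = |f'|^2$ in this regime). Substituting $f''(\ut_0)^{-1}$, the bilinear operator $\langle f''(\ut_0)^{-1}D,D\rangle$ becomes $\mp(d+1)^{-1}\EuScript L$ with $\EuScript L$ as in \eqref{5eq: differential operator D}; after collecting the $(\pm i)^j$ and the Hörmander factor $i^{-m}$, one arrives exactly at the asserted coefficients $(\pm i)^{j-m} B_{m,j}(\unu) x^{-m-d/2}$ with $B_{m,j}(\unu)$ given by \eqref{5eq: B mj}. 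Lemma~\ref{5lem: coefficient} then permits the sum to be truncated to $m \geq j$.

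The main technical point — and the one I would be most careful about — is tracking the dependence on $\unu$ in the Hörmander remainder $x^{-M}\sum_{|\ualpha|\leq 2M}\sup_K|D^{\ualpha}u|$. Each derivative $\partial^\ualpha p_{\unu}$ on the compact set $K_{\boldsymbol 0}$ produces a product of falling factorials $[\nu_l]_{\alpha_l}$, each of size $O_{\fr}(\fc^{\alpha_l})$, so $\sup_K|D^\ualpha(p_{\unu} h_{\boldsymbol 0} (\theta - d - 1)^j)| \lll_{\fr, j, d, \ualpha} \fc^{|\ualpha|} \leq \fc^{2M}$; this produces the $\fc^{2M}$ factor in the error term as claimed. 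The factor $(d+1)^{1/2}(\pm 2\pi i)^{-d/2}(\pm i)^j$ and the derivatives of $(\theta-d-1)^j$ contribute only $d$- and $j$-dependent constants, which get absorbed into the implied constant. Combining the central contribution with the absorbed off-center remainders yields the proposition.
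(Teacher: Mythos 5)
Your proposal is correct and follows the paper's own derivation step for step: decompose by the partition of unity, apply H\"ormander's Proposition~\ref{prop: Stationary phase} to the central piece with exactly the data spelled out in the text, invoke Lemma~\ref{5lem: coefficient} to start the sum at $m=j$, track $\fc^{2M}$ through the derivatives of $p_{\unu}$ in the remainder, and absorb the $\urho\neq\boldsymbol 0$ pieces via \eqref{5eq: bound for J nu (j) (x; sigma; rho) rho neq 0}. One tiny inaccuracy: $\Theta=\sum_l (t_l\,\partial_l\theta)^2$, not $|f'|^2$, and Lemma~\ref{lem: lower bound for Theta} bounds $\Theta$ only on $K_\urho$ with $\urho\neq\boldsymbol 0$; the fact that $f'\neq 0$ on $K_{\boldsymbol 0}\smallsetminus\{\ut_0\}$ instead follows from the earlier observation that $\ut_0$ is the unique stationary point of $\theta$ on all of $\BR_+^d$.
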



\begin{cor}\label{cor: W(j)}
	Let $N$, $j$ be nonnegative integers such that $N \geq j$, and let $\epsilon > 0$.
	
	{ \rm (1).}  We have
	$ W_{\unu}^{\pm, (j)}(x ) \lll_{\,\fr, j,\, d} \fc^{2j} x^{-j}$ for $x \geq \frc $.
	
	{ \rm (2).} If $x \geq \frc^{2 + \epsilon}$, then
	\begin{equation*} 
	W_{\unu}^{\pm, (j)}(x ) = \sum_{ m = j }^{N-1} ( \pm i )^{j - m} B_{m, j} (\unu) x^{- m - \frac d 2} + O_{\,\fr,\, N, j,\, \epsilon,\, d} \left( \frc^{2 N } x^{-N - \frac d 2}\right).
	\end{equation*} 
\end{cor}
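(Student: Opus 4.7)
The plan is to derive both parts of the corollary from Proposition \ref{prop: asymptotics for W nu (j) (x; pm)} combined with the polynomial size bound $B_{m,j}(\unu) \lll_{m,j,d} \fc^{2m-2j}$ supplied by Lemma \ref{5lem: coefficient}, together with the elementary inequality $x^{-1} \leq \fc^{-(2+\epsilon)}$ that holds throughout the regime $x \geq \fc^{2+\epsilon}$.

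For part (1), I would specialize Proposition \ref{prop: asymptotics for W nu (j) (x; pm)} to the borderline value $M=j$. Then the sum over $m$ is empty and only the error term survives, which reads $O_{\fr,j,d}(\fc^{2j} x^{-j})$ for all $x \geq \fc$; this is exactly the claimed bound.

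For part (2), the idea is to apply Proposition \ref{prop: asymptotics for W nu (j) (x; pm)} at a sufficiently large cutoff $M = M(N,\epsilon,d) \geq N$ and then re-bundle everything past the $m=N-1$ term into the desired remainder $\fc^{2N} x^{-N-d/2}$. There are two contributions to control. First, the overshoot terms $m = N,\dots,M-1$ in the asymptotic sum satisfy
\[
\bigl| (\pm i)^{j-m} B_{m,j}(\unu) x^{-m-d/2} \bigr| \lll_{m,j,d} \fc^{2m-2j} x^{-m-d/2} = \fc^{2N} x^{-N-d/2} \cdot \fc^{2(m-N)-2j} x^{-(m-N)},
\]
and since $\fc \geq 1$ and $x \geq \fc^{2+\epsilon}$ one has $\fc^{2(m-N)-2j} x^{-(m-N)} \leq \fc^{-\epsilon(m-N)-2j} \leq 1$, so each such term is absorbed into the claimed error. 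Second, the Proposition's error $\fc^{2M} x^{-M}$ is absorbed by the same mechanism provided
\[
\fc^{2M} x^{-M} \leq \fc^{2N} x^{-N-d/2} \cdot \fc^{2(M-N)} x^{-(M-N-d/2)} \leq \fc^{2N} x^{-N-d/2} \cdot \fc^{-\epsilon(M-N) + (2+\epsilon)d/2},
\]
which is $\leq \fc^{2N} x^{-N-d/2}$ once $M$ is chosen with $\epsilon(M-N) \geq (2+\epsilon)d/2$, e.g.\ $M = N + \lceil (2+\epsilon)d/(2\epsilon)\rceil$.

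No serious obstacle is anticipated: the argument is entirely a trade-off between powers of $\fc$ and powers of $x$ governed by the hypothesis $x \geq \fc^{2+\epsilon}$, and the mild point is merely bookkeeping to verify that the implicit constants depend only on the declared parameters $\fr, N, j, \epsilon, d$ after $M$ has been fixed as a function of $N, \epsilon, d$.
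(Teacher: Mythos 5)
Your proof is correct and follows the same route as the paper: take $M=j$ in Proposition \ref{prop: asymptotics for W nu (j) (x; pm)} for part (1), and for part (2) apply the proposition at a large $M$, use the bound $B_{m,j}(\unu)\lll_{m,j,d}\fc^{2m-2j}$ from Lemma \ref{5lem: coefficient} on the terms $N\leq m\leq M-1$, and absorb those together with the $O(\fc^{2M}x^{-M})$ error into $O(\fc^{2N}x^{-N-d/2})$ by trading powers of $\fc$ against $x\geq\fc^{2+\epsilon}$. One incidental remark: your explicit threshold $M-N\geq(2+\epsilon)d/(2\epsilon)$ is the corrected form of the paper's stated condition ``$(2+\epsilon)(M-N+\tfrac d2)\geq 2(M-N)$'', which as written is vacuous for $M\geq N$ and whose $+$ is evidently a typo for $-$.
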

\begin{proof}
	On letting $M = j$, Proposition \ref{prop: asymptotics for W nu (j) (x; pm)} implies (1). On  choosing $M$ sufficiently large so that $(2 + \epsilon) \lp M-N + \frac d 2\rp \geq 2 (M-N) $, Proposition \ref{prop: asymptotics for W nu (j) (x; pm)} and Lemma \ref{5lem: coefficient} yield
	\begin{align*} 
	& W_{\unu}^{\pm, (j)}(x ) - \sum_{ m = j }^{N-1} ( \pm i )^{j - m} B_{m, j} (\unu) x^{- m - \frac d 2} \\
	= \ & \sum_{ m = N }^{M-1} ( \pm i )^{j - m} B_{m, j} (\unu) x^{- m - \frac d 2} + O_{\,\fr, j,\, M,\, d} \left( \frc^{2 M } x^{-M}\right) = O_{\,\fr, j,\, N,\, \epsilon,\, d} \left( \frc^{2 N } x^{-N - \frac d 2}\right).
	\end{align*}
\end{proof}

Finally,   the asymptotic expansion of $H^{\pm} (x; \ulambda)$($ = H^{\pm}_{\unu} (x)$) is formulated as below. 

\begin{thm}\label{thm: asymptotic expansion}
	Let $\mathfrak C =  \max  \left\{ | \lambda_l      | \right \} + 1$ and $\mathfrak R = \max  \left\{ |\Re \lambda_l      | \right \}$. Let $M$ be a nonnegative integer.
	
	{\rm (1).} Define $W^{\pm} (x; \ulambda) = \sqrt n (\pm 2 \pi i)^{- \frac { n-1} 2} e^{\mp i n x} H^{\pm}  (x; \ulambda)$. Let $M \geq j \geq 0$. Then \begin{equation*} 
	W^{\pm, (j)} (x; \ulambda) = \sum_{m = j }^{M-1} ( \pm i )^{j - m} B_{m, j} (\ulambda) x^{- m - \frac { n-1} 2} + O_{\,\mathfrak R,\, M, j ,\, n} \left( \fC^{2 M } x^{-M}\right)
	\end{equation*} 
	for all $x \geq \mathfrak C $.
	Here $B_{m, j} (\ulambda) \in \BQ[\ulambda]$ is a symmetric polynomial in $\ulambda$ of degree $2m$, with $B_{0, 0} (\ulambda) = 1$. The coefficients of $B_{m, j} (\ulambda)$ depends only on $m$, $j$ and $d$.

	{\rm (2).} Let $B_{m } (\ulambda) = B_{m, 0} (\ulambda)$. Then for $x \geq \mathfrak C $  
	\begin{equation*}
	\begin{split}
	H^{\pm}  (x; \ulambda) = n^{- \frac 1 2}   (\pm 2 \pi i)^{ \frac {n-1}  2} e^{ \pm i n x} x^{ - \frac {n-1} 2}  \hskip - 3 pt \lp  \sum_{m=0}^{M-1} ( \pm i )^{ - m} B_{m} (\ulambda) x^{- m} \hskip -1 pt + \hskip -1 pt O_{\,\mathfrak R,\, M,\, d} \left( \mathfrak C^{2 M} x^{-M + \frac {n-1} 2} \hskip -1 pt \right) \hskip -2 pt \rp \hskip -3 pt.
	\end{split}
	\end{equation*}
\end{thm}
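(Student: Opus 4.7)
The plan is to deduce this theorem from Proposition \ref{prop: asymptotics for W nu (j) (x; pm)} and Corollary \ref{cor: W(j)}, which already establish the stationary-phase expansion for $W_{\unu}^{\pm,(j)}(x)$ in the formal-integral parameter $\unu \in \BC^{d}$ with $d = n-1$. Passing from $\unu$ to $\ulambda$ is then essentially a bookkeeping exercise through the identifications proved earlier.

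First I would reduce to the case $\ulambda \in \BL^{n-1}$. Writing $\lambda = |\ulambda|/n$ and $\ulambda' = \ulambda - \lambda\,\ue^n \in \BL^{n-1}$, Lemma \ref{3lem: normalize J(x; sigma, lambda)} gives $H^{\pm}(x;\ulambda) = x^{-n\lambda}H^{\pm}(x;\ulambda')$, so after rescaling the whole asymptotic expansion it suffices to treat $\ulambda' \in \BL^{n-1}$. Setting $\nu_l = \lambda_l - \lambda_n$ for $l = 1,\ldots,d$, Proposition \ref{prop: J (x ; sigma, lambda) = J nu (x; sigma)} identifies $H^{\pm}(x;\ulambda) = H^{\pm}_{\unu}(x)$, and comparing normalizing constants gives $W^{\pm}(x;\ulambda) = W^{\pm}_{\unu}(x)$. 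Part (1) then reads off from Proposition \ref{prop: asymptotics for W nu (j) (x; pm)} upon defining $B_{m,j}(\ulambda) := B_{m,j}(\unu)\big|_{\nu_l = \lambda_l - \lambda_n}$; the relations $\fc = \max|\nu_l|+1 \leq 2\fC$ and $\fr \leq 2\fR$ absorb the change of constants in the error term.

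To finish Part (1) I must verify the three assertions on $B_{m,j}(\ulambda)$. Rationality and degree at most $2m - 2j$ in $\ulambda$, and hence at most $2m$, follow from Lemma \ref{5lem: coefficient} together with the linear substitution $\nu_l = \lambda_l - \lambda_n$. Symmetry in $\ulambda$ is not immediate from the definition \eqref{5eq: B mj}, in which $\lambda_n$ plays a distinguished role, but is forced by the fact that $H^{\pm}(x;\ulambda)$ itself is symmetric in $\ulambda$, since the gamma factor $G(s;\pm,\ldots,\pm,\ulambda) = \prod_l G(s-\lambda_l,\pm)$ is; hence all coefficients in its asymptotic expansion are symmetric as well. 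Finally, $B_{0,0}(\ulambda) = 1$ is read off from \eqref{5eq: B mj}: the term $m = r = 0$ is simply $p_{\unu}(\ut_0) = 1$.

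Part (2) is immediate from Part (1) with $j = 0$ by unwinding the definition $W^{\pm}(x;\ulambda) = \sqrt{n}\,(\pm 2\pi i)^{-(n-1)/2}e^{\mp i n x}H^{\pm}(x;\ulambda)$ and multiplying through by $n^{-1/2}(\pm 2\pi i)^{(n-1)/2}e^{\pm i n x}$. The main obstacle is genuinely absent at this stage: the substantive analytic work, namely the application of H\"ormander's stationary-phase Proposition \ref{prop: Stationary phase} to the oscillatory integral over $K_{\boldsymbol 0}$ and the iterated partial-integration estimate \eqref{5eq: bound for J nu (j) (x; sigma; rho) rho neq 0} controlling $J_{\unu}^{(j)}(x;\pm,\ldots,\pm;\urho)$ for $\urho \neq \boldsymbol 0$, was already carried out in \S \ref{sec: Bessel functions of K-type and H-Bessel functions}. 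The only real care required is in extracting the symmetric-polynomial description of $B_{m,j}(\ulambda)$ as indicated above.
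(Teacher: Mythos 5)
Your proof is correct and follows the paper's route: read off the expansion from Proposition~\ref{prop: asymptotics for W nu (j) (x; pm)} and Lemma~\ref{5lem: coefficient} via the identification $H^{\pm}(x;\ulambda)=H^{\pm}_{\unu}(x)$ of Proposition~\ref{prop: J (x ; sigma, lambda) = J nu (x; sigma)}, and appeal to the manifest $\ulambda$-symmetry of $H^{\pm}$ (visible from the gamma-factor formula defining it) to promote symmetry of $B_{m,j}$ from the $\unu$- to the $\ulambda$-variables. The only wrinkle is that your opening reduction to $\ulambda\in\BL^{n-1}$ is vacuous, since the standing normalization in force throughout this chapter already fixes $|\ulambda|=0$, so that $\lambda=|\ulambda|/n=0$.
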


\begin{proof}
	This theorem is a direct consequence of Proposition \ref{prop: asymptotics for W nu (j) (x; pm)} and Lemma \ref{5lem: coefficient}.
	It is only left to verify the symmetry of $B_{m, j}(\ulambda) = B_{m, j} (\unu)$ with respect to $\ulambda$. Indeed, in view of the definition of $H^{\pm} (x; \ulambda)  $ by \eqref{1def: G(s; sigma; lambda)} and \eqref{3eq: definition of J (x; sigma)}, $H^{\pm} (x; \ulambda)  $ is symmetric with respect to $\ulambda$, so $B_{m, j}(\ulambda) $ must be represented by a symmetric polynomial in $\ulambda$ modulo $\sum_{l     =1}^{d+1} \lambda_l     $.
\end{proof}

\begin{cor}\label{cor: H pm}
	Let $M$ be a nonnegative integer, and let $\epsilon > 0$.
	Then for $x \geq  \mathfrak C^{2 + \epsilon}$  
	\begin{equation*}
	\begin{split}
	H^{\pm}  (x; \ulambda) =   n^{- \frac 1 2} (\pm 2 \pi i)^{ \frac {n-1} 2} e^{ \pm i n x} x^{ - \frac {n-1} 2} 
	\lp \sum_{m=0}^{M-1} ( \pm i )^{ - m} B_{m} (\ulambda) x^{- m} + O_{\,\mathfrak R,\, M,\, \epsilon,\, n} \left( \mathfrak C^{2 M} x^{-M  } \right) \rp.
	\end{split}
	\end{equation*}
\end{cor}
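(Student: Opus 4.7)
\medskip

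\noindent\textbf{Proof proposal for Corollary \ref{cor: H pm}.}

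The plan is to obtain the sharper error term $\mathfrak{C}^{2M} x^{-M}$ from the weaker estimate in Theorem \ref{thm: asymptotic expansion}(2) by applying that theorem with a larger truncation parameter $M'$ and absorbing the extra expansion terms into the error, exactly in the spirit of the passage from Proposition \ref{prop: asymptotics for W nu (j) (x; pm)} to Corollary \ref{cor: W(j)}(2). Concretely, given $M$ and $\epsilon > 0$, I would choose an integer $M' \geq M$ large enough so that
\[
(M' - M)\,\frac{\epsilon}{2+\epsilon}\ \geq\ \frac{n-1}{2},
\]
and apply Theorem \ref{thm: asymptotic expansion}(2) with $M$ replaced by $M'$.

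This gives the identity
\[
H^{\pm}(x;\ulambda) = n^{-\frac 12}(\pm 2\pi i)^{\frac{n-1}{2}} e^{\pm i n x} x^{-\frac{n-1}{2}}\!\left( \sum_{m=0}^{M-1} (\pm i)^{-m} B_m(\ulambda) x^{-m} + R_1 + R_2 \right),
\]
where
\[
R_1 = \sum_{m=M}^{M'-1} (\pm i)^{-m} B_m(\ulambda) x^{-m}, \qquad R_2 = O_{\mathfrak{R},\,M',\,n}\bigl(\mathfrak{C}^{2M'} x^{-M' + \frac{n-1}{2}}\bigr).
\]
For $R_1$, I would use the bound $B_m(\ulambda) \lll_{\,m,\,n} \mathfrak{C}^{2m}$ from Theorem \ref{thm: asymptotic expansion}(1) (the polynomial $B_m$ having degree $2m$ in $\ulambda$), so that each term satisfies
\[
B_m(\ulambda) x^{-m} \lll_{\,m,\,n} \mathfrak{C}^{2m} x^{-m} = \mathfrak{C}^{2M} x^{-M} \cdot (\mathfrak{C}^2/x)^{m-M}.
\]
Since $x \geq \mathfrak{C}^{2+\epsilon} \geq \mathfrak{C}^2$ (recall $\mathfrak{C} \geq 1$), the factor $(\mathfrak{C}^2/x)^{m-M} \leq 1$, and summing the finitely many terms $M \leq m \leq M'-1$ yields $R_1 = O_{\mathfrak{R},\,M,\,\epsilon,\,n}(\mathfrak{C}^{2M} x^{-M})$.

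For $R_2$, the hypothesis $x \geq \mathfrak{C}^{2+\epsilon}$ gives $\mathfrak{C}^{2(M'-M)} \leq x^{2(M'-M)/(2+\epsilon)}$, hence
\[
\mathfrak{C}^{2M'} x^{-M' + \frac{n-1}{2}} = \mathfrak{C}^{2M} x^{-M} \cdot \mathfrak{C}^{2(M'-M)} x^{-(M'-M) + \frac{n-1}{2}} \leq \mathfrak{C}^{2M} x^{-M} \cdot x^{-(M'-M)\frac{\epsilon}{2+\epsilon} + \frac{n-1}{2}},
\]
and the exponent of $x$ on the right is $\leq 0$ by the choice of $M'$. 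Combining both estimates produces the required error $O_{\mathfrak{R},\,M,\,\epsilon,\,n}(\mathfrak{C}^{2M} x^{-M})$.

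There is no genuine obstacle; the only point to be careful about is that the implicit constant in the new error term must be allowed to depend on $\epsilon$ (through the choice of $M'$), which matches the statement. The whole argument is a routine rebalancing of main terms and remainder once Theorem \ref{thm: asymptotic expansion} is in hand.
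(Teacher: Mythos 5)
Your proof is correct, and it is essentially the argument the paper uses: the corollary is the $H$-Bessel-function reformulation of Corollary \ref{cor: W(j)}(2), whose proof in the paper likewise picks a larger truncation parameter $M'$ with $(M'-M)\epsilon/(2+\epsilon)\geq d/2$, bounds the extra terms via $B_m(\ulambda)\lll\mathfrak{C}^{2m}$ together with $\mathfrak{C}^2/x\leq 1$, and shows the residual error $\mathfrak{C}^{2M'}x^{-M'+(n-1)/2}$ is dominated by $\mathfrak{C}^{2M}x^{-M}$ once $x\geq\mathfrak{C}^{2+\epsilon}$. You re-derive this directly from Theorem \ref{thm: asymptotic expansion}(2) rather than passing through the $W$-normalization, but the computation is identical, and you correctly flag that the implied constant picks up a dependence on $\epsilon$ through the choice of $M'$.
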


\subsection{Concluding Remarks}

\
\vskip 5 pt

\subsubsection{On the Analytic Continuations of $H$-Bessel Functions}\label{sec: Analytic continuations of the H-Bessel functions}
Our observation is that the phase function $\theta $ defined by (\ref{5eq: theta (t)}) is always positive on $\BR^{d}_+$. It follows that if one replaces $x$  by $z = x e^{i\omega}$, with $x > 0$ and $0 \leq \pm \omega \leq \pi$, then the various $J$-integrals in the rigorous interpretation of $H^{\pm}_{\unu} (z)$ remain absolutely convergent, uniformly with respect to $z$, since $\left| e^{\pm i z \theta(\ut)} \right| = e^{\mp x \sin \omega \, \theta(\ut)} \leq 1.$ Therefore, the resulting integral $H^{\pm}_{\unu} (z)$ gives rise to an analytic continuation of $H^{\pm}_{\unu} (x)$ onto the half-plane $\BH^{\pm} = \left\{ z \in \BC \smallsetminus \{0\} : 0 \leq \pm \arg z \leq \pi \right\} $. In view  of Proposition \ref{prop: J (x ; sigma, lambda) = J nu (x; sigma)}, one may define $H^{\pm} (z; \ulambda) = H^{\pm}_{\unu} (z)$ and regard it as the analytic continuation of  $H^{\pm} (x; \ulambda)$ from $\BR _+$ onto $\BH^{\pm}$. Furthermore, with slight modifications of the arguments above, where the phase function $f $ is now chosen to be $\pm e^{i \omega} (\theta - d - 1)$ in the application of Proposition \ref{prop: Stationary phase},  the domain of validity for the asymptotic expansions in Theorem \ref{thm: asymptotic expansion} may be extended from $\BR_+$ onto $  \BH^{\pm}$. For example, we have 
\begin{equation}\label{5eq: asymptotic expansion 1}
\begin{split}
H^{\pm}  (z; \ulambda) = n^{- \frac 1 2}  &(\pm 2 \pi i)^{ \frac { n-1} 2} e^{ \pm i n z} z^{ - \frac { n-1} 2} \\
& \lp \sum_{m=0}^{M-1} ( \pm i )^{ - m} B_{m} (\ulambda) z^{- m} + O_{\,\mathfrak R,\, M,\, n} \left( \mathfrak C^{2 M} |z|^{-M + \frac { n-1} 2} \right) \rp,
\end{split}
\end{equation}
for all $z \in \BH^\pm$ such that $ |z| \geq \fC$.

Obviously, the above method of obtaining the analytic continuation of $H^{\pm}_{\unu} $ does not apply to $K$-Bessel functions.

\vskip 5 pt

\subsubsection{On the Implied Constants of Estimates} All the implied constants that occur in this section are of exponential dependence on the real parts of the indices. 
If one considers  the $d$-th symmetric lift of a holomorphic Hecke cusp form of weight $k$, the estimates are particularly awful in the $k$ aspect. 


In \S \ref{sec: Recurrence relations and differential equations of the Bessel functions} and \S \ref{sec: Bessel equations}, we shall further explore the theory of Bessel functions from the perspective of differential equations.  Consequently,  if the argument is sufficiently large, then all the estimates in this section can be improved so that the dependence on the index can be completely eliminated.

\vskip 5 pt

\subsubsection{On the Coefficients in the Asymptotics} \label{sec: remark on the coefficients in the asymptotics}
One feature of the method of stationary phase is the  explicit formula of the coefficients in the asymptotic expansion in terms of  certain partial differential operators.  In the present case of $H^{\pm}  (x; \ulambda) = H^{\pm}_{\unu} (x)$,
\eqref{5eq: B mj} provides an explicit formula of $B_{m} (\ulambda) = B_{m, 0} (\unu)$. 
To compute $\EuScript L^{m+r} \left( G^r p_{\unu} \right) (\ut_0)$ appearing in \eqref{5eq: B mj},  we observe that the function $G$ defined in (\ref{5eq: G(t)}) does not only vanish of third order at $\ut_0$. Actually, $\partial^{\ualpha} G (\ut_0) $ vanishes except for $ \ualpha = (0, ..., 0, \alpha , 0 ..., 0)$, with $\alpha \geq 3$. In the exceptional case we have $\partial^{\ualpha} G (\ut_0) = (- )^{\alpha} \alpha !$. However, the resulting expression is  considerably complicated. 

When $d = 1$, by the definition \eqref{5eq: differential operator D}, we have $\EuScript L = (d/dt)^2$. For $2 m \geq r \geq 1$,
\begin{align*}
& \lp d/ dt \rp^{2m+2r} \lp G^r p_{\nu}\rp (1) \\
= &\, (2m+2r)! \sum_{\alpha = 0}^{2m-r} \left| \left\{ (\alpha_1, ..., \alpha_r) : \sum_{q = 1}^r \alpha_q = 2m+2r- \alpha, \alpha_q \geq 3 \right \} \right| \frac { (- )^{\alpha}[ \nu - 1 ]_\alpha} {\alpha !} \\
= &\, (2m+2r)! \sum_{\alpha = 0}^{2m-r} {2m-\alpha-1 \choose r-1} \frac {  (1 - \nu)_{\alpha} } {\alpha !}.
\end{align*}
Therefore \eqref{5eq: B mj} yields 
\begin{equation*}
\begin{split}
B_{m, 0} (\nu) = \lp -\frac {1} {4}\rp^m \lp \frac { (1 - \nu)_{2m}} {m!} + \sum_{r=1}^{2m} \frac {(-)^r (2m+2r)!} {4^r (m+r)! r!} \sum_{\alpha = 0}^{2m-r} {2m-\alpha-1 \choose r-1} \frac { (1 - \nu)_{\alpha}} { \alpha !} \rp.
\end{split}
\end{equation*}
However, this expression of  $ B_{m, 0} (\nu)$ is more involved than what is known in the literature. 
Indeed, we have the  asymptotic expansions of $H^{(1)}_{\nu} $ and  $H^{(2)}_{\nu} $ (\cite[7.2 (1, 2)]{Watson})
\begin{align*}
H_{\nu}^{(1, 2)} (x)  \sim \lp \frac 2 {\pi x} \rp^{\frac 1 2} e^{\pm i \lp x - \frac 1 2 \nu \pi - \frac 1 4\pi \rp} \lp \sum_{m=0}^\infty \frac { (\pm)^m \lp \frac 1 2 - \nu \rp_m  \lp \frac 1 2 + \nu \rp_m} { m! (2 i x)^m} \rp,  
\end{align*}
which are deducible from Hankel's integral representations (\cite[6.12 (3, 4)]{Watson}). 
In view of Proposition \ref{prop: Classical Bessel functions} and Theorem \ref{thm: asymptotic expansion}, we have 
\begin{equation*}
B_{m, 0} (\nu) =  \frac {\lp \frac 1 2 - \nu \rp_m  \lp \frac 1 2 + \nu \rp_m } {4^m m!}.
\end{equation*}
Therefore, we deduce the following  combinatorial identity 
\begin{equation}
\begin{split}
  \frac { (- )^m  \lp \frac 1 2 - \nu \rp_m  \lp \frac 1 2 + \nu \rp_m } {m!}  
& =   \\
  \frac { (1 - \nu)_{2m}} {m!} & + \sum_{r=1}^{2m} \frac {(- )^r  (2m+2r)!} {4^r (m+r)! r!} \sum_{\alpha = 0}^{2m-r}{2m-\alpha-1 \choose r-1} \frac {(1 - \nu)_{\alpha}} {\alpha !}.
\end{split}
\end{equation}
It seems however hard to find an elementary proof of this identity\footnote{Added in proof: This problem was solved by Zhaolin Li, an undergraduate student at Zhejiang University.}.

\section{Recurrence Formulae and the Differential Equations for Bessel Functions}
\label{sec: Recurrence relations and differential equations of the Bessel functions}

Making use of certain recurrence formulae for  $J_{\unu} (x; \usigma)$, 
we shall derive the differential equation satisfied by $J (x; \usigma, \ulambda)$.  

\subsection{Recurrence Formulae}

Applying the formal partial integrations of either the first or the second kind and the differentiation under the integral sign on the formal integral expression of $J_{\unu} (x; \usigma)$ in (\ref{3eq: definition J nu (x; sigma)}), one obtains the recurrence formulae
\begin{equation}\label{6eq: recurrence from partial integration}
{\nu_l     } ({i x})\-  J_{\unu} (x; \usigma) = \varsigma_l      J_{\unu  - \ue_l     } (x; \usigma) - \varsigma_{d + 1} J_{\unu + \ue^d} (x; \usigma)
\end{equation}
for $l      = 1, ..., d$, and
\begin{equation}\label{6eq: reccurence from differentiation}
J'_{\unu} (x; \usigma) = \varsigma_{d + 1} i J_{\unu + \ue^d} (x; \usigma) + i \sum_{l      = 1}^{d} \varsigma_l      J_{\unu - \ue_l     } (x; \usigma).
\end{equation}
It is easy to verify (\ref{6eq: recurrence from partial integration}) and (\ref{6eq: reccurence from differentiation}) using the rigorous interpretation of $J_{\unu} (x; \usigma)$ established in \S \ref{sec: Definition of BJ nu (x; sigma)}.
Alternatively, one may prove these by the Mellin-Barnes integral representation \eqref{3eq: definition of J (x; sigma)} of  $J (x; \usigma, \ulambda)$. Moreover, using (\ref{6eq: recurrence from partial integration}), one may reformulate  (\ref{6eq: reccurence from differentiation})  as below,
\begin{equation}
J'_{\unu} (x; \usigma) \label{6eq: reccurence second form}
= \varsigma_{d + 1} i (d + 1) J_{\unu + \ue^d} (x; \usigma) + \frac {\sum_{l      = 1}^{d} \nu_l     } { x} J_{\unu} (x; \usigma).
\end{equation}

\subsection{The Differential Equations}


\begin{lem}\label{lem: J (j) nu (x; sigma) derivatives}
	Define $\ue^{l     } = (\underbrace {1,..., 1}_{l      }, 0 ..., 0)$,  $l      = 1, ..., d$, and denote $\ue^0 = \ue^{d+1} = (0, ..., 0)$ for convenience. Let $\nu_{d+1} = 0$.
	
	{ \rm (1).} For $l      = 1, ..., d+1$ we have
	\begin{equation}\label{6eq: J' nu+e ell (x; sigma)}
	J'_{\unu + \ue^{l     }} (x; \usigma) = \varsigma_{l     } i (d + 1) J_{\unu + \ue^{l     -1}} (x; \usigma) - \frac {\varLambda_{d-l     +1} (\unu) + d - l      +1} { x} J_{\unu + \ue^{l     }} (x; \usigma),
	\end{equation} 
	with
	\begin{equation*} 
	\varLambda_{m } (\unu) = - \sum_{k = 1}^{d}\nu_{k} + (d+1) \nu_{d-m+1}, \hskip 10 pt   m = 0, ..., d.
	\end{equation*}
	
	{ \rm (2).} For $0 \leq j \leq k \leq d+1$ define
	\begin{equation*}
	U_{k, j} (\unu) = 
	\begin{cases}
	1, & \text { if } j=k,\\
	- \lp \varLambda_{j} (\unu) + k - 1 \rp U_{k - 1, j} (\unu) + U_{k-1, j-1} (\unu), & \text { if } 0 \leq j \leq k-1,
	\end{cases}
	\end{equation*}
	with the notation $U_{k, -1} (\unu) = 0$, and
	\begin{equation*}
	S_0 (\usigma) = +, \hskip 10 pt S_j (\usigma) = \prod_{m = 0}^{j - 1}\varsigma_{d-m+1}\ \text{ for } j = 1, ..., d+1.
	\end{equation*}
	Then
	\begin{equation} \label{6eq: J (j) nu (x; sigma) derivatives}
	J^{(k)}_{\unu}  (x; \usigma) = \sum_{j = 0}^k  S_j (\usigma) (i (d+1))^j U_{k, j} (\unu) x^{j - k} J_{\unu + \ue^{d - j + 1}}  (x; \usigma).
	\end{equation}
\end{lem}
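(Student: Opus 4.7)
Part (1) is an algebraic manipulation of the three basic recurrences already established: (6.3), (6.4), and (6.5). The key observation is that (6.5) privileges the sign $\varsigma_{d+1}$ (coming from the exponential term $\varsigma_{d+1} t_1 \cdots t_d$), but by using (6.3) to eliminate the term $\varsigma_{d+1}J_{\unu+\ue^d}$ in favor of a term involving $\varsigma_l$, one can transfer this privileged role to any index $l\in\{1,\ldots,d\}$. The case $l = d+1$ in (6.6) is literally (6.5). Part (2) will follow by a straightforward induction on $k$, using Part (1) to differentiate each summand and using the combinatorial recurrence that defines $U_{k,j}(\unu)$.

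\textbf{Plan for Part (1).} For fixed $l \in \{1,\ldots,d\}$, rewrite (6.3) as $\varsigma_{d+1} J_{\unu + \ue^d}(x;\usigma) = \varsigma_l J_{\unu-\ue_l}(x;\usigma) - \nu_l(ix)^{-1} J_\unu(x;\usigma)$, and substitute this into (6.5) to obtain
\begin{equation*}
J'_\unu(x;\usigma) = \varsigma_l\, i(d+1)\, J_{\unu - \ue_l}(x;\usigma) - \frac{(d+1)\nu_l - \sum_k \nu_k}{x}\, J_\unu(x;\usigma).
\end{equation*}
Since $(d+1)\nu_l - \sum_k \nu_k = \varLambda_{d-l+1}(\unu)$, this is already of the shape of (6.6) but with $\unu$ in place of $\unu+\ue^l$. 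Replacing $\unu$ by $\unu+\ue^l$ then yields (6.6) once we check that $\varLambda_{d-l+1}(\unu + \ue^l) = \varLambda_{d-l+1}(\unu) + d - l + 1$, which is immediate from $\sum_k (\unu+\ue^l)_k = \sum_k\nu_k + l$ and $(\unu+\ue^l)_l = \nu_l + 1$. For $l = d+1$ there is nothing to prove beyond unfolding the conventions $\ue^{d+1}=\mathbf{0}$ and $\nu_{d+1}=0$, which reduces (6.6) to (6.5).

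\textbf{Plan for Part (2).} Induct on $k$. The case $k=0$ is trivial (the single surviving summand reduces to $J_\unu(x;\usigma)$ via $U_{0,0}=1$, $S_0=+$, $\ue^{d+1}=\mathbf{0}$). For the inductive step, differentiate the assumed formula for $J_\unu^{(k-1)}$ termwise. Each summand contributes two pieces: one from differentiating $x^{j-k+1}$, yielding a factor $(j-k+1)x^{j-k}J_{\unu+\ue^{d-j+1}}$, and one from applying Part (1) (with $l = d-j+1$, noting $d-l+1 = j$) to $J'_{\unu+\ue^{d-j+1}}$, yielding the two terms
\begin{equation*}
\varsigma_{d-j+1}\,i(d+1)\, J_{\unu+\ue^{d-j}} \;-\; \frac{\varLambda_j(\unu) + j}{x}\, J_{\unu+\ue^{d-j+1}}.
\end{equation*}
Using the identity $S_j(\usigma)\varsigma_{d-j+1} = S_{j+1}(\usigma)$ and reindexing $j \mapsto j+1$ in the shifted part, the sum collapses into the desired shape provided the coefficient of $x^{j-k}J_{\unu+\ue^{d-j+1}}$ is exactly $S_j(\usigma)(i(d+1))^j U_{k,j}(\unu)$. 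This coefficient works out to $-(\varLambda_j(\unu)+k-1)U_{k-1,j}(\unu) + U_{k-1,j-1}(\unu)$, which matches the defining recurrence of $U_{k,j}(\unu)$ (with boundary conventions $U_{k-1,-1}=0$ and $U_{k-1,k}=0$ handling the endpoints $j=0$ and $j=k$).

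\textbf{Remarks on difficulty.} The only genuine insight is the symmetry-breaking substitution in Part (1): the ``obvious'' derivation of (6.5) chooses to eliminate the $J_{\unu-\ue_l}$ using $\varsigma_{d+1}J_{\unu+\ue^d}$, whereas here we invert that roles and eliminate $\varsigma_{d+1}J_{\unu+\ue^d}$ in favor of $\varsigma_l J_{\unu-\ue_l}$. Once this is in hand, everything else is bookkeeping: a careful reindexing in Part (2) and verifying that the $\varLambda_j$ shift under $\unu \mapsto \unu+\ue^l$ produces exactly the $+d-l+1$ correction. No analytic input beyond the three recurrences is required, so the proof is purely formal and presents no real obstacle.
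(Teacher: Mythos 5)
Your proof is correct and takes essentially the same approach as the paper. For Part (1), you substitute (6.3) into (6.5) at index $\unu$ and then shift $\unu\mapsto\unu+\ue^l$, whereas the paper first writes (6.5) at index $\unu+\ue^l$ and then applies (6.3) at that same shifted index; the two orderings produce the identical computation. Part (2) matches the paper's induction on $k$ line for line, including the reindexing, the identity $S_j(\usigma)\varsigma_{d-j+1}=S_{j+1}(\usigma)$, and the collapse of the coefficient to the recurrence defining $U_{k,j}(\unu)$.
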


\begin{proof}
	By (\ref{6eq: reccurence second form}) and (\ref{6eq: recurrence from partial integration}),   
	\begin{equation*}
	\begin{split}
	& J'_{\unu + \ue^{l     }} (x; \usigma) = \varsigma_{d + 1} i (d + 1) J_{\unu + \ue^{l     } + \ue^d} (x; \usigma) + \frac {\sum_{k = 1}^{d}\nu_{k} + l     } { x} J_{\unu + \ue^{l     }} (x; \usigma) \\
	= \ &  i (d + 1) \left(- \frac {\nu_{l     } + 1} {i x} J_{\unu + \ue^{l     }} (x; \usigma) + \varsigma_{l     } J_{\unu + \ue^{l      - 1}} (x; \usigma) \right) + \frac {\sum_{k = 1}^{d}\nu_{k} + l     } { x} J_{\unu + \ue^{l     }} (x; \usigma)\\
	= \ &  \varsigma_{l     } i (d + 1)  J_{\unu + \ue^{l     -1}} (x; \usigma) + \frac {\sum_{k = 1}^{d}\nu_{k} - (d+1) \nu_l      + l      - d - 1} { x} J_{\unu + \ue^{l     }} (x; \usigma).
	\end{split}
	\end{equation*}
	This proves \eqref{6eq: J' nu+e ell (x; sigma)}.
	
	(\ref{6eq: J (j) nu (x; sigma) derivatives}) is trivial when $k=0$. Suppose that $k \geq 1$ and that  (\ref{6eq: J (j) nu (x; sigma) derivatives}) is already proven for $k-1$. The inductive hypothesis and  (\ref{6eq: J' nu+e ell (x; sigma)}) imply
	\begin{align*}
	J^{(k)}_{\unu}  (x; \usigma) = &  \sum_{j=0}^{k-1}  S_j(\usigma) (i(d+1))^j U_{k-1, j} (\unu) x^{j-k+1} \\
	& \lp (j-k+1) x^{-1} J_{\unu + \ue^{d-j+1}} (x; \usigma) \right. \\
	& \left. \quad + \varsigma _{d-j+1} i (d+1) J_{\unu + \ue^{d-j}}  (x; \usigma) - (\varLambda_j(\unu) + j) x\- J_{\unu + \ue^{d-j+1}} (x; \usigma)  \rp\\
	= & - \sum_{j=0}^{k-1} S_j(\usigma) (i(d+1))^j U_{k-1, j} (\unu) (\varLambda_j(\unu) + k-1) x^{j-k} J_{\unu + \ue^{d-j+1}} (x; \usigma)\\
	& + \sum_{j=1}^{k } S_{j-1}(\usigma) \varsigma _{d-j+2} (i(d+1))^j U_{k-1, j-1} (\unu) x^{j-k} J_{\unu + \ue^{d-j+1}} (x; \usigma).
	\end{align*}
	Then (\ref{6eq: J (j) nu (x; sigma) derivatives}) follows from the definitions of $ U_{k, j} (\unu)$ and  $ S_j (\usigma) $.
\end{proof}

Lemma \ref{lem: J (j) nu (x; sigma) derivatives} (2) may be recapitulated as
\begin{equation}\label{6eq: differentials, matrix form}
X_{\unu} (x; \usigma)
= 
D(x)\- U ( \unu) D(x) S(\usigma) Y_{\unu}  (x; \usigma),
\end{equation}
where $X_{\unu} (x; \usigma) = \big( J^{(k)}_{\unu} (x; \usigma) \big)_{k=0}^{ d+1 }$ and $Y_{\unu}  (x; \usigma) = \big(  J_{\unu + \ue^{d - j + 1}}  (x; \usigma) \big)_{j=0}^{d+1}$ are column vectors of functions, 
$S(\usigma) = \mathrm{diag} \left(S_j(\usigma)(i(d+1))^j \right)_{j=0}^{d+1}$ and $D(x) = \mathrm{diag} \left(x^j \right)_{j=0}^{d+1}$ are diagonal matrices, and $U ( \unu) $ is the lower triangular unipotent $(d+2) \times (d+2)$ matrix whose $(k+1, j+1)$-th entry is equal to $U_{k, j} (\unu)$. 
The inverse matrix $U ( \unu)\-$ is again a lower triangular unipotent matrix. Let  $ V_{k, j} (\unu) $ denote the $(k+1, j+1)$-th entry of  $U ( \unu)\-$. It is evident that $ V_{k, j} (\unu) $ is a polynomial in $\unu$ of  degree $k-j$ and integral coefficients.

Observe that $J_{\unu + \ue^{d+1}} (x; \usigma) = J_{\unu + \ue^0} (x; \usigma) = J_{\unu} (x; \usigma)$. Therefore,  (\ref{6eq: differentials, matrix form}) implies that $J_{\unu} (x; \usigma)$ satisfies the following linear differential equation of order $d+1$
\begin{equation} \label{6eq: differential equation of J}
\begin{split}
\sum_{j = 1}^{d+1} V_{d+1, j} (\unu)   x^{j - d - 1} w^{(j)}  
+ \left( V_{d+1, 0} (\unu) x^{- d - 1} - S_{d+1} (\usigma) (i(d+1))^{d+1} \right) w = 0.
\end{split}
\end{equation}

\subsection{Calculations of the Coefficients in the Differential Equations}\label{sec: Calculation of the coefficients}

\begin{defn}\label{6defn: coefficients of Bessel equation}
	Let $\uLambda = \{\varLambda_m \}_{m=0}^\infty$ be a sequence of complex numbers. 
	
	{ \rm (1).} For $k, j \geq -1$ inductively define a double sequence of polynomials $ U_{k, j} (\uLambda) $ in $\uLambda$ by the initial conditions
	\begin{equation*} 
	U_{-1, -1}(\uLambda) = 1, \hskip 10 pt U_{k, - 1} (\uLambda) =  U_{-1, j} (\uLambda) = 0 \ \text{ if } k , j \geq 0,
	\end{equation*} 
	and the recurrence relation
	\begin{equation}\label{6eq: definition of U k j (mu)}
	U_{k, j} (\uLambda) = - \lp \varLambda_j + k - 1\rp U_{k-1, j} (\uLambda)  + U_{k-1, j-1} (\uLambda), \hskip 10 pt k, j \geq 0.
	\end{equation}
	
	{ \rm (2).} For $j, m \geq -1$ with $(j, m) \neq (-1, -1)$ define a double sequence of integers $A_{j, m}$ by the initial conditions
	\begin{equation*}
	A_{-1, 0} = 1, \hskip 10 pt A_{-1, m} = A_{j, -1} = 0 \  \text{ if } m \geq 1, j \geq 0,
	\end{equation*}
	and the recurrence relation
	\begin{equation}\label{6eq: definition of a k m}
	A_{j, m}  = j A_{j, m-1} + A_{j-1, m}, \hskip 10 pt j,  m \geq 0.
	\end{equation}

	{ \rm (3).} For $ k, m \geq 0 $ we define $\sigma_{k, m} (\uLambda)$ to be the elementary symmetric polynomial in $\varLambda_0, ..., \varLambda_{k}$ of degree $m$, with the convention that
	$\sigma_{k, m} (\uLambda) = 0 $ if $m \geq k+2$. Moreover, we denote
	\begin{equation*}
	\sigma_{-1, 0} (\uLambda) = 1, \hskip 10 pt \sigma_{k, -1} (\uLambda) = \sigma_{-1, m} (\uLambda) = 0 \ \text{ if } k \geq -1, m \geq 1.
	\end{equation*} Observe that, with the above notations as initial conditions, $\sigma_{k, m} (\uLambda)$ may also be inductively defined by the recurrence relation
	\begin{equation}\label{6eq: recurrence relation of elementary symmetric polynomials}
	\sigma_{k, m}(\uLambda) = \varLambda_k \sigma_{k-1, m-1} (\uLambda) + \sigma_{k-1, m} (\uLambda),  \hskip 10 pt k,  m \geq 0.
	\end{equation}
	
	{ \rm (4).} For $k \geq 0, j \geq -1$ define 
	\begin{equation}\label{6eq: definition of V k j (mu)}
	V_{k, j} (\uLambda) = 
	\begin{cases}
	0,  & \text { if } j > k,\\
	\ds \sum_{m=0}^{k-j} A_{j, k-j-m} \sigma_{k-1, m} (\uLambda),  & \text{ if } k \geq j.
	\end{cases}
	\end{equation}
\end{defn}

\begin{lem} \label{lem: simple facts on U, a, V} Let notations be as above.
	
	{ \rm (1).} $U_{k, j}(\uLambda)$ is a polynomial in $\varLambda_0, ..., \varLambda_j$. $U_{k, j} (\uLambda) = 0$ if $j > k $, and $U_{k, k} (\uLambda) = 1$. $U_{k, 0}(\uLambda) = [ - \varLambda_0]_k$ for $k \geq 0$.
	
	{ \rm (2).} $A_{j, 0} = 1$, and $A_{j, 1} = \frac 1 2 {j(j+1)} $.
	
	{ \rm (3).} $V_{k, j} (\uLambda) $ is a symmetric polynomial in $\varLambda_0, ..., \varLambda_{k-1}$. $V_{k, k} (\uLambda) = 1$. $V_{k, -1} (\uLambda) = 0$ and $V_{k, k-1} (\uLambda) = \sigma_{k-1, 1} (\uLambda) + \frac 12 {k(k-1)} $ for $k \geq 0$.
	
	{ \rm (4).} $V_{k, j} (\uLambda) $ satisfies the following recurrence relation
	\begin{equation}\label{6eq: Recurrence relation of V k j}
	V_{k, j} (\uLambda) =  ( \varLambda_{k-1} + j) V_{k-1, j} (\uLambda) + V_{k-1, j-1} (\uLambda), \hskip 10 pt k \geq 1, j \geq 0.
	\end{equation}
\end{lem}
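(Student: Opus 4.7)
All four statements are combinatorial identities for double sequences defined by simple recurrences. The plan is to prove them in the order \textrm{(1), (2), (4), (3)}, since \textrm{(3)} is an immediate consequence of the definition \eqref{6eq: definition of V k j (mu)} once \textrm{(4)} is in hand.

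For \textrm{(1)}, I would induct on $k$ using the recurrence \eqref{6eq: definition of U k j (mu)}. The three claims ``$U_{k,j}=0$ for $j>k$'', ``$U_{k,k}=1$'', and ``$U_{k,j}$ is a polynomial in $\varLambda_0,\dots,\varLambda_j$'' all follow at once from \eqref{6eq: definition of U k j (mu)}: if $j>k$ then both $U_{k-1,j}$ and $U_{k-1,j-1}$ vanish by induction; the diagonal entry satisfies $U_{k,k}=U_{k-1,k-1}=1$ because $U_{k-1,k}=0$; and the polynomial dependence is transparent from the recurrence, with $\varLambda_j$ entering only via the coefficient $-(\varLambda_j+k-1)$. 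For the formula $U_{k,0}(\uLambda)=[-\varLambda_0]_k$, the recurrence collapses to $U_{k,0}=-(\varLambda_0+k-1)U_{k-1,0}$ (as $U_{k-1,-1}=0$), giving the falling factorial directly by induction. Statement \textrm{(2)} is the same kind of routine induction on $j$ using \eqref{6eq: definition of a k m}: $A_{j,-1}=0$ forces $A_{j,0}=A_{j-1,0}$, so $A_{j,0}=A_{-1,0}=1$; then $A_{j,1}=j A_{j,0}+A_{j-1,1}=j+A_{j-1,1}$ telescopes to $\tfrac12 j(j+1)$.

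The main computational step is \textrm{(4)}. Starting from the closed form
\[
V_{k,j}(\uLambda)=\sum_{m=0}^{k-j} A_{j,k-j-m}\,\sigma_{k-1,m}(\uLambda),
\]
I would substitute $\sigma_{k-1,m}=\varLambda_{k-1}\sigma_{k-2,m-1}+\sigma_{k-2,m}$ from \eqref{6eq: recurrence relation of elementary symmetric polynomials}. Shifting the dummy index in the term carrying $\varLambda_{k-1}$ yields
\[
V_{k,j}=\varLambda_{k-1}\sum_{m=0}^{k-j-1}A_{j,k-j-1-m}\sigma_{k-2,m}+\sum_{m=0}^{k-j}A_{j,k-j-m}\sigma_{k-2,m}.
\]
The first sum is exactly $\varLambda_{k-1}V_{k-1,j}$. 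Into the second sum I would insert the $A$-recurrence \eqref{6eq: definition of a k m}, $A_{j,k-j-m}=jA_{j,k-j-m-1}+A_{j-1,k-j-m}$; the $jA_{j,k-j-m-1}$ piece contributes $jV_{k-1,j}$ (the $m=k-j$ term drops because $A_{j,-1}=0$), while the $A_{j-1,k-j-m}$ piece is $V_{k-1,j-1}$ by the definition with $j$ shifted to $j-1$. Assembling these gives \eqref{6eq: Recurrence relation of V k j}. The only delicate point is bookkeeping at the boundary indices $m=0$ and $m=k-j$, and the vanishing conventions $\sigma_{-1,0}=1$, $\sigma_{k-1,m}=0$ for $m\geq k+1$, $A_{j,-1}=0$, $A_{-1,m}=0$ for $m\geq 1$; these are exactly set up to make the shifts work, so no miracle is needed.

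Finally, \textrm{(3)} follows with no further computation. Symmetry of $V_{k,j}(\uLambda)$ in $\varLambda_0,\dots,\varLambda_{k-1}$ is immediate from \eqref{6eq: definition of V k j (mu)} since each $\sigma_{k-1,m}(\uLambda)$ is symmetric in those variables and the $A_{j,m}$ are constants. Evaluating the sum at $j=k$ leaves only $m=0$, giving $V_{k,k}=A_{k,0}\sigma_{k-1,0}=1$; at $j=k-1$ one has two terms, $V_{k,k-1}=A_{k-1,1}\sigma_{k-1,0}+A_{k-1,0}\sigma_{k-1,1}=\tfrac12 k(k-1)+\sigma_{k-1,1}(\uLambda)$ by \textrm{(2)}; and $V_{k,-1}=0$ follows either from the defining sum (only $m=k+1$ could contribute via $A_{-1,0}=1$, but $\sigma_{k-1,k+1}=0$) or inductively from \eqref{6eq: Recurrence relation of V k j} applied with $j=-1$. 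I expect no significant obstacle beyond carefully tracking the boundary conventions when combining the $A$- and $\sigma$-recurrences in step \textrm{(4)}.
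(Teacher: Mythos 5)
Your proof is correct and takes essentially the same route as the paper, which dismisses (1)--(3) as evident from the definitions and in (4) performs exactly your computation: substitute the $\sigma$-recurrence \eqref{6eq: recurrence relation of elementary symmetric polynomials}, re-index the term carrying $\varLambda_{k-1}$, apply the $A$-recurrence \eqref{6eq: definition of a k m} to the remaining sum, and drop the boundary terms via $\sigma_{k-2,-1}=0$ and $A_{j,-1}=0$. One small caveat: your offered alternative derivation of $V_{k,-1}=0$ by ``applying \eqref{6eq: Recurrence relation of V k j} with $j=-1$'' would invoke the undefined $V_{k-1,-2}$ (the recurrence and the definition are only stated for $j\geq -1$), but your primary argument from the defining sum with $\sigma_{k-1,k+1}=0$ is correct, so nothing is lost.
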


\begin{proof}
	(1-3) are  evident from the definitions.
	
	(4). (\ref{6eq: Recurrence relation of V k j}) is obvious if $j \geq k$. If $k > j$, then the recurrence relations (\ref{6eq: recurrence relation of elementary symmetric polynomials}, \ref{6eq: definition of a k m}) for $\sigma_{k, m} (\uLambda)$ and $A_{j, m}$, in conjunction with the definition (\ref{6eq: definition of V k j (mu)}) of $V_{k, j} (\uLambda)$,  yield
	\begin{align*}
	V_{k, j} (\uLambda) = & \sum_{m=0}^{k - j } A_{j, k - j - m } \sigma_{k - 1, m} (\uLambda)\\
	= &\, \varLambda_{k - 1} \sum_{m = 1}^{k - j }  A_{j, k - j - m } \sigma_{k-2, m-1} (\uLambda) + \sum_{m = 0}^{k - j }   A_{j, k - j - m } \sigma_{k-2, m} (\uLambda)\\
	= &\, \varLambda_{k - 1} \sum_{m = 0}^{k - j - 1}  A_{j, k - j - m - 1} \sigma_{k-2, m} (\uLambda) \\
	& + j \sum_{m = 0}^{k - j - 1} A_{j, k - j - m - 1} \sigma_{k-2, m} (\uLambda) + \sum_{m = 0}^{k - j } A_{j-1, k - j - m } \sigma_{k-2, m} (\uLambda)\\
	= &\,  ( \varLambda_{k-1} + j) V_{k-1, j} (\uLambda) + V_{k-1, j-1} (\uLambda).
	\end{align*}
\end{proof}

\begin{lem}\label{lem: Orthogonality}
	For $k \geq 0$ and $j \geq - 1$ such that $k \geq j$, we have
	\begin{equation}\label{6eq: Orthogonality}
	\sum_{l      = j}^k U_{k, l     } (\uLambda) V_{l     , j} (\uLambda) = \delta_{k, j},
	\end{equation}
	where $ \delta_{k, j}$ denotes Kronecker's delta symbol.
	Consequently, 
	\begin{equation}\label{6eq: Orthogonality, 1}
	\sum_{l      = j}^k V_{k, l     } (\uLambda) U_{l     , j} (\uLambda) = \delta_{k, j}.
	\end{equation}
\end{lem}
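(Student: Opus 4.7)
The two identities \eqref{6eq: Orthogonality} and \eqref{6eq: Orthogonality, 1} assert that the infinite lower-triangular matrices $U = (U_{k,j}(\uLambda))_{k,j \geq -1}$ and $V = (V_{k,j}(\uLambda))_{k,j \geq -1}$ are two-sided inverses of each other. Since both matrices are lower-triangular and unipotent (by Lemma \ref{lem: simple facts on U, a, V} (1) and (3)), a one-sided inverse is automatically two-sided; hence it suffices to establish \eqref{6eq: Orthogonality}, and \eqref{6eq: Orthogonality, 1} will follow.

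I would prove \eqref{6eq: Orthogonality} by induction on $k-j \geq 0$. The base case $k = j$ is immediate from $U_{k,k}(\uLambda) = V_{k,k}(\uLambda) = 1$. For $j = -1$ with $k \geq 0$, $V_{l,-1}(\uLambda) = 0$ for all $l \geq 0$, so the sum vanishes, matching $\delta_{k,-1} = 0$. For the inductive step, fix $k > j \geq 0$ and assume \eqref{6eq: Orthogonality} holds for all pairs $(k',j')$ with $k'-j' < k-j$. Split off $l = k$ (using $U_{k,k} = 1$) and apply the $U$-recurrence \eqref{6eq: definition of U k j (mu)} to the remaining terms:
\begin{equation*}
\sum_{l=j}^{k} U_{k,l} V_{l,j} = V_{k,j} - \sum_{l=j}^{k-1}(\varLambda_l + k - 1)\, U_{k-1,l} V_{l,j} + \sum_{l=j-1}^{k-2} U_{k-1,l} V_{l+1,j},
\end{equation*}
after reindexing $l \mapsto l+1$ in the last sum. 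Next, apply the $V$-recurrence \eqref{6eq: Recurrence relation of V k j} to $V_{l+1,j} = (\varLambda_l + j) V_{l,j} + V_{l,j-1}$ and regroup the resulting expression into two groups according to whether the second index of $V$ is $j$ or $j-1$. The $V_{l,j}$ group simplifies because the coefficients $(\varLambda_l + j) - (\varLambda_l + k - 1) = -(k-1-j)$ become independent of $l$; the induction hypothesis at $(k-1,j)$ collapses the inner sum, leaving only a multiple of $V_{k-1,j}$. Combining that with the remaining boundary terms $V_{k,j}$ and $-(\varLambda_{k-1}+k-1)V_{k-1,j}$, the $V$-recurrence \eqref{6eq: Recurrence relation of V k j} at $(k,j)$ produces $V_{k-1,j-1}$, which merges into the $V_{l,j-1}$ group to give precisely $\sum_{l=j-1}^{k-1} U_{k-1,l} V_{l,j-1}$. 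By the induction hypothesis at $(k-1, j-1)$ (still satisfying $k-1 > j-1$), this sum is zero.

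The main obstacle is bookkeeping: one has to carefully track the boundary terms at $l = k$, $l = k-1$, and $l = j-1$ (where $V_{j-1,j} = 0$ and $V_{l,-1} = 0$ drop out), and verify that the crucial cancellation $(\varLambda_l + j) - (\varLambda_l + k-1) = j - k + 1$ produces a factor independent of $l$ so that the induction hypothesis can be invoked. Once the algebra is organized so that the two sums at level $k$ reduce to two instances of the orthogonality at level $k-1$ (one with index $j$, one with $j-1$), the telescoping is clean and the proof concludes.
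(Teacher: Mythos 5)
Your algebraic route reproduces the paper's argument almost verbatim: apply the $U$-recurrence, regroup, apply the $V$-recurrence, let the $\varLambda_l$ terms telescope, and reduce everything to lower-level instances of the orthogonality plus Kronecker-delta bookkeeping. The reduction of \eqref{6eq: Orthogonality, 1} to \eqref{6eq: Orthogonality} via unipotence is also correct and matches the paper's ``Consequently.'' However, there is a genuine gap in your induction scheme. You propose ``induction on $k-j \geq 0$'' and in the inductive step ``assume \eqref{6eq: Orthogonality} holds for all pairs $(k',j')$ with $k'-j' < k-j$,'' but the last step of your own argument invokes the identity at $(k-1, j-1)$, and since $(k-1)-(j-1) = k-j$, this pair has the \emph{same} difference, not a smaller one. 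It is therefore not covered by the hypothesis as you have stated it. Concretely: to settle $(k,j)=(2,1)$ you need $(1,0)$, which also has $k-j=1$ and is neither a base case nor covered by ``smaller $k-j$''; the pair $(1,0)$ in turn needs $(0,-1)$, which is covered. The chain is acyclic, so the claim is true, but ``induction on $k-j$'' does not provide a well-founded order within a fixed level of $k-j$.

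The fix is precisely what the paper does: induct lexicographically on $(k-j,\,j)$ --- i.e., strengthen the hypothesis to ``proven for smaller $k-j$, and also for the same $k-j$ with smaller $j$'' --- or, equivalently, induct on $k$ and, for fixed $k$, on $j$. With that adjustment the invocation of $(k-1,j-1)$ is legitimate (it has the same $k-j$ but strictly smaller $j$), and your proof closes and coincides with the paper's.
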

\begin{proof}
	(\ref{6eq: Orthogonality}) is obvious if either $k = j$ or $j = -1$. In the proof we may therefore assume that $k - 1 \geq j \geq 0$ and that (\ref{6eq: Orthogonality}) is already proven for smaller values of $k - j$ as well as for smaller values of $j$ and the same $k-j$.
	
	By the recurrence relations (\ref{6eq: definition of U k j (mu)}, \ref{6eq: Recurrence relation of V k j}) for $U_{k, j} (\uLambda)$ and  $V_{ k, j} (\uLambda)$ and the induction hypothesis,
	\begin{align*}
	& \sum_{l      = j}^k U_{k, l     } (\uLambda) V_{l     , j} (\uLambda) \\
	=  & - \sum_{l     =j}^{k-1} ( k-1 +  \varLambda_{l     }) U_{k-1, l     } (\uLambda) V_{l     , j} (\uLambda) + \sum_{l     =j}^{k } U_{k-1, l     -1} (\uLambda) V_{l     , j} (\uLambda)\\
	= & - ( k-1) \delta_{k-1, j} - \sum_{l     =j}^{k-1} \varLambda_{l     } U_{k-1, l     }  (\uLambda) V_{l     , j} (\uLambda) + \sum_{l     =j +1}^{k } \varLambda_{l     -1} U_{k-1, l     -1} (\uLambda) V_{l     -1, j} (\uLambda) \\
	& + j \sum_{l     =j + 1}^{k } U_{k-1, l     -1} (\uLambda) V_{l     -1, j} (\uLambda) +  \sum_{l     =j }^{k } U_{k-1, l     -1} (\uLambda) V_{l     -1, j-1} (\uLambda) \\
	= & -( k-1) \delta_{k-1, j} + 0 + j \delta_{k-1, j} + \delta_{k-1, j-1} = 0.
	\end{align*}
	This completes the proof of (\ref{6eq: Orthogonality}).
\end{proof}

Finally, we have the following explicit formulae for $A_{j, m}$.
\begin{lem}\label{lem: formula of a jm}
	We have $A_{0,0} = 1$, $A_{0, m} = 0$ if $m \geq 1$, and
	\begin{equation}\label{6eq: formula for a j m}
	A_{j, m} = \sum_{r=1}^j \frac {(- )^{j-r} r^{m+j}}{r! (j-r)!} \  \text { if }  j \geq 1, m \geq 0.
	\end{equation}
\end{lem}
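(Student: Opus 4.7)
The plan is to establish the formula by a straightforward induction on $j$, using the recurrence \eqref{6eq: definition of a k m} together with the stated initial conditions. The base case $j=0$ is immediate: combining $A_{0,m}=0\cdot A_{0,m-1}+A_{-1,m}$ with $A_{-1,0}=1$ and $A_{-1,m}=0$ for $m\geq 1$ yields precisely $A_{0,0}=1$ and $A_{0,m}=0$ for $m\geq 1$, matching the claim.

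For the inductive step, I would fix $j\geq 1$, assume \eqref{6eq: formula for a j m} at level $j-1$ for every $m\geq 0$, and verify the formula at level $j$ by a secondary induction on $m$. The anchor $m=0$ requires a separate argument: the recurrence gives $A_{j,0}=jA_{j,-1}+A_{j-1,0}=A_{j-1,0}$, so inductively $A_{j,0}=A_{0,0}=1$, while the right-hand side of \eqref{6eq: formula for a j m} at $m=0$ reads $\frac{1}{j!}\sum_{r=0}^{j}(-)^{j-r}\binom{j}{r}r^{j}$, which equals $1$ by the classical finite-difference identity $\sum_{r=0}^{j}(-)^{j-r}\binom{j}{r}r^{j}=j!$.

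For $m\geq 1$, assuming the closed form for $A_{j,m-1}$ and $A_{j-1,m}$, the key step is to decompose $j=r+(j-r)$ inside the sum for $jA_{j,m-1}$:
\begin{align*}
jA_{j,m-1}
&=\sum_{r=1}^{j}\frac{j\,(-)^{j-r}r^{m+j-1}}{r!(j-r)!}\\
&=\sum_{r=1}^{j}\frac{(-)^{j-r}r^{m+j}}{r!(j-r)!}+\sum_{r=1}^{j-1}\frac{(-)^{j-r}r^{m+j-1}}{r!(j-r-1)!}.
\end{align*}
The second sum is exactly $-A_{j-1,m}$ by the inductive hypothesis, so $jA_{j,m-1}+A_{j-1,m}=\sum_{r=1}^{j}(-)^{j-r}r^{m+j}/(r!(j-r)!)$, which is the asserted expression for $A_{j,m}$, closing the induction.

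There is no genuine obstacle here beyond careful bookkeeping of signs and indices; the argument is an elementary verification once the recurrence is written down. The only noteworthy external ingredient is the finite-difference identity $\sum_{r=0}^{j}(-)^{j-r}\binom{j}{r}r^{j}=j!$, needed solely to settle the $m=0$ base of the inner induction. (One could equivalently recognize $A_{j,m}$ as $j!$ times a Stirling number of the second kind and quote the standard inclusion--exclusion formula, but the direct calculation above is self-contained.)
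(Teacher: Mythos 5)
Your proof is correct and is essentially the paper's argument, just organized as an explicit double induction rather than "verify the closed form satisfies the recurrence, then check the $m=0$ boundary": the algebraic step of writing $j=r+(j-r)$ inside $jA_{j,m-1}$ is exactly the verification the paper calls "straightforward," and both arguments reduce the $m=0$ case to the same finite-difference identity $\sum_{r=1}^{j}(-)^{j-r}\binom{j}{r}r^{j}=j!$ (which the paper proves by differentiating $(x-1)^j$ and evaluating at $x=1$, while you cite it as classical). No gaps.
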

\begin{proof}
	It is easily seen that $A_{0,0} = 1$ and $A_{0, m} = 0$ if $m \geq 1$.
	
	It is straightforward to verify that the sequence given by (\ref{6eq: formula for a j m}) satisfies the recurrence relation (\ref{6eq: definition of a k m}), so it is left to show that \eqref{6eq: formula for a j m} holds true for $m = 0$. Initially, $A_{j, 0} = 1$, and hence one must verify
	\begin{equation*}
	\sum_{r=1}^j \frac {(- )^{j-r}
		r^{j}}{r! (j-r)!} = 1.
	\end{equation*}
	This however follows from considering all the identities obtained by differentiating the following binomial identity up to $j$ times and then evaluating at $x = 1$,
	\begin{equation*}
	(x - 1)^j - (-1)^j = j! \sum_{r=1}^j \frac {(-1)^{j-r}} {r! (j-r)!} x^r.
	\end{equation*}
\end{proof}

\subsection {Conclusion}

We first observe that, when $0\leq j \leq k \leq d+1$, both $U_{k, j} (\uLambda)$ and $V_{k, j} (\uLambda)$ are polynomials in $\varLambda_0, ..., \varLambda_{d}$  according to Lemma \ref{lem: simple facts on U, a, V} (1, 3). 
If one puts $\varLambda_{m} = \varLambda_m (\unu)$ for $m = 0, ..., d$, then   $U_{k, j} (\unu) = U_{k, j} (\uLambda)$. It follows from Lemma \ref{lem: Orthogonality} that  $V_{k, j} (\unu) = V_{k, j} (\uLambda)$. Moreover, the relations $\nu_{l     } = \lambda_{l     } - \lambda_{d+1}$, $l      = 1, ..., d$, along with the assumption $\sum_{l      = 1}^{d+1} \lambda_l      = 0$, yields
\begin{equation*} 
\varLambda_{m} (\unu) = (d+1)  \lambda_{d- m+1 }.
\end{equation*}
Now we can reformulate  \eqref{6eq: differential equation of J} in the following theorem.
\begin{thm}\label{thm: Bessel equations}
	The Bessel function $J (x; \usigma, \ulambda)$ satisfies the following linear differential equation of order $d+1$
	\begin{equation} \label{6eq: differential equation of J 2}
	\sum_{j = 1}^{d+1} V_{d+1, j} (\ulambda) x^{j} w^{(j)} + \left( V_{d+1, 0} (\ulambda) - S_{d+1} (\usigma) (i(d+1))^{d+1}  x^{ d + 1} \right) w  = 0,
	\end{equation}
	where
	\begin{equation*} 
	S_{d+1} (\usigma) = \prod_{l      = 1}^{d+1} \varsigma_{l     }, \hskip 10 pt V_{d+1, j} (\ulambda) = \sum_{m=0}^{d-j+1} A_{j, d-j-m+1} (d+1)^m \sigma_{m} (\ulambda),
	\end{equation*}
	$\sigma_m (\ulambda)$ denotes the elementary symmetric polynomial in $\ulambda$ of degree $m$, with $\sigma_1 (\ulambda) = 0$, and $A_{j, m}$ is recurrently defined in Definition {\rm \ref{6defn: coefficients of Bessel equation} (3)} and explicitly given in Lemma {\rm \ref{lem: formula of a jm}}.  We shall call the equation \eqref{6eq: differential equation of J 2} a Bessel equation of index $\ulambda$, or simply a Bessel equation if the index $\ulambda$ is given.
\end{thm}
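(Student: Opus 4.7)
The proof plan is to combine the recurrence machinery from Lemma \ref{lem: J (j) nu (x; sigma) derivatives} with the inversion identity of Lemma \ref{lem: Orthogonality} and the explicit formulas of \S \ref{sec: Calculation of the coefficients}. The starting point is the matrix identity \eqref{6eq: differentials, matrix form}, which expresses the column vector of derivatives $X_{\unu}(x;\usigma)$ in terms of the column vector $Y_{\unu}(x;\usigma)$ of shifted Bessel functions $J_{\unu + \ue^{d-j+1}}(x;\usigma)$ via the lower triangular unipotent matrix $U(\unu)$. Inverting $U(\unu)$ gives the dual relation
\begin{equation*}
Y_{\unu}(x;\usigma) \;=\; S(\usigma)^{-1} D(x)^{-1} U(\unu)^{-1} D(x)\, X_{\unu}(x;\usigma),
\end{equation*}
whose $(d+1,d+1)$-entry reads $J_{\unu + \ue^{0}}(x;\usigma) = \sum_{j=0}^{d+1} V_{d+1,j}(\unu)\, S_j(\usigma)^{-1}(i(d+1))^{-j} x^{j-d-1} J^{(j)}_{\unu}(x;\usigma)$. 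Since $\ue^{d+1} = \ue^0 = \boldsymbol 0$, the top entry of $Y_{\unu}$ coincides with the bottom entry, and eliminating the resulting identity using $S_{d+1}(\usigma)S_j(\usigma)^{-1}=\prod_{l=1}^{d-j+1}\varsigma_l$ together with $V_{d+1,d+1}(\unu)=1$ yields exactly the ODE \eqref{6eq: differential equation of J}, after clearing $x^{d+1}$ and gathering the leading term coming from the diagonal of $S(\usigma)$.

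Next I would verify that the coefficients $V_{k,j}(\unu)$ produced by this inversion agree with those defined by \eqref{6eq: definition of V k j (mu)}. This requires checking the recurrence \eqref{6eq: Recurrence relation of V k j} and using Lemma \ref{lem: Orthogonality} to confirm that \eqref{6eq: definition of V k j (mu)} gives the genuine inverse of $U$ when one specializes the formal sequence $\uLambda$ to $\varLambda_m(\unu)$. Here I would lean on Lemma \ref{lem: simple facts on U, a, V}(4), which says the $V_{k,j}(\uLambda)$ satisfy exactly the mirror recurrence to $U_{k,j}(\uLambda)$; an induction on $k-j$ then shows \eqref{6eq: Orthogonality}, and hence the specialization in Theorem \ref{thm: Bessel equations} inherits the correct form.

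The final step is purely translational. Since $\nu_l = \lambda_l - \lambda_{d+1}$ for $l=1,\dots,d$ and $\sum_{l=1}^{d+1}\lambda_l = 0$, a direct computation gives
\begin{equation*}
\varLambda_m(\unu) \;=\; -\sum_{k=1}^{d}\nu_k + (d+1)\nu_{d-m+1} \;=\; (d+1)\lambda_{d-m+1}.
\end{equation*}
Substituting this into \eqref{6eq: definition of V k j (mu)} turns $\sigma_{d,m}(\uLambda)$ into $(d+1)^{m}\sigma_{m}(\ulambda)$ (after noting $\sigma_1(\ulambda)=0$), which, combined with the formula in Definition \ref{6defn: coefficients of Bessel equation}(4), produces the stated expression $V_{d+1,j}(\ulambda) = \sum_{m=0}^{d-j+1} A_{j,d-j-m+1}(d+1)^m\sigma_m(\ulambda)$. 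The sign factor $S_{d+1}(\usigma)$ is already built into \eqref{6eq: differential equation of J}, so the Bessel equation \eqref{6eq: differential equation of J 2} follows at once.

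The main obstacle I anticipate is the bookkeeping in the second step: the recurrences \eqref{6eq: definition of U k j (mu)} and \eqref{6eq: Recurrence relation of V k j} look deceptively similar but act on different indices, and one must carefully match the shift pattern $\ue^{l}$ versus $\ue_l$ produced by the two kinds of formal partial integrations. Once the orthogonality \eqref{6eq: Orthogonality} is established, the rest amounts to a symmetry check on $V_{d+1,j}$ as an element of $\BQ[\ulambda]$ modulo $\sigma_1(\ulambda)$, which is transparent from the symmetric-polynomial expansion derived in \S \ref{sec: Calculation of the coefficients}.
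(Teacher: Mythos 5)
Your overall plan --- invert the matrix relation \eqref{6eq: differentials, matrix form}, read off the differential equation from the bottom row, verify the coefficients via the orthogonality of Lemma \ref{lem: Orthogonality}, and specialize $\varLambda_m(\unu) = (d+1)\lambda_{d-m+1}$ to reach $V_{d+1,j}(\ulambda) = \sum_{m=0}^{d-j+1}A_{j,d-j-m+1}(d+1)^m\sigma_m(\ulambda)$ --- is exactly the paper's approach, and the last two steps are carried out correctly.

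The inversion step, however, contains a concrete error. From $Y_{\unu} = S(\usigma)^{-1}D(x)^{-1}U(\unu)^{-1}D(x)\,X_{\unu}$, the $(d+1)$-th entry of the column vector $Y_{\unu}$ is
\begin{equation*}
Y_{d+1} \;=\; S_{d+1}(\usigma)^{-1}\,(i(d+1))^{-(d+1)}\, x^{-(d+1)}\sum_{j=0}^{d+1} V_{d+1,j}(\unu)\, x^{j}\, J^{(j)}_{\unu}(x;\usigma):
\end{equation*}
the diagonal entries of $S(\usigma)^{-1}$ and $D(x)^{-1}$ are indexed by the row $d+1$, so they come out as a single fixed prefactor $S_{d+1}(\usigma)^{-1}(i(d+1))^{-(d+1)}x^{-(d+1)}$ sitting outside the sum. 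You instead wrote $S_j(\usigma)^{-1}(i(d+1))^{-j}$ inside the $j$-sum, treating them as if they were attached to the column index rather than the row index, and the ``elimination'' step using $S_{d+1}(\usigma)S_j(\usigma)^{-1}=\prod_{l=1}^{d-j+1}\varsigma_l$ is a symptom of this: if you actually carried it through from your intermediate formula, the $j$-th coefficient would retain a spurious factor $\bigl(\prod_{l=1}^{d-j+1}\varsigma_l\bigr)(i(d+1))^{d+1-j}$, which is incompatible with \eqref{6eq: differential equation of J}. Indeed the Bessel equation depends on $\usigma$ only through the overall sign $S_{d+1}(\usigma)$, not through partial products of the $\varsigma_l$. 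Once the diagonal prefactor is pulled out correctly, no elimination is needed at all --- multiplying by $S_{d+1}(\usigma)(i(d+1))^{d+1}x^{d+1}$ and rearranging gives \eqref{6eq: differential equation of J} immediately, and the specialization you describe then gives \eqref{6eq: differential equation of J 2}.
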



For a given index  $\ulambda$, (\ref{6eq: differential equation of J 2}) only provides two Bessel equations.  The sign $S_{d+1} (\usigma)$ determines which one of the two   Bessel equations   a Bessel function  $ J (x; \usigma, \ulambda)$ satisfies.
\begin{defn}
	We call $S_{d+1} (\usigma) = \prod_{l      = 1}^{d+1} \varsigma_{l     }$ the sign of the Bessel function $ J (x; \usigma, \ulambda)$ as well as   the Bessel equation satisfied by $ J (x; \usigma, \ulambda)$. 
\end{defn}

Finally, we collect some simple facts on $V_{d+1, j} (\ulambda)$ in the following lemma, which will play   important roles later in the study of   Bessel equations. See \eqref{6eq: Orthogonality, 1} in Lemma \ref{lem: Orthogonality} and Lemma \ref{lem: simple facts on U, a, V} (3).

\begin{lem}\label{lem: simple facts on V (lambda)} We have
	
	{ \rm (1).}
	$\sum_{j = 0}^{d+1} V_{d+1, j} (\ulambda) [ - (d+1) \lambda_{d+1}]_{j} = 0.$
	
	{ \rm (2).} $V_{d+1, d} (\ulambda) = \frac 12 {d (d+1)} $.
	
\end{lem}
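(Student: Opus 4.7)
\medskip

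\noindent\textbf{Proof plan.} Both assertions are essentially bookkeeping consequences of the identities already established for the polynomials $U_{k,j}(\uLambda)$, $V_{k,j}(\uLambda)$ in \S\ref{sec: Calculation of the coefficients}, once we substitute $\varLambda_m = \varLambda_m(\unu) = (d+1)\lambda_{d-m+1}$ and invoke the normalization $\ulambda \in \BL^{d}$, i.e. $\sum_{l=1}^{d+1}\lambda_l = 0$.

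For part (2), I would simply cite Lemma \ref{lem: simple facts on U, a, V} (3), which gives
\[
V_{d+1,d}(\uLambda) \;=\; \sigma_{d,1}(\uLambda) + \tfrac{1}{2}(d+1)d.
\]
Under the substitution $\varLambda_m = (d+1)\lambda_{d-m+1}$, the elementary symmetric polynomial of degree one becomes
\[
\sigma_{d,1}(\uLambda) \;=\; \sum_{m=0}^{d} \varLambda_m \;=\; (d+1)\sum_{l=1}^{d+1}\lambda_l \;=\; 0,
\]
the last equality being the defining condition for $\ulambda\in\BL^{d}$. Thus $V_{d+1,d}(\ulambda)=\tfrac12 d(d+1)$, as required.

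For part (1), the key observation is that the asserted identity is nothing but the orthogonality relation \eqref{6eq: Orthogonality, 1} in disguise. Taking $k=d+1$ and $j=0$ in \eqref{6eq: Orthogonality, 1} yields
\[
\sum_{l=0}^{d+1} V_{d+1,l}(\uLambda)\, U_{l,0}(\uLambda) \;=\; \delta_{d+1,0} \;=\; 0.
\]
By Lemma \ref{lem: simple facts on U, a, V} (1) we have $U_{l,0}(\uLambda) = [-\varLambda_0]_l$, and with our substitution $\varLambda_0 = (d+1)\lambda_{d+1}$ this becomes $U_{l,0}(\uLambda) = [-(d+1)\lambda_{d+1}]_l$. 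Substituting, and recalling that $V_{d+1,j}(\uLambda) = V_{d+1,j}(\ulambda)$ under the identification above, gives exactly the claimed identity.

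\medskip

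\noindent\textbf{Remarks on difficulty and interpretation.} There is essentially no obstacle: both claims are formal consequences of material already collected in Definition \ref{6defn: coefficients of Bessel equation} and Lemmas \ref{lem: simple facts on U, a, V}--\ref{lem: Orthogonality}, the only external input being $|\ulambda|=0$. It is nevertheless worth noting \emph{why} part (1) is the natural statement: plugging the ansatz $w=x^{s}$ into \eqref{6eq: differential equation of J 2} and matching the lowest-order term produces the indicial equation $\sum_{j=0}^{d+1} V_{d+1,j}(\ulambda)[s]_j = 0$ at $x=0$, so (1) is the assertion that $s=-(d+1)\lambda_{d+1}$ is a root of the indicial polynomial; by the symmetry of $V_{d+1,j}$ in $\ulambda$ (Lemma \ref{lem: simple facts on U, a, V} (3)), the values $s=-(d+1)\lambda_l$ for $l=1,\dots,d+1$ are all roots, which is precisely what one needs in order to construct the Frobenius solutions $J_l(z;\varsigma,\ulambda)$ announced in the introduction.
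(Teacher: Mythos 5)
Your proof is correct and follows the same route as the paper, which simply cites \eqref{6eq: Orthogonality, 1} with $k=d+1$, $j=0$ for part (1) and Lemma \ref{lem: simple facts on U, a, V} (3) together with $\sigma_{d,1}(\uLambda)=(d+1)|\ulambda|=0$ for part (2). The closing remark on the indicial equation correctly anticipates how the paper uses part (1) and the symmetry of $V_{n,j}(\ulambda)$ in \S\ref{sec: Bessel functions of the first kind} to factor $P_{\ulambda}(\rho)=\prod_{l}(\rho+n\lambda_l)$.
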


\begin{rem}
	If we define
	\begin{equation*}
	\boldsymbol J (x; \usigma, \ulambda) = J \lp(d+1)\-x; \usigma, (d+1)\- \ulambda \rp,
	\end{equation*} 
	then this normalized Bessel function satisfies a differential equation with coefficients free of powers of $(d+1)$, that is,
	\begin{equation*} 
	\sum_{j = 1}^{d+1} \boldsymbol V_{d+1, j} (\ulambda) x^{j} w^{(j)} + \left(\boldsymbol V_{d+1, 0} (\ulambda) - S_{d+1} (\usigma) i^{d+1}  x^{ d + 1} \right) w  = 0,
	\end{equation*}
	with
	\begin{equation*} 
	\boldsymbol V_{d+1, j} (\ulambda) = \sum_{m=0}^{d-j+1} A_{j, d-j-m+1} \sigma_{m} (\ulambda).
	\end{equation*}
	In particular, if $d=1$, $\ulambda = (\lambda, -\lambda)$, then the two normalized Bessel equations are
	\begin{equation*} 
	x^2 \frac {d^2 w} {dx^2} + x \frac {d w} {dx } + \left( - \lambda^2 \pm  x^{2} \right) w = 0.
	\end{equation*}
	These are exactly the Bessel equation and the modified Bessel equation of index $\lambda$. 
\end{rem}


\section{Bessel Equations} \label{sec: Bessel equations}

The theory of linear ordinary differential equations with analytic coefficients\footnote{\cite[Chapter 4, 5]{Coddington-Levinson} and \cite[Chapter II-V]{Wasow} are the main references that we follow, and the reader is referred to these books for terminologies and definitions.} will be employed in this section to study Bessel equations. 

\vskip 5 pt

Subsequently, we shall use $z$ instead of $x$ to indicate complex variable.
For $\varsigma \in \{+, - \}$ and $\ulambda \in \BL^{n-1}$, we introduce the Bessel differential operator
\begin{equation}
\nabla_{\varsigma, \ulambda} = \sum_{j = 1}^{n} V_{n, j} (\ulambda) z^{j} \frac {d^j} {d z^j} + V_{n, 0} (\ulambda) - \varsigma (i n)^{n}  z^{n} .
\end{equation}
The Bessel equation of index $\ulambda$ and sign $\varsigma$  may be written as
\begin{equation} \label{7eq: differential equation, variable z}
\nabla_{\varsigma, \ulambda} (w)  = 0.
\end{equation}
Its corresponding system of differential equations is given by
\begin{equation}\label{7eq: differential equation, matrix form}
w' = B (z; \varsigma, \ulambda) w,
\end{equation}
with
\begin{equation*} 
\begin{split}
B   (z; \varsigma, \ulambda)  = 
\begin{pmatrix}
0 & 1 & 0 & \cdots & 0\\
0 & 0 & 1 & \cdots & 0\\
\vdots & \vdots & \vdots & \ddots & \vdots \\
0 & 0 &  \cdots & \cdots & 1 \\
- V_{n, 0} (\ulambda) z^{-n} + \varsigma (in)^n & -  V_{n, 1} (\ulambda) z^{-n+1} & \cdots & \cdots & -  V_{n, n-1} (\ulambda) z^{-1}
\end{pmatrix}.
\end{split}
\end{equation*}

We shall study Bessel equations on the Riemann surface $\BU$ associated with $\log z$, that is, the universal cover of $\BC \smallsetminus \{ 0 \}$. Each element in $\BU$ is represented by a pair $(x, \omega)$ with modulus $x \in \BR _+$ and argument $\omega \in \BR$, and will be denoted by $z = x e^{i\omega} = e^{\log x + i \omega}$ with some ambiguity. Conventionally, define $z^\lambda = e^{\lambda \log z }$ for $z \in \BU, \lambda \in \BC$, $\overline z = e^{- \log z}$, and moreover let $1 = e^{ 0}$, $-1 = e^{\pi i}$ and $\pm i = e^{\pm \frac 1 2 \pi i}$.

First of all, since Bessel equations are nonsingular on $\BU$, all the solutions of  Bessel equations are analytic on $\BU$.

Each Bessel equation has only two singularities at $z = 0$ and $z = \infty$. According to the classification of singularities, $0$ is a \textit{regular singularity}, so the Frobenius method gives rise to solutions of Bessel equations developed in series of ascending powers of $z$, or possibly logarithmic sums of this kind of series, whereas $\infty$ is an \textit{irregular singularity of rank one}, and therefore one may find certain formal solutions that are the asymptotic expansions of some actual solutions of Bessel equations.
Accordingly, there are two kinds of Bessel functions arising as solutions of Bessel equations. 

Finally, a simple but important observation is as follows. 

\begin{lem}\label{lem: phi (e pi i m/n z) is also a solution}
	Let $\varsigma \in \{+, - \}$ and $a$ be an integer. If $\varphi (z)$ is a solution of the Bessel equation of sign $\varsigma$, then
	$\varphi \big(e^{\pi i \frac a n} z\big)$ satisfies the  Bessel equation of sign $(-)^a \varsigma$.
\end{lem}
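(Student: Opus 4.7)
\medskip

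The plan is to perform the direct substitution $\zeta = e^{\pi i a/n} z$ in the Bessel equation satisfied by $\varphi$ and read off what equation the shifted function $w(z) = \varphi(e^{\pi i a/n} z)$ satisfies. The key observation is that the Euler-type operators $z^j \tfrac{d^j}{dz^j}$ appearing in the Bessel equation are invariant under a constant rescaling of the variable, whereas the single non-Eulerian term $\varsigma (in)^n z^n$ picks up a factor $e^{\pi i a} = (-)^a$.

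Concretely, first I would note that $w^{(j)}(z) = e^{\pi i ja/n}\,\varphi^{(j)}(\zeta)$, so that
\[
z^j w^{(j)}(z) \;=\; \bigl(e^{\pi i a/n} z\bigr)^j \varphi^{(j)}(\zeta) \;=\; \zeta^j \varphi^{(j)}(\zeta), \qquad j = 0,1,\dots,n.
\]
Applying this identity termwise to $\nabla_{\varsigma,\ulambda}(\varphi)(\zeta) = 0$, I would then substitute $\zeta = e^{\pi i a/n} z$ in the one remaining place where a bare power of $\zeta$ occurs, namely the exponential-type term $\varsigma(in)^n \zeta^n \varphi(\zeta)$. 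Since $\zeta^n = e^{\pi i a} z^n = (-)^a z^n$, this term becomes $(-)^a \varsigma (in)^n z^n w(z)$.

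Collecting the terms yields
\[
\sum_{j=1}^n V_{n,j}(\ulambda)\, z^j w^{(j)}(z) + \bigl(V_{n,0}(\ulambda) - (-)^a\varsigma(in)^n z^n\bigr) w(z) \;=\; 0,
\]
which is precisely the Bessel equation of index $\ulambda$ and sign $(-)^a \varsigma$. Hence $w(z) = \varphi(e^{\pi i a/n} z)$ is a solution of $\nabla_{(-)^a\varsigma,\,\ulambda}(w)=0$.

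There is no genuine obstacle here: the only point requiring any attention is the bookkeeping showing that the entire polynomial-coefficient part $\sum_{j=0}^{n-1} V_{n,j}(\ulambda) z^j w^{(j)}$ is rescaling-invariant, so that the sign flip is isolated in the $z^n$ term alone. Everything else is a one-line substitution and the identity $e^{\pi i a} = (-)^a$ for $a \in \BZ$.
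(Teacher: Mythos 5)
Your proof is correct and is essentially the intended argument: the paper states this as a ``simple but important observation'' without giving a proof, and the substitution $\zeta = e^{\pi i a/n} z$ together with the scale-invariance $z^j w^{(j)}(z) = \zeta^j \varphi^{(j)}(\zeta)$ and the identity $\zeta^n = (-)^a z^n$ is exactly the computation being invoked. Nothing to add.
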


Variants of Lemma \ref{lem: phi (e pi i m/n z) is also a solution}, Lemma \ref{lem: J ell (z; sigma; lambda) relations},  \ref{lem: J(z ; usigma; lambda) relations to H +} and \ref{lem: J(z; lambda; xi) relations to J(xi z; lambda; 1)}, will play important roles later in \S \ref{sec: H-Bessel functions and K-Bessel functions revisited} when we establish the connection formulae for various kinds of Bessel functions. 

\subsection{Bessel Functions of the First Kind}\label{sec: Bessel functions of the first kind}

The indicial equation associated with $\nabla_{\varsigma, \ulambda}$ is given as below,
\begin{equation*}
\sum_{j = 0}^n [\rho]_{j} V_{n, j} (\ulambda) = 0.
\end{equation*}
Let $P_{\ulambda} (\rho) $ denote the polynomial on the left of this equation. Lemma \ref{lem: simple facts on V (lambda)} (1) along with the symmetry of  $V_{n, j} (\ulambda)$ yields the following identity,
\begin{equation*}
\sum_{j = 0}^n [- n \lambda_{l     }]_{j} V_{n, j} (\ulambda) = 0,
\end{equation*}
for each $l      = 1, ..., n$. Therefore,
\begin{equation*} 
P_{\ulambda} (\rho) = \prod_{l      = 1}^n (\rho + n \lambda_{l     }).
\end{equation*}

Consider the formal series
\begin{equation*}
\sum_{m=0}^\infty c_m z^{\, \rho + m},
\end{equation*}
where the index $\rho$ and the coefficients $c_m$, with $c_0 \neq 0$, are to be determined.
It is easy to see that
\begin{equation*}
\nabla_{\varsigma, \ulambda} \sum_{m=0}^\infty c_m z^{\,\rho + m} = \sum_{m=0}^\infty c_m P_{\ulambda} (\rho + m)  z^{\,\rho + m} - \varsigma (in)^n \sum_{m=0}^\infty c_m z^{\,\rho + m + n}.
\end{equation*}
If the following equations are satisfied
\begin{equation}\label{7eq: series solution, system of equations}
\begin{split}
& c_m P_{\ulambda} (\rho + m) =0, \hskip 10 pt n > m  \geq 1,\\
& c_m  P_{\ulambda} (\rho + m) - \varsigma (in)^n c_{m-n} = 0,  \hskip 10 pt  m \geq n,
\end{split}
\end{equation}
then 
\begin{equation*}
\nabla_{\varsigma, \ulambda} \sum_{m=0}^\infty c_m z^{\,\rho + m} = c_0  P_{\ulambda} (\rho ) z^{\,\rho}.
\end{equation*}
Given $l      \in\{ 1, ..., n \}$. Choose $\rho = - n \lambda_{l     }$ and let $c_0 = \prod_{k = 1}^n \Gamma \lp  { \lambda_{k} - \lambda_{l     }} + 1 \rp \- $. Suppose, for the moment, that no two components of $n \ulambda$ differ by an integer. Then  $P_{\ulambda} (- n \lambda_l      + m) \neq 0$   for any $m \geq 1$ and  $c_0\neq 0$, and hence the system of equations (\ref{7eq: series solution, system of equations}) is uniquely solvable. It follows that
\begin{equation}\label{7eq: series solution J ell (z; sigma)}
\sum_{m=0}^\infty \frac { (\varsigma i^n)^m  z^{ n (- \lambda_{l      } + m)} } { \prod_{k = 1}^n \Gamma \lp  { \lambda_{ k } - \lambda_{l     }}  + m + 1 \rp} 
\end{equation}
is a formal solution of the differential equation (\ref{7eq: differential equation, variable z}).

Now suppose that $\ulambda \in \BL^{n-1}$ is unrestricted. The series in (\ref{7eq: series solution J ell (z; sigma)}) is absolutely convergent, compactly convergent with respect to $\ulambda $, and hence gives rise to an analytic function of $z$ on  
the Riemann surface $\BU$, as well as an analytic function of $\ulambda$. 
We denote by $J_{l     } (z; \varsigma, \ulambda)$ the analytic function given by the series (\ref{7eq: series solution J ell (z; sigma)}) and call it a  {Bessel function of the first kind}. It is evident that  $J_{l     } (z; \varsigma, \ulambda)$ is an actual solution of (\ref{7eq: differential equation, variable z}).

\begin{defn}\label{defn: D, generic lambda}
	Let $\BD^{n-1}$ denote the set of $\ulambda \in \BL^{n-1}$ such that no two components of $\ulambda$ differ by an integer. We call an index $\ulambda $ { generic} if $\ulambda \in \BD^{n-1}$.
\end{defn}

When $\ulambda \in \BD^{n-1}$,  all the $J_{l     } (z; \varsigma, \ulambda)$ constitute a fundamental set of solutions, since the leading term in the expression (\ref{7eq: series solution J ell (z; sigma)}) of $J_{l     } (z; \varsigma, \ulambda)$ does not vanish.
However, this is no longer the case if $ \ulambda \notin  \BD^{n-1}$. Indeed, if $\lambda_{l      } - \lambda_{k}$ is an integer, $k \neq l     $, then $J_{l      } (z; \varsigma, \ulambda) = (\varsigma i^n)^{\lambda_{l      } - \lambda_{k}} J_{k} (z; \varsigma, \ulambda)$. There are other solutions arising as certain logarithmic sums of series of ascending powers of $z$. Roughly speaking, powers of $\log z$ may occur in some solutions. 
For more details the reader may consult \cite[\S 4.8]{Coddington-Levinson}.

\begin{lem}\label{lem: J ell (z; sigma; lambda) relations}
	Let $a$ be an integer. We have
	\begin{equation*}
	J_{l     } \big(e^{\pi i \frac a n} z; \varsigma, \ulambda \big) = e^{- \pi i a \lambda_l     }J_{l     } (z; (-)^a \varsigma, \ulambda).
	\end{equation*}
\end{lem}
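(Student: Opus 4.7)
The plan is a direct substitution into the defining power series
\begin{equation*}
J_{l} (z; \varsigma, \ulambda) = \sum_{m=0}^\infty \frac { (\varsigma i^n)^m  z^{ n (- \lambda_{l} + m)} } { \prod_{k = 1}^n \Gamma \lp   \lambda_{ k } - \lambda_{l}  + m + 1 \rp},
\end{equation*}
which converges absolutely and compactly on $\BU$, so term-by-term manipulations are legitimate.

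First I would use the conventions for $\BU$ recorded in the Notation chapter: since $e^{\pi i a/n}$ denotes the point of $\BU$ with modulus $1$ and argument $\pi a /n$, and multiplication on $\BU$ adds arguments, one has $\log\big(e^{\pi i a/n} z\big) = \log z + \pi i a / n$, hence
\begin{equation*}
\big(e^{\pi i a/n} z\big)^{n(-\lambda_{l}+m)} = e^{\pi i a (-\lambda_{l} + m)} \, z^{n(-\lambda_{l}+m)}
\end{equation*}
by the convention $w^{\mu} = e^{\mu \log w}$.

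Next I would pull the $m$-independent factor $e^{-\pi i a \lambda_{l}}$ out of the sum and absorb the remaining $e^{\pi i a m} = ((-1)^a)^m$ into $(\varsigma i^n)^m$, producing $((-1)^a \varsigma \cdot i^n)^m$. The series becomes exactly the one defining $J_{l}(z; (-)^a \varsigma, \ulambda)$, yielding the claimed identity.

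There is really no obstacle here — the only thing to be careful about is the bookkeeping of arguments on the Riemann surface $\BU$, i.e.\ making sure that $e^{\pi i a /n}$ is interpreted as a point of $\BU$ rather than a complex number on $\BC^\times$, so that $z \mapsto e^{\pi i a /n} z$ is a well-defined deck transformation and the formula $\log(e^{\pi i a/n} z) = \log z + \pi i a/n$ holds exactly (not merely modulo $2\pi i$). Once this is in place, the verification is a single line of algebra.
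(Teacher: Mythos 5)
Your proof is correct, and since the paper states this lemma without proof, the direct term-by-term verification from the series definition of $J_l(z;\varsigma,\ulambda)$ is exactly the intended argument. The only point worth being careful about — the interpretation of $e^{\pi i a/n}$ as a point of $\BU$ so that $\log(e^{\pi i a/n} z) = \log z + \pi i a/n$ holds exactly and the factor $\big(e^{\pi i a/n} z\big)^{n(-\lambda_l+m)}$ splits cleanly — is the one you correctly flagged, and it is indeed guaranteed by the conventions in the Notation chapter.
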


\begin{rem}\label{rem: n=2 J-Bessel function}
	If $n = 2$, then we have the following formulae according to \cite[3.1 (8), 3.7 (2)]{Watson},
	\begin{equation*}
	\begin{split}
	J_1 (z; +, \lambda, - \lambda) = J_{- 2 \lambda} (2 z), \hskip 10 pt &  J_2 (z; +, \lambda, - \lambda) = J_{2 \lambda} (2 z),\\
	J_1 (z; -, \lambda, - \lambda) = I_{- 2 \lambda} (2 z), \hskip 10 pt &  J_2 (z; -, \lambda, - \lambda) = I_{2 \lambda} (2 z).
	\end{split}
	\end{equation*}
\end{rem}


\begin{rem}\label{rem: GHF}
	Recall the definition of the generalized hypergeometric functions given by the series {\rm \cite[\S 4.4]{Watson}}
	\begin{equation*}
	{_pF_q} (\alpha_1, ..., \alpha_p ; \rho_1, ..., \rho_q ; z) = \sum_{m=0}^\infty \frac {(\alpha_1)_m ... (\alpha_p)_m} {m! (\rho_1)_m ... (\rho_q)_m} z^m.
	\end{equation*}
	It is evident that each Bessel function $J_{l     } (z; \varsigma, \ulambda)$ is closely related to a certain generalized hypergeometric function $ {_0F_{n-1}} $ as follows
	\begin{equation*} 
	J_{l     } (z; \varsigma, \ulambda) = \lp \prod_{k \neq l}  \frac {z^{ \lambda_k - \lambda_l }} {\Gamma \lp \lambda_k - \lambda_l + 1 \rp} \rp \cdot
	{_0F_{n-1}} \lp \left\{  \lambda_k - \lambda_l + 1 \right\}_{k \neq l} ; \varsigma i^n z^n \rp.
	\end{equation*}
\end{rem}

\subsection{\texorpdfstring{The Analytic Continuation of $J (x; \usigma, \ulambda)$}{The Analytic Continuation of $J (x; \varsigma, \lambda)$}}\label{sec: Analytic continuation of J (x; usigma; ulambda)}
In \S \ref{sec: Bessel kernel J(x; sigma, lambda)}, from its Barnes integral representation, we have seen that  $J (x; \usigma, \ulambda)$ admits a unique analytic continuation from $\BR_+$ onto $\BU$. For any given $\ulambda \in \BL^{n-1}$, since $J (x; \usigma, \ulambda)$ satisfies the Bessel equation of sign $S_n (\usigma)$, such an analytic continuation  $J (z; \usigma, \ulambda)$  on $\BU$ is also clear from the theory of differential equations. In the following, we shall make  $J (z; \usigma, \ulambda)$ more precise in terms of Bessel functions of the first kind.

Recall the definition
\begin{equation*} 
J (x ; \usigma, \ulambda) = \frac 1  {2 \pi i}\int_{\EuScript C} G(s; \usigma, \ulambda) x^{-n s} d s, \hskip 10 pt x \in \BR _+,
\end{equation*}
where $G (s; \usigma, \ulambda) = \prod_{k = 1}^{n} \Gamma (s - \lambda_{k} ) e \left(\frac 1 4 {\varsigma_{k} (s - \lambda_{k} )}   \right)$ and $\EuScript C$ is a suitable contour. 

Let $\varsigma = S_n (\usigma)$. For the moment, let us assume that $ \ulambda $ is generic.
For $l      = 1, ..., n$ and  $m = 0, 1, 2, ...$, $G (s; \usigma, \ulambda)$ has a simple pole at $\lambda_l      - m$ with residue
\begin{align*}
& (- )^m\frac {1} {m !}  e \lp \frac {\sum_{k = 1}^n \varsigma_{k} ( \lambda_{l     } - \lambda_{k} - m)} 4 \rp  \prod_{k \neq l      } \Gamma (\lambda_{l     } - \lambda_{k} - m) 
=   \\
& \pi^{n-1} e \hskip -1 pt \lp - \frac {\sum_{k = 1}^n \varsigma_{k}\lambda_{k}} 4 \rp \hskip -1 pt e \hskip -1 pt \lp   \frac {\sum_{k = 1}^n \varsigma_{k}\lambda_{l     }} 4 \hskip -1 pt \rp  \hskip -1 pt \lp \prod_{k \neq l      } \frac 1 {\sin \lp \pi (\lambda_{l     } - \lambda_{k} ) \hskip -1 pt \rp} \hskip -2 pt \rp   \hskip -2 pt \frac { (\varsigma i^n )^{m} } {\prod_{k=1}^n  \Gamma (\lambda_{k} - \lambda_{l      } + m + 1)} .
\end{align*}
Here we have used Euler's reflection formula for the Gamma function. 
Applying Cauchy's residue theorem, $J (x ; \usigma, \ulambda)$ is developed into an absolutely convergent series on shifting the contour $\EuScript C$ far left, and, in view of (\ref{7eq: series solution J ell (z; sigma)}), we obtain 
\begin{equation} \label{7eq: J(x; sigma; lambda) as a sum of J ell (x; sigma; lambda)}
\begin{split}
J (z ; \usigma, \ulambda) =  \pi^{n-1} E (\usigma, \ulambda) \sum_{l      = 1}^n E_{l     } (\usigma, \ulambda)   S_{l     } ( \ulambda) J_{l     } (z; \varsigma, \ulambda), \hskip 10 pt z \in \BU,
\end{split}
\end{equation}
with 
\begin{align*}
& E (\usigma, \ulambda) = e \lp - \frac {\sum_{k = 1}^n \varsigma_{k}\lambda_{k}} 4 \rp, \hskip 3 pt E_{l     } (\usigma, \ulambda) = e \lp \frac {\sum_{k = 1}^n \varsigma_{k}\lambda_{l     }} 4 \rp, \hskip 3 pt  S_{l     } ( \ulambda) =     \prod_{k \neq l      } \frac 1 {\sin \lp \pi (\lambda_{l     } - \lambda_{k} ) \rp }  .
\end{align*}
Because of the possible vanishing of $\sin \lp \pi (\lambda_{l     } - \lambda_{k} ) \rp$, the definition of $S_{l     } ( \ulambda) $ may fail  to make sense if $\ulambda $ is not generic.  In order to properly interpret \eqref{7eq: J(x; sigma; lambda) as a sum of J ell (x; sigma; lambda)} in the nongeneric case, one has to pass to the limit, that is, 
\begin{equation} \label{7eq: J(x; sigma; lambda) as a sum of J ell (x; sigma; lambda), limit}
J   (z ; \usigma, \ulambda) = \pi^{n-1} E (\usigma, \ulambda) \cdot  \lim_{\sstyle \boldsymbol \lambda' \ra \ulambda \atop \sstyle  \boldsymbol \lambda' \, \in \, \BD^{n-1}} \sum_{l      = 1}^n E_{l     } (\usigma, \ulambda')  S_l      (\boldsymbol \lambda')   J_{l     } (z; \varsigma, \boldsymbol \lambda' ). 
\end{equation}

We recollect the definitions of $L_{\pm} (\usigma)$ and $n_{\pm} (\usigma)$  introduced in Proposition \ref{prop: special example}.
\begin{defn}\label{defn: signature}
	Let $\usigma \in \{+, - \}^n$. We define $L_\pm (\usigma) = \{ l      : \varsigma_l      = \pm \}$ and $n_\pm (\usigma) = \left| L_\pm (\usigma) \right|$.
	The pair of integers $(n_+ (\usigma), n_- (\usigma) )$ is called the signature of $\usigma$, as well as the signature of the Bessel function $J(z ; \usigma, \ulambda)$.
\end{defn}

With Definition \ref{defn: signature}, we reformulate (\ref{7eq: J(x; sigma; lambda) as a sum of J ell (x; sigma; lambda)}, \ref{7eq: J(x; sigma; lambda) as a sum of J ell (x; sigma; lambda), limit}) in the following lemma.

\begin{lem}\label{lem: J(x; sigma; lambda) as a sum of J ell (x; sigma; lambda)}
	We have
	\begin{equation*} 
	\begin{split}
	J (z ;  \usigma , \ulambda) = \pi^{n-1} E (\usigma, \ulambda)  \sum_{l      = 1}^n  E_{l     } (\usigma, \ulambda) S_{l     } ( \ulambda) J_{l     } \big(z; (-)^{n_- (\usigma)}, \ulambda \big),
	\end{split}
	\end{equation*}
	with  $E (\usigma, \ulambda) = e   \lp  - \frac 1 4 \sum_{k\in L_+ (\usigma)} + \frac 1 4 \sum_{k\in L_- (\usigma)} \lambda_{k} \rp$, $E_{l     } (\usigma, \ulambda) = e \lp \frac 1 4 { (n_+ (\usigma) - n_- (\usigma)) }  \lambda_{l     } \rp$ and $S_{l     } ( \ulambda) =   1/  \prod_{k \neq l      } {\sin \lp \pi (\lambda_{l     } - \lambda_{k} ) \rp } $. When $\ulambda $ is not generic, the right hand side is to be replaced by its limit.
\end{lem}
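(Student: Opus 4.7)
The plan is to apply Cauchy's residue theorem to the Mellin-Barnes integral representation \eqref{3eq: definition of J (x; sigma)} of $J(z; \usigma, \ulambda)$, shifting the contour $\EuScript C_{\ulambda}$ arbitrarily far to the left and collecting residues of $G(s; \usigma, \ulambda)$. First I would assume $\ulambda \in \BD^{n-1}$ so that the poles are all simple; they are located exactly at $s = \lambda_l - m$ for $l = 1, \ldots, n$ and $m \in \BN$. A direct calculation of the residue at $\lambda_l - m$, followed by a single application of Euler's reflection formula to convert $\prod_{k \neq l} \Gamma(\lambda_l - \lambda_k - m)$ into $\pi^{n-1} S_l(\ulambda) / \prod_{k = 1}^n \Gamma(\lambda_k - \lambda_l + m + 1)$ with appropriate $(-1)^m$ corrections, produces exactly the general term of the series \eqref{7eq: series solution J ell (z; sigma)} defining $J_l(z; \varsigma, \ulambda)$, multiplied by a factor involving $\varsigma$ and the exponential prefactors.

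Next I would justify the contour shift. The vertical moderate growth estimate \eqref{1eq: vertical bound, G (s; sigma, lambda)} of Lemma \ref{1lem: vertical bound}, together with the power factor $z^{-ns}$, ensures that the integral along a far-left vertical portion of the shifted contour tends to zero and that the sum of residues converges absolutely; this is precisely what guarantees that Cauchy's residue theorem may be applied to all poles simultaneously, yielding an absolutely convergent series. Grouping the residues according to $l$ gives a single copy of $J_l(z; \varsigma, \ulambda)$ with the sign determined by $\varsigma = S_n(\usigma) = \prod_{k=1}^n \varsigma_k$. The signature rewriting $S_n(\usigma) = (-)^{n_-(\usigma)}$ is immediate, and the exponential prefactors are reorganized via the bookkeeping identities $\sum_k \varsigma_k = n_+(\usigma) - n_-(\usigma)$ and $\sum_k \varsigma_k \lambda_k = \sum_{k \in L_+(\usigma)} \lambda_k - \sum_{k \in L_-(\usigma)} \lambda_k$, producing the stated $E(\usigma, \ulambda)$ and $E_l(\usigma, \ulambda)$. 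This establishes the generic-index form \eqref{7eq: J(x; sigma; lambda) as a sum of J ell (x; sigma; lambda)}.

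For the nongeneric case $\ulambda \in \BL^{n-1} \smallsetminus \BD^{n-1}$, I would argue by analytic continuation. By Lemma \ref{3lem: J (x; sigma, lambda) analytic}, $J(z; \usigma, \ulambda)$ is analytic in $\ulambda$ on $\BL^{n-1}$, and each $J_l(z; \varsigma, \ulambda)$ is analytic on $\BL^{n-1}$ from the convergent series \eqref{7eq: series solution J ell (z; sigma)}. Since $\BD^{n-1}$ is dense in $\BL^{n-1}$ and the left-hand side has a well-defined analytic limit, the right-hand side must also admit such a limit, giving \eqref{7eq: J(x; sigma; lambda) as a sum of J ell (x; sigma; lambda), limit}; the apparent singularities in individual terms $S_l(\boldsymbol \lambda')$, arising when $\lambda_l - \lambda_k \in \BZ$ for some $k \neq l$, must cancel across the sum.

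The main obstacle is not analytic but combinatorial: carefully verifying the exponential prefactors produced by Euler's reflection formula and re-expressing them in terms of $L_\pm(\usigma)$ and $n_\pm(\usigma)$ so that they match the compact form $E(\usigma, \ulambda)$ and $E_l(\usigma, \ulambda)$ in the statement. Once this bookkeeping is done, both the generic identity and the limit formulation follow with no further difficulty.
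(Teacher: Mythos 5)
Your proposal follows the paper's proof essentially verbatim: the paper also shifts the Barnes contour far to the left, picks up simple residues at $\lambda_l - m$ (assuming $\ulambda$ generic), applies Euler's reflection formula to convert $\prod_{k\neq l}\Gamma(\lambda_l-\lambda_k-m)$ into $\pi^{n-1}S_l(\ulambda)/\prod_{k}\Gamma(\lambda_k-\lambda_l+m+1)$, recognizes the resulting series as $J_l(z;\varsigma,\ulambda)$, and then passes to the limit for nongeneric $\ulambda$. The final reorganization of the prefactors via $\sum_k\varsigma_k = n_+(\usigma)-n_-(\usigma)$, $\sum_k\varsigma_k\lambda_k = \sum_{L_+}\lambda_k - \sum_{L_-}\lambda_k$, and $S_n(\usigma)=(-)^{n_-(\usigma)}$ is exactly the bookkeeping the paper does to go from \eqref{7eq: J(x; sigma; lambda) as a sum of J ell (x; sigma; lambda)} to the lemma's statement.

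One small inaccuracy worth flagging: the bound \eqref{1eq: vertical bound, G (s; sigma, lambda)} is a polynomial-growth estimate in $|\Im s|$ valid \emph{uniformly on a fixed bounded strip} $\BS[a,b]$, with implied constant depending on $a,b$; it does not by itself control the integrand as the shifted vertical contour migrates to $\Re s \to -\infty$, which is what the far-left shift requires. The correct ingredient is the super-exponential decay derived in \S\ref{sec: Bessel kernel J(x; sigma, lambda)} via Euler's reflection formula for $G(s,\pm)$ and Stirling, namely $G(-\rho+it;\usigma,\ulambda)\lll_{\ulambda,r}e^{n\rho}\rho^{-n(\rho+1/2)-\Re|\ulambda|}$ for $\rho\ggg 1$ off the poles. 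With that citation substituted, the convergence justification and hence the whole argument is sound and matches the paper.
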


\begin{rem}\label{7rem: connections to the first kind}
	In view of Proposition {\rm \ref{prop: Classical Bessel functions}} and Remark {\rm \ref{rem: n=2 J-Bessel function}}, Lemma {\rm \ref{lem: J(x; sigma; lambda) as a sum of J ell (x; sigma; lambda)}} is equivalent to the connection formulae in {\rm (\ref{2eq: connection formulae}, \ref{1eq: connection formula K})} {\rm(}see \cite[3.61(5, 6), 3.7 (6)]{Watson}{\rm)}.
\end{rem}

 {
\begin{rem}
	In the  case when  $\ulambda = \frac 1 n \lp  \frac {n-1} 2, ..., - \frac {n-1} 2 \rp$, the formula in  Lemma {\rm \ref{lem: J(x; sigma; lambda) as a sum of J ell (x; sigma; lambda)}} amounts to splitting the Taylor series expansion of $e^{in \xi (\usigma) x }$ in \eqref{2eq: special example} 
	according to the congruence classes of indices modulo $n$. To see this, one requires the multiplicative formula  of the Gamma function \eqref{2eq: multiplication theorem} as well as the trigonometric identity
	\begin{equation*}
	\prod_{k=1}^{n-1} \sin \lp \frac {k \pi} n  \rp = \frac n {2^{n-1}}.
	\end{equation*}
\end{rem}
}

Using Lemma \ref{lem: J ell (z; sigma; lambda) relations} and \ref{lem: J(x; sigma; lambda) as a sum of J ell (x; sigma; lambda)}, one proves the following lemma, which implies that the Bessel function $J(z ; \usigma, \ulambda)$ is determined by its signature up to a constant multiple. 

\begin{lem}\label{lem: J(z ; usigma; lambda) relations to H +}
	Define $H^{\pm} (z; \ulambda) = J(z; \pm, ..., \pm, \ulambda)$. Then
	\begin{equation*}
	J(z ; \usigma, \ulambda) = e \lp \pm \frac {\sum_{l      \in L_ \mp (\usigma)} \lambda_{l      }} 2 \rp H^\pm \Big(e^{\pm \pi i \frac { n_\mp (\usigma)} n} z; \ulambda \Big).
	\end{equation*}
\end{lem}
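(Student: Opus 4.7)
The strategy is to expand both sides of the identity in the basis of Bessel functions of the first kind using Lemma \ref{lem: J(x; sigma; lambda) as a sum of J ell (x; sigma; lambda)}, then use the rotation identity of Lemma \ref{lem: J ell (z; sigma; lambda) relations} to relate the two expansions, and finally match the resulting exponential prefactors coefficient-wise. The key simplification is that when $\usigma = \pm \ue^n$ the constraint $|\ulambda| = 0$ causes the prefactor $E(\pm \ue^n, \ulambda)$ in Lemma \ref{lem: J(x; sigma; lambda) as a sum of J ell (x; sigma; lambda)} to collapse to $1$ and the sign entering the Bessel function of the first kind is $S_n (\pm \ue^n) = (\pm)^n$; explicitly, one obtains
\begin{equation*}
H^\pm (z; \ulambda) = \pi^{n-1} \sum_{l=1}^n e \lp \pm \frac n 4 \lambda_l \rp S_l (\ulambda) \, J_l \big(z; \pm 1, \ulambda\big).
\end{equation*}

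Next, I would substitute the rotated argument $e^{\pm \pi i n_\mp (\usigma) / n} z$ into this display and invoke Lemma \ref{lem: J ell (z; sigma; lambda) relations} with $a = \pm n_\mp (\usigma)$ on each summand. This produces the factor $e^{\mp \pi i n_\mp (\usigma) \lambda_l}$ and replaces the sign parameter $\pm 1$ by $(\pm 1) \cdot (-)^{n_\mp (\usigma)} = (-)^{n_- (\usigma)} = S_n (\usigma)$, so that every $J_l$ now carries the sign  of the Bessel equation satisfied by $J(z;\usigma,\ulambda)$. Multiplying through by the overall prefactor $e \lp \pm \frac 1 2 \sum_{l \in L_\mp (\usigma)} \lambda_l \rp$ and consolidating exponentials, the exponent attached to the $l$-th summand becomes
\begin{equation*}
\pm \tfrac 1 2 \textstyle \sum_{k \in L_\mp (\usigma)} \lambda_k \pm \tfrac n 4 \lambda_l \mp \tfrac 1 2 n_\mp (\usigma) \lambda_l,
\end{equation*}
and the identities $n_+ (\usigma) + n_- (\usigma) = n$ together with $|\ulambda| = 0$ (which yields $\sum_{k \in L_+ (\usigma)} \lambda_k = - \sum_{k \in L_- (\usigma)} \lambda_k$) simplify this to exactly $-\tfrac 1 4 \sum_{k \in L_+ (\usigma)} \lambda_k + \tfrac 1 4 \sum_{k \in L_- (\usigma)} \lambda_k + \tfrac{n_+ (\usigma) - n_- (\usigma)}{4} \lambda_l$, which is the combined exponent $E (\usigma, \ulambda) E_l (\usigma, \ulambda)$ in Lemma \ref{lem: J(x; sigma; lambda) as a sum of J ell (x; sigma; lambda)}.

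Comparing term by term with the expansion of $J(z;\usigma,\ulambda)$ given by Lemma \ref{lem: J(x; sigma; lambda) as a sum of J ell (x; sigma; lambda)} then establishes the claim for generic $\ulambda \in \BD^{n-1}$, where $\{ J_l (z; S_n(\usigma), \ulambda) \}_{l=1}^n$ is linearly independent. The nongeneric case follows immediately from analyticity in $\ulambda$ on $\BL^{n-1}$, since both sides of the asserted identity are analytic functions of $\ulambda$ on all of $\BL^{n-1}$ by Lemma \ref{3lem: J (x; sigma, lambda) analytic}. There is no serious obstacle in this argument: the only thing to watch is bookkeeping of the signs and exponents, and the only place where the constraint $|\ulambda| = 0$ is essential is in converting $E (\usigma, \ulambda)$ into the single-sided sum $\pm \tfrac 1 2 \sum_{l \in L_\mp (\usigma)} \lambda_l$ appearing in the statement.
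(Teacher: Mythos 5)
Your proposal is correct and follows exactly the route the paper sketches: the text of the paper proves this lemma with the single sentence ``Using Lemma~\ref{lem: J ell (z; sigma; lambda) relations} and~\ref{lem: J(x; sigma; lambda) as a sum of J ell (x; sigma; lambda)}, one proves the following lemma,'' and your argument is precisely that computation carried out in full. Your exponent bookkeeping is right: the $\lambda_l$-coefficient $\pm\tfrac n4 \mp \tfrac{n_\mp(\usigma)}2$ collapses to $\tfrac{n_+(\usigma)-n_-(\usigma)}4$ and the constant $\pm\tfrac12\sum_{k\in L_\mp(\usigma)}\lambda_k$ collapses, via $|\ulambda|=0$, to $-\tfrac14\sum_{L_+(\usigma)}\lambda_k + \tfrac14\sum_{L_-(\usigma)}\lambda_k$, matching $E(\usigma,\ulambda)E_l(\usigma,\ulambda)$ exactly. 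One small notational slip worth flagging: in your display for $H^\pm$ and in the subsequent sign bookkeeping you write the sign as ``$\pm 1$'', whereas you had correctly identified it a line earlier as $S_n(\pm\ue^n)=(\pm)^n$. When $n$ is even the lower choice gives $(-)^n=+$, not $-1$, and your literal computation $(-1)\cdot(-)^{n_+(\usigma)}$ would then produce $(-)^{n_+(\usigma)+1}\ne(-)^{n_-(\usigma)}$; carrying the correct $(\pm)^n$ through gives $(\pm)^n\cdot(-)^{n_\mp(\usigma)}=(-)^{n_-(\usigma)}=S_n(\usigma)$ uniformly. Since you wrote down the right final sign anyway, this does not affect the argument, but the intermediate ``$\pm1$'' should be replaced by ``$(\pm)^n$'' throughout.
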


\subsection{Asymptotics for Bessel Equations and Bessel Functions of the Second Kind}

Subsequently,  we   proceed to investigate the asymptotics at infinity for Bessel equations. 

\begin{defn}
	For  $\varsigma \in \{ +, -\}$ and a positive integer $N$, we let $\BX_{N} (\varsigma) $ denote the set of $N$-th roots of $\varsigma 1$.\footnote{Under certain circumstances, it is suitable to view an element $\xi$ of $\BX_{N} (\varsigma)$  as a point in $\BU$ instead of $\BC \smallsetminus \{0\}$. 
		This however should be clear from the context.}
\end{defn}

Before   delving into our general study, let us first consider the prototypical example given in Proposition \ref{prop: special example}. 

\begin{prop}
	For any $\xi \in \BX_{2n} (+)$, the function $ z^{- \frac {n-1} 2} e^{ i n \xi z}$ is a solution of the Bessel equation of index $ \frac 1 n \lp  \frac {n-1} 2, ..., - \frac {n-1} 2 \rp$ and sign $\xi^{\, n}$. 
\end{prop}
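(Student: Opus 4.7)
The plan is to read off one instance of the desired function from the explicit evaluation in Proposition \ref{prop: special example}, and then propagate it to every $\xi \in \BX_{2n}(+)$ by invoking the rotational symmetry of Bessel equations recorded in Lemma \ref{lem: phi (e pi i m/n z) is also a solution}.

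First I would specialize Proposition \ref{prop: special example} to $\usigma = (+,\ldots,+)$. With this choice $n_+(\usigma) = n$ and $n_-(\usigma) = 0$, so that
$$\xi(\usigma) = i\,e^{\pi i (n_-(\usigma) - n_+(\usigma))/(2n)} = i\,e^{-\pi i/2} = 1,$$
and the proposition gives, on $\BR_+$,
$$H^{+}(x;\ulambda) = \frac{c(+,\ldots,+)}{\sqrt{n}}\,(2\pi)^{(n-1)/2}\, x^{-(n-1)/2} e^{inx},$$
a nonzero constant multiple of $x^{-(n-1)/2} e^{inx}$. By Theorem \ref{thm: Bessel equations} this function satisfies the Bessel equation of index $\ulambda$ and sign $S_n(+,\ldots,+) = +$. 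Since that equation has coefficients analytic on $\BU$, all of its solutions extend to $\BU$, and I may conclude that $w_1(z) := z^{-(n-1)/2} e^{inz}$ is a solution of the Bessel equation of index $\ulambda$ and sign $+$ on all of $\BU$.

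Next I would apply Lemma \ref{lem: phi (e pi i m/n z) is also a solution}: for every integer $a$, the function $w_1(e^{\pi i a/n} z)$ solves the Bessel equation of index $\ulambda$ and sign $(-)^a \cdot (+) = (-)^a$. A direct calculation gives
$$w_1(e^{\pi i a/n} z) = e^{-\pi i a(n-1)/(2n)}\, z^{-(n-1)/2} e^{i n \xi z}, \qquad \xi = e^{\pi i a/n},$$
and, by linearity and homogeneity of the Bessel equation, the scalar prefactor may be discarded: $z^{-(n-1)/2} e^{in\xi z}$ is itself a solution of the Bessel equation of index $\ulambda$ and sign $(-)^a$.

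Finally, as $a$ runs over $\{0,1,\ldots,2n-1\}$ the points $\xi = e^{\pi i a/n}$ exhaust $\BX_{2n}(+)$, and $\xi^{\,n} = e^{\pi i a} = (-)^a$ coincides with the sign produced above. This yields the claim for every $\xi \in \BX_{2n}(+)$. There is no substantive obstacle; the only delicate point is the bookkeeping identifying the rotation exponent $a$ with the sign $\xi^{\,n}$, which is immediate from $\xi^{\,n} = (-1)^a$.
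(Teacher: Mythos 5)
Your proof is correct and follows essentially the same route as the paper: extract a base case from Proposition \ref{prop: special example} together with Theorem \ref{thm: Bessel equations}, then rotate using Lemma \ref{lem: phi (e pi i m/n z) is also a solution}. The only cosmetic difference is that you start from the single base case $\xi=1$ while the paper reads off all $\xi$ with $\Im\xi\geq 0$ before rotating, but the argument is the same.
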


\begin{proof}
	When $ \Im \xi \geq 0$, this can be seen from Proposition \ref{prop: special example} and Theorem \ref{thm: Bessel equations}. For arbitrary $ \xi $, one makes use of Lemma \ref{lem: phi (e pi i m/n z) is also a solution}.
\end{proof}

\subsubsection{Formal Solutions of Bessel Equations at Infinity}\label{sec: formal solutions at infinity}
Following \cite[Chapter 5]{Coddington-Levinson}, we shall consider the system of differential equations (\ref{7eq: differential equation, matrix form}).
We have 
\begin{equation*}
B (\infty; \varsigma, \ulambda) = \begin{pmatrix}
0 & 1 & 0 & \cdots & 0\\
0 & 0 & 1 & \cdots & 0\\
\vdots & \vdots & \vdots & \ddots & \vdots \\
0 & 0 & 0 & \cdots & 1 \\
\varsigma (in)^n & 0 & \cdots & \cdots & 0
\end{pmatrix}.
\end{equation*}
If one let  $\BX_n (\varsigma) = \left \{\xi_1 ,..., \xi_n \right \}$, then the eigenvalues of $B (\infty; \varsigma, \ulambda)$ are $in\xi_1 ,..., in\xi_n$.
The conjugation by the following matrix diagonalizes $B (\infty; \varsigma, \ulambda)$,
\begin{align*}
& T = \frac 1 n \begin{pmatrix}
1 & (in\xi_1)^{-1} &  \cdots & (in\xi_1)^{-n+1}\\
1 & (in\xi_2)^{-1} &  \cdots & (in\xi_2)^{-n+1}\\
\cdots & \cdots &  \cdots & \cdots \\
1 & (in\xi_n)^{-1} &  \cdots & (in\xi_n)^{-n+1}\\
\end{pmatrix},\\
& T\- = \begin{pmatrix}
1 & 1 & \cdots  & 1\\
in\xi_1 & in\xi_2   & \cdots & in\xi_n\\
\cdots & \cdots & \cdots & \cdots \\
(in\xi_1)^{n-1} & (in\xi_2)^{n-1} & \cdots & (in\xi_n)^{n-1}\\
\end{pmatrix}.
\end{align*}
Thus, the substitution $u = T w$ converts the system of differential equations (\ref{7eq: differential equation, matrix form}) into
\begin{equation}\label{7eq: differential equation, matrix form 2}
u' = A(z) u,
\end{equation}
where $A(z) = T B (z; \varsigma, \ulambda) T\-$ is a matrix of polynomials in $z\-$ of degree $n$,
\begin{equation*}
A(z) = \sum_{j=0}^n  z^{-j} A_j,
\end{equation*}
with 
\begin{equation}\label{7eq: A0, Aj}
\begin{split}
A_0 &= \varDelta = \mathrm {diag} \lp i n \xi_{l     } \rp_{l      = 1}^n, \\
A_j &= - i^{\, -j+1} n^{-j} V_{n, n-j} (\ulambda) \lp \xi_{k} \xi_{l     }^{\, -j} \rp_{k, l      = 1}^n, \ j = 1, ..., n.
\end{split}
\end{equation}
It is convenient to put $A_j = 0$ if $j > n$. The dependence on $ \varsigma, \ulambda$ and the ordering of the eigenvalues have been suppressed in our notation for the interest of brevity.

Suppose $\widehat \Phi$ is a formal solution matrix for (\ref{7eq: differential equation, matrix form 2}) of the form
\begin{equation*} 
\widehat \Phi (z) = P(z) z^R e^{Q z},
\end{equation*}
where $P $ is a formal power series in $z\-$,
\begin{equation*}
P(z) = \sum_{m=0}^\infty z^{-m} P_m,
\end{equation*}
and $R$, $Q$ are constant diagonal matrices. Since
\begin{equation*}
\widehat \Phi ' = P' z^R e^{Qz} + z\- P R z^R e^{Qz} + P z^R Q e^{Qz} = \lp P' + z\- P R + P Q \rp  z^R e^{Qz},
\end{equation*}
the differential equation (\ref{7eq: differential equation, matrix form 2}) yields
\begin{equation*}
\sum_{m=0}^\infty z^{-m-1} P_m (R - m I) + \sum_{m=0}^\infty z^{-m } P_m Q = \lp  \sum_{j=0}^\infty z^{-j } A_j \rp \lp \sum_{m=0}^\infty z^{- m } P_m \rp,
\end{equation*}
where $I$ denotes the identity matrix.
Comparing the coefficients of various powers of $z\-$, it follows that $\widehat \Phi$ is a formal solution matrix for (\ref{7eq: differential equation, matrix form 2}) if and only if $R$, $Q$ and $P_m$ satisfy the following equations
\begin{equation}\label{7eq: relations of R Q Pm}
\begin{split}
P_0 Q - \varDelta P_0 &= 0\\
P_{m+1} Q - \varDelta P_{m+1} & = \sum_{j = 1}^{m+1} A_j P_{m-j+1} + P_m (m I - R ), \hskip 10 pt  m \geq 0.
\end{split}
\end{equation}
A solution of the first equation in (\ref{7eq: relations of R Q Pm}) is given by
\begin{equation}\label{7eq: Q=A0, P0=I}
Q = \varDelta, \hskip 10 pt P_0 = I.
\end{equation}
Using (\ref{7eq: Q=A0, P0=I}), the second equation in (\ref{7eq: relations of R Q Pm}) for $m=0$ becomes
\begin{equation}\label{7eq: relation m=0}
P_1 \varDelta - \varDelta P_1 =  A_1 - R.
\end{equation}
Since $\varDelta$ is diagonal, the diagonal entries of the left side of (\ref{7eq: relation m=0}) are zero, and hence the diagonal entries of $R$ must be identical with those of $A_1$. In view of (\ref{7eq: A0, Aj}) and Lemma \ref{lem: simple facts on V (lambda)} (2), we have $$A_1 = - \frac 1 n V_{n, n-1} (\ulambda) \cdot \lp \xi_{k} \xi_{l     }^{-1} \rp_{k, l      = 1}^n = - \frac {n-1} 2 \lp \xi_{k} \xi_{l     }^{-1} \rp_{k, l      = 1}^n,$$ and therefore 
\begin{equation}\label{7eq: R = - (n-2)/2}
R = - \frac {n-1} 2 I.
\end{equation}
Let $ p _{1,\, k l     }$ denote the $(k, l     )$-th entry of $P_1$. It follows from (\ref{7eq: A0, Aj}, \ref{7eq: relation m=0}) that
\begin{equation}\label{7eq: off-diagonal P1}
in (\xi_l      - \xi_k) p _{1,\, k l     } = - \frac {n-1} 2 \xi_k \xi_l     \-, \hskip 10 pt k \neq l     .
\end{equation}
The off-diagonal entries of $P_1$ are uniquely determined by \eqref{7eq: off-diagonal P1}. Therefore, a solution of (\ref{7eq: relation m=0}) is 
\begin{equation}\label{7eq: P1 = D1 + tilde P1}
P_1 = D_1 + P^o_1,
\end{equation}
where $D_1$ is any diagonal matrix and $ P^o_1$ is the matrix with diagonal entries zero and $(k, l     )$-th entry $p _{1,\, k l     }$, $ k \neq l     $. To determine $D_1$, one resorts to the second equation in (\ref{7eq: relations of R Q Pm}) for $m=1$, which, in view of (\ref{7eq: Q=A0, P0=I}, \ref{7eq: R = - (n-2)/2}, \ref{7eq: P1 = D1 + tilde P1}), may be written as
\begin{equation*}
P_2\varDelta - \varDelta P_2 -  \lp A_1 + \frac {n-1} 2 \rp D_1 - \frac {n+1} 2 P^o_1 = A_1 P^o_1 + A_2 + D_1.
\end{equation*}
The matrix on the left side has  zero diagonal entries. It follows that $D_1$ must be equal to the diagonal part of $- A_1 P^o_1 - A_2.$

In general, using (\ref{7eq: Q=A0, P0=I}, \ref{7eq: R = - (n-2)/2}), the second equation in (\ref{7eq: relations of R Q Pm}) may be written as
\begin{equation}\label{7eq: relations of Pm}
P_{m+1} \varDelta - \varDelta P_{m+1} = \sum_{j = 1}^{m+1} A_j P_{m-j+1} + \lp m + \frac {n-1} 2 \rp P_m, \hskip 10 pt  m \geq 0.
\end{equation}
Applying (\ref{7eq: relations of Pm}), an induction on $m$ implies that
\begin{equation*} 
P_{m } = D_m + P^o_m, \hskip 10 pt m\geq 1, 
\end{equation*}
where $D_m$ and $P^o_m$ are inductively defined as follows. Put $D_0 = I$. Let $m D_m$ be the diagonal part of
\begin{equation*}
- \sum_{j=2}^{m+1} A_{j } D_{m-j+1 } - \sum_{j=1}^{m} A_j P^o_{m-j+1}, 
\end{equation*} 
and let $P^o_{m+1}$ be the matrix with diagonal entries zero such that $P^o_{m+1} \varDelta - \varDelta P^o_{m+1}$ is the off-diagonal part of
\begin{equation*}
\begin{split}
\sum_{j=1}^{m+1} A_{j } D_{m-j+1 } + \sum_{j=1}^{m} A_j P^o_{m-j+1} 
+ \lp m + \frac {n-1} 2 \rp  P^o_m.   
\end{split}
\end{equation*}
In this way, an inductive construction of the formal solution matrix of (\ref{7eq: differential equation, matrix form 2}) is completed for the given initial choices $Q = \varDelta$, $P_0 = I$.

With the observations that $A_j$ is of degree $j$ in $\ulambda$ for $j \geq 2$  and that $A_1$ is constant, we may show the following lemma using an inductive argument.

\begin{lem} \label{lem: entries of Pm} The entries of $P_m$ are symmetric polynomials in $\ulambda$. If $m \geq 1$, then the off-diagonal entries of $P_m$ have degree  at most $2m-2$, whereas the degree of each diagonal entry is exactly $2m$.
\end{lem}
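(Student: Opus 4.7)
The plan is to establish the symmetry statement together with both degree bounds by a single induction on $m$, using the explicit recurrence for $D_m$ and $P^o_{m+1}$ given above. The structural input I will rely on is the following: by \eqref{7eq: A0, Aj} together with Theorem \ref{thm: Bessel equations} and Lemma \ref{lem: simple facts on V (lambda)}, the entries of $A_j$ are symmetric polynomials in $\ulambda$ of degree exactly $j$ for $j \geq 2$ (since $\sigma_1(\ulambda) = 0$ on $\BL^{n-1}$ and $A_{j,0} = 1$ force $V_{n,n-j}(\ulambda) = n^j \sigma_j(\ulambda) + (\text{lower order})$), whereas $A_0$ and $A_1$ are independent of $\ulambda$. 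Moreover, the off-diagonal scalar divisors $in(\xi_l - \xi_k)$ arising when one inverts $X \mapsto X\varDelta - \varDelta X$ are nonzero and free of $\ulambda$, so that step preserves both degree and symmetry.

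For the base case $m = 1$, equation \eqref{7eq: off-diagonal P1} yields $P^o_1$ with constant entries, hence of degree $0 = 2(1)-2$; the diagonal part $D_1$ is the diagonal part of $-A_1 P^o_1 - A_2$. Since $A_1 P^o_1$ is constant and the diagonal of $A_2$ has degree-$2$ leading term proportional to $n^2 \sigma_2(\ulambda) \cdot \mathrm{diag}(\xi_l^{-1})$, each diagonal entry of $D_1$ is a symmetric polynomial of degree exactly $2$. For the inductive step, assume the lemma for $P_0, \ldots, P_m$ and (by the staggered order of construction) that $P^o_{m+1}$ has already been produced. Reading degrees off the defining identity
\[
P^o_{m+1} \varDelta - \varDelta P^o_{m+1} = \Bigl(\sum_{j=1}^{m+1} A_j D_{m-j+1} + \sum_{j=1}^{m} A_j P^o_{m-j+1} + \bigl(m + \tfrac{n-1}{2}\bigr) P^o_m\Bigr)\Big|_{\mathrm{off\text{-}diag}},
\]
the top degree of the right-hand side is $\max(0+2m,\, 2+2(m-1),\, 2+2m-4,\, 2m-2) = 2m$, so $\deg P^o_{m+1} \leq 2m = 2(m+1)-2$. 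For the diagonal,
\[
(m+1)\,D_{m+1} = \Bigl(-\sum_{j=2}^{m+2} A_j D_{m+2-j} \;-\; \sum_{j=1}^{m+1} A_j P^o_{m+2-j}\Bigr)\Big|_{\mathrm{diag}},
\]
and the unique summand of maximal degree $2(m+1)$ is $-A_2 D_m$ (every other term has degree $\leq 2m+1$), giving $\deg D_{m+1} \leq 2(m+1)$.

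The delicate point will be showing this last bound is sharp, so that the degree of each diagonal entry is \emph{exactly} $2(m+1)$ and the induction closes. Writing $d^\circ_{m,l}$ for the degree-$2m$ leading term of the $(l,l)$-entry of $D_m$ (nonzero by induction), a direct computation yields the top-degree part of the $(l,l)$-entry of $A_2 D_m$:
\[
(A_2 D_m)_{ll}^{\mathrm{top}} \;=\; -\,i^{-1} n^{-2} \cdot n^2 \sigma_2(\ulambda) \cdot \xi_l^{-1} \cdot d^\circ_{m,l} \;=\; -\,i\, \sigma_2(\ulambda)\, \xi_l^{-1}\, d^\circ_{m,l}.
\]
Since $\sigma_2(\ulambda)$ is a nontrivial polynomial on $\BL^{n-1}$ (for example, $\sigma_2(1, -1, 0, \ldots, 0) = -1$), this leading term does not vanish, so the $(l,l)$-entry of $D_{m+1}$ has degree exactly $2(m+1)$ with leading coefficient $\frac{i}{m+1} \sigma_2(\ulambda)\, \xi_l^{-1} d^\circ_{m,l}$. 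Symmetry in $\ulambda$ propagates at each step, because every ingredient in the recurrence is a symmetric polynomial in $\ulambda$ and all the operations performed (linear combinations, multiplication by the $A_j$, and division by the constants $in(\xi_l-\xi_k)$) preserve symmetry. This completes the induction and proves the lemma.
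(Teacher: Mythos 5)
Your argument is correct and fills in exactly the inductive degree-count that the paper's one-line proof invokes ("$A_j$ has degree $j$ for $j \geq 2$, $A_1$ is constant, then induct"); the key extra contribution you supply, and the only non-routine part, is the exactness of the diagonal degree, which you settle by tracking the $\sigma_2(\ulambda)$-leading term through the recurrence $m D_m = -\mathrm{diag}(A_2 D_{m-1}) + (\text{lower degree})$. One small arithmetic slip worth fixing: since $i^{-1} = -i$, one has $-i^{-1} = i$, so the leading part of $(A_2)_{ll}$ is $i\,\sigma_2(\ulambda)\,\xi_l^{-1}$ rather than $-i\,\sigma_2(\ulambda)\,\xi_l^{-1}$, and accordingly $d^\circ_{m+1,l} = -\frac{i}{m+1}\sigma_2(\ulambda)\xi_l^{-1}d^\circ_{m,l}$; the sign has no bearing on nonvanishing, so the conclusion stands.
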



The first row of $T\- \widehat \Phi$ constitutes a fundamental system of formal solutions of the Bessel equation  (\ref{7eq: differential equation, variable z}). Some calculations yield the following proposition, where for   derivatives of order higher than $n-1$ the differential equation (\ref{7eq: differential equation, variable z}) is applied.

\begin{prop}\label{prop: formal solution}
	Let $\varsigma \in \{+, - \}$ and $\xi \in \BX_n (\varsigma)$. There exists a unique sequence of symmetric polynomials $B_m (\ulambda; \xi)$ in $\ulambda$ of degree $2 m$ 
	and coefficients depending only on $m$, $\xi$ and $n$, normalized so that $B_0 (\ulambda; \xi) = 1$, such that
	\begin{equation}\label{7eq: formal solution, asymptotic}
	e^{i n \xi z} z^{- \frac { n-1} 2} \sum_{m=0}^\infty B_m (\ulambda; \xi) z^{-m}
	\end{equation}
	is a formal solution of the Bessel equation  of sign $\varsigma$  {\rm(\ref{7eq: differential equation, variable z})}. We shall denote the formal series in {\rm (\ref{7eq: formal solution, asymptotic})} by $\widehat J (z; \ulambda; \xi)$. Moreover, the $j$-th formal derivative $\widehat J^{(j)} (z; \ulambda; \xi)$ is also of the form as {\rm (\ref{7eq: formal solution, asymptotic})}, but with coefficients depending on $j$ as well. 
\end{prop}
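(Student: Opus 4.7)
\medskip

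My plan is to harvest this proposition directly from the formal solution matrix $\widehat\Phi(z) = P(z) z^R e^{Qz}$ already constructed in \S\ref{sec: formal solutions at infinity}. Recall that $Q = \varDelta = \mathrm{diag}(in\xi_l)_{l=1}^n$, $R = -\tfrac{n-1}{2} I$, and $P_0 = I$, and that the system \eqref{7eq: differential equation, matrix form 2} was obtained from the companion system \eqref{7eq: differential equation, matrix form} of the Bessel equation via the substitution $u = Tw$. Since the first row of $T^{-1}$ is $(1,1,\ldots,1)$, the first row of $T^{-1}\widehat\Phi$ yields $n$ formal solutions of the Bessel equation \eqref{7eq: differential equation, variable z}; writing $l(\xi)$ for the index with $\xi_{l(\xi)} = \xi$, the $l(\xi)$-th such solution is
\[
e^{i n \xi z} z^{-\frac{n-1}{2}} \sum_{m=0}^{\infty} z^{-m}\, B_m(\ulambda;\xi), \qquad B_m(\ulambda;\xi) := \sum_{k=1}^{n} (P_m)_{k, l(\xi)}.
\]

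Existence and the normalization $B_0(\ulambda;\xi)=1$ are then immediate from $P_0 = I$. The symmetry and the degree bounds for $B_m(\ulambda;\xi)$ follow at once from Lemma~\ref{lem: entries of Pm}: the off-diagonal entries $(P_m)_{k,l(\xi)}$ for $k \neq l(\xi)$ are symmetric polynomials of degree at most $2m-2$, while the diagonal entry $(P_m)_{l(\xi), l(\xi)}$ is symmetric of exact degree $2m$, so the sum $B_m(\ulambda;\xi)$ is symmetric of degree exactly $2m$. The dependence on only $m,\xi,n$ is visible from the recurrence \eqref{7eq: relations of Pm} together with the formulas \eqref{7eq: A0, Aj} for $A_j$.

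For uniqueness, I would substitute the ansatz $e^{i n\xi z} z^{-\frac{n-1}{2}} \sum_{m\ge 0} B_m z^{-m}$ (with $B_0$ not yet fixed) into $\nabla_{\varsigma,\ulambda}(w) = 0$ and collect coefficients of $e^{in\xi z} z^{-m-\frac{n-1}{2}}$. The $n$-th power $\varsigma (in)^n z^n$ in $\nabla_{\varsigma,\ulambda}$ cancels against the leading term from $V_{n,n}(\ulambda) z^n w^{(n)}$ because $\xi^n = \varsigma$ and $V_{n,n}(\ulambda) = 1$, so the leading coefficient (in $z$) at each step is linear in $B_m$ with a nonzero coefficient (proportional to $in\xi \cdot m$ for $m\ge 1$, from the mismatched exponential/power differentiations). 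This produces a first-order recurrence that determines each $B_m$ for $m\ge 1$ uniquely from $B_0,\ldots,B_{m-1}$; insisting on $B_0 = 1$ then pins down the whole sequence. The main (minor) obstacle here is bookkeeping of the many binomial contributions from differentiating $z^{-\frac{n-1}{2}-m}$ up to $n$ times, but the coefficient of $B_m$ at each step is patently nonzero, so the recurrence is non-degenerate.

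Finally, for the statement about the $j$-th formal derivative $\widehat J^{(j)}(z;\ulambda;\xi)$: for $0 \le j \le n-1$ I would simply apply the Leibniz rule termwise to $e^{in\xi z} z^{-\frac{n-1}{2}} \sum_m B_m z^{-m}$; each differentiation either hits $e^{in\xi z}$ (producing the constant factor $in\xi$) or lowers the power of $z$ by one (changing $z^{-\frac{n-1}{2}-m}$ into $-(\tfrac{n-1}{2}+m) z^{-\frac{n-1}{2}-m-1}$), so the result is again of the form $e^{in\xi z} z^{-\frac{n-1}{2}} \sum_m B_m^{(j)}(\ulambda;\xi) z^{-m}$ with $B_m^{(j)}$ a polynomial in $\ulambda$ (and a polynomial in $m$ of bounded degree), of the same degree $2m$ in $\ulambda$ since the new factors involve no $\ulambda$. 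For $j \ge n$ the Bessel equation \eqref{7eq: differential equation, variable z} expresses $w^{(n)}$, and hence inductively every $w^{(j)}$, as a $\BQ[\ulambda]$-linear combination of $z^a w^{(i)}$ for $i < n$ and integers $a \in \{-n,\ldots,0\}$; applying this to $\widehat J(z;\ulambda;\xi)$ preserves the form of the expansion, with the degree in $\ulambda$ of the $m$-th coefficient remaining $2m$ because the coefficients $V_{n,i}(\ulambda)/V_{n,n}(\ulambda)$ from the equation contribute degree at most $n-i \le n$, matched by the index shift $z^a$ with $a \le 0$.
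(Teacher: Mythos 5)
Your proof is correct and follows essentially the same route the paper takes: read off the formal Bessel solutions from the first row of $T^{-1}\widehat\Phi$, use Lemma~\ref{lem: entries of Pm} to obtain the symmetry and the exact degree $2m$ of $B_m(\ulambda;\xi)=\sum_k (P_m)_{k,l(\xi)}$, handle uniqueness via the scalar recurrence, and reduce higher-order formal derivatives through the equation itself. The only nit is that the coefficient of $B_m$ in the recurrence is proportional to $m\,(in\xi)^{n-1}$ rather than $in\xi\cdot m$ (cf.\ the remark after the proposition, where it appears as $(m-1)W_{1,0}(\ulambda)$ with $W_{1,0}=n(in\xi)^{n-1}$), but since all that matters is its non-vanishing for $m\ge 1$, the uniqueness argument is unaffected.
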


\begin{rem}
	The above arguments are essentially adapted from the proof of \cite[Chapter 5, Theorem 2.1]{Coddington-Levinson}. This construction of the formal solution and Lemma {\rm \ref{lem: entries of Pm}} will be required later in \S { \rm \ref{sec: Error Bounds for the asymptotic expansions} } for the error analysis. 
	
	However, This method is \textit{not} the best for the actual computation of the coefficients $B_m (\ulambda; \xi)$. We may derive the recurrent relations for $B_m (\ulambda; \xi)$ by a more direct but less suggestive approach as follows.
	
	The substitution $w = e^{i n \xi z} z^{- \frac { n-1} 2} u $ transforms the Bessel equation {\rm (\ref{7eq: differential equation, variable z})} into
	\begin{equation*}
	\sum_{j = 0}^{n} W_{j} (z; \ulambda) u^{(j)} = 0,
	\end{equation*}
	where $W_j  (z; \ulambda)$ is a polynomial in $z\-$ of degree $n-j$,
	\begin{equation*}
	W_j (z; \ulambda) = \sum_{k = 0}^{n-j} W_{j, k} (\ulambda) z^{-k},
	\end{equation*}
	with
	\begin{align*}
	W_{0, 0} (\ulambda) &=  (in \xi)^{n} - \varsigma (in)^n = 0,\\
	W_{j, k} (\ulambda) &= \frac { (in \xi)^{n-j-k} } {j! (n-j-k)!} \sum_{r=0}^k \frac {(n-r)!} { (k-r)!} \left[- \frac {n-1} 2\right]_{k-r} V_{n, n-r} (\ulambda), \hskip 10 pt (j,k) \neq (0,0).
	\end{align*}
	We have
	\begin{equation*}
	W_{0, 1} (\ulambda) = (in \xi)^{n-1} \lp n \lp - \frac {n-1} 2 \rp V_{n, n}  (\ulambda) + V_{n, n-1} (\ulambda) \rp = 0,
	\end{equation*}
	but $W_{1, 0} (\ulambda) = n( in \xi)^{n-1}$ is nonzero. Some calculations show that $ B_m (\ulambda; \xi)$ satisfy the following recurrence relations
	\begin{align*}
	(m-1) &  W_{1, 0} (\ulambda) B_{m-1} (\ulambda; \xi) =   \sum_{k=2}^{\min\{m , n\} } B_{m- k} (\ulambda; \xi) \sum_{j=0}^{k} W_{j, k-j} (\ulambda) [k-m]_j  , \hskip 10 pt m \geq 2.
	\end{align*}
	
	If $n = 2$, for a fourth root of unity $\xi = \pm 1, \pm i$ one may calculate in this way to obtain
	\begin{equation*}
	B_m (\lambda, - \lambda; \xi) = \frac {  \lp \frac 1 2 - 2\lambda \rp_m  \lp \frac 1 2 + 2\lambda\rp_m} { (4i\xi)^m m!}.
	\end{equation*}
\end{rem}

\subsubsection{Bessel Functions of the Second Kind}\label{sec: asymptotic expansions for the Bessel equations}

{\it Bessel functions of the second kind} are solutions of Bessel equations defined according to their asymptotic expansions at infinity. 
We shall apply several results in the asymptotic theory of ordinary differential equations  from \cite[Chapter IV]{Wasow}. 

Firstly, \cite[Theorem 12.3]{Wasow} implies the following lemma.

\begin{lem} [Existence of solutions] \label{lem: asymptotic, existence of a solution}
	Let $\varsigma \in \{+, - \}$, $\xi \in \BX_n (\varsigma)$, and $\BS \subset \BU$ be an open sector with vertex at the origin and a positive central angle not exceeding $\pi$. Then there exists a solution of the Bessel equation  of sign $\varsigma$ {\rm (\ref{7eq: differential equation, variable z})} that has the asymptotic expansion $\widehat J (z; \ulambda; \xi )$ defined in {\rm (\ref{7eq: formal solution, asymptotic})} on $\BS$. Moreover, each derivative of this solution has the formal derivative of  $\widehat J (z; \ulambda; \xi )$ of the same order as its asymptotic expansion.
\end{lem}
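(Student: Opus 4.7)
The plan is to reduce the Bessel equation to the first-order system $u' = A(z)u$ constructed in \S\ref{sec: formal solutions at infinity} and invoke the standard existence theorem for asymptotic solutions of a meromorphic system at an irregular singularity of rank one (Wasow's Theorem 12.3). Concretely, I would first recall the two reductions already set up in the excerpt: the vectorization $w \mapsto (w, w', \dots, w^{(n-1)})^T$ turning \eqref{7eq: differential equation, variable z} into \eqref{7eq: differential equation, matrix form}, followed by the gauge transformation $u = Tw$ turning it into $u' = A(z) u$ with $A(z) = \sum_{j=0}^n z^{-j} A_j$ and $A_0 = \varDelta = \mathrm{diag}(i n \xi_l)_{l=1}^n$.

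The crucial structural point is that the eigenvalues $i n \xi_1, \dots, i n \xi_n$ of $A_0$ are pairwise distinct, since $\BX_n(\varsigma)$ consists of $n$ distinct points on the unit circle. This places us in exactly the setting where Wasow's theorem applies: for any open sector $\BS \subset \BU$ with vertex at $0$ and central angle at most $\pi$ (the maximal admissible opening for a rank-one irregular singularity), there is an actual fundamental solution matrix $\Phi(z)$ on $\BS$ which admits the formal fundamental matrix $\widehat\Phi(z) = P(z) z^R e^{Q z}$ constructed in \S\ref{sec: formal solutions at infinity} as its asymptotic expansion, column-by-column, as $|z| \to \infty$ within $\BS$. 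With $Q = \varDelta$, $R = -\frac{n-1}{2} I$ and $P_0 = I$, the $\xi$-column of $\Phi(z)$ is asymptotic to $e^{i n \xi z} z^{-\frac{n-1}{2}}$ times a column of formal power series in $z^{-1}$ whose top entry is $1 + O(z^{-1})$.

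Now I would pull back to the Bessel equation via $w = T^{-1} u$. The first component of $T^{-1} \Phi(z) e_\xi$ (where $e_\xi$ picks out the $\xi$-column) is then a solution of \eqref{7eq: differential equation, variable z} whose asymptotic expansion on $\BS$ is obtained by taking the first row of $T^{-1}$ against the corresponding formal column of $\widehat\Phi$. A direct computation, using that the first row of $T^{-1}$ consists of $1$'s and that only the $\xi$-column contributes the exponential $e^{i n \xi z}$, shows that this solution is asymptotic precisely to $\widehat J(z;\ulambda;\xi)$ as defined in \eqref{7eq: formal solution, asymptotic}; the uniqueness statement in Proposition \ref{prop: formal solution} identifies the resulting scalar formal series with $\widehat J$.

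The statement about derivatives is essentially free from the vector formulation. Since the components of the vector $w = T^{-1} u$ are exactly $w, w', \dots, w^{(n-1)}$, the asymptotic expansion of $w^{(j)}$ for $0 \leq j \leq n-1$ is read off from the corresponding component of $T^{-1} \Phi(z) e_\xi$, and termwise differentiation of the asymptotic series is legitimate because Wasow's theorem asserts asymptoticity for every column simultaneously; comparing these expansions with $\widehat J^{(j)}(z;\ulambda;\xi)$ again uses Proposition \ref{prop: formal solution}. For derivatives of order $\geq n$, I would simply differentiate the Bessel equation itself and solve algebraically for $w^{(j)}$ in terms of $w, w', \dots, w^{(n-1)}$, which preserves the asymptotic-expansion property. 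The only genuine point of care in the whole argument is the verification that the hypotheses of Wasow's theorem are met—distinctness of eigenvalues of $A_0$ and the sector opening $\leq \pi$—and this is precisely what the distinctness of the $\xi_l \in \BX_n(\varsigma)$ and the hypothesis on $\BS$ furnish.
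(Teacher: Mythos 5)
Your proof is correct and takes essentially the same route as the paper, which simply cites Wasow's Theorem 12.3 and notes that the first row of $T^{-1}\widehat\Phi$ furnishes the formal fundamental system for the scalar equation, with the differential equation itself used for derivatives of order $\geq n$. You have just made explicit the two hypothesis checks (distinct eigenvalues of $A_0$, sector opening $\leq\pi$) and the pull-back calculation that the paper leaves implicit.
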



For two distinct $\xi, \xi' \in \BX_n(\varsigma)$, the ray emitted from the origin on which
$$\Re \lp (i \xi - i\xi') z \rp = - \Im \lp (\xi - \xi') z \rp = 0 $$ is called a  {separation ray}. 

We first consider the case $n=2$. It is clear that the separation rays constitute either the real or the imaginary axis and thus separate $ \BC \smallsetminus \{0 \}$ into two half-planes. 
Accordingly, we define  $\BS_{\pm 1} = \left\{ z : \pm \Im z > 0 \right \}$ and $\BS_{\pm i} = \left\{ z : \pm \Re z > 0 \right \}$.

In the case $n \geq 3$, there are $2n$ distinct separation rays in $ \BC \smallsetminus \{0 \}$ given by the equations 
$$\arg z = \arg ( i \xi' ),  \hskip 10 pt \xi' \in \BX_{2n} (+).$$
These separation rays divide $\BC \smallsetminus \{0\}$ into $2 n$  many open sectors  \begin{equation}\label{7def: S pm xi}
\BS^{\pm}_\xi = \left\{ z : 0 < \pm \left( \arg z - \arg ( i \xoverline \xi ) \right) < \frac {\pi} n \right \}, \hskip 10 pt \xi \in \BX_{ n} (\varsigma).
\end{equation}
In both sectors $\BS^{+}_{ \xi }$ and $\BS^{-}_{ \xi }$ we have
\begin{equation}\label{7eq: condition for S xi}
\Re (i \xi  z) < \Re (i \xi' z) \ \text{ for all } \xi' \in \BX_n(\varsigma),\ \xi' \neq \xi.
\end{equation}
Let $\BS_{ \xi }$ be the sector on which \eqref{7eq: condition for S xi} is satisfied. It is evident that
\begin{equation}\label{7def: S xi}
\BS_\xi = \left\{ z : \left| \arg z - \arg ( i \xoverline \xi ) \right| < \frac {\pi} n \right \}.
\end{equation}


\begin{lem}\label{lem: asymptotic, uniqueness}
	Let $\varsigma \in \{+, - \}$ and $\xi \in \BX_n( \varsigma)$.

	{\rm (1. Existence of asymptotics).} If $n \geq 3$,  on the sector $\BS^{\pm}_{\xi }$, all the solutions of the Bessel equation  of sign $\varsigma$ have asymptotic representation a multiple of $\widehat J (z; \ulambda; \xi')$ for some $\xi' \in \BX_n( \varsigma)$. If $ n = 2$, the same assertion is true with  $\BS^{\pm}_{\xi }$ replaced by  $\BS_{\xi }$.
	
	{\rm (2. Uniqueness of the solution).}
	There is a unique solution of the Bessel equation of sign $\varsigma$ that possesses $\widehat J (z; \ulambda; \xi )$ as its asymptotic expansion on $\BS_{\xi }$ or any of its open subsector, and we shall denote this solution by $J (z; \ulambda; \xi )$. Moreover, $J^{(j)} (z; \ulambda; \xi ) \sim \widehat J^{(j)} (z; \ulambda; \xi )$ on $\BS_{\xi }$ for any $j \geq 0$.
\end{lem}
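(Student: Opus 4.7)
The plan is to deduce both assertions from Lemma \ref{lem: asymptotic, existence of a solution} combined with a standard dominance argument on Stokes sectors. First I would observe that each sector $\BS^{\pm}_{\xi'}$ (of central angle $\pi/n$) and $\BS_{\xi'}$ (of central angle $2\pi/n$) has angle at most $\pi$ for $n \geq 2$; Lemma \ref{lem: asymptotic, existence of a solution} therefore produces, for every $\xi' \in \BX_n(\varsigma)$, a solution of the Bessel equation whose asymptotic expansion (together with those of its derivatives) on the chosen sector is $\widehat J(z;\ulambda;\xi')$. Since the exponentials $e^{in\xi' z}$ have pairwise distinct growth rates on any open sector, any $n$ such solutions built on a common sector are linearly independent and hence form a fundamental system of the Bessel equation of sign $\varsigma$. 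In particular, existence of a solution with asymptotic $\widehat J(z;\ulambda;\xi)$ on $\BS_\xi$, together with the corresponding derivative statement, is immediate.

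For part (1), fix $\BS = \BS^{\pm}_\xi$ when $n \geq 3$ (or $\BS = \BS_\xi$ when $n = 2$). By construction $\BS$ avoids all separation rays in its interior, so $\Re(in\xi' z)$ is strictly linearly ordered as a function of $\xi' \in \BX_n(\varsigma)$ uniformly on $\BS$. Expand an arbitrary solution $w$ in the fundamental system $\{J^{\BS}(z;\ulambda;\xi')\}_{\xi' \in \BX_n(\varsigma)}$ as $w = \sum_{\xi'} c_{\xi'} J^{\BS}(z;\ulambda;\xi')$, and let $\xi_0$ be the element, among those with $c_{\xi_0} \neq 0$, that maximises $\Re(in\xi' z)$ on $\BS$. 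For every other such $\xi'$ the ratio $|e^{in\xi' z}/e^{in\xi_0 z}|$ decays faster than any negative power of $|z|$ throughout $\BS$, so those contributions are absorbed into the error term of any truncation of $c_{\xi_0}\widehat J(z;\ulambda;\xi_0)$. Consequently $w \sim c_{\xi_0}\widehat J(z;\ulambda;\xi_0)$ on $\BS$, which is the asserted form.

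For part (2), only uniqueness remains. Definition \eqref{7def: S xi} together with \eqref{7eq: condition for S xi} ensures that $e^{in\xi z}$ is strictly the most recessive of all the $e^{in\xi' z}$ throughout $\BS_\xi$. Suppose $J_1$ and $J_2$ both have asymptotic expansion $\widehat J(z;\ulambda;\xi)$ on $\BS_\xi$ (or on a fixed open subsector); then $w = J_1 - J_2$ is asymptotic to $0$ on $\BS_\xi$, and in particular on the subsector $\BS^{+}_\xi$ (for $n = 2$ work directly on $\BS_\xi$). Expanding $w$ in the fundamental system on $\BS^{+}_\xi$ and invoking the dominance argument of part (1), any nonzero coefficient $c_{\xi'}$ with $\xi' \neq \xi$ would force $w$ to be asymptotic to a nonzero multiple of $\widehat J(z;\ulambda;\xi')$, contradicting $w \sim 0$ since $e^{in\xi' z}z^{-(n-1)/2}$ strictly dominates $e^{in\xi z} z^{-N}$ for every $N$ on $\BS^{+}_\xi$. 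Hence all such $c_{\xi'}$ vanish, leaving $w = c_\xi J^{\BS^{+}}(z;\ulambda;\xi)$, and the hypothesis $w \sim 0$ then forces $c_\xi = 0$. Thus $w$ vanishes identically on the open set $\BS^{+}_\xi$, and by analyticity of solutions on the nonsingular domain $\BU$ we conclude $J_1 = J_2$ on all of $\BU$.

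The main obstacle is keeping track of the dominance hierarchy among the $n$ exponentials $e^{in\xi' z}$ across the full sector $\BS_\xi$: although $e^{in\xi z}$ remains the most recessive throughout $\BS_\xi$, the relative ordering among the $\xi' \neq \xi$ can flip at the internal separation ray bisecting $\BS_\xi$, so a single fundamental system with uniformly valid asymptotic description over all of $\BS_\xi$ need not exist. Consequently the uniqueness argument cannot be read off directly from one expansion on $\BS_\xi$, but must be carried out on one half $\BS^{\pm}_\xi$ at a time and then promoted to all of $\BU$ via the analyticity of solutions to the Bessel equation.
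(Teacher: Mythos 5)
Your overall skeleton matches the paper's: existence on $\BS_\xi$ comes from Lemma~\ref{lem: asymptotic, existence of a solution}, uniqueness is established on a half-sector $\BS^\pm_\xi$, and then the full statement is promoted by analytic continuation on $\BU$. Where you diverge is that the paper simply cites \cite[Theorem 15.1]{Wasow} for part (1) and \cite[Corollary to Theorem 15.3]{Wasow} for uniqueness on $\BS^\pm_\xi$, whereas you reprove both by expanding an arbitrary solution in a fundamental system $\{J^{\BS}(z;\ulambda;\xi')\}$ on a separation-ray--free sector and running the dominance argument. That is a legitimate, more self-contained alternative: it makes transparent exactly what is being used (the strict, $z$-uniform ordering of $\Re(in\xi' z)$ on a Stokes sector), whereas the paper's version trades that transparency for brevity via the black-box citations. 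Your closing paragraph correctly identifies why the half-sector detour is necessary for $n\geq 3$: the dominance ordering among $\xi' \neq \xi$ flips at the internal separation ray bisecting $\BS_\xi$.

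Two small points you should tighten. First, in part (2) you say ``and in particular on the subsector $\BS^+_\xi$,'' but the given open subsector need not contain or even meet $\BS^+_\xi$; what is guaranteed is only that at least one of $\BS \cap \BS^+_\xi$ and $\BS \cap \BS^-_\xi$ is a nonempty open sector, and the dominance argument must be run there (the paper makes this explicit). Second, the dominance estimates $|e^{in\xi' z}/e^{in\xi_0 z}| = o(|z|^{-M})$ are uniform only on closed subsectors strictly interior to $\BS^\pm_\xi$, not up to the separation-ray boundary; this is harmless because asymptotic expansions on an open sector are, by convention (and in Wasow), asserted uniformly on closed subsectors, but it is worth saying so explicitly when you present the argument rather than a citation.
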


\begin{proof}
	(1) follows directly from \cite[Theorem 15.1]{Wasow}.
	
	For $n = 2$, since \eqref{7eq: condition for S xi} holds for the sector $\BS_{\xi }$, (2) is true according to \cite[Corollary to Theorem 15.3]{Wasow}. Similarly, if $n \geq 3$, (2) is true with  $\BS_{\xi }$ replaced by  $\BS^{\pm}_{\xi }$. Thus there exists a unique solution of the Bessel equation  of sign $\varsigma$ possessing $\widehat J (z; \ulambda; \xi )$ as its asymptotic expansion on $\BS^{\pm}_{\xi }$ or any of its open subsector. For the moment, we denote this solution   by  $J^{\pm} (z; \ulambda; \xi )$.
	On the other hand, because $\BS_{\xi }$ has central angle $\frac 2 {\,n\,} \pi < \pi$, there exists a solution $J (z; \ulambda; \xi )$ with asymptotic $\widehat J (z; \ulambda; \xi )$ on a given open subsector $\BS \subset \BS_{\xi }$ due to Lemma \ref{lem: asymptotic, existence of a solution}. Observe that at least one of $\BS \cap \BS^+_{\xi }$ and $\BS \cap \BS^-_{\xi }$ is a nonempty open sector, say $\BS \cap \BS^+_{\xi } \neq \O $, then the uniqueness of $J (z; \ulambda; \xi )$ follows from that of $J^{+} (z; \ulambda; \xi )$ along with the principle of analytic continuation.
\end{proof}

\begin{prop}\label{prop: J (z; lambda; xi) on a large sector}
	Let $\varsigma \in \{+, - \}$, $\xi \in \BX_n( \varsigma)$, $ \vartheta $ be a small positive constant, say $0 < \vartheta < \frac 1 2 \pi $, and define
	\begin{equation}\label{7eq: S' xi (delta)}
	\BS'_{\xi } (\vartheta) = \left\{ z : \left| \arg z - \arg ( i \xoverline \xi) \right|  < \pi + \frac {\pi} n - \vartheta \right \}.
	\end{equation} Then $J (z; \ulambda; \xi )$ is the unique solution of the Bessel equation  of sign $\varsigma$ that has the asymptotic expansion $\widehat J (z; \ulambda; \xi )$ on $\BS'_{\xi } (\vartheta)$. Moreover, $ J^{(j)} (z; \ulambda; \xi ) \sim \widehat J^{(j)} (z; \ulambda; \xi )$ on $ \BS'_{\xi }(\vartheta)$ for any   $j \geqslant 0$.
\end{prop}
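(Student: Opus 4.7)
First, since the coefficients of the Bessel equation \eqref{7eq: differential equation, variable z} are analytic on $\BU$, the solution $J(z;\ulambda;\xi)$ provided by Lemma~\ref{lem: asymptotic, uniqueness}(2) extends as an analytic function to all of $\BU$, and its uniqueness on $\BS'_\xi(\vartheta)$ is immediate from the uniqueness on $\BS_\xi \subset \BS'_\xi(\vartheta)$. The substantive task is to extend the asymptotic $J(z;\ulambda;\xi) \sim \widehat J(z;\ulambda;\xi)$ from $\BS_\xi$ (central angle $2\pi/n$) to $\BS'_\xi(\vartheta)$ (central angle $2\pi + 2\pi/n - 2\vartheta$).

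The plan is to cover $\BS'_\xi(\vartheta)$ by a chain of overlapping subsectors of the form $\BS^\pm_{\xi_k}$ for consecutive $\xi_k \in \BX_n(\varsigma)$, so that Lemma~\ref{lem: asymptotic, uniqueness}(1) applies on each, with the first link overlapping $\BS_\xi$ itself. Proceeding inductively, on each $\BS^\pm_{\xi_k}$ in the chain Lemma~\ref{lem: asymptotic, uniqueness}(1) will produce an asymptotic $J(z;\ulambda;\xi) \sim c_k \widehat J(z;\ulambda;\xi'_k)$ for some constant $c_k$ and some $\xi'_k \in \BX_n(\varsigma)$; the objective is to show $c_k = 1$ and $\xi'_k = \xi$ at every step. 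This will be done by matching with the asymptotic from the previous step on the nonempty overlap: two formal series of the shape \eqref{7eq: formal solution, asymptotic} that agree modulo the asymptotic error on a subsector of positive angular width must coincide, unless one of the exponential factors is uniformly exponentially subdominant there, in which case it is simply absorbed into the error.

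The principal difficulty, and the crux of the argument, will be the geometric verification that $\pi + \pi/n - \vartheta$ is exactly the correct angular extent. The key fact is that on every sub-sector $\BS^\pm_{\xi_k}$ visited in the chain within $\BS'_\xi(\vartheta)$, the formal series $\widehat J(z;\ulambda;\xi'_k)$ with $\xi'_k \neq \xi$ is exponentially subdominant to $\widehat J(z;\ulambda;\xi)$, so that any Stokes contribution picked up when crossing a separation ray inside $\BS'_\xi(\vartheta)$ is absorbed into the asymptotic error and leaves the leading asymptotic intact as $\widehat J(z;\ulambda;\xi)$. The bound $\pi + \pi/n - \vartheta$ marks exactly the angular distance from $\arg(i\xoverline\xi)$ beyond which some other $e^{in\xi' z}$ first becomes uniformly dominant over $e^{in\xi z}$ on an adjacent subsector of width $\pi/n$, at which threshold the hidden Stokes contribution would become visible and the asymptotic would fail; the parameter $\vartheta$ provides the necessary uniformity margin.

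The derivative statement $J^{(j)}(z;\ulambda;\xi) \sim \widehat J^{(j)}(z;\ulambda;\xi)$ will follow by applying the same chain-of-sectors argument to the first-order system \eqref{7eq: differential equation, matrix form} and its formal fundamental matrix $\widehat \Phi$ constructed in \S\ref{sec: formal solutions at infinity}: the $(j{+}1)$-st entry of the column of $T^{-1}\widehat \Phi$ indexed by $\xi$ is precisely $\widehat J^{(j)}(z;\ulambda;\xi)$ for $0 \leq j \leq n-1$, and the higher derivatives are obtained from the Bessel equation \eqref{7eq: differential equation, variable z} as rational combinations of $J, J', \ldots, J^{(n-1)}$ with coefficients analytic on $\BU$.
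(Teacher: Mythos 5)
The central step of your proposal does not go through as stated. The open sectors $\BS^{\pm}_{\xi_k}$ of \eqref{7def: S pm xi} are carved out by the separation rays $\arg z = \arg(i\,\xoverline{\xi'})$, $\xi' \in \BX_{2n}(+)$, which partition $\BC\smallsetminus\{0\}$ into $2n$ open sectors of angular width $\pi/n$. For consecutive $\xi_k$ these subsectors are pairwise disjoint, sharing at most a boundary ray, so there is no nonempty open overlap on which your inductive matching ``$c_k=1$, $\xi'_k=\xi$'' could be carried out. The ``chain of overlapping subsectors of the form $\BS^{\pm}_{\xi_k}$'' that your argument rests on does not exist.

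The missing ingredient is Lemma~\ref{lem: asymptotic, existence of a solution}, which you never invoke: it gives \emph{existence} of a solution with the asymptotic $\widehat J(z;\ulambda;\xi)$ itself (not merely a multiple of some $\widehat J(\cdot;\ulambda;\xi')$) on any open sector of central angle at most $\pi$. The paper applies it to the two sectors $\BS^{\pm}_{\xi}(\vartheta)$, which have central angle exactly $\pi$ and each overlap $\BS_{\xi}$ in a sector of width $\vartheta>0$; Lemma~\ref{lem: asymptotic, uniqueness}(2) then forces the resulting solution to coincide with $J(z;\ulambda;\xi)$ on the overlap, and analytic continuation extends the identification over $\BS'_\xi(\vartheta) = \BS_\xi \cup \BS^+_\xi(\vartheta) \cup \BS^-_\xi(\vartheta)$. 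No step-by-step tracking of Stokes contributions is needed: the admissible width $\pi$ in the existence lemma already encodes why the asymptotic persists over an angular span of $\pi + \pi/n$. Your paragraph on the exponential subdominance threshold is a correct heuristic, but making it rigorous along your lines would amount to re-proving the Wasow theorem; citing it once on a sector of width $\pi$ is shorter and avoids the overlap problem altogether. The derivative statement inherits the same gap, though your remark on extracting $\widehat J^{(j)}$ from $T^{-1}\widehat\Phi$ and the equation itself is correct as content once the main argument is repaired.
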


\begin{proof}
	Following from Lemma \ref{lem: asymptotic, existence of a solution}, there exists a solution of the Bessel equation  of sign $\varsigma$ that has the asymptotic expansion $\widehat J (z; \ulambda; \xi )$ on the open sector
	\begin{equation*}
	\begin{split}
	\BS^{\pm}_{\xi } (\vartheta) = \left\{ z : \frac {\pi} n - \vartheta < \pm \lp \arg z - \arg ( i \xoverline \xi ) \rp < \pi + \frac {\pi} n - \vartheta \right \}.
	\end{split}
	\end{equation*}
	On the nonempty open sector $\BS_{\xi } \cap \BS^{\pm}_{\xi } (\vartheta)$ this solution must be identical with  $J (z; \ulambda; \xi )$ by Lemma \ref{lem: asymptotic, uniqueness} (2) and hence is equal to  $J (z; \ulambda; \xi )$ on $\BS_{\xi } \cup \BS^{\pm}_{\xi } (\vartheta)$ due to the principle of analytic continuation. Therefore, the region of validity of the asymptotic  $ J (z; \ulambda; \xi ) \sim \widehat J (z; \ulambda; \xi )$ may be widened from $\BS_{\xi }$ onto $\BS'_{\xi } (\vartheta) = \BS_{\xi } \cup \BS^{+}_{\xi } (\vartheta) \cup \BS^{-}_{\xi } (\vartheta)$. 
	In the same way, Lemma \ref{lem: asymptotic, existence of a solution} and \ref{lem: asymptotic, uniqueness} (2) also imply  that $ J^{(j)} (z; \ulambda; \xi ) \sim \widehat J^{(j)} (z; \ulambda; \xi )$ on $  \BS'_{\xi }(\vartheta)$. 
\end{proof}

\begin{cor}\label{prop: J (z; lambda; xi) form a fundamental set of solutions}
	Let $\varsigma \in \{+, - \}$.
	All the $J (z; \ulambda; \xi )$, with $\xi \in \BX_n(\varsigma)$, form a fundamental set of solutions of the Bessel equation  of sign $\varsigma$.
\end{cor}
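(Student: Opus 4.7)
The Bessel equation of order $n$ has an $n$-dimensional space of analytic solutions on $\BU$, so, since $\lvert \BX_n(\varsigma) \rvert = n$, the task is to show that the $J(z; \ulambda; \xi)$, $\xi \in \BX_n(\varsigma)$, are linearly independent. My plan is to compute their Wronskian in two ways: first, via Abel's identity, to see that it has a very rigid shape up to a multiplicative constant; then, via the asymptotic expansions from Lemma \ref{lem: asymptotic, uniqueness} and Proposition \ref{prop: J (z; lambda; xi) on a large sector}, to evaluate that constant and show it is nonzero.

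For the first step, I would write the Bessel equation in the normalized form $w^{(n)} + p_{n-1}(z)\, w^{(n-1)} + \dots + p_0(z)\, w = 0$, noting from \eqref{7eq: differential equation, variable z} that $p_{n-1}(z) = V_{n,n-1}(\ulambda)/z$, while all other $p_j$ are regular at $\infty$ in the sense of $z^{-1}$ being the integrand. Applying Abel's identity, the Wronskian $W(z) = \det\bigl(J^{(j-1)}(z; \ulambda; \xi_k)\bigr)_{j,k=1}^{n}$ satisfies
\[
W(z) \;=\; C \exp\!\Bigl(-\!\int p_{n-1}(z)\,dz\Bigr) \;=\; C\, z^{-V_{n,n-1}(\ulambda)} \;=\; C\, z^{-n(n-1)/2},
\]
where the last equality uses Lemma \ref{lem: simple facts on V (lambda)} (2). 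Thus $W$ is either identically zero or nowhere vanishing on $\BU$, so linear independence reduces to $C \neq 0$.

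For the second step, I would determine $C$ by matching to the leading asymptotics. Label the elements of $\BX_n(\varsigma)$ as $\xi_1,\dots,\xi_n$ and lift them to $\BU$ so that $\arg(i\bar\xi_k)$ are equally spaced with gaps $2\pi/n$. The nonemptiness of the intersection $\bigcap_{k=1}^n \BS'_{\xi_k}(\vartheta)$ (for small $\vartheta > 0$) follows from a direct computation: each $\BS'_{\xi_k}(\vartheta)$ has half-width $\pi+\pi/n-\vartheta$, and the centers span a range of $2\pi(n-1)/n$, so the intersection is a nonempty sector of half-width $2\pi/n - \vartheta$ centered at the average of the $\arg(i\bar\xi_k)$. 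On this common sector Proposition \ref{prop: J (z; lambda; xi) on a large sector} gives, for $|z| \to \infty$,
\[
J^{(j)}(z; \ulambda; \xi_k) \;=\; (in\xi_k)^{j}\, e^{in\xi_k z}\, z^{-(n-1)/2}\bigl(1 + O(z^{-1})\bigr),
\]
uniformly in $k$. Factoring the leading terms out of the determinant produces $z^{-n(n-1)/2}\,\exp\bigl(inz\sum_{k}\xi_k\bigr)$ times the Vandermonde determinant $\det\bigl((in\xi_k)^{j-1}\bigr)_{j,k}$. Since the $\xi_k$ are the distinct roots of $X^n - \varsigma = 0$ and $n\geq 2$, we have $\sum_k \xi_k = 0$, so the exponential factor equals $1$; the Vandermonde $(in)^{n(n-1)/2}\prod_{k<k'}(\xi_{k'}-\xi_k)$ is nonzero. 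Comparing with the Abel form above forces
\[
C \;=\; (in)^{n(n-1)/2}\prod_{1\leq k<k'\leq n}(\xi_{k'}-\xi_k) \;\neq\; 0,
\]
so $W$ never vanishes, the $J(z; \ulambda; \xi)$ are linearly independent, and they form a fundamental set.

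The main technical point, and likely the only real obstacle, is the sector bookkeeping: one must invoke the freedom on $\BU$ to choose lifts of the $\xi_k$ so that the wide sectors $\BS'_{\xi_k}(\vartheta)$ have a common subsector on which all $n$ asymptotic expansions are simultaneously valid. The arithmetic identity $\sum_{\xi \in \BX_n(\varsigma)}\xi = 0$ (Vieta) and the cancellation it produces is crucial; without it the Wronskian would not simplify to a power of $z$ in the asymptotic regime and would not match Abel's identity. Everything else is routine.
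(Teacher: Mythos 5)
Your proof is correct. The paper states this result as a corollary with no explicit proof — it is treated as an immediate consequence of Lemma~\ref{lem: asymptotic, uniqueness} and Proposition~\ref{prop: J (z; lambda; xi) on a large sector}, the implicit reasoning being that the $n$ solutions carry $n$ distinct exponential factors $e^{in\xi z}$ at infinity and are therefore independent. Your Wronskian computation via Abel's identity is a clean and rigorous route to the same conclusion, with the bonus that it actually identifies the Wronskian as $(in)^{n(n-1)/2}\prod_{k<k'}(\xi_{k'}-\xi_k)\,z^{-n(n-1)/2}$. The two ingredients you flag are indeed the substance of the argument: the Vieta cancellation $\sum_{\xi \in \BX_n(\varsigma)}\xi = 0$ (valid for $n\geq 2$; the case $n=1$ is trivial) makes the exponential factor in the Wronskian disappear so that Abel's rigid form $Cz^{-n(n-1)/2}$ is matched; and the sector bookkeeping (with $\arg(i\overline{\xi_k})$ equally spaced with gaps $2\pi/n$, each $\BS'_{\xi_k}(\vartheta)$ of half-width $\pi + \pi/n - \vartheta$, the common intersection has positive width $4\pi/n - 2\vartheta$) is exactly what ensures all $n$ expansions from Proposition~\ref{prop: J (z; lambda; xi) on a large sector} are simultaneously valid on a ray going to infinity. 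One small remark worth keeping in mind: the corollary implicitly presupposes a compatible choice of lifts of the $\xi$ to $\BU$ (as the paper signals in the footnote to the definition of $\BX_N(\varsigma)$), and your equally-spaced choice is the natural one; with a maliciously shifted lift the sectors $\BS'_{\xi_k}(\vartheta)$ need not intersect and the functions $J(z;\ulambda;\xi_k)$ themselves would change by monodromy, so it is right that you fixed a concrete convention before running the argument.
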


\begin{rem}\label{rem: n=2, asymptotics}
	If $n = 2$, by \cite[3.7 (8), 3.71 (18), 7.2 (1, 2), 7.23 (1, 2)]{Watson} we have the following formula of $J(z; \lambda, - \lambda; \xi)$, with $\xi = \pm 1, \pm i$, and the corresponding sector on which its asymptotic expansion is valid
	\begin{equation*}
	\begin{split}
	& J(z; \lambda, - \lambda; 1) = \sqrt {\pi i} e^{\pi i \lambda} H^{(1)}_{2 \lambda} (2 z), \hskip 10 pt 
	\BS'_{1} (\vartheta) = \left\{ z : - \pi + \vartheta < \arg z < 2 \pi - \vartheta \right\} ;\\
	& J(z; \lambda, - \lambda; - 1) = \sqrt {- \pi i} e^{- \pi i \lambda} H^{(2)}_{2 \lambda} (2 z), \hskip 10 pt 
	\BS'_{-1} (\vartheta) = \left\{z : - 2 \pi + \vartheta < \arg z < \pi - \vartheta \right\};\\
	& J(z; \lambda, - \lambda; i) = \frac 2  {\sqrt \pi}K_{2 \lambda} (2 z), \hskip 10 pt 
	\BS'_{i} (\vartheta) = \left\{ z :  |\arg z| < \frac 3 2 \pi - \vartheta \right\}; \\
	& J(z; \lambda, - \lambda; - i) = 2 \sqrt \pi I_{2 \lambda} (2 z) - \frac {2 i}  {\sqrt \pi} e^{2\pi i \lambda} K_{2 \lambda} ( 2 z), \\
	& \hskip 130 pt
	\BS'_{- i} (\vartheta) = \left\{ z : - \frac 1 2 \pi + \vartheta < \arg z < \frac 5 2 \pi - \vartheta \right\} .
	\end{split}
	\end{equation*}
\end{rem}


\begin{lem}\label{lem: J(z; lambda; xi) relations to J(xi z; lambda; 1)}
	Let $\xi\in \BX_{2n} (+)$. We have $$J( z; \ulambda; \xi) = (\pm \xi)^{ \frac {n-1} 2} J(\pm \xi z; \ulambda; \pm 1),$$
	and $B_m (\ulambda; \xi) = (\pm \xi)^{- m} B_m (\ulambda; \pm 1)$. 
	
\end{lem}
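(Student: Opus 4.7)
The plan is to characterize both sides of the identity as the unique solution of a common Bessel equation possessing a common asymptotic expansion on a suitable sector, using Lemma \ref{lem: asymptotic, uniqueness} and Proposition \ref{prop: J (z; lambda; xi) on a large sector}. Fix $\xi\in\BX_{2n}(+)$, so that $\xi^{n}\in\{+1,-1\}$ as a complex number; throughout, the $\pm$ in the statement is read as the complex number $\xi^{n}$.

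First I would verify that the right-hand side satisfies the same Bessel equation as $J(z;\ulambda;\xi)$. By the computation underlying Lemma \ref{lem: phi (e pi i m/n z) is also a solution}, if $\varphi(w)$ satisfies the Bessel equation of sign $\varsigma$ and $c$ is any nonzero constant with $c^{n}=\pm 1$, then $\varphi(cz)$ satisfies the Bessel equation of sign $\varsigma\cdot c^{n}$. Applied to $\varphi(w)=J(w;\ulambda;\pm 1)$, which by definition satisfies sign $(\pm 1)^{n}$, with $c=\pm\xi$, one gets sign $(\pm 1)^{n}\cdot(\pm\xi)^{n}=(\pm 1)^{2n}\cdot\xi^{n}=\xi^{n}$, which is the sign of the Bessel equation for $J(z;\ulambda;\xi)$.

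Next I would compute the asymptotic expansion of $(\pm\xi)^{(n-1)/2}J(\pm\xi z;\ulambda;\pm 1)$ by substituting $w=\pm\xi z$ into the asymptotic series for $J(w;\ulambda;\pm 1)$ given by Proposition \ref{prop: formal solution} and Lemma \ref{lem: asymptotic, uniqueness}. Using $(\pm 1)(\pm\xi z)=\xi z$ and the multiplicativity $(\pm\xi z)^{\lambda}=(\pm\xi)^{\lambda}z^{\lambda}$ on $\BU$, this yields
\[
(\pm\xi)^{\frac{n-1}{2}}J(\pm\xi z;\ulambda;\pm 1)\;\sim\;e^{in\xi z}z^{-\frac{n-1}{2}}\sum_{m=0}^{\infty}(\pm\xi)^{-m}B_{m}(\ulambda;\pm 1)z^{-m}.
\]
By Proposition \ref{prop: J (z; lambda; xi) on a large sector}, this asymptotic is valid on the open sector $\{z:\pm\xi z\in\BS'_{\pm 1}(\vartheta)\}$, whose central angle exceeds $2\pi$. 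Consequently it intersects $\BS_{\xi}$ in an open subsector, and on this subsector both sides of the claimed identity are solutions of the same Bessel equation carrying the same leading factor $e^{in\xi z}z^{-(n-1)/2}$.

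By the uniqueness of the solution with prescribed asymptotic expansion (Lemma \ref{lem: asymptotic, uniqueness}(2), applied on a subsector of $\BS_{\xi}$), the two sides must agree as analytic functions on all of $\BU$, and matching coefficients of $z^{-m}$ in the two asymptotic series then forces $B_{m}(\ulambda;\xi)=(\pm\xi)^{-m}B_{m}(\ulambda;\pm 1)$ for every $m\geq 0$. The only real pitfall is bookkeeping on $\BU$: one must use the paper's conventions $-1=e^{i\pi}$ and $w^{\lambda}=e^{\lambda\log w}$ consistently when interpreting $\pm\xi$, $\pm\xi z$, and $(\pm\xi)^{(n-1)/2}$, and must distinguish the value of $c^{n}$ as a complex number (which governs the sign of the transformed Bessel equation) from the value of $\arg c$ in $\BU$ (which governs the argument shift in the asymptotic). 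Once these conventions are applied uniformly, both the matching of signs in Step 1 and the identification of the exponential factor in Step 2 go through without difficulty.
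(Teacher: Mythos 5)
Your proof is correct and follows essentially the same route as the paper: use Lemma \ref{lem: phi (e pi i m/n z) is also a solution} to see the right-hand side solves the correct Bessel equation, then match its asymptotic expansion against $\widehat J(z;\ulambda;\xi)$ on a suitable subsector of $\BS_\xi$ and invoke uniqueness (Lemma \ref{lem: asymptotic, uniqueness}(2)). The detour through $\BS'_{\pm1}(\vartheta)$ via Proposition \ref{prop: J (z; lambda; xi) on a large sector} is harmless but not needed: with the argument convention $\arg(\pm\xi)=\arg\xi-\arg(\pm1)$ one checks directly that $\{z:\pm\xi z\in\BS_{\pm1}\}=\BS_\xi$, which also shows that "central angle exceeds $2\pi$" alone would not suffice on $\BU$ to guarantee the intersection without this identification.
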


\begin{proof}
	By Lemma \ref{lem: phi (e pi i m/n z) is also a solution}, $(\pm \xi)^{ \frac {n-1} 2} J(\pm \xi z; \ulambda; \pm 1)$ is a solution of one of the two Bessel equations of index $\ulambda$. In view of Proposition \ref{prop: formal solution} and Lemma \ref{lem: asymptotic, uniqueness} (2), it  possesses $ \widehat J (z; \ulambda; \xi )$ as its asymptotic expansion on $\BS _{\xi } $ and hence must be identical with $ J( z; \ulambda; \xi)$. 
\end{proof}

\begin{term}\label{term: Bessel functions of the second kind}
	For $\xi \in \BX_{2n} (+)$, $J (z; \ulambda; \xi)$ is called a Bessel function of the second kind.
\end{term}

\begin{rem}
	The results in this section do not provide any information on the asymptotics near zero of Bessel functions of the second kind, and therefore their connections with Bessel functions of the first kind can not be clarified here. We shall nevertheless derive the connection formulae between the two kinds of Bessel functions later in \S {\rm \ref{sec: H-Bessel functions and K-Bessel functions revisited}}, appealing to the asymptotic expansion of the $H$-Bessel function $H^{\pm}  (z; \ulambda)$ on the half-plane $\BH^{\pm}$ that we showed earlier in \S {\rm \ref{sec: Bessel functions of K-type and H-Bessel functions}}. 
\end{rem}

\subsection{Error Analysis for Asymptotic Expansions}\label{sec: Error Bounds for the asymptotic expansions}

The error bound for the asymptotic expansion of $J (z; \ulambda; \xi ) $ with dependence on  $\ulambda$ is always desirable for potential applications in analytic number theory. However, the author does not find any general results on the error analysis for differential equations of order higher than two.
We shall nevertheless combine and generalize the ideas from \cite[\S 5.4]{Coddington-Levinson} and \cite[\S 7.2]{Olver} to obtain an almost optimal error estimate for the asymptotic expansion of the Bessel function $J (z; \ulambda; \xi )$.
Observe that both of their methods have drawbacks for generalizations. \cite{Olver} hardly uses the viewpoint from differential systems as only the second-order case is treated, whereas \cite[\S 5.4]{Coddington-Levinson} is restricted to the positive real axis for more clarified expositions.


\vskip 5 pt

\subsubsection{Preparations}

We retain the notations from \S \ref{sec: formal solutions at infinity}. For  a positive integer $M $ denote by $P_{(M)}$ the polynomial in $z\-$,
\begin{equation*}
P_{(M)} (z) = \sum_{m=0}^M z^{-m} P_m,
\end{equation*} 
and by $\widehat \Phi_{(M)}$ the truncation of $\widehat \Phi$,
\begin{equation*}
\widehat \Phi_{(M)} (z) = P_{(M)} (z) z^{-\frac { n-1} 2} e^{\varDelta z}.
\end{equation*}
By Lemma \ref{lem: entries of Pm}, we have $\left|z^{-m} P_m \right| \lll_{\, m,\, n} \fC^{2m} |z|^{-m}$, so $P_{(M)}\-$ exists as an analytic function for $|z| > c_1 \fC^2$, where $c_1$ is some constant depending only on $M$ and $n$. Moreover,
\begin{equation}
\label{7eq: bound of P (M) P (M) inverse}
\left|  P_{(M)} (z) \right|, \ \big| P_{(M)}\- (z) \big| = O_{\,M,\, n} (1), \hskip 10 pt |z| > c_1 \fC^2.
\end{equation} 
Let $A_{(M)}$ and $E_{(M)}$ be defined by 
\begin{equation*}
A_{(M)} = \widehat \Phi_{(M)}' \widehat \Phi\-_{(M)}, \hskip 10 pt E_{(M)} = A - A_{(M)}.
\end{equation*}
$A_{(M)}$ and $E_{(M)}$ are clearly analytic for $|z| > c_1 \fC^2$.
Since $$E_{(M)} P_{(M)} = A P_{(M)} - \lp P'_{(M)} - \frac {n-1} 2 z\- P_{(M)} + P_{(M)} \varDelta \rp,$$ 
it follows from the construction of $\widehat \Phi$ in \S \ref{sec: formal solutions at infinity} that $E_{(M)} P_{(M)}$ is a polynomial in $z\-$ of the form $\sum_{m=M+1}^{M+n} z^{-m} E_{m}$
so that
\begin{equation*}
\begin{split}
E_{M+1} & = P^o_{M+1} \varDelta - \varDelta P^o_{M+1},\\
E_m & = \sum_{j = m-M}^{\min\{m, n\}} A_j P_{m-j}, \hskip 10 pt M+1 < m \leq M+n.
\end{split}
\end{equation*} 
Therefore, in view of Lemma \ref{lem: entries of Pm}, $| E_{M+1}| \lll_{\,M,\, n} \fC^{2M} $ and $| E_{m}| \lll_{\,m,\, n} \fC^{m + M} $ for $M+1 < m \leq M+n$. It follows that $| E_{(M)}(z) P_{(M)}(z) | \lll_{\, M,\, n} \fC^{2M} z^{-M-1}$ for $|z| > c_1 \fC^2$, and this, combined with (\ref{7eq: bound of P (M) P (M) inverse}), yields
\begin{equation}\label{7eq: bound of E (M)}
| E_{(M)} (z) | = O_{\,M,\, n} \lp \fC^{2M} |z|^{-M-1} \rp.
\end{equation}


By the definition of $A_{(M)}$, for $|z| > c_1 \fC^2$,  $\widehat \Phi_{(M)}$ is a fundamental matrix of the system
\begin{equation}\label{7eq: differential system truncated}
u' = A_{(M)} u.
\end{equation}
We shall regard the differential system (\ref{7eq: differential equation, matrix form 2}), that is,
\begin{equation} \label{7eq: differential system, nonhomogeneous}
u' = A u = A_{(M)} u + E_{(M)} u,
\end{equation}
as a nonhomogeneous system with (\ref{7eq: differential system truncated}) as the corresponding homogeneous system.

\vskip 5 pt

\subsubsection{Construction of a Solution}

Given $l      \in \{ 1, ..., n \}$,   let $$ \widehat \varphi_{(M),\, l     } (z) = p_{(M),\, l     } (z) z^{- \frac { n-1} 2} e^{in \xi_{l     } z}$$ be the $l     $-th column vector of the matrix $\widehat \Phi_{(M)}$, where $ p_{(M),\, l     }$ is the $l     $-th  column vector of $P_{(M)}$. Using a version of the variation-of-constants formula and the method of successive approximations, we shall construct a solution $\varphi_{(M),\, l     } $ of (\ref{7eq: differential equation, matrix form 2}), for $z$ in some suitable domain, satisfying
\begin{equation} \label{7eq: error bounds 1}
\left| \varphi_{(M),\, l     } (z) \right| = O_{\,M,\, n} \lp |z|^{- \frac { n-1} 2} e^{\Re \lp i n \xi_{l     } z \rp} \rp,
\end{equation}
and
\begin{equation} \label{7eq: error bounds 2}
\left| \varphi_{(M),\, l     } (z) - \widehat \varphi_{(M),\, l     } (z) \right| = O_{\,M,\, n} \lp \fC^{2 M} |z|^{-M- \frac { n-1} 2} e^{\Re \lp i n \xi_{l     } z \rp} \rp,
\end{equation}
with the implied constant in \eqref{7eq: error bounds 2} also depending on the chosen domain.

\vskip 5 pt

{\it Step} 1. {\it Constructing the domain and the contours for the integral equation.} For $C \geq c_1 \fC^2$ and $0 < \vartheta < \frac 12 \pi $, define the domain $\BD(C; \vartheta) \subset \BU$ by 
\begin{equation*}
\BD(C; \vartheta) = \left\{ z : \left| \arg z \right| \leq  \pi, |z| > C \right \} \cup \left\{ z : \pi < \left| \arg z \right| < \frac {3} 2 \pi - \vartheta,\ \Re\,  z < - C \right \}.
\end{equation*}
For $k \neq l     $ let $\omega(l     , k) = \arg (i \xoverline \xi_{l     } - i \xoverline \xi_{k}) = \arg (i \xoverline \xi_{l     }) + \arg ( 1 - \xi_{k} \xoverline \xi_{l      })$, and define
\begin{equation*}
\BD_{\xi_l     } (C; \vartheta) = \bigcap_{k \neq l     }  e^{i \omega(l     , k)} \cdot \BD(C; \vartheta).
\end{equation*}
With the observation that
$$
\left\{ \arg ( 1 - \xi_{k} \xoverline \xi_{l      }) : k \neq l      \right \} = \left \{ \lp \frac 1 2 - \frac a n\rp \pi :  a =1, ..., n-1 \right \},
$$
it is straightforward to show that $\BD_{\xi_l     } (C; \vartheta) = i \xoverline \xi_l      \BD' (C; \vartheta)$, where $\BD' (C; \vartheta)$ is defined to be the union of the sector 
\begin{equation*}
\left\{ z : \left| \arg z \right| \leq \frac \pi 2 + \frac {\pi} n,\ |z| > C \right \}
\end{equation*}
and the following two domains
\begin{equation*}
\begin{split}
& \left\{ z : \frac \pi 2 + \frac {\pi} n < \arg z < \pi + \frac {\pi} n - \vartheta,\ \Im \big(e^{- \frac 1 n \pi i } z\big) > C  \right \},\\
& \left\{ z : - \pi  - \frac {\pi} n + \vartheta < \arg z < - \frac \pi 2 - \frac {\pi} n,\ \Im\big(e^{\frac 1 n \pi i } z\big) < - C  \right \}.
\end{split}
\end{equation*}


\begin{figure}
	\begin{center}
		\begin{tikzpicture}
		\draw [dotted, fill=lightgray](0,0) circle [radius=0.75];
		
		\draw [->] (-2,0) -- (3,0);
		\draw [->] (0, -1.75) -- (0,2.5);
		\draw [thick](1.6,0) arc [radius=1.6, start angle=0, end angle= 140];
		\draw [-, thick] (1.6, 0) -- (2.97, 0);
		
		\draw [->] (2.5,0) -- (2.3,0);
		\draw [->](1.6,0) arc [radius=1.6, start angle=0, end angle= 80];
		
		\draw[fill=white] (0.75,0) circle [radius=0.04];
		\node [below right] at (0.75,0) { \footnotesize $C$};
		\draw[fill=white] (-1.2257, 1.0285) circle [radius=0.04];
		\node [below] at (-1.2257, 1.0285) { \footnotesize $z$};
		\draw[fill=white] (1.6, 0) circle [radius=0.04];
		\node [above right] at (1.6, 0) { \footnotesize $|z|$};
		\node [above right] at (1.06, 1.06) { \footnotesize $\EuScript {C}(z)$};
		\node [below] at (0, -1.85) { \small $|\arg z| \leq \pi$};

		\draw [dotted, fill=lightgray](7.75,0) arc [radius=0.75, start angle=0, end angle= 180];
		\draw [fill=lightgray, lightgray] (6.25,0) to (6.25,-1) to (7, -1) to (7, 0) to (6.25,0);
		\draw [fill=lightgray, lightgray] (6.25,0) to (6.25,-0.75) to (5.25, -1.75) to (7, -1.75) to (7, 0) to (6.25,0);
		\draw [dotted](6.25,0) -- (6.25,-0.75);
		\draw [dotted](7 ,0) -- (5.25 ,-1.75);
		
		\draw [->] (4.5,0) -- (10,0);
		\draw [->] (7, -1.75) -- (7,2.5);
		\draw [thick](8.6,0) arc [radius=1.6, start angle=0, end angle= 180];
		
		\draw [-, thick] (5.4, 0) -- (5.4, -0.75);
		\draw [-, thick] (8.6, 0) -- (9.97, 0);
		\draw [->] (9.5,0) -- (9.3,0);
		\draw [->](8.6,0) arc [radius=1.6, start angle=0, end angle= 80];
		\draw [->] (5.4, 0) -- (5.4, -0.5);
		
		\draw [-](7,-0.25) arc [radius=0.25, start angle=270, end angle= 225];
		\node [below] at (6.85,-0.15) {\scriptsize $\vartheta$};
		
		\draw[fill=white] (7.75,0) circle [radius=0.04];
		\node [below] at (7.75,0) {\footnotesize $C$};
		\draw[fill=white] (5.4, -0.75) circle [radius=0.04];
		\node [ left] at (5.4, -0.75) {\footnotesize $z$};
		\draw[fill=white] (5.4, 0) circle [radius=0.04];
		\node [above left] at (5.4, 0) {\footnotesize $\Re  z$};
		\draw[fill=white] (8.6, 0) circle [radius=0.04];
		\node [below] at (8.6, 0) {\footnotesize $- \Re  z$};
		\node [above right] at (8.06, 1.06) {\footnotesize $\EuScript {C}(z)$};
		\node [below] at (7, -1.85) {\small $\pi < \arg z < \frac 3 2\pi - \vartheta$};
		\end{tikzpicture}
	\end{center}
	\caption{$\EuScript C (z) \subset \BD (C; \vartheta)$}\label{fig: C (z)}
\end{figure}

\begin{figure}
	\begin{center}
		\begin{tikzpicture}

		\draw [dotted, fill=lightgray](0.75,0) arc [radius=0.75, start angle=0, end angle= 120];
		\draw [dotted, fill=lightgray](0.75,0) arc [radius=0.75, start angle=0, end angle= -120];
		
		\draw [fill=lightgray, lightgray] (0,0) to (-0.75/2,1.7320508*0.75/2) to (0.75, 0);
		\draw [fill=lightgray, lightgray] (0,0) to (-0.75/2,-1.7320508*0.75/2) to (0.75, 0);
		
		\draw [-](- 1.4,1.7320508*1.4) -- (0,0);
		\draw [-](- 0.8 , -1.7320508*0.8 ) -- (0,0);

		\draw [->] (-1.4,0) -- (3,0);
		\draw [->] (0, -1.7320508*0.8) -- (0,2.5);

		\draw [-](0, 0.25) arc [radius=0.25, start angle=90, end angle= 120];
		\node [above] at (- 0.1, 0.2) {\footnotesize $\frac \pi n$};

		\draw [thick](1.6,0) arc [radius=1.6, start angle=0, end angle= 75];
		\draw [-, thick] (1.6, 0) -- (2.97, 0);

		\draw [->] (2.5,0) -- (2.3,0);
		\draw [->](1.6,0) arc [radius=1.6, start angle=0, end angle= 55];

		\draw[fill=white] (1.6, 0) circle [radius=0.04];
		\draw[fill=white] (0.2588190451*1.6, 0.96592582628*1.6) circle [radius=0.04];
		\node [above] at (0.2588190451*1.6, 0.96592582628*1.6) {\footnotesize $ z$};
		
		\node [above right] at (1.36, 0.66) { \footnotesize $\EuScript {C}'(z)$};
		
		\node [below] at (0, -1.5) { \small $|\arg z| \leq   \frac 1 2 \pi + \frac 1 n   \pi  $};

		\draw [dotted, fill=lightgray](7.75,0) arc [radius=0.75, start angle=0, end angle= 120];
		\draw [fill=lightgray, lightgray] (7,0) to (7 - 1.7320508*1, -1) to (7 -0.75/2- 1.7320508*1,1.7320508*0.75/2-1) to (7 -0.75/2,1.7320508*0.75/2);
		\draw [fill=lightgray, lightgray] (7,0) to (7 -0.75/2,1.7320508*0.75/2) to (7.75, 0);
		\draw [dotted](7 -0.75/2,1.7320508*0.75/2) -- (7 -0.75/2- 1.7320508*1,1.7320508*0.75/2-1);

		\draw [fill=lightgray, lightgray] (7,0) to (7 -0.75/2,1.7320508*0.75/2) to (7 - 1.7320508*1, -1) to (7 -0.96592582628*2.82842712475, 0.258819*2.82842712475);
		\draw [dotted](7 -0.96592582628*2.82842712475, 0.258819*2.82842712475) -- (7,0);
		\draw [dotted](7, 0) -- (7 - 1.7320508*1, -1);
		
		\draw [fill=lightgray, lightgray] (7 - 1.7320508*1, -1) to (7 -0.96592582628*2.82842712475, 0.258819*2.82842712475) to (7 -0.96592582628*2.82842712475, -1);

		\draw [-](7 - 1.4,1.7320508*1.4) -- (7,0);

		\draw [->] (7 -0.96592582628*2.82842712475,0) -- (7+3,0);
		\draw [->] (7 , -1) -- (7,2.5);
		
		\draw [-](7 - 1.7320508*0.125,-0.125) arc [radius=0.25, start angle=210, end angle= 165];
		\draw [fill=lightgray, lightgray](7 - 0.3,-0.05) arc [radius=0.15, start angle=0, end angle= 360];
		\node [left] at (7 - 1.7320508*0.125-0.015, -0.125+0.08) {\scriptsize $\vartheta$};
		
		\draw [-](7, 0.25) arc [radius=0.25, start angle=90, end angle= 120];
		\node [above] at (7 - 0.1, 0.2) {\footnotesize $\frac \pi n$};

		\draw [thick](7+ 1.6,0) arc [radius=1.6, start angle=0, end angle= 120];
		\draw [-, thick] (7+ 1.6, 0) -- (7+ 2.97, 0);
		\draw [->] (7 -0.8-1.7320508/6, 1.7320508*0.8-1/6) -- (7 -0.8-1.7320508/5, 1.7320508*0.8-0.2);
		\draw [-, thick] (7 -0.8, 1.7320508*0.8) -- (7 -0.8-1.7320508/2, 1.7320508*0.8-1/2);
		
		\draw [->] (7+ 2.5,0) -- (7+ 2.3,0);
		\draw [->](7+ 1.6,0) arc [radius=1.6, start angle=0, end angle= 80];

		\draw[fill=white] (7 -0.8, 1.38564) circle [radius=0.04];
		\draw[fill=white] (7+ 1.6, 0) circle [radius=0.04];
		\draw[fill=white] (7 -0.8-1.7320508/2, 1.38564-1/2) circle [radius=0.04];
		\node [above left] at (7 -0.8-1.7320508/2 + 0.2, 1.38564-1/2) {\footnotesize $ z$};
		
		\node [above right] at (8.06, 1.06) {\footnotesize $\EuScript {C}'(z)$};

		\node [below] at (7, -1.5) {\small $\frac 1 2 \pi   + \frac 1 n {\pi}   < \arg z < \pi + \frac 1 n {\pi}   - \vartheta$};
		
		\end{tikzpicture}
	\end{center}
	\caption{$ \EuScript C' (z) \subset \BD' (C; \vartheta)$}\label{fig: D xi (C)}
\end{figure}

For $z \in \BD (C; \vartheta)$ we define a contour $\EuScript C (z) \subset \BD (C; \vartheta)$ that starts from $\infty$ and ends at $z$; see Figure \ref{fig: C (z)}. For $z \in \BD (C; \vartheta)$ with $ |\arg z| \leq \pi$, the contour $\EuScript C (z)$ consists of the part of the positive axis where the magnitude exceeds $ |z|$ and an arc of the circle centered at the origin of radius $|z|$, with angle not exceeding $\pi$ and endpoint $z$. For $z \in \BD (C; \vartheta)$ with $\pi < |\arg z| < \frac {3 } 2 \pi - \vartheta$, the definition of the contour $\EuScript C (z)$ is modified so that the circular arc has radius $- \Re  z$ instead of $|z|$ and ends at $\Re  z$ on the negative real axis, and that $\EuScript C (z)$ also consists of a vertical line segment joining $\Re  z$ and $z$. 
The most crucial property that $\EuScript C (z)$ satisfies is the \textit{nonincreasing} of $\Re  \zeta$ along $\EuScript C (z)$.

We also define a contour $\EuScript C' (z)$ for $z \in \BD' (C; \vartheta) $ of a similar shape as $ \EuScript C \left( z \right)$ illustrated in Figure \ref{fig: D xi (C)}.

\vskip 5 pt

{\it Step} 2. {\it Solving the integral equation via successive approximations.}
We first split $\widehat \Phi_{(M)}\-$ into $n$ parts
\begin{equation*}
\widehat \Phi_{(M)}\- = \sum_{k = 1}^n \Psi^{(k)}_{(M)},
\end{equation*}
where the $j$-th row of $\Psi^{(k)}_{(M)}$ is identical with the $k$-th row of $\widehat \Phi_{(M)}\-$, or identically zero, according as $j = k$ or not.

The integral equation to be considered is the following
\begin{equation}\label{7eq: differential equation to integral equation}
u(z) = \widehat \varphi_{(M),\, l     } (z) + \sum_{k \neq l     } \int_{\infty e^{i \omega(l     , k)}}^z K_{k} (z, \zeta) u(\zeta) d \zeta + \int_{\infty i \overline \xi_l     }^z K_{l     } (z, \zeta) u(\zeta) d \zeta,
\end{equation}
where
\begin{equation*}
K_{k} (z, \zeta) = \widehat \Phi_{(M)} (z) \Psi_{(M)}^{(k)} (\zeta) E_{(M)}(\zeta), \hskip 10 pt z, \zeta \in \BD_{\xi_l     } (C; \vartheta), k = 1, ..., n,
\end{equation*}
the integral in the sum is integrated on the contour $ e^{i \omega(l     , k)} \EuScript C \big(e^{ - i \omega(l     , k)} z \big)$, whereas the last integral is on the contour $ i \xoverline \xi_l      \EuScript C' \left(- i  \xi_l      z \right)$. Clearly, all these contours lie in $\BD_{\xi_l     } (C; \vartheta)$. Most importantly, we note that $\Re ((i \xi_{l     } - i \xi_{k}) \zeta )$ is a negative multiple of $\Re \big( e^{- i \omega(l     , k)} \zeta \big)$ and hence is \textit{nondecreasing} along the contour $ e^{i \omega(l     , k)} \EuScript C \big(e^{ - i \omega(l     , k)} z\big)$.

By direct verification, it follows that if $u (z) = \varphi (z)$ satisfies \eqref{7eq: differential equation to integral equation}, with the integrals convergent, then $\varphi$ satisfies \eqref{7eq: differential system, nonhomogeneous}.

In order to solve (\ref{7eq: differential equation to integral equation}), define the successive approximations
\begin{equation}\label{7eq: successive approximation}
\begin{split}
& \varphi^0 (z) \equiv 0,\\
\varphi^{\alpha+1} (z) = \widehat \varphi_{(M),\, l     } (z) + & \sum_{k \neq l     } \int_{\infty e^{i \omega(l     , k)}}^z K_{k} (z, \zeta) \varphi^\alpha(\zeta) d \zeta + \int_{\infty i \overline \xi_l     }^z K_{l     } (z, \zeta) \varphi^\alpha(\zeta) d \zeta.
\end{split}
\end{equation}

The $(j, r)$-th entry of the matrix $\widehat \Phi_{(M)} (z) \Psi_{(M)}^{(k)} (\zeta)$ is given by
\begin{equation*}
\lp \widehat \Phi_{(M)} (z) \Psi_{(M)}^{(k)} (\zeta) \rp_{j r} = \lp P_{(M)} (z) \rp_{j k} \big( P\-_{(M)} (\zeta) \big)_{k r} \lp\frac z {\zeta} \rp^{-\frac { n-1} 2} e^{in \xi_{k} (z - \zeta)}.
\end{equation*}
It follows from (\ref{7eq: bound of P (M) P (M) inverse}, \ref{7eq: bound of E (M)}) that
\begin{equation}\label{7eq: bound of K ell' (z, zeta)}
| K_{k} (z, \zeta) | \leq c_2 \fC^{2M} |z|^{- \frac { n-1} 2} |\zeta|^{-M-1+ \frac { n-1} 2} e^{\Re (in \xi_{k} (z - \zeta))},
\end{equation}
for some constant $c_2$ depending only on $M$ and $n$. Furthermore, we may appropriately choose $c_2$ such that
\begin{equation}\label{7eq: bound of integral zeta -M-1}
\int_{\infty i \overline \xi_l     }^z \left| \zeta \right|^{-M-1} \left|d \zeta \right|, \ \int_{\infty e^{i \omega(l     , k)}}^z \left| \zeta \right|^{-M-1} \left|d \zeta \right| \leq c_2 C^{-M}, \hskip 10 pt  k \neq l     .
\end{equation}

According to \eqref{7eq: successive approximation}, $\varphi^1 (z) = \widehat \varphi_{(M),\, l     } (z) = p_{(M),\, l     } (z) z^{- \frac { n-1} 2} e^{in \xi_{l     } z}$, so
\begin{equation*}
\left|\varphi^1 (z) - \varphi^0 (z) \right| = \left|\varphi^1 (z) \right| \leq c_2 |z|^{- \frac { n-1} 2} e^{\Re (in \xi_l      z)}, \hskip 10 pt z \in \BD_{\xi_l     } (C; \vartheta).
\end{equation*}
We shall show by induction that for all $z \in \BD_{\xi_l     } (C; \vartheta)$
\begin{equation}\label{7eq: successive approximation induction hypothesis}
\left|\varphi^\alpha (z) - \varphi^{\alpha -1} (z) \right| \leq c_2 \lp n c_2^2  \fC^{2M} C^{-M} \rp^{\alpha -1} |z|^{- \frac { n-1} 2} e^{\Re (in \xi_l      z)}.
\end{equation}
Let $z \in \BD_{\xi_l     } (C; \vartheta)$. Assume that (\ref{7eq: successive approximation induction hypothesis}) holds.
From (\ref{7eq: successive approximation}) we have
\begin{equation*}
\left|\varphi^{\alpha+1} (z) - \varphi^\alpha (z) \right|  
\leq \sum_{k \neq l     } R_{k} + R_{l     },
\end{equation*}
with 
\begin{align*}
R_{k} & = \int_{\infty e^{i \omega(l     , k)}}^z | K_{k} (z, \zeta)| \left|\varphi^\alpha (\zeta) - \varphi^{\alpha-1} (\zeta) \right| \left|d \zeta\right|,\\
R_{l     } & = \int_{\infty i \overline \xi_l     }^z | K_{l     } (z, \zeta)| \left|\varphi^\alpha (\zeta) - \varphi^{\alpha-1} (\zeta) \right| \left|d \zeta\right|.
\end{align*}
It follows from (\ref{7eq: bound of K ell' (z, zeta)}, \ref{7eq: successive approximation induction hypothesis}) that $R_{k} $ has bound
\begin{equation*}
c_2^2 \fC^{2M} \lp n c_2^2  \fC^{2M} C^{-M} \rp^{\alpha-1}  |z|^{- \frac { n-1} 2} e^{\Re (in \xi_l      z)} \int_{\infty e^{i \omega(l     , k)}}^z |\zeta|^{-M-1} e^{\Re (in (\xi_{l     } - \xi_{k}) (\zeta - z))} \left|d \zeta\right|.
\end{equation*}
Since $\Re ((i \xi_{l     } - i \xi_{k}) \zeta)$ is nondecreasing on the integral contour,
\begin{equation*}
R_{k} \leq c_2^2 \fC^{2M} \lp n c_2^2 \fC^{2M} C^{-M} \rp^{\alpha-1} |z|^{- \frac { n-1} 2} e^{\Re (in \xi_l      z)} \int_{\infty e^{i \omega(l     , k)}}^z |\zeta|^{-M-1} \left|d \zeta\right|,
\end{equation*}
and (\ref{7eq: bound of integral zeta -M-1}) further yields
\begin{equation*}
R_{k} \leq c_2 n^{\alpha -1} \lp c_2^2  \fC^{2M} C^{-M} \rp^{\alpha} |z|^{- \frac { n-1} 2} e^{\Re (in \xi_l      z)}.
\end{equation*}
Similar arguments  show that $R_{l     }$ has the same bound as $R_{k}$. Thus (\ref{7eq: successive approximation induction hypothesis}) is true with $\alpha$ replaced by $\alpha+1$.

Set the constant $C = c \fC^2$ such that $c^M \geq 2 n c_2^2$. Then $ n c_2^2 \fC^{2M} C^{-M} \leq \frac 1 2$, and therefore the series $\sum_{\alpha = 1}^\infty (\varphi^\alpha(z) - \varphi^{\alpha-1}(z))$ absolutely and compactly converges. The limit function $\varphi_{(M),\, l     } (z)$ satisfies (\ref{7eq: error bounds 1}) for all $z \in \BD_{\xi_l     } (C; \vartheta)$. More precisely,
\begin{equation}\label{7eq: precise error bound, 1}
\left| \varphi_{(M),\, l     } (z) \right| \leq 2 c_2 |z|^{- \frac { n-1} 2} e^{\Re \lp i n \xi_{l     } z \rp}, \hskip 10 pt z \in \BD_{\xi_l     } (C; \vartheta).
\end{equation} 
Using a standard argument for successive approximations, it follows that $\varphi_{(M),\, l     }$ satisfies the integral equation \eqref{7eq: differential equation to integral equation} and hence the differential system \eqref{7eq: differential system, nonhomogeneous}.

The proof of the error bound (\ref{7eq: error bounds 2}) is similar. Since $\varphi_{(M),\, l     } (z)$ is a solution of the integral equation \eqref{7eq: differential equation to integral equation}, we have
\begin{equation*}
\left| \varphi_{(M),\, l     } (z) - \widehat \varphi_{(M),\, l     } (z) \right| \leq \sum_{k \neq l     } S_{k} + S_l     ,
\end{equation*}
where
\begin{align*}
S_{k}  = \int_{\infty e^{i \omega(l     , k)}}^z | K_{k} (z, \zeta)| \left|\varphi_{(M),\, l     } (\zeta) \right| \left|d \zeta\right|, \hskip 5 pt
S_{l     } = \int_{\infty i \overline \xi_l     }^z | K_{l     } (z, \zeta)| \left|\varphi_{(M),\, l     } (\zeta) \right| \left|d \zeta\right|.
\end{align*}
With the observation that $|\zeta| \geq \sin \vartheta \cdot |z|$ for $z \in \BD_{\xi_l     } (C; \vartheta)$ and $\zeta$ on the integral contours given above, we may replace \eqref{7eq: bound of integral zeta -M-1} by the following
\begin{equation}\label{7eq: bound of integral zeta -M-1, 2}
\int_{\infty i \overline \xi_l     }^z \left| \zeta \right|^{-M-1} \left|d \zeta \right|, \ \int_{\infty e^{i \omega(l     , k)}}^z \left| \zeta \right|^{-M-1} \left|d \zeta \right| \leq c_2 |z|^{-M}, \hskip 10 pt  k \neq l     ,
\end{equation}
with  $c_2$ now also depending on $\vartheta$.

The bounds (\ref{7eq: bound of K ell' (z, zeta)}, \ref{7eq: precise error bound, 1}) of
$ K_{k} (z, \zeta) $ and $\varphi_{(M),\, l     } (z)$ along with \eqref{7eq: bound of integral zeta -M-1, 2} yield
\begin{align*}
S_{k} & \leq 2 c_2^2 \fC^{2M} |z|^{- \frac { n-1} 2} e^{\Re (in \xi_l      z)} \int_{\infty e^{i \omega(l     , k)}}^z |\zeta|^{-M-1} e^{\Re (in (\xi_{l     } - \xi_{k}) (\zeta - z))} \left|d \zeta\right|\\
& \leq 2 c_2^2 \fC^{2M} |z|^{- \frac { n-1} 2} e^{\Re (in \xi_l      z)} \int_{\infty e^{i \omega(l     , k)}}^z |\zeta|^{-M-1} \left|d \zeta\right|\\
& \leq 2 c_2^3 \fC^{2M} |z|^{- M - \frac { n-1} 2} e^{\Re (in \xi_l      z)}.
\end{align*}
Again, the second inequality follows from the fact that $\Re ((i \xi_{l     } - i \xi_{k}) \zeta)$ is nondecreasing on the integral contour.
Similarly, $S_{l     }$ has the same bound as $S_{k}$. Thus \eqref{7eq: error bounds 2} is proven and can be made precise as below
\begin{equation}\label{7eq: precise error bound, 2}
\left| \varphi_{(M),\, l     } (z) - \widehat \varphi_{(M),\, l     } (z) \right| \leq 2 n c_2^3 \fC^{2 M} |z|^{-M- \frac { n-1} 2} e^{\Re \lp i n \xi_{l     } z \rp}, \hskip 10 pt z \in \BD_{\xi_l     }(C; \vartheta).
\end{equation}

\vskip 5 pt

\subsubsection{Conclusion}

Restricting to the sector $\BS^{\pm}_{\xi_{l     }} \cap \{z : |z| > C \} \subset \BD_{\xi_l     } (C; \vartheta)$, with 
$\BS^{\pm}_{\xi_{l     }}$   replaced by $\BS_{\xi_{l     }}$ if $n = 2$, each $\varphi_{(M),\, l     }$ has an asymptotic representation a multiple of $\widehat \varphi_{k}$ for some $k$ according to Lemma \ref{lem: asymptotic, uniqueness} (1). Since $\Re (i \xi_{l     } z) < \Re (i \xi_{j} z)$ for all $ j \neq l     $, the bound (\ref{7eq: error bounds 1}) forces $k = l     $. Therefore, for any positive integer $M  $, $\varphi_{(M),\, l     }$ is identical with the unique solution $\varphi_{l     }$ of the differential system (\ref{7eq: differential equation, matrix form 2}) with asymptotic expansion $\widehat \varphi_{l     }$ on $\BS^{\pm}_{\xi_l     }$ (see Lemma \ref{lem: asymptotic, uniqueness}). Replacing $\varphi_{(M),\, l     }$ by $\varphi_{ l     }$ and absorbing the $M$-th term of $\widehat \varphi_{(M),\, l     }$ into the error bound,  
we may reformulate (\ref{7eq: precise error bound, 2}) as the following error bound for $\varphi_{l     }$
\begin{equation}\label{7eq: error bound 3}
\left| \varphi_{ l     } (z) - \widehat \varphi_{(M-1),\, l     } (z) \right| = O_{\,M,\, \vartheta,\, n} \lp \fC^{2 M} |z|^{-M- \frac { n-1} 2} e^{\Re \lp i n \xi_{l     } z \rp} \rp, \hskip 10 pt z \in \BD_{\xi_l     } (C; \vartheta).
\end{equation}
Moreover, in view of the definition of the sector $\BS'_{\xi_l     }(\vartheta)$ given in (\ref{7eq: S' xi (delta)}), we have
\begin{equation}\label{7eq: S'(delta) subset of D(C)}
\BS'_{\xi_l     }(\vartheta) \cap \left\{z : |z| >   \frac C {\sin \vartheta} \right \} \subset \BD_{\xi_l     } (C; \vartheta).
\end{equation}
Thus the following theorem is finally established by (\ref{7eq: error bound 3}) and  (\ref{7eq: S'(delta) subset of D(C)}).

\begin{thm}\label{thm: error bound}
	Let $\varsigma \in \{+, - \}$, $\xi \in \BX_n ( \varsigma)$, $0 < \vartheta < \frac 12 \pi $, $\BS'_{\xi} (\vartheta)$ be the sector defined as in {\rm (\ref{7eq: S' xi (delta)})}, and $M $ be a positive integer. Then there exists a constant $c$, depending only on $M$, $\vartheta$ and $n$, such that
	\begin{equation}\label{7eq: asymptotic of J(z; lambda, xi)}
	J (z; \ulambda; \xi) = e^{i n \xi z} z^{- \frac { n-1} 2} \lp \sum_{m=0}^{M-1} B_m (\ulambda; \xi) z^{-m} + O_{\,M,\, \vartheta,\, n} \lp \fC^{2 M} |z|^{- M } \rp \rp
	\end{equation}
	for all $z \in \BS'_{\xi} (\vartheta)$ such that $ |z| > c \fC^2 $. Similar asymptotic is valid for all the derivatives of $J (z; \ulambda; \xi)$, where the implied constant of the error estimate is allowed to depend on the order of the derivative.
\end{thm}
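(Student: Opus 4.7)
The plan is to upgrade the qualitative asymptotic of Proposition~\ref{prop: J (z; lambda; xi) on a large sector} into a quantitative one by carrying out a variation-of-constants argument on the diagonalized system \eqref{7eq: differential equation, matrix form 2}, solving the resulting integral equation by successive approximations on a carefully designed domain, and tracking everywhere the dependence on $\fC = \max|\lambda_l|+1$. The uniqueness portion of Lemma~\ref{lem: asymptotic, uniqueness} will then let me identify the constructed solution with $J(z;\ulambda;\xi)$.

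First I would set up the truncated formal solution $\widehat\Phi_{(M)}(z) = P_{(M)}(z) z^{-(n-1)/2} e^{\Delta z}$ coming from Proposition~\ref{prop: formal solution}, and record the bounds $|P_{(M)}(z)|,\, |P_{(M)}\-(z)| = O_{M,n}(1)$ for $|z| > c_1 \fC^2$, using Lemma~\ref{lem: entries of Pm} which bounds entries of $P_m$ by $O(\fC^{2m})$; this is also what forces the threshold $\fC^2$ rather than $\fC$. From $E_{(M)} P_{(M)} = A P_{(M)} - (P'_{(M)} - \tfrac{n-1}{2} z^{-1}P_{(M)} + P_{(M)}\Delta)$ I would read off that $E_{(M)} P_{(M)}$ is a polynomial in $z^{-1}$ starting at $z^{-M-1}$, whose coefficients are again controlled by $\fC^{2M}$ via Lemma~\ref{lem: entries of Pm}; combining with the bound on $P_{(M)}\-$ gives $|E_{(M)}(z)| = O_{M,n}(\fC^{2M}|z|^{-M-1})$.

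Next I would construct, for each $l$, a domain $\BD_{\xi_l}(C;\vartheta) = i\bar\xi_l \BD'(C;\vartheta)$ obtained by intersecting $n-1$ rotated copies of the domain $\BD(C;\vartheta)$, and on it the contours $\EuScript C(z), \EuScript C'(z)$ pictured in Figures~\ref{fig: C (z)}--\ref{fig: D xi (C)}. The crucial geometric property I want is that $\Re((i\xi_l - i\xi_k)\zeta)$ is nondecreasing along the $k$-th contour (so that the exponential factor in the kernel contracts rather than amplifies). Splitting $\widehat\Phi_{(M)}\-$ row-wise as $\sum_k \Psi^{(k)}_{(M)}$ and writing the kernels $K_k(z,\zeta) = \widehat\Phi_{(M)}(z)\Psi^{(k)}_{(M)}(\zeta) E_{(M)}(\zeta)$, I would write down the integral equation \eqref{7eq: differential equation to integral equation} whose solutions automatically satisfy the nonhomogeneous system \eqref{7eq: differential system, nonhomogeneous}, and solve it by Picard iteration starting from $\varphi^1 = \widehat\varphi_{(M),l}$. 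The bound $|K_k(z,\zeta)| \leq c_2 \fC^{2M}|z|^{-(n-1)/2}|\zeta|^{-M-1+(n-1)/2} e^{\Re(in\xi_k(z-\zeta))}$ together with $\int |\zeta|^{-M-1}|d\zeta| \leq c_2 C^{-M}$ on the tail contours, combined with the monotonicity of $\Re((i\xi_l-i\xi_k)\zeta)$, gives the induction bound $|\varphi^{\alpha+1}-\varphi^\alpha| \leq c_2(nc_2^2\fC^{2M}C^{-M})^\alpha |z|^{-(n-1)/2} e^{\Re(in\xi_l z)}$.

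Choosing $C = c\fC^2$ with $c^M \geq 2nc_2^2$ makes the contraction ratio $\leq 1/2$, so the iterates converge to a solution $\varphi_{(M),l}$ of \eqref{7eq: differential system, nonhomogeneous} on $\BD_{\xi_l}(C;\vartheta)$ satisfying both $|\varphi_{(M),l}(z)| \leq 2c_2 |z|^{-(n-1)/2} e^{\Re(in\xi_l z)}$ and, by feeding this back into the integral equation once with the sharper tail bound $\int |\zeta|^{-M-1}|d\zeta| \leq c_2 \sin(\vartheta)^{-M}|z|^{-M}$ valid because $|\zeta| \geq \sin\vartheta \cdot |z|$ throughout the contours, the error estimate $|\varphi_{(M),l}(z)-\widehat\varphi_{(M),l}(z)| = O_{M,\vartheta,n}(\fC^{2M}|z|^{-M-(n-1)/2} e^{\Re(in\xi_l z)})$. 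The remaining step is identification: on the subsector $\BS^\pm_{\xi_l}\cap\{|z|>C\} \subset \BD_{\xi_l}(C;\vartheta)$, Lemma~\ref{lem: asymptotic, uniqueness}(1) forces $\varphi_{(M),l}$ to be a scalar multiple of some $\widehat\varphi_k$; the size bound $|\varphi_{(M),l}| \lesssim |z|^{-(n-1)/2} e^{\Re(in\xi_l z)}$ rules out every $k\neq l$ since $\Re(in\xi_l z) < \Re(in\xi_j z)$ there, so $\varphi_{(M),l} = \varphi_l$, independent of $M$. Absorbing the $M$-th term of $\widehat\varphi_{(M),l}$ into the error and invoking the inclusion $\BS'_{\xi_l}(\vartheta)\cap\{|z|>C/\sin\vartheta\}\subset\BD_{\xi_l}(C;\vartheta)$ yields \eqref{7eq: asymptotic of J(z; lambda, xi)}. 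The asymptotic for derivatives follows either by differentiating the integral equation or by noting that the entire argument applies to any column of the $j$-th derivative of $\widehat\Phi_{(M)}$ with the same kind of bounds coming from Lemma~\ref{lem: entries of Pm}. The main obstacle I anticipate is the bookkeeping in choosing $C$ proportional to $\fC^2$ rather than $\fC$: one must ensure the contraction ratio $nc_2^2\fC^{2M}C^{-M}$ is bounded by $1/2$ uniformly in $\ulambda$, which is exactly why the quadratic growth of the diagonal entries of $P_m$ (Lemma~\ref{lem: entries of Pm}) is essential and is the source of the exponent $2M$ in the final estimate.
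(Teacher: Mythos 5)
Your proposal reproduces the paper's argument in \S\ref{sec: Error Bounds for the asymptotic expansions} essentially step for step: truncating the formal solution, bounding $P_{(M)}$, $P_{(M)}^{-1}$, $E_{(M)}$ via Lemma~\ref{lem: entries of Pm}, constructing $\BD_{\xi_l}(C;\vartheta)$ with the monotonicity of $\Re((i\xi_l - i\xi_k)\zeta)$ along the contours, running the Picard iteration with $C=c\fC^2$, feeding the crude bound back through the integral equation with the $|\zeta|\geq\sin\vartheta\cdot|z|$ refinement, and identifying the solution via Lemma~\ref{lem: asymptotic, uniqueness}(1) and the exponential size comparison. The approach and all key estimates are the same as the paper's; the proof is correct.
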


Finally, we remark that, since $B_m (\ulambda; \xi) z^{-m} $ is of size $O_{m,\, n} \lp \fC^{2 m} |z|^{- m } \rp$, the error bound in \eqref{7eq: asymptotic of J(z; lambda, xi)} is optimal, given that $\vartheta$ is fixed.

\section{Connections between Various Types of Bessel Functions} \label{sec: H-Bessel functions and K-Bessel functions revisited}

Recall from \S \ref{sec: Analytic continuations of the H-Bessel functions} that the asymptotic expansion in Theorem \ref{thm: asymptotic expansion} remains valid for the $H$-Bessel function $H^{\pm} (z; \ulambda)$ on the half-plane $\BH^{\pm} = \left\{ z : 0 \leq \pm \arg z \leq \pi \right\}$ (see \eqref{5eq: asymptotic expansion 1}).
With the observations that $H^{\pm} (z; \ulambda)$ satisfies the Bessel equation of sign $( \pm)^n$, that the asymptotic expansions of $\sqrt n (\pm 2 \pi i)^{- \frac { n-1} 2} H^{\pm} (z; \ulambda)$ and $J (z; \ulambda; \pm 1)$ have exactly the same form and the same leading term due to Theorem \ref{thm: asymptotic expansion} and Proposition \ref{prop: formal solution}, and that $\BS_{\pm 1} = \left\{ z : \lp \frac 1 2    - \frac 1 n \rp \pi   < \pm \arg z < \lp \frac 1 2     + \frac 1 n \rp \pi  \right \} \subset \BH^{\pm} $, 
Lemma \ref{lem: asymptotic, uniqueness} (2) implies the following theorem.

\begin{thm}\label{thm: bridge between H + (z; lambda) and J( z; lambda; 1)}
	We have
	$$ H^{\pm} (z; \ulambda) = n^{-\frac 1 2} (\pm 2 \pi i)^{ \frac { n-1} 2} J (z; \ulambda; \pm 1),$$
	and $B_m(\ulambda; \pm 1) = (\pm i)^{-m} B_m (\ulambda)$.
\end{thm}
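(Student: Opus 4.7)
The plan is to prove the identity by reducing it to the uniqueness statement in Lemma \ref{lem: asymptotic, uniqueness} (2). The three ingredients already at hand are: (i) the function $H^{\pm}(z;\ulambda)$ satisfies the Bessel equation of sign $(\pm)^n = S_n(\pm,\dots,\pm)$, by Theorem \ref{thm: Bessel equations}; (ii) $H^{\pm}(z;\ulambda)$ admits the asymptotic expansion \eqref{5eq: asymptotic expansion 1} on the entire half-plane $\BH^{\pm}$, extending the asymptotic of Theorem \ref{thm: asymptotic expansion} by the analytic continuation of $H$-Bessel functions discussed in \S\ref{sec: Analytic continuations of the H-Bessel functions}; (iii) the sector $\BS_{\pm 1}$ defined in \eqref{7def: S xi} is a subsector of $\BH^{\pm}$, since for $\xi=\pm 1$ one has $\arg(i\xoverline{\xi}) = \pm\pi/2$ and $\BS_{\pm 1}= \{z:|\arg z \mp \pi/2|<\pi/n\}\subset \BH^\pm$.

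First I would renormalize the left-hand side by setting $F^{\pm}(z)= \sqrt n\,(\pm 2\pi i)^{-\frac{n-1}{2}}H^{\pm}(z;\ulambda)$. By (i), $F^{\pm}$ is still a solution of the Bessel equation of sign $(\pm)^n$, which is exactly the sign associated with $\xi=\pm 1$ in the conventions of \S\ref{sec: asymptotic expansions for the Bessel equations} (as $(\pm 1)^n=(\pm)^n$). From \eqref{5eq: asymptotic expansion 1}, the function $F^{\pm}(z)$ admits, on any open subsector of $\BH^{\pm}$, the asymptotic expansion
\begin{equation*}
F^{\pm}(z)\,\sim\, e^{\pm inz}\,z^{-\frac{n-1}{2}}\sum_{m=0}^{\infty}(\pm i)^{-m}B_{m}(\ulambda)\,z^{-m}.
\end{equation*}
In particular this is so on $\BS_{\pm 1}$ (or any open subsector thereof), and the leading coefficient is $1=B_0(\ulambda)$, which matches the normalization $B_0(\ulambda;\pm 1)=1$ built into Proposition \ref{prop: formal solution}.

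Next I would invoke Lemma \ref{lem: asymptotic, uniqueness} (2): there is a \emph{unique} solution of the Bessel equation of sign $(\pm)^n$ whose asymptotic expansion on $\BS_{\pm 1}$ (or on any open subsector) equals $\widehat J(z;\ulambda;\pm 1)=e^{\pm inz}z^{-(n-1)/2}\sum B_m(\ulambda;\pm 1)z^{-m}$, and this solution is by definition $J(z;\ulambda;\pm 1)$. Both $F^{\pm}(z)$ and $J(z;\ulambda;\pm 1)$ are such solutions with the \emph{same} leading exponential $e^{\pm inz}$ and same leading algebraic factor $z^{-(n-1)/2}$; the asymptotic representation theorem (Lemma \ref{lem: asymptotic, uniqueness} (1)) guarantees that $F^{\pm}$ is asymptotic to some multiple of some $\widehat J(z;\ulambda;\xi')$ on an open subsector of $\BS_{\pm 1}$, and matching the exponential forces $\xi'=\pm 1$ while matching the leading constant forces the multiple to be $1$. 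Uniqueness in Lemma \ref{lem: asymptotic, uniqueness} (2) then gives $F^{\pm}(z)=J(z;\ulambda;\pm 1)$, which is precisely the asserted identity. Finally, comparing the two now-identical asymptotic series term by term on $\BS_{\pm 1}$ yields $B_m(\ulambda;\pm 1)=(\pm i)^{-m}B_m(\ulambda)$ for every $m\ge 0$.

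There is in fact no serious obstacle beyond correctly bookkeeping the normalization constants and verifying the inclusion $\BS_{\pm 1}\subset \BH^{\pm}$; the substance of the argument is already in Theorem \ref{thm: asymptotic expansion} (whose proof via stationary phase on $\BH^{\pm}$ required the sophisticated machinery of Chapter \ref{chap: analytic theory}) and in the differential-equation uniqueness of Lemma \ref{lem: asymptotic, uniqueness} (2). The one point worth emphasizing, as the author does in Remark \ref{rem: Stirling not working}, is that this comparison is possible only because the stationary phase method extends the asymptotic of $H^\pm$ from $\BR_+$ into the half-plane $\BH^{\pm}$ so as to meet the sector $\BS_{\pm 1}$; the Stirling-based approach on $\BR_+$ alone cannot reach $\BS_{\pm 1}$ and hence cannot invoke Lemma \ref{lem: asymptotic, uniqueness} (2).
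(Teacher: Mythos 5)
Your proof is correct and follows the paper's argument essentially verbatim: one renormalizes $H^{\pm}$ by $\sqrt n\,(\pm 2\pi i)^{-(n-1)/2}$, observes it is a solution of the Bessel equation of sign $(\pm)^n$ possessing the stationary-phase asymptotic of the required form on $\BH^{\pm}\supset\BS_{\pm 1}$, and concludes by the uniqueness in Lemma \ref{lem: asymptotic, uniqueness} (2). Your intermediate step of invoking Lemma \ref{lem: asymptotic, uniqueness} (1) to pin down $\xi'=\pm 1$ from the exponential and the scalar multiple from the leading coefficient is just a slightly more explicit version of the paper's observation that the two expansions ``have exactly the same form and the same leading term.''
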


\begin{rem}\label{rem: Stirling not working}
	The reader should observe that $\BS_{\pm 1} \cap \BR_+ = \O $, so Theorem {\rm \ref{thm: bridge between H + (z; lambda) and J( z; lambda; 1)}}  can not be obtained by the asymptotic expansion of $ H^{\pm} (x; \ulambda) $ on $\BR_+$ derived from Stirling's  asymptotic formula in Appendix {\rm \ref{appendix: asymptotic}} {\rm(}see Remark {\rm\ref{rem: appendix only for R+}}{\rm)}.

\end{rem}

\begin{rem}
	$B_m(\ulambda; \pm 1)$ can only be obtained from certain recurrence relations in \S {\rm \ref{sec: formal solutions at infinity}} from the differential equation aspect. On the other hand,  using the stationary phase method, \eqref{5eq: B mj} in \S {\rm \ref{sec: asymptotic expansions of H Bessel functions}} yields an explicit formula of $B_m (\ulambda)$. Thus Theorem {\rm \ref{thm: bridge between H + (z; lambda) and J( z; lambda; 1)}} indicates that the recurrence relations for $B_m(\ulambda; \pm 1)$ are actually solvable!
\end{rem}

As consequences of Theorem \ref{thm: bridge between H + (z; lambda) and J( z; lambda; 1)}, we can establish the connections between various Bessel functions, that is, $J  (z; \usigma, \ulambda)$, $J_l      ( z; \varsigma, \ulambda)$ and $J(z; \ulambda; \xi )$. Recall that $J  (z; \usigma, \ulambda)$ has already been expressed in terms of   $J_l      ( z; \varsigma, \ulambda)$ in Lemma \ref{lem: J(x; sigma; lambda) as a sum of J ell (x; sigma; lambda)}.

\subsection{\texorpdfstring{Relations between $J (z; \usigma, \ulambda)$ and $J(z; \ulambda; \xi )$}{Relations  between $J (z; \varsigma, \lambda)$ and $J(z; \lambda; \xi )$}} \label{sec: J (z; sigma; lambda) and Bessel functions of the second kind}

$J (z; \usigma, \ulambda)$ is equal to a multiple of $H^{\pm} \Big( e^{\pm \pi i \frac { n_{\mp}(\usigma)} n} z; \ulambda \Big)$ due to Lemma \ref{lem: J(z ; usigma; lambda) relations to H +}, whereas $J(z; \ulambda; \xi)$ is a multiple of  $J(\pm \xi z; \ulambda; \pm 1)$ in view of Lemma \ref{lem: J(z; lambda; xi) relations to J(xi z; lambda; 1)}. Furthermore, the equality, up to  constant, between $H^{\pm} (z; \ulambda)$ and  $J( z; \ulambda; \pm 1)$ has just been established in Theorem \ref{thm: bridge between H + (z; lambda) and J( z; lambda; 1)}. We then arrive at the following corollary.

\begin{cor}\label{cor: connetions, 1}
	Let $L_\pm (\usigma) = \{ l      : \varsigma_l      = \pm \}$ and $n_\pm (\usigma) = \left| L_\pm (\usigma) \right|$ be as in  Definition {\rm \ref{defn: signature}}. Let $c(\usigma, \ulambda) = e \lp \mp \frac {n-1} 8 \pm \frac {(n-1) n_{\pm}(\usigma)} {4 n} \mp \frac 1 2 {\sum_{l      \in L_{\pm}(\usigma)} \lambda_{l      }}   \rp$ and  $\xi (\usigma) = \mp e^{\mp \pi i \frac { n_{\pm} (\usigma)} n}$. Then 
	\begin{equation*} 
	J (z; \usigma, \ulambda) = \frac { ( 2\pi  )^{\frac { n-1} 2} c(\usigma, \ulambda)  } {\sqrt n}    J(z; \ulambda;  \xi (\usigma) ).
	\end{equation*}
	Here, it is understood that $\arg \xi (\usigma) = \frac {n_- (\usigma)} n \pi  = \pi -  \frac {n_+ (\usigma)} n \pi $.
\end{cor}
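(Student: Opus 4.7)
The plan is to chain together three previously established identities. First, I would invoke Lemma \ref{lem: J(z ; usigma; lambda) relations to H +}, which expresses
$$J(z; \usigma, \ulambda) = e\lp \pm \frac{\sum_{l \in L_\mp(\usigma)} \lambda_l}{2} \rp H^\pm\Big(e^{\pm \pi i \frac{n_\mp(\usigma)}{n}} z;\, \ulambda \Big).$$
Then Theorem \ref{thm: bridge between H + (z; lambda) and J( z; lambda; 1)} converts $H^\pm(w; \ulambda)$ into $n^{-1/2}(\pm 2\pi i)^{(n-1)/2} J(w; \ulambda; \pm 1)$, so the right hand side becomes a constant multiple of $J\big(e^{\pm \pi i n_\mp(\usigma)/n} z;\, \ulambda;\, \pm 1\big)$.

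The key algebraic bridge is the argument identity
$$e^{\pm \pi i \frac{n_\mp(\usigma)}{n}} = \pm\, \xi(\usigma) \quad \text{on } \BU,$$
which follows from $n_+(\usigma) + n_-(\usigma) = n$ together with $e^{\pm \pi i} = -1$, read with the argument convention $\arg \xi(\usigma) = \pi n_-(\usigma)/n$ stipulated in the corollary. Granted this, Lemma \ref{lem: J(z; lambda; xi) relations to J(xi z; lambda; 1)} yields
$$J\big(\pm\xi(\usigma)\, z;\, \ulambda;\, \pm 1\big) = \big(\pm\xi(\usigma)\big)^{-(n-1)/2} J\big(z;\, \ulambda;\, \xi(\usigma)\big),$$
and since $\pm\xi(\usigma)$ has argument $\pi n_\mp(\usigma)/n$, the scalar factor evaluates to $e\lp \mp \frac{(n-1)\, n_\mp(\usigma)}{4n} \rp$.

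It then remains to collect the three constants: the prefactor $n^{-1/2}(2\pi)^{(n-1)/2}(\pm i)^{(n-1)/2} = n^{-1/2}(2\pi)^{(n-1)/2} e(\pm(n-1)/8)$ from Theorem \ref{thm: bridge between H + (z; lambda) and J( z; lambda; 1)}, the rotation factor $e\lp \mp (n-1)n_\mp(\usigma)/(4n)\rp$ just obtained, and the initial factor $e\lp \pm \tfrac12 \sum_{l \in L_\mp(\usigma)} \lambda_l \rp$. Substituting $n_\mp = n - n_\pm$ converts $\mp (n-1) n_\mp/(4n)$ into $\mp (n-1)/4 \pm (n-1)n_\pm/(4n)$, and the hyperplane relation $\sum_{l=1}^n \lambda_l = 0$ (in force throughout Chapter \ref{chap: analytic theory}) converts the sum over $L_\mp(\usigma)$ into $-\sum_{l \in L_\pm(\usigma)} \lambda_l$. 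Combining the two exponent contributions $\pm(n-1)/8$ and $\mp(n-1)/4$ into the single term $\mp(n-1)/8$ then exactly matches the stated formula for $c(\usigma, \ulambda)$.

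I do not anticipate any real obstacle: the argument is purely algebraic book-keeping once the three earlier results are assembled. The only subtle point is that all of this must be interpreted consistently on the Riemann surface $\BU$ — fractional powers such as $(\pm \xi(\usigma))^{-(n-1)/2}$ depend on the chosen argument, and the identity $e^{\pm \pi i n_\mp/n} = \pm \xi(\usigma)$ must be read as an equality of points on $\BU$ (not merely in $\BC^\times$), which is precisely the convention fixed by the clause ``\emph{it is understood that} $\arg \xi(\usigma) = \pi n_-(\usigma)/n$'' appended to the statement of the corollary.
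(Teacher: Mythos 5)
Your proof is correct and follows exactly the paper's route: the paper itself states (in the paragraph preceding the corollary) that the result is obtained by chaining Lemma \ref{lem: J(z ; usigma; lambda) relations to H +}, Theorem \ref{thm: bridge between H + (z; lambda) and J( z; lambda; 1)}, and Lemma \ref{lem: J(z; lambda; xi) relations to J(xi z; lambda; 1)}, which is precisely your argument, and your constant bookkeeping (including the use of $n_+ + n_- = n$, $\sum_l \lambda_l = 0$, and the $\BU$-argument convention) checks out. The only cosmetic slip is the phrase ``$\pm\xi(\usigma)$ has argument $\pi n_\mp(\usigma)/n$,'' which should read ``$\pm\pi n_\mp(\usigma)/n$'' for consistency with the displayed scalar $e\lp \mp (n-1) n_\mp(\usigma)/(4n) \rp$, but your subsequent algebra is already consistent with the corrected sign.
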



Corollary \ref{cor: connetions, 1} shows that $J (z; \usigma, \ulambda)$ should really be categorized in the class of Bessel functions of the second kind. 
Moreover, the asymptotic behaviours of the Bessel functions $J (z; \usigma, \ulambda)$ are classified by their signatures $(n_+ (\usigma), n_- (\usigma) )$. Therefore, $J (z; \usigma, \ulambda)$ is \textit{uniquely} determined by its  signature up to a constant multiple.

\subsection{Relations Connecting the Two Kinds of Bessel Functions}\label{sec: connection two kinds of Bessel functions}


From Lemma \ref{lem: J(z; lambda; xi) relations to J(xi z; lambda; 1)} and Theorem \ref{thm: bridge between H + (z; lambda) and J( z; lambda; 1)}, one sees that $J(z; \ulambda ; \xi)$ is a constant multiple of $H^+ (\xi z; \ulambda)$.
On the other hand, $H^+ (z; \ulambda)$ can be expressed in terms of Bessel functions of the first kind in view of Lemma \ref{lem: J(x; sigma; lambda) as a sum of J ell (x; sigma; lambda)}.
Finally, using Lemma \ref{lem: J ell (z; sigma; lambda) relations}, the following corollary is readily established.
\begin{cor}\label{cor: J(z; lambda; xi) and the J-Bessel functions}
	Let $\varsigma \in \{+,-\}$. If $\xi \in \BX_n ( \varsigma )$, then
	\begin{equation*} 
	\begin{split}
	J(z; \ulambda ; \xi) =  \sqrt n \lp - \frac {\pi i \xi } {2 } \rp^{\frac {n-1} 2}  
	\sum_{l      = 1}^{n}  \big( i  \xoverline \xi \big)^{n \lambda_l     } S_l      (\ulambda) J_l      ( z; \varsigma, \ulambda),
	\end{split}
	\end{equation*}
	with $S_l      (\ulambda) = 1 / \prod_{k \neq l     } \sin \lp \pi (\lambda_l      - \lambda_{k} ) \rp  $. According to our convention, we have $\lp   - i \xi  \rp^{  \frac {n-1} 2 } = e^{ \frac {n-1} 2 \lp - \frac 1 2 \pi i + i \arg \xi \rp }  $ and $\big( i  \xoverline \xi \big)^{n \lambda_l     } = e^{ \frac 1 2 \pi i n \lambda_l      - i n \lambda_l      \arg \xi }$. When $\ulambda $ is not generic, the right hand side should be replaced by its limit.
\end{cor}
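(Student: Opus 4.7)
The proof will be a direct chain of substitutions following the roadmap sketched in the paragraph immediately preceding the corollary, so the main task is careful bookkeeping of branches and constants rather than any new idea.

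The plan is as follows. Since $\varsigma\in\{+,-\}$ and $\xi\in\BX_n(\varsigma)$, one has $\xi^{2n}=(\xi^n)^2=1$, so $\xi\in\BX_{2n}(+)$; choose an integer $a$ with $\xi=e^{\pi i a/n}$, and note that $(-)^a=\xi^n=\varsigma$. First I would apply Lemma \ref{lem: J(z; lambda; xi) relations to J(xi z; lambda; 1)} with the upper sign to write
\begin{equation*}
J(z;\ulambda;\xi)=\xi^{\frac{n-1}{2}}\,J(\xi z;\ulambda;1),
\end{equation*}
and then Theorem \ref{thm: bridge between H + (z; lambda) and J( z; lambda; 1)} to replace $J(\xi z;\ulambda;1)$ by $\sqrt{n}\,(2\pi i)^{-\frac{n-1}{2}}H^{+}(\xi z;\ulambda)$. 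This reduces everything to expanding $H^{+}(\xi z;\ulambda)$ in terms of Bessel functions of the first kind.

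Next I would specialize Lemma \ref{lem: J(x; sigma; lambda) as a sum of J ell (x; sigma; lambda)} to $\usigma=(+,\ldots,+)$, for which $n_+(\usigma)=n$, $n_-(\usigma)=0$, $E(\usigma,\ulambda)=e(-\tfrac14\sum_k\lambda_k)=1$ (using $\ulambda\in\BL^{n-1}$), and $E_l(\usigma,\ulambda)=e(\tfrac{n}{4}\lambda_l)=i^{n\lambda_l}$, obtaining
\begin{equation*}
H^{+}(z;\ulambda)=\pi^{n-1}\sum_{l=1}^{n}i^{n\lambda_l}S_l(\ulambda)\,J_l(z;+,\ulambda).
\end{equation*}
Substituting $z\mapsto\xi z=e^{\pi i a/n}z$ and invoking Lemma \ref{lem: J ell (z; sigma; lambda) relations} gives
\begin{equation*}
J_l(\xi z;+,\ulambda)=e^{-\pi i a\lambda_l}J_l(z;(-)^a,\ulambda)=\xoverline{\xi}^{\,n\lambda_l}J_l(z;\varsigma,\ulambda),
\end{equation*}
so $i^{n\lambda_l}J_l(\xi z;+,\ulambda)=(i\xoverline{\xi})^{n\lambda_l}J_l(z;\varsigma,\ulambda)$.

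It then remains to collect the scalar prefactors, the only step that requires any care. Combining the three factors yields
\begin{equation*}
\xi^{\frac{n-1}{2}}\cdot\sqrt{n}\,(2\pi i)^{-\frac{n-1}{2}}\cdot\pi^{n-1}
=\sqrt{n}\,\pi^{n-1}\Bigl(\frac{\xi}{2\pi i}\Bigr)^{\!\frac{n-1}{2}}
=\sqrt{n}\,\Bigl(\frac{\pi^{2}}{2\pi i}\,\xi\Bigr)^{\!\frac{n-1}{2}}
=\sqrt{n}\Bigl(-\frac{\pi i\xi}{2}\Bigr)^{\!\frac{n-1}{2}},
\end{equation*}
where at each step the branch is determined by the chosen $\arg\xi=\pi a/n$, so that $\arg(-\pi i\xi/2)=\arg\xi-\pi/2$ and $\arg(i\xoverline\xi)=\pi/2-\arg\xi$, matching the conventions of the statement. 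Assembling these gives the asserted identity for generic $\ulambda$, and the nongeneric case follows by passing to the limit $\boldsymbol\lambda'\to\ulambda$ with $\boldsymbol\lambda'\in\BD^{n-1}$, exactly as in Lemma \ref{lem: J(x; sigma; lambda) as a sum of J ell (x; sigma; lambda)}, using the analyticity of $J(z;\ulambda;\xi)$ and $J_l(z;\varsigma,\ulambda)$ in $\ulambda$ recorded in \S\ref{sec: Bessel functions of the first kind} and \S\ref{sec: asymptotic expansions for the Bessel equations}. The only mild obstacle is ensuring the branch conventions for $\xi^{(n-1)/2}$, $(2\pi i)^{-(n-1)/2}$ and $\xoverline{\xi}^{\,n\lambda_l}$ are consistent across the chain of identities; this is automatic once $\arg\xi$ is fixed, and no deeper difficulty arises.
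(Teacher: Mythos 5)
Your proof is correct and follows precisely the route the paper sketches in the paragraph preceding the corollary: Lemma \ref{lem: J(z; lambda; xi) relations to J(xi z; lambda; 1)} and Theorem \ref{thm: bridge between H + (z; lambda) and J( z; lambda; 1)} to reduce to $H^+(\xi z;\ulambda)$, Lemma \ref{lem: J(x; sigma; lambda) as a sum of J ell (x; sigma; lambda)} to expand $H^+$ in Bessel functions of the first kind, and Lemma \ref{lem: J ell (z; sigma; lambda) relations} to rotate the argument. The bookkeeping of the prefactor $\xi^{\frac{n-1}{2}}\sqrt{n}(2\pi i)^{-\frac{n-1}{2}}\pi^{n-1}=\sqrt{n}(-\pi i\xi/2)^{\frac{n-1}{2}}$ and the branch conventions is consistent with the stated normalizations.
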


We now fix an integer $a$ and let $\xi _{  j} = e^{ \pi i \frac { {  2 j + a - 2}  } n} \in \BX_n \lp (-)^a \rp $, with   $j = 1, ..., n$. 
It follows from Corollary \ref{cor: J(z; lambda; xi) and the J-Bessel functions} that
\begin{equation*}
X  (z; \ulambda) 
= \sqrt n \lp \frac  {\pi } {2  } \rp^{\frac {n-1} 2} e^{- \frac 1 4 \pi i (n-1)}  \cdot  D  V(\ulambda) S (\ulambda) E  (\ulambda) 
Y (z; \ulambda),
\end{equation*}
with
\begin{align*}
& X  (z; \ulambda)   = \big(J  \lp  z; \ulambda ; \xi _{  j}  \rp \big)_{j=1}^n, \hskip 10 pt   Y  (z; \ulambda) = \big(J_l       \lp  z; (-)^a , \ulambda \rp \big)_{l      =1}^n, \\ 
& D    = \mathrm{diag} \Big( \xi_{  j} ^{\frac {n-1} 2} \Big)_{j = 1}^n, \hskip 10 pt   E  (\ulambda)  = \mathrm{diag} \left( e^{ \pi i \lp \frac 1 2 n -  a \rp \lambda_l     } \right)_{l      = 1}^n,  \hskip 10 pt
S (\ulambda)   = \mathrm{diag}  \textstyle \big(    S_l      (\ulambda)  \big)_{l      = 1}^n, \\
& V(\ulambda)  = \lp e^{- 2 \pi i (j - 1) \lambda_{l     }} \rp_{j,\, l      = 1}^n.
\end{align*}
Observe that $V(\ulambda)$ is a {\it Vandermonde matrix}. 

\begin{lem}
	\label{8lem: Vandermonde}
	For an $n$-tuple $\ux = (x_1, ..., x_n) \in \BC^n$ we define the  Vandermonde matrix $V = \big(  x_{l     }^{ j - 1 } \big)_{j,\, l      = 1}^n$.
	For $d = 0, 1, ..., n-1$ and $m = 1, ..., n$, let $\sigma_{m, d} $ denote the elementary symmetric polynomial in $x_1, ..., \widehat {x_m}, ..., x_n$ of degree $d$, and let $\tau_m = \prod_{k \neq m} (x_m - x_k) $.
	If $\ux$ is generic in the sense that all the components of $\ux$ are distinct, then $V$ is invertible, and furthermore, the inverse of $V$ is $\lp (-)^{n-j } \sigma_{m, n-j}  \tau_m\- \rp_{m,\, j = 1}^n $.
\end{lem}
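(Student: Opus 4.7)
The plan is to recognize the claimed inverse as the matrix whose rows encode the Lagrange interpolation polynomials at the nodes $x_1, \dots, x_n$. Concretely, the Vandermonde matrix $V = (x_l^{\,j-1})_{j,l=1}^n$ encodes the evaluation map sending a polynomial $p(x) = \sum_{j=1}^n c_j x^{j-1}$ of degree at most $n-1$, given by its coefficient vector $(c_j)_{j=1}^n$, to the vector of values $(p(x_l))_{l=1}^n$; the rows of $V^{-1}$ must therefore be the coefficients of the Lagrange basis polynomials realising the inverse of this evaluation map.

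First I would dispose of invertibility by recalling the classical Vandermonde determinant identity $\det V = \prod_{1 \leq k < l \leq n}(x_l - x_k)$, which is nonzero precisely when the components of $\ux$ are distinct.

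Next I would introduce the Lagrange basis polynomials
\begin{equation*}
L_m(x) = \frac{1}{\tau_m}\prod_{k \neq m}(x - x_k), \qquad m = 1, \dots, n,
\end{equation*}
which satisfy $L_m(x_l) = \delta_{m,l}$ by construction. Expanding the product $\prod_{k \neq m}(x - x_k)$ using the elementary symmetric polynomials $\sigma_{m,d}$ in $x_1, \dots, \widehat{x_m}, \dots, x_n$ gives
\begin{equation*}
\prod_{k \neq m}(x - x_k) = \sum_{j=1}^{n}(-1)^{n-j}\sigma_{m,n-j}\, x^{j-1},
\end{equation*}
so that the coefficient of $x^{j-1}$ in $L_m(x)$ is exactly $w_{m,j} := (-1)^{n-j}\sigma_{m,n-j}/\tau_m$, which is precisely the $(m,j)$-entry of the matrix $W$ proposed as the inverse of $V$.

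Finally I would verify $WV = I$ by a one-line computation: the $(m,l)$-entry equals
\begin{equation*}
(WV)_{m,l} = \sum_{j=1}^{n} w_{m,j}\, x_l^{\,j-1} = L_m(x_l) = \delta_{m,l}.
\end{equation*}
There is no genuine obstacle in this argument — it is a direct unpacking of the defining property $L_m(x_l) = \delta_{m,l}$ of the Lagrange basis — and the only point requiring care is matching the sign convention $(-1)^{n-j}$ with the expansion of $\prod_{k \neq m}(x - x_k)$ in ascending powers of $x$.
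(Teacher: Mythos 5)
Your proof is correct and uses the same core idea as the paper: the rows of $V^{-1}$ are the coefficient vectors of the Lagrange interpolation basis polynomials. The only cosmetic difference is direction — the paper starts from the entries of $V^{-1}$ and uses Lagrange interpolation to identify them, while you start from the Lagrange polynomials, read off their coefficients, and then verify $WV = I$ directly — but the substance is identical.
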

\begin{proof}[Proof of Lemma \ref{8lem: Vandermonde}]
	It is a well-known fact that $V$ is invertible whenever $\ux$ is generic.
	If one denotes by $w_{m, j}$ the $(m, j)$-th entry of $V\-$, then
	$$\sum_{j=1}^n w_{m, j} x_l     ^{j-1} = \delta_{m, l     }.$$
	The Lagrange interpolation formula implies the following identity of polynomials
	$$\sum_{j=1}^n w_{m, j} x^{j-1} = \prod_{k \neq m} \frac {x - x_k} {x_m - x_k}.$$
	Identifying the coefficients of $x^{j-1}$ on both sides yields the desired formula of $w_{m, j}$.
\end{proof}

\begin{cor}\label{8cor: inverse connection}
	Let $a$ be a given integer. For $j = 1, ..., n$ define $\xi _{  j} = e^{ \pi i \frac { {2 j  + a - 2}  } n}$. For $d = 0, 1, ..., n-1$ and $ l      = 1, ..., n$, let $\sigma_{l     , d} (\ulambda) $ denote the elementary symmetric polynomial in $e^{- 2 \pi i \lambda_1}, ..., \widehat {e^{- 2 \pi i \lambda_l     }}, ..., e^{- 2 \pi i \lambda_n}$ of degree $d$. Then 
	\begin{equation*}
	J_l      \big( z; (-)^a , \ulambda \big) = \frac {e^{\frac 3 4 \pi i (n-1)}} {\sqrt n (2\pi)^{\frac {n-1} 2}}   e^{\pi i \lp \frac 1 2 n +  a   - 2 \rp \lambda_l     } \sum_{j=1}^n (-)^{n-j} \xi_{  j} ^{- \frac {n-1} 2}  \sigma_{l     , n-j} (\ulambda) J \lp z; \ulambda; \xi_{  j} \rp.
	\end{equation*} 
\end{cor}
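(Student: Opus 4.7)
The plan is to simply invert the matrix identity
\[
X(z;\ulambda) \;=\; \sqrt{n}\,\bigl(\pi/2\bigr)^{(n-1)/2} e^{-\pi i(n-1)/4}\cdot D\,V(\ulambda)\,S(\ulambda)\,E(\ulambda)\,Y(z;\ulambda)
\]
that appears immediately after Corollary~\ref{cor: J(z; lambda; xi) and the J-Bessel functions}. Since $D$, $S(\ulambda)$, $E(\ulambda)$ are diagonal, the only substantive step is computing $V(\ulambda)^{-1}$, and for this I would apply Lemma~\ref{8lem: Vandermonde} with the specialization $x_l = e^{-2\pi i \lambda_l}$; the result is
\[
(V(\ulambda)^{-1})_{l,j} \;=\; (-)^{n-j}\,\sigma_{l,n-j}(\ulambda)\big/\tau_l(\ulambda),\qquad \tau_l(\ulambda) = \prod_{k\neq l}\bigl(e^{-2\pi i\lambda_l}-e^{-2\pi i\lambda_k}\bigr).
\]

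The key simplification is to rewrite $\tau_l(\ulambda)$ in terms of $S_l(\ulambda)$. Using the identity $e^{-2\pi i\lambda_l}-e^{-2\pi i\lambda_k} = -2i\,e^{-\pi i(\lambda_l+\lambda_k)}\sin(\pi(\lambda_l-\lambda_k))$ factor by factor, and then exploiting the normalization $\ulambda\in\BL^{n-1}$ (so that $\sum_{k\neq l}\lambda_k = -\lambda_l$ and hence $(n-1)\lambda_l+\sum_{k\neq l}\lambda_k = (n-2)\lambda_l$), I obtain
\[
\tau_l(\ulambda) \;=\; (-2i)^{n-1}\,e^{-\pi i(n-2)\lambda_l}\big/S_l(\ulambda).
\]
The $S_l(\ulambda)$ factor then cancels neatly against the corresponding entry of $S(\ulambda)^{-1}$ when I form the product $S(\ulambda)^{-1}V(\ulambda)^{-1}$, leaving only the exponential $e^{\pi i(n-2)\lambda_l}$ and the elementary-symmetric polynomials $\sigma_{l,n-j}(\ulambda)$.

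Finally, I would collect the remaining constants and diagonal factors. Multiplying $E(\ulambda)^{-1}$, which contributes $e^{-\pi i(n/2-a)\lambda_l} = e^{\pi i(a-n/2)\lambda_l}$, together with $e^{\pi i(n-2)\lambda_l}$ from $\tau_l(\ulambda)^{-1}$, produces exactly the advertised exponent $\pi i(n/2+a-2)\lambda_l$. The scalar constant $c^{-1}/(-2i)^{n-1}$ simplifies, using $(-2i)^{n-1} = 2^{n-1}e^{-\pi i(n-1)/2}$ and $(\pi/2)^{-(n-1)/2} = 2^{(n-1)/2}\pi^{-(n-1)/2}$, to $e^{3\pi i(n-1)/4}/\bigl(\sqrt{n}\,(2\pi)^{(n-1)/2}\bigr)$, matching the prefactor in the statement; the $\xi_j^{-(n-1)/2}$ comes from $D^{-1}$ acting on $X$.

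There is no serious obstacle, since everything reduces to bookkeeping once Lemma~\ref{8lem: Vandermonde} and the $|\ulambda|=0$ normalization are in hand; the only mildly delicate point is the consistent tracking of branches in $\xi_j^{(n-1)/2}$ and the various half-integer powers, which is settled by our convention $z^\lambda = e^{\lambda\log z}$. The nongeneric case (where some $\lambda_l-\lambda_k\in\BZ$, so that $V(\ulambda)$ degenerates and the Vandermonde inverse is not literally defined) is handled exactly as in the passage from \eqref{7eq: J(x; sigma; lambda) as a sum of J ell (x; sigma; lambda)} to \eqref{7eq: J(x; sigma; lambda) as a sum of J ell (x; sigma; lambda), limit}, namely by taking a limit along generic $\boldsymbol\lambda'\to\ulambda$ in $\BD^{n-1}$ and invoking the joint analyticity of both sides in $\ulambda$.
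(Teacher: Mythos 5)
Your proposal is correct and follows the same route the paper takes: apply Lemma~\ref{8lem: Vandermonde} with $x_l = e^{-2\pi i\lambda_l}$ to invert $V(\ulambda)$, simplify $\tau_l(\ulambda)^{-1}$ to $(-2i)^{1-n}e^{\pi i(n-2)\lambda_l}S_l(\ulambda)$ using $|\ulambda|=0$, and then carry out the matrix algebra and constant bookkeeping. The paper compresses all of this into ``some straightforward calculations,'' and your write-up simply makes those calculations explicit (and sensibly flags the limit argument for nongeneric $\ulambda$, which the paper leaves implicit here).
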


\begin{proof}
	Choosing $x_{l     } = e^{-2\pi i \lambda_l     }$ in Lemma \ref{8lem: Vandermonde}, one sees that if  $\ulambda$ is generic then the matrix $V(\ulambda)$ is invertible and its inverse is given by $$ \lp (-2i)^{1-n} \cdot (-)^{n-j }  \sigma_{l     , n-j} (\ulambda) e^{\pi i (n-2) \lambda_l     } S_{l     } (\ulambda) \rp_{l     ,\, j = 1}^n .$$
	Some straightforward calculations then complete the proof.
\end{proof}

\begin{rem}
	In view of Proposition {\rm \ref{prop: Classical Bessel functions}}, Remark {\rm \ref{rem: n=2 J-Bessel function}} and {\rm \ref{rem: n=2, asymptotics}}, when $n=2$, Corollary {\rm \ref{cor: J(z; lambda; xi) and the J-Bessel functions}} corresponds to the connection formulae {\rm (}\cite[3.61(5, 6), 3.7 (6)]{Watson}{\rm )}, 
	\begin{align*} 
	&  	H^{(1)}_\nu (z) = \frac {J_{-\nu} (z) - e^{- \pi i \nu} J_\nu (z) }{i \sin ( \pi \nu)}, \hskip 10 pt 
	H^{(2)}_\nu (z) = \frac { e^{\pi i \nu} J_\nu (z) - J_{-\nu} (z)}{i \sin ( \pi \nu)},\\
	&  K_{\nu} (z) =  \frac {\pi \lp I_{-\nu} (z) - I_\nu (z) \rp} {2 \sin (\pi \nu)}, \hskip 27 pt 
	\pi I_\nu (z) - i e^{\pi i \nu} K_\nu (z) = 
	\frac {\pi i \lp e^{-\pi i \nu} I_{ \nu} (z) - e^{\pi i \nu} I_{-\nu} (z) \rp} {2 \sin (\pi \nu)},
	\end{align*}
	whereas Corollary {\rm \ref{8cor: inverse connection}}, with $a = 0$ or $1$, amounts to the formulae  {\rm (}see \cite[3.61(1, 2), 3.7 (6)]{Watson}{\rm )}
	\begin{align*}
	& J_\nu (z) = \frac {H_\nu^{(1)} (z) + H_\nu^{(2)} (z)} 2, \hskip 61 pt 
	J_{-\nu} (z) =  \frac {e^{\pi i \nu} H_\nu^{(1)} (z) + e^{-\pi i \nu} H_\nu^{(2)} (z) } 2, \\
	& I_{\nu} (z) = \frac {i e^{ \pi i \nu} K_{\nu} (z) + \lp \pi I_{\nu} (z) - i e^{\pi i \nu} K_{\nu} (z)  \rp } {\pi},  
	I_{- \nu} (z) = \frac {i e^{-\pi i \nu} K_{\nu} (z) + \lp \pi I_{\nu} (z) - i e^{\pi i \nu} K_{\nu} (z)  \rp } {\pi} .
	\end{align*}
\end{rem}


\section{$H$-Bessel Functions and $K$-Bessel Functions, II}\label{sec: H-Bessel functions and K-Bessel functions, II}

\delete{
	\begin{term}\label{term: H, K , I-Bessel functions}
		Let $\xi$ be a $2n$-th root of unity. $J (z; \ulambda; 1)$ and $J(z; \ulambda; -1)$ are called $H$-Bessel functions. $J(z; \ulambda; \xi)$ is called a $K$-Bessel function, respectively an $I$-Bessel function, if $\Im \xi$ is positive, respectively negative.
	\end{term}
	The above classification of Bessel functions of the second kind is in accordance with their asymptotic behaviours on $\BR _+$, where the $H$-Bessel functions
	$J (x; \ulambda; \pm 1)$ oscillate and decay proportionally to $x^{- \frac { n-1} 2}$, whereas the $K$-Bessel functions and the $I$-Bessel functions are exponentially decaying and growing functions on $\BR _+$ respectively.
	
	\begin{rem}
		The term $I$-Bessel function may not be the most appropriate, since it is seen in Remark {\rm \ref{rem: n=2, asymptotics}} that $J(z; \lambda, -\lambda; - i) = 2 \sqrt \pi I_{2 \lambda} (2 z) - \frac {2 i}  {\sqrt \pi} e^{2\pi i \lambda} K_{2 \lambda} ( 2 z)$ is not exactly a classical $I$-Bessel function. However, $J(z; \lambda, -\lambda; - i)$ and $2 \sqrt \pi I_{2\lambda} (2 z)$ have the same asymptotic expansion on $\BR _+$. Moreover, the classical $I$-Bessel function has already been categorized as a Bessel function of the first kind according to Remark {\rm\ref{rem: n=2 J-Bessel function}}. Therefore, no confusion is caused in our terminology system.
	\end{rem}
	
	In view of Corollary \ref{cor: connetions, 1}, the $H$-Bessel functions and the $K$-Bessel functions defined in Terminology \ref{term: Bessel functions of K-type and H-Bessel functions} and Terminology \ref{term: H, K , I-Bessel functions} are essentially consistent.
}

In this concluding section, we apply Theorem  \ref{thm: error bound}  to  improve the results in \S \ref{sec: Bessel functions of K-type and H-Bessel functions} on the asymptotics of Bessel functions $J (x; \usigma, \ulambda)$ 
for $x \ggg \mathfrak C^2$. 

\subsection{Asymptotic Expansions of $H$-Bessel Functions} \label{8sec: Asymptotics of H-Bessel functions}

The following proposition is a direct  consequence of Theorem  \ref{thm: error bound} and \ref{thm: bridge between H + (z; lambda) and J( z; lambda; 1)}.

\begin{prop}\label{prop: improved asymptotic}
	Let $0 < \vartheta < \frac 1 2 \pi  $.
	
	{ \rm (1).} Let $M$ be a positive integer. We have 
	\begin{equation} \label{7eq: asymptotic expansion H pm, improved}
	\begin{split}
	H^{\pm}  (z; \ulambda) = n^{- \frac 1 2}  &  (\pm 2 \pi i)^{ \frac { n-1} 2} e^{ \pm i n z} z^{ - \frac { n-1} 2} \\
	& \lp \sum_{m=0}^{M-1} ( \pm i )^{ - m} B_{m} (\ulambda) z^{- m} + O_{\,M, \,\vartheta,\, n} \left( \mathfrak C^{2 M} |z|^{-M} \right) \rp,
	\end{split}
	\end{equation}
	for all $z \in  \BS'_{\pm 1} (\vartheta)$ such that $|z| \ggg_{\,M,\, \vartheta,\, n} \fC^2$.
	
	{ \rm (2).} Define $W^{\pm} (z;  \ulambda) = \sqrt n (\pm 2 \pi i)^{- \frac { n-1} 2} e^{\mp i n z} H^{\pm}  (z; \ulambda)$. Let $M - 1 \geq j \geq 0$. We have
	\begin{equation} 
	W^{\pm, (j)} (z;  \ulambda) = z^{- \frac { n-1} 2} \lp \sum_{m = j }^{M-1} ( \pm i )^{j - m} B_{m, j} (\ulambda) z^{- m } + O_{\,M,\, \vartheta, j ,\, n} \left( \fC^{2 M - 2j} |z|^{-M}\right) \rp,
	\end{equation}
	for all $z \in  \BS'_{\pm 1} (\vartheta)$  such that $|z| \ggg_{\,M,\, \vartheta,\, n} \fC^2$.
\end{prop}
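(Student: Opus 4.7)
The plan is to deduce Proposition 8.1 directly from the two ingredients developed earlier: the identity
\[
H^{\pm}(z;\ulambda) = n^{-\frac12}(\pm 2\pi i)^{\frac{n-1}{2}} J(z;\ulambda;\pm 1)
\]
of Theorem~\ref{thm: bridge between H + (z; lambda) and J( z; lambda; 1)}, and the quantitative asymptotic of Theorem~\ref{thm: error bound} applied to $\xi = \pm 1$. Indeed, $\BS'_{\pm 1}(\vartheta)$ is exactly the sector $\BS'_{\xi}(\vartheta)$ for $\xi = \pm 1$, so the theorem gives, for $z \in \BS'_{\pm 1}(\vartheta)$ with $|z| > c\fC^2$,
\[
J(z;\ulambda;\pm 1) = e^{\pm i n z} z^{-\frac{n-1}{2}}\!\left(\sum_{m=0}^{M-1} B_m(\ulambda;\pm 1)\, z^{-m} + O_{M,\vartheta,n}\!\left(\fC^{2M} |z|^{-M}\right)\right).
\]
Substituting the scaling factor $n^{-1/2}(\pm 2\pi i)^{(n-1)/2}$ and invoking the identification $B_m(\ulambda;\pm 1) = (\pm i)^{-m} B_m(\ulambda)$, also from Theorem~\ref{thm: bridge between H + (z; lambda) and J( z; lambda; 1)}, yields part (1) immediately.

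For part (2), first observe that the definition of $W^{\pm}(z;\ulambda)$ combined with the identity above gives the clean factorization
\[
W^{\pm}(z;\ulambda) = e^{\mp i n z}\, J(z;\ulambda;\pm 1).
\]
Theorem~\ref{thm: error bound} also supplies the corresponding asymptotic for each derivative $J^{(k)}(z;\ulambda;\pm 1)$ on $\BS'_{\pm 1}(\vartheta)$, with an error bound of the form $O_{M,\vartheta,k,n}(\fC^{2M}|z|^{-M})$ against the formally differentiated series. I would then apply Leibniz's rule to $W^{\pm,(j)}$, writing
\[
W^{\pm,(j)}(z;\ulambda) = \sum_{k=0}^{j} \binom{j}{k} (\mp i n)^{j-k} e^{\mp i n z} J^{(k)}(z;\ulambda;\pm 1),
\]
and expand each $e^{\mp inz} J^{(k)}(z;\ulambda;\pm 1)$ into its asymptotic series. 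Most terms telescope because, by Proposition~\ref{prop: formal solution}, the formal derivatives $\widehat J^{(k)}(z;\ulambda;\pm 1)$ already arise as exact derivatives of the formal series $\widehat J(z;\ulambda;\pm 1)$; so the combination equals the formal $j$-th derivative of $e^{\mp inz}\widehat J(z;\ulambda;\pm 1)$, which up to the constant $(\pm i)^{j-m}$ is precisely the asserted series with coefficients $B_{m,j}(\ulambda)$ (matching the identification carried out in Theorem~\ref{thm: asymptotic expansion}).

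The subtle point is the gain in the $\fC$-exponent from $\fC^{2M}$ to $\fC^{2M-2j}$ in part (2). This gain is forced by the degrees: by Theorem~\ref{thm: asymptotic expansion}(1), $B_{m,j}(\ulambda)$ is a symmetric polynomial of degree $2m-2j$, hence the formal series that $W^{\pm,(j)}(z;\ulambda)$ matches genuinely has terms of size $\fC^{2m-2j}|z|^{-m-(n-1)/2}$. The error estimate from Theorem~\ref{thm: error bound} is stated as $\fC^{2M}|z|^{-M-(n-1)/2}$, which is larger than what we need by a factor of $\fC^{2j}$. The main (and essentially only) obstacle will therefore be to extract this additional $\fC^{2j}$ saving; I would address it by taking $M' = M - j$ in Theorem~\ref{thm: error bound}, using the polynomial degree bound on $B_{m,j}(\ulambda)$ to absorb the extra asymptotic terms $m = M-j, \ldots, M-1$ into an $O_{M,\vartheta,j,n}(\fC^{2M-2j}|z|^{-M})$ remainder, and then re-truncating at $M$. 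This reduction to a weaker statement of Theorem~\ref{thm: error bound} together with bookkeeping on polynomial degrees should close the argument cleanly.
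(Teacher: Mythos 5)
Part (1) is correct and matches the paper's intended argument: combine Theorem~\ref{thm: bridge between H + (z; lambda) and J( z; lambda; 1)} with Theorem~\ref{thm: error bound} for $\xi = \pm 1$, and use the identity $B_m(\ulambda;\pm 1) = (\pm i)^{-m}B_m(\ulambda)$.

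Part (2) has a genuine gap. You correctly identify that Theorem~\ref{thm: error bound} applied to $J^{(k)}(z;\ulambda;\pm 1)$ and fed through Leibniz gives an error of size $\fC^{2M}|z|^{-M}$, falling short of the asserted $\fC^{2M-2j}|z|^{-M}$ by a factor of $\fC^{2j}$, and that the polynomial degrees $\deg B_{m,j} = 2m-2j$ force the claim. But the workaround you propose --- take $M' = M-j$ in Theorem~\ref{thm: error bound} and ``absorb'' the terms $m = M-j,\dots,M-1$ --- does not close the gap. With truncation $M' = M-j$ the error from Theorem~\ref{thm: error bound} is $O\lp \fC^{2(M-j)}|z|^{-(M-j)}\rp$, which exceeds the target $O\lp \fC^{2M-2j}|z|^{-M}\rp$ by a factor of $|z|^j$; re-truncating cannot recover a missing power of $|z|$. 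Worse, the absorbed terms themselves do not fit: the largest one, $m = M-j$, has size $\fC^{2M-4j}|z|^{-(M-j)}$, again exceeding the target error by the factor $(|z|/\fC^2)^j \geq 1$. The absorption goes the wrong way.

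The ingredient you are missing is holomorphy of the remainder, i.e.\ Cauchy's estimate. Apply part (1) on the slightly wider sector $\BS'_{\pm 1}(\vartheta/2)$ and write $W^{\pm}(z;\ulambda) = z^{-\frac{n-1}{2}}\sum_{m=0}^{M-1}(\pm i)^{-m}B_m(\ulambda)z^{-m} + E(z)$ with $|E(z)| \lll_{M,\vartheta,n}\fC^{2M}|z|^{-M-\frac{n-1}{2}}$ there. Differentiating the (exact) Laurent polynomial in the main term $j$ times and re-indexing gives $z^{-\frac{n-1}{2}}\sum_{m=j}^{M-1+j}(\pm i)^{j-m}B_{m,j}(\ulambda)z^{-m}$; since $\deg B_{m,j}(\ulambda) = 2(m-j)$, the overhanging terms $m = M,\dots,M-1+j$ are $O\lp\fC^{2M-2j}|z|^{-M-\frac{n-1}{2}}\rp$ once $|z| \geq \fC^2$, so they go into the error. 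For $E^{(j)}(z)$, place a disk of radius $\delta_\vartheta|z|$ around $z \in \BS'_{\pm 1}(\vartheta)$ inside $\BS'_{\pm 1}(\vartheta/2)$ (with $\delta_\vartheta$ depending only on $\vartheta$ and $n$, and $|z|$ in the admissible range) and apply Cauchy's estimate: $|E^{(j)}(z)| \lll_{j,\vartheta,M,n}\fC^{2M}|z|^{-M-j-\frac{n-1}{2}}$; the extra factor $|z|^{-j} \leq \fC^{-2j}$, coming from the hypothesis $|z| \ggg \fC^2$, yields exactly the improvement from $\fC^{2M}$ to $\fC^{2M-2j}$.
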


Observe that \begin{equation*}
\begin{split}
& \BH^{\pm} = \left\{ z \in \BC : 0 \leq \pm \arg z \leq \pi \right\} \\
\subset &\ \BS'_{\pm 1} (\vartheta) = \left\{ z \in \BU : - \lp \frac 1 2 - \frac 1 n \rp \pi - \vartheta < \pm \arg z < \lp \frac 3 2 + \frac 1 n \rp \pi + \vartheta \right\}.
\end{split}
\end{equation*} 
Fixing $\vartheta$ and restricting to the domain $\left \{ z \in \BH^\pm : |z| \ggg_{\,M,\, n} \fC^2 \right\}$, Proposition \ref{prop: improved asymptotic} improves Theorem \ref{thm: asymptotic expansion}.

\subsection{Exponential Decay of $K$-Bessel Functions}

Now suppose that $J (z; \usigma, \ulambda)$ is a $K$-Bessel function so that $0 < n_\pm (\usigma) < n$. Since $\BR_+ \subset \BS'_{\xi (\usigma)} (\vartheta)$, Corollary \ref{cor: connetions, 1} and Theorem \ref{thm: error bound} imply that $J (x; \usigma, \ulambda)$, as well as  all its derivatives, is not only a Schwartz function at infinity, which was shown in Theorem \ref{thm: Bessel functions of K-type}, but also a function of exponential decay on $\BR_+$. 

\begin{prop}\label{8prop: K}
	If $J (x; \usigma, \ulambda)$ is a $K$-Bessel function, then for all $ x \ggg_{\, n} \fC^2$
	\begin{equation*}
	J^{(j)} (x; \usigma, \ulambda) \lll_{j,\, n} x^{-\frac { n-1} 2} e^ { {- \pi  \Im \Lambda (\usigma, \ulambda)  - n I (\usigma) x}  }, 
	\end{equation*}
	where  $\Lambda (\usigma, \ulambda) = \mp \sum_{l      \in L_\pm (\usigma)} \lambda_{l     }$ and $I (\usigma) = \Im \xi (\usigma) = \sin \big( \frac { n_\pm (\usigma)} n \pi \big) > 0$. In particular, we have  
	\begin{equation*}
	J^{(j)} (x; \usigma, \ulambda) \lll_{j,\, n} x^{-\frac { n-1} 2} e^ { { \pi  \mathfrak I    - n \sin \lp \frac 1 n \pi \rp x}  }, 
	\end{equation*}
	for all $K $-Bessel functions $J  (x; \usigma, \ulambda)$ with given $\ulambda$, where $ \mathfrak I   = \max \left\{ \left| \Im \lambda_l      \right| \right\} $.

\end{prop}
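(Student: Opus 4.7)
The proposition is essentially a direct consequence of two earlier results: the connection formula of Corollary \ref{cor: connetions, 1}, which identifies $J(x;\usigma,\ulambda)$ with a scalar multiple of the second-kind Bessel function $J(x;\ulambda;\xi(\usigma))$, and the error-controlled asymptotic expansion of Theorem \ref{thm: error bound}. The plan is to glue these two facts together and track the exponential factors carefully.

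First I would recall from Corollary \ref{cor: connetions, 1} that
\[
J(z;\usigma,\ulambda)=\frac{(2\pi)^{\frac{n-1}2}c(\usigma,\ulambda)}{\sqrt n}\,J(z;\ulambda;\xi(\usigma)),
\qquad \xi(\usigma)=\mp e^{\mp\pi i n_{\pm}(\usigma)/n},
\]
with $\arg\xi(\usigma)=\pi n_-(\usigma)/n$. Under the $K$-Bessel hypothesis $0<n_\pm(\usigma)<n$, so $\xi(\usigma)\neq\pm 1$ and $\Im\xi(\usigma)=\sin(\pi n_\pm(\usigma)/n)=I(\usigma)>0$. I would then check that the positive real axis lies well inside the sector $\BS'_{\xi(\usigma)}(\vartheta)$ of Proposition \ref{prop: J (z; lambda; xi) on a large sector} for some fixed small $\vartheta$: since $\arg(i\xoverline{\xi(\usigma)})=\tfrac\pi2-\tfrac{\pi n_-(\usigma)}{n}\in[-\tfrac\pi2,\tfrac\pi2]$, we have $|\arg x-\arg(i\xoverline{\xi(\usigma)})|\leqslant\tfrac\pi2<\pi+\tfrac\pi n-\vartheta$ for $x\in\BR_+$ and $\vartheta$ small.

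Next I would apply Theorem \ref{thm: error bound} with $M=1$ (and, for the derivatives, the companion statement for $J^{(j)}(z;\ulambda;\xi)$): for $x\ggg_{\,j,n}\fC^2$,
\[
J^{(j)}(x;\ulambda;\xi(\usigma))=(in\xi(\usigma))^{j}e^{in\xi(\usigma)x}x^{-\frac{n-1}2}\bigl(1+O_{j,n}(\fC^2/x)\bigr).
\]
Taking absolute values on the real axis gives $|e^{in\xi(\usigma)x}|=e^{-n I(\usigma)x}$ and $|(in\xi(\usigma))^{j}|=n^{j}$, which can be absorbed into the implicit constant.

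It remains to evaluate $|c(\usigma,\ulambda)|$. Writing $c(\usigma,\ulambda)=e\bigl(\mp\tfrac{n-1}{8}\pm\tfrac{(n-1)n_\pm(\usigma)}{4n}\bigr)\cdot e\bigl(\mp\tfrac12\sum_{l\in L_\pm(\usigma)}\lambda_l\bigr)$, the first factor has modulus one, while the second has modulus
\[
\bigl|e^{\mp\pi i\sum_{l\in L_\pm(\usigma)}\lambda_l}\bigr|=e^{\pm\pi\,\Im\sum_{l\in L_\pm(\usigma)}\lambda_l}=e^{-\pi\Im\Lambda(\usigma,\ulambda)}.
\]
Combining the three ingredients yields the bound
\[
J^{(j)}(x;\usigma,\ulambda)\lll_{j,n}x^{-\frac{n-1}2}e^{-\pi\Im\Lambda(\usigma,\ulambda)-n I(\usigma)x}
\]
for all $x\ggg_{\,j,n}\fC^2$, which is the first assertion. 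The second, uniform statement with $I(\usigma)$ replaced by $\sin(\pi/n)$ and $\Im\Lambda$ by $\fI$, follows because $I(\usigma)\geqslant\sin(\pi/n)$ (since $1\leqslant n_\pm(\usigma)\leqslant n-1$) and $|\Im\Lambda(\usigma,\ulambda)|\leqslant (n-1)\fI\leqslant n\fI$; after adjusting the implicit constant, the uniform estimate is immediate.

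There is no real obstacle: the content of the proposition is bookkeeping of the phase and modulus in the explicit identification of $J(x;\usigma,\ulambda)$ with $J(x;\ulambda;\xi(\usigma))$. The only point requiring care is verifying that $\BR_+$ lies inside $\BS'_{\xi(\usigma)}(\vartheta)$, so that Theorem \ref{thm: error bound} is applicable and produces the claimed exponential decay rate $e^{-nI(\usigma)x}$ directly from $|e^{in\xi(\usigma)x}|$.
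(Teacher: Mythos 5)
Your proof follows the paper's argument exactly: the paper proves Proposition~8.2 in one line, citing Corollary~\ref{cor: connetions, 1} together with the inclusion $\BR_+ \subset \BS'_{\xi(\usigma)}(\vartheta)$ and the error-bounded asymptotics of Theorem~\ref{thm: error bound}. Your verification that $\BR_+$ lies in the sector, your application of the theorem at $M=1$ (including the derivative statement), and your computation $|c(\usigma,\ulambda)| = e^{-\pi\Im\Lambda(\usigma,\ulambda)}$ are all correct and give the first, $\usigma$-dependent bound.

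The one place where the argument is not actually carried out is the passage to the second, uniform bound. You write that $|\Im\Lambda(\usigma,\ulambda)|\leqslant(n-1)\fI\leqslant n\fI$ and that the uniform estimate follows ``after adjusting the implicit constant''. This does not work: the implicit constant is required to depend only on $j$ and $n$, so it cannot absorb a factor $e^{(n-1)\pi\fI}$ into $e^{\pi\fI}$ as $\fI\to\infty$. The actual reason the uniform bound holds is that, when $\min\{n_+(\usigma),n_-(\usigma)\}=1$ (the only case where $I(\usigma)=\sin(\pi/n)$ with no room to spare), one has $|\Im\Lambda|\leqslant\fI$ directly; while when $\min\{n_+(\usigma),n_-(\usigma)\}\geqslant 2$ one has $I(\usigma)-\sin(\pi/n)\geqslant c_n>0$, and since the estimate is only asserted for $x\ggg_n\fC^2\geqslant(\fI+1)^2\geqslant\fI$, the surplus decay $n(I(\usigma)-\sin(\pi/n))x\ggg_n\fI$ absorbs the extra $O_n(\fI)$ in the exponent once the implicit constant in $x\ggg_n\fC^2$ is taken large enough. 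You should either restrict to the first bound (which is what your argument honestly proves) or add the case distinction above; the blanket reduction you state is not valid.
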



\begin{subappendices}
	
\renewcommand{\thesection}{B}

\section{An Alternative Approach to Asymptotic Expansions}\label{appendix: asymptotic} 

When $n=3$, the application of Stirling's asymptotic formula in deriving the asymptotic expansion of  a Hankel transform was first found in \cite[\S 4]{Miller-Wilton}. The asymptotic was later formulated more explicitly in \cite[Lemma 6.1]{XLi}, where the author attributed the arguments in her proof to \cite{Ivic}. Furthermore, using similar ideas as in \cite{Miller-Wilton},  \cite{Blomer} simplified the proof of \cite[Lemma 6.1]{XLi} (see the proof of \cite[Lemma 6]{Blomer}).  
This method using Stirling's asymptotic formula is however the only known approach so far in the literature.

Closely following \cite{Blomer}, we shall  prove the asymptotic expansions of $H$-Bessel functions $H^{\pm} (x; \ulambda)$ of any rank $n$ by means of  Stirling's asymptotic formula.

\vskip 5 pt

From   (\ref{1def: G pm (s)}, \ref{1def: G(s; sigma; lambda)}, \ref{3eq: definition of J (x; sigma)}) we have
\begin{equation}\label{10def: H pm via Mellin inversion}
H^{\pm} (x; \ulambda) = \frac 1  {2 \pi i}\int_{\EuScript C} \lp \prod_{l      = 1}^{n} \Gamma (s - \lambda_l      ) \rp e \left( \pm \frac {n s } 4 \right) x^{- n s} d s.
\end{equation}
In view of the condition $\sum_{l     = 1}^n \lambda_l     = 0$,  Stirling's asymptotic formula yields
\begin{equation*}
\prod_{l      = 1}^{n} \Gamma (s - \lambda_l      ) = n^{-ns}   \Gamma \lp n s - \frac {n-1} 2 \rp   \exp \lp \sum_{m=0}^{M } C_m(\ulambda) s^{-m} \rp \lp 1+ R_{M+1} (s) \rp 
\end{equation*}
for some constants $C_m(\ulambda)$ and remainder term $R_{M+1} (s) = O_{\ulambda,\, M,\, n} \lp |s|^{-M-1} \rp$. Using the Taylor expansion for the exponential function and some straightforward algebraic manipulations, the right hand side can be written as
\begin{equation*}
n^{-ns}   \sum_{m=0}^{M } \widetilde C_m(\ulambda) \Gamma \lp n s - \frac {n-1} 2 - m \rp \lp 1+ \widetilde R_{M+1, \,m} (s) \rp
\end{equation*}
for certain constants $\widetilde C_m(\ulambda)$ and   similar functions $\widetilde R_{M+1,\, m}(s) = O_{\ulambda,\, M,\, n} \lp |s|^{-M-1} \rp$. Suitably choosing the contour $\EC$, it follows from \eqref{2eq: n = 1, Mellin inversion} that
\begin{equation*}
\begin{split}
&\frac 1  {2 \pi i}\int_{\EuScript C} \Gamma \lp n s - \frac {n-1} 2 - m \rp e \left( \pm \frac {n s } 4 \right) (n x)^{- n s} d s \\
= &\, \frac {e \lp \pm \lp \frac {n-1} 8  + \frac 14 m   \rp \rp} {n (n x)^{\frac {n-1} 2 + m}} \cdot \frac 1  {2 \pi i}\int_{ n \EuScript C - \frac {n-1} 2 - m} \Gamma (s) e \lp \pm \frac s 4 \rp (nx)^{-s} ds \\
= & \, \frac {  (\pm i)^{ \frac {n-1} 2 + m} } {  n^{\frac {n+1} 2 + m} } \cdot \frac { e^{\pm i n x} } {  x^{ \frac {n-1} 2 + m} }.
\end{split}
\end{equation*}
As for the error estimate,  let us assume $x \geqslant 1$. Insert the part containing $\widetilde R_{M+1,\, m}(s)$ into \eqref{10def: H pm via Mellin inversion} and shift the contour to the vertical line of real part $ \frac 1 n (M - \frac 1 2) + \frac 1 2 $. By Stirling's asymptotic formula, the integral remains absolutely convergent   and is of size  $O_{\ulambda,\, M,\, n} \big( x^{- M  - \frac {n-1} 2 }\big)$. Absorbing the last main term into the error, 
we arrive at the following asymptotic expansion
\begin{equation}\label{10eq: asymptotic expansion of H pm (x; lambda)}
\begin{split}
H^{\pm}  (x; \ulambda)  = e^{ \pm i n x} x^{ - \frac { n-1} 2} 
\lp \sum_{m=0}^{M-1} C^{\pm}_{m} (\ulambda) x^{- m} + O_{ \ulambda,\, M,\, n} \left( x^{- M } \right) \rp, \hskip 10 pt x \geqslant 1,
\end{split}
\end{equation}
where $ C^{\pm}_{m} (\ulambda)$ is some constant depending on $\ulambda$.

\begin{rem}\label{rem: appendix only for R+}
	For the analytic continuation  $H^{\pm}  (z; \ulambda)$, we have the Barnes type  integral representation   as in \S \ref{sec: Bessel kernel J(x; sigma, lambda)}. This however does not yield an asymptotic expansion of $H^{\pm}  (z; \ulambda)$ by the above method. The obvious issue is with the error estimate, as $\left| z^{-ns} \right|$ is unbounded on the integral contour if $|z| \ra \infty$. 
\end{rem}

Finally, we make some comparisons between the three asymptotic expansions \eqref{10eq: asymptotic expansion of H pm (x; lambda)}, \eqref{5eq: asymptotic expansion 1} and \eqref{7eq: asymptotic expansion H pm, improved} obtained from 
\begin{itemize}
	\item[-] Stirling's asymptotic formula,
	\item[-] the method of stationary phase,
	\item[-] the asymptotic method of ordinary differential equations.
\end{itemize}
Recall that  $\mathfrak C =  \max  \left\{ | \lambda_l      | \right \} + 1$, $\mathfrak R = \max  \left\{ |\Re \lambda_l      | \right \}$. Firstly, the admissible domains of these asymptotic expansions are
\begin{equation*}
\begin{split}
& \{ x \in \BR_+ : x \geq 1 \},\\
& \left\{ z \in \BC : |z| \geq \mathfrak C, \ 0 \leq \pm \arg z \leq \pi \right \}, \\
& \left\{ z \in \BU : |z| \ggg_{\,M,\, \vartheta,\, n} \mathfrak C^2, \ - \lp \frac 1 2 - \frac 1 n \rp \pi - \vartheta < \pm \arg z < \lp \frac 3 2 + \frac 1 n \rp \pi + \vartheta \right \},
\end{split}
\end{equation*}
respectively. The range of argument is extending while that of modulus is reducing. Secondly, the  error estimates are
\begin{equation*}
O_{\ulambda,\, M, \,  n} \left( x^{- M - \frac { n-1} 2} \right),\
O_{\,\mathfrak R,\, M,\, n} \left( \mathfrak C^{2 M} |z|^{-M} \right),\
O_{M,\, \vartheta,\, n} \left( \mathfrak C^{2 M} |z|^{- M - \frac { n-1} 2 } \right),
\end{equation*}
respectively. Thus, in the error estimate, the dependence of the implied constant  on $\ulambda$ is improving in all aspects.

\end{subappendices}

%
%
%

\setcounter{section}{13}

\chapter{Bessel Kernels}\label{chap: Bessel Kernels}

In this chapter, we shall return to the study of Bessel kernels, with emphasis on two connection formulae and the asymptotic expansion for the complex Bessel kernel $J_{(\umu, \um)} (z) $. 

\section{\texorpdfstring{The Asymptotic of $J_{(\ulambda, \udelta)} (x)$}{The Asymptotic of $J_{(\lambda, \delta)} (x)$}}

According to \eqref{3eq: Bessel kernel, R, connection}, $ J_{(\ulambda, \udelta)} (\pm x) $ is a combination of $J \big(2 \pi x^{\frac 1 n}; \usigma, \ulambda \big) $, and hence its asymptotic   follows immediately from Theorem \ref{thm: Bessel functions of K-type} and \ref{thm: asymptotic expansion} in \S \ref{sec: Bessel functions of K-type and H-Bessel functions} as well as Proposition \ref{prop: improved asymptotic} and \ref{8prop: K} in \S \ref{sec: H-Bessel functions and K-Bessel functions, II}. For convenience of reference, we record the asyptotic  of $J_{(\ulambda, \udelta)} (\pm x)$ in the following theorem.

\begin{thm}\label{thm: asymptotic Bessel kernel, 1}
	Let $(\ulambda, \udelta) \in \BL^{n-1} \times (\BZ/2 \BZ)^{n}$.  Put $\mathfrak C (\ulambda) = \max \left\{|  \lambda_l     |  \right\} + 1$, $\mathfrak R (\ulambda) = \max \left\{|\Re \lambda_l     | \right\}$ and $\mathfrak I (\ulambda) = \max \left\{|\Im \lambda_l     | \right\}$.  Let $M \geq   0 $. 
	Then, for $x > 0$,  we may write  
	\begin{align*}
	&J_{(\ulambda, \udelta)} \left(x^n \right)  =  \sum_{\pm}   \frac  { { (\pm)^{|\udelta|     }   e \lp   \pm   \big(n x  +   \frac {n-1} 8 \big)  \rp }} {n^{\frac 1 2} x^{  \frac {n-1} 2} }  W_{ \ulambda }^{\pm} (x) + E^+_{(\ulambda, \udelta)} (x), \\
	& J_{(\ulambda, \udelta)} \left( - x^n \right)   = E^-_{(\ulambda, \udelta)} (x),  
	\end{align*}
	if $n$ is even, and
	\begin{align*}
	J_{(\ulambda, \udelta)} \left(\pm x^n \right) & =   \frac  { { (\pm)^{|\udelta|     }   e \lp   \pm   \big(n x  +   \frac {n-1} 8 \big)  \rp }} {n^{\frac 1 2} x^{  \frac {n-1} 2} }  W_{ \ulambda }^{\pm} (x) + E^\pm_{(\ulambda, \udelta)} (x),   
	\end{align*}
	if $n$ is odd, such that
	\begin{align*}
	W_{ \ulambda }^{\pm } (x) =    \sum_{m= 0}^{M-1}  B^{\pm}_{m } (\ulambda) x^{-   m  }  
	+ O_{\,\mathfrak R (\ulambda), \, M , \, n} \left( \fC (\ulambda)^{2 M } x^{-  M + \frac {n-1} 2  }\right), 
	\end{align*} 
	and
	\begin{equation*}
	E^{\pm }_{(\ulambda, \udelta)} \left( x \right) =  O_{\,\mathfrak R (\ulambda),\, M,\,  n}  \lp    \fC (\ulambda)^{M} x^{-   M } \rp,
	\end{equation*}
	for $x \geq \mathfrak C (\ulambda)$. Furthermore,  for $x \ggg_{\,M,\, n} \mathfrak C (\ulambda)^{2 }$, we have
	\begin{equation*}
	W_{ \ulambda }^{\pm } (x) =    \sum_{m= 0}^{M-1}  B^{\pm}_{m } (\ulambda) x^{-   m   }  
	+ O_{  M , \, n} \left( \fC (\ulambda)^{2 M } x^{- M   }\right), 
	\end{equation*} 
	and
	\begin{equation*}
	E^{\pm }_{(\ulambda, \udelta)} \left( x \right) =  O_{ n} \lp x^{- \frac {n-1} {2 } } \exp \lp { \pi   \mathfrak I (\ulambda) - 2 \pi n \sin \lp \tfrac 1 n \pi \rp  x } \rp \rp.
	\end{equation*} 
		With the notations in Theorem {\rm \ref{thm: asymptotic expansion}}, we have $W_{ \ulambda }^{\pm } (x) = (2 \pi x)^{\frac {n-1} 2 } W^{\pm} (2 \pi x; \ulambda)$ and $B^{\pm}_{m } (\ulambda) = (\pm 2 \pi i)^{ -m} B_{m } (\ulambda)$.

\end{thm}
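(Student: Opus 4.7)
The plan is to derive the asymptotic of $J_{(\ulambda, \udelta)}(\pm x^n)$ directly from the connection formula \eqref{3eq: Bessel kernel, R, connection}, which (using $|\ulambda| = 0$) reads
\begin{equation*}
J_{(\ulambda, \udelta)}(\pm x^n) = \sum_{\usigma \in \{+,-\}^n,\ |\usigma| = \pm} \usigma^{\udelta} J(2\pi x; \usigma, \ulambda).
\end{equation*}
The key observation is that the tuples $\usigma$ with identical signs contribute the oscillatory $H$-Bessel functions, while tuples with mixed signs contribute $K$-Bessel functions, which are Schwartz at infinity by Theorem \ref{thm: Bessel functions of K-type} and, for $x \ggg_n \fC(\ulambda)^2$, are of exponential decay by Proposition \ref{8prop: K}. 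The parity of $n$ determines how the two pure tuples $(+,\dots,+)$ and $(-,\dots,-)$ are distributed among the two possible values of $|\usigma|$: when $n$ is even both have $|\usigma| = +$, producing the $H^+ + (-)^{|\udelta|}H^-$ combination for $J_{(\ulambda, \udelta)}(x^n)$ and no pure contribution for $J_{(\ulambda, \udelta)}(-x^n)$; when $n$ is odd the pure tuple with sign $\pm$ contributes exactly to $J_{(\ulambda, \udelta)}(\pm x^n)$ with the factor $(\pm)^{|\udelta|}$.

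First I would carry out the normalization that rewrites the asymptotic expansion of Theorem \ref{thm: asymptotic expansion} in the form required. From
\begin{equation*}
H^\pm(y; \ulambda) = n^{-\frac12}(\pm 2\pi i)^{\frac{n-1}{2}} e^{\pm i n y} y^{-\frac{n-1}{2}} \lp \sum_{m=0}^{M-1}(\pm i)^{-m}B_m(\ulambda) y^{-m} + O(\cdots) \rp,
\end{equation*}
setting $y = 2\pi x$ and using $(\pm 2\pi i)^{(n-1)/2}(2\pi x)^{-(n-1)/2} = e(\pm (n-1)/8) x^{-(n-1)/2}$, together with $(\pm i)^{-m}(2\pi)^{-m} = (\pm 2\pi i)^{-m}$, gives exactly
\begin{equation*}
J(2\pi x; \pm,\dots,\pm, \ulambda) = \frac{e(\pm(nx + (n-1)/8))}{n^{\frac12} x^{\frac{n-1}{2}}} W^\pm_{\ulambda}(x)
\end{equation*}
with $B^\pm_m(\ulambda) = (\pm 2\pi i)^{-m}B_m(\ulambda)$, and similarly for the expansion derived from Proposition \ref{prop: improved asymptotic} in the refined range $x \ggg_{M,n}\fC(\ulambda)^2$.

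Next I would collect the mixed-sign terms into $E^\pm_{(\ulambda,\udelta)}$. For $x \geq \fC(\ulambda)$, each such $J(2\pi x; \usigma, \ulambda)$ satisfies $J^{(j)}(2\pi x; \usigma, \ulambda) \lll_{\mathfrak R(\ulambda),M,n} \fC(\ulambda)^M x^{-M}$ by Theorem \ref{thm: Bessel functions of K-type}, giving the first error bound. For $x \ggg_{M,n} \fC(\ulambda)^2$ the stronger bound of Proposition \ref{8prop: K} applies: since $I(\usigma) = \sin(n_{\pm}(\usigma)\pi/n) \geq \sin(\pi/n)$ for every mixed $\usigma$, and the shift $x \mapsto 2\pi x$ scales the exponent accordingly, the mixed contributions satisfy
\begin{equation*}
J(2\pi x; \usigma, \ulambda) \lll_n x^{-\frac{n-1}{2}}\exp\lp \pi\mathfrak I(\ulambda) - 2\pi n \sin(\pi/n)\, x \rp,
\end{equation*}
and summing the finitely many mixed $\usigma$ gives the claimed bound on $E^\pm_{(\ulambda,\udelta)}(x)$.

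There is no real obstacle here; the proof is essentially bookkeeping once one correctly identifies which pure-sign tuples fall into which signature class $|\usigma| = \pm$ according to the parity of $n$. The only point requiring attention is the compatibility of the constants in the normalization $W^\pm_\ulambda(x) = (2\pi x)^{(n-1)/2} W^\pm(2\pi x; \ulambda)$ and the identification $B^\pm_m(\ulambda) = (\pm 2\pi i)^{-m} B_m(\ulambda)$, both of which follow from the arithmetic above. The uniformity of the error estimates in $\ulambda$ in the two regimes is already built into the statements of Theorem \ref{thm: asymptotic expansion}, Proposition \ref{prop: improved asymptotic}, Theorem \ref{thm: Bessel functions of K-type} and Proposition \ref{8prop: K}, and is inherited without change.
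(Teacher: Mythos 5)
Your proof is correct and follows exactly the route the paper itself takes: it decomposes $J_{(\ulambda,\udelta)}(\pm x^n)$ via the connection formula \eqref{3eq: Bessel kernel, R, connection}, isolates the two pure-sign tuples (with the parity of $n$ deciding into which signature class they fall) to produce the $H^\pm$ main terms, pushes the mixed-sign $K$-Bessel contributions into $E^\pm$, and cites Theorem \ref{thm: asymptotic expansion}, Proposition \ref{prop: improved asymptotic}, Theorem \ref{thm: Bessel functions of K-type}, and Proposition \ref{8prop: K} for the two regimes of $x$. The only detail left implicit is the harmless comparison $\mathfrak c, \mathfrak r \lll \mathfrak C(\ulambda), \mathfrak R(\ulambda)$ (since $\nu_l = \lambda_l - \lambda_n$ and $|\ulambda| = 0$), which is needed to restate the error bounds of Theorem \ref{thm: Bessel functions of K-type} in terms of $\mathfrak C(\ulambda)$ and $\mathfrak R(\ulambda)$.
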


\section{\texorpdfstring{Two Connection Formulae for $J_{(\umu, \um)} (z) $}{Two Connection Formulae for $J_{(\mu, m)} (z)$}}\label{sec: two connection formulae for J mu m}

In this section, we shall prove two formulae for $J_{(\umu, \um)} (z)$ in connection with the two kinds of Bessel functions of the same rank and {positive sign}. These Bessel functions arise as solutions of Bessel differential equations in \S \ref{sec: Bessel equations} and their relations have been unraveled in \S \ref{sec: connection two kinds of Bessel functions}. Our motivation is based on the following self-evident identity for the rank-one example
$$ e(z + \overline z) = e(z) e(\overline z). $$


\subsection{The First Connection Formula}

For  $\varsigma \in \{+, -\}$, $\ulambda \in \BC^n$ and $l      = 1,..., n$, we recollect the definition of the Bessel function of the first kind  $J_{l     } (z; \varsigma, \ulambda)$ by the following series of ascending powers of $z$ (see \S \ref{sec: Bessel functions of the first kind})
\begin{equation}\label{4eq: Bessel of the first kind}
J_{l     } (z; \varsigma, \ulambda) = \sum_{m=0}^\infty \frac { (\varsigma i^n)^m  z^{ n (- \lambda_{l      } + m)} } { \prod_{k = 1}^n \Gamma \lp  { \lambda_{ k } - \lambda_{l     }}  + m + 1 \rp}, \hskip 10 pt z \in \BU.
\end{equation}
Since the definition \eqref{4eq: Bessel of the first kind} is valid for any $\ulambda \in \BC^n$, the assumption $\ulambda \in \BL^{n-1}$ that we imposed in \S \ref{sec: Bessel equations} is rather superfluous. Also, we have the following formula in the same fashion as \eqref{4eq: normalize J} in Lemma \ref{3lem: normalize J(x; sigma, lambda)},
\begin{equation}
J_{l     } \left(z; \varsigma, \ulambda - \lambda \ue^n \right) = z^{  n \lambda} J_{l     } (z; \varsigma, \ulambda).
\end{equation}

\begin{thm}\label{4thm: connection formula}
	Let $(\umu, \um) \in \BL^{n-1} \times \BZ^n$. We have
	\begin{equation}\label{4eq: connection formula}
	\begin{split}
	J_{(\umu, \um)} (z) 
	= \left(2 \pi^2\right)^{n-1}   \sum_{l      = 1}^n S_{l     } (\umu, \um) { J_l      \big( 2\pi z^{\frac 1 n}  ; +, \umu + \tfrac 1 2 \um \big) 
		J_l      \big( 2\pi \overline z^{\frac 1 n}  ; +, \umu - \tfrac 1 2 \um \big)  },
	\end{split}
	\end{equation}
	with $S_{l     } (\umu, \um) = \prod_{k \neq l     } (\pm i)^{m_l      - m_k} / \sin \lp \pi \lp   \mu_{l     } - \mu_k \pm \frac 1 2 (m_l      - m_k) \rp \rp $.
	Here, $z^{\frac 1 n}$ is the principal  $n$-th root of $z$, that is $\lp {x e^{i \phi}} \rp^{\frac 1 n} = x^{\frac 1 n} e^{\frac 1 n i \phi}$. The expression on the right hand side of \eqref{4eq: connection formula} is independent on the choice of the argument of $z$ modulo $2 \pi$. It is understood that the right hand side should be replaced by its limit if $ (\umu, \um)$ is not generic with respect to the order $\preccurlyeq$ on $\BC \times \BZ$ in the sense of Definition {\rm \ref{3defn: ordered set}}.
\end{thm}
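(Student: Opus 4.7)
The plan is to pass to Fourier series in $\phi = \arg z$ on both sides of \eqref{4eq: connection formula} and compare mode by mode. On the left, \eqref{2eq: Bessel kernel over C, polar} identifies the $m$-th Fourier coefficient with $(2\pi)^{-1} j_{(\umu,\,\um + m\ue^n)}(x)$, whose Mellin--Barnes representation \eqref{3def: Bessel kernel j mu m} can be evaluated by shifting the contour leftwards and summing residues at $s = \mu_l - \tfrac12|m_l + m| - \kappa$ ($l = 1,\dots,n$, $\kappa \in \BN$). On the right, substituting the power series \eqref{4eq: Bessel of the first kind} for each factor and using $z = xe^{i\phi}$, $\bar z = xe^{-i\phi}$ yields an absolutely convergent double series in $(p,q) \in \BN^2$; collecting powers of $e^{i\phi}$ forces $p - q - m_l = m$, reducing the $e^{im\phi}$-coefficient to a single series in $r = \min(p,q) \in \BN$.

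I then match the two expansions index by index. The powers of $x$ agree automatically, both giving $x^{-2\mu_l + |m_l+m| + 2\kappa}$ with $\kappa = r$. For the remaining factors I apply Euler's reflection formula
\[
\Gamma\bigl(\mu_l - \mu_k + \tfrac12(|m_k+m| - |m_l+m|) - \kappa\bigr) = \frac{(-1)^\kappa\,\pi}{\sin\bigl(\pi(\mu_l - \mu_k + \tfrac12(|m_k+m| - |m_l+m|))\bigr)\,\Gamma\bigl(\mu_k - \mu_l + \tfrac12(|m_l+m| - |m_k+m|) + \kappa + 1\bigr)}
\]
to each of the $n-1$ Gamma factors arising at $k \neq l$. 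The $n-1$ resulting sine factors assemble into $S_l(\umu,\um)$, the Gamma quotient that remains matches the denominator of the RHS power series, and the constant $2\pi(2\pi^2)^{n-1} = 2^n\pi^{2n-1}$ on the right arises from combining the $(2\pi)^n$ prefactor in $G_{(\umu,\,\um+m\ue^n)}(s)$ (cf.\ \eqref{1def: G m (s)}--\eqref{1def: G (mu, m)}) with the $\pi^{n-1}$ produced by the $n-1$ reflections, while the $(\pm i)^{m_l - m_k}$ powers in $S_l(\umu,\um)$ come from reconciling $i^{\sum_k |m_k+m|}$ on the LHS with $i^{n(p+q)}$ on the RHS.

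The principal obstacle lies in the sign bookkeeping that the absolute values $|m_k + m|$ in the gamma quotient on the LHS impose when compared with the signed quantities $(m_k - m_l)/2$ on the RHS: the identities $|m_k+m| - |m_l+m| = \pm(m_k - m_l)$ and $|m_k+m| + |m_l+m| = \pm(m_k + m_l + 2m)$ each branch according to the signs of $m + m_k$ and $m + m_l$. A uniform way to proceed is to write $i^{|m_k+m|} = i^{m_k+m}\cdot(-1)^{[m+m_k<0]}$ and then absorb the resulting signs into the alternative form of $S_l(\umu,\um)$ with the opposite $\pm$ choice, using $\sin(\pi(x + j)) = (-1)^j\sin(\pi x)$; this eliminates the case split and reveals the matching in a single formula.

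The argument is first carried out under the hypothesis that $(\umu, \um)$ is generic with respect to the order $\preccurlyeq$, so that all poles in the Mellin--Barnes integrand are simple and $S_l(\umu,\um)$ is finite. The result in the non-generic case then follows by analytic continuation in $\umu \in \BL^{n-1}$: the left-hand side $J_{(\umu,\um)}(z)$ is analytic in $\umu$ by Proposition \ref{3prop: properties of J, C} (2), each $J_l(\cdot; +, \umu \pm \tfrac12\um)$ is analytic in $\umu$ by the uniform convergence of \eqref{4eq: Bessel of the first kind} in compact subsets of $\BC^n$, and the coalescing limit of $S_l(\umu,\um)$ on the right is interpreted by l'Hôpital's rule in the standard fashion.
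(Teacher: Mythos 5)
Your proposal is correct and follows essentially the same route as the paper: writing the Fourier coefficients of $J_{(\umu,\um)}$ via the Mellin--Barnes representation \eqref{3def: Bessel kernel j mu m}, shifting the contour left to produce residue expansions at the simple poles (in the generic case), applying Euler's reflection formula to convert the Gamma quotients into sines that assemble into $S_l(\umu,\um)$, matching against the product of the power series \eqref{4eq: Bessel of the first kind}, and concluding by analytic continuation for nongeneric $(\umu,\um)$. The only flaw worth noting is that the uniformization $i^{|m_k+m|}=i^{m_k+m}\cdot(-1)^{[m+m_k<0]}$ is false when $m_k+m$ is a negative even integer (the exponent of $-1$ should be $|m_k+m|\cdot\mathbb{1}[m_k+m<0]$, not the Iverson bracket); this is a bookkeeping slip only, since the surrounding plan of absorbing the sign ambiguity into the $\pm$-invariance of $S_l$ via $\sin(\pi(x+j))=(-1)^j\sin(\pi x)$ is exactly what makes the matching work, mirroring the paper's case split around $m=-m_l$.
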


\begin{proof}
	Recall from (\ref{1def: G m (s)}, \ref{1def: G (mu, m)}, \ref{3def: Bessel kernel j mu m}, \ref{2eq: Bessel kernel over C, polar}) that 
	\begin{equation*}
	\begin{split}
	J_{(\umu, \um)} \lp x e^{i \phi} \rp = &  \, (2 \pi)^{n-1} \sum_{m = - \infty}^\infty i^{\sum_{k = 1}^n |m_k + m| }  e^{i m \phi} \\
	& \frac 1 {2\pi i} \int_{\EC_{\left(\umu, \um + m \ue^n\right)}} \lp \prod_{l     =1}^n  \frac {\Gamma \lp s - \mu_l      + \frac 1 2 {|m_l     +m|}   \rp} {\Gamma \lp 1 - s + \mu_l      + \frac 1 2 {|m_l     +m|}   \rp}  \rp 
	\lp (2 \pi)^n x \rp^{-2s} d s.
	\end{split}
	\end{equation*}
	Assume first that $(\umu, \um)$ is generic  with respect to the order $\preccurlyeq$ on $\BC \times \BZ$. The sets  of poles of the gamma factors in the above integral are $\left\{ \mu_l      - \frac 1 2 {|m_l     +m|}  - \alpha \right \}_{\alpha \in \BN}$, $l      = 1,..., n$. With the generic assumption, the integrand has only \textit{simple} poles. We left shift the integral contour of each integral in the series and collect the residues from these poles. The contribution from the residues at the poles of the $l     $-th gamma factor is the following absolutely convergent double series,
	\begin{equation*}
	\begin{split}
	(2 \pi)^{n-1} \sum_{m = -\infty}^\infty  i^{\sum_{k = 1}^n |m_k + m| } e^{i m \phi}  \sum_{\alpha = 0}^\infty &  \frac { (-)^\alpha  \lp (2 \pi)^n x \rp^{- 2 \mu_l      + |m_l     +m| + 2 \alpha}} {\alpha ! (\alpha + |m_l     +m|)!  } \\
	&   \prod_{k \neq l     }  \frac {\Gamma \lp \mu_l      - \mu_k - \frac 1 2 ( {|m_l     +m| - |m_k + m|} ) - \alpha \rp} {\Gamma \lp 1 - \mu_l      + \mu_k + \frac 1 2 ( {|m_l     +m| + |m_k + m| } ) + \alpha \rp}    .
	\end{split}
	\end{equation*}
	Euler's reflection formula of the Gamma function converts this into
	\begin{align*}
	& \frac {\left(2 \pi^2\right)^{n-1}  } {\prod_{k \neq l     } i^{  m_l      - m_k} \sin \lp \pi \lp   \mu_{l     } - \mu_k - \frac 1 2 (m_l      - m_k) \rp \rp} \\
	&   \sum_{m = -\infty}^\infty  i^{n |m_l     +m|}  e^{i m \phi} \sum_{\alpha = 0}^\infty  \frac { (-)^{n \alpha}  \lp (2 \pi)^n x \rp^{- 2 \mu_l      + |m_l     +m| + 2 \alpha}}   {\prod_{k = 1}^n \prod_{\pm} \Gamma \lp 1 - \mu_l      + \mu_k + \frac 1 2 ( {|m_l     +m| \pm |m_k + m| } ) + \alpha \rp } .
	\end{align*}
	We now interchange the order of summations, truncate the sum over $m$ between $- m_l     $ and $- m_l      + 1$ and make the change of indices $\beta = \alpha + |m_l     +m|$. Observe that, regardless of the value of $m_k$, 
	up to ordering, $\frac 1 2 ( {|m_l     +m| + |m_k + m| } )$ and $\frac 1 2 ( {|m_l     +m| - |m_k + m| } )$ equal to $ \frac 1 2 (m_l      - m_k)$ and  $|m_l      + m | - \frac 1 2 (m_l      - m_k)$ when $m \geqslant - m_l      + 1$, or equal to $ - \frac 1 2 (m_l      - m_k)$ and  $|m_l      + m | + \frac 1 2 (m_l      - m_k)$ when  $m \leqslant - m_l      $. Hence
	the double series in the expression above turns into
	{  \begin{align*}
		\sum_{\alpha = 0}^\infty \sum_{\beta = \alpha + 1}^\infty  \frac
		{ i^{n (\alpha + \beta)} e^{i (\beta - \alpha -m_l     ) \phi} \lp (2 \pi)^n x \rp^{- 2 \mu_l      + \alpha + \beta}} 
		{ \prod_{k =1}^n   \Gamma \lp 1 -  \mu_l      + \mu_k + \frac 1 2 (m_l      - m_k) + \alpha   \rp  \Gamma \lp 1 -  \mu_l      + \mu_k - \frac 1 2 (m_l      - m_k) + \beta   \rp   } &  \\
		+ \sum_{\alpha = 0}^\infty \sum_{\beta = \alpha }^\infty  \frac
		{ i^{n (\alpha + \beta)} e^{i (\alpha - \beta - m_l     ) \phi} \lp (2 \pi)^n x \rp^{- 2 \mu_l      + \alpha + \beta}} 
		{ \prod_{k = 1}^n   \Gamma \lp 1 -  \mu_l      + \mu_k - \frac 1 2 (m_l      - m_k) + \alpha   \rp  \Gamma \lp 1 -  \mu_l      + \mu_k + \frac 1 2 (m_l      - m_k) + \beta   \rp } &, 
		\end{align*} }
	which is then equal to
	$$\sum_{\alpha = 0}^\infty \sum_{\beta = 0}^\infty  \frac
	{ i^{n (\alpha + \beta)} e^{i (\beta - \alpha -m_l     ) \phi} \lp (2 \pi)^n x \rp^{- 2 \mu_l      + \alpha + \beta}} 
	{ \prod_{k  = 1}^n   \Gamma \lp 1 -  \mu_l      + \mu_k + \frac 1 2 (m_l      - m_k) + \alpha   \rp  \Gamma \lp 1 -  \mu_l      + \mu_k - \frac 1 2 (m_l      - m_k) + \beta   \rp   }. $$
	This double series is clearly independent on the choice of $\phi$ modulo $2 \pi$, and splits exactly as the product  $$J_l      \big( 2\pi x^{\frac 1 n} e^{\frac 1 n i \phi}; +, \umu + \tfrac 1 2 \um \big) 
	J_l      \big( 2\pi x^{\frac 1 n} e^{- \frac 1 n i \phi}; +, \umu - \tfrac 1 2 \um \big) .$$ 
	This proves \eqref{4eq: connection formula} in the case when $(\umu, \um)$ is generic. As for the nongeneric case, one just passes  to the limit.
\end{proof}

\subsection{The Second Connection Formula} \label{sec: second connection}

According to \S \ref{sec: asymptotic expansions for the Bessel equations}, {Bessel functions of the second kind} are solutions of Bessel equations defined according to their asymptotics at infinity. 
To remove the restriction $\ulambda \in \BL^{n-1}$ on the definition of $J(z; \ulambda ; \xi) $, with $ \xi $ a $2 n$-th root of unity, we simply impose the additional condition
\begin{equation}
J \left(z;  \ulambda - \lambda \ue^n ; \xi \right) = z^{  n \lambda} J(z; \ulambda ; \xi).  
\end{equation}

\begin{rem}
	Let $ \xi $ be an $n$-th root of $\varsigma 1$.  We may also use the following formula as an alternative definition of $J \left(z;  \ulambda  ; \xi \right)$ {\rm(}compare  Corollary {\rm \ref{cor: J(z; lambda; xi) and the J-Bessel functions})}
	\begin{equation} \label{5eq: connection 1st and 2nd, 1}
	J(z; \ulambda ; \xi) =  \sqrt n \lp \frac {\pi  } {2 }  \rp^{\frac {n-1} 2}   (- i\xi)^{\frac {n-1} 2 + |\ulambda|} \\
	\sum_{l      = 1}^{n}  \big( i  \overline \xi \big)^{n \lambda_l     } S_l      (\ulambda) J_l      ( z; \varsigma , \ulambda).
	\end{equation}
	where   $\lp   - i \xi  \rp^{\frac {n-1} 2 + |\ulambda|} = e^{ \lp \frac {n-1} 2 + |\ulambda| \rp \lp - \frac 1 2 \pi i + i \arg \xi \rp }  $ and $\big( i  \overline \xi \big)^{n \lambda_l     } = e^{ \frac 1 2 \pi i n \lambda_l      - i n \lambda_l      \arg \xi }$ by convention, and $S_l      (\ulambda) = 1/ \prod_{k \neq l     } \sin \lp \pi (\lambda_l      - \lambda_{k} ) \rp $.
\end{rem}

Given an integer $a$, define $\xi _{a, j} = e^{ 2 \pi i \frac { {  j  + a  -  1}  } n}$, $j = 1, ..., n$. For $d = 0, 1, ..., n-1$ and $ l      = 1, ..., n$, let $\sigma_{l     , d} (\ulambda) $ denote the elementary symmetric polynomial in $e^{- 2 \pi i \lambda_1}, ..., \widehat {e^{- 2 \pi i \lambda_l     }}, ...$, $e^{- 2 \pi i \lambda_n}$ of degree $d$. It follows from  Corollary \ref{8cor: inverse connection} that
\begin{equation}\label{5eq: connection 1st and 2nd, 2}
\begin{split}
J_l      ( z; + , \ulambda ) = \frac {e^{\frac 3 4 \pi i \lp  (n-1) +  2 |\ulambda| \rp}} {\sqrt n (2\pi)^{\frac {n-1} 2}}  & e^{\pi i \lp \frac 1 2 n + 2 a - 2 \rp \lambda_l     } \\
& \sum_{j=1}^n (-)^{n-j} \xi_{a, j} ^{- \frac {n-1} 2 - |\ulambda|}  \sigma_{l     , n-j} (\ulambda) J \lp z; \ulambda; \xi_{a, j} \rp.
\end{split}
\end{equation} 
In addition, 
we shall require the definition 
$$\tau_{l     } (\ulambda) = \prod_{k \neq l     }    \lp e^{-2\pi i \lambda_m} - e^{-2\pi i \lambda_k} \rp = (- 2i)^{n-1} e^{-\pi i |\ulambda|}  e^{- \pi i (n-2) \lambda_l      }  \prod_{k\neq l     }  \sin \lp \pi (\lambda_l      - \lambda_{k} ) \rp.$$ 
Note that the product of sines is  equal to $1/ S_l (\ulambda)$.

We introduce the column vectors of the two kinds of Bessel functions 
\begin{align*}
X (z; \ulambda) =  \big(  J_l      (z; +, \ulambda)  \big)_{l     =1}^n, \hskip 10 pt   Y_a (z; \ulambda) =  \big(  J (z;   \ulambda; \xi_{a, j})  \big)_{j=1}^n,
\end{align*}
and the matrices
\begin{align*}
\varSigma (\ulambda) & = \big( \sigma_{l     , n-j} (\ulambda) \big)_{l     , j=1}^n, \\
E_a (\ulambda) & = \mathrm{diag}\lp e^{\pi i \lp \frac 1 2 n + 2 a - 2 \rp \lambda_l     } \rp_{l      = 1}^n, \hskip 10 pt D_a (\ulambda) = \mathrm{diag}\Big(  (-)^{n-j} \xi_{a, j}^{- \frac {n-1} 2 - |\ulambda| } \Big)_{j = 1}^n.
\end{align*}
Then the formula \eqref{5eq: connection 1st and 2nd, 2} may be written as
\begin{equation}\label{5eq: connection 1st and 2nd, matrix form}
X  (z; \ulambda) = \frac {e^{\frac 3 4 \pi i \lp  (n-1) +  2 |\ulambda| \rp}} {\sqrt n (2\pi)^{\frac {n-1} 2}} \cdot E_a(\ulambda) \varSigma (\ulambda) D_a(\ulambda) Y_a  (z; \ulambda).
\end{equation}

We now formulate \eqref{4eq: connection formula} as  
\begin{align} \label{4eq: connection formula, matrix}
J_{(\umu, \um)} (z)  = (-)^{ |\um|} e^{- \frac 1 2 \pi i (n-1)} \left(  4 \pi^2\right)^{n-1} \cdot 
{^t X\left( \hskip -1 pt 2 \pi z^{\frac 1 n}  ; \ulambda_{(\umu, \um)}^+ \hskip -1 pt \right)}  S_{(\umu, \um)}
X \left( \hskip -1 pt 2\pi \overline z^{\frac 1 n}  ; \ulambda_{(\umu, \um)}^- \hskip -1 pt \right),
\end{align}
with $\ulambda^\pm_{(\umu, \um)} = \umu \pm \tfrac 1 2 \um$ and
$$S_{(\umu, \um)} =  \mathrm{diag} \lp  \tau_{l     } \lp  \ulambda_{(\umu, \um)}^\pm \rp \- e^{- \pi i \lp  (n-2) \mu_l      \mp m_l     \rp } \rp_{l     =1}^n  .$$
We insert into \eqref{4eq: connection formula, matrix} the formulae of  $X\left( 2 \pi z^{\frac 1 n}  ; \ulambda_{(\umu, \um)}^+ \right)$ and $X \left( 2\pi \overline z^{\frac 1 n}  ; \ulambda_{(\umu, \um)}^- \right)$ given by \eqref{5eq: connection 1st and 2nd, matrix form}, with $\ulambda = \ulambda_{(\umu, \um)}^+$, $a=0$ in the former and $\ulambda = \ulambda_{(\umu, \um)}^-$, $a= 1 - r $, for $r = 0, 1, ..., n$, in the latter. 
Then follows the formula
\begin{equation}\label{7eq: second matrix form}
\begin{split}
& J_{(\umu, \um)}  (z) = (-)^{(n - 1) + |\um|} \frac  {\lp 2 \pi  \rp^{n-1}}  n    \\
& {^t Y_0 \left(\hskip -1 pt 2 \pi z^{\frac 1 n}  ; \ulambda_{(\umu, \um)}^+ \hskip -1 pt  \right) } \hskip -1 pt  D_0\left( \hskip -1 pt  \ulambda_{(\umu, \um)}^+ \hskip -1 pt  \right) \hskip -1 pt 
{^{t \hskip -1 pt } \varSigma_{(\umu, \um)}  } R_{(\umu, \um)}  \varSigma_{(\umu, \um)} 
D_{1-r} \hskip -1 pt  
\left( \hskip -1 pt  \ulambda_{(\umu, \um)}^- \hskip -1 pt  \right) Y_{1-r} \left( \hskip -1 pt  2 \pi \overline z^{\frac 1 n}  ; \ulambda_{(\umu, \um)}^- \hskip -1 pt  \right) \hskip -2 pt ,
\end{split}
\end{equation}
where 
\begin{align*}
\varSigma_{(\umu, \um)} & = \varSigma \lp \ulambda_{(\umu, \um)}^+ \rp = \varSigma \lp \ulambda_{(\umu, \um)}^- \rp, \\
R_{(\umu, \um)} & =  E_0 \lp \ulambda_{(\umu, \um)}^+  \rp S_{(\umu, \um)} E_{1-r} \lp \ulambda_{(\umu, \um)}^- \rp  = \mathrm{diag} \lp  \tau_{l     } \lp \ulambda_{(\umu, \um)}^\pm  \rp\- e^{- 2 \pi i r \lambda_{(\umu, \um),\, l     }^\pm } \rp_{l     =1}^n.
\end{align*}
We are therefore reduced to computing the matrix ${^t \varSigma_{(\umu, \um)}  } R_{(\umu, \um)}  \varSigma_{(\umu, \um)} $. For this, we have the following lemma.

\begin{lem}\label{7lem: matrix}
	Let  $\ux = (x_1, ..., x_n) \in \BC^n$ be a generic $n$-tuple  in the sense that all its components are distinct.
	Let $\sigma_{l     , d} $, respectively $\sigma_d$, denote the elementary symmetric polynomial in $x_1, ..., \widehat {x_l     }, ..., x_n$,  respectively  $x_1, ... , x_n$, of degree $d$, and let $\tau_l      = \prod_{h \neq l     } (x_l      - x_h) $. Define the matrices $\varSigma = \big( \sigma_{l     , n-j} \big)_{l     , j=1}^n$, $X = \mathrm{diag} \lp  x_l      \rp_{l     =1}^n$ and $T = \mathrm{diag} \lp  \tau_l     \-  \rp_{l     =1}^n$. 
	Then, for any $r = 0, 1, ..., n$, the matrix $ {^t \varSigma  } X^r T \varSigma$ can be written as
	\begin{equation*}
	\begin{pmatrix}
	(-)^{n - r } A & 0 \\
	0 &  (-)^{n-r+1} B
	\end{pmatrix},
	\end{equation*}
	where
	\begin{equation*}
	A = \begin{pmatrix}
	0 & \cdots & 0 & \sigma_{n} \\
	\vdots & \iddots & \iddots & \vdots\\
	0 & \iddots & \iddots & \sigma_{n-r+2} \\
	\sigma_{n} & \cdots & \sigma_{n-r+2} & \sigma_{n-r+1 }
	\end{pmatrix}, \hskip 10 pt
	B = \begin{pmatrix}
	\sigma_{n-r-1} &  \sigma_{n-r-2} & \cdots & \sigma_0 \\
	\sigma_{n-r-2} & \iddots & \iddots & 0  \\
	\vdots & \iddots &  \iddots & \vdots   \\
	\sigma_0 & 0 & \cdots & 0
	\end{pmatrix}.
	\end{equation*}
	More precisely, the $(k, j)$-th entry $a_{k, j}$, $k, j = 1,..., r$, of $A$ is given by
	\begin{equation*}
	a_{k, j} = \left\{
	\begin{split}
	& \sigma_{n+r - k - j +1} \hskip 5 pt \text{ if } k + j \geq  r + 1, \\ 
	& 0 \hskip 51 pt \text{if otherwise},
	\end{split} \right.
	\end{equation*}
	whereas the $(k, j)$-th entry $b_{k, j}$, $k, j = 1,..., n-r$, of $B$ is given by
	\begin{equation*}
	b_{k, j} = \left\{
	\begin{split}
	& \sigma_{ n- r - k - j + 1 }  \hskip 5 pt \text{ if } k + j \leq n-r+1, \\ 
	& 0  \hskip 51 pt \text{if otherwise}.
	\end{split}\right.
	\end{equation*}

	\delete{
		{\rm(1).} If $n = 2 r $, then the matrix $ {^t \varSigma  } X^r T \varSigma$ can be written as
		\begin{equation*}
		\begin{pmatrix}
		(-)^{r } A & 0 \\
		0 &  (-)^{r+1} B
		\end{pmatrix},
		\end{equation*}
		where 
		\begin{equation*}
		A = \begin{pmatrix}
		0 & \cdots & 0 & \sigma_{2r} \\
		\vdots & \iddots & \iddots & \vdots\\
		0 & \iddots & \iddots & \sigma_{r + 2} \\
		\sigma_{2r} & \cdots & \sigma_{r+2} & \sigma_{r+1}
		\end{pmatrix}, \hskip 10 pt
		B = \begin{pmatrix}
		\sigma_{r-1} &  \sigma_{r-2} & \cdots & \sigma_0 \\
		\sigma_{r-2} & \iddots & \iddots & 0  \\
		\vdots & \iddots &  \iddots & \vdots   \\
		\sigma_0 & 0 & \cdots & 0
		\end{pmatrix},
		\end{equation*}
		or more precisely, the $(k, j)$-th entries $a_{k, j}$ and $b_{k, j}$, $k, j = 1,..., r$, of $A$ and $B$ are given by
		\begin{equation*}
		a_{k, j} = \left\{
		\begin{split}
		& \sigma_{3 r - k - j + 1}, \hskip 5 pt \text{ if } k + j \geq r + 1, \\ 
		& 0 \hskip 49 pt \text{if otherwise},
		\end{split} \right. \hskip 10 pt
		b_{k, j} = \left\{
		\begin{split}
		& \sigma_{ r - k - j + 1}, \hskip 5 pt \text{ if } k + j \leq r + 1, \\ 
		& 0 \hskip 46 pt \text{if otherwise},
		\end{split} \right.
		\end{equation*}
		respectively.

		{\rm(2).} If $n = 2 r - 1$, then the matrix $ {^t \varSigma  } X^r T \varSigma$ can be written as
		\begin{equation*}
		\begin{pmatrix}
		(-)^{r -1 } A & 0 \\
		0 &  (-)^{r} B
		\end{pmatrix},
		\end{equation*}
		where 
		\begin{equation*}
		A = \begin{pmatrix}
		0 & \cdots & 0 & \sigma_{2r-1} \\
		\vdots & \iddots & \iddots & \vdots\\
		0 & \iddots & \iddots & \sigma_{r + 1} \\
		\sigma_{2r-1} & \cdots & \sigma_{r+1} & \sigma_{r }
		\end{pmatrix}, \hskip 10 pt
		B = \begin{pmatrix}
		\sigma_{r-2} &  \sigma_{r-3} & \cdots & \sigma_0 \\
		\sigma_{r-3} & \iddots & \iddots & 0  \\
		\vdots & \iddots &  \iddots & \vdots   \\
		\sigma_0 & 0 & \cdots & 0
		\end{pmatrix},
		\end{equation*}
		or more precisely, the $(k, j)$-th entry $a_{k, j}$, $k, j = 1,..., r$, of $A$ is given by
		\begin{equation*}
		a_{k, j} = \left\{
		\begin{split}
		& \sigma_{3 r - k - j } \hskip 5 pt \text{ if } k + j \geq r + 1, \\ 
		& 0 \hskip 35 pt \text{if otherwise},
		\end{split} \right.
		\end{equation*}
		whereas the $(k, j)$-th entry $b_{k, j}$, $k, j = 1,..., r-1$, of $B$ is given by
		\begin{equation*}
		b_{k, j} = \left\{
		\begin{split}
		& \sigma_{ r - k - j  }  \hskip 5 pt \text{ if } k + j \leq r  , \\ 
		& 0  \hskip 31 pt \text{if otherwise}.
		\end{split}\right.
		\end{equation*}
	}
\end{lem}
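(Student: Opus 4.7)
The plan is a generating function computation. Let $\Phi_l(s) = \prod_{h\neq l}(1+x_h s) = \sum_{d=0}^{n-1}\sigma_{l,d}s^d$, so the $(k,j)$-entry of $^t\varSigma X^r T\varSigma$ equals $\sum_{l=1}^n \sigma_{l,n-k}\sigma_{l,n-j}x_l^r/\tau_l$, which is precisely the coefficient of $s^{n-k}t^{n-j}$ in the polynomial
\begin{equation*}
B(s,t)\ =\ \sum_{l=1}^n \frac{x_l^r}{\tau_l}\,\Phi_l(s)\Phi_l(t),
\end{equation*}
a polynomial of degree at most $n-1$ in each variable. Thus it suffices to determine $B(s,t)$ as a closed-form expression and read off the coefficients.

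To compute $B(s,t)$, write $\Phi_l(s) = P(s)/(1+x_l s)$, where $P(s)=\prod_h(1+x_h s)=\sum_{d=0}^n \sigma_d s^d$, and use the partial fraction identity
\begin{equation*}
\frac{1}{(1+x_l s)(1+x_l t)} \ =\ \frac{1}{s-t}\left(\frac{s}{1+x_l s}-\frac{t}{1+x_l t}\right),
\end{equation*}
to obtain $B(s,t) = \frac{P(s)P(t)}{s-t}\bigl(sG(s)-tG(t)\bigr)$ with $G(s)=\sum_l \frac{x_l^r}{\tau_l(1+x_l s)}$. The key intermediate claim is
\begin{equation*}
\sum_{l=1}^n \frac{x_l^r\,\Phi_l(s)}{\tau_l}\ =\ (-s)^{n-1-r}\qquad(0\le r\le n-1),
\end{equation*}
which follows by comparing two polynomials in $s$ of degree $\le n-1$ at the $n$ distinct points $s=-1/x_l$: at $s=-1/x_l$ all terms with $l'\ne l$ vanish (since $\Phi_{l'}(-1/x_l)$ contains the factor $1-x_l/x_l=0$), while the surviving $l$-th term simplifies via $\Phi_l(-1/x_l)=\tau_l/x_l^{n-1}$ to $x_l^{r-n+1}$, matching the right hand side. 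Consequently $G(s)=(-s)^{n-1-r}/P(s)$ for $0\le r\le n-1$, and an analogous verification gives $sG(s)=1-1/P(s)$ when $r=n$; in either case the ``constant'' piece drops out of $sG(s)-tG(t)$, yielding the unified formula
\begin{equation*}
B(s,t)\ =\ (-1)^{n-1-r}\cdot\frac{s^{n-r}P(t)\ -\ t^{n-r}P(s)}{s-t},\qquad 0\le r\le n.
\end{equation*}

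It remains to expand $s^{n-r}P(t)-t^{n-r}P(s) = \sum_{d=0}^n \sigma_d(s^{n-r}t^d - s^d t^{n-r})$ and apply the elementary identity
\begin{equation*}
\frac{s^a t^d-s^d t^a}{s-t}\ =\ \mathrm{sgn}(a-d)\sum_{i=0}^{|a-d|-1} s^{\max(a,d)-1-i}\,t^{\min(a,d)+i}
\end{equation*}
with $a=n-r$. Matching the coefficient of $s^{n-k}t^{n-j}$ forces $d=n+r+1-k-j$ in every contributing term, and a direct case-check on the summation range shows: the terms with $d<n-r$ contribute only when $k,j\ge r+1$ and $k+j\le n+r+1$, giving entry $(-1)^{n-r+1}\sigma_{n+r+1-k-j}$ (this is the $B$ block); the terms with $d>n-r$ contribute only when $k,j\le r$ and $k+j\ge r+1$, giving entry $(-1)^{n-r}\sigma_{n+r+1-k-j}$ (this is the $A$ block); the off-diagonal blocks vanish because the two index regimes are disjoint. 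This matches the stated block anti-diagonal structure.

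The main bookkeeping obstacle is the final case analysis: one must carefully track the three constraints $d\ge 0$, $i\ge 0$, $i\le |a-d|-1$ and their translation into the $(k,j)$ region, and sort out the extra sign coming from $\mathrm{sgn}(a-d)$, so that the overall factor $(-1)^{n-1-r}$ converts correctly to $(-1)^{n-r}$ on one block and $(-1)^{n-r+1}$ on the other. The small-rank check $n=2$, $r=0$ (yielding $\begin{pmatrix}-\sigma_1 & -\sigma_0\\ -\sigma_0 & 0\end{pmatrix}$) and the boundary cases $r=0$ and $r=n$ (where one of the two blocks is empty) provide useful sanity checks at each step.
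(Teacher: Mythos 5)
Your proof is correct, and it takes a genuinely different route from the paper's. The paper's argument relies on the explicit Lagrange-interpolation inverse of $T\varSigma$ computed in Lemma~\ref{8lem: Vandermonde} (namely $U = \big((-)^{n-j}x_l^{j-1}\big)_{j,l}$), multiplies the claimed identity on the right by $U$, and reduces to a pair of elementary-symmetric-polynomial identities that are read off from coefficients of two explicit polynomial factorizations. Your approach instead encodes the full matrix at once as the two-variable generating polynomial $B(s,t)=\sum_l x_l^r\tau_l^{-1}\Phi_l(s)\Phi_l(t)$, computes it in closed form via partial fractions and the pointwise interpolation identity $\sum_l x_l^r\tau_l^{-1}\Phi_l(s)=(-s)^{n-1-r}$, and then extracts coefficients. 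The two methods are of comparable length; what yours buys is self-containment (you do not need to invoke the Vandermonde inverse lemma) and a uniform one-line closed form $B(s,t)=(-1)^{n-1-r}(s^{n-r}P(t)-t^{n-r}P(s))/(s-t)$ from which the block-antidiagonal structure, the signs, and the vanishing of the off-diagonal blocks all drop out at once; what the paper's method buys is compatibility with Lemma~\ref{8lem: Vandermonde}, which is in any case needed elsewhere (e.g.\ Corollary~\ref{8cor: inverse connection}). I verified the partial-fraction identity, the interpolation claim at $s=-1/x_l$, the $r=n$ boundary case $sG(s)=1-1/P(s)$, the constraint bookkeeping in the coefficient extraction (the contributing monomial forces $d=n+r+1-k-j$, and the conditions $i\ge 0$, $i\le|n-r-d|-1$ translate precisely to $k,j\le r$ in one regime and $k,j\ge r+1$ in the other), and the $n=2$, $r=0$ sanity check; all are sound.
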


\begin{proof} 
	
	Appealing to the Lagrange interpolation formula, we find in  Lemma \ref{8lem: Vandermonde}  that the inverse of $T \varSigma$ is equal to the matrix $U = \lp (-)^{n - j} x_{l     }^{j-1} \rp_{j, l      = 1}^n $. Therefore, it suffices to show that
	\begin{equation*}
	{^t \varSigma  } X^r = \begin{pmatrix}
	(-)^{n-r } A & 0 \\
	0 &  (-)^{n-r+1} B
	\end{pmatrix} U.
	\end{equation*}
	This is equivalent to the following two collections of identities,
	\begin{align*}
	\sum_{j=r-k+1}^{r} (-)^{r+j} \sigma_{n+r - k - j +1}  x_l     ^{j-1}  &= \sigma_{l     , n-k } x_l     ^r, \hskip 10 pt k = 1, ..., r, \\
	\sum_{j=1}^{n - r - k +1} (-)^{j-1} \sigma_{ n- r - k - j + 1}  x_l     ^{r+j-1}  &= \sigma_{l     , n-r-k} x_l     ^r , 
	\hskip 10 pt k = 1, ..., n-r,
	\end{align*} 
	which are further equivalent to
	\begin{align*}
	\sum_{j=1}^{k } (-)^{k+j } \sigma_{ n - j+1}  x_l     ^{j - k -1}  & = \sigma_{l     , n-k},   \hskip 10 pt  k = 1, ..., r, \\
	\sum_{j=1}^{k } (-)^{ j - 1} \sigma_{k - j}  x_l     ^{j-1}  & = \sigma_{l     , k - 1}, \hskip 10 pt  k = 1, ..., n-r.
	\end{align*}
	The last two identities can be easily seen, actually for all $k = 1,..., n$, from computing the coefficients of $x^{k-1}$ and $x^{2n - k}$ on the two sides of  
	\begin{align*}
	\prod_{h \neq l     } (x - x_h) & = \lp  \sum_{p=0}^\infty x_{l     }^p x^{-p-1} \rp \prod_{h = 1}^{n} (x - x_h), \\
	\lp x^{n} - x_l     ^{n} \rp \prod_{h \neq l     } (x - x_h) & = \lp \sum_{p=1}^{n } x_{l     }^{ p - 1} x^{n-p}  \rp \prod_{h = 1}^{n} (x - x_h),
	\end{align*}
	respectively.
\end{proof}

Applying Lemma \ref{7lem: matrix} with $x_{l     } = e^{- 2\pi i \lambda_{(\umu, \um), l     }^\pm  } = (-)^{m_l     } e^{- 2\pi i  \mu_{l     } }$ to the formula  \eqref{7eq: second matrix form},
we arrive at the following theorem.

\begin{thm}
	\label{7thm: second formula}
	Let $(\umu, \um) \in \BL^{n-1} \times \BZ^n$ and $r \in \{ 0, 1, .., n \}$. 
	Define $\xi _{ j} = e^{ 2 \pi i \frac { {  j -  1}  } n}$, $\zeta_j = e^{ 2 \pi i \frac { {  j -  r}  } n}$, and  denote by $\sigma_{(\umu, \um)}^d $ the elementary symmetric polynomial in $(-)^{m_1} e^{- 2 \pi i  \mu_1 }$, $...$, $(-)^{m_n} e^{- 2 \pi i  \mu_n }$ of degree $d$, with $j = 1, ..., n$ and $d = 0, 1, ..., n$. Then we have
	\begin{equation}\label{7eq: J (z)}
	\begin{split}
	J _{(\umu, \um)} (z) & =  (-)^{ |\um|} \frac {(2 \pi)^{n-1}} n \mathop{\sum\sum}_{\substack{ k, j = 1, ..., r \\ k+j \geq r+1 }}  
	C_{k, j} (\umu, \um)  \\ 
	& \hskip 92 pt
	J \big( 2 \pi z^{\frac 1 n} ; \umu + \tfrac 1 2 \um; \xi_k \big) J \big( 2 \pi \overline z^{\frac 1 n} ; \umu - \tfrac 1 2 \um; \zeta_j \big) \\
	& + (-)^{ |\um|} \frac {(2 \pi)^{n-1}} n \mathop{\sum\sum}_{\substack{ k, j = 1, ..., n-r \\ k+j \leq n-r+1 }}  D_{k, j} (\umu, \um)   \\
	& \hskip 74 pt
	J \big( 2 \pi z^{\frac 1 n} ; \umu + \tfrac 1 2 \um; \xi_{r+k} \big) J \big( 2 \pi \overline z^{\frac 1 n} ; \umu - \tfrac 1 2 \um; \zeta_{r+j} \big).
	\end{split}
	\end{equation}
	\delete{	\begin{equation}\label{7eq: J -}
		\begin{split}
		J^-_{(\umu, \um)} (z) = &\, (-)^{ |\um|} \frac {(2 \pi)^{n-1}} n \mathop{\sum\sum}_{\substack{ k, j = 1, ..., r \\ k+j \geq r+1 }}  C^-_{k, j} (\umu, \um)  \\ 
		& \hskip 82 pt
		J \big( 2 \pi z^{\frac 1 n} ; \umu + \tfrac 1 2 \um; \xi_k \big) J \big( 2 \pi \overline z^{\frac 1 n} ; \umu - \tfrac 1 2 \um; \zeta_j \big)  ,
		\end{split}
		\end{equation}
		\begin{equation}\label{7eq: J +}
		\begin{split}
		J^+_{(\umu, \um)} (z) = &\, (-)^{ |\um|} \frac {(2 \pi)^{n-1}} n \mathop{\sum\sum}_{\substack{ k, j = 1, ..., n-r \\ k+j \leq n-r+1 }}  D_{k, j} (\umu, \um)   \\
		& \hskip 64 pt
		J \big( 2 \pi z^{\frac 1 n} ; \umu - \tfrac 1 2 \um; \xi_{r+k} \big) J \big( 2 \pi \overline z^{\frac 1 n} ; \umu - \tfrac 1 2 \um; \zeta_{r+j} \big),
		\end{split}
		\end{equation}	
	}
	with
	\begin{align}
	\label{7eq: C-} C _{k, j} (\umu, \um) & =  (-)^{ r + k+j + 1} \xi_k^{-\frac {n-1} 2 - \frac 1 2 |\um|} \zeta_j^{-\frac {n-1} 2 + \frac 1 2 |\um|} \sigma_{(\umu, \um) }^{n+r-k-j+1}, \\
	\label{7eq: C+} D _{k, j} (\umu, \um) & = (-)^{ r+k+j } \xi_{r+k}^{-\frac {n-1} 2 - \frac 1 2 |\um|} \zeta_{r+j}^{-\frac {n-1} 2 + \frac 1 2 |\um|} \sigma_{(\umu, \um) }^{n-r-k-j+1}.
	\end{align}
\end{thm}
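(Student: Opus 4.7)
\medskip

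The plan is to follow exactly the preparatory work already assembled in the paragraphs preceding the theorem statement, so that the proof reduces to matching constants and substituting the structural matrix identity of Lemma \ref{7lem: matrix}. I would begin from the matrix form \eqref{4eq: connection formula, matrix} of the first connection formula (Theorem \ref{4thm: connection formula}), which writes $J_{(\umu,\um)}(z)$ as a triple product ${}^tX(\cdots;\ulambda^+_{(\umu,\um)})\, S_{(\umu,\um)}\, X(\cdots;\ulambda^-_{(\umu,\um)})$. Into each of the two column vectors $X$ I would insert the expansion \eqref{5eq: connection 1st and 2nd, matrix form} relating Bessel functions of the first kind to those of the second kind, choosing $a=0$ for the $z^{1/n}$ factor and $a=1-r$ for the $\overline z^{1/n}$ factor. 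This is exactly how one arrives at the preassembled identity \eqref{7eq: second matrix form}, with the central matrix ${}^t\varSigma_{(\umu,\um)}\,R_{(\umu,\um)}\,\varSigma_{(\umu,\um)}$.

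Next I would specialise Lemma \ref{7lem: matrix} by taking $\ux=\bigl((-)^{m_l}e^{-2\pi i\mu_l}\bigr)_{l=1}^n$, so that $X^r=\mathrm{diag}\bigl(e^{-2\pi i r\lambda^{\pm}_{(\umu,\um),l}}\bigr)_{l=1}^n$ and $T=\mathrm{diag}\bigl(\tau_l(\ulambda^{\pm}_{(\umu,\um)})^{-1}\bigr)_{l=1}^n$, which identifies $X^rT$ with the diagonal part of $R_{(\umu,\um)}$. Noting that $\ulambda^+_{(\umu,\um)}$ and $\ulambda^-_{(\umu,\um)}$ differ only by integer shifts in the $m$-components, the two possible diagonals give the same $\tau_l$, so that the lemma applies unambiguously. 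The elementary symmetric polynomials $\sigma_d$ of these $x_l$'s are, up to the universal sign $(-)^{|\um|}$ factored into $\sigma^d_{(\umu,\um)}$ in the statement, exactly the symbols appearing in \eqref{7eq: C-}, \eqref{7eq: C+}. The block form from Lemma \ref{7lem: matrix} then naturally splits the resulting double sum into the two pieces indexed by $\{k+j\ge r+1,\ k,j\le r\}$ and $\{k+j\le n-r+1,\ k,j\le n-r\}$, producing the two double sums in \eqref{7eq: J (z)}.

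The main obstacle, and really the only substantial work left, is the bookkeeping of constants: one has to track the prefactors $(-)^{n-1+|\um|}(2\pi)^{n-1}/n$, the scalar $e^{\frac{3}{4}\pi i(n-1+2|\ulambda|)}/(\sqrt n(2\pi)^{(n-1)/2})$ appearing twice from \eqref{5eq: connection 1st and 2nd, matrix form}, the factor $e^{-\frac12\pi i(n-1)}(4\pi^2)^{n-1}$ from \eqref{4eq: connection formula, matrix}, and the diagonal matrices $D_0(\ulambda^+_{(\umu,\um)})$, $D_{1-r}(\ulambda^-_{(\umu,\um)})$ whose entries $(-)^{n-j}\xi_j^{-(n-1)/2-|\ulambda|}$ and $(-)^{n-j}\zeta_j^{-(n-1)/2-|\ulambda|}$ combine with the $(-)^{n-r\pm\cdots}$ from the two blocks of Lemma \ref{7lem: matrix} to yield the sign patterns $(-)^{r+k+j+1}$ and $(-)^{r+k+j}$ in \eqref{7eq: C-} and \eqref{7eq: C+}. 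Using $|\ulambda^\pm_{(\umu,\um)}|=\pm\tfrac12|\um|$ (since $\umu\in\BL^{n-1}$), the $(-)^{|\um|}$ prefactor and the exponents $-\tfrac{n-1}{2}\mp\tfrac12|\um|$ on $\xi_k$ and $\zeta_j$ fall out.

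Finally, I would first carry out this computation under the genericity assumption of Theorem \ref{4thm: connection formula} (no two components of $(\umu,\um)$ differing by the relevant integer shifts), so that Lemma \ref{7lem: matrix} applies and both \eqref{5eq: connection 1st and 2nd, matrix form}, \eqref{7eq: second matrix form} are unambiguous; the general case then follows by the continuity of both sides of \eqref{7eq: J (z)} in $\umu$ and passage to the limit, exactly as in the concluding step of the proof of Theorem \ref{4thm: connection formula}.
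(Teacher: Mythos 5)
Your proposal is correct and follows essentially the same route as the paper: insert the expansion \eqref{5eq: connection 1st and 2nd, matrix form} (with $a=0$ for the $z^{1/n}$ side and $a=1-r$ for the $\overline z^{1/n}$ side) into the matrix form \eqref{4eq: connection formula, matrix} to reach \eqref{7eq: second matrix form}, identify $X^rT$ with $R_{(\umu,\um)}$ upon setting $x_l=(-)^{m_l}e^{-2\pi i\mu_l}$, and then invoke Lemma \ref{7lem: matrix} to read off the block structure and the constants $C_{k,j}$, $D_{k,j}$. One small imprecision: the factor $(-)^{|\um|}$ in \eqref{7eq: J (z)} is not ``factored into'' $\sigma^d_{(\umu,\um)}$ (the latter is literally $\sigma_d$ of the $x_l$); rather, it survives from the prefactor $(-)^{(n-1)+|\um|}$ in \eqref{7eq: second matrix form} after the $(-)^{n-1}$ is consumed by the $(-)^{n-r}$ and $(-)^{n-r+1}$ signs of the two blocks in Lemma \ref{7lem: matrix} together with the $(-)^{n-j}$ entries of $D_0$ and $D_{1-r}$ — but this does not affect the validity of the argument.
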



\begin{lem}\label{7lem: C and D}
	We retain the notations in Theorem {\rm \ref{7thm: second formula}}. Moreover, we define  $\mathfrak I (\umu) = \max \left\{ \left| \Im \mu_l      \right| \right\}$.
	
	{\rm (1.1).} For $k=1, ..., r$, we have 
	$ C_{k, r-k+1} (\umu, \um) = (- \overline\xi_k)^{ |\um|}.$
	
	{\rm (1.2).} Let $k, j = 1, ... , r$ be such that $k + j \geq r + 2$. Denote $p = k+j-r-1$. We have the estimate
	\begin{equation*}
	|C_{k, j} (\umu, \um)| \leq {n \choose p} \exp\big(  2 \pi \min \left\{  n-p, p \right\} \mathfrak I (\umu) \big).
	\end{equation*}
	
	{\rm (2.1).} For $k=1, ..., n-r$, we have 
	$D_{k, n-r-k+1} (\umu, \um) =  (- \overline\xi_{k+r})^{ |\um|}.$
	
	{\rm (2.2).} Let $k, j = 1, ... , n-r$ be such that $k + j \leq n-r  $. Denote $p = n - r - k - j + 1$. We have the estimate
	\begin{equation*}
	|D_{k, j} (\umu, \um)| \leq {n \choose p} \exp\big(   2 \pi \min \left\{  n-p, p \right\} \mathfrak I (\umu) \big).
	\end{equation*}	
	
\end{lem}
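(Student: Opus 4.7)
The plan is to treat (1.1)/(2.1) and (1.2)/(2.2) by two different mechanisms. For (1.1) and (2.1), the indices $j=r-k+1$ or $j=n-r-k+1$ are precisely those that make the degree of the $\sigma$-factor equal to $n$ or to $0$ respectively. Since $\umu\in\BL^{n-1}$ gives $|\umu|=0$, the identity $\sigma^n_{(\umu,\um)} = \prod_{l=1}^n(-)^{m_l}e^{-2\pi i\mu_l} = (-)^{|\um|}$ holds, while $\sigma^0_{(\umu,\um)}=1$ by convention; these deliver the $(-)^{|\um|}$ part of $(-\overline{\xi_k})^{|\um|}$ or $(-\overline{\xi_{r+k}})^{|\um|}$. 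The remaining factor $\overline{\xi_k}^{|\um|}$ (resp.\ $\overline{\xi_{r+k}}^{|\um|}$) must then come from the products of roots of unity in \eqref{7eq: C-} and \eqref{7eq: C+}.

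The second step is therefore the explicit computation of $\xi_k^{-(n-1)/2-|\um|/2}\zeta_{r-k+1}^{-(n-1)/2+|\um|/2}$ and $\xi_{r+k}^{-(n-1)/2-|\um|/2}\zeta_{n-k+1}^{-(n-1)/2+|\um|/2}$ using $\xi_j=e^{2\pi i(j-1)/n}$ and $\zeta_j=e^{2\pi i(j-r)/n}$. In the first (centered) product the $(n-1)/2$ contributions cancel and the $|\um|/2$ contributions combine to $\overline{\xi_k}^{|\um|}$; the sign $(-)^{r+k+j+1}=(-)^{2r+2}=1$ in front does nothing, so (1.1) follows. In the second, the analogous telescoping leaves an extra factor $(-1)^{n+1+|\um|}\overline{\xi_{r+k}}^{|\um|}$, and multiplication by the prefactor $(-)^{r+k+j}=(-)^{n+1}$ cancels the $(-)^{n+1}$ to yield $(-\overline{\xi_{r+k}})^{|\um|}$, giving (2.1). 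The one place that requires genuine care, and which I expect to be the main bookkeeping obstacle, is that $\zeta^a\neq\xi^{-a}$ in general when $a$ is a half-integer even if the underlying complex numbers are reciprocals: the computation must be done via the explicit exponential expressions, not by formally substituting $\zeta_{r-k+1}=\overline{\xi_k}$ inside a non-integer power.

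For (1.2) and (2.2) the argument is entirely different. Since $\xi_k,\zeta_j$ lie on the unit circle and the exponents $-(n-1)/2\pm|\um|/2$ are real, all the root-of-unity prefactors have modulus one, and the estimate reduces to bounding $|\sigma^{n-p}_{(\umu,\um)}|$ for (1.2) with $p=k+j-r-1$, and $|\sigma^p_{(\umu,\um)}|$ for (2.2). Each of the $\binom{n}{p}$ or $\binom{n}{n-p}=\binom{n}{p}$ terms in the expansion of the elementary symmetric polynomial is a product $\prod_{l\in S}(-)^{m_l}e^{-2\pi i\mu_l}$ of modulus $\exp\bigl(2\pi\sum_{l\in S}\Im\mu_l\bigr)$ for some $S$ of the appropriate size. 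The key input from $|\umu|=0$ is that $\sum_{l=1}^n\Im\mu_l=0$, so $\sum_{l\in S}\Im\mu_l=-\sum_{l\notin S}\Im\mu_l$, and therefore $|\sum_{l\in S}\Im\mu_l|\leq\min\{|S|,n-|S|\}\mathfrak I(\umu)$. Summing the trivial triangle inequality over the $\binom{n}{p}$ subsets yields the asserted bound in both (1.2) and (2.2).
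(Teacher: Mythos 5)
Your proposal is correct, and it is the natural argument; the paper states Lemma \ref{7lem: C and D} without an explicit proof, so there is no competing "paper approach" to compare against. All four parts check out: the diagonal cases (1.1)/(2.1) reduce to $\sigma^n_{(\umu,\um)}=(-)^{|\um|}$ (using $|\umu|=0$) and $\sigma^0_{(\umu,\um)}=1$, and your careful tracking of the root-of-unity exponents — in particular keeping the $e^{2\pi i}$ from $\zeta_{n-k+1}$ versus $\overline{\xi_{r+k}}$ separate before raising to a half-integer power — is exactly the bookkeeping that makes the sign $(-)^{n+1}$ in (2.1) cancel cleanly to give $(-\overline{\xi_{r+k}})^{|\um|}$. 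The bounds (1.2)/(2.2) follow from the triangle inequality on the $\binom{n}{p}$ terms of the elementary symmetric polynomial, with the improvement $\min\{p,n-p\}$ coming from $\sum_l\Im\mu_l=0$, again a consequence of $|\umu|=0$; and the unit-modulus observation for the real-power prefactors is correct under the paper's convention $\xi^a=e^{ia\arg\xi}$.
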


\subsection{The Rank-Two Case}\label{sec: rank-two, C}

\begin{example}\label{3prop: n=2, C}
	Let $\mu \in \BC$ and $m \in \BZ$. 
	
	If we define 
	\begin{equation}\label{7def: J mu m (z), n=2, C}
	J_{\mu, m} (z) = J_{- 2\mu - \frac 12 m } \lp  z \rp J_{- 2\mu + \frac 12 m  } \lp  {\overline z} \rp,
	\end{equation}
	then
	\begin{equation}\label{3eq: n=2, C}
	\hskip -1 pt J_{(\mu, - \mu, m, 0)} \lp z \rp \hskip -1 pt =  \hskip -2 pt
	\left\{ 
	\begin{split}
	& \hskip -3 pt \frac {2 \pi^2} {\sin (2\pi \mu)} [\sqrt z]^{-m} \lp J_{\mu, m} (4 \pi \sqrt z) \hskip -1 pt - \hskip -1 pt  J_{-\mu, -m} (4 \pi \sqrt z) \rp \hskip 2 pt \text {if } m \text{ is even},\\
	& \hskip -3 pt \frac {2 \pi^2 i} {\cos (2\pi \mu)} [\sqrt z]^{-m} \lp J_{\mu, m} (4 \pi \sqrt z) \hskip -1 pt + \hskip -1 pt J_{-\mu, -m} (4 \pi \sqrt z) \rp \hskip 2 pt \text {if }  m \text{ is odd},
	\end{split}
	\right.
	\end{equation} 
	which should be interpreted in the way as in Theorem {\rm \ref{4thm: connection formula}}. We remark that the generic case is when  $4 \mu \notin 2\BZ + m$.
	
	On the other hand, using the connection formulae {\rm (\cite[3.61 (1, 2)]{Watson})}
	\begin{equation*}
	J_\nu (z) = \frac {H_\nu^{(1)} (z) + H_\nu^{(2)} (z)} 2, \hskip 10 pt 
	J_{-\nu} (z) =  \frac {e^{\pi i \nu} H_\nu^{(1)} (z) + e^{-\pi i \nu} H_\nu^{(2)} (z) } 2,
	\end{equation*}
	one obtains
	\begin{equation}
	J_{(\mu, - \mu, m, 0)} (z) = \pi^2 i [\sqrt z]^{-m}  \lp e^{2 \pi i \mu} H^{(1)}_{\mu, m} \lp 4 \pi \sqrt z \rp + (-)^{m+1} e^{- 2 \pi i \mu} H^{(2)}_{\mu, m} \lp 4 \pi \sqrt z \rp \rp,
	\end{equation}
	with the definition
	\begin{equation}\label{7def: H (1, 2) mu m (z), n=2, C}
	H^{(1, 2)}_{\mu, m} (z) = H^{(1, 2)}_{2 \mu + \frac 1 2 m} \lp   z \rp  H^{(1, 2)}_{2 \mu - \frac 1 2 m} \lp  { \overline z} \rp.
	\end{equation}
\end{example}

\delete{
	\begin{lem} 
		Let  $\ux = (x_1, ..., x_n) \in \BC^n$ be a generic $n$-tuple  in the sense that all its components are distinct.
		Let $\sigma_{l     , d} $, respectively $\sigma_d$, denote the elementary symmetric polynomial in $x_1, ..., \widehat {x_l     }, ..., x_n$,  respectively  $x_1, ... , x_n$, of degree $d$, and let $\tau_l      = \prod_{k \neq l     } (x_l      - x_k) $. Define the matrices $\varSigma = \big( \sigma_{l     , n-j} \big)_{l     , j=1}^n$, $X = \mathrm{diag} \lp  x_l      \rp_{l     =1}^n$ and $T = \mathrm{diag} \lp  \tau_l     \-  \rp_{l     =1}^n$. Then the $(k, j)$-th entry $a_{k, j}$ of $A = {^t \varSigma  } X T \varSigma$ is given by 
		\begin{equation*}
		a_{k, j} = 
		\begin{cases}
		(-)^{n-1} \sigma_n, & \text{ if } k = j = 1, \\
		(-)^{n } \sigma_{n - k - j + 2}, & \text{ if } k, j  \geq 2 \text{ and } n+2 \geq k+j,\\
		0 & \text{if otherwise}. 
		\end{cases}
		\end{equation*}
		The explicit form of  $A$  is 
		\begin{equation*}
		\begin{pmatrix}
		(-)^{n-1} \sigma_n & 0 & 0 & \cdots & 0 \\
		0 & (-)^{n } \sigma_{n-2} & (-)^{n } \sigma_{n-3} & \cdots & (-)^{n} 1 \\
		0 &  (-)^{n } \sigma_{n-3} & \iddots & \iddots & 0  \\
		\vdots & \vdots &  \iddots & \iddots & \vdots \\
		0 & (- )^{n } 1 & 0 & \cdots & 0
		\end{pmatrix}.
		\end{equation*}
	\end{lem}
	
	\begin{proof}[Proof of Lemma \ref{7lem: matrix}]
		We find in \cite[Lemma 8.6]{Qi} by means of the Lagrange interpolation formula that the inverse of $T \varSigma$ is equal to the matrix $U = \lp (-)^{n-j} x_{l     }^{j-1} \rp_{j, l      = 1}^n $. Therefore, it suffices to show that
		\begin{equation*}
		{^t \varSigma  } X = A U.
		\end{equation*}
		This is equivalent to the following identities,
		\begin{align*}
		\sigma_n = \sigma_{l     , n-1} x_{l      }, \hskip 10 pt \sum_{j=2}^{n-k + 2}\sigma_{ n-k - j +2} (-)^{j} x_l     ^{j-1}  = \sigma_{l     , n-k} x_l      , \hskip 10 pt k = 2, ..., n,
		\end{align*}
		which can be easily seen.
	\end{proof}
	
	Applying Lemma \ref{7lem: matrix} with $x_{l     } = e^{- 2\pi i \lambda_{(\umu, \um), l     }^\pm  } =  e^{- 2\pi i \lp \mu_{l     } \pm \frac 1 2 m_l      \rp }$ to the formula  \eqref{7eq: second matrix form},
	we arrive at the following theorem.
	
	\begin{thm}
		Let $(\umu, \um) \in \BL^{n-1} \times \BZ^n$. We define $\xi _{ j} = e^{ 2 \pi i \frac { {  j -  1}  } n}$ and  denote by $\sigma_{(\umu, \um)}^d $ the elementary symmetric polynomial in $e^{- 2 \pi i \lp \mu_1 \pm \frac 1 2 m_1 \rp}, ... , e^{- 2 \pi i  \lp \mu_n \pm \frac 1 2 m_n \rp }$ of degree $d$, with $j = 1, ..., n$ and $d = 0, 1, ..., n$. Then we have
		\begin{equation}
		\begin{split}
		J_{(\umu, \um)} (z) = &\ \frac {(2 \pi)^{n-1}} n \bigg( J \big( 2 \pi z^{\frac 1 n} ; \umu - \tfrac 1 2 \um; 1 \big) J \big( 2 \pi \overline z^{\frac 1 n} ; \umu + \tfrac 1 2 \um; 1 \big)  \\
		& + (-)^{(n-1) + |\um|}   \mathop{\sum\sum}_{\substack{k, j \geq 2\\ k+j \leq n+2 }}  (-)^{k+j} \xi_k^{-\frac {n-1} 2 + \frac 1 2 |\um|} \xi_j^{-\frac {n-1} 2 - \frac 1 2 |\um|} \sigma_{(\umu, \um) }^{n-k-j+2}  \\
		& \hskip 100 pt
		J \big( 2 \pi z^{\frac 1 n} ; \umu - \tfrac 1 2 \um; \xi_k \big) J \big( 2 \pi \overline z^{\frac 1 n} ; \umu + \tfrac 1 2 \um; \xi_j \big) \bigg).
		\end{split}
		\end{equation}
	\end{thm}
}


\delete{
	\begin{proof}
		Recall from (\ref{1def: G m (s)}, \ref{1def: G (mu, m)}, \ref{3def: Bessel kernel j mu m}, \ref{2eq: Bessel kernel over C, polar}) that 
		\begin{equation*}
		\begin{split}
		& J_{(\mu, - \mu, m, 0)} \lp x e^{i \phi} \rp = 2 \pi \sum_{k = - \infty}^\infty i^{|m+k| + |k|} e^{ik \phi} \\
		& \frac 1 {2\pi i} \int_{\EC_{(\mu, - \mu, m+k, k)}} 
		\frac {\Gamma \lp s - \mu + \frac {|m+k|} 2 \rp} {\Gamma \lp 1 - s + \mu + \frac {|m+k|} 2 \rp} 
		\frac {\Gamma \lp s + \mu + \frac {| k|} 2 \rp} {\Gamma \lp 1 - s - \mu + \frac {| k|} 2 \rp} 
		\lp (2 \pi)^2 x \rp^{-2s} d s.
		\end{split}
		\end{equation*}
		
		Assume first that $m$ is even and $2 \mu \notin \BZ$. The sets of poles of the two gamma factors in the above integral, that is $\left\{ \mu - \frac {|m+k|} 2 - \alpha \right \}_{\alpha \in \BN}$ and $\left\{ - \mu - \frac {| k|} 2 - \alpha \right \}_{\alpha \in \BN}$, do not intersect, and therefore the integrand has only \textit{simple} poles. Left shift the integral contour of each integral in the sum and pick up the residues from these poles. The contribution from the residues at the poles of the first gamma factor is the following absolutely convergent double series,
		\begin{equation*}
		\begin{split}
		2 \pi \sum_{k = -\infty}^\infty \ \sum_{\alpha = 0}^\infty (-)^\alpha i^{|m+k| + |k|} e^{ik \phi} \frac { \Gamma \lp 2 \mu - \alpha + \frac {|k| - |m+k|} 2 \rp \lp (2 \pi)^2 x \rp^{- 2 \mu + |m+k| + 2 \alpha}} {\alpha ! (\alpha + |m+k|)! \Gamma \lp 1 - 2 \mu + \alpha + \frac {|k| + |m+k|} 2 \rp}.
		\end{split}
		\end{equation*}
		Euler's reflection formula of the Gamma function turns this into
		\begin{equation*}
		\begin{split}
		& \frac {2 \pi^2} {\sin (2\pi \mu)} \sum_{k = -\infty}^\infty \sum_{\alpha = 0}^\infty (-)^{|m+k|} e^{ik \phi}  \\
		& \frac {\lp (2 \pi)^2 x \rp^{- 2 \mu + |m+k| + 2 \alpha}} {\alpha ! (\alpha + |m+k|)!  \Gamma \lp 1 - 2 \mu + \alpha - \frac {|k| - |m+k|} 2 \rp \Gamma \lp 1 - 2 \mu + \alpha + \frac {|k| + |m+k|} 2 \rp}.
		\end{split}
		\end{equation*}
		Interchanging the order of summations and making the change of indices $\beta = \alpha + |m+k|$, we arrive at
		\begin{equation*}
		\begin{split}
		& \frac {2 \pi^2} {\sin (2\pi \mu)} \Bigg( \sum_{\alpha = 0}^\infty \sum_{\beta = \alpha + 1}^\infty  \frac { (-)^{\alpha + \beta} e^{i (\beta - \alpha -m) \phi} \lp (2 \pi)^2 x \rp^{- 2 \mu + \alpha + \beta}} {\alpha ! \beta!  \Gamma \lp 1 - 2 \mu + \alpha + \frac {m} 2 \rp \Gamma \lp 1 - 2 \mu + \beta - \frac {m} 2 \rp} \\
		& \hskip 45 pt + \sum_{\alpha = 0}^\infty \sum_{\beta = \alpha }^\infty  \frac { (-)^{\alpha + \beta} e^{i (\alpha - \beta - m) \phi} \lp (2 \pi)^2 x \rp^{- 2 \mu + \alpha + \beta}} {\alpha ! \beta!  \Gamma \lp 1 - 2 \mu + \alpha - \frac {m} 2 \rp \Gamma \lp 1 - 2 \mu + \beta + \frac {m} 2 \rp} \Bigg)\\
		= & \ \frac {2 \pi^2} {\sin (2\pi \mu)} \sum_{\alpha = 0}^\infty \sum_{\beta = 0}^\infty  \frac { (-)^{\alpha + \beta} e^{i (\beta - \alpha - m) \phi} \lp (2 \pi)^2 x \rp^{- 2 \mu + \alpha + \beta}} {\alpha ! \beta!  \Gamma \lp 1 - 2 \mu + \alpha + \frac {m} 2 \rp \Gamma \lp 1 - 2 \mu + \beta - \frac {m} 2 \rp}\\
		= & \ \frac {2 \pi^2} {\sin (2\pi \mu)} e^{- \frac 1 2 i m \phi} J_{- 2\mu - \frac m 2} \big( 4 \pi \sqrt x e^{\frac 1 2 i \phi} \big) J_{- 2\mu + \frac m 2} \big( 4 \pi \sqrt x e^{- \frac 1 2 i \phi} \big).
		\end{split}
		\end{equation*}
		Similarly, the residues at the poles of the first gamma factor contribute 
		$$ - \frac {2 \pi^2} {\sin (2\pi \mu)} e^{- \frac 1 2 i m \phi} J_{ 2\mu + \frac m 2} \big( 4 \pi \sqrt x e^{\frac 1 2 i \phi} \big) J_{ 2\mu - \frac m 2} \big( 4 \pi \sqrt x e^{- \frac 1 2 i \phi} \big).$$
		This can be easily seen via the shift of indices from $k$ to $k-m$. Thus, the first line of \eqref{3eq: n=2, C} is proven. In the nongeneric case $m \in 2\BZ$ and $ 2 \mu \in \BZ$, \eqref{3eq: n=2, C} still holds by passing to the limit.
		
		For $m$ odd, similar arguments show the second line of  \eqref{3eq: n=2, C}.
	\end{proof}
}

\section{\texorpdfstring{The Asymptotic Expansion of $J_{(\umu, \um)} (z) $}{The Asymptotic Expansion of $J_{(\mu, m)} (z) $}}\label{sec: asymptotics}




In the following, we shall present the asymptotic expansion of $J_{(\umu, \um)} (z) $. 

\vskip 5 pt

First of all, we have the following proposition on the asymptotic expansion of $J (z; \ulambda; \xi)$, which is in substance  Theorem \ref{thm: error bound}. 



\begin{prop}\label{8prop: asymptotic}
	Let $\ulambda \in \BC^n$ and define $\mathfrak C (\ulambda) = \max \left\{ \left| \lambda_l      - \frac 1 n |\ulambda| \right| + 1 \right\}$. Let $\xi $ be a  $2 n$-th root of unity.  For a small positive constant $ \vartheta $, say $0 < \vartheta < \frac 1 2 \pi $, we define the sector
	\begin{equation*}
	\BS'_{\xi } (\vartheta) = \left\{ z : \left| \arg z - \arg ( i \overline \xi) \right|  < \pi + \frac {\pi} n - \vartheta \right \}.
	\end{equation*} 
	For a positive integer $A $, we have  the asymptotic expansion
	\begin{equation*}
	J (z; \ulambda; \xi)   =    e^{i n \xi z} z^{- \frac { n-1} 2 - |\ulambda| }   \lp     \sum_{\alpha=0}^{A-1}   (i \xi)^{-\alpha} B_\alpha   \left(  \ulambda - \tfrac 1 n |\ulambda| \ue^n   \right) z^{-\alpha}   + O_{A, \,\vartheta,\, n} \lp \fC (\ulambda)^{2 A} |z|^{- A} \rp   \rp
	\end{equation*}
	for all $z \in \BS'_{\xi} (\vartheta)$ such that $ |z| \ggg_{\,A,\, \vartheta,\, n} \fC (\ulambda)^2 $. Here $B_{\alpha } (\ulambda) $  is a certain symmetric polynomial function in $\ulambda \in \BL^{n-1}$ of degree $2 \alpha$, with $B_0 (\ulambda) = 1$.
\end{prop}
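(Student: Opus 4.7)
The plan is to reduce the assertion, which is stated for unrestricted $\ulambda \in \BC^n$, to the case $\ulambda \in \BL^{n-1}$ already settled in the Bessel-equation theory. I first invoke the homogeneity relation $J(z;\ulambda - \lambda \ue^n; \xi) = z^{n\lambda} J(z;\ulambda;\xi)$ — precisely the condition imposed at the start of Section~\ref{sec: second connection} to extend the definition of $J(z;\ulambda;\xi)$ from $\BL^{n-1}$ to all of $\BC^n$ — with the choice $\lambda = \tfrac{1}{n} |\ulambda|$. Setting $\ulambda_0 = \ulambda - \tfrac{1}{n} |\ulambda| \ue^n \in \BL^{n-1}$, this gives
\[
J(z;\ulambda;\xi) = z^{-|\ulambda|} J(z;\ulambda_0;\xi).
\]
Moreover the constant $\fC(\ulambda) = \max_l |\lambda_l - \tfrac{1}{n}|\ulambda|\,| + 1$ in the statement coincides by construction with the constant $\fC$ attached to $\ulambda_0$ in Chapter~\ref{chap: analytic theory}, so the reduction is clean: it suffices to produce the claimed asymptotic for $J(z;\ulambda_0;\xi)$ and then multiply through by $z^{-|\ulambda|}$.

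Next, since $\xi^{2n} = 1$ forces $\xi^n = \pm 1$, we have $\xi \in \BX_n(+) \cup \BX_n(-)$, so Theorem~\ref{thm: error bound} applies directly to $J(z;\ulambda_0;\xi)$ with $\ulambda_0 \in \BL^{n-1}$ and yields
\[
J(z;\ulambda_0;\xi) = e^{i n \xi z} z^{-\frac{n-1}{2}} \lp \sum_{\alpha=0}^{A-1} B_\alpha(\ulambda_0;\xi) z^{-\alpha} + O_{A,\vartheta,n} \lp \fC(\ulambda_0)^{2A} |z|^{-A} \rp \rp
\]
on $\BS'_\xi(\vartheta)$ for $|z| \ggg_{A,\vartheta,n} \fC(\ulambda_0)^2$. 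The only identification left is the evaluation of $B_\alpha(\ulambda_0;\xi)$ in terms of the known symmetric polynomials $B_\alpha(\ulambda_0)$. For this I would combine Lemma~\ref{lem: J(z; lambda; xi) relations to J(xi z; lambda; 1)}, which gives $J(z;\ulambda_0;\xi) = \xi^{(n-1)/2} J(\xi z;\ulambda_0;+1)$ for any $2n$-th root of unity $\xi$ (via the $+$ sign choice), with Theorem~\ref{thm: bridge between H + (z; lambda) and J( z; lambda; 1)}, which identifies the coefficients for $\xi=+1$ as $B_m(\ulambda_0;+1) = i^{-m} B_m(\ulambda_0)$. Substituting $w = \xi z$ into the expansion of $J(w;\ulambda_0;+1)$ and absorbing the factor $\xi^{(n-1)/2}$ together with the powers $(\xi z)^{-m}$ into the prefactor produces
\[
B_\alpha(\ulambda_0;\xi) = (i \xi)^{-\alpha} B_\alpha(\ulambda_0).
\]

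Inserting this identification back into the expansion and multiplying by $z^{-|\ulambda|}$ reproduces the formula of the Proposition verbatim, with the factor $z^{-\frac{n-1}{2}}$ upgraded to $z^{-\frac{n-1}{2} - |\ulambda|}$. I expect no real obstacle in this argument: all the delicate analytic work — existence of a solution with the prescribed asymptotic, uniqueness on the large sector $\BS'_\xi(\vartheta)$, and above all the error bound of order $\fC^{2A} |z|^{-A}$ in which the dependence on $\ulambda$ is tracked polynomially rather than merely $O_{\ulambda}(\,\cdot\,)$ — is entirely packaged in the successive-approximation construction behind Theorem~\ref{thm: error bound}. Once that input is granted, the present Proposition reduces to a homogeneity rescaling and two previously established connection identities, so the remaining work is pure bookkeeping.
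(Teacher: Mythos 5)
Your proof is correct and follows exactly the route the paper intends: the paper states that the Proposition is ``in substance Theorem~\ref{thm: error bound}'' and gives no further argument, and you have simply made that reduction explicit. The three ingredients you supply — the homogeneity normalization $J(z;\ulambda;\xi) = z^{-|\ulambda|}J(z;\ulambda_0;\xi)$ from the convention in \S\ref{sec: second connection}, the observation that $\fC(\ulambda)$ agrees with the $\fC$ of Theorem~\ref{thm: error bound} evaluated at $\ulambda_0$, and the coefficient identification $B_\alpha(\ulambda_0;\xi) = (i\xi)^{-\alpha}B_\alpha(\ulambda_0)$ via Lemma~\ref{lem: J(z; lambda; xi) relations to J(xi z; lambda; 1)} and Theorem~\ref{thm: bridge between H + (z; lambda) and J( z; lambda; 1)} — are precisely what is needed, and all the hard analysis is indeed already contained in Theorem~\ref{thm: error bound} as you say.
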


\begin{lem}\label{8lem: asymptotic}
	Let $r$ be a positive integer. Suppose that either $ n = 2r$ or $n = 2r-1$. Put $\vartheta_n = \frac 1 {\, n \, }  \pi$ if $n = 2 r$ and $\vartheta_n = \frac 1 {2 n} \pi$ if $n = 2r-1$.
	For a given constant $0 < \vartheta < \vartheta_n$  define the sector
	\begin{equation*}
	\BS_n (\vartheta) =  \left\{ \begin{split}
	\left\{ z : - \frac \pi 2 - \frac \pi {n} + \vartheta < \arg z < - \frac \pi 2 + \frac {3 \pi} {n} - \vartheta \right\} & \hskip 10 pt \text{ if } n = 2 r, \\
	\left\{ z : - \frac \pi 2 - \frac \pi {n} + \vartheta < \arg z < - \frac \pi 2 + \frac {2 \pi} {n} - \vartheta \right\} & \hskip 10 pt \text{ if } n = 2 r - 1, 
	\end{split} \right.
	\end{equation*}
	Let $(\umu, \um) \in \BL^{n-1} \times \BZ^n$ and define $\mathfrak C (\umu, \um) = \max \left\{ |\mu_l     | + 1, \left|m_l      - \frac 1 n |\um| \right| + 1 \right\}$. Define $\xi _{ j} = e^{ 2 \pi i \frac { {  j -  1}  } n}$ and $\zeta_j = e^{ 2 \pi i \frac { {  j -  r}  } n}$ for $j=1, ..., n$.  
	Then, for any $z \in \BS_n (\vartheta) $ such that $|z| \ggg_{\, A,\, \vartheta,\, n} \fC (\umu, \um) ^2$, we have
	\begin{align*}
	& J  \lp 2 \pi z  ; \umu + \tfrac 1 2 \um; \xi_k \rp J \lp 2 \pi \overline z  ; \umu - \tfrac 1 2 \um; \zeta_j \rp =  \frac { e \lp n \lp  \xi_k z + \zeta_j \overline z \rp  \rp }   { (2\pi)^{n-1} |z|^{n-1} [z]^{|\um|} }  \\
	& \hskip 50 pt
	\lp \mathop{\sum\sum}_{\substack{ \alpha,\, \beta = 0, ..., A-1 \\ \alpha + \beta \leq A-1 }} (i \xi_k)^{-\alpha} (i \zeta_j)^{-\beta}  B_{\alpha, \beta} (\umu, \um) z^{-\alpha} \overline z^{-\beta} + O_{A,\, \vartheta,\, n} \lp\fC (\umu, \um)^{2 A} |z|^{-A} \rp \rp,
	\end{align*}
	with 
	\begin{equation*}
	B_{\alpha, \beta} (\umu, \um) = (2\pi)^{-\alpha-\beta} B_\alpha \lp \umu + \tfrac 1 2 \um - \tfrac 1 {2 n} |\um| \ue^n \rp B_\beta \lp \umu - \tfrac 1 2 \um + \tfrac 1 {2 n} |\um| \ue^n \rp, \hskip 10 pt \alpha, \beta \in \BN,
	\end{equation*}
	where $B_\alpha (\ulambda)$  
	is the polynomial function in $\ulambda$ of degree $2 \alpha$ given in Proposition {\rm \ref{8prop: asymptotic}}.
\end{lem}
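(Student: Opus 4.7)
The plan is to apply Proposition 8.2 separately to the two Bessel factors $J(2\pi z;\umu+\tfrac 12\um;\xi_k)$ and $J(2\pi\overline z;\umu-\tfrac 12\um;\zeta_j)$, multiply the resulting asymptotic expansions, collapse the power factors, and keep track of the error.

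The first step will be to verify that $\BS_n(\vartheta)$ is precisely the sector on which Proposition 8.2 applies uniformly to both factors for \emph{every} $k,j=1,\dots,n$. Using $\arg(i\overline{\xi_k})=\tfrac\pi2-\tfrac{2\pi(k-1)}{n}$ and taking the intersection of $\BS'_{\xi_k}(\vartheta)$ on $\BU$, I would show
\[
\bigcap_{k=1}^n \BS'_{\xi_k}(\vartheta) \,=\, \Big(-\tfrac\pi2-\tfrac\pi n+\vartheta,\,-\tfrac\pi2+\tfrac{3\pi}{n}-\vartheta\Big),
\]
and symmetrically for the $\zeta_j$'s, obtaining an intersection centered (on $\BU$) at $\tfrac\pi2-\tfrac\pi n$ (when $n=2r$) or $\tfrac\pi 2$ (when $n=2r-1$). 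Imposing simultaneously $\arg z$ in the first intersection and $-\arg z$ in the second (the natural lift for $\overline z$) yields exactly the stated $\BS_n(\vartheta)$; the asymmetry between the even and odd cases comes from the index shift $\zeta_j=\xi_k e^{-2\pi i(r-1)/n}$.

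Next, for $z\in\BS_n(\vartheta)$ with $|z|\ggg_{A,\vartheta,n}\fC(\umu,\um)^2$, I invoke Proposition 8.2 with the same truncation $A$ on both factors. Since $\umu\in\BL^{n-1}$ we have $|\umu\pm\tfrac12\um|=\pm\tfrac12|\um|$, and one checks $\fC(\umu\pm\tfrac12\um)\lll_n \fC(\umu,\um)$. Multiplying the two expansions, the exponentials combine into $e(n(\xi_k z+\zeta_j\overline z))$, while the power prefactors combine as
\[
(2\pi z)^{-\frac{n-1}{2}-\frac{|\um|}{2}}(2\pi\overline z)^{-\frac{n-1}{2}+\frac{|\um|}{2}}
=(2\pi)^{-(n-1)}(z\overline z)^{-\frac{n-1}{2}}(\overline z/z)^{\frac{|\um|}{2}}
=(2\pi)^{-(n-1)}|z|^{-(n-1)}[z]^{-|\um|},
\]
using $z\overline z=|z|^2$ and $z/\overline z=[z]^2$. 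This reproduces the prefactor in the claim and identifies the coefficients as $B_{\alpha,\beta}(\umu,\um)$ after recognizing $\umu\pm\tfrac12\um-\tfrac{1}{n}|\umu\pm\tfrac12\um|\ue^n=\umu\pm\tfrac12\um\mp\tfrac{1}{2n}|\um|\ue^n$.

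The remaining step is the error bookkeeping. The ``main$\times$main'' terms with $\alpha+\beta\ge A$ have magnitude $\fC^{2(\alpha+\beta)}|z|^{-(\alpha+\beta)}\leq\fC^{2A}|z|^{-A}\cdot(\fC^2/|z|)^{(\alpha+\beta)-A}$, which sums to $O(\fC^{2A}|z|^{-A})$ once $|z|\ge 2\fC^2$; the two ``main$\times$error'' cross terms give $O(1)\cdot O(\fC^{2A}|z|^{-A})$; and ``error$\times$error'' is $O(\fC^{4A}|z|^{-2A})=O(\fC^{2A}|z|^{-A})$ on the same range of $|z|$. Collecting these proves the claimed expansion. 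I expect the genuine difficulty to be entirely in the first step --- the sector analysis on the universal cover $\BU$, where one must carefully distinguish the lifts and handle the parity of $n$ separately; the remaining steps are essentially mechanical once Proposition 8.2 is in hand.
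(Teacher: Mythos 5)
Your proof is correct and follows essentially the same path as the paper: compute $\bigcap_{k=1}^n\BS'_{\xi_k}(\vartheta)$ and $\bigcap_{j=1}^n\BS'_{\zeta_j}(\vartheta)$, observe that $\BS_n(\vartheta)$ is precisely the set of $z$ for which both $z$ and $\overline z$ lie in the appropriate intersections, apply Proposition~\ref{8prop: asymptotic} to each factor, and multiply. The paper's proof is even terser than yours (it encapsulates the sector intersections in a one-line formula for general $a$ and omits the power-factor and error bookkeeping that you carry out explicitly), but the substance is identical.
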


\begin{proof}

	
	Recall that, for an integer $a$, we defined $\xi _{a, j} = e^{ 2 \pi i \frac { {  j  + a  -  1}  } n}$. Note that $\xi_j = \xi _{0, j}$ and $\zeta_j = \xi _{1-r, j}$. It is clear that
	\begin{equation*}
	\bigcap_{j=1}^n \BS'_{\xi_{a, j} } (\vartheta) =  \left\{ z : 
	- \frac \pi 2  -   \frac { 2a+1 } n \pi + \vartheta <  \arg z < - \frac \pi 2  -    \frac { 2a-3 } n \pi - \vartheta \right \}.
	\end{equation*}
	We denote this sector by $\BS' _{ a } (\vartheta)$. 
	Observe that, when $n = 2 r$ or $2r-1$, the intersection $\BS' _{ 0 } (\vartheta) \cap \overline {\BS' _{ 1-r } (\vartheta) } $ is exactly the sector $\BS_n (\vartheta)$. In other words,  for all $j = 1, ..., n$, $z \in \BS'_{\xi_{ j} } (\vartheta) $ and $ \overline z \in \BS'_{\zeta_{ j} } (\vartheta) $ both hold if $z \in \BS_n (\vartheta)$. Therefore, Proposition \ref{8prop: asymptotic} can be applied to yield the asymptotic expansion of  $J \lp 2 \pi z  ; \umu + \tfrac 1 2 \um; \xi_k \rp J \lp 2 \pi \overline z  ; \umu - \tfrac 1 2 \um; \zeta_j \rp$ as above. 
\end{proof}

\begin{rem}  In view of our choice of $\vartheta$, the sector $\BS_{n} (\vartheta) $ is of angle at least $ \frac 2 {\,n\,} \pi$, and therefore the sector $\BS_{n} (\vartheta)^n = \left\{ z^n : z \in \BS_{n} (\vartheta) \right\}$ covers the whole  $\BC  \smallsetminus \{0\}$. 
	
\end{rem}

\begin{lem} \label{8lem: positive im}
	Let notations be as in Lemma {\rm \ref{8lem: asymptotic}}. 
	
	{\rm (1.1).} For $k = 1,..., r$, we have 
	\begin{equation*}
	\Im \lp \xi_{k} z + \zeta_{r-k+1} \overline z \rp = 0.
	\end{equation*}

	{\rm (1.2).} Let $k, j = 1, ... , r$ be such that $k + j \geq r + 2$. For any $z \in \BS_n (\vartheta)$, we have 
	\begin{equation*}
	\Im \lp \xi_{k} z + \zeta_j \overline z \rp \geq 2 \sin \lp\frac {k+j-r-1} n \pi\rp \sin \vartheta \cdot |z|.
	\end{equation*}
	
	{\rm (2.1).} For $k = 1,..., n - r$, we have 
	\begin{equation*}
	\Im \lp \xi_{k+r} z + \zeta_{n-k+1} \overline z \rp = 0.
	\end{equation*}
	
	{\rm (2.2).} Let $k, j = 1, ... , n-r$ be such that $k + j \leq n-r  $. For any $z \in \BS_n (\vartheta)$, we have 
	\begin{equation*}
	\Im \lp \xi_{k + r} z + \zeta_{j+r} \overline z \rp \geq 
	\left\{ \begin{split}
	& 2\sin \lp\frac {n-r -k-j +1} n \pi \rp \sin \vartheta \cdot |z|, \hskip 43 pt \text{ if } n=2r, \\
	& 2\sin \lp\frac {n-r-k-j +1} n \pi \rp \sin \lp \frac \pi n + \vartheta \rp \cdot |z|,  \hskip 10 pt \text{ if } n=2r -1.
	\end{split} \right.
	\end{equation*}
	
\end{lem}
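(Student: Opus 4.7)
The proof will be a direct trigonometric computation, splitting naturally into the ``equality'' parts (1.1), (2.1) and the ``inequality'' parts (1.2), (2.2).

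For (1.1) and (2.1), the plan is to observe that $\zeta_{r-k+1}=e^{2\pi i(r-k+1-r)/n}=e^{-2\pi i(k-1)/n}=\overline{\xi_k}$, and similarly $\zeta_{n-k+1}=\overline{\xi_{k+r}}$. Therefore
\begin{equation*}
\xi_k z+\zeta_{r-k+1}\overline{z}=\xi_k z+\overline{\xi_k z}=2\Re(\xi_k z),
\end{equation*}
which is manifestly real, and the analogous identity handles (2.1). These two parts are essentially immediate.

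The substance of the lemma lies in (1.2) and (2.2). Writing $z=|z|e^{i\phi}$ and applying the sum-to-product identity $\sin A+\sin B=2\sin\tfrac{A+B}{2}\cos\tfrac{A-B}{2}$, I would compute
\begin{equation*}
\Im\bigl(\xi_k z+\zeta_j\overline{z}\bigr)=2|z|\sin\!\Bigl(\frac{\pi(k+j-r-1)}{n}\Bigr)\cos\!\Bigl(\frac{\pi(k-j+r-1)}{n}+\phi\Bigr),
\end{equation*}
and the analogous formula for $\xi_{k+r}z+\zeta_{j+r}\overline{z}$ with the trivial identity $\sin(\pi(k+j+r-1)/n)=\sin(\pi(n-r-k-j+1)/n)$. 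The sine factor is exactly the one appearing in the stated bound and is strictly positive under the hypothesis on $k+j$.

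The main work, and where I expect the only real subtlety, is the lower bound on the cosine factor. For (1.2) with $n=2r$, the hypotheses $k,j\leq r$ and $k+j\geq r+2$ force $m:=k-j+r-1\in\{1,\dots,2r-3\}$; combined with $\phi\in(-\pi/2-\pi/n+\vartheta,-\pi/2+3\pi/n-\vartheta)$ this places the argument of cosine in the subinterval
\begin{equation*}
\Bigl(\tfrac{(m-1)\pi}{n}-\tfrac{\pi}{2}+\vartheta,\ \tfrac{(m+3)\pi}{n}-\tfrac{\pi}{2}-\vartheta\Bigr)\subset\bigl[-\tfrac{\pi}{2}+\vartheta,\ \tfrac{\pi}{2}-\vartheta\bigr],
\end{equation*}
so that $\cos\geq\sin\vartheta$. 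For $n=2r-1$ the corresponding sector is narrower, the admissible range of $m$ shifts to $\{2,\dots,n-3\}$ (for case (2.2)), and the same bookkeeping yields containment in $[-\pi/2+\pi/n+\vartheta,\pi/2-\pi/n-\vartheta]$, giving the stronger estimate $\cos\geq\sin(\pi/n+\vartheta)$ that explains the different constant in the second case of (2.2). I would record both parity cases side by side, since the only real content is tracking how the endpoints of $\phi$ shift under translation by $\pi m/n$ and verifying that the resulting interval stays inside the required window; there is no deeper obstacle, but some care is needed to confirm that the extremal values of $m$ (and the choice of $\vartheta_n$) produce sharp endpoints, which is what makes the constants in the bound the right ones.
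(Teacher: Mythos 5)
Your proof is correct and takes essentially the same approach as the paper's: the sum-to-product identity you apply is algebraically the same factoring the paper performs, and your interval-tracking of the cosine argument is the same bookkeeping the paper does via the triangle inequality, with $|k-j+r-1|$ constrained by $|k-j|\leq r-2$ (equivalently your $m\in\{1,\dots,2r-3\}$). The paper only writes out the case (1.1)--(1.2) with $n=2r$ and declares the rest analogous; your explicit tracking of how the bound shifts to $\sin(\pi/n+\vartheta)$ in (2.2) for $n=2r-1$ fills in exactly the check the paper omits.
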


\begin{proof}
	We shall only prove (1.1) and (1.2) in the case $n = 2 r$. The other cases follow in exactly the same way.
	
	Write $z = x e^{i \phi}$.  
	Since
	\begin{align*}
	\xi_{k} z + \zeta_j \overline z & = xe^{2 \pi i \frac {k-1} {2r} + i \phi} + xe^{2 \pi i \frac {j-r} {2r} -i \phi} \\
	& = xe^{\pi i \frac {k+j - r - 1} {2r} } \lp e^{ \pi i \frac {k-j+r-1} {2r} + i \phi}  + e^{ - \pi i \frac {k-j+r-1} {2r} - i \phi} \rp,
	\end{align*}
	(1.1) is then obvious (we also note that $\zeta_{r-k+1} = \overline \xi_k $), whereas (1.2) is equivalent to
	\begin{equation}\label{8eq: sin cos}
	\cos \lp  \frac {k-j+r-1} {2 r} \pi + \phi \rp \geq  \sin \vartheta.
	\end{equation}
	Observe that the condition $z \in \BS_{2r} (\vartheta)$ amounts to 
	\begin{equation*}
	\left|\phi + \frac \pi 2  -\frac \pi {2r} \right| < \frac {\pi} r  - \vartheta.
	\end{equation*} Moreover, under  our assumptions on $k$ and $j$ in (1.2), one has $|k-j| \leq r-2$.  Consequently, these yield the following estimate
	\begin{align*}
	\left|\frac {k-j+r-1} {2 r} \pi + \phi \right| \leq \frac {r-2} {2 r} \pi + \frac \pi r  - \vartheta = \frac \pi 2 - \vartheta.
	\end{align*}
	Thus \eqref{8eq: sin cos} is proven.
\end{proof}

\begin{rem}
	In the cases other than those listed in Lemma {\rm \ref{8lem: positive im}}, $\Im \lp \xi_{k} z + \zeta_{j} \overline z \rp  $ can not always be nonnegative for all $z \in \BS_n (\vartheta)$. 
	Fortunately, these cases are excluded from the second connection formula for $J_{(\umu, \um)} (z)$ in Theorem {\rm \ref{7thm: second formula}}.
\end{rem}

Now the asymptotic expansion of $J _{(\umu, \um)} \left(z \right)$ can   be readily established using Theorem \ref{7thm: second formula} along with Lemma \ref{7lem: C and D}, \ref{8lem: asymptotic} and \ref{8lem: positive im}.

\begin{thm}\label{8thm: asymptotic}
	Denote by $\BX_n  $ the set of $n$-th roots of unity. Let $(\umu, \um) \in \BL^{n-1} \times \BZ^n$ and define $\mathfrak C (\umu, \um) = \max \left\{ |\mu_l     | + 1, \left|m_l      - \frac 1 n |\um| \right| + 1 \right\}$. Let $A$ be a positive integer. Then
	\begin{equation*}
	\begin{split}
	J _{(\umu, \um)} \left(z^n \right)   = \sum_{\xi \in \BX_n } \frac { e \big( n \big( \xi z +   \overline {\xi z } \big) \big) }   { n |z|^{n-1} [\xi z]^{|\um|} }   
	\lp \mathop{\sum\sum}_{\substack{ \alpha,\, \beta = 0, ...,\, A-1 \\ \alpha + \beta \leq A-1 }} i^{- \alpha - \beta} \xi^{ -\alpha + \beta} B_{\alpha, \beta} (\umu, \um) z^{-\alpha} \overline z^{-\beta} \rp & \\
	 \hskip 71 pt + O_{A, \, n} \lp\fC (\umu, \um)^{2 A} |z|^{-A-n+1} \rp   & ,
	\end{split}
	\end{equation*}
	if $|z| \ggg_{\,A,\, n}  \fC (\umu, \um)^2$, with the coefficient $B_{\alpha, \beta} (\umu, \um)$ given 
	in Lemma {\rm \ref{8lem: asymptotic}}. 
	
\end{thm}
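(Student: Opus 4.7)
The plan is to combine the second connection formula, Theorem~\ref{7thm: second formula}, with the asymptotic expansion for products of Bessel functions of the second kind, Lemma~\ref{8lem: asymptotic}. First I would apply Theorem~\ref{7thm: second formula} with $z$ replaced by $z^n$ and some fixed choice of $r\in\{0,1,\ldots,n\}$; this expresses $J_{(\umu,\um)}(z^n)$ as a finite double sum, indexed by pairs $(k,j)$ in the two triangular ranges $k+j\geq r+1$, $k,j\leq r$ and $k+j\leq n-r+1$, $k,j\leq n-r$, of products $J\big(2\pi (z^n)^{1/n};\umu+\tfrac12\um;\xi_k\big)\,J\big(2\pi(\overline{z^n})^{1/n};\umu-\tfrac12\um;\zeta_j\big)$ with coefficients $C_{k,j}(\umu,\um)$ and $D_{k,j}(\umu,\um)$ described by Lemma~\ref{7lem: C and D}. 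Restricting $z$ to the sector $\BS_n(\vartheta)$ for $\vartheta$ small, the geometric condition in Lemma~\ref{8lem: asymptotic} is met for every $(k,j)$, so each product admits the asymptotic expansion supplied there.

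Next I would split the pairs into \emph{boundary} pairs, satisfying $k+j=r+1$ or $k+j=n-r+1$ (equivalently $\zeta_j=\overline{\xi_k}$), and \emph{interior} pairs. Parts (1.1)/(2.1) of Lemma~\ref{8lem: positive im} show that for boundary pairs the phase $\Im(\xi_k z+\zeta_j\overline z)$ vanishes identically, so the prefactor $e(n(\xi_k z+\zeta_j\overline z))$ is purely oscillatory; parts (1.1)/(2.1) of Lemma~\ref{7lem: C and D} give the accompanying coefficient as $(-\overline{\xi_k})^{|\um|}$ (respectively $(-\overline{\xi_{k+r}})^{|\um|}$). As $(k,j)$ runs over the boundary pairs, the corresponding $\xi=\xi_k$ or $\xi=\xi_{k+r}$ runs bijectively over $\BX_n$, producing one main term for each $\xi$. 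Collecting constants, the prefactor $(-)^{|\um|}(2\pi)^{n-1}/n$ from Theorem~\ref{7thm: second formula} multiplies the $\big((2\pi)^{n-1}|z|^{n-1}[z]^{|\um|}\big)^{-1}$ from Lemma~\ref{8lem: asymptotic} and the coefficient $(-\overline\xi)^{|\um|}$ to give $\big(n|z|^{n-1}[\xi z]^{|\um|}\big)^{-1}$, using that $[\xi z]^{|\um|}=\xi^{|\um|}[z]^{|\um|}$; the inner factor $(i\xi_k)^{-\alpha}(i\zeta_j)^{-\beta}$ with $\zeta_j=\overline{\xi_k}$ collapses to $i^{-\alpha-\beta}\xi^{-\alpha+\beta}$, matching the claimed series exactly.

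For the interior pairs, parts (1.2)/(2.2) of Lemma~\ref{8lem: positive im} furnish a strictly positive lower bound $\Im(\xi_k z+\zeta_j\overline z)\geq c_n\sin(\vartheta)|z|$, so the exponential prefactor decays as $\exp(-c'_n|z|)$ for a positive constant $c'_n$. The coefficient bounds from parts (1.2)/(2.2) of Lemma~\ref{7lem: C and D} are at most $\binom{n}{p}\exp(2\pi n\,\mathfrak I(\umu))$, and the expansion error from Lemma~\ref{8lem: asymptotic} contributes at most $O_{A,\vartheta,n}(\fC(\umu,\um)^{2A}|z|^{-A})$ inside each product; since $\mathfrak I(\umu)\leq \fC(\umu,\um)$ and $|z|\ggg_{A,n}\fC(\umu,\um)^2$, the exponential decay crushes both, absorbing the total interior contribution into the error $O_{A,n}(\fC^{2A}|z|^{-A-n+1})$ stated in the theorem. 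Finally, to extend the identity from $z\in\BS_n(\vartheta)$ to all $z\in\BC\smallsetminus\{0\}$, I would invoke rotational symmetry: both sides are invariant under $z\mapsto e^{2\pi i/n}z$ (the left because $(e^{2\pi i/n}z)^n=z^n$, the right because the sum over $\BX_n$ is permuted by $\xi\mapsto e^{2\pi i/n}\xi$), and the angular width of $\BS_n(\vartheta)$ strictly exceeds $2\pi/n$, so its $n$th-root rotates tile $\BC\smallsetminus\{0\}$.

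The main obstacle is the error analysis for the interior pairs, since the coefficients $C_{k,j},D_{k,j}$ genuinely grow exponentially in $\mathfrak I(\umu)$: one must verify that the linear exponential decay $\exp(-c'_n|z|)$ from the phase truly dominates both $\exp(2\pi n\,\fC)$ and the polynomial growth $\fC^{2A}|z|^{-A-n+1}$ simultaneously. This is precisely where the hypothesis $|z|\ggg_{A,n}\fC(\umu,\um)^2$ is essential, with the implied threshold chosen large enough in terms of $A$ and $n$ to guarantee $c'_n|z|\geq 2\pi n\,\fC+O_{A,n}(\log|z|)$.
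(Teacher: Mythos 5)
Your argument is exactly the paper's intended proof, which it gives only as a one-sentence pointer to Theorem~\ref{7thm: second formula} and Lemmas~\ref{7lem: C and D}, \ref{8lem: asymptotic} and \ref{8lem: positive im}; your decomposition into boundary pairs $\zeta_j=\overline{\xi_k}$ (producing the $n$ oscillatory main terms) and interior pairs (exponentially suppressed via Lemma~\ref{8lem: positive im}, with the threshold $|z|\ggg_{A,n}\fC^2$ absorbing the $\exp(O(n\fC))$ coefficient growth from Lemma~\ref{7lem: C and D}) is precisely the step the paper leaves implicit. The rotation argument extending the identity from $\BS_n(\vartheta)$ to all of $\BC\smallsetminus\{0\}$ is also what the paper has in mind, as signalled by the remark following Lemma~\ref{8lem: asymptotic}.
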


We may also prove the following elaborate version of Theorem \ref{8thm: asymptotic}.

\begin{thm}\label{8thm: asymptotic, complex, 2}
	Let notations be as in  Lemma {\rm \ref{8lem: asymptotic}} and Theorem {\rm \ref{8thm: asymptotic}}. Fix the angle $\vartheta$, say  $\vartheta = \frac 1 2 \vartheta_n$.
	Let $\mathfrak I (\umu) = \max \left\{ \left| \Im \mu_l      \right| \right\}$. Then we may write 
	\begin{align*}
	J _{(\umu, \um)} \left(z^n \right) = \sum_{\xi \in \BX_n } \frac { e \big( n \big( \xi z +   \overline {\xi z } \big) \big) }   { n |z|^{n-1} [\xi z]^{|\um|} } W_{(\umu, \um)} \lp  z , \xi \rp + E_{(\umu, \um)} (z),
	\end{align*} 
	such that
	\begin{align*}
	W_{(\umu, \um)} \lp z , \xi \rp =    
	\mathop{\sum\sum}_{\substack{ \alpha,\, \beta = 0, ...,\, A-1 \\ \alpha + \beta \leqslant A-1 }} i^{- \alpha - \beta} \xi^{- \alpha + \beta}  B_{\alpha, \beta} (\umu, \um) z^{-\alpha } \overline z^{-\beta }  + O_{A, \, n} \lp\fC (\umu, \um)^{2 A} |z|^{-A } \rp ,
	\end{align*} 
	and
	\begin{align*}
	E_{ (\umu, \um) } (z) = O_{ n} \left( |z|^{- n + 1} \exp \lp 2 \pi \mathfrak I (\umu)   - 4 \pi n \sin \lp \tfrac 1 {\,n\,} \pi \rp \sin   \vartheta  \cdot |z| \rp \right),
	\end{align*} 
	for $z \in  \BS_n (\vartheta) $ with $|z| \ggg_{\,A,\, n}  \fC (\umu, \um)^2$.  Moreover, $ E_{ (\umu, \um) } (z) \equiv 0 $ when $n = 1, 2$.
\end{thm}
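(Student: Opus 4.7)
The plan is to derive this refinement of Theorem \ref{8thm: asymptotic} by feeding Proposition \ref{8prop: asymptotic} into the second connection formula (Theorem \ref{7thm: second formula}) on the sector $\BS_n(\vartheta)$, and then separating the pairs of indices into those that produce bona fide oscillatory main terms and those that produce exponentially decaying remainders.

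First I would invoke Theorem \ref{7thm: second formula} with argument $z^n$ and the same $r$ as in Lemma \ref{8lem: asymptotic}, noting that for $z\in\BS_n(\vartheta)$ the principal branch gives $(z^n)^{1/n}=z$. This expresses $J_{(\umu,\um)}(z^n)$ as a double sum over pairs $(k,j)$ indexing products $J(2\pi z;\umu+\tfrac12\um;\xi_k)\,J(2\pi\overline z;\umu-\tfrac12\um;\zeta_j)$ with coefficients $C_{k,j}(\umu,\um)$ or $D_{k,j}(\umu,\um)$. Next I would apply Lemma \ref{8lem: asymptotic}, whose hypothesis is satisfied on $\BS_n(\vartheta)$, to expand each such product as an oscillatory factor $e\bigl(n(\xi_k z+\zeta_j\overline z)\bigr)\cdot|z|^{-(n-1)}[z]^{-|\um|}/(2\pi)^{n-1}$ times a polynomial in $z^{-1},\overline z^{-1}$ of total degree $\leq A-1$, plus an error of size $O_{A,n}(\fC(\umu,\um)^{2A}|z|^{-A-n+1})$.

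The key dichotomy comes from Lemma \ref{8lem: positive im}. The \emph{boundary pairs} $(k,r-k+1)$ and $(k,n-r-k+1)$ have $\Im(\xi_k z+\zeta_j\overline z)=0$, and by parts (1.1), (2.1) of Lemma \ref{7lem: C and D} the coefficients are exactly $(-\overline{\xi}_k)^{|\um|}$ and $(-\overline{\xi}_{k+r})^{|\um|}$; moreover $\zeta_{r-k+1}=\overline{\xi}_k$ and $\zeta_{n-k+1}=\overline{\xi}_{k+r}$. As $k$ varies, the corresponding $\xi_k$ and $\xi_{k+r}$ jointly exhaust $\BX_n$. Combining the prefactor $(-)^{|\um|}(2\pi)^{n-1}/n$ from Theorem \ref{7thm: second formula} with the leading-order factors from Lemma \ref{8lem: asymptotic}—in particular using $(2\pi z)^{-(n-1)/2-|\um|/2}(2\pi\overline z)^{-(n-1)/2+|\um|/2}=(2\pi)^{-(n-1)}|z|^{-(n-1)}[z]^{-|\um|}$ and the identity $\overline{\xi}^{|\um|}[z]^{-|\um|}=[\xi z]^{-|\um|}$—one checks that the boundary pairs sum precisely to
\[
\sum_{\xi\in\BX_n}\frac{e\bigl(n(\xi z+\overline{\xi z})\bigr)}{n|z|^{n-1}[\xi z]^{|\um|}}\,W_{(\umu,\um)}(z,\xi),
\]
with $W_{(\umu,\um)}(z,\xi)$ having the asserted polynomial-plus-remainder expansion.

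The \emph{interior pairs} are controlled by parts (1.2), (2.2) of Lemma \ref{8lem: positive im} and Lemma \ref{7lem: C and D}: setting $p=k+j-r-1\geq1$ (resp.\ $p=n-r-k-j+1\geq1$), one has $\Im(\xi_k z+\zeta_j\overline z)\geq 2\sin(p\pi/n)\sin\vartheta\cdot|z|$, while $\sin(p\pi/n)\geq p\sin(\pi/n)$ by concavity of $\sin$ on $[0,\pi/2]$ and the fact that $\min\{p,n-p\}\leq \lfloor n/2\rfloor$. The coefficient bound is $|C_{k,j}|,|D_{k,j}|\leq\binom{n}{p}\exp\bigl(2\pi p\,\mathfrak I(\umu)\bigr)$. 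Combining these gives that each interior contribution is dominated by $\binom{n}{p}|z|^{-(n-1)}\exp\bigl(p\,(2\pi\mathfrak I(\umu)-4\pi n\sin(\pi/n)\sin\vartheta\cdot|z|)\bigr)$. Once $|z|$ is large enough (depending on $A$ and $n$) that the bracket is negative, the $p=1$ term dominates, yielding the announced bound for $E_{(\umu,\um)}(z)$. Finally, when $n=1$ or $n=2$ one has $r=1$ and both interior index ranges are empty, so $E_{(\umu,\um)}\equiv 0$.

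The main obstacle will be the bookkeeping in the boundary step: tracking signs, $|\um|$-dependent powers, and the identifications $\zeta_{r-k+1}=\overline{\xi}_k$, $\zeta_{n-k+1}=\overline{\xi}_{k+r}$ so that the two sums in Theorem \ref{7thm: second formula} glue to a single sum over $\BX_n$ with the exact coefficient $B_{\alpha,\beta}(\umu,\um)$ predicted by Lemma \ref{8lem: asymptotic}; the exponential-decay step is conceptually straightforward once the sine concavity trick is in place.
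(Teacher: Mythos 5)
Your approach is the intended one and the overall structure is sound: apply Theorem~\ref{7thm: second formula} at $z^n$ on $\BS_n(\vartheta)$, expand every product $J(2\pi z;\cdot;\xi_k)J(2\pi\overline z;\cdot;\zeta_j)$ via Lemma~\ref{8lem: asymptotic}, and separate boundary pairs (indexed by $p=0$) from interior pairs ($p\geq1$). The identifications $\zeta_{r-k+1}=\overline\xi_k$, $\zeta_{n-k+1}=\overline\xi_{k+r}$ together with parts (1.1) and (2.1) of Lemma~\ref{7lem: C and D} correctly glue the two partial boundary sums into a single sum over $\BX_n$, and the combination $(-\overline\xi)^{|\um|}[z]^{-|\um|}=[\xi z]^{-|\um|}$ with the prefactor $(-)^{|\um|}(2\pi)^{n-1}/n$ matches the displayed main term. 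The observation that the interior index ranges are empty for $n\leq 2$ is also correct. There are, however, two points needing repair.

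First, a computational discrepancy you did not flag. On a boundary pair, $(i\xi_k)^{-\alpha}(i\zeta_j)^{-\beta}=(i\xi)^{-\alpha}(i\overline\xi)^{-\beta}=i^{-\alpha-\beta}\xi^{-\alpha+\beta}$, since $\overline\xi=\xi^{-1}$. Hence the inner sum you obtain for $W_{(\umu,\um)}(z,\xi)$ carries the factor $\xi^{-\alpha+\beta}$, consistent with the display in Theorem~\ref{8thm: asymptotic}, and \emph{not} the factor $\xi^{-\alpha-\beta}$ written in the statement of Theorem~\ref{8thm: asymptotic, complex, 2}. These two agree only when $n\leq 2$. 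Your derivation is right; the statement contains a typo, and you should make explicit that you are proving the $\xi^{-\alpha+\beta}$ version.

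Second, the inequality $\sin(p\pi/n)\geq p\sin(\pi/n)$ is false: concavity of $\sin$ on $[0,\pi/2]$ together with $\sin 0=0$ gives $\sin(tx)\geq t\sin x$ for $t\in[0,1]$, equivalently $\sin(p\pi/n)\leq p\sin(\pi/n)$, the opposite of what you wrote. Consequently your asserted bound $\binom{n}{p}|z|^{-(n-1)}\exp\bigl(p(2\pi\mathfrak I(\umu)-4\pi n\sin(\pi/n)\sin\vartheta|z|)\bigr)$ for the $p$-th interior contribution does not follow. Fortunately only the weaker inequality $\sin(p\pi/n)\geq\sin(\pi/n)$, valid for $1\leq p\leq n-1$ by the symmetry $\sin(p\pi/n)=\sin((n-p)\pi/n)$ and monotonicity on $[0,\pi/2]$, is needed to reproduce the claimed decay rate. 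To absorb the extra coefficient growth $e^{2\pi(p-1)\mathfrak I(\umu)}$ for $p\geq2$, observe that in your index range $p\leq r-1$ one in fact has the \emph{strict} inequality $\sin(p\pi/n)>\sin(\pi/n)$; combined with $\mathfrak I(\umu)\leq\fC(\umu,\um)$ and the hypothesis $|z|\ggg_{A,n}\fC(\umu,\um)^2$, the surplus exponential decay $\exp\bigl(-4\pi n(\sin(p\pi/n)-\sin(\pi/n))\sin\vartheta|z|\bigr)$ dominates $e^{2\pi(p-1)\mathfrak I(\umu)}\binom{n}{p}$ once the implied constant in $|z|\ggg_{A,n}\fC^2$ is chosen large enough. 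With these two corrections the argument goes through.
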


%
%
%


\chapter{Hankel Transforms and Bessel Kernels in Representation Theory}\label{chap: Representation Theory}

This last chapter is devoted to the representation theoretic investigations of Hankel transforms for $\GL_n (\BF)$ and Bessel kernels for $\GL_2 (\BF)$, with $\BF = \BR$ or $\BC$.

\section{Hankel Transforms from the Representation Theoretic Viewpoint}\label{sec: Hankel, Ichino-Templier}

We shall start with a brief review of Hankel transforms over an \textit{Archimedean} local field\footnote{For a non-Archimedean local field, Hankel transforms can also be constructed in the same way.} in the work of Ichino and Templier \cite{Ichino-Templier} on the \Voronoi summation formula. 
For the theory of $L$-functions and local functional equations over a local field the reader is referred to Cogdell's survey \cite{Cogdell}. We shall then study  Hankel transforms using the Langlands classification.
For this,  Knapp's article \cite{Knapp} is used as our reference, with some changes of notations for our convenience.
\vskip 5 pt

Let $\BF$ be an Archimedean local field with normalized absolute value $\|\, \| = \|\, \|_{\BF}$ defined as in \S \ref{sec: R+, Rx and Cx}, 
and let $\psi $ be a given additive character on $\BF$.  For $s \in \BC$  let $\omega_s $ denote the character $\omega_s (x) = \|x\|^s$. Let $\eta (x) = \sgn (x)$ for $x \in \BRx$ and  $\eta (z) = [z]$ for $z \in \BCx$. 


Suppose for the moment $n \geq 2$. Let $\pi $ be an infinite dimensional irreducible admissible generic representation of $\GL_n( \BF)$\footnote{Since $\pi $ is a local component of an irreducible cuspidal automorphic representation in \cite{Ichino-Templier},  \cite{Ichino-Templier} also assumes that $\pi$ is unitary. However, if one only considers the local theory, this assumption is not necessary.}, and $\mathscr W (\pi, \psi)$ be the $\psi$-Whittaker model of $\pi$. Denote by $\omega_\pi$ the central character of $\pi$. Recall that the $\gamma$-factor $\gamma (s, \pi , \psi )$ of $\pi$ is given by
\begin{equation*}
\gamma (s, \pi , \psi ) = \epsilon (s, \pi , \psi ) \frac {L(1 - s, \widetilde \pi )} { L(s,\pi ) }
\end{equation*}
where $\widetilde \pi $ is the contragradient representation of $\pi$,  $\epsilon (s, \pi , \psi )$ and $L( s, \pi)$ are the $\epsilon$-factor and the $L$-function of $\pi$ respectively. 

To a smooth compactly supported function $w$ on $\BF^{\times}$ we associate a dual function $\widetilde w$ on $\BF^{\times}$ defined by \cite[(1.1)]{Ichino-Templier},
\begin{equation}\label{4eq: Ichino-Templier}
\begin{split}
\int_{\BF^\times} \widetilde w (x) \chiup (x)\- & \|x\|^{s - \frac {n-1} 2} d^\times x \\
& = \chiup (-1)^{n-1} \gamma (1-s, \pi \otimes \chiup, \psi ) \int_{\BF^\times} w (x) \chiup (x) \|x\|^{1 - s - \frac {n-1} 2} d^\times x,
\end{split}
\end{equation}
for all $s$ of real part sufficiently large and all unitary multiplicative characters $\chiup $ of $\BF^\times$. (\ref{4eq: Ichino-Templier}) is independent of the chosen Haar measure $d^\times x$ on $\BF^\times$, and uniquely defines $\widetilde w $ in terms of $\pi$, $\psi$ and $w$. We shall let the Haar measure be given as in \S \ref{sec: R+, Rx and Cx}. We call $\widetilde w $ the Hankel transform of $w$ associated with $\pi$.

According to \cite[Lemma 5.1]{Ichino-Templier}, there exists a smooth Whitaker function $W \in \mathscr W (\pi, \psi)$ so that
\begin{equation}\label{4eq: w = Whittaker}
w(x) = W   
\begin{pmatrix}
x & \\
& I_{n-1}
\end{pmatrix},
\end{equation}
for all $x \in \BF^\times$. Denote by $\varw_n$ the  $n$-by-$n$ permutation matrix whose anti-diagonal entries are $1$, that is, the longest Weyl element of rank $n$, 
and define
\begin{equation*}
\varw_{n, 1} = \begin{pmatrix}
1 & \\
& \varw_{n-1}
\end{pmatrix}.
\end{equation*}
In the theory of integral representations of Rankin-Selberg $L$-functions, \eqref{4eq: Ichino-Templier} amounts to the local functional equations of 
zeta integrals for $\pi \otimes \chiup$, with
\begin{equation}\label{4eq: tilde w for n = 2}
\widetilde w (x) = \widetilde W 
\begin{pmatrix}
x & \\
& 1
\end{pmatrix}  = 
W \lp \varw_2
\begin{pmatrix}
x\- & \\
& 1
\end{pmatrix}\rp,
\end{equation}
if $n = 2$, and
\begin{equation}\label{4eq: tilde w for n > 2}
\widetilde w (x) = \int_{\BF^{n-2}} \widetilde W \lp
\begin{pmatrix}
x &  &  \\
y & I_{n-2} & \\
&  & 1
\end{pmatrix} \varw_{n, 1}
\rp d y_{\psi},
\end{equation}
if $n \geq 3$, where $\widetilde W \in \mathscr W(\widetilde \pi, \psi\-)$ is the dual Whittaker function defined by $\widetilde W (g) = W(\varw_n \cdot {^t g\-})$, for $g \in \GL_n( \BF)$, and $d x_\psi$ denotes the self-dual additive Haar measure on $\BF$ with respect to $\psi$. See \cite[Lemma 2.3]{Ichino-Templier}.

\vskip 5 pt

It should be noted that the settings in \cite{Ichino-Templier} can be extended in two aspects.

First, the constraint that $\pi$ be \textit{infinite dimensional} and {\it generic} is actually dispensable for defining the Hankel transform via \eqref{4eq: Ichino-Templier}. In the following, we shall assume that $\pi$ is any irreducible admissible representation of $\GL_n(\BF)$. Moreover, we shall also include the case $n=1$. It will be seen that, after renormalizing  the functions $w$ and $\widetilde w$, the Hankel transform defined by \eqref{4eq: Ichino-Templier} converts into the Hankel transform given by \eqref{3eq: Hankel transform identity, R} or \eqref{3eq: Hankel transform identity, C}.
For this, we shall apply the Langlands classification for irreducible admissible representations of $\GL_n(\BF)$. 

Second, the constraint that the weight function $w$ be compactly supported is not necessary. By the work in \S \ref{sec: all Ssis} and \ref{sec: Hankel transforms}, the Hankel transform defined by \eqref{4eq: Ichino-Templier} may be extended to a larger space of weight functions. Particularly important is that this space contains the Kirillov model when $n = 2$ (see Remark \ref{rem: Kirillov}). 

\subsection{Hankel Transforms over $\BR$} 

Suppose $\BF = \BR$. Recall that $\|\, \|_\BR = |\ |$ is the ordinary absolute value. 
For $  r     \in \BR^\times$ let $\psi (x) = \psi_  r     (x) = e (  r     x)$.

According to \cite[\S 3, Lemma]{Knapp}, every finite dimensional semisimple representation $\varphi$ of the Weil group of $\BR$ may be decomposed into irreducible representations of dimension one or two. The one-dimensional representations are parametrized by $ (\mu, \delta) \in \BC \times \BZT$. We denote by $\varphi_{(\mu, \delta)}$ the representation given by $(\mu, \delta)$. $\varphi_{(\mu, \delta)}$ corresponds to the representation $\chiup_{(\mu, \delta)} = \omega_{\mu} \eta^{\delta}   $ of $\GL_1( \BR)$ under the Langlands correspondence over $\BR$. The irreducible two-dimensional representations are parametrized by $(\mu, m) \in \BC \times \BN_+$. We denote by $\varphi_{( \mu, m)}$ the representation given by $( \mu, m)$. $\varphi_{( \mu, m)}$ corresponds to the representation $\sigma(m) \otimes \omega_{\mu}(\det) $ of $\GL_2( \BR)$, where $\sigma(m)$ denotes the discrete series representation of weight $m$. 

In view of the formulae \cite[(3.6, 3.7)]{Knapp}\footnote{The formulae in \cite[(3.6, 3.7)]{Knapp} are for $\psi_1$. The relation between the epsilon factors $\epsilon (s, \pi , \psi_r )$ and $\epsilon (s, \pi , \psi )$ is given in \cite[\S 3]{Tate} (see in particular \cite[(3.6.6)]{Tate}).} of   $L$-functions and $\epsilon$-factors, the definitions  of $G_\delta$ and $G_m$ in \eqref{1def: G delta} and \eqref{1def: G m (s)}, along with the formula \eqref{1f: G m (s) = G 1 G delta(m)}, we deduce that
\begin{equation}\label{4eq: gamma, character}
\begin{split}
\gamma ( s, \varphi_{( \mu, \delta)} , \psi)  
= \sgn (  r     )^{\delta}  |  r     |^{s  + \mu - \frac 1 2 } G_{\delta} (1 - s  - \mu),
\end{split}
\end{equation} 
whereas
\begin{equation}\label{4eq: gamma, discrete}
\begin{split}
\gamma (s, \varphi_{( \mu, m)} , \psi) & = \sgn (  r     )^{\delta(m)+1} |  r     |^{ 2s + 2\mu - 1} i G_m(1 - s - \mu),
\end{split}
\end{equation}
and
\begin{equation}\label{4eq: gamma, discrete, 2}
\begin{split}
\gamma (s, \varphi_{( \mu, m)} , \psi) & = \gamma  ( s, \varphi_{ ( \mu + \frac 12 {m} ,\, \delta(m) + 1  )} , \psi ) \gamma ( s, \varphi_{ ( \mu - \frac 12 {m} ,\, 0  )} , \psi ) \\
& = \gamma  ( s, \varphi_{( \mu + \frac 12 {m} ,\, \delta (m) )} , \psi ) \gamma  ( s, \varphi_{( \mu - \frac 12 {m} ,\, 1 )} , \psi ).
\end{split}
\end{equation}
To $ \varphi_{( \mu, m)}$ we shall attach either one of the following two parameters 
\begin{equation}\label{4eq: two parameters, discrete}
\textstyle \big( \mu + \frac 12 {m} , \mu - \frac 12 {m} , \delta (m) + 1, 0 \big), \ \big(\mu + \frac 12{m}  , \mu - \frac 12 {m}  , \delta (m), 1 \big).
\end{equation}

\begin{rem}\label{4rem: discrete series}
	\eqref{4eq: gamma, discrete, 2} reflects the isomorphism $\varphi_{( 0, m)} \otimes \varphi_{( 0, 1)} \cong \varphi_{( 0, m)}$ of representations of the Weil group {\rm (}here $(0, 1)$ is an element in $\BC \times \BZT${\rm )}, as well as 
	the isomorphism $\sigma(m) \otimes \eta \cong \sigma(m)$ of representations of $\GL_2(\BR)$. 
\end{rem}

For $\varphi$ reducible, 
$\gamma (s, \varphi , \psi)$ is the product of the $\gamma$-factors of the  irreducible constituents of $\varphi$. Suppose that $\varphi$ is $n$-dimensional. It follows from (\ref{4eq: gamma, character}, \ref{4eq: gamma, discrete}, \ref{4eq: gamma, discrete, 2}) that there is a parameter $(\umu, \udelta) \in \BC^{n} \times (\BZT)^n$ attached to $\varphi$ such that
\begin{equation}\label{4eq: gamma factor of phi}
\gamma ( s, \varphi, \psi) = \sgn(  r     )^{|\udelta|} |  r     |^{n\lp s - \frac 1 2 \rp + |\umu|} G_{(\umu, \udelta)} (1 - s ).
\end{equation}
The irreducible constituents of $\varphi$ are unique up to permutation, but, in view of the two different parameters  attached to $\varphi_{(\mu, m)}$ in \eqref{4eq: two parameters, discrete}, the  parameter $(\umu, \udelta)$ attached to $\varphi$ may not.

Suppose that $\pi$ corresponds to $\varphi $ under the Langlands correspondence over $\BR$. We have $\gamma ( s, \pi, \psi) = \gamma ( s, \varphi, \psi)$. It is known that $\pi$ is an irreducible constituent of the principal series representation unitarily induced from the character $ \bigotimes _{l      = 1}^n \chiup_{(\mu_l     , \delta_l     )}$ of the Borel subgroup. 
In particular, 
\begin{align}\label{4eq: central char, R}
\omega_\pi (x) =  \omega_{|\umu|} (x) \eta^{|\udelta|} (x) = \sgn (x)^{|\udelta|} |x|^{|\umu|}.
\end{align}

Now let $\chiup = \chiup_{(0,\delta)} = \eta^{\delta}$ in \eqref{4eq: Ichino-Templier}, $\delta \in \BZT$.
In view of \eqref{4eq: gamma factor of phi} and \eqref{4eq: central char, R}, one has the following expression of the $\gamma$-factor in   \eqref{4eq: Ichino-Templier},
\begin{equation}\label{4eq: gamma factor real case}
\gamma (1-s, \pi \otimes \eta^{\delta} , \psi) = \omega_\pi (  r     ) \big( \sgn (  r     )^{ \delta} |  r     |^{ \frac 1 2 - s} \big)^n  G_{(\umu, \udelta + \delta \ue^n)}  (s ).
\end{equation}
Some calculations show that (\ref{4eq: Ichino-Templier}) is exactly translated into \eqref{3eq: Hankel transform identity, R} if one lets
\begin{equation}\label{4eq: upsilon and w, real case}
\begin{split}
\upsilon (x) & =  \omega_\pi (  r     ) w \big(  |  r     |^{- \frac n 2}x \big) |x|^{ - \frac {n-1} 2}, \\
\Upsilon (x) & = \widetilde w \big( (-)^{n-1} \sgn (  r     )^{ n } |  r     |^{- \frac n 2}x \big) |x|^{ - \frac {n-1} 2}.
\end{split}
\end{equation}
Then, \eqref{3eq: Hankel transform, with Bessel kernel, R} can be reformulated as
\begin{equation}\label{4eq: Hankel transform tilde w and w, real case}
\widetilde w \big((-)^{n-1} x \big) = \omega_\pi (  r     ) |  r     |^{\frac n 2} |x|^{\frac {n-1} 2} \int_{\BR ^\times} w (y) J_{(\umu, \udelta)} (   r     ^n xy ) |y|^{1 - \frac {n-1} 2} d^\times y.
\end{equation}

\subsection{Hankel Transforms over $\BC$} \label{sec: Hankel transform over C}
Suppose $\BF = \BC$. Recall that $\|\, \|_\BC = \|\, \| = |\  |^2$, where $|\  |$ denotes the ordinary absolute value. 
For $  r     \in \BC^\times$ let $\psi (z) = \psi_  r     (z) = e (  r     z + \overline {  r     z})$.

The Langlands classification and correspondence for $\GL_n( \BC)$ are less complicated. First of all,
the Weil group of $\BC$ is simply $\BC^\times$. Any $n$-dimensional semisimple representation $\varphi$ of the Weil group $\BC^\times$ is the direct sum of one-dimensional representations. The one-dimensional representations are of the form $\chiup_{( \mu, m)}  = \omega_{\mu} \eta^m  $, with $ ( \mu, m) \in \BC \times \BZ.$
In view of the formulae \cite[(4.6, 4.7)]{Knapp} of $L(s, \chiup_{( \mu, m)})$ and $\epsilon (s, \chiup_{( \mu, m)}, \psi)$ as well as the definition  of  $G_m$ in \eqref{1def: G m (s)}, we have
\begin{equation}
\gamma (s, \chiup_{( \mu, m)}, \psi) = [  r     ]^m \|  r     \|^{ s + \mu - \frac 1 2 } G_m (1-s-\mu).
\end{equation}
Thus $\varphi$ is parametrized by some $(\umu, \um)\in \BC^{n} \times \BZ^n$ and
\begin{equation}\label{4eq: gamma factor of chi, complex case}
\gamma (s,  \varphi , \psi) = [  r     ]^{|\um|}  \|  r     \|^{n \lp s - \frac 1 2\rp + |\umu|} G_{(\umu, \um)} (1-s).
\end{equation}
This parametrization is unique up to permutation, in contrast to the case $\BF = \BR$.

If $\pi$ corresponds to $\varphi $ under the Langlands correspondence over $\BC$, then one has $\gamma ( s, \pi, \psi) = \gamma ( s, \varphi, \psi)$. Moreover, $\pi$ is an irreducible constituent of the principal series representation unitarily induced from the character $ \bigotimes _{l      = 1}^n \chiup_{(\mu_l     , m_l     )}$ of the Borel subgroup. Note that 
\begin{align}\label{4eq: central char, C}
\omega_\pi  (z) = \omega_{|\umu|} (z) \eta^{|\um|} (z) = [z]^{|\um|} \|z\|^{|\umu|}. 
\end{align}

Now let $\chiup = \chiup_{(0, m)} =\eta^m$ in \eqref{4eq: Ichino-Templier}, $m \in \BZ $. Then \eqref{4eq: gamma factor of chi, complex case} and \eqref{4eq: central char, C} imply
\begin{equation}\label{4eq: gamma factor, complex case}
\gamma (1-s,  \pi \otimes \eta^m, \psi) = \omega_\pi (  r     ) \big(   [  r     ]^m \|  r     \|^{ \frac 1 2 - s } \big)^n  G_{(\umu, \um + m \ue^n)} (s).
\end{equation}
By putting  
\begin{equation}\label{4eq: upsilon = w Upsilon = tilde w, complex}
\begin{split}
\upsilon (z) &= \omega_\pi (  r     ) w \big( \|  r     \|^{- \frac n 2} z \big) \|z\| ^{ - \frac {n-1} 2}, \\
\Upsilon (z) &=   \widetilde w \big( (-)^{n-1} [  r     ]^{- n} \|  r     \|^{-  \frac n 2}  z \big) \|z\| ^{ - \frac {n-1} 2},
\end{split}
\end{equation}
the identity (\ref{4eq: Ichino-Templier}) is translated into \eqref{3eq: Hankel transform identity, C}, and \eqref{3eq: Hankel transform, with Bessel kernel, C} can be reformulated as
\begin{equation}\label{4eq: Hankel transform tilde w and w, complex case}
\widetilde w \lp (-)^{n-1}  z \rp = \omega_\pi (  r     )  \|  r     \|^{\frac n 2} \| z\|^{\frac {n-1} 2} \int_{\BC^\times } w \lp u \rp J_{(\umu, \um)} (   r     ^n z u) \| u\|^{1 - \frac {n-1} 2} d^\times u.
\end{equation}



\subsection{Some New Notations} Let $\pi$ be an irreducible admissible representation of $\GL_n(\BF)$. 
For $\BF = \BR$, respectively $\BF = \BC$, if $\pi$ is parametrized by $(\umu, \udelta)$, respectively $(\umu, \um)$, we shall denote    by $J_{\pi}$ the Bessel kernel $J_{(\umu, \udelta)}$, respectively $J_{(\umu, \um)}$.
Thus, \eqref{4eq: Hankel transform tilde w and w, real case} and \eqref{4eq: Hankel transform tilde w and w, complex case} can be uniformly combined into one formula
\begin{equation}\label{4eq: Hankel transform tilde w and w}
\widetilde w \lp (-)^{n-1}  x \rp = \omega_\pi (  r     )  \|  r     \|^{\frac n 2} \| x\|^{\frac {n-1} 2} \int_{\BF^\times } w \lp y \rp J_{\pi} (   r     ^n x y) \| y\|^{1 - \frac {n-1} 2} d^\times y.
\end{equation}

Proposition \ref{3prop: properties of J, R} (1) and \ref{3prop: properties of J, C} (1) are translated into the following lemma. 
\begin{lem}\label{4lem: normalizing the index}
	Let $\pi$ be an irreducible admissible representation of $\GL_n(\BF)$, and let $\chiup $ be a character on $\BFx $. We have
	$
	J_{\chiup \otimes \pi}(x) = \chiup \- (x) J_{\pi} (x).
	$
\end{lem}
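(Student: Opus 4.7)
The plan is to reduce the lemma to Proposition \ref{3prop: properties of J, R} (1) and \ref{3prop: properties of J, C} (1), which already give the shift formula for the Bessel kernel under translation of its Langlands parameter. Write $\chiup = \chiup_{(\mu', \delta')}$ if $\BF = \BR$ or $\chiup = \chiup_{(\mu', m')}$ if $\BF = \BC$, and let $\varphi$ be the Weil group representation corresponding to $\pi$ under the Langlands correspondence of Section \ref{sec: Hankel, Ichino-Templier}. Since $\chiup \otimes \pi$ corresponds to $\varphi \otimes \chiup$, the first step is to identify the Langlands parameter of the latter in terms of that of $\varphi$ and $\chiup$.

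Over $\BC$, where $\varphi = \bigoplus_{l     } \chiup_{(\mu_l, m_l)}$ is a sum of one-dimensional representations, this is immediate: $\chiup_{(\mu', m')} \otimes \chiup_{(\mu_l, m_l)} = \chiup_{(\mu_l + \mu', m_l + m')}$, so $\chiup \otimes \pi$ has parameter $(\umu + \mu' \ue^n, \um + m' \ue^n)$. Over $\BR$, one-dimensional summands shift similarly to $(\mu_l + \mu', \delta_l + \delta')$. The subtlety lies with two-dimensional discrete-series summands $\varphi_{(\mu_j, m_j)}$: tensoring with $\chiup_{(\mu', \delta')}$ sends these to $\varphi_{(\mu_j + \mu', m_j)}$ because $\sigma(m_j) \otimes \eta \cong \sigma(m_j)$ by Remark \ref{4rem: discrete series}, so the $\delta$-components are not affected directly. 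However, the two admissible parametrizations in \eqref{4eq: two parameters, discrete} of $\varphi_{(\mu_j, m_j)}$ differ by translating the $\BZT$-components by $(1,1)$, so when $\delta' = 1$ one passes from the original parametrization to the alternate one after the twist, realizing the uniform shift $(\umu + \mu' \ue^n, \udelta + \delta' \ue^n)$; when $\delta' = 0$ one keeps the parametrization unchanged. Recalling that the Bessel kernel $J_\pi$ is independent of the chosen parametrization (since by \eqref{4eq: gamma, discrete, 2} and \eqref{1f: G m (s) = G 1 G delta(m)} the two choices produce the same gamma factor and hence the same $G_{(\umu, \udelta)}$), the identification of $J_{\chiup \otimes \pi}$ with $J_{(\umu + \mu' \ue^n, \udelta + \delta' \ue^n)}$ is unambiguous.

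With the parameter of $\chiup \otimes \pi$ in hand, Proposition \ref{3prop: properties of J, R} (1) applied with $(\mu, \delta) = (-\mu', -\delta')$ yields
\[
J_{\chiup \otimes \pi}(x) = J_{(\umu + \mu' \ue^n, \udelta + \delta' \ue^n)}(x) = \sgn(x)^{-\delta'} |x|^{-\mu'} J_{(\umu, \udelta)}(x) = \chiup^{-1}(x) J_\pi(x),
\]
and the complex case follows identically from Proposition \ref{3prop: properties of J, C} (1), with $\sgn(x)^{-\delta'} |x|^{-\mu'}$ replaced by $[z]^{-m'} \|z\|^{-\mu'} = \chiup^{-1}(z)$. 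The only real content of the argument is the bookkeeping over $\BR$ caused by the non-uniqueness of the Langlands parameter in the presence of discrete-series factors, and I expect this parity matching to be the main (and really the only) obstacle; once it is settled, the lemma is a direct transcription of the shift formulae already proved in Chapter \ref{chap: Hankel transforms}.
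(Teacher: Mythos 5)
Your argument is correct and follows the same route the paper intends: the text preceding the lemma simply declares it to be Proposition \ref{3prop: properties of J, R} (1) and \ref{3prop: properties of J, C} (1) ``translated,'' and you carry out that translation, identifying the Langlands parameter of $\chiup\otimes\pi$ and applying the index-shift formulas. Your extra care over $\BR$ regarding the two-parametrization ambiguity of $\varphi_{(\mu,m)}$ in \eqref{4eq: two parameters, discrete} --- showing that the uniform shift $(\umu+\mu'\ue^n,\udelta+\delta'\ue^n)$ is always a valid parameter and that $J_\pi$ is well-defined via \eqref{1f: G m (s) = G 1 G delta(m)} --- is a genuine point the paper leaves implicit, but it does not change the approach.
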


\begin{rem}\label{4rem: G/Z}
	Let $\mathrm Z_n$ denote the center of $\GL_n$. In view of Lemma {\rm \ref{4lem: normalizing the index}}, no generality will be lost if one only considers $J_{\pi}$ for irreducible admissible  representations $\pi$ of $\GL_n (\BF)/ \mathrm Z_n(\BR _+)$.
\end{rem}

Let $\varphi$ be the $n$-dimensional semisimple representation of the Weil group of $\BF$ corresponding to $\pi$ under the Langlands correspondence over $\BF$.

When $\BF = \BR$,  the function space $\Ssis^{(\umu, \udelta)} (\BRx)$ depends on the choice of the parameter $(\umu, \udelta)$ attached to $\varphi$, if some discrete series $\varphi_{(\mu, m)}$ occurs in its decomposition. Thus one needs to redefine the function spaces  for Hankel transforms according to the Langlands classification rather than   the above parametrization. For this, let $n_1, n_2 \in \BN$, $(\umu^1, \udelta^1) \in \BC^{n_1} \times (\BZT)^{n_1}$ and $(\umu^2, \um^2) \in \BC^{n_2} \times \BN_+ ^{n_2}$ be such that $n_1 + 2 n_2 = n$ and $\varphi = \bigoplus_{l      = 1}^{n_1} \varphi_{(\mu^1_{l     },\, \delta^1_{l     })} \oplus \bigoplus_{l      = 1}^{n_2} \varphi_{(\mu^2_{l     },\, m^2_{l     })}$.
We define the function space $\Ssis^{\pi} (\BRx) = \Ssis^{\varphi} (\BRx)$ to be
\begin{equation}\label{4eq: Ssis phi, R}
\Ssis^{(- \umu^1,\, \udelta^1)} (\BRx) + \mathscr S_{\mathrm {sis} }^{(- \umu^2,\, \um^2)} (\BRx),
\end{equation}
where $\Ssis^{( \umu ,\, \udelta)} (\BRx)$ is   defined by \eqref{3eq: Ssis (lambda, delta), R}, and
\begin{align}
\mathscr S_{\mathrm {sis} }^{( \umu ,\, \um )} (\BRx) =  \sum_{\delta \in \BZT} \sgn(x)^\delta \mathscr S_{\mathrm {sis} }^{  \umu  - \frac 1 2 \um } (\BR_+) ,
\end{align}
with $\mathscr S_{\mathrm {sis} }^{\ulambda} (\BR_+)$ defined by \eqref{3eq: Ssis(R+)}.  

\begin{lem}\label{4lem: Ssis pi}
	Let $(\umu, \um) \in \BC^{n} \times \BN_+^n$. Suppose $ \upsilon \in \mathscr S_{\mathrm {sis} }^{ (- \umu, \um) } (\BRx)$. Then there exists a unique function $\Upsilon \in \mathscr S_{\mathrm {sis} }^{ ( \umu, \um) } (\BRx)$ satisfying the following two identities,
\begin{equation}\label{3eq: Hankel transform identity, R, 2}
\EM _\delta \Upsilon (s ) = i^{n} G_{(\umu,  \um)} (s) \EM _\delta \upsilon ( 1 - s), \hskip 10 pt \delta \in \BZ/2 \BZ.
\end{equation}
Writing $\EuScript H_{(\umu, \um)}^2  \upsilon  = \Upsilon$, we have
\begin{equation}\label{3eq: Hankel inversion, R, 2}
\EuScript H_{(\umu, \um) }^2 \upsilon (x) = \Upsilon (x), \hskip 10 pt \EuScript H_{(-\umu, \um)}^2  \Upsilon (x) = (-)^{|\um|+n} \upsilon \left( x \right).
\end{equation}
\end{lem}


When $\BF = \BC$, we put
\begin{equation}\label{4eq: Ssis phi, C}
\Ssis^{\pi} (\BCx) = \Ssis^{\varphi} (\BCx) = \Ssis^{(- \umu ,\, -\um)} (\BCx).
\end{equation}

Let $d = [\BF : \BR]$. For each character $\chiup$ on $\BFx/\BR _+$ we define the Mellin transform $\EM_{\chiup}$ of a function $\upsilon \in \Ssis (\BFx)$ by 
\begin{equation} \label{4def: Mellin transform over F}
\EM _{\chiup} \upsilon (s) = \int_{\BF^\times} \upsilon (x) \chiup (x) \|x\|^{\frac 1 d s } d^\times x.
\end{equation} 

\begin{thm}\label{thm: uniform}
	Let $\pi$ be an irreducible admissible representation of $\GL_n(\BF)$. Suppose $\upsilon \in \Ssis^{\pi} (\BFx)$. Then there exists a unique $\widetilde \upsilon \in  \Ssis^{\widetilde \pi} (\BFx)$ satisfying the following identity
	\begin{equation*}
	\EM_{\chiup\-} \widetilde \upsilon (d s) = \gamma (1-s, \pi \otimes \chiup, \psi_1) \EM_{\chiup } \upsilon (d (1-s))
	\end{equation*}
	for all  characters $\chiup$ on $\BFx/\BR _+$. We write $\EH_{\pi} \upsilon  = \widetilde \upsilon$ and call $\widetilde \upsilon$ the normalized Hankel transform of  $\upsilon$ over $\BFx$ associated with $\pi$. Moreover, we have the Hankel inversion formula
	\begin{equation*} 
	\EH_{\pi} \upsilon (x) = \widetilde \upsilon (x), \hskip 10 pt \EH_{\widetilde \pi} \widetilde \upsilon (x) = \omega_{\pi} (-1) \upsilon \lp (-)^n x \rp.
	\end{equation*}
\end{thm}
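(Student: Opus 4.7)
The plan is to reduce Theorem \ref{thm: uniform} to the two rank-$n$ Hankel transform theorems already proved in Chapter \ref{chap: Hankel transforms}, namely Theorem \ref{3prop: H (lambda, delta)} for $\BF = \BR$ and Theorem \ref{3prop: H (mu, m)} for $\BF = \BC$. The bridge is the translation of the $\gamma$-factor characterization \eqref{4eq: Ichino-Templier} (with $w = \upsilon$, $\widetilde w = \widetilde\upsilon$, $r = 1$, $\psi = \psi_1$) into the Mellin-side identities defining $\Hld$ and $\Hmum$, which is already encoded in the Langlands-classification formulas \eqref{4eq: gamma factor real case} and \eqref{4eq: gamma factor, complex case}. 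Uniqueness in both cases will reduce via Mellin inversion to the injectivity of the component Mellin transforms $\EM_\delta$ or $\EM_{-m}$ established in Lemma \ref{lem: Mellin inversions}.

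First I would treat the complex case, where the Langlands classification attaches to $\pi$ a unique parameter $(\umu, \um) \in \BC^n \times \BZ^n$ up to permutation and every character of $\BCx/\BR_+$ is $\chiup = \eta^m$ for some $m \in \BZ$. Under the dictionary $\EM_{\eta^m} = \EM_m$ and $\EM_{\eta^{-m}} = \EM_{-m}$ of \eqref{1def: Mellin transform over complex} and \eqref{4def: Mellin transform over F}, specializing the stated identity to $\chiup = \eta^m$ and applying \eqref{4eq: gamma factor, complex case} with $r = 1$ converts it into the system $\EM_{-m} \widetilde\upsilon(2s) = G_{(\umu,\, \um + m\ue^n)}(s)\, \EM_m \upsilon(2(1-s))$, $m \in \BZ$, which is precisely \eqref{3eq: Hankel transform identity, C} defining $\Hmum \upsilon$. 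Since $\Ssis^\pi(\BCx) = \Ssis^{(-\umu, -\um)}(\BCx)$ by \eqref{4eq: Ssis phi, C}, Theorem \ref{3prop: H (mu, m)} provides a unique $\widetilde\upsilon = \Hmum \upsilon \in \Ssis^{(\umu, \um)}(\BCx) = \Ssis^{\widetilde\pi}(\BCx)$, and the inversion \eqref{3eq: Hankel inversion, C} combined with $\omega_\pi(-1) = (-)^{|\um|}$ from \eqref{4eq: central char, C} delivers the claimed Hankel inversion.

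The real case follows the same pattern, but the main obstacle is the non-uniqueness of the Langlands parameter: each discrete-series constituent $\varphi_{(\mu, m)}$ may be replaced by either pair in \eqref{4eq: two parameters, discrete}, yielding several different admissible parameters $(\umu, \udelta)$ attached to the same $\pi$. Specializing to $\chiup = \eta^\delta$, $\delta \in \BZT$, and using \eqref{4eq: gamma factor real case} with $r = 1$ turns the stated identity into $\EM_\delta \widetilde\upsilon(s) = G_{(\umu, \udelta + \delta\ue^n)}(s)\, \EM_\delta \upsilon(1-s)$, which is \eqref{3eq: Hankel transform identity, R} for $\Hld$ with any chosen parameter; by \eqref{4eq: gamma, discrete, 2} the right-hand side is in fact independent of this choice. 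By Lemma \ref{4lem: Ssis pi}, $\Ssis^\pi(\BRx) = \sum_i \Ssis^{(-\umu^{(i)}, \udelta^{(i)})}(\BRx)$ over all admissible parameters, so for $\upsilon \in \Ssis^\pi(\BRx)$ one decomposes $\upsilon = \sum_i \upsilon_i$ with $\upsilon_i \in \Ssis^{(-\umu^{(i)}, \udelta^{(i)})}(\BRx)$, sets $\widetilde\upsilon = \sum_i \EH_{(\umu^{(i)}, \udelta^{(i)})} \upsilon_i \in \Ssis^{\widetilde\pi}(\BRx)$, and observes that the signed Mellin transforms of $\widetilde\upsilon$ are entirely prescribed by the identity; two candidates arising from different decompositions of $\upsilon$ must therefore coincide by Mellin inversion, which simultaneously yields both uniqueness in the theorem and well-definedness of the construction. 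The Hankel inversion in the real case is obtained by applying the same procedure to $\widetilde\upsilon$ with $\pi$ replaced by $\widetilde\pi$, invoking \eqref{3eq: Hankel inversion, R}, and matching the sign $(-)^{|\udelta|}$ with $\omega_\pi(-1)$ via \eqref{4eq: central char, R}.
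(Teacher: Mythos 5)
Your proposal is correct and follows the same route as the paper's (extremely terse) proof: the paper reduces to Theorem \ref{3prop: H (mu, m)} when $\BF = \BC$, and to Theorem \ref{3prop: H (lambda, delta)} combined with Lemma \ref{4lem: Ssis pi} when $\BF = \BR$, exactly as you do, with the $\gamma$-factor identities \eqref{4eq: gamma factor real case} and \eqref{4eq: gamma factor, complex case} supplying the dictionary and \eqref{4eq: central char, R}, \eqref{4eq: central char, C} supplying the sign in the inversion formula. Your additional care with well-definedness of the construction in the real case (independence from the chosen Langlands parameter, via the invariance of $\gamma(1-s, \pi\otimes\eta^\delta,\psi_1)$ guaranteed by \eqref{4eq: gamma, discrete, 2} and injectivity of $\EM_{\BR}$ on $\Ssis(\BRx)$) fills in details the paper leaves implicit, and is the point that genuinely needs checking.
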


\begin{proof}
	If $\BF = \BR$, 
	this follows from  Theorem  \ref{3prop: H (lambda, delta)} and Lemma \ref{4lem: Ssis pi}. If $\BF = \BC$, this is simply a translation of Theorem \ref{3prop: H (mu, m)}.
\end{proof}

\begin{rem}\label{rem: Kirillov}
	Now let $n = 2$. In view of \cite[\S 5, 6]{J-L} or \cite[\S 2.5]{Godement}, it may be checked that $\|x\|^{-\frac 1 2} w (x)$ lies in  the space   $\Ssis^{\pi} (\BFx)$ if $w$ is a  function in the   Kirillov model of $\pi$, namely, 
	\begin{align*}
	\left\{  \|x\|^{-\frac 1 2} W   
	\begin{pmatrix}
	x & \\
	& 1
	\end{pmatrix} : W \in \mathscr W(  \pi, \psi_1 )  \right\}\subset \Ssis^{\pi} (\BFx).
	\end{align*}
	Moreover, it follows from the formula of $J_{\pi}$ in {\rm\S \ref{sec: n=1,2, R}} and {\rm \S \ref{sec: rank-two, C}}  that the integral in \eqref{4eq: Hankel transform tilde w and w} is absolutely convergent and \eqref{4eq: Hankel transform tilde w and w} is valid for all $\|x\|^{-\frac 1 2} w (x) \in \Ssis^{\pi} (\BFx)$ if   $\pi$ is either a principal series representation with parameter $|\Re \mu| < \frac 1 2$ or   a discrete series representation.  For the principal series case, see also Proposition {\rm \ref{3prop: properties of J, R} (3)} and {\rm \ref{3prop: properties of J, C} (3)}. In particular,  if $\pi$ is unitary then \eqref{4eq: Hankel transform tilde w and w} is valid for all $ w $ in the  Kirillov model of $\pi$.
\end{rem}

	

\section{Bessel Functions for $\GL_2 (\BF)$}\label{sec: Bessel, GL2(F)}

Let $n=2$ and retain the notations from \S \ref{sec: Hankel, Ichino-Templier} except for the different choice of the Weyl element
$\varw_2 = \begin{pmatrix}
& -1 \\
1&
\end{pmatrix}$, which is more often used  for $\GL_2$ in the literature.

Let $\pi$ be an infinite dimensional irreducible  unitary representation  of $\GL_2( \BF)$\footnote{It is well-known that a representation of $\GL_2 (\BF)$ satisfying these conditions is generic.}. Using (\ref{4eq: w = Whittaker}, \ref{4eq: tilde w for n = 2})  and Remark \ref{rem: Kirillov}, one may rewrite \eqref{4eq: Hankel transform tilde w and w} as follows,
\begin{equation}\label{7eq: Hankel transform, W}
W   
\begin{pmatrix}
& 1\\
- x\- &  
\end{pmatrix} 
= \omega_\pi (  r     )  \|  r     \|  \int_{\BF^\times }  \| x y\|^{  \frac {1} 2} J_{\pi} (   r^2 x y) W 
\begin{pmatrix}
y & \\
& 1
\end{pmatrix} 
d^\times y,
\end{equation}
for all $W \in \mathscr W (\pi, \psi_r) $.
We define 
\begin{equation}\label{7eq: EJ}
\EuScript J_{\pi, \psi_r } (x) = \omega_\pi ( - r x )  \|  r     \| \sqrt {\|x\|} J_{\pi} (  r     ^2 x).
\end{equation}
We call $\EuScript J_{\pi, \psi } (x)$ the {Bessel function associated with $\pi$ and $\psi $}. The formula \eqref{7eq: Hankel transform, W} then reads
\begin{equation}\label{7eq: Hankel transform, W, 2}
\omega_\pi ( -  x ) W   
\begin{pmatrix}
& 1\\
- x\- &  
\end{pmatrix}  
= \int_{\BF^\times }  \EJ_{\pi, \psi } ( x y) \omega_\pi (y)\- W 
\begin{pmatrix}
y & \\
& 1
\end{pmatrix} 
d^\times y.
\end{equation}
Moreover, with the observation
\begin{equation*}
\omega_\pi (-x) W   
\begin{pmatrix}
& 1\\
- x\- &  
\end{pmatrix}   
=
  W \lp
\begin{pmatrix}
x & \\
& 1
\end{pmatrix} 
\varw_2 \rp,
\end{equation*}
\eqref{7eq: Hankel transform, W, 2} turns into
\begin{equation}\label{7eq: Weyl element action}
W \lp  
\begin{pmatrix}
x & \\
&  1
\end{pmatrix} 
\varw_2 \rp 
=  \int_{\BF^\times } \omega_\pi (y)\- \EJ_{\pi, \psi } ( x y)  
W 
\begin{pmatrix}
y & \\
& 1
\end{pmatrix} 
d^\times y.\footnote{In the real case, this identity is given in \cite[Theorem 4.1]{CPS}. 
}
\end{equation}
Thus \eqref{7eq: Weyl element action} indicates that the action of the Weyl element $\varw_2$ on the Kirillov model 
$$\mathscr K (\pi, \psi) = \left\{ w(x) = W   \begin{pmatrix}
x & \\
& 1
\end{pmatrix}   : W \in \mathscr W (\pi, \psi) \right\}$$
is essentially a Hankel transform. From this perspective, the Hankel inversion formula follows from the simple identity $\varw_2^2 = - I_2$. This may be seen from the following lemma.
\begin{lem}\label{7lem: n=2, Bessel} Let $\pi$ be an irreducible admissible representation of $\GL_2(\BF)$. Then we have
	$
	J_{\widetilde \pi} (x) = \omega_{\pi}(x) J_{\pi}(x).
	$
\end{lem}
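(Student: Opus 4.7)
The plan is to reduce the identity to a rank-two feature of the Weil-group side: for any two-dimensional semisimple representation $\varphi$ of the Weil group of $\BF$ one has $\varphi^\vee \cong (\det\varphi)^{-1}\otimes\varphi$, as is immediate from the general relation $\varphi^\vee \cong \wedge^{n-1}\varphi\otimes(\det\varphi)^{-1}$ specialized to $n=2$. Transported across the Langlands correspondence recalled in \S\ref{sec: Hankel, Ichino-Templier} (which sends $\det\varphi_\pi$ to $\omega_\pi$), this gives the representation-theoretic identity
\[
\widetilde\pi \;\cong\; \omega_\pi^{-1}\otimes\pi
\]
for every irreducible admissible representation $\pi$ of $\GL_2(\BF)$. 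One could alternatively verify this case by case against the classification in \S\ref{sec: Hankel, Ichino-Templier}: for a principal series $\pi=\pi(\chiup_1,\chiup_2)$ both sides agree up to the irrelevant swap $\pi(\chiup_1^{-1},\chiup_2^{-1})\leftrightarrow\pi(\chiup_2^{-1},\chiup_1^{-1})$, and for a discrete series $\sigma(m)\otimes\omega_\mu$ over $\BR$ one uses the self-intertwining $\sigma(m)\otimes\eta\cong\sigma(m)$ recorded in Remark \ref{4rem: discrete series}; the complex case is covered by the principal-series computation.

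Granted this isomorphism, the proof is a one-line application of Lemma \ref{4lem: normalizing the index} with the character $\chiup=\omega_\pi^{-1}$:
\[
J_{\widetilde\pi}(x) \;=\; J_{\omega_\pi^{-1}\otimes\pi}(x) \;=\; \omega_\pi(x)\,J_\pi(x).
\]
No new analytic input on Bessel kernels is required, since the full analytic content has already been packaged into Lemma \ref{4lem: normalizing the index}, which itself rests on the normalization formulae \eqref{3eq: normalize j (lambda, delta)} and \eqref{3eq: normalize j (mu, m)}.

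There is no substantive obstacle; the only thing worth flagging is that the isomorphism $\widetilde\pi\cong\omega_\pi^{-1}\otimes\pi$ is a genuinely $\GL_2$-specific phenomenon and fails for $\GL_n$ with $n\geq 3$, so the present argument does not generalize. If one wished to bypass the representation-theoretic identification, an entirely analytic alternative would be to manipulate the Mellin--Barnes integral defining $J_\pi$ via the substitution $s\mapsto|\umu|-s$ and exploit the symmetry of $G_{(\umu,\udelta)}$ or $G_{(\umu,\um)}$ in the entries of $\umu$ together with the functional equations \eqref{1eq: G mu delta (1-s) = G - mu delta (s)} and \eqref{1eq: G mu m (1-s) = G - mu m (s)}; this recovers the same identity but is less transparent than the representation-theoretic route.
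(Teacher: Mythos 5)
Your argument is correct, and it is precisely the ``representation theoretic viewpoint'' that the paper itself records in the unlabeled Remark placed immediately after the lemma, so although you found it independently it is not a new route relative to the text as a whole. The paper's stated \emph{proof}, by contrast, is the direct computation: identify $\widetilde\pi$ with the parameter $(-\umu,\udelta)$ (respectively $(-\umu,\um)$), apply Proposition~\ref{3prop: properties of J, R}~(1) (respectively Proposition~\ref{3prop: properties of J, C}~(1)) with the shift $\mu=\mu_1+\mu_2$, $\delta=\delta_1+\delta_2$ (respectively $m=m_1+m_2$), and use the manifest invariance of $J_{(\umu,\udelta)}$ and $J_{(\umu,\um)}$ under permuting the entry pairs to conclude $J_{(-\umu,\udelta)}(x)=\sgn(x)^{|\udelta|}|x|^{|\umu|}J_{(\umu,\udelta)}(x)$ and its complex analogue. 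Both routes are the same normalization identity repackaged: Lemma~\ref{4lem: normalizing the index} is itself nothing more than Propositions~\ref{3prop: properties of J, R}~(1) and \ref{3prop: properties of J, C}~(1) restated in representation-theoretic language, so your proof trades an explicit two-entry shift for the $\GL_2$-specific input $\widetilde\pi\cong\omega_\pi^{-1}\otimes\pi$. What you gain is transparency: it becomes evident that the lemma is the rank-two contragredient relation $\varphi^\vee\cong(\det\varphi)^{-1}\otimes\varphi$ in disguise, and Lemma~\ref{4lem: normalizing the index} quarantines the analytic content. Your closing caveat that the isomorphism $\widetilde\pi\cong\omega_\pi^{-1}\otimes\pi$ is special to $n=2$ is also right, and it explains why the paper confines this statement to rank two.
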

\begin{proof}
	This follows from some straightforward calculations using Proposition \ref{3prop: properties of J, R} (1) and \ref{3prop: properties of J, C} (1). 
\end{proof}

\begin{rem}
	The representation theoretic viewpoint of Lemma {\rm \ref{7lem: n=2, Bessel}} is the isomorphism $\widetilde \pi \cong \omega\- \otimes \pi$. With this, Lemma {\rm \ref{7lem: n=2, Bessel}} is a direct consequence of Lemma {\rm \ref{4lem: normalizing the index}}.
\end{rem}

Finally, we shall summarize the formulae of the Bessel functions associated with infinite dimensional irreducible {unitary} representations of $\GL_2( \BF) $. First of all, in view of Lemma \ref{4lem: normalizing the index} and Remark \ref{4rem: G/Z}, one may assume without loss of generality that $\pi$ is trivial on $\mathrm Z_2 (\BR_+)$. Moreover, with the simple observation 
\begin{equation}\label{7eq: psi a and psi 1}
\EuScript J_{\pi, \psi_r } (x) = \omega_{\pi} (r        ) \- \EuScript J_{\pi, \psi_1 }  (r^2 x  ),
\end{equation}
it is sufficient to consider the Bessel function $\EuScript J_{\pi }  = \EuScript J_{\pi, \psi_1 } $  associated with $\psi_1$.


\subsection{Bessel Functions for $\GL_2(\BR)$}

Under the Langlands correspondence, we have the following classification of infinite dimensional irreducible unitary representations  of $\GL_2( \BR)/\mathrm Z_2 (\BR_+)$.
\begin{itemize}
	\item[-] (principal series and the limit of discrete series) $\varphi _{(i t,  \epsilon + \delta)} \oplus \varphi _{(- it,  \epsilon)}$, with $t \in \BR$ and $ \epsilon, \delta \in \BZT$,
	\item[-] (complementary series) $\varphi _{(t,  \epsilon)} \oplus \varphi _{(- t,  \epsilon)}$, with $t \in \lp 0, \frac 1 2\rp$ and $ \epsilon \in \BZT$,
	\item[-] (discrete series) $\varphi_{(0, m)}$, with $m \in \BN_+$.
\end{itemize}
Here, in the first case, the corresponding representation is a limit of discrete series  if $t = 0$ and $\delta = 1$ and  a principal series representation if otherwise. We shall write the corresponding representations  as
$\eta^{ \epsilon} \otimes \pi^+ (i t)$ if $\delta = 0$, $\eta^{ \epsilon} \otimes \pi^- (i t)$ if $\delta = 1$,
$\eta^{ \epsilon} \otimes \pi ( t)$
and $\sigma (m)$, respectively. We have
\begin{equation}
\omega_{\pi^+ (i t)} = 1, \ \ \omega_{\pi^- (i t)} = \eta, \ \  \omega_{\pi ( t)} = 1, \ \ \omega_{\sigma (m)} = \eta ^{m+1}.
\end{equation}
Furthermore, we have the equivalences $ \pi^+ (i t) \cong \pi^+ (- i t)$ and $ \pi^- (i t) \cong \eta \otimes \pi^- (- i t)$. 

According to \S \ref{sec: n=1,2, R}, we have the following proposition.

\begin{prop} 
	
	\  
	
	{\rm (1).} Let $t \in \BR $. We have for $x \in \BR _+$
	\begin{equation*} 
	\begin{split}
	\EJ_{\pi^+ (i t) } ( x) &= \frac {\pi i} {\sinh (\pi t)} \sqrt x \lp J_{2it} (4\pi \sqrt x ) - J_{-2it} (4\pi \sqrt x ) \rp,\\
	\EJ_{\pi^+ (i t) } (-x) &= 4 \cosh (\pi t)  \sqrt x K_{2it} (4 \pi  \sqrt x ),
	\end{split}
	\end{equation*}
	where it is understood that when $t = 0$ the right hand side of the first formula should be replaced by its limit, and
	\begin{equation*} 
	\begin{split}
	\EJ_{\pi^- (i t) } ( x) &= - \frac {\pi i} {\cosh (\pi t)}  \sqrt x \lp J_{2it} (4\pi   \sqrt x) + J_{-2it} (4\pi \sqrt x ) \rp,\\
	\EJ_{\pi^- (i t) } ( -x) &= 4 \sinh (\pi t)  \sqrt x K_{2it} (4 \pi \sqrt x ).
	\end{split}
	\end{equation*}
	
	{\rm (2).} Let $t \in \lp 0, \frac 1 2\rp$. We have for $x \in \BR _+$
	\begin{equation*} 
	\begin{split}
	\EJ_{\pi ( t) } ( x) &= - \frac {\pi} {\sin  (\pi t)}   \sqrt x \lp J_{2t} (4\pi   \sqrt x ) - J_{-2t} (4\pi  \sqrt x ) \rp,\\
	\EJ_{\pi ( t) } ( -x) &= 4 \cos (\pi t) \sqrt x K_{2t} (4 \pi  \sqrt x ).
	\end{split}
	\end{equation*}
	
	{\rm (3).} Let $m \in \BN_+$. We have for $x \in \BR _+$
	\begin{equation*} 
	\EJ_{\sigma (m) } ( x) =  2 \pi (-i)^{m+1} \sqrt x J_{m} \big(4 \pi \sqrt x \big),
	\hskip 10 pt
	\EJ_{\sigma (m) } ( -x) = 0.
	\end{equation*}
\end{prop}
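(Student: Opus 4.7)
The plan is to reduce the computation to an explicit case-by-case application of the formulas for the real rank-two Bessel kernel $J_{(\mu,-\mu,\delta,0)}(\pm x)$ derived in \S \ref{sec: n=1,2, R}, via the definition of $\EJ_\pi$ together with the Langlands parameters identified in the list preceding the proposition. Since we have normalized to $\pi$ trivial on $\mathrm{Z}_2(\BR_+)$ and $\psi = \psi_1$, by \eqref{7eq: EJ} we simply have $\EJ_\pi(x) = \sqrt{|x|}\, J_\pi(x)$, so the task reduces to computing $J_\pi$ for each of the four families.

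First I would dispose of the principal and limit-of-discrete-series cases. The unitary representation $\eta^{\epsilon}\otimes\pi^{+}(it)$ corresponds to $\varphi_{(it,\epsilon)}\oplus\varphi_{(-it,\epsilon)}$, which by Lemma \ref{4lem: normalizing the index} reduces (up to the factor $\eta^\epsilon$) to parameter $(\umu,\udelta) = (it,-it,0,0)$; similarly $\eta^{\epsilon}\otimes\pi^{-}(it)$ corresponds to $(it,-it,1,0)$. Plugging these two parameter choices into the explicit formulas for $J_{(\mu,-\mu,\delta,0)}(\pm x)$ recorded in \S \ref{sec: n=1,2, R}, and then using $\sin(\pi it) = i\sinh(\pi t)$ and $\cos(\pi it) = \cosh(\pi t)$, yields assertion (1) directly. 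For the limit-of-discrete-series value $t=0,\delta=1$ the formula for $x>0$ is interpreted as a limit just as in the remark following the real rank-two formulas. The complementary series $\pi(t)$ has parameter $(t,-t,0,0)$, and the same substitution gives assertion (2) without any trigonometric conversion.

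The discrete series case requires one small additional step. Using Remark \ref{4rem: discrete series}, I would attach to $\sigma(m)$ the real parameter $(\tfrac{1}{2}m,-\tfrac{1}{2}m,\delta(m)+1,0)$, so that $J_{\sigma(m)}$ is precisely the Bessel kernel computed in \S \ref{sec: n=1,2, R} for this special index; namely, $J_{\sigma(m)}(x) = 2\pi i^{m+1}J_m(4\pi\sqrt{x})$ for $x>0$ and $J_{\sigma(m)}(-x)=0$. Multiplying by $\sqrt{x}$ gives (3). The fact that either of the two parameters in \eqref{4eq: two parameters, discrete} yields the same $J_\pi$ is a consistency check: both produce identical Bessel kernels, reflecting the isomorphism $\sigma(m)\otimes\eta\cong\sigma(m)$ already noted in Remark \ref{4rem: discrete series}.

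I do not anticipate any serious obstacle since all heavy lifting is already done; the proof is essentially bookkeeping. The only delicate point is keeping track of the combinatorics in the Langlands parameter $(\umu,\udelta)$ for $\pi^\pm(it)$---specifically, remembering that the ``$-$'' case shifts the second $\BZ/2\BZ$-coordinate, yielding $\delta = 1$ rather than $\delta = 0$, which is precisely what flips $\sin\leftrightarrow\cos$ and $\sinh\leftrightarrow\cosh$ in passing from $\pi^+(it)$ to $\pi^-(it)$. Once this dictionary is set, each assertion of the proposition follows by direct substitution from the tables in \S \ref{sec: n=1,2, R}.
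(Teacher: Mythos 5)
Your proof is correct and takes essentially the same approach the paper intends, which is simply to combine the formula $\EJ_{\pi,\psi_1}(x) = \sqrt{|x|}\,J_{\pi}(x)$ (the specialization of \eqref{7eq: EJ} at $r=1$) with the explicit formulas for $J_{(\mu,-\mu,\delta,0)}(\pm x)$ recorded in \S \ref{sec: n=1,2, R}, reading off the Langlands parameters $(it,-it,0,0)$, $(it,-it,1,0)$, $(t,-t,0,0)$, and $(\tfrac{1}{2}m,-\tfrac{1}{2}m,\delta(m)+1,0)$ and using $\sin(\pi i t)=i\sinh(\pi t)$, $\cos(\pi i t)=\cosh(\pi t)$. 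One tiny wording slip: the passage from $\pi^+(it)$ to $\pi^-(it)$ shifts the \emph{first} component $\delta_1$ of $\udelta$ (the third coordinate in the tuple $(\mu_1,\mu_2,\delta_1,\delta_2)$), not the second; but since you substitute the correct parameter $(it,-it,1,0)$ this does not affect the argument.
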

\begin{rem}
	$\eta^{ \epsilon} \otimes \pi^+ (i t)$, $\eta^{ \epsilon} \otimes \pi (t)$ and $\sigma (2d-1)$ exhaust all the infinite dimensional irreducible unitary representations of $\PGL_2( \BR) $. Their Bessel functions are also given in \cite[Proposition 6.1]{CPS}.
\end{rem}

\subsection{Bessel Functions for $\GL_2(\BC)$}

Under the Langlands correspondence, we have the following classification of infinite dimensional irreducible unitary representations  of $\GL_2( \BC)/\mathrm Z_2 (\BR_+)$.
\begin{itemize}
	\item[-] (principal series) $\chiup _{(i t, k + d + \delta)} \oplus \chiup _{(- it, k - d)}$, with $t \in \BR $, $k, d \in \BZ $ and $\delta \in \BZT = \{0, 1\}$,
	\item[-] (complementary series) $\chiup _{(t, k)} \oplus \chiup _{(- t, k)}$, with $t \in 
	\lp 0, \frac 1 2\rp$ and $k \in\BZ $.
\end{itemize}
We   write the corresponding   representations 
as
$\eta^{k} \otimes \pi_{d}^+ (i t)$ if $\delta = 0$, $\eta^{k} \otimes \pi_{d}^- (i t)$ if $\delta = 1$ and $\eta^{k} \otimes \pi  ( t)$, respectively.
We have
\begin{equation}
\omega_{ \pi_{d}^+ (i t) } = 1, \ \ \omega_{ \pi_{d}^- (i t) } = \eta, \ \ 
\omega_{ \pi  ( t) } = 1.
\end{equation}
Furthermore, we have the equivalences $ \pi_{d}^+ (i t) \cong \pi_{- d}^+ (- i t) $, $ \pi_{d}^- (i t) \cong \pi_{- d - 1}^- (- i t) $. 

According to Example \ref{3prop: n=2, C}, we have the following proposition.

\begin{prop} \label{7prop: Bessel, C} 
	Recall the definitions {\rm(\ref{7def: J mu m (z), n=2, C}, \ref{7def: H (1, 2) mu m (z), n=2, C})} of $J_{\mu, m} (z)$ and $H^{(1, 2) }_{\mu, m} (z)$ in Example {\rm \ref{3prop: n=2, C}}. 
	
	{\rm (1).} Let $t \in \BR $ and $d \in \BZ$. We have for $z \in \BCx$
	\begin{align*}
	\EJ_{\pi_{d}^+ (i t)} (z) & = - \frac {2 \pi^2 i} {\sinh (2\pi t)}   |z| \lp J_{i t, 2 d} (4 \pi  \sqrt z) -  J_{-i t, - 2 d} (4 \pi  \sqrt z) \rp\\
	& = \pi^2 i  |z|  \lp e^{- 2 \pi   t} H^{(1)}_{i t, 2 d} \lp 4 \pi  \sqrt z \rp - e^{  2 \pi  t} H^{(2)}_{it, 2d} \lp 4 \pi   \sqrt z \rp \rp,\\
	\EJ_{\pi_{d}^- (i t)} (z) & = - \frac {2 \pi^2  i } {\cosh (2\pi t)}   {\sqrt {  |z |    z }} \lp J_{i t, 2 d + 1} (4 \pi  \sqrt z) + J_{-i t, - 2 d - 1} (4 \pi  \sqrt z) \rp \\
	& = - \pi^2  i \sqrt {  |z |    z }  \lp e^{- 2 \pi t} H^{(1)}_{i t, 2d+1} \lp 4 \pi  \sqrt z \rp +  e^{ 2 \pi t} H^{(2)}_{it, 2d+1} \lp 4 \pi  \sqrt z \rp \rp.
	\end{align*}
	
	{\rm (2).} Let $t \in 
	\lp 0, \frac 1 2\rp$. We have for $z \in \BCx$
	\begin{align*}
	\EJ_{\pi  ( t)} (z) & =  \frac {2 \pi^2 } {\sin (2\pi t)}  |z| \lp J_{ t, 0} (4 \pi  \sqrt z) -  J_{- t, 0} (4 \pi   \sqrt z) \rp \\
	& =  \pi^2 i  |z|  \lp e^{ 2 \pi i  t} H^{(1)}_{ t, 0} \lp 4 \pi  \sqrt z \rp - e^{-  2 \pi i t} H^{(2)}_{t, 0} \lp 4 \pi   \sqrt z \rp \rp.
	\end{align*}
\end{prop}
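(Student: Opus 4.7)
\medskip

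\noindent\textbf{Proof plan.} The strategy is to specialize the general machinery developed in the excerpt to the concrete situation of $\GL_2(\BC)$. For each representation $\pi$ in the list, I will first identify the Langlands parameter $(\umu,\um) \in \BC^2 \times \BZ^2$ corresponding to its semisimple $2$-dimensional representation of the Weil group $\BC^\times$, as explained in \S\ref{sec: Hankel transform over C}. Reading off from the classification just given, the parameters are:
$\pi_d^+(it) \leftrightarrow (\umu,\um) = \bigl((it,-it),(d,-d)\bigr)$,
$\pi_d^-(it) \leftrightarrow \bigl((it,-it),(d+1,-d)\bigr)$,
$\pi_d(t) \leftrightarrow \bigl((t,-t),(d,-d)\bigr)$.

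\medskip

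Second, I will normalize these parameters so that Example \ref{3prop: n=2, C} applies directly. By Proposition \ref{3prop: properties of J, C} (1),
\[
J_{(\umu-\mu\ue^2,\,\um-m\ue^2)}(z) = [z]^{m}\|z\|^{\mu}\, J_{(\umu,\um)}(z),
\]
so choosing $\mu=0$ and shifting $\um$ by a multiple of $\ue^2$ turns our parameters into the form $(\mu,-\mu,m,0)$ needed for \eqref{3eq: n=2, C}. Explicitly, for $\pi_d^+(it)$ one takes $m=-d$, which gives $[z]^d$ times $J_{(it,-it,2d,0)}$; the $2d$ is even. For $\pi_d(t)$ the argument is identical with $it$ replaced by $t$. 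For $\pi_d^-(it)$ one shifts by $m=-d$ to land on $J_{(it,-it,2d+1,0)}$, which is the odd case. In all three cases the rescaling factor $[z]^d$ or $[z]^{d}$ combines cleanly with $[\sqrt z]^{-m}$ appearing in \eqref{3eq: n=2, C} via $[z]=[\sqrt z]^{2}$.

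\medskip

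Third, I plug the resulting formula for $J_{(\umu,\um)}(z)$ into the definition \eqref{7eq: EJ} of the Bessel function,
\[
\EuScript J_{\pi,\psi_1}(z) = \omega_\pi(1)\|1\|\,\sqrt{\|z\|}\,J_{\pi}(z) = |z|\,J_{\pi}(z),
\]
using $\sqrt{\|z\|_{\BC}}=|z|$. After the substitution $z\mapsto (4\pi\sqrt z)$ inside the $J_{\mu,m}$ notation of \eqref{7def: J mu m (z), n=2, C}, and using $\sin(2\pi i t)=i\sinh(2\pi t)$, $\cos(2\pi i t)=\cosh(2\pi t)$, the first equalities in (1) and (2) of the proposition drop out. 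A small bookkeeping point to watch is the factor $\sqrt{|z|\,\overline z}$ appearing in the $\pi_d^-(it)$ formula: it arises from the odd power of $[\sqrt z]$ that is left over after combining $[z]^d$ from the shift with $[\sqrt z]^{-(2d+1)}$ from \eqref{3eq: n=2, C}.

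\medskip

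Fourth, to obtain the Hankel‐function expressions, I convert the combinations $J_{\mu,m}\pm J_{-\mu,-m}$ into products of $H^{(1)}$ and $H^{(2)}$ via the classical identities \eqref{2eq: connection formulae} used in the form
\[
J_\nu(x) = \tfrac12\bigl(H^{(1)}_\nu(x)+H^{(2)}_\nu(x)\bigr),\qquad J_{-\nu}(x) = \tfrac12\bigl(e^{\pi i\nu}H^{(1)}_\nu(x)+e^{-\pi i\nu}H^{(2)}_\nu(x)\bigr),
\]
applied to both tensor factors in $J_{\mu,m}(z)=J_{2\mu+m/2}(z)J_{2\mu-m/2}(\overline z)$. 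After expanding the four resulting terms, the ``mixed'' cross terms $H^{(1)}\otimes H^{(2)}$ and $H^{(2)}\otimes H^{(1)}$ cancel between $J_{\mu,m}$ and $\pm J_{-\mu,-m}$, leaving only the ``pure'' products $H^{(1)}_{\mu,m}$ and $H^{(2)}_{\mu,m}$ of \eqref{7def: H (1, 2) mu m (z), n=2, C}, with coefficients $e^{\mp\pi i(2it)}=e^{\mp 2\pi t}$ (or $e^{\pm 2\pi it}$ in the complementary series case) coming from the factors $e^{\pm\pi i\nu}$ with $\nu=2it$. This cancellation, together with the identity $2/\sinh(2\pi t)\cdot(e^{2\pi t}-e^{-2\pi t})/2 = 1$ (etc.), converts the first equality into the second.

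\medskip

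The main obstacle will be the fourth step: ensuring the correct bookkeeping of signs, half-integer exponents, and the $[\sqrt z]$ versus $[z]$ conventions when one of the Hankel orders is a half-integer (the odd $\pi_d^-(it)$ case), and checking that the cross terms really do cancel to leave only $H^{(1)}_{\mu,m}$ and $H^{(2)}_{\mu,m}$ after summing $J_{\mu,m}\pm J_{-\mu,-m}$. Everything else is a mechanical specialization of results already established in the excerpt.
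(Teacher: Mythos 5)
Your proposal is correct and follows essentially the same route as the paper, which simply invokes Example \ref{3prop: n=2, C} combined with the Langlands parametrization and the shift formula of Proposition \ref{3prop: properties of J, C} (1). One minor redundancy: your fourth step re-derives the Hankel-function expression from the Bessel-function one via \eqref{2eq: connection formulae}, but Example \ref{3prop: n=2, C} already records the Hankel form $J_{(\mu,-\mu,m,0)}(z) = \pi^2 i\,[\sqrt z]^{-m}\bigl(e^{2\pi i\mu}H^{(1)}_{\mu,m}(4\pi\sqrt z) + (-)^{m+1}e^{-2\pi i\mu}H^{(2)}_{\mu,m}(4\pi\sqrt z)\bigr)$, so you could cite it directly rather than redo the cross-term cancellation; the shift by $[z]^d = [\sqrt z]^{2d}$ and multiplication by $|z| = \sqrt{\|z\|}$ then give both equalities at once in each case, with $|z|\,[\sqrt z]^{-1} = \sqrt{|z|\,\overline z}$ in the odd case.
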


In view of Corollary \ref{6cor: d=1, J mu m}, we have the following integral representations of $\EJ_{\pi } \lp x e^{i\phi} \rp$ except for $\pi =  {\pi  ( t)}$ and $ t  \in \left[ \frac 3 8, \frac 1 2 \rp $.

\begin{prop} \label{7prop: Bessel, C, integral} \  
	
	{\rm (1).} Let $t \in \BR $ and $d \in \BZ$. We have for $x \in \BR_+$ and $\phi \in {\BR/2\pi \BZ}$
	\begin{align*}
	\EJ_{\pi_{d}^+ (i t)} \big(  x e^{i\phi} \big) &  =   4 \pi  x  \big(\hskip -2 pt -e^{i \phi} \big)^d \int_0^\infty y^{4 i t - 1} \big[ y\- +  y e^{ i \phi} \big]^{-2d} J_{2 d} \big( 4 \pi \sqrt x \big|y\-  +  y e^{ i \phi} \big| \big) d y, \\
	\EJ_{\pi_{d}^- (i t)} \big( x e^{i\phi} \big) &  =   4 \pi i   x  \big(\hskip -2 pt -e^{i \phi} \big)^{d+1} \hskip -2 pt \int_0^\infty \hskip -2 pt y^{4 i t - 1} \big[ y\- +  y e^{ i \phi} \big]^{-2d - 1} \hskip -2 pt  J_{2 d + 1} \big( 4 \pi \sqrt x \big|y\-  +  y e^{ i \phi} \big| \big) d y.
	\end{align*}
	
	{\rm (2).} Let $t \in        
	\left( 0, \frac 3 8 \right)$. We have for $x \in \BR_+$ and $\phi \in {\BR/2\pi \BZ}$
	\begin{equation*}
	\EJ_{\pi  ( t)} \big( x e^{i\phi} \big)  =  4 \pi   x   \int_0^\infty y^{4 t - 1}   J_{0} \big( 4 \pi \sqrt x \big|y\-  +  y e^{ i \phi}\big| \big) d y,
	\end{equation*}
	in which the integral converges  absolutely only for $ t \in   \left(0, \frac 1 8 \right) $.
\end{prop}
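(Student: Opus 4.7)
The plan is to deduce all three formulas as a direct consequence of the integral representation of the rank-two Bessel kernel in Corollary \ref{6cor: d=1, J mu m}, once the relevant Langlands parameters are read off and normalized appropriately. First I would specialize the definition \eqref{7eq: EJ} to $r=1$; since $\sqrt{\|z\|}=|z|$ for $z\in\BCx$ and $\omega_\pi(1)=1$, this simply reads $\EJ_\pi(xe^{i\phi}) = x\, J_\pi(xe^{i\phi})$, reducing the problem to an integral representation of the Bessel kernel itself. From the Langlands classification of unitary irreducibles of $\GL_2(\BC)/\mathrm Z_2(\BR_+)$ recalled just above the proposition, the parameters $(\umu,\um)\in\BC^2\times\BZ^2$ are $(it,-it,d,-d)$ for $\pi^+_d(it)$, $(it,-it,d+1,-d)$ for $\pi^-_d(it)$, and $(t,-t,d,-d)$ for $\pi_d(t)$.

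Next, to match the form $J_{(\mu,-\mu,m,0)}$ required in Corollary \ref{6cor: d=1, J mu m}, I would apply the homogeneity relation of Proposition \ref{3prop: properties of J, C} (1) with $\mu=0$ and $m=-d$, which gives
\begin{equation*}
J_{(\umu,\um)}(z) \;=\; [z]^{d}\, J_{(\umu,\,\um+d\ue^{2})}(z).
\end{equation*}
For $z=xe^{i\phi}$ the prefactor is $[z]^d=e^{id\phi}$, while the shifted index $\um+d\ue^{2}$ has second component zero and first component $2d$, $2d+1$, or $2d$ in the three cases respectively. Combining this with $\EJ_\pi(xe^{i\phi})=x J_\pi(xe^{i\phi})$ isolates a universal prefactor $x e^{id\phi}$ in each formula.

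Finally, I would invoke Corollary \ref{6cor: d=1, J mu m} with $\mu=it$ (for the principal and limit-of-discrete series), respectively $\mu=t$ (for the complementary series), and $m\in\{2d,\,2d+1,\,2d\}$. For $\mu=it$ the constraint $|\Re\mu|<3/8$ is trivial, so the representations are valid for all $t\in\BR$, whereas for $\mu=t$ the constraint becomes exactly $t\in(0,3/8)$, matching the stated restriction. The factors $i^{2d}=(-1)^d$ and $i^{2d+1}=i(-1)^d$ coming from the corollary reproduce the signs in the three formulas, and the absolute-convergence assertion for $\pi_d(t)$ in the range $t\in(0,1/8)$ is nothing but the absolute-convergence half of Corollary \ref{6cor: d=1, J mu m}.

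The only point requiring care is bookkeeping: one must keep track of the normalization $r=1$, the direction of the index shift (pushing the second coordinate of $\um$ to zero rather than the first), and the phase $[z]^d=e^{id\phi}$, so that the powers of $i$, the factors $(-1)^d$, and the exponents $4it-1$, $4it-1$, $4t-1$ align exactly with the formulas as written. There is no genuine analytic obstacle—the convergence and the analytic continuation in $\mu$ were already handled in Proposition \ref{6prop: d=1, J v k} and Corollary \ref{6cor: d=1, J mu m}—so the argument is essentially a bookkeeping exercise once the parameter dictionary is in place.
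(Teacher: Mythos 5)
Your proposal is correct and follows essentially the paper's route: the paper states only that the proposition holds ``In view of Corollary~\ref{6cor: d=1, J mu m}'' (after normalizing indices so that $\EJ_\pi = \EJ_{\pi,\psi_1}$), and your argument fills in exactly the intended bookkeeping --- reading off $(\umu,\um)$ from the $\GL_2(\BC)$ Langlands parameters, shifting by $d\ue^2$ via Proposition~\ref{3prop: properties of J, C}(1) to reach the form $J_{(\mu,-\mu,m,0)}$ covered by the corollary, and matching the prefactors $x$, $e^{id\phi}$, $i^{m}$ and the exponent $4\mu-1$ with the stated formulas and convergence ranges.
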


\begin{rem}
	$\pi^+_d (i t)$ and $\pi  (t)$ exhaust all the infinite dimensional irreducible unitary representations of $\PGL_2( \BC) $. Proposition {\rm \ref{7prop: Bessel, C}} shows that the Bessel function for $\pi^+_d (i t)$ actually coincide with that given in \cite{B-Mo}. More precisely, we have the equality $\EJ_{\pi_{d}^+ (i t)} (z) =  {2 \pi^2 } |z| \EuScript K_{2 i t, - d} (4 \pi  \sqrt z),$ with $\EuScript K_{\nu, p}$ given by \cite[(6.21), (7.21)]{B-Mo}. Furthermore, the integral representation of $\EJ_{\pi_{d}^+ (i t)}$ in Proposition {\rm \ref{7prop: Bessel, C, integral}  (1)} is tantamount to \cite[Theorem 12.1]{B-Mo}. We have similar relations between the Bessel function for $\pi^-_d (i t)$ and that given in \cite{B-Mo2}.
\end{rem}


\backmatter
\def\cprime{$'$} \def\cprime{$'$}

\printindex

\end{document}